\theoremstyle{definition}
\theoremstyle{remark}
\numberwithin{section}{chapter}
\numberwithin{equation}{chapter}
\newtheorem{thm}{Theorem}
\newtheorem{prop}[thm]{Proposition}
\newtheorem{lemma}[thm]{Lemma}
\newtheorem{claim}{Claim}
\newtheorem*{Claim}{Claim}
\newtheorem{defi}[thm]{Definition}
\newtheorem{rmk}[thm]{Remark}
\newtheorem*{Rmk}{Remark}
\newtheorem*{Rmks}{Remarks}
\newtheorem{cor}[thm]{Corollary}
\newtheorem{xpl}[thm]{Example}
\newtheorem*{Xpl}{Example}
\newtheorem{conj}[thm]{Conjecture}
\newenvironment{pf}[1][Proof.]{\noindent \emph{#1.}}{}
\newenvironment{enui}{\begin{enumerate}[(i)]}{\end{enumerate}}
\newenvironment{enua}{\begin{enumerate}[(a)]}{\end{enumerate}}
\newcommand\const{\equiv}
\newcommand\op{{\operatorname{op}}}
\newcommand\disj{\coprod}
\newcommand\Then{\,\Longrightarrow\,}
\newcommand\then{\Longrightarrow}
\newcommand\im{{\operatorname{im}}}
\newcommand\Aut{{\operatorname{Aut}}}
\newcommand\id{{\operatorname{id}}}
\newcommand\nn{{\nonumber}}
\newcommand\wt[1]{{\widetilde{#1}}}
\newcommand{\BAR}[1]{{\overline{#1}}}
\newcommand\al{{\alpha}}
\newcommand\be{\beta}
\newcommand\Ga{\Gamma}
\newcommand\ga{\gamma}
\newcommand\ka{\kappa}
\newcommand\de{\delta}
\newcommand\eps{\varepsilon}
\newcommand\Lam{\Lambda}
\newcommand\lam{\lambda}
\newcommand\Om{\Omega}
\newcommand\om{\omega}
\newcommand\Si{\Sigma} 
\newcommand\si{\sigma}
\newcommand\ze{\zeta}
\renewcommand\phi{\varphi}
\newcommand\na{\nabla}
\newcommand\U{\mathcal{U}}
\newcommand\V{\mathcal{V}}
\newcommand\B{\mathcal{B}}
\newcommand\BB{\widetilde{\mathcal{B}}}
\newcommand\E{E}
\newcommand\EE{\mathcal{E}}
\newcommand\EEE{\widetilde{\mathcal{E}}}
\newcommand\X{\mathcal{X}}
\newcommand\XX{\widetilde{\mathcal{X}}}
\newcommand\XXX{\hhat{\mathcal{X}}}
\newcommand\Y{\mathcal{Y}}
\newcommand\YY{\widetilde{\mathcal{Y}}}
\newcommand\YYY{\hhat{\mathcal{Y}}}
\newcommand{\N}{\mathbb{N}}
\newcommand\NN{\mathcal{N}}
\newcommand{\Z}{\mathbb{Z}}
\newcommand{\Q}{\mathbb{Q}}
\newcommand{\R}{\mathbb{R}}
\newcommand\C{\mathbb C}
\newcommand\D{{\mathcal{D}}}
\newcommand\DD{{\hhat{\mathcal{D}}}}
\newcommand\DDD{\mathbb{D}}
\newcommand\G{\mathcal{G}}
\newcommand\g{\mathfrak g}
\newcommand\A{\mathcal A}
\newcommand\AAA{\mathbb A}
\newcommand\Lie{\operatorname{Lie}}
\newcommand\EG{{\operatorname{EG}}}
\newcommand\BG{{\operatorname{BG}}}
\newcommand\sub{\subseteq}
\newcommand\x{\times}
\newcommand\wo{\setminus}
\newcommand\one{\mathbf{1}}
\newcommand\iso{\cong}
\newcommand\dd{\partial}
\newcommand\lan{\langle}
\newcommand\ran{\rangle}
\newcommand\loc{{\operatorname{loc}}}
\newcommand\M{\mathcal{M}}
\newcommand\MM{\wt{\mathcal{M}}}
\newcommand\ME{\mathcal{M}_{<\infty}}
\newcommand\barev{\BAR{\operatorname{ev}}}
\newcommand\ev{\operatorname{ev}}
\newcommand\W{{\bf W}}
\newcommand\WW{\mathcal{W}}
\newcommand\WWW{\wt{\mathcal{W}}}
\newcommand\z{{\bf z}}
\newcommand\Sym{\operatorname{Sym}}
\newcommand\Isom{\operatorname{Isom}}
\newcommand\wrt{w.r.t.~}
\newcommand\Emin{\operatorname{E}_{\min}}
\newcommand\hhat[1]{\widehat{#1}}
\newcommand\noi{\noindent}
\newcommand\new{{\operatorname{new}}}
\newcommand\Qk{{\operatorname{Q}\!\kappa}}
\newcommand\QH{{\operatorname{QH}}}
\newcommand\PSL{\operatorname{PSL}(2,\C)}
\newcommand\TR{{\mathcal{T}_{\C}}}
\newcommand\CP{\mathbb{C}\!\operatorname{P}}
\newcommand\UU{\mathcal{U}}
\newcommand\VV{\mathcal{V}}
\newcommand\La{\Delta}
\newcommand\inj{\hookrightarrow}
\newcommand\Ad{{\operatorname{Ad}}}
\newcommand\pr{{\operatorname{pr}}}
\newcommand\GW{\operatorname{GW}}
\newcommand\wi{{A^i}}
\newcommand\wzeroone{{A^{0,1}}}
\newcommand\wone{{A^1}}
\newcommand\wtwo{{A^2}}
\newcommand\ind{{\operatorname{ind}}}
\newcommand\SSS{\mathcal{S}}
\newcommand\SSSS{\widetilde{\mathcal{S}}}
\newcommand\Hom{\operatorname{Hom}}
\newcommand\End{\operatorname{End}}
\newcommand\PD{\operatorname{PD}}
\newcommand\Co{\mathcal{C}}
\newcommand\Int{{\operatorname{int}}}
\newcommand\HAT{{\widehat{\phantom{.}}}}
\newcommand\unhat[1]{\overset{\vee}{#1}}
\newcommand\Unhat{\unhat{\phantom{.}}}
\newcommand\cont{\supseteq}
\newcommand\coker{\operatorname{coker}}
\newcommand\PR{\operatorname{Pr}}
\newcommand\SO{\operatorname{SO}}
\newcommand\st{{\operatorname{st}}}
\newcommand\Slash{/\!}
\begin{document}

\frontmatter

\title{A Quantum Kirwan Map: Bubbling and Fredholm Theory for Symplectic Vortices over the Plane}

\author{Fabian Ziltener}
\address{Korea Institute for Advanced Study, 87 Hoegiro, Dongdaemun-gu, Seoul 130-722, Republic of Korea}
\curraddr{}
\email{fabian@kias.re.kr}
\thanks{The author acknowledges financial support by the Swiss National Science Foundation (fellowship 200020-101611).}

\author{}
\address{}
\curraddr{}
\email{}
\thanks{}

\date{}

\subjclass[2000]{Primary: 53D20, 53D45}

\keywords{Symplectic quotient, Kirwan map, (equivariant) Gromov-Witten invariants, (equivariant) quantum cohomology, morphism of cohomological field theories, Yang-Mills-Higgs functional, stable map, compactification, weighted Sobolev spaces}

\begin{abstract} 
Consider a Hamiltonian action of a compact connected Lie group on a symplectic manifold $(M,\om)$. Conjecturally, under suitable assumptions there exists a morphism of cohomological field theories from the equivariant Gromov-Witten theory of $(M,\om)$ to the Gromov-Witten theory of the symplectic quotient. The morphism should be a deformation of the Kirwan map. The idea, due to D.~A.~Salamon, is to define such a deformation by counting gauge equivalence classes of symplectic vortices over the complex plane $\C$.

The present memoir is part of a project whose goal is to make this definition rigorous. Its main results deal with the symplectically aspherical case. The first one states that every sequence of equivalence classes of vortices over the plane has a subsequence that converges to a new type of genus zero stable map, provided that the energies of the vortices are uniformly bounded. Such a stable map consists of equivalence classes of vortices over the plane and holomorphic spheres in the symplectic quotient. The second main result is that the vertical differential of the vortex equations over the plane (at the level of gauge equivalence) is a Fredholm operator of a specified index.

Potentially the quantum Kirwan map can be used to compute the quantum cohomology of symplectic quotients.
\end{abstract}

\maketitle

\tableofcontents

\mainmatter

\chapter{Motivation and main results}\label{chap:main}
\section{Quantum deformations of the Kirwan map}\label{sec:defo}
Let $(M,\om)$ be a symplectic manifold without boundary, and $G$ a compact connected Lie group with Lie algebra $\g$. We fix a Hamiltonian action of $G$ on $M$ and an (equivariant) momentum map%
\footnote{Momentum maps are often called \emph{moment maps} by symplectic geometers. However, the first term seems more appropriate, since the notion generalizes the linear and angular momenta appearing in classical mechanics.}
 $\mu:M\to\g^*$. Throughout this memoir, we make the following standing assumption:\\[2ex]
\noindent {\bf Hypothesis (H):} \textit{$G$ acts freely on $\mu^{-1}(0)$ and the momentum map $\mu$ is proper.}\\[2ex]
Then the symplectic quotient $\big(\BAR M:=\mu^{-1}(0)/G,\BAR\om\big)$ is well-defined, smooth and closed (i.e., compact and without boundary). The \emph{Kirwan map} is a canonical ring homomorphism 
\[\ka:H_G^*(M)\to H^*(\BAR M).\]
Here $H^*$ and $H_G^*$ denote cohomology and equivariant cohomology with rational coefficients, and the product structures are the cup products. F.~Kirwan proved \cite{Kir} that this map is surjective. Based on this result, the cohomology ring $H^*(\BAR M)$ was described in different ways by L.~C.~Jeffrey and F.~Kirwan \cite[Theorem 8.1]{JK}, S.~Tolman and J.~Weitsman \cite[Theorem 1]{TW}, and many others.

The present memoir is concerned with the problem of ``quantizing'' the Kirwan map, which was first investigated by R.~Gaio and D.~A.~Salamon. Assuming symplectic asphericity and some other restrictive conditions, in \cite[Corollary A']{GS} these authors constructed a ring homomorphism from $H_G^*(M)$ to the (small) quantum cohomology of $(\BAR M,\BAR\om)$, which intertwines the Gromov-Witten invariants of the symplectic quotient with the symplectic vortex invariants. Their result is based on an adiabatic limit in the symplectic vortex equations. It was used by K.~Cieliebak and D.~A.~Salamon in \cite[Theorem 1.3]{CS} to prove that given a monotone linear symplectic torus action on $\R^{2n}$ with minimal first equivariant Chern number at least 2, the quantum cohomology of $(\BAR M,\BAR\om)$ is isomorphic to the Batyrev ring. 

The result by Gaio and Salamon motivates the following conjecture. We denote by $\QH^*_G(M,\om)$ the equivariant quantum cohomology. By this we mean the $\Q$-vector space of all maps $\al:H_2^G(M,\Z)\to H_G^*(M)$ satisfying an equivariant version of the Novikov condition, together with a product counting holomorphic maps from $S^2$ to the fibers of the Borel construction for the action of $G$ on $M$.%
\footnote{For the definition of this product see \cite{Gi,GiK,KimEq,Lu,Ru}.}
 The Novikov condition states that for every number $C\in\R$ there are only finitely many classes $B\in H^G_2(M,\Z)$, such that $\al(B)\neq0$ and $\big\lan[\om-\mu],B\big\ran\leq C$. Here $[\om-\mu]\in H_G^2(M)$ denotes the cohomology class of the two-cocycle $\om-\mu$ in the Cartan model. 

The space $\QH^*_G(M,\om)$ is naturally a module over the equivariant Novikov ring $\Lam_\om^\mu$.%
\footnote{This ring consists of all maps $\lam:H_2^G(M,\Z)\to\Q$ satisfying an equivariant version of the Novikov condition, analogous to the one above. The product is given by convolution.}
 We denote by $\QH^*(\BAR M,\BAR\om)$ the quantum cohomology of $(\BAR M,\BAR\om)$ with coefficients in this ring. A map
\begin{equation}\label{eq:phi}\phi:H_2^G(M,\Z)\to\Hom_\Q\big(H_G^*(M),H^*(\BAR M)\big)\end{equation}
satisfying the equivariant Novikov condition%
\footnote{This condition is analogous to the one above.}%
, induces a $\Lam_\om^\mu$-module homomorphism
\begin{equation}\label{eq:phi * QH}\phi_*:\QH^*_G(M,\om)\to\QH^*(\BAR M,\BAR\om),\quad(\phi_*\al)(B):=\sum\phi(B_1)\al(B_2),\end{equation}
where the sum is over all pairs $B_1,B_2\in H_2^G(M,\Z)$ satisfying $B_1+B_2=B$. We denote by $c_1^G(M,\om)\in H_G^2(M,\Z)$ the first $G$-equivariant Chern class of $(TM,\om)$, and by 
\begin{equation}\label{eq:N inf}N:=\inf\left(\big\{\big\lan c_1^G(M,\om),B\big\ran\,\big|\,B\in H_2^G(M,\Z):\,\textrm{spherical}\big\}\cap\N\right)\in\N\cup\{\infty\}
\footnote{ In this memoir $\N:=\{1,2,\ldots\}$ does not include $0$.}
\end{equation}
the minimal equivariant Chern number. We call $(M,\om,\mu)$ \emph{semipositive} iff there exists a constant $c\in\R$ such that 
\[\big\lan[\om-\mu],B\big\ran=c\big\lan c_1^G(M,\om),B\big\ran,\]
for every spherical class $B\in H_2^G(M,\Z)$, and if $c<0$ then $N\geq\frac12\dim\BAR M$.
\begin{conj}[Quantum Kirwan map, semipositive case]\label{conj:QK} Assume that (H) holds and that $(M,\om,\mu)$ is convex at $\infty$
\footnote{This means that there exists an $\om$-compatible and $G$-invariant almost complex structure $J$ on $M$, such that the quadruple $\big(M,\om,\mu,J\big)$ is convex at $\infty$ in the sense explained before Theorem \ref{thm:bubb} below.}
 and semipositive. Then there exists a map $\phi$ as in (\ref{eq:phi}), satisfying the equivariant Novikov condition, such that the induced map $\phi_*$ as in (\ref{eq:phi * QH}) is a surjective ring homomorphism, and
\begin{eqnarray}\label{eq:phi 0}&\phi(0)=\ka,&\\
\label{eq:om mu B}&\big\lan[\om-\mu],B\big\ran\leq0,\,B\neq0\Then\phi(B)=0.& 
\end{eqnarray}
\end{conj}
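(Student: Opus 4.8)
This conjecture is the guiding goal of the project to which the present memoir belongs, and its two main results — the compactness theorem and the Fredholm index theorem — are the principal analytic inputs into an eventual proof. The plan is the following. First, for each class $B\in H_2^G(M,\Z)$ one introduces the moduli space $\M_B$ of gauge equivalence classes of finite-energy symplectic vortices over $\C$ representing $B$, equipped with two evaluation maps: an \emph{equivariant} evaluation $\ev_0:\M_B\to M_G$ at the origin, valued in the Borel construction $M_G:=\EG\times_G M$ (a vortex carries a residual $G$-action), and an evaluation $\barev:\M_B\to\BAR M$ at $\infty$, well-defined because every finite-energy vortex over $\C$ has a well-defined asymptotic limit in $\BAR M$ — part of the asymptotic analysis behind Theorem \ref{thm:bubb}. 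One also needs the variants $\M_{B,k}$ carrying $k$ additional marked points on $\C$ with their equivariant evaluation maps, which furnish the higher structure maps of the conjectural morphism of cohomological field theories.

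Second, one establishes transversality and coherent orientations. Using the Fredholm property and the index formula of this memoir's second main result, a generic choice of $G$-invariant, $\om$-compatible almost complex structure (allowed to depend on the point of $\C$), or where necessary an abstract perturbation, makes each $\M_{B,k}$ a smooth oriented manifold of the predicted dimension, and by Theorem \ref{thm:bubb} its natural compactification $\BAR\M_{B,k}$ — obtained by attaching vortices over $\C$ and holomorphic spheres in $\BAR M$ — is compact. Here semipositivity of $(M,\om,\mu)$ plays exactly the role it does in the closed case: it forces the strata on which multiply covered or negative-Chern-number spheres occur to have codimension at least two in every moduli space whose dimension is relevant to the construction, so that $\BAR\M_{B,k}$ is a pseudocycle and one may define $\phi$ by intersection theory rather than with a full virtual machinery. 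Concretely,
\[\phi(B)(a):=\barev_*\big(\ev_0^*a\cap[\M_B]\big)\in H^*(\BAR M)\]
for $a\in H_G^*(M)$, with the obvious modification in the presence of marked points. The energy identity — a vortex representing $B$ has Yang--Mills--Higgs energy equal to $\lan[\om-\mu],B\ran$ — shows that $\M_B=\emptyset$ unless $\lan[\om-\mu],B\ran\geq0$, with equality forcing $B=0$; this is precisely (\ref{eq:om mu B}), and together with the finiteness of the set of classes of bounded energy represented by vortices (a consequence of the a priori estimates and Theorem \ref{thm:bubb}) it yields the equivariant Novikov condition for $\phi$.

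Third, one verifies the three remaining assertions. For $\phi(0)=\ka$: the moduli space $\M_0$ consists of zero-energy vortices, which are gauge equivalent to constant maps into $\mu^{-1}(0)$; identifying $\M_0\iso\BAR M$ and matching the evaluation maps gives $\phi(0)=\ka$, alternatively one recovers this through the adiabatic-limit argument of Gaio and Salamon relating vortices of small energy to pseudoholomorphic curves in $\BAR M$. For the ring-homomorphism property of $\phi_*$: one studies the codimension-one boundary strata of the one-dimensional components of $\BAR\M_{B,k}$ for $k\geq3$; by Theorem \ref{thm:bubb} together with the corresponding gluing theorem these strata are precisely the products of lower moduli spaces of vortices over $\C$ and of holomorphic spheres in $\BAR M$, and the resulting relation among (pseudo)cycles, once the quantum products on $\QH^*_G(M,\om)$ and $\QH^*(\BAR M,\BAR\om)$ are expanded, is exactly the identity $\phi_*(\al\cdot\be)=\phi_*\al\cdot\phi_*\be$. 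Finally, surjectivity of $\phi_*$ follows, as in comparable deformation arguments, from the surjectivity of $\ka$ \cite{Kir} and the fact that $\phi_*$ respects the energy filtration with leading term $\al\mapsto\ka\circ\al$ — the equivariant Novikov condition supplying the finiteness needed to run the induction over energy levels.

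The two steps where the real difficulty lies are, first, the transversality-and-gluing package over the \emph{noncompact} domain $\C$: the vortex equations over the plane carry the non-compact translation symmetry and must be set up in weighted Sobolev spaces (the framework in which this memoir's Fredholm theorem is proved), and the gluing constructions — in particular gluing a holomorphic sphere in $\BAR M$ onto a vortex at its asymptotic limit, and gluing two vortices over $\C$ — have to be carried out in that functional-analytic setting; and second, the verification of the full list of cohomological-field-theory axioms for the higher structure maps, which demands a uniform and orientation-compatible treatment of all the degeneration phenomena catalogued by the bubbling theorem. These are the substance of the remaining parts of the project; the present memoir disposes of the compactness and Fredholm prerequisites.
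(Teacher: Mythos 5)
This statement is a conjecture which the paper does not prove; the memoir is explicitly positioned as supplying only the bubbling and Fredholm prerequisites (Theorems \ref{thm:bubb} and \ref{thm:Fredholm}) toward a future proof. Your sketch correctly recognizes this and reproduces essentially the same strategy the paper itself outlines informally in the subsection on the idea of the proof --- $\phi$ defined by integrating $\ev_0^*\al\smile\barev_\infty^*\BAR\PD(\BAR b)$ over $\M_B$, the energy identity $E(W)=\big\lan[\om-\mu],[W]\big\ran$ giving the Novikov condition together with (\ref{eq:phi 0}) and (\ref{eq:om mu B}), semipositivity suppressing marked-point-free vortex bubbles, a marked-point degeneration argument for the ring-homomorphism property, and surjectivity of $\ka$ for surjectivity of $\phi_*$ --- so there is no gap to name and no alternative route to compare.
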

Once proven, this conjecture will give rise to a recursion formula for $\QH^*(\BAR M,\BAR\om)$ in terms of $\QH^*_G(M,\om)$ and $\phi$.%
\footnote{The recursion is over the set 
\[\left\{\left\lan[\om-\mu],\sum_{i=1}^kB_i\right\ran\,\Bigg|\,k\in\N\cup\{0\},\,B_i\in H_2^G(M,\Z):\,\phi(B_i)\neq0\textrm{ or }\GW^G_{B_i}\neq0,\,i=1,\ldots,k\right\},\] 
where $\GW^G_{B_i}$ denotes the 3-point genus 0 equivariant Gromov-Witten invariant of $(M,\om)$ in the class $B_i$.}
 As noticed in \cite{NWZ}, without the semipositivity condition, the conjecture likely needs to be modified as follows:
\begin{conj}[Quantum Kirwan map, general situation]\label{conj:QK } Assume that (H) holds, and that $(M,\om,\mu)$ is convex at $\infty$. Then there exists a morphism of cohomological field theories (CohFT's) from the equivariant Gromov-Witten theory of $(M,\om)$ to the Gromov-Witten theory of $(\BAR M,\BAR\om)$. 
\end{conj}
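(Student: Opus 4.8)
\medskip\noindent\emph{Towards a proof.} The plan is to construct the morphism by a virtual count of symplectic vortices over the plane, following the idea of D.~A.~Salamon. The two main results of this memoir are the foundational inputs: Theorem \ref{thm:bubb} provides a Gromov-type compactification of the space of gauge equivalence classes of finite-energy vortices over $\C$ by genus zero stable maps built from vortices over $\C$ and $J$-holomorphic spheres in $\BAR M$, and the Fredholm theorem identifies the index of the vertical differential of the vortex equations modulo gauge on suitable weighted Sobolev spaces. Since these are stated under symplectic asphericity, the first step of the program is to extend both to the general ``convex at $\infty$'' case, allowing sphere bubbles of positive energy in $M$ and in $\BAR M$; the a priori estimates of this memoir, together with the confinement of vortices to a fixed compact subset of $M$ afforded by convexity at $\infty$, survive the extension. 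Granting this, one obtains for each equivariant homology class $B\in H_2^G(M,\Z)$ a moduli space $\M_B$ of gauge equivalence classes of finite-energy vortices over $\C$ in class $B$, with well-defined asymptotic limit in $\BAR M$; its Gromov compactification $\BAR\M_B$ has, up to the combinatorics of genus zero trees, codimension-one faces that are fiber products of a lower vortex moduli space $\M_{B'}$ with a moduli space of $J$-holomorphic spheres in $\BAR M$, and its virtual dimension is the index computed by the Fredholm theorem.

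The second step is transversality and the extraction of the invariants. Because of the gauge symmetry, the reparametrization symmetry of $\C$, and the presence of reducible and symmetric vortices, a generic choice of $\om$-compatible $G$-invariant $J$ and of Hamiltonian perturbations will not suffice, so I would build a virtual fundamental cycle on $\BAR\M_B$ by an abstract perturbation scheme --- Kuranishi structures with corners, or a polyfold description --- adapted to the weighted Sobolev framework underlying the Fredholm theorem and compatible with the boundary stratification of Theorem \ref{thm:bubb}. Adding marked points on the cylindrical ends of $\C$ and pairing $\barev$ with a pseudocycle (or de Rham form) representing a class in $H^*(\BAR M)$ and the remaining $\ev$'s with pseudocycles representing classes in $H_G^*(M)$ yields, when the total virtual dimension is $0$, a rational number; these numbers are the matrix coefficients of the map $\phi$ of (\ref{eq:phi}). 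Dimension counting shows $\phi$ satisfies the equivariant Novikov condition, and a codimension-one analysis of $\BAR\M_B$ --- whose relevant faces, by Theorem \ref{thm:bubb}, are exactly the configurations contributing to $\phi_*$ post-composed with the quantum product on $\BAR M$ and to the equivariant quantum product post-composed with $\phi_*$ --- shows that the $\Lam_\om^\mu$-module map $\phi_*$ of (\ref{eq:phi * QH}) is a ring homomorphism. To upgrade $\phi$ to a genuine \emph{morphism of CohFT's} I would let the domain vary: since the degenerations of Theorem \ref{thm:bubb} mix vortices over $\C$ with $J$-holomorphic spheres in $\BAR M$, the natural moduli of domains is not $\BAR{\mathcal M}_{0,n+1}$ but a compactification of genus zero curves carrying an additional affine/scaling structure, in the spirit of the scaled curves of Mau and Woodward, whose strata match the tree types of Theorem \ref{thm:bubb}; over this moduli space one assembles a family of virtual cycles of ``vortices on scaled curves'', and the CohFT compatibility axioms ($S_n$-equivariance, behavior under the forgetful maps, the flat unit, and above all the factorization axiom at the boundary divisors) translate into gluing statements for vortex configurations along the two kinds of nodes. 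For the semipositive refinement, Conjecture \ref{conj:QK}, one verifies in addition the identities (\ref{eq:phi 0}) and (\ref{eq:om mu B}): a zero-energy vortex over $\C$ is gauge equivalent to a constant map into $\mu^{-1}(0)$, so the $B=0$ contribution reduces to Kirwan's construction and $\phi(0)=\ka$; and $\big\lan[\om-\mu],B\big\ran$ is the energy of a vortex in class $B$, so $\big\lan[\om-\mu],B\big\ran\le0$ with $B\neq0$ forces $\M_B=\emptyset$ and hence $\phi(B)=0$, while semipositivity serves to discard sphere bubbles of the wrong Chern number.

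The hard part, which I expect to absorb most of the remaining work in this project, is analytic and has two linked faces. The first is transversality over the \emph{noncompact} domain $\C$ in the presence of gauge and reparametrization symmetry: setting up a virtual machinery compatible with weighted Sobolev spaces and with the adiabatic-limit character of the vortex equations, and verifying that it produces cycles with the boundary predicted by Theorem \ref{thm:bubb}, is itself substantial. The second, and deepest, is the gluing theorem underlying the factorization axiom. Gluing a vortex over $\C$ to another vortex over $\C$ along a shared asymptotic limit in $\BAR M$ is a gauge-theoretic gluing over a noncompact base and does not reduce to the standard Gromov--Witten gluing; constructing the approximate solutions, producing uniformly bounded right inverses on the weighted spaces, and proving surjectivity of the resulting gluing map are the main obstacles. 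Gluing a vortex over $\C$ to a $J$-holomorphic sphere in $\BAR M$ is closer to the classical situation, but must still be carried out within the same weighted framework with the adiabatic scaling made explicit. Once these gluing results and the virtual framework are in place, the remaining ingredients --- the operadic bookkeeping, the Novikov estimates, the identification of the $B=0$ term with $\ka$, and (in the semipositive case) the surjectivity of $\phi_*$ inherited from that of $\ka$ --- should be essentially formal consequences of the compactness and Fredholm packages developed here.
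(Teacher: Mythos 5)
The statement you are addressing is a \emph{conjecture}, not a theorem: the paper explicitly says ``The present memoir is part of a project whose goal is to prove Conjectures \ref{conj:QK} and \ref{conj:QK },'' and what this memoir actually establishes are two foundational ingredients, Theorems \ref{thm:bubb} and \ref{thm:Fredholm}, under the symplectic asphericity hypothesis. Your ``proof proposal'' is therefore a program outline, and as such it matches the paper's own declared strategy in Section 1.2 closely: counting gauge-equivalence classes of finite-energy vortices over $\C$ with an asymptotic evaluation into $\BAR M$; compactifying via Theorem \ref{thm:bubb} and using the Fredholm index of Theorem \ref{thm:Fredholm} for the dimension count; adding marked points to produce maps $\Qk^n$; using the moduli space $\BAR M_{n,1}(\AAA)$ of stable $n$-marked scaled lines in the sense of Mau--Woodward as the base of domains, exactly as the paper's paragraph before Conjecture \ref{conj:QK } indicates; lifting the asphericity restriction by allowing sphere bubbles in the Borel fibers, exactly as Remark \ref{rmk:asphericity} says; and identifying a gluing theorem along the two kinds of nodes as the key missing analytic step, which is what the paper states explicitly (``a suitable gluing result''). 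Your reading of conditions (\ref{eq:phi 0}) and (\ref{eq:om mu B}) through the energy identity $E(W)=\lan[\om-\mu],[W]\ran$ also agrees with the paper's heuristic.

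The one methodological point where you depart from the paper's declared plan is in reaching for Kuranishi structures or polyfolds. The paper, at least in the aspherical case it actually treats, plans to make $\M_B$ an oriented manifold by a \emph{direct} transversality result (referenced as \cite{ZiTrans}) and to package the compactified evaluation map as a \emph{pseudocycle} in the sense of \cite[Definition 6.5.1]{MS04}, not to build a virtual fundamental class. In fact one of the paper's stated reasons for restricting the reparametrization group on $\C$ to translations (rather than all orientation-preserving isometries) is precisely so that this group acts freely on simple stable maps (Proposition \ref{prop:simple}), which is exactly what makes the McDuff--Salamon pseudocycle route viable. Your worry that ``a generic choice of $J$ \ldots will not suffice'' is well-founded for the \emph{general} (non-aspherical) case, where multiply-covered sphere bubbles in the Borel fibers obstruct classical transversality; so for Conjecture \ref{conj:QK } in full generality something like your abstract perturbation step is probably unavoidable, and the two approaches are not really in conflict but apply at different levels of generality. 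In short: your program is essentially the paper's program, correctly identified as incomplete at precisely the points (gluing, transversality/virtual cycles, the non-aspherical extension, and the evaluation-map pseudocycle) that the paper defers to \cite{ZiConsEv}, \cite{ZiTrans}, and future work.
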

For the notion of a morphism between two CohFT's $V$ and $W$ see \cite{NWZ}. Such a morphism consists of a sequence of $S_n$-invariant multilinear maps
\[\psi^n:V^{\x n}\x H^*(\BAR M_{n,1}(\AAA))\to W,\quad n\in\N_0:=\N\cup\{0\},\]
satisfying relations involving the composition maps of $V$ and $W$. Here $\BAR M_{n,1}(\AAA)$ denotes the moduli space of stable $n$-marked scaled lines. Furthermore, the action of the symmetric group $S_n$ is by permutations of the first $n$ arguments of $\psi^n$. The map $\psi^1$ plays the role of $\phi_*$ as in Conjecture \ref{conj:QK}. The map $\psi^0$ measures how much $\psi^1$ fails to be a ring homomorphism. Once proven, Conjecture \ref{conj:QK } will give rise to a recursion formula for the Gromov-Witten invariants of $(\BAR M,\BAR\om)$ in terms of the equivariant Gromov-Witten invariants of $(M,\om)$ and the morphism $(\psi^n)_n$.

The present memoir is part of a project whose goal is to prove Conjectures \ref{conj:QK} and \ref{conj:QK }.%
\footnote{Further relevant results will appear elsewhere, including \cite{ZiConsEv,ZiTrans}.}
 The approach pursued here was suggested by D.~A.~Salamon.%
\footnote{Private communication.}
 The idea is to construct the maps of the conjectures by counting symplectic vortices over the complex plane $\C$. In a first step, we will consider the (symplectically) aspherical case. This means that 
\begin{equation}\label{eq:int S 2 u om}\int_{S^2}u^*\om=0,\quad\forall u\in C^\infty(S^2,M).\end{equation}
In this case the equivariant quantum cup product is induced by the ordinary cup product on $H^*_G(M)$.
\section[Symplectic vortices]{Symplectic vortices, idea of the proof of existence of a quantum Kirwan map}
To explain the idea of the proofs of Conjectures \ref{conj:QK} and \ref{conj:QK }, we recall the symplectic vortex equations: Let $J$ be an $\om$-compatible and $G$-invariant almost complex structure on $M$, $\lan\cdot,\cdot\ran_\g$ an invariant inner product on $\g$, $(\Si,j)$ a Riemann surface, and $\om_\Si$ a compatible area form on $\Si$.%
\footnote{This means that $j$ and $\om_\Si$ determine the same orientation of $\Si$.}
 For every smooth (principal) $G$-bundle $P$ over $\Si$ we denote by $\A(P)$ the affine space of smooth connection one-forms on $P$, and by $C^\infty_G(P,M)$ the set of smooth equivariant maps from $P$ to $M$. Consider the class
\begin{align}\label{eq:BB}\BB:=\BB_\Si:=\big\{w:=(P,A,u)\,\big|\,&P\textrm{ smooth }G\textrm{-bundle over }\Si,\\
\nn&A\in\A(P),\,u\in C^\infty_G(P,M)\big\}.\end{align}
The \emph{symplectic vortex equations} are the equations 
\begin{eqnarray}\label{eq:BAR dd J A u}\bar\dd_{J,A}(u)&=&0,\\
\label{eq:F A mu}F_A+(\mu\circ u)\om_\Si&=&0
\end{eqnarray}
for a triple $(P,A,u)\in\BB$. To explain these conditions, note that the pullback bundle $u^*TM\to P$ descends to a complex vector bundle $(u^*TM)/G\to\Si$.%
\footnote{The complex structure on this bundle is induced by the almost complex structure $J$.}
 For every $x\in M$ we denote by $L_x:\g\to T_xM$ the infinitesimal action, corresponding to the action of $G$ on $M$. With this notation, $\bar\dd_{J,A}(u)$ means the complex antilinear part of $d_Au:=du+L_uA$, which we think of as a one-form on $\Si$ with values in $(u^*TM)/G\to\Si$. In (\ref{eq:F A mu}) we view the curvature $F_A$ of $A$ as a two-form on $\Si$ with values in the adjoint bundle $\g_P:=(P\x\g)/G\to\Si$
\footnote{Here $G$ acts on $\g$ in the adjoint way.}.
 Furthermore, identifying $\g^*$ with $\g$ via $\lan\cdot,\cdot\ran_\g$, we view $\mu\circ u$ as a section of $\g_P$. The vortex equations (\ref{eq:BAR dd J A u},\ref{eq:F A mu}) were discovered by K.~Cieliebak, A.~R.~Gaio, and D.~A.~Salamon \cite{CGS}, and independently by I.~Mundet i Riera \cite{MuPhD,MuHam}.%
\footnote{In the case $G:=S^1\sub\C$ acting on $M:=\C$ by multiplication, and $\Si:=\C$, the corresponding energy functional was introduced previously by V.~L.~Ginzburg and L.~D.~Landau \cite{GL}, in order to model superconductivity. More generally, in the case $M:=\C^n$ and $G$ a closed subgroup of $\U(n)$, the functional appeared in physics in the context of \emph{gauged linear sigma models}, starting with the work of E.~Witten \cite{Wi}.}
 A solution of these equations is called a \emph{(symplectic) vortex}.

Two elements $w,w'\in\BB$ are called \emph{(gauge) equivalent} iff there exists an isomorphism $\Phi:P'\to P$ of smooth $G$-bundles (which descends to the identity on $\Si$), such that
\[\Phi^*(A,u):=(A\circ d\Phi,u\circ\Phi)=(A',u').\]
In this case we write $w\sim w'$. We define
\begin{equation}\label{eq:B}\B:=\B_\Si:=\BB\Slash\sim.\end{equation}
The equations (\ref{eq:BAR dd J A u},\ref{eq:F A mu}) are invariant under equivalence. We call an element $W\in\B$ a \emph{vortex class} iff it consists of vortices. We define the \emph{energy density} of a class $W\in\B$ to be 
\begin{equation}\label{eq:e W}e_W:=e_w:=\frac12\big(|d_Au|^2+|F_A|^2+|\mu\circ u|^2\big),\end{equation}
where $w:=(P,A,u)$ is any representative of $W$. Here the norms are induced by the Riemannian metrics $\om_\Si(\cdot,j\cdot)$ on $\Si$ and $\om(\cdot,J\cdot)$ on $M$, and by $\lan\cdot,\cdot\ran_\g$. This definition does not depend on the choice of $w$. Vortex classes are absolute minimizers of the \emph{(Yang-Mills-Higgs) energy functional}
\begin{equation}\label{eq:E B}E:\B\to[0,\infty],\quad E(W):=\int_\Si e_W\om_\Si\end{equation}
in a given second equivariant homology class.%
\footnote{See \cite[Proposition 3.1]{CGS}. Here we assume that $\Si$ is closed, and vortices in the given homology class exist.}
 We define the \emph{image} of a class $W\in\B$ to be the set of orbits of $u(P)$, where $(P,A,u)$ is any representative of $W$. This is a subset of the orbit space $M/G$. 

Consider now the complex plane $\Si:=\C$, equipped with the standard area form $\om_\C:=\om_0$.%
\footnote{In this case a vortex may be viewed as a map $(\Phi,\Psi,u):\C\to\g\x\g\x M$, see Remark \ref{rmk:trivial} below.}
 Let $W\in\B_\C$ be a vortex class of finite energy, such that the image of $W$ has compact closure%
\footnote{with respect to the quotient topology on $M/G$}%
. Then $W$ naturally carries an equivariant homology class $[W]\in H^G_2(M,\Z)$. (See Section \ref{SEC:HOMOLOGY CHERN}.) Let $B\in H^G_2(M,\Z)$. We denote by $\M_B$ the set of vortex classes $W$ representing the class $B$, and by $\EG\to\BG$ a universal $G$-bundle. There are natural evaluation maps 
\[\ev_z:\M_B\to(M\x\EG)/G,\quad\barev_\infty:\M_B\to \BAR M\]
at $z\in \C$ and $\infty\in \C\cup\{\infty\}$.%
\footnote{See \cite{ZiPhD,ZiConsEv} and Section \ref{sec:stable}.}
 We denote by $\BAR\PD:H_*(\BAR M)\to H^*(\BAR M)$ the Poincar\'e duality map. To prove Conjecture \ref{conj:QK}, heuristically, we define
\begin{eqnarray}\label{eq:phi Qk}&\phi:H_2^G(M,\Z)\to\Hom_\Q\big(H^*_G(M),H^*(\BAR M)\big),&\\
\label{eq:phi B al bar b}&\displaystyle\big\lan\phi(B)\al,\BAR b\big\ran:=\int_{\M_B}\ev_0^*\al\smile\barev_\infty^*\BAR\PD(\BAR b),&
\end{eqnarray}
for $B\in H_2^G(M,\Z)$, $\al\in H^*_G(M)$, and $\BAR b\in H_*(\BAR M)$. Under the hypotheses of Conjecture \ref{conj:QK}, this map is ``morally'' well-defined and satisfies the conditions of the conjecture: If $J$ is chosen as in the definition of convexity below, then there exists a compact subset of $M/G$ containing the image of every finite energy vortex class $W\in\B_\C$ whose image has compact closure. This ensures that for every $B\in H_2^G(M,\Z)$, the space $\M_B$ can be compactified by including holomorphic spheres in $\BAR M$ and in the fibers of the Borel construction $(M\x\EG)/G$. In the transverse case, it follows that the ``boundary'' of $\M_B$ has codimension at least 2. This makes the map $\phi$ ``well-defined''. It satisfies the equivariant Novikov condition as a consequence of the compactification argument, conservation of the equivariant homology class in the limit (see \cite{ZiPhD,ZiConsEv}), and the identity
\[E(W)=\big\lan[\om-\mu],[W]\big\ran.\]
This holds for every vortex class $W\in\B_\C$ of finite energy, such that the image of $W$ has compact closure.%
\footnote{The equality follows from \cite[Proposition 3.1]{CGS} with $\Si:=S^2\iso\C\cup\{\infty\}$ and a smoothening argument at $\infty$.}
The identity also implies conditions (\ref{eq:phi 0},\ref{eq:om mu B}). 

The ring homomorphism property for the induced map $\Qk:=\phi_*$ follows from an argument involving two marked points on the plane $\C$ that either move together or infinitely apart. The semipositivity assumption ensures that in the limit there is no bubbling of vortex classes over $\C$ without marked points. In contrast with holomorphic planes, such vortex classes may occur in stable maps in top dimensional strata, even in the transverse case. This is due to the fact that vortices over $\C$ ``should not be rotated'', which is explained below.

Surjectivity of $\Qk$ will be a consequence of surjectivity of the Kirwan map $\ka$, and the equivariant Novikov property. The idea of the proof of Conjecture \ref{conj:QK } is to define $\Qk^1:=\Qk$ as above, and for general $n\in\N_0$, $\Qk^n$ in a similar way, using $n$ marked points. The map $\Qk^0$ counts vortex classes over $\C$ without marked points.

The ``quantum Kirwan morphism'' $(\Qk^n)_{n\in\N_0}$ will intertwine the genus 0 symplectic vortex invariants with the Gromov-Witten invariants of $(\BAR M,\BAR\om)$. This will follow from a bubbling argument for a sequence of vortex classes over the sphere $S^2$, equipped with an area form that converges to $\infty$.%
\footnote{This corresponds to the adiabatic limit studied by Gaio and Salamon in \cite{GS}. The new feature here is that in the limit, vortex classes over $\C$ may bubble off.}

The goal of the present memoir is to establish bubbling (i.e., ``compactification'') and Fredholm results for vortices over $\C$ in the aspherical case. Together with a transversality result (see \cite{ZiTrans}), the Fredholm result will provide a natural structure of an oriented manifold on the set $\M_B$. Furthermore, the bubbling result will imply that the map 
\[(\ev_0,\barev_\infty):\M_B\to(M\x\EG)/G\x\BAR M\]
is a pseudocycle%
\footnote{as defined in \cite[Definition 6.5.1]{MS04}}.%
 This will give a rigorous meaning to the integral (\ref{eq:phi B al bar b}). The ring homomorphism property and the relations defining a morphism of CohFT's will be a consequence of the bubbling result and a suitable gluing result.
\section{Bubbling for vortices over the plane}
To explain the first main result of this memoir, we assume that $(M,\om)$ is aspherical, i.e., condition (\ref{eq:int S 2 u om}) is satisfied.%
\footnote{The general situation is discussed in Remark \ref{rmk:general} in Section \ref{sec:stable}.}
 We denote by 
\begin{equation}\label{eq:M}\MM:=\big\{(P,A,u)\in\BB_\C\,\big|\,(\ref{eq:BAR dd J A u},\ref{eq:F A mu})\big\},\quad\M:=\MM\Slash\sim
\end{equation}
the class of all vortices over $\C$ and the set of equivalence classes of such vortices. The latter is equipped with a natural topology.%
\footnote{It is induced by the $C^\infty$-topology on compact subsets of $\C$.}
 Consider the subspace of all classes in $\M$ with fixed finite energy $E>0$. There are three sources of non-compactness of this space: Consider a sequence $W_\nu\in\M$, $\nu\in\N$, of classes of energy $E$. In the limit $\nu\to\infty$, the following scenarios (and combinations) may occur:\\

\noindent{\bf Case 1.}~The energy density of $W_\nu$ blows up at some point in $\C$.\\

\noindent{\bf Case 2.}~There exists a number $r>0$ and a sequence of points $z_\nu\in\C$ that converges to $\infty$, such that the energy density of $W_\nu$ on the ball $B_r(z_\nu)$ is bounded above and below by some fixed positive constants.\\

\noindent{\bf Case 3.}~The energy densities converge to 0, i.e., the energy is spread out more and more.\\

In case 1, by rescaling $W_\nu$ around the bubbling point, in the limit $\nu\to\infty$, we obtain a non-constant $J$-holomorphic map from $\C$ to $M$. Using removal of singularity, this is excluded by the asphericity condition. In case 2, we pull $W_\nu$ back by the translation $z\mapsto z+z_\nu$, and in the limit $\nu\to\infty$, obtain a vortex class over $\C$. Finally, in case 3, we ``zoom out'' more and more. In the limit $\nu\to\infty$ and after removing the singularity at $\infty$, we obtain a pseudo-holomorphic map from $S^2$ to the symplectic quotient $\BAR M=\mu^{-1}(0)/G$.

Hence in the limit, passing to a subsequence, we expect $W_\nu$ to converge to a new sort of stable map, which consists of vortex classes over $\C$ and pseudo-holomorphic spheres in $\BAR M$. Here an important difference to Gromov-convergence for pseudo-holomorphic maps is the following: Although the vortex equations are invariant under all orientation preserving isometries of $\Si$, only translations on $\C$ should be allowed as reparametrizations used to obtain a vortex class over $\C$ in the limit. Hence we should disregard some symmetries of the equations. The reasons are that otherwise the reparametrization group does not always act with finite isotropy on the set of simple stable maps, and that there is no suitable evaluation map on the set of vortex classes, which is invariant under rotation.%
\footnote{See Remarks \ref{rmk:conv rot} and \ref{rmk:Isom +} in Sections \ref{sec:conv} and \ref{sec:repara}.}

We are now ready to formulate the first main result. Here we say that $(M,\om,\mu,J)$ is \emph{convex at $\infty$}\label{convex} iff there exists a proper $G$-invariant function $f\in C^\infty(M,[0,\infty))$ and a constant $C\in[0,\infty)$, such that 
\[\om(\na_v\na f(x),Jv)-\om(\na_{Jv}\na f(x),v)\geq0,\quad df(x)JL_x\mu(x)\geq0,\]
for every $x\in f^{-1}([C,\infty))$ and $0\neq v\in T_xM$. Here $\na$ denotes the Levi-Civita connection of the metric $\om(\cdot,J\cdot)$. This condition reduces to the existence of a plurisubharmonic function in the case in which $G$ is trivial. It is satisfied e.g.~if $M$ is closed, and for linear actions on symplectic vector spaces.%
\footnote{See \cite[Example 2.8]{CGMS}. Here the standing assumption that $\mu$ is proper is used.}
\begin{thm}[Bubbling]\label{thm:bubb}  Assume that hypothesis (H) is satisfied, $(M,\om)$ is aspherical, and $(M,\om,\mu,J)$ is convex at $\infty$. Let $k\in\N_0$, and for $\nu\in\N$ let $W_\nu\in\M$ be a vortex class and $z_1^\nu,\ldots,z_k^\nu\in\C$ be points. Suppose that the closure of the image of each $W_\nu$ is compact, and
\[\E(W_\nu)>0,\,\forall\nu\in\N,\quad\sup_{\nu\in\N}\E(W_\nu)<\infty.\]
Then there exists a subsequence of $\big(W_\nu,z_0^\nu:=\infty,z_1^\nu,\ldots,z_k^\nu\big)$ that converges to some genus 0 stable map $(\W,\z)$ consisting of vortex classes over $\C$ and pseudo-holomorphic spheres in $\BAR M$, with $k+1$ marked points. (See Definitions \ref{defi:st} and \ref{defi:conv} in Chapter \ref{chap:bubbling}.%
\footnote{The reasons for introducing the additional marked points $z_0^\nu=\infty$ are explained in Remark \ref{rmk:z 0} in Section \ref{sec:conv}.}%
)
\end{thm}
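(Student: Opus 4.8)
The plan is to follow the standard strategy for compactness of moduli of (pseudo-)holomorphic curves, adapted to the vortex setting, in three nested steps corresponding to the three sources of non-compactness described above. The starting point is the a priori energy bound $\sup_\nu\E(W_\nu)<\infty$ together with the convexity-at-$\infty$ hypothesis, which via the argument sketched before the theorem produces a fixed compact subset $K\subseteq M/G$ containing the image of every $W_\nu$; this uniform compactness of images is what prevents the targets from running off to infinity and is used throughout. First I would establish the \emph{local} compactness statement: on any compact subset of $\C$ on which the energy densities $e_{W_\nu}$ stay uniformly bounded, one gets uniform $C^\infty_{\loc}$ bounds (after applying suitable gauge transformations, e.g.\ Uhlenbeck-type local slices / Hodge gauge for the connections) and hence, by Arzel\`a--Ascoli and a diagonal argument, a subsequence converging in $C^\infty_{\loc}$ to a vortex over that region. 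This is the vortex analogue of elliptic bootstrapping for $J$-holomorphic maps and relies on the equations being a first-order elliptic system in a good gauge together with the target staying in the compact set $K$.

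Next I would carry out the \emph{bubbling analysis at interior points} (Case 1). Define the bad point set as the set of $z\in\C$ where energy concentrates, i.e.\ where for every $r>0$, $\limsup_\nu\int_{B_r(z)}e_{W_\nu}\om_\C\geq\hbar$ for a suitable quantum $\hbar>0$; the energy bound forces this set to be finite. Away from it the previous step gives a limiting vortex class $W_\infty$ over $\C$ (after possibly a translation, but for the first component no translation is needed). At each concentration point, rescale $W_\nu$ by a sequence of dilations chosen so that a fixed fraction of energy is captured in a unit ball; in the rescaled limit the curvature term $F_A+(\mu\circ u)\om_\Si$ and the metric degenerate so that $\mu\circ u\to 0$ and $F_A\to 0$, and one obtains a non-constant $J$-holomorphic sphere in $\BAR M$ (or, if one rescales differently, a holomorphic plane; removal of singularities plus asphericity kills holomorphic planes in $M$, hence the bubble lives in $\BAR M$). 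Iterating this rescaling — the usual ``soft rescaling'' to account for slow decay and multiple bubbles at the same point — one builds a bubble tree of holomorphic spheres in $\BAR M$ attached to the vortex class(es), and conservation of energy (no energy loss in the necks, using the isoperimetric/annulus energy estimate) ensures the total energy is preserved in the limit.

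Then I would treat Case 2, \emph{escape of energy to $\infty$}: if there is a sequence $z_\nu\to\infty$ carrying a definite amount of energy on balls $B_r(z_\nu)$, pull back by the translation $z\mapsto z+z_\nu$ and apply the local compactness step to extract a further vortex-class component over $\C$; only translations are used here, which is essential — as the text emphasizes, rotations must be discarded so that the evaluation map $\barev_\infty$ is well-defined and the reparametrization group acts with finite isotropy on simple configurations. One keeps track of the added marked points $z_0^\nu=\infty, z_1^\nu,\dots,z_k^\nu$ under these rescalings/translations, recording on which component each marked point comes to rest (this is the reason for introducing $z_0^\nu=\infty$). Case 3 (energy spreading out, densities $\to 0$) is handled by ``zooming out'': rescale by dilations going to $0$ and, after removal of the singularity at $\infty$, obtain a pseudo-holomorphic sphere in $\BAR M$ carrying the escaping energy. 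Assembling all pieces — the principal vortex class component, the finitely many bubbled vortex classes over $\C$ from Case 2, and the trees of holomorphic spheres in $\BAR M$ from Cases 1 and 3 — with their nodal and marked-point data, one checks the stability condition (every ghost-type component has enough special points, using the definition of stable map from Chapter \ref{chap:bubbling}) and that the resulting object is genus $0$; this gives the desired stable map $(\W,\z)$, and convergence in the sense of Definition \ref{defi:conv} holds by construction.

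\textbf{Main obstacle.} The hardest part is the quantitative control in the neck regions, i.e.\ proving \emph{no energy is lost} and the limiting configuration is connected: one needs exponential-decay / isoperimetric estimates for the energy of a vortex on long annuli, adapted to the fact that on the outer (large-radius) part of $\C$ the relevant geometry interpolates between the vortex equations and the $J$-holomorphic sphere equation in $\BAR M$, and the momentum-map term $|\mu\circ u|$ decays rather than being identically zero. Establishing the sharp annulus energy estimate (the analogue of Mundet--Tian's and Gaio--Salamon's estimates, but over the non-compact base $\C$ with the chosen marked point $\infty$) and combining it with the soft-rescaling bookkeeping to rule out ``bubbles in the neck'' of vanishing-but-nonzero energy is the technical heart of the proof; the rest is adaptation of well-understood Gromov-compactness machinery.
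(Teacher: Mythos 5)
Your proposal assembles the right tools (local $C^\infty_\loc$ compactness modulo gauge, energy quantization, Hofer's lemma, soft rescaling, an annulus/isoperimetric estimate for the necks), but the organization is not the one that works, and your Case~1 analysis is wrong. If the densities $e_{W_\nu}$ blow up at a fixed $z_0\in\C$, zooming in produces in the limit a solution of $\bar\dd_{J,A}u=0$, $F_A=0$, i.e.\ a non-constant $J$-holomorphic plane in $M$, whose removal of singularity is a sphere in $M$, \emph{not} in $\BAR M$; asphericity then kills this, and the correct conclusion of Case~1 is that no interior bubbling occurs at all for the unrescaled sequence. The ``hence the bubble lives in $\BAR M$'' step is a non sequitur. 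Spheres in $\BAR M$ only appear after zooming out, so that the rescaled vortices are $R_\nu$-vortices with $R_\nu\to\infty$; in that regime a concentration point gives either a vortex class over $\C$ or a $\BAR M$-sphere, depending on how the secondary rescaling rate compares to $R_\nu$, never a sphere in $M$.

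The deeper gap is structural. Handling your Cases 2 and 3 separately and bottom-up does not assemble the bubble tree, because several sequences $z_\nu\to\infty$ may escape at different rates and these must be coordinated with the zoom-out scales, and your first-constructed local limit $W_0$ on compacts of $\C$ may have zero energy. The paper's proof (Section~\ref{sec:proof:thm:bubb}) inverts your order of operations: one first zooms out \emph{rapidly} by $R_0^\nu:=\nu R_\nu$, where $R_\nu$ is chosen so that $B_{R_\nu}$ captures essentially all the energy, so that in the rescaled picture the entire energy concentrates at the origin; one then applies a single Soft Rescaling result (Proposition~\ref{prop:soft}) inductively, each application producing either a vortex over $\C$ ($R_0=1$) or a $\BAR J$-sphere ($R_0=\infty$) together with a finite set of yet-to-be-resolved concentration points each carrying at least $\Emin$ of energy. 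This terminates because the tree has at most $2E/\Emin+1$ vertices. The top-down discipline is precisely what makes the bubbles form a tree, settles which component carries the marked point $z_0=\infty$, and shows that vortex classes occur as leaves attached to $\BAR M$-spheres at $\infty$, rather than a ``principal vortex component'' with things attached as in your picture. Finally, your handling of the marked points $z_1^\nu,\dots,z_k^\nu$ is too brief: the paper runs a separate induction over $k$ with three cases (new generic position, collision with an existing special point, and a point in the neck between two components), inserting ghost components of the appropriate type in the last two.
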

The proof of this result combines Gromov compactness for pseudo-holomorphic maps with Uhlenbeck compactness. It relies on work \cite{CGMS,GS} by K.~Cieliebak, R.~Gaio, I.~Mundet i Riera, and D.~A.~Salamon. The idea is the following. In order to capture all the energy, we ``zoom out rapidly'', i.e., rescale the vortices so much that the energies of the rescaled vortices are concentrated near the origin in $\C$. Now we ``zoom back in'' in such a way that we capture the first bubble, which may either be a vortex class over $\C$ or a $J$-holomorphic sphere in $\BAR M$. In the first case we are done. In the second case we ``zoom in'' further, to obtain a finite number of vortices and spheres that are attached to the first bubble. Iterating this procedure, we construct the limit stable map. 

The proof involves generalizations of results for pseudo-holomorphic maps to vortices: a bound on the energy density of a vortex, quantization of energy, compactness with bounded derivatives, and hard and soft rescaling. The proof that the bubbles connect and no energy is lost between them, uses an isoperimetric inequality for the invariant symplectic action functional, proved in \cite{ZiA}, based on a version of the inequality by R.~Gaio and D.~A.~Salamon \cite{GS}. 

Another crucial point is that when ``zooming out'', no energy is lost locally in $\C$ in the limit. This relies on an upper bound on the ``momentum map component'' of a vortex, due to R.~Gaio and D.~A.~Salamon. 
\section{Fredholm theory for vortices over the plane}\label{sec:Fredholm}
The space of gauge equivalence classes of symplectic vortices can be viewed as the zero set of a section of an infinite dimensional vector bundle. Formally, the second main result of this memoir states that in the case $\Si=\C$ the vertical differential of this section is Fredholm when seen as an operator between suitable weighted Sobolev spaces. We will first state the result and then interpret it in this way.
\subsection*{Statement of the Fredholm result}
Consider the case $\Si:=\C$ and $\om_{\C}:=\om_0$. Let $p>2$ and $\lam$ be real numbers.%
\footnote{In this memoir, $p$ and $\lam$ always refer to finite values, unless otherwise stated.}
 We define the set $\B^p_\lam$ as follows. For a measurable function $f:\R^n\to\R$ we denote $\Vert f\Vert_p:=\Vert f\Vert_{L^p(\C)}\in[0,\infty]$ and define the \emph{$\lam$-weighted $L^p$-norm of $f$} to be
\[\Vert f\Vert_{p,\lam}:=\big\Vert(1+|\cdot|^2)^\frac\lam2f\big\Vert_p\in[0,\infty].\] 
We define $\BB^p_\loc$ to be the class consisting of all triples $(P,A,u)$, where $P\to\C$ is a $G$-bundle of class $W^{2,p}_\loc$
\footnote{By definition, $P$ is a topological $G$-bundle over $\C$, equipped with an atlas of local trivializations whose transition functions lie in $W^{2,p}_\loc$. Every such bundle is trivializable, but we do not fix a trivialization here.}%
, $A$ a connection (one-form) of class $W^{1,p}_\loc$, and $u:P\to M$ a $G$-equivariant map of class $W^{1,p}_\loc$. We call two elements $w,w'\in\BB^p_\loc$ \emph{$p$-equivalent} iff there exists an isomorphism $\Phi:P'\to P$ of $G$-bundles of class $W^{2,p}_\loc$ (descending to the identity on $\C$), such that $\Phi^*(A,u)=(A',u')$. In this case we write $w\sim_pw'$. We define 
\begin{eqnarray}\label{eq:BB p lam}&\BB^p_\lam:=\big\{w:=(P,A,u)\in\BB^p_\loc\,\big|\,\BAR{u(P)}\textrm{ compact, }\Vert\sqrt{e_w}\Vert_{p,\lam}<\infty\big\},&\\
\label{eq:B p lam}&\B^p_\lam:=\BB^p_\lam\Slash\sim_p,&
\end{eqnarray}
where the energy density $e_w$ is defined as in (\ref{eq:e W}).

Let $W\in\B^p_\lam$. We define normed vector spaces $\X_W^{p,\lam}$ and $\Y_W^{p,\lam}$ as follows. Let $E$ be a real vector bundle over $\C$. We denote by $\wi(E)$ the bundle of alternating $i$-forms on $\C$ with values in $E$. If $E$ is a complex vector bundle, then we denote by $\wzeroone(E)$ the bundle of anti-linear one-forms on $\C$ with values in $E$. We denote by $\Ga^p_\loc(E)$ and $\Ga^{1,p}_\loc(E)$ the spaces of its $L^p_\loc$- and $W^{1,p}_\loc$-sections, respectively. We fix $w:=(P,A,u)\in\BB^p_\lam$, and denote by 
\[\g_P:=(P\x\g)/G\to\Si,\quad TM^u:=(u^*TM)/G\to\C\]
the adjoint bundle and the quotient of the pullback bundle $u^*TM\to P$. Let $\ze:=(\al,v)\in\Ga^{1,p}_\loc\big(\wone(\g_P)\oplus TM^u\big)$. We denote $d_A\al:=d\al+[A\wedge\al]$ and by $\na^A$ the connection on $TM^u$ induced by the Levi-Civita connection $\na$ of $\om(\cdot,J\cdot)$ and $A$
\footnote{See definition (\ref{eq:na A}) in Appendix \ref{sec:add}.}%
, and we abbreviate $\na^A\ze:=(d_A\al,\na^Av)$. We define the weighted norm
\begin{equation}\label{eq:ze w p lam}\Vert\ze\Vert_{w,p,\lam}:=\Vert\ze\Vert_\infty+\big\Vert|\na^A\ze|+|d\mu(u)v|+|\al|\big\Vert_{p,\lam}\in[0,\infty].
\end{equation}
Here the norms are taken with respect to $\om(\cdot,J\cdot)$, the standard metric on $\C$, and $\lan\cdot,\cdot\ran_\g$. We denote by $*$ the Hodge star operator with respect to the standard metric on $\C$, and by 
\[d_A^*=-*d_A*:\Ga^{1,p}_\loc(\wone(\g_P))\to\Ga^p_\loc(\g_P)\]
the formal adjoint of the twisted differential $d_A$. For $x\in M$ we denote by $L_x^*:T_xM\to\g$ the adjoint map of the infinitesimal action of $G$ on $M$ at $x$, with respect to the Riemannian metric $\om(\cdot,J\cdot)$ and the inner product $\lan\cdot,\cdot\ran_\g$. The collection $(L_x^*)_{x\in M}$ induces a map $L_u^*$ from the space of sections of $TM^u$ to the space of sections of $\g_P$. We define%
\footnote{As explained in the next subsection, the map $L_w^*$ is the formal adjoint for the infinitesimal action of the gauge group on the product of the spaces of connections and equivariant maps from $P$ to $M$.}%
\begin{equation}
\label{eq:L w *}L_w^*:\Ga^{1,p}_\loc\big(\wone(\g_P)\oplus TM^u\big)\to\Ga^p_\loc(\g_P),\quad L_w^*(\al,v):=-d_A^*\al+L_u^*v,
\end{equation}
\begin{align}
\label{eq:XX w}\XX_w^{p,\lam}&:=\big\{\ze\in\Ga^{1,p}_\loc\big(\wone(\g_P)\oplus TM^u\big)\,\big|\,L_w^*\ze=0,\,\Vert\ze\Vert_{w,p,\lam}<\infty\big\},\\ 
\label{eq:YY w}\YY_w^{p,\lam}&:=\big\{\ze'\in\Ga^p_\loc\big(\wzeroone(TM^u)\oplus\wtwo(\g_P)\big)\,\big|\,\Vert\ze'\Vert_{p,\lam}<\infty\big\},\\
\label{eq:X W}\X_W^{p,\lam}&:=\disj_{w\in W}\XX_w^{p,\lam}\Slash\sim_p,\\
\label{eq:Y W}\Y_W^{p,\lam}&:=\disj_{w\in W}\YY_w^{p,\lam}\Slash\sim_p.
\end{align}
Here the equivalence relations in (\ref{eq:X W},\ref{eq:Y W}) are defined similarly to the $p$-equivalence relation on $\BB^{p,\lam}$. Since the energy-density is invariant under the gauge transformations, the gauge group of $P$
\footnote{i.e., the group of transformations on $P$}
 of class $W^{2,p}_\loc$ naturally acts on the set
\begin{equation}\label{eq:BB p lam P}\BB^p_\lam(P):=\big\{(A,u)\,\big|\,(P,A,u)\in\BB^p_\lam\big\}.
\end{equation}
Assume that $\lam>1-2/p$. Then this action is free.%
\footnote{See Lemma \ref{le:free} in Appendix \ref{sec:add}.}
 Therefore, $\X_W^{p,\lam}$ is naturally a normed vector space, which can canonically be identified with $\XX_w^{p,\lam}$, for any representative $w$ of $W$. Similarly, $\Y_W^{p,\lam}$ is naturally a normed vector space, which may be identified with $\YY_w^{p,\lam}$, for any representative $w$ of $W$. 

Consider the operator
\begin{eqnarray}
\label{eq:D p lam W}&\D^{p,\lam}_W:\X^{p,\lam}_W\to \Y^{p,\lam}_W&\\
\label{eq:D w v al}&\D_W^{p,\lam}[w;\al,v]:=\left[\begin{array}{c}\big(\na^Av+L_u\al\big)^{0,1}-\frac12J(\na_vJ)(d_Au)^{1,0} \\
d_A\al+\om_0\, d\mu(u)v
\end{array}\right].&
\end{eqnarray}
Here the brackets $[\cdots]$ denote equivalence classes. Formally, this is the vertical differential of a section of a Banach space bundle over a Banach manifold, whose zeros are the gauge equivalence classes of vortices. (For explanations see the next subsection.) Recall that $c_1^G(M,\om)\in H_2^G(M,\Z)$ denotes the equivariant first Chern class of $(M,\om)$. The second main result of this memoir is the following.
\begin{thm}[Fredholm property]\label{thm:Fredholm} Let $p>2$, $\lam\in\R$, and $W\in\B^p_\lam$ (defined as in (\ref{eq:B p lam})). Assume that $\dim M>2\dim G$. Then the following statements hold.
\begin{enui}\item \label{thm:Fredholm:X Y} If $\lam>1-2/p$ then the normed vector spaces $\X^{p,\lam}_W$ and $\Y^{p,\lam}_W$ (defined as in (\ref{eq:X W},\ref{eq:Y W})) are complete. 
\item\label{thm:Fredholm:DDD} If $1-2/p<\lam<2-2/p$ then the operator $\D^{p,\lam}_W$ (defined as in (\ref{eq:D p lam W},\ref{eq:D w v al})) is well-defined and Fredholm of real index 
\begin{equation}\label{eq:ind}\ind\D^{p,\lam}_W=\dim M-2\dim G+2\big\lan c_1^G(M,\om),[W]\big\ran,
\end{equation}
where $[W]$ denotes the equivariant homology class of $W$ (see Section \ref{SEC:HOMOLOGY CHERN}).
\end{enui}
\end{thm}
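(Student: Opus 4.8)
The plan is to establish the Fredholm property by a deformation/decomposition argument that reduces $\D^{p,\lam}_W$ to a model operator over $\C$ whose index is computed by an Atiyah--Singer / Riemann--Roch type count, combined with the standard fact that compact perturbations and lower-order terms do not affect the Fredholm property or the index. First I would set up the functional-analytic framework: using the freeness of the gauge action for $\lam>1-2/p$ (Lemma \ref{le:free}), fix a representative $w=(P,A,u)$ of $W$ and identify $\X^{p,\lam}_W$ with $\XX^{p,\lam}_w$ and $\Y^{p,\lam}_W$ with $\YY^{p,\lam}_w$, so that everything becomes a statement about an explicit operator between weighted Sobolev spaces of sections of bundles over $\C$. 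Part \ref{thm:Fredholm:X Y} (completeness) I would handle first and separately: it amounts to checking that the weighted norm $\Vert\cdot\Vert_{w,p,\lam}$ (which includes an $L^\infty$-term and a weighted $W^{1,p}$-term, together with the pointwise gauge-fixing constraint $L_w^*\ze=0$) defines a closed subspace of a complete space; since $\BAR{u(P)}$ is compact the metric data are uniformly bounded, so this is a routine argument using the weighted Sobolev embedding $W^{1,p}_\lam\hookrightarrow C^0$ valid for $p>2$ and the appropriate range of $\lam$, plus the fact that $L_w^*$ is continuous in these norms.

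For the Fredholm statement \ref{thm:Fredholm:DDD}, the key steps in order would be: (1) Augment $\D^{p,\lam}_W$ by the gauge-fixing operator to form the operator $\wt\D_W := \D^{p,\lam}_W \oplus L_w^*$ acting on \emph{all} of $\Ga^{1,p}_\loc(\wone(\g_P)\oplus TM^u)$ with the weighted norm, into $\YY^{p,\lam}_w\oplus\Ga^p_\loc(\g_P)$; this is the ``full linearized operator'' of the vortex-plus-gauge-fixing system, and because the gauge action is free (with a bounded right inverse for $L_w$, again from Lemma \ref{le:free}) the Fredholm property and index of $\D^{p,\lam}_W$ follow from those of $\wt\D_W$ by the standard splitting $\ker L_w^* \oplus \im L_w$. (2) Observe that $\wt\D_W$ is a first-order elliptic operator of Cauchy--Riemann type on $\C$: up to zeroth-order terms (the terms involving $L_u$, $d\mu(u)$, $\na J$, and the curvature-type contributions, all of which are bounded because $\BAR{u(P)}$ is compact) it is a direct sum of a $\bar\dd$-operator on the complex bundle $TM^u\oplus\g_P^{\C}$ (the first slot pairing $v$ with $\bar\dd_{J,A}v$, the second pairing $\al$ with $d_A\al + i\, d_A^*\al$, which is the standard identification of $\wone(\g_P)$ with $\g_P^\C$ making $d_A\oplus d_A^*$ into a $\bar\dd_A$-operator). (3) Invoke the weighted-Sobolev Fredholm theory for such operators on $\C$ (as in Donaldson's and Lockhart--McOwen's work, and as used in the vortex literature, e.g.\ \cite{CGMS}): for $\lam$ in the indicated window $1-2/p<\lam<2-2/p$, which excludes the critical weights where the indicial roots of the model operator at $\infty$ cause a jump, the operator is Fredholm, and its index is $\dim_\R$ of the fiber bundle plus $2$ times the degree (first Chern number) of the complex bundle, adjusted by the weight-dependent contribution from the asymptotic behavior at $\infty$. (4) Compute the Chern number: the complex bundle is $TM^u \oplus \g_P^\C$; its equivariant first Chern number against $[W]$ is $\lan c_1^G(M,\om),[W]\ran$ for the $TM^u$-part, while $\g_P^\C$ (the complexified adjoint bundle) has vanishing first Chern number since $\g_P$ is associated to the connected group $G$ via the adjoint representation and $[W]$ is represented by a vortex over $\C$ (so the bundle extends over $S^2$ with the adjoint bundle having $c_1=0$, the adjoint action landing in $\SO(\g)$, hence in $\SU(\g^\C)$-conjugate structure). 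The real rank of the fiber is $\dim M + \dim\g = \dim M + \dim G$; subtracting the $\dim G$ real dimensions absorbed by the gauge-fixing equation $L_w^*\ze=0$ (the operator $L_w^*$ is surjective with $\dim G$-dimensional cokernel contribution handled by the weight, contributing $-2\dim G$ in the end after accounting for both the constraint and the $\wtwo(\g_P)$ target) yields the stated index $\dim M - 2\dim G + 2\lan c_1^G(M,\om),[W]\ran$. I would make the bookkeeping precise by comparing with the closed-surface case on $S^2$, where the index of the vortex operator is known, and then tracking the correction terms coming from the weight and the puncture at $\infty$.

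The main obstacle I anticipate is step (3)--(4): getting the index formula exactly right, in particular pinning down the contribution of the weight $\lam$ and the asymptotic (indicial-root) analysis at $\infty$. The window $1-2/p<\lam<2-2/p$ has width $1$, exactly the spacing between consecutive indicial roots of the model $\bar\dd$-operator on the cylinder $\R\times S^1$ (coordinates at infinity), so within this window the index is locally constant; the subtlety is that the relevant model operator is not quite the flat $\bar\dd$ but the linearization around the asymptotic limit of the vortex, which converges (after gauge transformation) to a point in $\mu^{-1}(0)$ and a trivial connection, so the asymptotic operator decomposes into a CR-operator on $T_{\BAR M}$-type directions plus an operator on the $\g$-directions whose indicial roots must be located carefully — this is where the hypothesis $\dim M > 2\dim G$ and the precise range of $\lam$ are used, and it is the technical heart of the argument. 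A secondary difficulty is verifying rigorously that all the zeroth-order terms are genuinely compact perturbations in the weighted spaces; this requires the weighted Rellich-type compactness, which fails for the full $\C$ but holds after the weight $\lam>0$ forces decay at infinity, so one localizes: on a large ball the standard Rellich lemma applies, and on the complement the weight gives a small-norm estimate, a now-standard but somewhat delicate patching argument. Everything else — the gauge-fixing reduction, the ellipticity, the Chern-number computation — I expect to be routine given the results cited in the excerpt.
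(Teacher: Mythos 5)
Your overall framework — reduce via gauge-fixing to an augmented operator, identify the leading part as Cauchy--Riemann, invoke Lockhart--McOwen weighted-Sobolev theory — is in the right family, and the gauge-fixing reduction in step (1) matches the paper's use of Theorem~\ref{thm:Fredholm aug} plus the right inverse of $L_w^*$ (Theorem~\ref{thm:L w * R}) combined via Lemma~\ref{le:X Y Z}. However, steps (2)--(3) contain a genuine gap that the paper explicitly flags: you propose to write $\DD_w$ (or the augmented operator) as a $\bar\dd$-operator on $TM^u\oplus\g_P^{\C}$ plus ``zeroth-order terms'' that you hope are compact perturbations. This is precisely the naive approach ruled out in Remark~\ref{rmk:naive}: the terms $\al\mapsto(L_u\al)^{0,1}$ and $v\mapsto\om_0\,d\mu(u)v$ are \emph{not} compact in the relevant weighted spaces, because the multiplication operators $L_u$ and $d\mu(u)$ do not decay at infinity — near $\infty$ the vortex limits onto $\mu^{-1}(0)$, where $L_x$ and $d\mu(x)$ have full rank $\dim G$, so these operators are uniformly bounded below on the group directions. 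Your acknowledged ``secondary difficulty'' (weighted Rellich) is actually fatal here: the hypothesis of Proposition~\ref{prop:lam d Morrey}(\ref{prop:lam W cpt}) requires the multiplying function to tend to zero at $\infty$, which fails. Relatedly, your suggestion to treat the operator in logarithmic coordinates as an operator $\dd_\tau+A(\tau)$ with translation-invariant asymptotics (the ``indicial roots'' picture, as in Robbin--Salamon) also breaks: as recorded in Remark~\ref{rmk:Fredholm aug}, the term $v\mapsto e^{2\tau}d\mu(u)v\,d\tau\wedge d\phi$ diverges as $\tau\to\infty$, so the asymptotic operator does not exist.

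The paper's actual route avoids this by constructing a \emph{good complex trivialization} (Definition~\ref{defi:triv}, Proposition~\ref{prop:triv}) that near $\infty$ respects the splitting $T_{u(p)}M=(\im L^{\C}_{u(p)})^{\perp}\oplus\im L^{\C}_{u(p)}$ from (\ref{eq:im L C}). Only on the horizontal (rank-$\bar n$) block does the operator become a $\dd_{\bar z}$-operator up to a genuinely compact error (Proposition~\ref{prop:X X w}); on the $\g^{\C}\oplus\g^{\C}$ block the leading part is the $2\times 2$ matrix operator
\begin{equation*}
\left(\begin{array}{cc}\dd_{\bar z}&\id/2\\ S_{\infty}&2\dd_{z}\end{array}\right),
\end{equation*}
whose off-diagonal entries are \emph{not} compact and cannot be absorbed; instead the paper proves directly (Proposition~\ref{prop:d A B d}, via Calder\'on's theorem) that this matrix operator is Fredholm of index $0$. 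The domain and target are also not standard weighted Sobolev spaces: the norm $\Vert\cdot\Vert_{w,p,\lam}$ in (\ref{eq:ze w p lam}) imposes weighted $L^{p}$-decay on $d\mu(u)v$ and on $\al$ as a \emph{constraint on the domain}, and the Chern twist shows up only on the first $\C$-factor (via $\XXX'_{p,\lam,d}$), not uniformly across a rank-$\bar n+2\dim G$ complex bundle. As a result your index bookkeeping in step (4) — rank of fibre $\dim M+\dim G$, subtract $\dim G$ for gauge fixing — does not reproduce $\dim M-2\dim G$ correctly; the $-2\dim G$ arises because the horizontal block has complex rank $\bar n=\tfrac12\dim M-\dim G$, contributes $2\bar n+2d$ from the $\bar\dd$-Fredholm count (Corollary~\ref{cor:d L L}), and the $\g^{\C}\oplus\g^{\C}$ block contributes $0$. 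So the key missing idea is the splitting trivialization together with the positive zeroth-order matrix operator replacing the naive $\bar\dd$; without it the claimed compactness, and hence the index formula, does not go through.
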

The contraction appearing in formula (\ref{eq:ind}) can be interpreted as a certain Maslov index. (See Proposition \ref{prop:Chern Maslov} in Section \ref{SEC:HOMOLOGY CHERN}.) The condition $1-2/p<\lam<2-2/p$ in part (\ref{thm:Fredholm:DDD}) of this result captures the geometry of finite energy vortices over $\C$. (See Remark \ref{rmk:geometry} below.) The condition $\lam<2-2/p$ is also needed for the map $\D^{p,\lam}_W$ to have the right Fredholm index. (See Remark \ref{rmk:index}.) The definition of the space $\X^{p,\lam}_W$ naturally parallels the definition of $\B^p_\lam$. (See Remark \ref{rmk:X}.) Note that some naive choices for the domain and target of the operator $\D_W^{p,\lam}$ do not work. (See Remark \ref{rmk:naive}.)

The proof of Theorem \ref{thm:Fredholm} is based on a Fredholm result for the augmented vertical differential and the existence of a bounded right inverse for $L_w^*$. (See Theorems \ref{thm:Fredholm aug} and \ref{thm:L w * R} in Section \ref{subsec:reform}.) The proof of Theorem \ref{thm:Fredholm aug} has two main ingredients. The first one is the existence of a suitable complex trivialization of the bundle $\wone(\g_P)\oplus TM^u$. For $R$ large, $z\in\C\wo B_R$ and $p\in\pi^{-1}(z)\sub P$ such a trivialization respects the splitting 
\begin{equation}\label{eq:im L C}T_{u(p)}M=(\im L^\C_{u(p)})^\perp\oplus\im L^\C_{u(p)},\end{equation}
where $L_x^\C:\g\otimes\C\to T_xM$ denotes the complexified infinitesimal action, for $x\in M$. The second ingredient are two propositions stating that the standard Cauchy-Riemann operator $\dd_{\bar z}$ and a related matrix differential operator are Fredholm maps between suitable weighted Sobolev spaces. These results are based on the analysis of weighted Sobolev spaces carried out by R.~B.~Lockhart and R.~C.~McOwen \cite{Lockhart PhD,Lockhart Fred,Lockhart Hodge,LM ell R n,LM ell mf,McOwen Lap,McOwen Fred,McOwen ell}. A crucial analytical ingredient is a Hardy-type inequality (Proposition \ref{prop:Hardy} in Appendix \ref{sec:weighted}). 
\subsection*{Motivation, a formal setting}
To put the Fredholm result into context, let $(\Si,j)$ be a connected smooth Riemann surface, equipped with a compatible area form $\om_\Si$. Recall the definitions (\ref{eq:BB},\ref{eq:B}) of $\BB,\B$. Consider the subclass $\BB^*\sub\BB$ of triples $(P,A,u)$ for which there exists a point $p\in P$, such that the action of $G$ at the point $u(p)\in M$ is free. We define
\[\B^*:=\BB^*\Slash\sim,\]
where $\sim$ is defined as before the definition (\ref{eq:B}). Formally, $\B^*$ may be viewed as an infinite dimensional manifold, since for every smooth $G$-bundle $P$ over $\Si$, the natural action of the gauge group $\G_P=C^\infty_G(P,G)$ on the ``infinite dimensional manifold'' 
\begin{equation}\label{eq:BB P}\BB^*_P:=\big\{(A,u)\,\big|\,(P,A,u)\in\BB^*\big\}
\end{equation} 
is free. Furthermore, the set of vortex classes may be viewed as the zero set of a section of a vector bundle $\EE$ over $\B^*$, with infinite dimensional fiber, as follows. Consider the ``vector bundle'' $\EEE:=\EEE_\Si$ over $\BB^*$, whose fiber over a point $w=(P,A,u)\in\BB^*$ is given by
\begin{equation}\label{eq:EEE Si w}\EEE_w:=\Ga\left(\wzeroone(TM^u)\oplus\wtwo(\g_P)\right).%
\footnote{Here $\Ga(E)$ denotes the space of smooth sections of a vector bundle $E\to\Si$.}%
\end{equation}
The bundle $\EE:=\EE_\Si$ over $\B^*$ is now defined to be the quotient of the bundle $\EEE\to\BB^*$ under the natural equivalence relation lifting the relation $\sim$ on $\BB^*$. Finally,
\[\SSS:\B^*\to\EE\]
is defined to be the section induced by
\[\SSSS:\BB^*\to\EEE,\quad \SSSS(A,u):=\big(\bar \dd_{J,A}(u),F_A+(\mu\circ u)\om_\Si\big).\]
Heuristically, $\EE$ is an infinite dimensional vector bundle over $\B^*$, and $\SSS$ is a smooth section of $\EE$. The zero set $\SSS^{-1}(0)\sub\B^*$ consists of all vortex classes over $\Si$. Assume that $W\in\SSS^{-1}(0)$. Then formally, there is a canonical map $T:T_{(W,0)}\EE\to\EE_W$, where $\EE_W\sub\EE$ denotes the fiber over $W$. We define the vertical differential of $\SSS$ at $W$ to be the map
\begin{equation}\label{eq:d V S W}d^V\SSS(W)=T\,d\SSS(W):T_W\B^*\to\EE_W.\end{equation}
Heuristically, if this map is Fredholm and surjective, for every $W\in\SSS^{-1}(0)$, then the zero set $\SSS^{-1}(0)$ is a smooth submanifold of $\B^*$. The dimension of a connected component of this submanifold equals the Fredholm index of $d^V\SSS(W)$, where $W$ is any point in the connected component. 

At a formal level, in the case $\Si=\C$, equipped with $\om_\Si=\om_0$, the vertical differential (\ref{eq:d V S W}) coincides with the operator $\D^{p,\lam}_W$, which was defined in (\ref{eq:D p lam W},\ref{eq:D w v al}) and occurred in the Fredholm result, Theorem \ref{thm:Fredholm}. To see this, let $W\in\B^*$. We interpret $T_W\B^*$ as a quotient, as follows. Let $P$ be a smooth $G$-bundle over $\Si$, and $(A,u)\in\BB^*_P$. Denoting $w:=(P,A,u)$, the infinitesimal action at the point $(A,u)$, corresponding to the action of $\G_P$ on $\BB^*_P$, is given by 
\[L_w:\Lie(\G_P)=\Ga(\g_P)\to T_{(A,u)}\BB^*_P=\Ga\left(\wone(\g_P)\oplus TM^u\right),\quad L_w\xi=(-d_A\xi,L_u\xi),\]
where $d_A\xi:=d\xi+[A,\xi]$. Defining
\begin{equation}\label{eq:XX P A u}\XX_w:=T_{(A,u)}\BB^*_P/\im L_w,
\end{equation}
we may identify
\begin{equation}\label{eq:X W disj}T_W\B^*=\X_W:=\left(\disj_{w\in W}\XX_w\right)\Slash\sim,
\end{equation}
where $\sim$ denotes the natural lift of the equivalence relation on $\BB^*$. Assume formally that $\BB^*_P$ and $\Lie(\G_P)$ are equipped with a $\G_P$-invariant Riemannian metric and a $\G_P$-invariant inner product, respectively. For $(A,u)\in\BB^*_P$ we denote by $L_w^*:T_{(A,u)}\BB^*_P\to\Lie(\G_P)$ the adjoint map of $L_w$. Then by (\ref{eq:XX P A u}), we may identify 
\[\XX_w=\ker L_w^*\sub\Ga\left(\wone(\g_P)\oplus TM^u\right).\]
Using this and (\ref{eq:X W disj},\ref{eq:EEE Si w}), the vertical differential (\ref{eq:d V S W}) at $W\in\SSS^{-1}(0)$ agrees with the map 
\[\displaystyle\left(\disj_{w\in W}\ker L_w^*\right)\Slash\sim\,\,\to\,\,\left(\disj_{w\in W}\Ga\left(\wzeroone(TM^u)\oplus\wtwo(\g_P)\right)\right)\Slash\sim,\]
given by (\ref{eq:D w v al}), in the case $\Si=\C$ and $\om_\Si=\om_0$. Here on either side, $\sim$ denotes a natural lift of the equivalence relation on $\BB^*$.
\section{Remarks, related work, organization}\label{sec:remarks}
\subsection*{Remarks}
\begin{rmk}[Vortices as triples]\label{rmk:bundle} In some earlier work (e.g.~\cite{CGS} and \cite{ZiPhD}), the $G$-bundle $P$ was fixed and the vortex equations were seen as equations for a pair $(A,u)$ rather than a triple $(P,A,u)$.%
\footnote{However, in \cite{MT} I.~Mundet i Riera and G.~Tian took the viewpoint of the present memoir.}
 The motivation for making $P$ part of the data is twofold: 

When formulating convergence for a sequence of vortex classes over $\C$ to a stable map, one has to pull back the vortices by translations of $\C$. (See Section \ref{sec:conv}.) If the principal bundle is fixed and vortices are defined as pairs $(A,u)$ solving (\ref{eq:BAR dd J A u},\ref{eq:F A mu}), then there is no natural such pullback. However, there \emph{is} a natural pullback if the bundle is made part of the data for a vortex.%
\footnote{Given a $G$-bundle $P$ over $\C$, we may of course choose a trivialization of $P$, and then define a pull back for pairs $(A,u)$, using the trivialization. However, this approach is unnatural, since it depends on the choice of a trivialization.}
 More generally, it is possible to pull back vortex triples $(P,A,u)$ by a K\"ahler transformation of a Riemann surface equipped with a compatible area form.

Another motivation is the following: If the area form or the complex structure on the surface $\Si$ vary, then in the limit we may obtain a surface $\Si'$ with singularities. It does not make sense to consider $P$ as a bundle over $\Si'$. One way of solving this problem is by decomposing $\Si'$ into smooth surfaces, and constructing smooth $G$-bundles over these surfaces. Hence the $G$-bundle should be viewed as a varying object. 

Once $P$ is made part of the data, it is natural to consider \emph{equivalence classes} of triples $(P,A,u)\in\MM$ (as defined in (\ref{eq:M})), rather than the triples themselves. One reason is that all important quantities, like energy density and energy, are invariant (or equivariant) under equivalence. Note also that the bubbling and Fredholm results are more naturally stated for equivalence classes of vortices. Viewing the equivalence classes as the fundamental objects also matches the physical viewpoint that the ``gauge field'', i.e., the connection $A$, is physically relevant only ``up to gauge''. $\Box$
\end{rmk}
\begin{rmk}\label{rmk:trivial} Let $\Si$ be the plane $\C$, equipped with the standard area form $\om_0$, and consider the trivial $G$-bundle $P_0:=\C\x G$. Then the solutions $(A,u)$ of the vortex equations (\ref{eq:BAR dd J A u},\ref{eq:F A mu}) on $P_0$ bijectively correspond to solutions $(\Phi,\Psi,f)\in C^\infty\big(\C,\g\x\g\x M\big)$ of the equations
\begin{eqnarray}
\label{eq:dd s f L f Phi}&\dd_sf+L_f\Phi+J(f)(\dd_tf+L_f\Psi)=0,&\\
\label{eq:dd s Psi}&\dd_s\Psi-\dd_t\Phi+[\Phi,\Psi]+\mu(f)=0.&
\end{eqnarray}
Here we denote by $s$ and $t$ the standard coordinates in $\C=\R^2$, and in the second equation we identify the Lie algebra $\g$ with its dual via the inner product $\lan\cdot,\cdot\ran_\g$. The correspondence maps such a triple $(\Phi,\Psi,f)$ to $\big(A,u\big)$, where $A$ denotes the connection on $P_0$ defined by 
\[A_{(z,g)}(\ze,g\xi):=\big(\Phi(z)ds+\Psi(z)dt\big)\ze+\xi,\quad\forall \ze\in T_z\C,\,\xi\in\g,\,z\in\C,\,g\in G,\]
and the map $u:P_0\to M$ is given by $u(z,g):=g^{-1}f(z)$. The group $C^\infty(\C,G)$ acts on the set $\MM_0$ of solutions of (\ref{eq:dd s f L f Phi},\ref{eq:dd s Psi}) by 
\[h^*(\Phi,\Psi,f):=\Big(h^{-1}\dd_sh+\Ad_{h^{-1}}\Phi,h^{-1}\dd_th+\Ad_{h^{-1}}\Psi,h^{-1}f\Big),\]
where we denote the adjoint representation of an element $g\in G$ by $\Ad_g:\g\to\g$. This group naturally corresponds to the gauge group $C^\infty_G(P_0,G)$, and its action to the action 
\[g^*(A,u):=\big(g^{-1}dg+\Ad_{g^{-1}}A,g^{-1}u\big).\]
Since every $G$-bundle over $\C$ is trivializable, it follows that the quotient of $\MM_0$ by the action of $C^\infty(\C,G)$ bijectively corresponds to the quotient $\M=\MM\Slash\sim$, consisting of gauge equivalence classes of triples $(P,A,u)$ of solutions of (\ref{eq:BAR dd J A u},\ref{eq:F A mu}). Hence the results of the present memoir can alternatively be formulated in terms of solutions of the equations (\ref{eq:dd s f L f Phi},\ref{eq:dd s Psi}). However, the intrinsic approach using equations (\ref{eq:BAR dd J A u},\ref{eq:F A mu}) seems more natural. $\Box$
\end{rmk}
\begin{rmk}[Asphericity]\label{rmk:asphericity} Without the asphericity condition one needs to include holomorphic spheres in the fibers of the Borel construction in the definition of a stable map. In this situation, to compactify the space of vortices over $\C$ with an upper energy bound, one needs to combine the proof of Theorem \ref{thm:bubb} with the analysis carried out by I.~Mundet i Riera and G.~Tian in \cite{MuPhD,MT}, or by A.~Ott in \cite{Ott}. $\Box$
\end{rmk}
\begin{rmk}[Quotient spaces]\label{rmk:quotient} The space $\X_W^{p,\lam}$ occurring in Theorem \ref{thm:Fredholm} is a quotient of a disjoint union of normed vector spaces. It is canonically isomorphic to the space $\XX_w^{p,\lam}$, for every representative $w$ of $W$. Similar statements hold for $\Y_W^{p,\lam}$. The description of the spaces $\X_W^{p,\lam}$ and $\Y_W^{p,\lam}$ as such quotients may look unconventional, however, it seems natural, since it does not involve any choice of a representative of $W$. 

Alternatively, one could phrase the Fredholm result in terms of the spaces $\XX_w^{p,\lam}$ and $\YY_w^{p,\lam}$. However, in view of the last part of Remark \ref{rmk:bundle}, this seems less natural than the present formulation. $\Box$
\end{rmk}

\begin{rmk}[Decay condition and vortices]\label{rmk:geometry} The condition $\Vert\sqrt{e_w}\Vert_{p,\lam}<\infty$ in the definition (\ref{eq:BB p lam}) of $\BB^p_\lam$ and the requirement $1-2/p<\lam<2-2/p$ in Theorem \ref{thm:Fredholm}(\ref{thm:Fredholm:DDD}) capture the geometry of finite energy vortices over $\C$, in the following sense. Let $w=(P,A,u)\in\BB^p_\loc$ be a finite energy vortex such that $u(P)$ has compact closure. (Here $\BB^p_\loc$ is defined as at the beginning of Section \ref{sec:Fredholm}.) Then for every $\eps>0$ there exists a constant $C$ such that $e_w(z)\leq C|z|^{-4+\eps}$, for every $z\in\C\wo B_1$. This follows from Theorem \ref{thm:reg bundle} in Appendix \ref{sec:smooth} and \cite[Corollary 1.4]{ZiA}. 

It follows that $w\in\BB^p_\lam$ if $\lam<2-2/p$. This bound is sharp. To see this, let $\lam\geq2-2/p$ and $M:=S^2$, equipped with the standard symplectic form $\om_\st$, complex structure $J:=i$, and the action of the trivial group $G:=\{\one\}$. Consider the inclusion $u:\C\x\{\one\}\iso\C\to S^2\iso\C\cup\{\infty\}$. Since this map is holomorphic, the triple $\big(\C\x\{\one\},0,u\big)$ is a finite energy vortex whose image has compact closure. It does not lie in $\BB^p_\lam$. 

On the other hand, every $w\in \BB^p_\lam$ has finite energy whenever $p>2$ and $\lam>1-2/p$.%
\footnote{This follows from the estimates
\[\Vert\sqrt{e_w}\Vert_2\leq\big\Vert\sqrt{e_w}\lan\cdot\ran^\lam\big\Vert_p\big\Vert\lan\cdot\ran^{-\lam}\big\Vert_q,\quad\big\Vert\lan\cdot\ran^{-\lam}\big\Vert_q<\infty,\]
where $q:=2p/(p-2)$. The first estimate is H\"older's inequality and the second one follows from a calculation in radial coordinates.}
 The latter condition is sharp. To see this, consider $M:=\R^2,\om:=\om_0,G:=\{\one\},J:=i$. We choose a smooth map $u:\C\x\{\one\}\iso\C\to\R^2$, such that
\[u(z)=\left(\cos\left(\sqrt{\log|z|}\right),\sin\left(\sqrt{\log|z|}\right)\right),\quad\forall z\in\C\wo B_2.\]
Then the triple $\big(\C\x\{\one\},0,u\big)$ lies in $\BB^p_\lam$ for every $p>2$ and $\lam\leq1-2/p$. However, it has infinite energy.%
\footnote{In the present setting, a simpler example of an infinite energy triple $w=(P,A,u)$ satisfying $\sqrt{e_w}\in L^p_\lam$ for every $p>2$ and $\lam\leq1-2/p$, is $w:=\big(\C\x\{\one\},0,u\big)$, where $u(z):=\left(\sqrt{\log|z|},0\right)$, for every $z\in\C\wo B_2$. However, the closure of the image of such a map $u$ is non-compact, since it contains the set $[1,\infty)\x\{0\}$. Therefore, $w$ does not lie in $\BB^p_\lam$ for any $p$ and $\lam$.}%
 $\Box$
\end{rmk}
\begin{rmk}[Index]\label{rmk:index} 
The condition $\lam<2-2/p$ in part (\ref{thm:Fredholm:DDD}) of Theorem \ref{thm:Fredholm} is needed for the map $\D^{p,\lam}_W$ to have the right Fredholm index. Namely, let $\lam>1-2/p$ be such that $\lam+2/p\not\in\Z$, and $W\in\B^p_\lam$. Then the proof of Theorem \ref{thm:Fredholm} shows that $\D^{p,\lam}_W$ is Fredholm with index equal to 
\[(2-k)(\dim M-2\dim G)+2\big\lan c_1^G(M,\om),[W]\big\ran,\]
where $k$ is the largest integer less than $\lam+2/p$. In particular, the index changes when $\lam$ passes the value $2-2/p$. 

Note also that the condition $\lam>1-2/p$ is needed, in order for the homology class $[W]$ to be well-defined. (See Remark \ref{rmk:W well-defined} in Section \ref{SEC:HOMOLOGY CHERN}.) $\Box$
\end{rmk}
\begin{rmk}[Weighted Sobolev spaces and energy density]\label{rmk:X} The definition of the space $\X^{p,\lam}_W$ naturally parallels the definition (\ref{eq:B p lam}) of $\B^p_\lam$. Namely, by linearizing with respect to $A$ and $u$ the terms $d_Au,F_A$ and $\mu\circ u$ occurring in the energy density $e_w$, we obtain the terms $\na^A\ze,d\mu(u)v$ and $L_u\al$. These expressions occur in $\Vert\ze\Vert_{w,p,\lam}$ (defined as in (\ref{eq:ze w p lam})), except for the factor $L_u$ in $L_u\al$.%
\footnote{It follows from hypothesis (H) and Lemma \ref{le:si} in Appendix \ref{sec:proofs homology} that this factor is irrelevant.}
 The expression $\Vert\ze\Vert_\infty$ is needed in order to make $\Vert\cdot\Vert_{w,p,\lam}$ non-degenerate. $\Box$
\end{rmk}
\begin{rmk}[Sobolev spaces and 0-th order terms]\label{rmk:naive} Consider the situation in which the norm (\ref{eq:ze w p lam}) is replaced by the usual $W^{1,p}$-norm, and the norm $\Vert\cdot\Vert_{p,\lam}$ (defining $\YY_w^{p,\lam}$) is replaced by the usual $L^p$-norm. Then in general, the map defined by (\ref{eq:D w v al}) does not have closed image, and hence it is not Fredholm. Note also that the 0-th order terms $\al\mapsto(L_u\al)^{0,1}$ and $v\mapsto\om_0\,d\mu(u)v$ in (\ref{eq:D w v al}) are not compact (neither with respect to $\X_W^{p,\lam}$ and $\Y_W^{p,\lam}$, nor with respect to the usual $W^{1,p}$- and $L^p$-norms). The reason is that the embedding of $W^{1,p}(\C)$ (for $p>2$) into the space of bounded continuous functions on $\C$ is not compact. Because of these terms, the map (\ref{eq:D w v al}) is not well-defined between spaces that look like the standard weighted Sobolev spaces in ``logarithmic'' coordinates $\tau+i\phi$ (with $e^{\tau+i\phi}=z\in\C\wo\{0\}$). 

This is in contrast with the situation in which $\Si$ is the infinite cylinder $\R\x S^1$, equipped with the standard complex structure and area form. In that situation the splitting (\ref{eq:im L C}) is unnecessary, and standard weighted Sobolev spaces in ``logarithmic'' coordinates can be used. In the relative setting, with the cylinder replaced by the infinite strip $\R\x[0,1]$, this was worked out by U.~Frauenfelder \cite[Proposition 4.7]{FrPhD}. The proof of the Fredholm result then relies on results \cite{RoSa,Sa} by J.~Robbin and D.~A.~Salamon.%
\footnote{In that setting, the index of the operator equals a certain spectral flow.}%
 $\Box$
\end{rmk}
\subsection*{Related work}\label{subsec:related work}
\subsubsection*{Quantum Kirwan maps} 
The history of Conjectures \ref{conj:QK} and \ref{conj:QK } is as follows. Assume that (H) holds, $(M,\om)$ is aspherical, $(M,\om,\mu)$ is convex at $\infty$ and monotone, and $H^*_G(M)$ is generated by classes of degrees less than $2N$, where $N$ is the minimal equivariant Chern number (defined as in (\ref{eq:N inf})). In this case R.~Gaio and D.~A.~Salamon \cite{GS} proved that there exists a ring homomorphism from $H^*_G(M)$ to the quantum cohomology of $(\BAR M,\BAR\om)$ that agrees with the Kirwan map on classes of degrees less than $2N$, see \cite[Corollary A']{GS}. The idea of the proof of this result is to fix an area form $\om_{S^2}$ on $S^2$ and to relate symplectic vortices for the area form $C\om_{S^2}$ with pseudo-holomorphic spheres in $\BAR M$, for sufficiently large $C>0$. The authors noticed that in general, this correspondence does not work, since in the limit $C\to\infty$, vortices over $\C$ may bubble off. Accordingly, the calculation of the quantum cohomology of monotone toric manifolds in \cite{CS}, which is based on \cite{GS}, does not extend to the situation of a general toric manifold. This follows from examples by H.~Spielberg \cite{SpGW,SpHirzebruch}.

Based on these observations, Salamon suggested to construct a ring homomorphism from $H_G^*(M)$ to the quantum cohomology of $(\BAR M,\BAR\om)$, by counting symplectic vortices over the plane $\C$, provided that $(M,\om)$ is aspherical and $(M,\om,\mu)$ is convex at $\infty$. This homomorphism should intertwine the symplectic vortex invariants and the Gromov-Witten invariants of $(\BAR M,\BAR\om)$. This gave rise to the Ph.D.-thesis \cite{ZiPhD}, which served as a basis for the present memoir. There it is observed that in the definition of convergence for vortices over $\C$ to a stable map, only translations should be allowed as reparametrizations used to obtain a vortex component in the limit.%
\footnote{As explained in Remark \ref{rmk:conv rot} in Section \ref{sec:conv}, the reason for this is that the evaluation of a vortex class at a point in $\C$ is not invariant under rotations.}
 C.~Woodward realized that with this restriction, vortices over $\C$ without marked points may appear in the bubbling argument used in the proof of the ring homomorphism property of the quantum Kirwan map. (This may happen even in the transverse case.) As a solution, he suggested to interpret the quantum Kirwan map as a morphism of cohomological field theories. (See \cite{NWZ} and Conjecture \ref{conj:QK } above.) On the other hand, under the semipositivity introduced above, the vortices without marked points can be excluded in the transverse case. This gave rise to Conjecture \ref{conj:QK}.

In his recent article \cite{Wo} C.~Woodward developed these ideas in an algebraic geometric setting. He defined a quantum Kirwan map in the case of a smooth projective variety with an action of a complex reductive group. (See \cite[Theorem 1.3]{Wo}.) Theorem \ref{thm:bubb} of the present memoir is used in the proof of that result to show properness of the Deligne-Mumford stack of stable scaled gauged maps in $(M,\om)$ of genus 0. (See \cite[Theorem 5.25]{Wo}.) 

In \cite{GWToric} E.~Gonzalez and C.~Woodward used Woodward's definition to calculate the quantum cohomology of a compact toric orbifold with projective coarse moduli space. Furthermore, in \cite{GWWall} they used it to formulate a quantum version of Kalkman's wall-crossing formula.
\subsubsection*{Bubbling and Fredholm results for vortices}
Assume that $\Si$ is closed, (H) holds, and $M$ is symplectically aspherical and equivariantly convex at $\infty$. In this case, in \cite[Theorem 3.4]{CGMS}, K.~Cieliebak et al.~proved compactness of the space of vortex classes with energy bounded above by a fixed constant. In the case in which $M$ and $\Si$ are closed, in \cite[Theorem 4.4.2]{MuPhD} I.~Mundet i Riera compactified the space of bounded energy vortex classes with fixed complex structure on $\Si$. Assuming also that $G:=S^1$, this was extended by I.~Mundet i Riera and G.~Tian in \cite[Theorem 1.4]{MT} to the situation of varying complex structure. That work is based on a version of Gromov-compactness for continuous almost complex structures, proved by S.~Ivashkovich and V.~Shevchishin in \cite{IS}. 

In \cite[Theorem 1.8]{Ott} A.~Ott compactified the space of bounded energy vortex classes in a different way, for a general Lie group and closed $M$ and $\Si$, the last with fixed complex structure. He used the approach to Gromov-compactness by D.~McDuff and D.~A.~Salamon in \cite{MS04}. In the case in which $\Si$ is an infinite strip, equipped with the standard area form and complex structure, the compactification was carried out in a relative setting by U.~Frauenfelder in \cite[Theorem 4.12]{FrPhD}. (See also \cite{FrArnold}.)

In \cite{GS} R.~Gaio and D.~A.~Salamon investigated the vortex equations with area form $C\om_\Si$ in the limit $C\to\infty$. Here $\Si$ is a closed surface equipped with a fixed area form $\om_\Si$. They proved that three types of objects may bubble off: a holomorphic sphere in $\BAR M$, a vortex over $\C$, and a holomorphic sphere in $M$. (See the proof of \cite[Theorem A]{GS}.)

For a \emph{closed} Riemann surface $\Si$, in \cite{CGMS} K.~Cieliebak et al.~proved that the augmented vertical differential of the vortex equations is Fredholm.
\subsubsection*{Other related work} In \cite{VW} S.~Venugopalan and C.~Woodward establish a Hitchin-Kobayashi correspondence for symplectic vortices over the plane $\C$.
\subsection*{Organization of this memoir}
This memoir is organized as follows. Chapter \ref{chap:bubbling} contains the bubbling analysis for vortices over the plane $\C$. In Section \ref{sec:stable} we define the notion of a stable map consisting of vortex classes over $\C$, pseudo-holomorphic spheres in $\BAR M$, and marked points. In Section \ref{sec:conv} we formulate convergence of a sequence of vortex classes over $\C$ to such a stable map. Stable maps and convergence are explicitly described in the Ginzburg-Landau setting in Section \ref{SEC:EXAMPLE}. Section \ref{sec:repara} covers an additional topic, which will be relevant in a future definition of the quantum Kirwan map. The main result of Section \ref{sec:comp} is that given a sequence of rescaled vortices with uniformly bounded energies, there exists a subsequence that converges up to gauge, modulo bubbling at finitely many points. Section \ref{sec:soft} contains a result that tells how to find the next bubble in the bubbling tree, at a bubbling point of a given sequence of rescaled vortices. Based on these results, the bubbling result, Theorem \ref{thm:bubb}, is proven in Section \ref{sec:proof:thm:bubb}. Section \ref{sec:proof:prop:conv S 1 C} contains the proof of the result characterizing convergence in the Ginzburg-Landau setting. 

Chapter \ref{chap:Fredholm} contains the Fredholm theory for vortices over the plane $\C$. In Section \ref{SEC:HOMOLOGY CHERN} the equivariant homology class of an equivalence class of triples $(P,A,u)$ is defined, and the contraction appearing in formula (\ref{eq:ind}) is interpreted as a certain Maslov index. Section \ref{sec:proof:Fredholm} contains the proof of the Fredholm result, Theorem \ref{thm:Fredholm}. 

In the appendix we recollect some auxiliary results, which are used in the proofs of the main results.
\subsection*{Acknowledgments}
This memoir arose from my Ph.D.-thesis. I would like to thank my adviser, Dietmar A.~Salamon, for the inspiring topic. I highly profited from his mathematical insight. I am very much indebted to Chris Woodward for his interest in my work, for sharing his ideas with me, and for his continuous encouragement. It was he who coined the term ``quantum Kirwan map''. I would also like to thank Urs Frauenfelder, Kai Cieliebak, Eduardo Gonzalez, Andreas Ott, and Bumsig Kim for stimulating discussions. Finally, I am grateful to the referees for their valuable suggestions.
\chapter{Bubbling for vortices over the plane}\label{chap:bubbling} 
In this chapter, stable maps consisting of vortex classes over the plane $\C$, holomorphic spheres in the symplectic quotient, and marked points, are defined, and the first main result of this memoir, Theorem \ref{thm:bubb}, is proven. This result states that given a sequence of vortex classes over $\C$, with uniformly bounded energies, and sequences of marked points, there exists a subsequence that converges to some stable map. We also describe stable maps and convergence in the simplest interesting example, the Ginzburg-Landau setting.
\section{Stable maps}\label{sec:stable}
Let $M,\om,G,\g,\lan\cdot,\cdot\ran_\g,\mu,J$ be as in Chapter \ref{chap:main}. Our standing hypothesis (H) implies that the symplectic quotient 
\[\big(\BAR M=\mu^{-1}(0)/G,\BAR\om\big)\]
is well-defined and closed. The structure $J$ induces an $\BAR\om$-compatible almost complex structure on $\BAR M$ as follows. For every $x\in M$ we denote by $L_x:\g\to T_xM$ the infinitesimal action at $x$. We define the \emph{horizontal distribution} $H\sub T(\mu^{-1}(0))$ by
\[H_x:=\ker d\mu(x)\cap\im L_x^\perp,\quad\forall x\in\mu^{-1}(0).\]
Here $\perp$ denotes the orthogonal complement with respect to the metric $\om(\cdot,J\cdot)$ on $M$. We denote by $\pi:\mu^{-1}(0)\to \BAR M:=\mu^{-1}(0)/G$ the canonical projection. We define $\bar J$ to be the unique isomorphism of $T\BAR M$ such that
\begin{equation}\label{eq:bar J}\bar J\,d\pi= d\pi J\textrm{ on }H.%
\footnote{Such a $\bar J$ exists and is unique, since the map $d\pi$ is an isomorphism from $H$ to $T\BAR M$, and $J$ preserves $H$.}%
\end{equation}
We identify $\C\cup\{\infty\}$ with $S^2$. The (Connectedness) condition in the definition of a stable map below will involve the evaluation of a vortex class at the point $\infty\in S^2$. In order to make sense of this, we need the following. We denote by $Gx$ the orbit of a point $x\in M$. Let $P$ be a smooth (principal) $G$-bundle over $\C$
\footnote{Such a bundle is trivializable, but we do not fix a trivialization here.}
 and $u\in C^\infty_G(P,M)$ a map. We denote by $M/G$ the orbit space of the action of $G$ on $M$, and define
\begin{equation}\nn\bar u:\C\to M/G,\quad\bar u(z):=Gu(p),\end{equation}
where $p\in P$ is an arbitrary point in the fiber over $z$. For $W\in\B$ we define 
\begin{equation}\label{eq:BAR u W}\bar u_W:=\bar u,\end{equation}
where $w=(P,A,u)$ is any representative of $W$. This is well-defined, i.e., does not depend on the choice of $w$. Recall the definition (\ref{eq:M}) of $\M$, and that by the image of a class $W\in\M$ we mean the set of orbits of $u(P)$, where $(P,A,u)$ is any vortex representing $W$. We define the set
\begin{equation}\label{eq:ME}\ME:=\big\{W\in\M\,\big|\,\BAR{\textrm{image}(W)}\textrm{ compact, }E(W)<\infty\big\}.
\end{equation}
\begin{prop}[Continuity at $\infty$]\label{prop:bar u} If $W\in\ME$ then the map $\bar u_W:\C\to M/G$ extends continuously to a map $f:S^2\to M/G$, such that $f(\infty)\in\BAR M=\mu^{-1}(0)/G$. 
\end{prop}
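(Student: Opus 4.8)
The plan is to show that the map $\bar u_W$ has a well-defined limit at $\infty$ lying in $\BAR M$, and that this limit is approached continuously. The central analytic input is the energy decay for finite-energy vortices over $\C$ whose image has compact closure: for every $\eps>0$ there is a constant $C$ with $e_w(z)\leq C|z|^{-4+\eps}$ for $|z|\geq 1$ (this is the estimate recalled in Remark \ref{rmk:geometry}, coming from Theorem \ref{thm:reg bundle} and \cite[Corollary 1.4]{ZiA}). In particular $|d_Au|\to 0$ and $|\mu\circ u|\to 0$ as $|z|\to\infty$, and the energy on the annulus $\{r\leq|z|\leq 2r\}$ tends to $0$ as $r\to\infty$. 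Fix a representative $w=(P,A,u)$; since $P$ is trivializable we may (locally near $\infty$, or globally) work in a trivialization and think of $w$ as a triple $(\Phi,\Psi,f)$ as in Remark \ref{rmk:trivial}, so that $\bar u_W(z)=Gf(z)$.

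First I would establish that $\bar u_W$ has a limit in $M/G$ as $|z|\to\infty$. The natural approach is to pass to logarithmic coordinates $z=e^{\tau+i\phi}$ on the end, so that the annuli $\{r\leq |z|\leq 2r\}$ become bounded cylinders $[\tau_0,\tau_0+\log 2]\times S^1$ on which the energy tends to $0$. On each such cylinder the (rescaled) vortex is close, in $C^0$, to a zero-energy object, i.e., the horizontal part of $d_Af$ is small and $\mu\circ f$ is small. Using the isoperimetric inequality for the invariant symplectic action functional from \cite{ZiA} (the same tool invoked in the proof of Theorem \ref{thm:bubb} to glue bubbles without energy loss), one gets exponential-type decay of the action of the loops $\phi\mapsto\bar u_W(e^{\tau+i\phi})$ and hence that these loops converge, uniformly in $\phi$, to a single point in $M/G$; the triangle inequality over the telescoping sequence of cylinders then shows the $\bar u_W$-images of the circles $|z|=r$ form a Cauchy family in $M/G$ as $r\to\infty$, so $f(\infty):=\lim_{|z|\to\infty}\bar u_W(z)$ exists in $M/G$ (which is Hausdorff and, near the relevant orbit, metrizable because the image has compact closure). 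This $f$ is then continuous on $S^2$ by construction.

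Next I would show $f(\infty)\in\BAR M=\mu^{-1}(0)/G$. This is immediate from the decay estimate: $|\mu\circ u|\leq\sqrt{2e_w}\to 0$ as $|z|\to\infty$, so if $z_n\to\infty$ with $u(p_n)\to x$ (for suitable $p_n$ in the fibers, using compactness of the closure of the image), then $\mu(x)=0$, whence $f(\infty)=Gx\in\mu^{-1}(0)/G$.

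I expect the main obstacle to be the convergence step — proving that $\bar u_W$ genuinely converges at $\infty$ rather than merely having its energy dissipate. A priori, small energy on each cylinder does not forbid the loops from drifting around $M/G$; one really needs a quantitative mechanism (the isoperimetric inequality for the symplectic action, together with the energy decay rate) to control the total variation of $\bar u_W$ over the infinite end and conclude it is finite. Care is also needed because $A$ is only defined up to gauge, so all statements must be phrased for $\bar u_W$ (the orbit-valued map), which is gauge-invariant; working with the $C^0$-distance on the compact subset $\overline{\textrm{image}(W)}\subset M/G$ sidesteps the connection entirely. Once the variation is controlled, continuity of the extension $f$ at $\infty$ and the membership $f(\infty)\in\BAR M$ follow as above.
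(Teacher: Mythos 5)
Your proposal is correct and follows essentially the same route as the paper: the paper simply cites Proposition \ref{prop:en conc} (Energy concentration near ends), whose estimate (\ref{eq:sup z z' bar d}) with $R=\infty$ gives exactly the uniform bound on $\bar d\big(Gu(z),Gu(z')\big)$ over large annuli that your telescoping argument needs, and that proposition is itself proved via the isoperimetric inequality for the invariant symplectic action together with the energy decay — precisely the mechanism you invoke. Your identification of drift control as the crux (rather than mere energy decay), and the observation that $|\mu\circ u|\to 0$ forces $f(\infty)\in\BAR M$, both match the paper's intended argument.
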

\begin{proof}[Proof of Proposition \ref{prop:bar u}]\setcounter{claim}{0} This follows from the estimate (\ref{eq:sup z z' bar d}) with $R=\infty$ in Proposition \ref{prop:en conc} (Section \ref{sec:soft} below).%
\footnote{Alternatively, it is a consequence of \cite[Proposition 11.1]{GS}.}%
\end{proof}
\begin{defi}\label{defi:barev} We define the \emph{evaluation map} $\barev$ to be the map from the disjoint union of $C^0(S^2,M/G)\x S^2$ and $\ME\x\{\infty\}$ to $M/G$, given as follows. For $(\bar u,z)\in C^0(S^2,M/G)\x S^2$ we define
\begin{equation}\label{eq:barev}
\barev_z(\bar u):=\barev(\bar u,z):=\bar u(z). 
\end{equation}
Furthermore, for $W\in\ME$ we define
\begin{equation}\label{eq:barev w}\barev_\infty(W):=f(\infty),
\end{equation}
where $f$ is as in Proposition \ref{prop:bar u}.
\end{defi}
By a tree relation on a set $T$ we mean a symmetric, anti-reflexive relation on $T$, such that each two points in $T$ are connected by a unique simple path of edges. A central definition of this memoir is the following.
\begin{defi}[Stable map]\label{defi:st} For every $k\in\N_0=\N\cup\{0\}$ a \emph{(genus 0) stable map consisting of vortex classes over $\C$ and pseudo-holomorphic spheres in $\BAR M$, with $k+1$ marked points}, is a tuple
\begin{equation}\label{eq:W z}(\W,\z):=\Big(T_0,T_1,T_\infty,E,(W_\alpha)_{\alpha\in T_1},(\bar u_\alpha)_{\al\in T_\infty},(z_{\alpha\beta})_{\alpha E\beta},(\alpha_i,z_i)_{i=0,\ldots,k}\Big),\end{equation}
where $T_i$ is a finite set for $i=0,1,\infty$, $E$ is a tree relation on $T:=T_0\disj T_1\disj T_\infty$, $W_\al\in\ME$ (for $\al\in T_1$), $\bar u_\al:S^2\to\BAR M=\mu^{-1}(0)/G$ is a $\bar J$-holomorphic map (for $\al\in T_\infty$), $z_{\al\beta}\in S^2$ is a point for each edge $\al E\beta$, $\al_i\in T$ is a vertex, and $z_i\in S^2$ is a point, for $i=0,\ldots,k$, such that the following conditions hold.

\begin{enui} 
\item\label{defi:st comb}{\bf (Combinatorics)} 
\begin{itemize}
 \item We have $\al_0\in T_1\cup T_\infty$. 
 \item For every $\al\in T$ there exist an integer $k\in\N$ and vertices $\be_1,\ldots,\be_k\in T$ such that 
\[\be_1=\al,\quad\be_k=\al_0,\]
and for every $i=1,\ldots,k-1$ we have
\begin{equation}\label{eq:be i be i 1}\be_iE\be_{i+1},\quad\be_i\in T_0\then\be_{i+1}\in T_0\cup T_1,\quad\be_i\in T_1\cup T_\infty\then\be_{i+1}\in T_\infty.
\end{equation}
\end{itemize}
\item \label{defi:st dist}{\bf (Special points)} 
\begin{itemize}
 \item If $\al_0\in T_1$ then $z_0=\infty$.
 \item If $\al\in T_1$ and $\be\in T_\infty$ are such that $\al E\be$ then $z_{\al\be}=\infty$.
\item Fix $\al\in T$. Then the points $z_{\al\beta}$ with $\beta\in T$ such that $\al E\beta$ and the points $z_i$ with $i=0,\ldots,k$ such that $\al_i=\al$, are all distinct. 
\end{itemize}
\item\label{defi:st conn}{\bf (Connectedness)} Let $\al,\beta\in T_1\cup T_\infty$ be such that $\al E\beta$. Then 
\[\barev_{z_{\al\beta}}(W_\al)=\barev_{z_{\beta\al}}(W_\beta).\]
Here $\barev$ is defined as in (\ref{eq:barev}) and (\ref{eq:barev w}) and we set $W_\al:=\bar u_\al$ if $\al\in T_\infty$. 
\item\label{defi:st st}{\bf(Stability)} Let $\al\in T$. 
\begin{itemize}
 \item If $\al\in T_1$ and $\E(W_\al)=0$ then the set
\begin{equation}\label{eq:special}\big\{\beta\in T\,|\,\al E\beta\big\}\cup\big\{i\in\{0,\ldots,k\}\,|\,\al_i=\al\big\}
\end{equation}
contains at least two points. 
 \item If either $\al\in T_\infty$ and $E(\bar u_\al)=0$, or $\al\in T_0$, then the set (\ref{eq:special}) consists of at least three points.
\end{itemize}
\end{enui}
\end{defi}
\begin{figure}
  \centering
\leavevmode\epsfbox{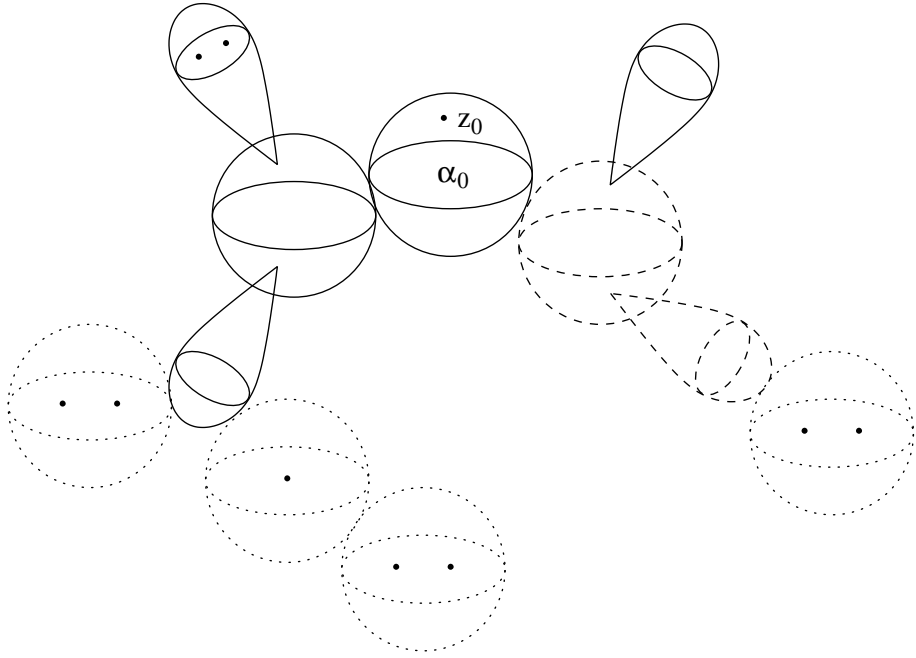}
\caption{Stable map consisting of vortex classes over $\C$ and pseudo-holomorphic spheres in $\BAR M$.}
  \label{fig:stable map} 
\end{figure}
This definition is modeled on the notion of a genus 0 pseudo-holomorphic stable map, as introduced by Kontsevich in \cite{Kontsevich}.%
\footnote{For an exhaustive exposition of those stable maps see the book by D.~McDuff and D.~A.~Salamon \cite{MS04}.}
 Roughly speaking, a stable map in the sense of Definition \ref{defi:st} can be thought of as a collection of vortex classes over $\C$, pseudo-holomorphic spheres in the symplectic quotient $\BAR M$, ``ghost spheres of type 0'' corresponding to the vertices of $T_0$, and marked and nodal points. A vortex class may be connected to a sphere in $\BAR M$ at the nodal point $\infty$, and to ``ghost spheres of type 0'' at points in $\C$. Furthermore, spheres of the same type may be connected at nodal points. The ``ghost spheres of type 0'' should be thought of as constant spheres in the Borel construction $(M\x\EG)/G$. They are needed for the bubbling result (Theorem \ref{thm:bubb}) to capture colliding marked points in $\C$.%
\footnote{As explained in \cite{ZiPhD,ZiConsEv} vortex classes over $\C$ evaluate to points in $(M\x\EG)/G$ at points in $\C$. Therefore, identifying each ``ghost sphere of type 0'' with a point in $(M\x\EG)/G$, it makes sense to ask that the sphere is connected to a vortex class over $\C$ at a given nodal point.}

Figure \ref{fig:stable map} shows an example of a stable map. Here the ``teardrops'' correspond to vortex classes over $\C$, the solid and dashed spheres to pseudo-holomorphic spheres in $\BAR M$, and the dotted spheres to ``ghost spheres of type 0''. The solid objects have positive energy, and the dashed and dotted spheres are ``ghosts'', i.e., their energy vanishes. Each ``teardrop'' is connected to a sphere (in $\BAR M$) via a nodal point at its vertex, which corresponds to the point $\infty\in\C\cup\{\infty\}$. 

To explain the stability condition (\ref{defi:st st}), we fix $\al\in T$. We define the \emph{set of nodal points on $\al$} to be 
\begin{equation}\label{eq:Z al}Z_\al:=\{z_{\al\beta}\,\big|\,\beta\in T,\,\al E\beta\}\sub S^2,\end{equation}
the set of \emph{marked points on $\al$} to be 
\[\big\{z_i\,\big|\,\al_i=\al,\,i\in\{0,\ldots,k\}\big\},\] 
and the \emph{set $Y_\al$ of special points on $\al$} to be the union of these two sets. The stable map of Figure \ref{fig:stable map} carries ten marked points, which are drawn as dots. The stability condition says the following. Assume that $\al\in T$ is a ``ghost component'', i.e., $\al\in T_0$ or $W_\al$ carries zero energy (in the case $\al\in T_1\cup T_\infty$). Then the following holds: If $\al\in T_1$ then it carries at least one special point in $\C$.%
\footnote{It then also carries a special point at $\infty$.}
 Furthermore, if $\al\in T_0\cup T_\infty$ then $\al$ carries at least three special points. 

This condition ensures that the action of a natural reparametrization group on the set of \emph{simple} stable maps of a given type is free.%
\footnote{See Proposition \ref{prop:simple} in Section \ref{sec:repara} below.}
 In a future definition of the quantum Kirwan map this will be needed in order to show that the evaluation map on the set of non-trivial vortex classes (with marked points) is a pseudo-cycle. 
\begin{Rmks} Condition (\ref{defi:st comb}) implies that if $T_1$ is empty then so is $T_0$, and hence a stable map in the sense of Definition \ref{defi:st} is a genus 0 stable map of $\bar J$-holomorphic spheres in $\BAR M$. 

If $\al_0\in T_1$ then $T_1=\{\al_0\}$ and $T_\infty=\emptyset$. This follows from the second part of condition (\ref{defi:st comb}) for $\al\in T_\infty$, using the last condition in (\ref{eq:be i be i 1}). Hence in this case a stable map consists of a single vortex class and special points.

If $\al_0\in T_\infty$ then the sets $T_\infty$ and $T_1\cup T_\infty$ are subtrees of $T$, and every element of $T_1$ is adjacent to a unique vertex in $T_\infty$ and to no vertex in $T_1$. In particular, each element of $T_1$ is a leaf of the tree $T_1\cup T_\infty$. These statements follow from condition (\ref{defi:st comb}) and the fact that $T$ does not contain any simple cycle.

The vertices in $T_0$ are not adjacent to those in $T_\infty$. Furthermore, for each connected component of $T_0$ there exists a unique vertex in $T_1$ that is adjacent to some element of the connected component. These assertions follow from condition (\ref{defi:st comb}) and the fact that $T$ does not contain any simple cycle. $\Box$
\end{Rmks}
\begin{rmk}\label{rmk:z i}\rm If $1\leq i\leq k$ is such that $\al_i\in T_1$ then $z_i\neq\infty$. This follows from condition (\ref{defi:st dist}) and the fact that either $\al_0\in T_1$ or every vertex in $T_1$ is adjacent to some vertex in $T_\infty$. $\Box$
\end{rmk}
\begin{rmk}\label{rmk:general} Without the asphericity assumption, a stable map should also include holomorphic maps from the sphere $S^2$ to the fibers of the Borel construction $(M\x\EG)/G$. These occur if in a sequence of vortices over $\C$ energy is concentrated around some point in $\C$. The necessary analysis was worked out by I.~Mundet i Riera and G.~Tian in \cite{MuPhD,MT}, and by A.~Ott in \cite{Ott}. $\Box$
\end{rmk}

\begin{Xpl} The simplest example of a stable map consists of the tree with one vertex $T=T_1=\{\al_0\}$, a vortex class $W\in \ME$, the marked point $z_0:=\infty$ and a finite number of distinct points $z_i\in \C$, $i=1,\ldots,k$, where $k\geq1$ if $\E(W)=0$. $\Box$
\end{Xpl}
\begin{xpl}\rm\label{xpl:stable} We set $k:=0$, choose an integer $\ell\in\N_0$, and define 
\[T_0:=\emptyset,\quad T_1:=\{1,\ldots,\ell\},\quad T_\infty:=\{0\},\quad E:=\big\{(0,1),\ldots,(0,\ell),(1,0),\ldots,(\ell,0)\big\},\]
\[\al_0:=0,\quad z_{i0}:=\infty,\,\forall i=1,\ldots,\ell.\] 
Let $z_0,z_{0i}\in S^2$, $i=1,\ldots,\ell$ be distinct points, $\bar u_0$ a $\bar J$-holomorphic sphere, and $W_i\in\ME$ be a vortex class, for $i=1,\ldots,\ell$. Assume that $E(W_i)>0$ for every $i$, and if $\ell\leq 1$ then $\bar u_0$ is nonconstant. Then the tuple
\[(\W,\z):=\Big(T_0:=\emptyset,T_1,T_\infty,E,(W_i)_{i\in\{1,\ldots,\ell\}},\bar u_0,(z_{ij})_{i Ej},(0,z_0)\Big)\]
is a stable map. (See Figure \ref{fig:example stable}.) $\Box$
\end{xpl}
\begin{figure}
  \centering
\leavevmode\epsfbox{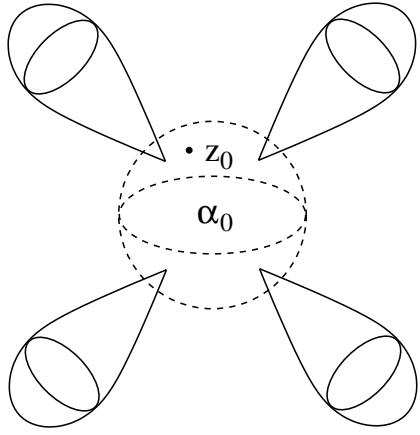}
\caption{This is the stable map described in Example \ref{xpl:stable} with $\ell:=4$ and a constant sphere $\bar u_0$.}
  \label{fig:example stable} 
\end{figure}
\section{Convergence to a stable map}\label{sec:conv}
Let $k\geq0$, for $\nu\in\N$ let $W_\nu\in\ME$ be a vortex class and $z^\nu_1,\ldots,z_k^\nu\in \C$ be points, and let 
\[(\W,\z):=\Big(T_0,T_1,T_\infty,E,(W_\alpha)_{\alpha\in T_1},(\bar u_\al)_{\al\in T_\infty},(z_{\alpha\beta})_{\alpha E\beta},(\alpha_i,z_i)_{i=0,\ldots,k}\Big)\]
be a stable map. In order to define convergence of $\big(W_\nu,z^\nu_0:=\infty,z^\nu_1,\ldots,z_k^\nu\big)$ to $(\W,\z)$, we need the following notations. For a $\bar J$-holomorphic map $f:S^2\to\BAR M$ we denote its energy by 
\[E(f)=\int_{S^2}f^*\BAR\om.\]
Let $\al\in T:=T_0\disj T_1\disj T_\infty$ and $i=0,\ldots,k$. We define $z_{\al,i}\in S^2$ as follows. If $\al=\al_i$ then we set 
\begin{equation}
  \label{eq:z al i}z_{\al,i}:=z_i.
\end{equation}
Otherwise we define $z_{\al,i}$ to be the first special point encountered on a path of edges from $\al$ to $\al_i$. To explain this, we denote by $\be\in T_\infty$ the unique vertex such that the chain of vertices of $T$ running from $\al$ to $\al_i$ is given by $(\al,\be,\ldots,\al_i)$. ($\be=\al_i$ is also allowed.) We define 
\begin{equation}
\label{eq:z al i }z_{\al,i}:=z_{\al\be}.
\end{equation}
Let $\Si$ be a compact connected smooth surface with non-empty boundary. Recall the definition (\ref{eq:B}) of the set $\B_\Si$ of equivalence classes of triples $(P,A,u)$. We define the \emph{$C^\infty$-topology $\tau_\Si$} on this set as follows: We fix a (smooth) $G$-bundle $P$ over $\Si$, and denote by $\G_P$ its gauge group. Since by hypothesis, $G$ is connected, every $G$-bundle over $\Si$ is trivializable. It follows that the map 
\begin{equation}\label{eq:A C G}\big(\A(P)\x C^\infty_G(P,M)\big)/\G_P\ni\G_P^*(A,u)\mapsto[P,A,u]\in\B_\Si\end{equation}
is a bijection.%
\footnote{Recall here that $\A(P)$ denotes the affine space of smooth connection one-forms on $P$. We use the simplified notation $[P,A,u]$ for the equivalence class $[(P,A,u)]$.}
\begin{defi}\label{defi:tau Si} We define $\tau_\Si$ to be the pushforward of the quotient topology of the $C^\infty$-topology on $\A(P)\x C^\infty_G(P,M)$ under the map (\ref{eq:A C G}).
\end{defi}
Let $\Si$ be a smooth surface, $W=[P,A,u]\in\B_\Si$, and $\Om\sub\Si$ an open subset with compact closure and smooth boundary. We define the \emph{restriction $W|_{\BAR\Om}$} to be the equivalence class of the pullback of $(P,A,u)$ under the inclusion map $\BAR\Om\to\Si$. 

For $W=[P,A,u]\in\B_\Si$ and $\phi$ a translation on $\C$, we define the pullback of $W$ by $\phi$ to be 
\begin{equation}\label{eq:phi []}\phi^*[P,A,u]:=\big[\phi^*P,\Phi^*(A,u)\big],\end{equation}
where $\Phi:\phi^*P\to P$ is defined by $\Phi(z,p):=p$.%
\footnote{Recall here that a point in the pullback bundle $\phi^*P$ has the form $(z,p)$, where $z\in\C$ and $p$ lies in the fiber of $P$ over $\phi(z)\in\C$.}
 We define 
\begin{equation}\label{eq:M *}M^*:=\big\{x\in M\,|\,\textrm{if }g\in G:\,gx=x\Rightarrow g=\one\big\}.\end{equation}
Note that $\mu^{-1}(0)\sub M^*$ by our standing hypothesis (H). 
\begin{defi}[Convergence]\label{defi:conv} The sequence $(W_\nu,z_0^\nu:=\infty,z_1^\nu,\ldots,z_k^\nu)$ is said to converge to $(\W,\z)$, as $\nu\to\infty$, iff the limit $E:=\lim_{\nu\to\infty} E(W_\nu)$ exists,
\begin{equation}
  \label{eq:E V bar T}E=\sum_{\al\in T_1}E(W_\al)+\sum_{\al\in T_\infty}E(\bar u_\al),
\end{equation}
and there exist M\"obius transformations $\phi_\al^\nu:S^2\to S^2$, for $\al\in T$, $\nu\in\N$, such that the following conditions hold.

\begin{enui}\item \label{defi:conv phi z} 
\begin{itemize}
\item If $\al\in T_1$ then $\phi_\al^\nu$ is a translation on $\C$. 
\item For every $\al\in T_\infty$ we have $\phi_\al^\nu(z_{\al,0})=\infty$, where $z_{\al,0}$ is defined as in (\ref{eq:z al i}), (\ref{eq:z al i }). 
\item Let $\al\in T_\infty$ and $\psi_\al$ be a M\"obius transformation such that $\psi_\al(\infty)=z_{\al,0}$. Then the derivatives $(\phi_\al^\nu\circ\psi_\al)'(z)$ converge to $\infty$, for every $z\in\C$. 
\end{itemize}
\item \label{defi:conv al be} If $\al,\be\in T$ are such that $\al E\beta$ then $(\phi_\al^\nu)^{-1}\circ\phi_\beta^\nu\to z_{\al\beta}$, uniformly on compact subsets of $S^2\wo\{z_{\beta\al}\}$.
\item \label{defi:conv w} 
\begin{itemize}
\item Let $\al\in T_1$ and $\Om\sub R^2$ be an open connected subset with compact closure and smooth boundary. Then the restriction $(\phi_\al^\nu)^*W_\nu|_{\BAR\Om}$ converges to $W_\al|_{\BAR\Om}$, with respect to the topology $\tau_{\BAR\Om}$ (as in Definition \ref{defi:tau Si}).
\item Fix $\al\in T_\infty$. Let $Q$ be a compact subset of $S^2\wo (Z_\al\cup\{z_{\al,0}\})$. For $\nu$ large enough, we have
\[\bar u_{W_\nu}\circ\phi_\al^\nu(Q)\sub M^*/G,\]
and $\bar u_{W_\nu}\circ\phi_\al^\nu$ converges to  $\bar u_\al$ in $C^1$ on $Q$. (Here $Z_\al$ and $\bar u_{W_\nu}$ are defined as in (\ref{eq:Z al},\ref{eq:BAR u W}).) 
\end{itemize}
\item\label{defi:conv z} We have $(\phi_{\al_i}^\nu)^{-1}(z_i^\nu)\to z_i$ for every $i=1,\ldots,k$. 
\end{enui}
\end{defi}
The meaning of this definition is illustrated by Figure \ref{fig:convergence}.
\begin{figure} 
  \centering
\leavevmode\epsfbox{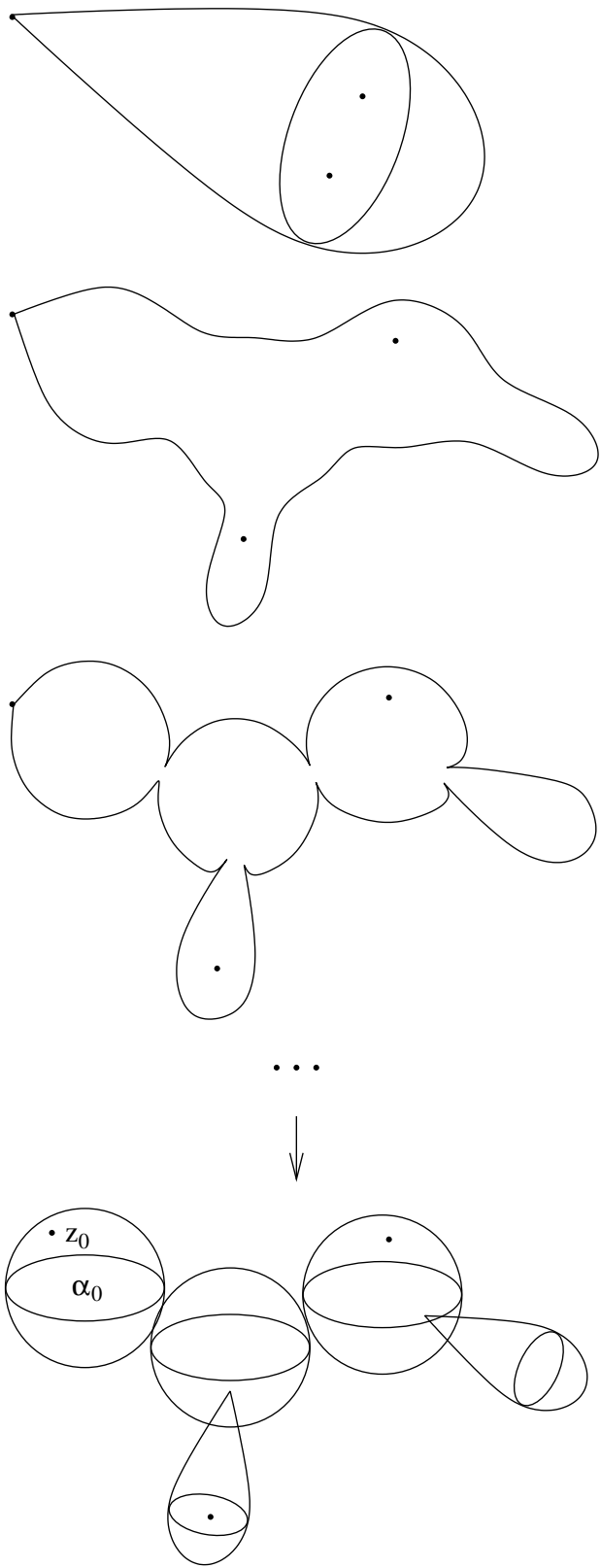}
\caption{Convergence of a sequence of vortex classes over $\C$ to a stable map.}
  \label{fig:convergence} 
\end{figure}
It is based on the notion of convergence of a sequence of pseudo-holomorphic spheres to a genus 0 pseudo-holomorphic stable map.%
\footnote{For that notion see for example \cite{MS04}.}
 An example in which it can be understood more explicitly, is discussed in the next section.\\

\begin{Rmk} The condition in the first part of (\ref{defi:conv w}), that $(\phi_\al^\nu)^*W_\nu|_{\BAR\Om}\to W_\al|_{\BAR\Om}$ with respect to $\tau_{\BAR\Om}$, is equivalent to the requirement that there exist representatives $w_\nu$ of $(\phi_\al^\nu)^*W_\nu|_{\BAR\Om}$ (for $\nu\in\N$) and $w$ of $W_\al|_{\BAR\Om}$ such that $w_\nu$ converges to $w$ in the $C^k$-topology, for every $k\in\N$. This follows from a straight-forward argument, using Lemma \ref{le:X G} (Appendix \ref{sec:add}). $\Box$
\end{Rmk}
\begin{Rmk} The last part of condition (\ref{defi:conv phi z}) and the second part of condition (\ref{defi:conv w}) capture the idea of catching a pseudo-holomorphic sphere in $\BAR M$ by ``zooming out'': Fix $\al\in T_\infty$, and consider the case $z_{\al,0}=\infty$. Then there exist $\lam_\al^\nu\in\C\wo \{0\}$ and $z_\al^\nu\in\C$ such that $\phi_\al^\nu(z)=\lam_\al^\nu z+ z_\al^\nu$. It follows from a direct calculation that $(\phi_\al^\nu)^*W_\nu$ is a vortex class with respect to the area form $\om_\Si=|\lam_\al^\nu|^2\om_0$, where $\om_0$ denotes the standard area form on $\C$. The last part of condition (\ref{defi:conv phi z}) means that $\lam_\al^\nu\to\infty$, for $\nu\to\infty$. Hence in the limit $\nu\to\infty$ we obtain the equations 
\[\bar\dd_{J,A}(u)=0,\quad\mu\circ u=0.\]
These correspond to the $\bar J$-Cauchy-Riemann equations for a map from $\C$ to $\BAR M$. (See Proposition \ref{prop:bar del J} in Appendix \ref{sec:add}.) The second part of (\ref{defi:conv w}) imposes that in fact the sequence of rescaled vortex classes converges (in a suitable sense) to a $\bar J$-holomorpic sphere and that this sphere equals $\bar u_\al$. 

It is unclear whether the bubbling result, Theorem \ref{thm:bubb}, remains true if we replace the $C^1$-convergence in this part of condition (\ref{defi:conv w}) by $C^\infty$-convergence. (Compare to Remark \ref{rmk:C infty} in Section \ref{sec:comp}.) $\Box$
\end{Rmk}
\begin{Rmk} The ``energy-conservation'' condition (\ref{eq:E V bar T}) has the important consequence that the stable map $(\W,\z)$ represents the same equivariant homology class as the vortex class $W_\nu$, for $\nu$ large enough. (See \cite[Proposition 5.4]{ZiPhD} and \cite{ZiConsEv}.) $\Box$ 
\end{Rmk}
\begin{rmk}\label{rmk:z 0}\rm The purpose of the additional marked point $(\al_0,z_0)$ is to be able to formulate the second part of condition (\ref{defi:conv w}). For $\al\in T_\infty$ and $\nu\in\N$ the map $G u_\nu\circ\phi_\al^\nu$ is only defined on the subset $(\phi_\al^\nu)^{-1}(\C)\sub S^2$. Since by condition (\ref{defi:conv phi z}) we have $\phi_\al^\nu(z_{\al,0})=\infty$, the composition $\bar u_{W_\nu}\circ\phi_\al^\nu:Q\to M/G$ is well-defined for each compact subset $Q\sub S^2\wo (Z_\al\cup\{z_{\al,0}\})$. Hence the second part of condition (\ref{defi:conv w}) makes sense. 

As another motivation, note that the bubbling result, Theorem \ref{thm:bubb}, is in general wrong, if we do not introduce the additional marked points $z_0^\nu:=\infty$ and $(\al_0,z_0)$. See Example \ref{xpl:conv} in Section \ref{SEC:EXAMPLE} below. $\Box$
\end{rmk}
\begin{rmk}\label{rmk:conv rot}\rm One conceptual difficulty in defining the notion of convergence is the following. Consider the group $\Isom^+(\Si)$ of orientation preserving isometries of $\Si$ (with respect to the metric $\om_\Si(\cdot,j\cdot)$).%
\footnote{This coincides with the group of diffeomorphisms of $\Si$ that preserve the pair $(\om_\Si,j)$.}
 This group acts on $\B_\Si$ (defined as in (\ref{eq:B})), as in (\ref{eq:phi []}). The set $\ME$ of finite energy vortex classes is invariant under this action. 

Hence naively, in the definition of convergence, for $\al\in T_1$ one would allow $\phi_\al^\nu$ to be an orientation preserving isometry of $\C$, rather than just a translation. The problem is that with this modification, there is no evaluation map on the set of stable maps, that is continuous with respect to convergence. Such a map is needed for the definition of the quantum Kirwan map.

Note here that we cannot define evaluation of a vortex class $W$ at some point $z\in\Si$ by choosing a representative $(P,A,u)$ of $W$ and evaluating $u$ at some point in the fiber over $z$, thus obtaining a point in $M$, since this point depends on the choices. Instead, $W$ evaluates at $z$ to a point in the Borel construction for the action of $G$ on $M$.%
\footnote{See \cite[Proposition 6.1]{ZiPhD} and \cite{ZiConsEv}.}

Another reason for allowing only translations as reparametrizations (for $\al\in T_1$) is that the action of $\Isom^+(\Si)$ on the set of vortex classes with positive energy is not always free. (See Example \ref{xpl:rot} in Section \ref{sec:repara} below.) This means that the action of the reparametrization group on the set of simple stable maps is not always free, if we allow reparametrizations in $\Isom^+(\Si)$. (Compare to Section \ref{sec:repara}.) $\Box$
\end{rmk}
\section{An example: the Ginzburg-Landau setting}\label{SEC:EXAMPLE}
Recall the definition (\ref{eq:ME}) of the set $\ME$ of finite energy vortex classes over the plane $\Si=\C$, whose image has compact closure. In this section we describe this set, stable maps, and convergence of a sequence of vortex classes to a stable map, in the simplest interesting example: Let $(M,\om,J,G):=(\R^2,\om_0,i,S^1)$. We equip $\g:=\Lie(S^1)=i\R$ with the standard inner product, and consider the action of $S^1\sub\C$ on $\R^2=\C$ by multiplication of complex numbers. We define a momentum map $\mu:\C\to\g$ for this action by
\[\mu(z):=\frac i2(1-|z|^2).\]
In this setting, the energy functional (\ref{eq:E B}) was introduced by V.~L.~Ginzburg and L.~D.~Landau \cite{GL}, in order to model superconductivity. Following the book \cite{JT} by A.~Jaffe and C.~Taubes, the set $\ME$ can be described as follows. Let $w:=(P,A,u)\in\BB_\C$ (defined as in (\ref{eq:BB})) and recall the definition (\ref{eq:e W}) of the energy density $e_w$. We denote by
\[E(w):=\int_\C e_w\]
the energy of $w$. In the present situation the condition on the image in the definition of $\ME$ is superfluous. This means that if $w$ solves the vortex equations (\ref{eq:BAR dd J A u},\ref{eq:F A mu}) and has finite energy, then the closure of the image of $u$ is compact. To see this, we fix such a solution. If the energy of $w$ vanishes, then $\mu\circ u\const0$, therefore the image of $u$ equals $S^1=\mu^{-1}(0)\sub M=\C$ and is thus compact. Hence the statement is a consequence of the following result.
\begin{prop}\label{prop:S 1 C image} If the energy of $w$ is positive (and finite) then the image of $u$ is contained in the open unit ball $B_1\sub\C$. 
\end{prop}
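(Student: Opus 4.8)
The plan is to exploit the maximum principle applied to the function $h := \mu \circ u$, viewed (after descent) as a function on $\C$. In the Ginzburg--Landau setting $\mu(z) = \frac{i}{2}(1-|z|^2)$, so identifying $\g$ with $\R$ via the standard inner product, $h$ is (up to the sign and factor) $\frac12(1-|u|^2)$ downstairs, and the claim that $u(\C) \subset B_1$ is equivalent to $h > 0$ everywhere, i.e.\ $|u|^2 < 1$ on all of $\C$. First I would recall the correspondence from Remark \ref{rmk:trivial}: working on the trivial bundle $P_0 = \C\times S^1$, a vortex corresponds to a triple $(\Phi,\Psi,f) \in C^\infty(\C,\g\times\g\times\C)$ solving equations (\ref{eq:dd s f L f Phi},\ref{eq:dd s Psi}), and $|f|^2 = |u|^2$ is gauge-invariant, so it suffices to bound $|f|$.

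The key computation is a Bochner-type / Weitzenb\"ock identity for $\Delta(1-|f|^2)$ (or equivalently for $\mu\circ u$ expressed through the covariant derivative $d_A u$), using the first vortex equation $\bar\dd_{J,A}(u) = 0$ and the second equation $F_A + (\mu\circ u)\om_\Si = 0$. A standard calculation in this setting (see e.g.\ Jaffe--Taubes \cite{JT}) yields something of the shape
\[
\Delta(1-|f|^2) \;=\; -\,|d_A f|^2 \;+\; (1-|f|^2)\,|f|^2,
\]
or more precisely an inequality $\Delta(1-|f|^2) \leq (1-|f|^2)|f|^2 \cdot(\text{positive const})$ wherever $|f| \geq 1$; combined with the finite-energy decay, this forces the maximum principle to apply. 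Concretely, set $\rho := |f|^2 - 1$. Then $\rho$ satisfies $\Delta\rho \geq c\,\rho$ on the open set $\{\rho > 0\}$ for a positive constant $c$ (using $|f|^2 \geq 1$ there), so $\rho$ is subsolution-like on that set. Since $E(w) < \infty$, Proposition \ref{prop:bar u} (continuity at $\infty$) and the energy-density decay (Remark \ref{rmk:geometry}) give that $\mu\circ u \to 0$ at $\infty$, hence $|f|^2 \to 1$, so $\rho \to 0$ at infinity; in particular $\rho$ does not attain a positive interior maximum over $\C\cup\{\infty\}$ unless it is identically zero on a component of $\{\rho>0\}$, which the strong maximum principle then propagates to a contradiction with $E(w)>0$. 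Therefore $\rho \leq 0$ everywhere, i.e.\ $|u| \leq 1$; a second application of the strong maximum principle (or unique continuation) upgrades this to the strict bound $|u| < 1$, since $|u| \equiv 1$ would force $\mu\circ u \equiv 0$ and hence $E(w)=0$.

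I expect the main obstacle to be twofold. First, establishing the differential inequality $\Delta(1-|f|^2) \leq (1-|f|^2)|f|^2$ rigorously requires care with the gauge-covariant Bochner formula and the sign conventions in (\ref{eq:dd s f L f Phi},\ref{eq:dd s Psi}) — one must verify that the cross terms coming from $\bar\dd_{J,A}(u)=0$ genuinely vanish or have the favorable sign, and that the curvature substitution $F_A = -(\mu\circ u)\om_0$ enters with the right sign to produce the reaction term $(1-|f|^2)|f|^2$. Second, the maximum principle argument must be run on the \emph{noncompact} domain $\C$, so I would either compactify using the continuous extension of $\bar u_W$ to $S^2$ from Proposition \ref{prop:bar u} together with the decay estimates, or argue directly on large disks $B_R$ and let $R\to\infty$, controlling the boundary term via $|f|^2 \to 1$. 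Modulo these two points, the argument is the classical Ginzburg--Landau maximum-principle bound, and the role of the hypothesis $E(w) > 0$ is precisely to rule out the degenerate equality case $|u|\equiv 1$.
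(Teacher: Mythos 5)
Your plan is to reprove the classical Ginzburg--Landau maximum-principle bound directly, whereas the paper simply observes that $(A,u)$ solves the Euler--Lagrange equations for the Yang--Mills--Higgs energy and then cites Jaffe--Taubes \cite[Chap.~III, Theorem 8.1]{JT}, which is exactly that maximum-principle result. So your underlying mathematics is the same as what sits behind the citation, and the Bochner computation you sketch ($\Delta(1-|f|^2)\leq(1-|f|^2)|f|^2$ from the second-order equation, plus strong maximum principle to upgrade $\leq$ to $<$) is the correct mechanism. The paper's route buys brevity; yours buys self-containment.

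There is, however, a genuine gap in how you control the behavior at infinity. You invoke Proposition \ref{prop:bar u} (continuity at $\infty$) and Remark \ref{rmk:geometry} (pointwise energy-density decay) to conclude $\mu\circ u\to0$ at $\infty$ and hence $\rho\to0$. But both of those results carry the standing hypothesis that the closure of the image of $u$ is compact: Proposition \ref{prop:bar u} is stated for $W\in\ME$, and Remark \ref{rmk:geometry} explicitly assumes $u(P)$ has compact closure. The whole point of Proposition \ref{prop:S 1 C image}, as the surrounding text makes clear, is to \emph{remove} the image-compactness hypothesis from $\ME$ in the Ginzburg--Landau example; using those results here is therefore circular. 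What you actually have available is only that $E(w)<\infty$, which gives $\mu\circ u\in L^2(\C)$ and $d_Au\in L^2(\C)$, not pointwise decay. To close the gap one must obtain the $L^\infty$ bound and decay of $|f|$ from the finite energy assumption and elliptic regularity for the second-order equation \emph{before} invoking any of the paper's machinery that presupposes boundedness -- which is precisely the technical content of \cite[Chap.~III]{JT} that the paper's one-line proof delegates to the reference. Modulo replacing your appeal to Proposition \ref{prop:bar u}/Remark \ref{rmk:geometry} with a direct a~priori estimate from finite energy, your argument is sound.
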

\begin{proof}[Proof of Proposition \ref{prop:S 1 C image}]\setcounter{claim}{0} An elementary calculation shows that $(A,u)$ solves the Euler-Lagrange equations corresponding to the energy functional $E:\BB_\C\to[0,\infty]$. Therefore, it follows from \cite[Chap.~III, Theorem 8.1]{JT} that $|u(p)|<1$ for every $p\in P$. This proves Proposition \ref{prop:S 1 C image}.
\end{proof}
In fact, under the hypothesis of this result the image of $u$ equals $B_1$, see Corollary \ref{cor:S 1 C image} below. (However, this fact will not be used.) Consider $W\in\ME$. We define the \emph{local degree map}
\begin{equation}\label{eq:deg W}\deg_W:\C\to\N_0\end{equation}
as follows. We choose a representative $(P,A,u)$ of $W$. Since $E(W)<\infty$, it follows from \cite[Chapter III, Theorem 2.2]{JT} that for every smooth section $\si:\C\to P$, the map $u\circ\si:\C\to\C$ has only finitely many zeros. Let $z\in\C$. We define 
\[\deg_u(z):=\deg\left(\frac{u\circ\si}{|u\circ\si|}:S^1_\eps(z)\to S^1\right),\]
where $\si:\C\to P$ is any smooth section and $\eps>0$ is so small that the closed ball $\BAR B_\eps(z)$ intersects $(u\circ\si)^{-1}(0)$ only in the point $z$. Here $S^1_\eps(z)\sub\R^2$ denotes the circle of radius $\eps$, centered at $z$. This definition does not depend on the choice of $\si$ nor $\eps$. We now define the map (\ref{eq:deg W}) by 
\begin{equation}\label{eq:deg W z}\deg_W(z):=\deg_u(z).\end{equation}
By an elementary argument the right hand side is independent of the choice of the representative $(P,A,u)$ of $W$, and hence $\deg_W(z)$ is well-defined. Furthermore, it follows from \cite[Theorem 2.2]{JT} that $\deg_W$ takes on nonnegative values. We define the \emph{(total) degree of $W$} to be
\begin{equation}\label{eq:deg W sum}\deg(W):=\sum_{z\in\C}\deg_W(z).\end{equation}
(Only finitely many terms in this sum are non-zero, hence the sum makes sense.) This number is proportional to the energy of $W$:
\begin{prop}\label{prop:E d} We have
\begin{equation}\label{eq:E d}
E(W)=\pi\deg(W).
\end{equation}
\end{prop}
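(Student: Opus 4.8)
The plan is to prove the identity $E(W)=\pi\deg(W)$ by the standard Bogomolny / first-order decomposition argument for abelian vortices, carried out on a representative $w=(P,A,u)$ of $W$, and then to express the resulting topological term as $\pi$ times the total vortex number. First I would pick a representative $(P,A,u)$; since every $G$-bundle over $\C$ is trivializable, fix a smooth section $\si:\C\to P$ and work with $f:=u\circ\si:\C\to\C$ and the pulled-back connection, which in the notation of Remark \ref{rmk:trivial} corresponds to a pair encoded by functions on $\C=\R^2$, with curvature a scalar $2$-form $F$. In these coordinates the energy density (\ref{eq:e W}) becomes the classical Ginzburg--Landau density $\frac12\big(|d_Af|^2+|F|^2+\frac14(1-|f|^2)^2\big)$ where $d_Af$ is the covariant derivative. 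The key algebraic step is the pointwise Bogomolny identity: one completes squares to write
\[e_w=\big|\bar\dd_{A}f\big|^2+\tfrac12\big|{*}F+\tfrac12(1-|f|^2)\big|^2+\tfrac12 F+\tfrac{i}{2}\,d\!\left(\textrm{something}\right),\]
i.e. the density equals a sum of nonnegative "first-order" terms plus $\frac12 F$ plus an exact form (the exact form being the differential of a current built from $f$ and $A$). This is exactly the statement that $(A,f)$ satisfying the vortex equations (\ref{eq:BAR dd J A u},\ref{eq:F A mu}) is an absolute minimizer, and I would quote \cite[Proposition 3.1]{CGS} or \cite[Chap.~III]{JT} for the precise form rather than grinding through the completion of squares.

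Next I would integrate over $\C$. Since $w$ solves the vortex equations, the two first-order square terms vanish identically, so $E(W)=\int_\C e_w=\frac12\int_\C F+\int_\C d(\cdots)$. The exact term integrates to zero provided the boundary contribution at infinity vanishes; this is where finite energy and the compact-image property enter, via the decay estimates already invoked in the text (Remark \ref{rmk:geometry}, \cite[Corollary 1.4]{ZiA}), which force $|d_Af|$, $F$, and $1-|f|^2$ to decay fast enough on $\C\wo B_R$ that $\int_{\dd B_R}(\cdots)\to0$ as $R\to\infty$. Hence $E(W)=\frac12\int_\C F$. It remains to identify $\frac{1}{2\pi}\int_\C F$ with $\deg(W)$ as defined in (\ref{eq:deg W sum}): the curvature integral equals $2\pi$ times the winding number of $f/|f|$ around a large circle (this is the first Chern number of the line bundle with connection, or directly: $\frac{i}{2\pi}\int_{\dd B_R}(f^{-1}d_Af)$ computed using $F=d A$ plus the fact that $\bar\dd_Af=0$ forces $d_Af$ to be essentially $(\dd f)\,dz$ away from zeros), and by \cite[Chapter III, Theorem 2.2]{JT} this winding number equals the sum of the local degrees $\deg_u(z)$ over the finitely many zeros of $f$, which is precisely $\deg(W)$. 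Combining, $E(W)=\frac12\cdot 2\pi\deg(W)=\pi\deg(W)$.

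The main obstacle, and the step deserving the most care, is the vanishing of the boundary term at infinity together with the clean identification of $\frac{1}{2\pi}\int_\C F$ with a winding number: one must be sure that the finite-energy hypothesis (and the resulting decay from \cite{ZiA}) is strong enough to justify both integrating the exact form to zero and exchanging $\int_\C F$ with $\lim_{R\to\infty}\int_{\dd B_R}$ of the corresponding one-form. A secondary point is checking that everything is independent of the choices (section $\si$, trivialization, representative of $W$): the energy $E(W)$ and the local degrees $\deg_W(z)$ are already known to be well-defined by the discussion preceding the proposition, so only the intermediate gauge-dependent quantities need to be handled, and gauge invariance of $F$ (up to the exact-form bookkeeping) takes care of this. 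I would organize the write-up so that the Bogomolny identity and the Jaffe--Taubes degree theorem are cited as black boxes, and the only genuine work is the boundary/decay argument.
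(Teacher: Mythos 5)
Your proposal is correct in outline and rests on the same Bogomolny/CGS circle of ideas, but it organizes the final computation differently from the paper's proof. The paper's Proposition \ref{prop:E int u om} (itself a consequence of \cite[Proposition 3.1]{CGS} plus the radial-gauge estimate of Lemma \ref{le:int S R si A} and the decay from \cite[Corollary 1.4]{ZiA}) delivers $E(w)=\lim_{R\to\infty}\int_{B_R}(u\circ\si)^*\om_0$ directly; Stokes' theorem with the primitive $\al=\frac{r^2}{2}\ga$ then turns $\int_{B_R}v^*\om_0$ into the boundary integral $\int_{S^1_R}v^*\al$, which is sandwiched between $\pi\deg(W)\min_{S^1_R}|v|^2$ and $\pi\deg(W)\max_{S^1_R}|v|^2$, and $|v|\to1$ uniformly on large circles by Lemma \ref{le:a priori}. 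You instead stop at $E=\tfrac12\int_\C F$ (the magnetic flux) and invoke flux quantization to read off the degree. Since $u^*\om$ and (a multiple of) $F$ differ by the Bogomolny squares plus an exact one-form, the two routes come from the same pointwise identity, but the paper's version has the advantage that, after replacing the energy density by $v^*\om_0$, the degree appears directly as the winding number $\int_{S^1_R}v^*\ga=2\pi\deg(W)$ and the sandwich argument disposes of all boundary subtleties in one stroke, with no need to ever write down the exact form or track $\int_{S^1_R}A$. Your route works but requires an extra step you rightly flag as delicate: the identification $\int_\C F=\pm 2\pi\deg(W)$ itself needs the same decay of $A$ and $|f|\to1$ that you use to kill the boundary term, and you should also fix normalizations (your displayed decomposition mixes scalar squares with the two-form $\tfrac12F$, and $F$ is $i\R$-valued, so signs and factors of $i$ must be tracked to land on $+\pi\deg(W)$).
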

\begin{proof}[Proof of Proposition \ref{prop:E d}]\setcounter{claim}{0} We choose a representative $w:=(P,A,u)$ of $W$ and a smooth section $\si:\C\to P$ as in Proposition \ref{prop:E int u om} in Appendix \ref{sec:vort}. We denote $v:=u\circ\si:\C\to\C$. Let $R>0$ be so large that $v(z)\neq0$ if $|z|\geq R$. We denote by $r$ the radial coordinate in $M=\C$ and by $\ga$ the standard angular one-form on $\R^2\wo\{0\}$.%
\footnote{By our convention this form integrates to $2\pi$ over any circle centered at the origin.}
 The one-form $\al:=\frac{r^2}2\ga$ is a primitive of $\om_0$, and therefore, by Stokes' theorem,
\begin{equation}\label{eq:int B R u om}\int_{B_R}v^*\om_0=\int_{S^1_R}v^*\al.\end{equation}
By elementary arguments, we have
\[\int_{S^1_R}v^*\ga=2\pi\deg\left(S^1_R\ni z\mapsto\frac{v(z)}{|v(z)|}\in S^1\right)=2\pi\sum_{z\in B_R}\deg_u(z)=2\pi\deg(W).\]
(Here in the last equality we used the assumption that $v(z)\neq0$ if $|z|\geq R$.) It follows that 
\begin{equation}\label{eq:pi deg W}\pi\deg(W)\min_{z\in S^1_R}|v(z)|^2\leq\int_{S^1_R}v^*\al\leq\pi\deg(W)\max_{z\in S^1_R}|v(z)|^2.\end{equation}
On the other hand, using the estimates $E(w)<\infty$ and $|\mu\circ u|\leq\sqrt{e_w}$, Lemma \ref{le:a priori} in Appendix \ref{sec:vort} implies that $|\mu\circ v(Rz)|=\frac12(1-|v(Rz)|^2)$ converges to 0, uniformly in $z\in S^1$, as $R\to\infty$. Combining this with (\ref{eq:pi deg W},\ref{eq:int B R u om},\ref{eq:int B R E}), equality (\ref{eq:E d}) follows. This proves Proposition \ref{prop:E d}.
\end{proof}
This result has the following consequence.
\begin{cor}\label{cor:S 1 C image} Let $w:=(P,A,u)$ be a smooth vortex over $\C$ with positive and finite energy. Then the image of $u$ contains the open unit ball $B_1\sub\C$. 
\end{cor}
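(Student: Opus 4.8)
The plan is to show that the image of $u$ contains $B_1$ by combining the degree formula with a continuity/winding argument. Fix a representative $w=(P,A,u)$ with $E(w)>0$ and finite, and a smooth section $\si:\C\to P$, so that $v:=u\circ\si:\C\to\C$ is a smooth map with finitely many zeros and, by Proposition \ref{prop:S 1 C image}, $|v(z)|<1$ for all $z$. By Proposition \ref{prop:E d} we have $\deg(W)=E(W)/\pi>0$, so $v$ has total winding number $d:=\deg(W)\geq1$ around zero on large circles $S^1_R$. The strategy is: first, show $0\in\im v$ (this follows since $\deg(W)>0$ forces $v$ to have a zero); second, show that $v$ is \emph{surjective onto} $B_1$ by a degree argument applied to the map $v:\BAR B_R\to\C$ relative to its boundary.

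The key steps, in order, are as follows. First I would record that since $d=\deg(W)\geq1$, for $R$ large the restriction $v|_{S^1_R}:S^1_R\to\C\wo\{0\}$ has winding number $d$ around $0$. By Proposition \ref{prop:S 1 C image}, $v(S^1_R)\sub B_1$, so in fact for any $c\in B_1$ we may ask whether $c\in v(\BAR B_R)$. Second, for a fixed $c\in B_1$ choose $R$ large enough that $|v(z)|$ stays close to $1$ on $S^1_R$ (possible by the a priori decay $|\mu\circ v|\to0$ used in the proof of Proposition \ref{prop:E d}, i.e.\ $|v(Rz)|\to1$ uniformly), hence $|v(z)|>|c|$ on $S^1_R$; then the straight-line homotopy $v_t:=v-tc$ on $S^1_R$ never vanishes, so $v$ and $v-c$ have the same winding number $d\geq1$ on $S^1_R$. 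Third, conclude by the standard fact that a continuous map $\BAR B_R\to\C$ whose boundary restriction has nonzero winding number around $c$ must attain the value $c$ somewhere in $B_R$; hence $c\in\im v\sub\im u$ (identifying $M=\C$). Since $c\in B_1$ was arbitrary, $B_1\sub\im u$.

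I expect the main obstacle to be the second step: making precise, uniformly in the target point $c\in B_1$, that one can choose a single radius $R$ (or a family $R=R(c)$) on which $|v|>|c|$, so that the homotopy argument applies. This needs the uniform boundary behavior $\liminf_{|z|\to\infty}|v(z)|=1$, which is exactly what the a priori estimate in Lemma \ref{le:a priori} (Appendix \ref{sec:vort}) gives — the same ingredient already invoked in the proof of Proposition \ref{prop:E d}. For $c$ with $|c|$ close to $1$ one simply takes $R$ correspondingly large; there is no uniformity issue because each $c$ is handled separately. A minor point to check is independence of the conclusion from the choice of section $\si$ and representative $(P,A,u)$, but this is immediate since changing $\si$ replaces $v$ by $g\cdot v$ for a map $g:\C\to S^1$, which does not change the image, and changing the representative amounts to a gauge transformation, which likewise preserves $\im u$ up to the $S^1$-action that is already quotiented out in the statement. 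Assembling these pieces gives the corollary.
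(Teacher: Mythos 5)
Your proof is correct, but it takes a genuinely different route from the paper's. The paper's argument is more elementary: it observes that the set $X:=\{|u(p)|\,:\,p\in P\}\sub[0,1)$ is connected (hence an interval), contains $0$ (because $\deg(W)>0$ forces a zero, via Proposition~\ref{prop:E d}), and has supremum $1$ (because $E(w)<\infty$ implies $|v(Rz)|\to1$ as $R\to\infty$); so $X\supseteq[0,1)$, and then $S^1$-invariance of $u(P)$ gives $B_1\sub u(P)$. Your approach instead proves, for each fixed $c\in B_1$ individually, that $c$ is attained, by the winding-number argument: pick $R$ large so $|v|>|c|$ on $S^1_R$, compare winding numbers of $v$ and $v-c$ via the straight-line homotopy, and invoke the standard fact that a map $\BAR B_R\to\C$ whose boundary restriction winds nontrivially around $c$ must hit $c$. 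Both are sound; the paper's version is shorter and avoids invoking planar degree theory for each target value, while your version makes explicit, without appealing to $S^1$-invariance, that every single $c\in B_1$ lies in $\im v$. One small redundancy in your write-up: the separate ``first step'' showing $0\in\im v$ is already subsumed by the general degree argument applied with $c=0$, so you need not treat it separately.
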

\begin{proof}[Proof of Corollary \ref{cor:S 1 C image}]\setcounter{claim}{0} Consider the set 
\[X:=\big\{|u(p)|\,\big|\,p\in P\big\}.\]
This set is connected, and hence an interval. Since $E(w)<\infty$, for every $r<1$ there exists a point $p\in P$ such that $|u(p)|\geq r$. On the other hand, positivity of the energy and Proposition \ref{prop:E d} imply that $u$ vanishes somewhere. It follows that $X$ contains the interval $[0,1)$. Since the image of $u$ is invariant under the $S^1$-action, it follows that it contains the ball $B_1$. This proves Corollary \ref{cor:S 1 C image}. 
\end{proof}
Proposition \ref{prop:S 1 C image} and Corollary \ref{cor:S 1 C image} imply that the image of $u$ equals $B_1$, for every smooth vortex over $\C$ with positive and finite energy. Fix now $d\in\N_0$. We denote by $\Sym^d(\C)$ the $d$-fold symmetric product. By definition this is the quotient topological space for the action of the symmetric group $S_d$ on $\C^d$ given by
\[\si\cdot(z_1,\ldots,z_d):=\big(z_{\si^{-1}(1)},\ldots,z_{\si^{-1}(d)}\big).\]
We identify $\Sym^d(\C)$ with the set $\wt\Sym^d(\C)$ of all maps $m:\C\to\N_0$ such that $m(z)\neq0$ for only finitely many points $z\in\C$, and
\begin{equation}\label{eq:sum m}\sum_{z\in\C}m(z)=d,
\end{equation}
by assigning to $\z:=[z_1,\ldots,z_d]\in\Sym^d(\C)$ the \emph{multiplicity} map $m_\z:\C\to\N_0$, given by 
\[m_\z(z):=\#\big\{i\in\{1,\ldots,d\}\,|\,z_i=z\big\}.\]
We can now characterize vortex classes with energy $d\pi$ as follows.
\begin{prop}\label{prop:class} The map 
\begin{equation}
  \label{eq:M d Sym}\M_d:=\big\{W\in\M\,|\,E(W)=d\pi\big\}\ni W\mapsto\deg_W\in\wt\Sym^d(\C)=\Sym^d(\C) 
\end{equation}
is a bijection.   
\end{prop}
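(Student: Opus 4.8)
The plan is to construct an explicit inverse to the map \eqref{eq:M d Sym}. The heart of the matter is the classical result from the book \cite{JT} of Jaffe and Taubes: in the abelian Ginzburg--Landau setting over $\C$, for every prescribed finite configuration of zeros (with multiplicities) there exists a vortex realizing precisely that configuration, and it is unique up to gauge equivalence. Concretely, given $\z=[z_1,\dots,z_d]\in\Sym^d(\C)$, one writes down, in a global trivialization of the trivial $S^1$-bundle $P_0=\C\x S^1$, the pair $(A,u)$ in terms of a solution of the scalar equation $\Delta\log|u|^2=|u|^2-1+4\pi\sum_i m_\z(z_i)\delta_{z_i}$ (equivalently, the second-order elliptic PDE for the function $h$ with $u=e^{h+i\theta}\prod_i(z-z_i)^{m_\z(z_i)}/|\cdot|$, after the standard substitution), and invokes \cite[Chap.~III]{JT} for existence, uniqueness, and the asymptotics guaranteeing finite energy. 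This produces a well-defined map $\Sym^d(\C)\to\M_d$, $\z\mapsto W_\z$, and by construction $\deg_{W_\z}=m_\z$, i.e.\ it is a right inverse to \eqref{eq:M d Sym}.

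Next I would verify surjectivity of \eqref{eq:M d Sym} and that the constructed map is also a left inverse. Surjectivity: given $W\in\M_d$, Proposition \ref{prop:E d} gives $\deg(W)=E(W)/\pi=d$, so $\deg_W\in\wt\Sym^d(\C)$ and $\deg_W$ lies in the target. Injectivity of \eqref{eq:M d Sym}: suppose $W,W'\in\M_d$ with $\deg_W=\deg_{W'}=m$. Choose representatives $(P,A,u)$ and $(P',A',u')$; since every $G$-bundle over $\C$ is trivializable we may assume $P=P'=P_0$ and work with the pairs $(A,u)$, $(A',u')$ on $P_0$, equivalently with the maps $v=u\circ\si$, $v'=u'\circ\si$ obtained from a global section $\si$. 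Both $v$ and $v'$ have zero set exactly $\mathrm{supp}(m)$ with the prescribed multiplicities and both solve the first-order vortex equations \eqref{eq:BAR dd J A u}, \eqref{eq:F A mu} with the Ginzburg--Landau $\mu$; by the uniqueness statement in \cite[Chap.~III]{JT} (uniqueness of the solution of the associated scalar PDE with given divisor and the finite-energy/asymptotic boundary condition) $(A,u)$ and $(A',u')$ are related by a smooth gauge transformation, so $W=W'$. Hence \eqref{eq:M d Sym} is injective and its composite with $\z\mapsto W_\z$ is the identity in both directions.

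Finally one should check the bookkeeping that makes the statement literally a bijection of \emph{sets} (continuity is not claimed here): that $\deg_W$ indeed satisfies the constraint \eqref{eq:sum m} with the value $d$, which is exactly Proposition \ref{prop:E d} again, and that the local degree $\deg_W(z)$ is well-defined independently of the representative and of $\eps$, which is already recorded in the discussion preceding the statement (via \cite[Chap.~III, Theorems 2.2]{JT}). Assembling: $\z\mapsto W_\z$ and $W\mapsto\deg_W$ are mutually inverse, so \eqref{eq:M d Sym} is a bijection.

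\textbf{Main obstacle.} The analytic content is entirely imported from \cite{JT}; the one genuine point requiring care is matching conventions so that a \emph{first-order} symplectic vortex over $\C$ (in the intrinsic triple formulation $(P,A,u)$) corresponds precisely to a solution of the \emph{second-order} scalar Ginzburg--Landau equation treated in \cite{JT}. This means: (i) reducing to the trivial bundle and a global gauge, using Remark \ref{rmk:trivial}; (ii) performing the standard substitution that turns $\bar\partial_{J,A}u=0$ together with $F_A+(\mu\circ u)\om_0=0$ into the single equation for $\log|u|^2$ with Dirac sources at the zeros; and (iii) confirming that the finite-energy hypothesis $E(W)<\infty$ is equivalent to the boundary behavior ($|u|\to1$ at infinity) under which \cite{JT} proves existence and uniqueness. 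None of this is deep, but it is the step where an error would hide, so I would write it out carefully; everything else is formal.
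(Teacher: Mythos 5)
Your proposal is correct and takes essentially the same route as the paper: the paper's entire proof is the single citation to \cite[Chap.~III, Theorem 1.1]{JT}, which is precisely the Taubes existence-and-uniqueness theorem you invoke (for constructing the inverse via the prescribed-divisor solution of the second-order scalar Ginzburg--Landau equation, and for injectivity via uniqueness). What you write out — reduction to the trivial bundle via Remark \ref{rmk:trivial}, the substitution passing between the first-order vortex system and the scalar PDE with Dirac sources, and the equivalence of finite energy with the $|u|\to 1$ boundary behavior — is the implicit content of that citation, expanded.
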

\begin{proof}[Proof of Proposition \ref{prop:class}]\setcounter{claim}{0} This follows from \cite[Chap.~III, Theorem 1.1]{JT}.
\end{proof}
As a consequence of Proposition \ref{prop:class}, we obtain a classification of stable maps in the sense of Definition \ref{defi:st} in the present setting: Here the symplectic quotient $\BAR M=\mu^{-1}(0)/S^1$ consists of a single point, the orbit $S^1\sub M=\C$. Hence every holomorphic sphere in $\BAR M$ is constant. Stable maps are thus classified in terms of their combinatorial structure $\big(T_0,T_1,T_\infty,E\big)$, the location of the special points, and for each $\al\in T_1$, a point in some symmetric product of $\C$.

Convergence to a stable map is explicitly described by the following result. Here we will use the inclusion 
\begin{eqnarray}
\label{eq:iota Sym}&\iota_d:\disj_{0\leq d'\leq d}\Sym^{d'}(\C)\to\Sym^d(S^2),&\\
\nn&\iota_d([z_1,\ldots,z_{d'}]):=\big[z_1,\ldots,z_{d'},\infty,\ldots,\infty\big],&
\end{eqnarray}
where we identify $S^2\iso \C\cup\{\infty\}$. We drop the constant maps to the symplectic quotient from the notation for a stable map, since no information gets lost.
\begin{prop}[Convergence in the Ginzburg-Landau setting]\label{PROP:CONV S 1 C} Let $k\in\N_0$, for $\nu\in\N$ let $W_\nu\in\ME$ be a vortex class and $z_1^\nu,\ldots,z_k^\nu\in\C$ be points, and let 
\[(\W,\z):=\Big(T_0,T_1,T_\infty,E,(W_\al)_{\al\in T_1},(z_{\al\be})_{\al E\be},(\al_i,z_i)_{i=0,\ldots,k}\Big)\]
be a stable map. Then the following conditions are equivalent.
\begin{enui}
\item\label{prop:conv S 1 C w} The sequence $\big(W_\nu,z_0^\nu:=\infty,z_1^\nu,\ldots,z_k^\nu\big)$ converges to $(\W,\z)$.
\item\label{prop:conv S 1 C deg} For large enough $\nu$ we have
\begin{equation}
\label{eq:deg w nu}\deg(W_\nu)=\sum_{\al\in T_1}\deg(W_\al)=:d.
\end{equation}
Furthermore, there exist M\"obius transformations $\phi_\al^\nu:S^2\to S^2$ for $\al\in T$ and $\nu\in\N$ such that conditions (\ref{defi:conv phi z},\ref{defi:conv al be},\ref{defi:conv z}) of Definition \ref{defi:conv} are satisfied, and for every $\al\in T_1$ the point in the symmetric product $\deg_{W_\nu}\circ\phi_\al^\nu\in\Sym^d(\C)\sub\Sym^d(S^2)$ converges to $\iota_d(\deg_{W_\al})\in\Sym^d(S^2)$, as $\nu\to\infty$.
\end{enui}
\end{prop}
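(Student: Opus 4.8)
The plan is to prove the equivalence of (\ref{prop:conv S 1 C w}) and (\ref{prop:conv S 1 C deg}) by reducing Definition \ref{defi:conv} of convergence to purely ``combinatorial-plus-divisor'' data in this special setting, using Propositions \ref{prop:E d} and \ref{prop:class}. The key observation is that in the Ginzburg-Landau setting $\BAR M$ is a point, so every $\bar J$-holomorphic sphere $\bar u_\al$ ($\al\in T_\infty$) is constant and $E(\bar u_\al)=0$; moreover the second part of condition (\ref{defi:conv w}) — $C^1$-convergence of $\bar u_{W_\nu}\circ\phi_\al^\nu$ to $\bar u_\al$ on compact subsets of $S^2\wo(Z_\al\cup\{z_{\al,0}\})$ — is automatically satisfied, because both sides take values in the single point $\BAR M$ (the only thing to check is that $\bar u_{W_\nu}\circ\phi_\al^\nu(Q)\sub M^*/G$ for large $\nu$, which follows from Proposition \ref{prop:S 1 C image}, since on any compact $Q$ away from the nodal/marked points the rescaled vortices have image escaping the zero set of $u$, hence landing in $\{z:|z|>0\}/S^1\sub M^*/G$, once $\phi_\al^\nu$ is a rescaling with $\lam_\al^\nu\to\infty$). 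Thus for $\al\in T_\infty$ the only surviving content of Definition \ref{defi:conv} is the rescaling condition (\ref{defi:conv phi z}) on $\phi_\al^\nu$, which is retained verbatim in (\ref{prop:conv S 1 C deg}), and the tree-compatibility (\ref{defi:conv al be}), also retained. Likewise the energy-conservation equation (\ref{eq:E V bar T}) reduces, via Proposition \ref{prop:E d}, to $E(W_\nu)\to\pi\sum_{\al\in T_1}\deg(W_\al)$, equivalently (for large $\nu$, since $\deg$ is integer-valued and $E$ is continuous) to $\deg(W_\nu)=\sum_{\al\in T_1}\deg(W_\al)=:d$, which is (\ref{eq:deg w nu}).

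The heart of the argument is therefore to show, for $\al\in T_1$ (where $\phi_\al^\nu$ is a translation), that the first part of condition (\ref{defi:conv w}) — namely $(\phi_\al^\nu)^*W_\nu|_{\BAR\Om}\to W_\al|_{\BAR\Om}$ in $\tau_{\BAR\Om}$ for every bounded $\Om$ — is equivalent to the divisor convergence $\deg_{W_\nu}\circ\phi_\al^\nu\to\iota_d(\deg_{W_\al})$ in $\Sym^d(S^2)$. For the forward direction ((\ref{prop:conv S 1 C w})$\Rightarrow$(\ref{prop:conv S 1 C deg})): I would fix a representative $(P_\nu,A_\nu,u_\nu)$ and a section $\si_\nu$, so that $v_\nu:=u_\nu\circ\si_\nu:\C\to\C$; $C^\infty_{\loc}$-convergence of the pulled-back vortices to a representative $v_\al$ of $W_\al$ on every $\BAR\Om$ gives $C^\infty_{\loc}$-convergence of $v_\nu\circ\phi_\al^\nu\to v_\al$, hence convergence of zeros with multiplicity on compact sets (a zero of $v_\al$ is a nondegenerate-in-degree zero by \cite[Chap.~III, Theorem 2.2]{JT}, and zeros of $v_\nu\circ\phi_\al^\nu$ cluster near zeros of $v_\al$ by Hurwitz/Rouché-type counting, the total count of zeros being exactly $\deg$ by Proposition \ref{prop:E d}). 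The remaining $d-\deg(W_\al)$ zeros of $\deg_{W_\nu}$ must escape every bounded region in the $\phi_\al^\nu$-chart; condition (\ref{defi:conv al be}) together with the fact that along the subtree leading away from $\al$ all energy is eventually accounted for by the other components forces these escaping zeros to concentrate at the nodal point $z_{\al,0}=\infty$, giving precisely the $\infty$-padding in $\iota_d$. For the converse ((\ref{prop:conv S 1 C deg})$\Rightarrow$(\ref{prop:conv S 1 C w})): given divisor convergence on each vortex component plus the retained conditions (\ref{defi:conv phi z},\ref{defi:conv al be},\ref{defi:conv z}), I would reconstruct $C^\infty_{\loc}$-convergence of the pulled-back vortices from the classification Proposition \ref{prop:class} — a vortex class over $\C$ is \emph{determined} by its divisor, and one checks (using the explicit solutions / the continuous dependence in \cite[Chap.~III, Theorem 1.1]{JT}) that the map $\Sym^d(\C)\to\M_d$ is a homeomorphism onto its image for the $C^\infty_{\loc}$-topology, so divisor convergence on bounded sets upgrades to $\tau_{\BAR\Om}$-convergence.

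The main obstacle I anticipate is precisely this last point: proving that the bijection of Proposition \ref{prop:class} is bicontinuous — that convergence of divisors in $\Sym^d$ implies $C^\infty_{\loc}$-convergence of the corresponding vortex classes (and the no-energy-loss statement that no zeros sneak off to $\infty$ unless forced by the tree structure). In the compact case this would be Uhlenbeck/elliptic-bootstrapping plus the a priori estimates of Appendix \ref{sec:vort}, but here one must control behaviour near $\infty\in S^2$ and rule out energy escaping to the nodal point except as dictated by the combinatorics of $(T,E)$ and the energy identity (\ref{eq:E V bar T}); this is exactly where the energy-conservation hypothesis does its work, and I would lean on the isoperimetric/decay estimates (cf.\ \cite{ZiA}, Lemma \ref{le:a priori}) used in the proof of Proposition \ref{prop:E d} to pin the ``lost'' degree to $\infty$. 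The rest is bookkeeping: matching the M\"obius transformations $\phi_\al^\nu$ on the two sides (identical), and translating the $C^1$-sphere-convergence clause into vacuity as explained above. I would carry the steps out in the order: (1) reduce energy conservation to (\ref{eq:deg w nu}) via Proposition \ref{prop:E d}; (2) dispose of the $T_\infty$-clauses of Definition \ref{defi:conv} as automatic; (3) establish the divisor $\leftrightarrow$ $C^\infty_{\loc}$ dictionary for vortex components using Propositions \ref{prop:class} and \ref{prop:S 1 C image}; (4) handle escape-to-$\infty$ of excess zeros via the tree structure and the a priori decay estimates; (5) assemble both implications.
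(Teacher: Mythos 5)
There is a genuine gap in your treatment of the $T_\infty$-clause of Definition~\ref{defi:conv}(\ref{defi:conv w}). You assert it is ``automatically satisfied, because both sides take values in the single point $\BAR M$.'' This is not so: the map $\bar u_{W_\nu}\circ\phi_\al^\nu$ takes values in $M/G\iso[0,\infty)$, not in $\BAR M$; only the prospective limit $\bar u_\al$ is $\BAR M$-valued. Concretely, in the Ginzburg--Landau setting the condition demands that $|u_\nu|\circ\phi_\al^\nu$ converge in $C^1$ on every compact $Q\sub S^2\wo(Z_\al\cup\{z_{\al,0}\})$ to the constant $1$ --- a genuine analytic statement, not a tautology. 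In the paper this is the content of Claims~\ref{claim:u nu i j} and \ref{claim:Z Z al}: one rescales by $\phi_\al^\nu\circ\psi$ (with $\psi(\infty)=z_{\al,0}$), applies Proposition~\ref{prop:cpt mod} (Compactness modulo bubbling and gauge) to extract a limit $\infty$-vortex and a finite bubbling set $Z$, and then must show $\psi(Z)\sub Z_\al$. That last step is an energy-counting argument: the already-established $T_1$-convergence (via Proposition~\ref{prop:loc conv S 1 C}) together with Proposition~\ref{prop:E d} forces all $d\pi$ units of energy into the translates $\phi_\be^\nu(B_R)$, $\be\in T_1$, leaving less than $\Emin=\pi$ near any putative bubbling point off $Z_\al$ (using Lemma~\ref{le:Q Q'} to separate the pieces), contradicting the quantization in Proposition~\ref{prop:cpt mod}(\ref{prop:cpt mod lim nu E eps}). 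None of this is in your sketch, and your brief remark about Proposition~\ref{prop:S 1 C image} giving image in $M^*/G$ does not supply the $C^1$ estimate.

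On the divisor dictionary you correctly identify the crux (Proposition~\ref{prop:loc conv S 1 C}) and the forward direction (Rouch\'e-type stability of zeros, degree-counting). For the converse, though, the paper does not prove continuity of the Jaffe--Taubes parametrization $\Sym^d(\C)\to\M_d$ directly; it uses the subsequence criterion (Lemma~\ref{le:subsubseq}): pass to a subsequence, extract a $C^\infty_\loc$-limit $W_0$ via Proposition~\ref{prop:cpt mod} (with $R_\nu\equiv1$), then run the already-proved forward implication to see $\deg_{W_0}=\deg_W$ and conclude $W_0=W$ by injectivity of (\ref{eq:M d Sym}). Your proposed route --- bicontinuity of $\Sym^d(\C)\to\M_d$ --- would also work, but the cited form of \cite[Chap.~III, Theorem 1.1]{JT} gives existence and uniqueness, not continuous dependence, so you would have to supply that; the compactness-plus-uniqueness route avoids it. Your reduction of (\ref{eq:E V bar T}) to (\ref{eq:deg w nu}) via Proposition~\ref{prop:E d} is correct and matches the paper.
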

The proof of Proposition \ref{PROP:CONV S 1 C} is based on Proposition \ref{prop:cpt mod} (Compactness modulo bubbling and gauge) in Section \ref{sec:comp} below, and will be given in Section \ref{sec:proof:prop:conv S 1 C}.
\begin{xpl}\label{xpl:conv} Let $k:=0$, and for $\nu\in\N$ let $W^\nu\in\M_7$ be the unique vortex satisfying 
\[\deg_{W^\nu}(-2-i)=1,\qquad\deg_{W^\nu}(3+4i)=2,\qquad \deg_{W^\nu}(\nu e^{i\nu})=4.\]
Let $W_1\in\M_3$ be the unique vortex satisfying 
\[\deg_{W_1}(-2-i)=1,\qquad \deg_{W_1}(3+4i)=2,\]
and $W_2\in\M_4$ be the unique vortex satisfying 
\[\deg_{W_2}(0)=4.\]
Then $\big(W^\nu,z_0^\nu:=\infty\big)$ converges to the stable map consisting of the sets $T_0:=\emptyset$, $T_1:=\{1,2\}$, $T_\infty:=\{0\}$, the tree relation $E:=\{(1,0),(0,1),(2,0),(0,2)\}$, the vortices $W_1$ and $W_2$, the unique constant $\bar J$-holomorphic sphere $\bar u_0$ in $\BAR M$, the nodal points $z_{1\,0}:=z_{2\,0}:=\infty$, $z_{0\,1}:=1$, $z_{0\,2}:=2\in S^2\iso\C\cup\{\infty\}$, and the marked point $(\al_0,z_0):=(0,\infty)\in T\x S^2$. This follows from Proposition \ref{PROP:CONV S 1 C}. 

It follows from this example that in general, the additional marked points $z_0^\nu:=\infty$ are needed in the bubbling result, Theorem \ref{thm:bubb}. Without these points, no subsequence of $W_\nu$ as above converges to a stable map.%
\footnote{For this statement to make sense, here we adjust the notion of ``stable map'' by discarding the marked point $(\al_0,z_0)$.}
 This follows from an elementary argument. $\Box$
\end{xpl}
\section{The action of the reparametrization group}\label{sec:repara}
This section covers an additional topic, which will not be used in this memoir, but will be relevant in a future definition of the quantum Kirwan map. Namely, we introduce a natural group of reparametrizations and show that this group acts freely on the set of simple stable maps consisting of vortex classes over $\C$ and pseudo-holomorphic spheres in the symplectic quotient. The relevance of this result is the following. For the definition of the quantum Kirwan map it will be necessary to show that a certain natural evaluation map on the set of vortex classes over $\C$ (see \cite[Proposition 6.1]{ZiPhD} and \cite{ZiConsEv}) is a pseudo-cycle. This will rely on the fact that its omega limit set has codimension at least two. In order to show this, one needs to cut down the dimension of each ``boundary stratum'' by dividing by the action of the reparametrization group. Heuristically, the freeness of the action of this group implies that the quotient is a smooth manifold, hence providing a meaning to this procedure.

We fix finite sets $T_0,T_1,T_\infty$ and a tree relation $E$ on the disjoint union $T:=T_0\disj T_1\disj T_\infty$. We define the \emph{reparametrization group} $G_T$ as follows. We define $\Aut(T):=\Aut\big(T_0,T_1,T_\infty,E\big)$ to be the subgroup of all automorphisms $f$ of the tree $(T,E)$, satisfying $f(T_i)=T_i$, for $i=0,1,\infty$. We denote by $\PSL$ the group of M\"obius transformations and by $\TR$ the group of translations of the plane $\C$. We define 
\[\Aut_\al:=\left\{\begin{array}{ll}
\TR,&\textrm{if }\al\in T_1,\\
\PSL,&\textrm{if }\al\in T_0\cup T_\infty.\end{array}\right.\] 
We denote by $\Aut_T$ the set of collections $(\phi_\al)_{\al\in T}$, such that $\phi_\al\in\Aut_\al$, for every $\al\in T$. The group $\Aut(T)$ acts on $\Aut_T$ by 
\[f\cdot(\phi_\al)_{\al\in T}:=(\phi_{f^{-1}(\al)})_{\al\in T}.\]
\begin{defi}\label{defi:G T} We define $G_T:=G_{T_0,T_1,T_\infty,E}$ to be the semi-direct product of $\Aut(T)$ and $\Aut_T$ induced by this action. 
\end{defi}
The group $\PSL$ acts on the set of $\bar J$-holomorphic maps $S^2\to\BAR M$ by 
\[\phi^*f:=f\circ\phi.\] 
Furthermore, the group $\TR$ acts on the set $\ME$ as in (\ref{eq:phi []}). By the \emph{combinatorial type} of a stable map $(\W,\z)$ as in (\ref{eq:W z}) we mean the tuple $\big(T_0,T_1,T_\infty,E\big)$. We denote by
\[\M(T):=\M\big(T_0,T_1,T_\infty,E\big)\]
the set of all \emph{stable maps of (combinatorial) type $T$}. $G_T$ acts on $\M(T)$ as follows. For every $(f,(\phi_\al))\in G_T$ and $(\W,\z)\in\M(T)$ we define
\begin{eqnarray}\nn&W'_\al:=\phi_{f(\al)}^*W_{f(\al)},\,\forall\al\in T_1,\quad\bar u'_\al:=\bar u_\al\circ\phi_{f(\al)},\,\forall\al\in T_\infty,&\\
\nn&z'_{\al\be}:=\phi_{f(\al)}^{-1}(z_{f(\al)f(\be)}),\quad\forall\al E\be,&\\
\nn&\al'_i:=f(\al_i),\quad z'_i:=\phi_{\al'_i}^{-1}(z_{\al'_i}),\quad\forall i=0,\ldots,k.&\end{eqnarray}
(Here we set $W_\al:=\bar u_\al$ if $\al\in T_\infty$. Furthermore, for $\phi\in\TR$ we set $\phi(\infty):=\infty$.) 
\begin{defi}\label{defi:action G T} We define 
\[(f,(\phi_\al))^*(\W,\z):=\Big(T_0,T_1,T_\infty,E,(W'_\al)_{\al\in T_1},(\bar u'_\al)_{\al\in T_\infty},(z'_{\al\be})_{\al E\be},(\al'_i,z'_i)_{i=0,\ldots,k}\Big).\]
\end{defi}
This defines an action of $G_T$ on $\M(T)$. Let now $(M,J)$ be an almost complex manifold. Recall that a $J$-holomorphic map $u:S^2\to M$ is called \emph{multiply covered} iff there exists a holomorphic map $\phi:S^2\to S^2$ of degree at least two, and a $J$-holomorphic map $v:S^2\to M$, such that $u=v\circ\phi$. Otherwise, $u$ is called \emph{simple}. 

We call a stable map $(\W,\z)$ \emph{simple} iff the following conditions hold: For every $\al\in T_\infty$ the $\bar J$-holomorphic map $\bar u_\al$ is constant or simple. Furthermore, if $\al,\be\in T_1$ are such that $\al\neq\be$ and $\E(W_\al)\neq0$, and $\phi\in\TR$, then $\phi^*W_\al\neq W_\be$. Moreover, if $\al,\be\in T_\infty$ are such that $\al\neq\be$ and $\bar u_\al$ is nonconstant, and if $\phi\in\PSL$, then $\bar u_\al\circ\phi\neq\bar u_\be$. We denote by
\[\M^*(T):=\M^*\big(T_0,T_1,T_\infty,E\big)\sub\M(T)\]
the subset of all \emph{simple stable maps}. The action of $G_T$ on $\M(T)$ leaves $\M^*(T)$ invariant. 
\begin{prop}\label{prop:simple} The action of $G_T$ on $\M^*(T)$ is free. 
\end{prop}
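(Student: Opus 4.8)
The plan is to show that the stabilizer in $G_T$ of an arbitrary simple stable map $(\W,\z)\in\M^*(T)$ is trivial. So suppose $(f,(\phi_\al))\in G_T$ satisfies $(f,(\phi_\al))^*(\W,\z)=(\W,\z)$. Unwinding Definition \ref{defi:action G T}, this means $f(\al_i)=\al_i$ and $\phi_{\al_i}(z_i)=z_i$ for $i=0,\ldots,k$, that $\phi_{f(\al)}^*W_{f(\al)}=W_\al$ for every $\al\in T$ (with the convention $W_\al:=\bar u_\al$ for $\al\in T_\infty$), and that $\phi_{f(\al)}^{-1}(z_{f(\al)f(\be)})=z_{\al\be}$ whenever $\al E\be$. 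I would prove first that $f=\id$, and then that each $\phi_\al=\id$.

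For the first step, note that every reparametrization preserves energy (a translation is an isometry of $(\C,\om_0)$, and the $\PSL$-action on $\bar J$-holomorphic spheres is by degree-one diffeomorphisms), so $f$ preserves the splitting of $T$ into ghost and non-ghost vertices. If $\al$ is non-ghost (i.e.\ $\al\in T_1$ with $\E(W_\al)>0$, or $\al\in T_\infty$ with $\bar u_\al$ nonconstant), then $\phi_{f(\al)}^*W_{f(\al)}=W_\al$ together with the defining conditions of a simple stable map --- non-ghost components are pairwise inequivalent under the appropriate reparametrization group --- forces $f(\al)=\al$. Also $f$ fixes every vertex carrying a marked point, since $f(\al_i)=\al_i$. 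Hence the set $\mathrm{Fix}(f)$ of vertices fixed by $f$ contains all non-ghost vertices and all marked vertices; since a tree automorphism preserves distances, $\mathrm{Fix}(f)$ is connected, hence a subtree of $T$. Finally, the stability condition (\ref{defi:st st}) rules out ghost leaves without marked points (such a leaf would carry only one special point, violating stability), so every leaf of $T$ lies in $\mathrm{Fix}(f)$; a subtree of $T$ containing all its leaves equals $T$ (every internal vertex lies on a path between two leaves), whence $f=\id$.

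Once $f=\id$, the stabilizer conditions say that for each $\al$ the transformation $\phi_\al$ fixes every special point on $\al$ and satisfies $\phi_\al^*W_\al=W_\al$ (resp.\ $\bar u_\al\circ\phi_\al=\bar u_\al$). I would then go through the vertex types. If $\al\in T_0$, or $\al\in T_\infty$ with $\bar u_\al$ constant, stability gives at least three distinct special points on $\al$, all fixed by $\phi_\al\in\PSL$, so $\phi_\al=\id$. If $\al\in T_\infty$ with $\bar u_\al$ nonconstant, then $\bar u_\al$ is simple and $\bar u_\al\circ\phi_\al=\bar u_\al$ forces $\phi_\al=\id$ by the standard fact that a simple pseudo-holomorphic sphere has trivial stabilizer in $\PSL$ (see \cite{MS04}). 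If $\al\in T_1$ with $\E(W_\al)=0$, then by stability together with the combinatorial structure recorded in the Remarks after Definition \ref{defi:st} and the distinctness condition (\ref{defi:st dist}) --- a vertex in $T_1$ has at most one special point at $\infty$, hence at least one in $\C$ --- the translation $\phi_\al\in\TR$ fixes a point of $\C$ and is therefore the identity. If $\al\in T_1$ with $\E(W_\al)>0$, I claim any translation $\phi_\al$ with $\phi_\al^*W_\al=W_\al$ is trivial: writing $\phi_\al(z)=z+c$ and using that $\phi_\al$ is an isometry of $(\C,\om_0)$ gives $e_{W_\al}=e_{W_\al}\circ\phi_\al$, so $e_{W_\al}$ is periodic with period $c$; being nonnegative with $\int_\C e_{W_\al}\,\om_0=\E(W_\al)<\infty$, this forces $c=0$. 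Having shown $f=\id$ and all $\phi_\al=\id$, the stabilizer of $(\W,\z)$ is trivial and the action is free.

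The main obstacle I expect is the step $f=\id$: it requires carefully combining the simplicity hypothesis (to immobilize the non-ghost components), the marked-point data, the stability condition (which rules out free symmetries of the ghost subtrees, in particular forcing every ghost leaf to carry a marked point), and the three-type combinatorial structure described in the Remarks after Definition \ref{defi:st}. The only genuinely vortex-specific input is that a positive-energy finite-energy vortex class over $\C$ admits no nontrivial translation symmetry, and this is handled by the short periodicity argument above; the remainder is the familiar argument for freeness of the reparametrization action on simple genus 0 pseudo-holomorphic stable maps, adapted to the present setting.
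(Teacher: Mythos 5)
Your proof is correct and spells out exactly the argument the paper only sketches: the paper's proof is the one-line pointer ``elementary argument, using the stability condition, Lemma \ref{le:trans}, and the fact that every simple holomorphic sphere is somewhere injective,'' and you use precisely these three ingredients (your translation-periodicity paragraph reproduces the proof of Lemma \ref{le:trans}, your $\PSL$-stabilizer step is the somewhere-injectivity argument, and the immobilization of $f$ via the fixed-subtree-contains-all-leaves argument is where the stability condition enters). The fixed-subtree argument for showing $f=\id$ is a detail the paper leaves implicit, and you handle it cleanly, including the observation that stability forces every ghost leaf to carry a marked point.
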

The proof of this result uses the following lemma.
\begin{lemma}\label{le:trans} The action of $\TR$ on $\ME$ is free.
\end{lemma}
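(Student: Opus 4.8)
The plan is to reduce the statement to a stabilizer computation: since $\TR\cong(\C,+)$ is abelian, the action on $\ME$ is free exactly when, for every $W\in\ME$, the stabilizer $\Ga_W:=\{\phi\in\TR\mid\phi^*W=W\}$ is trivial. First I would isolate the transformation law of the energy density under the $\TR$-action: for the translation $\phi_a\colon z\mapsto z+a$ one has
\[e_{\phi_a^*W}=e_W\circ\phi_a\qquad\text{on }\C .\]
This is immediate from the definition (\ref{eq:e W}) together with the formula (\ref{eq:phi []}) for $\phi_a^*[P,A,u]$: each of the three terms $d_Au$, $F_A$, $\mu\circ u$ is natural under pullback of triples, the translation $\phi_a$ preserves $\om_0$ and $j$ (hence the metric $\om_0(\cdot,j\cdot)$), and $|d\phi_a|\equiv1$, so no Jacobian factor appears.

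Next I would use this to determine $\Ga_W$. If $\phi_a\in\Ga_W$ then $e_W=e_W\circ\phi_a$, so $e_W$ is periodic with period $a$, hence with period $na$ for all $n\in\Z$. Supposing $a\neq0$, I would choose a closed fundamental strip $F\subset\C$ of width $|a|$ for the lattice $\Z a$, so that $\C=\bigcup_{n\in\Z}(F+na)$ with overlaps of Lebesgue measure zero, and then, using $e_W\geq0$ and the periodicity,
\[\E(W)=\int_\C e_W\,\om_0=\sum_{n\in\Z}\int_{F+na}e_W\,\om_0=\sum_{n\in\Z}\int_F e_W\,\om_0 .\]
Because $W\in\ME$ forces $\E(W)<\infty$, a convergent series of equal non-negative terms must have every term equal to $0$, so $\int_F e_W\,\om_0=0$, whence $e_W\equiv0$ on $F$ and therefore on all of $\C$, i.e.\ $\E(W)=0$. (A shorter alternative I would mention: the energy barycenter $c(W):=\E(W)^{-1}\int_\C z\,e_W(z)\,\om_0$ is absolutely convergent by the decay bound $e_W(z)=O(|z|^{-4+\eps})$ of Remark~\ref{rmk:geometry}, and $e_{\phi_a^*W}=e_W\circ\phi_a$ gives $c(\phi_a^*W)=c(W)-a$, so $\phi_a^*W=W$ forces $a=0$ at once, again provided $\E(W)>0$.)

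The step I expect to be the main obstacle is the remaining degenerate case $\E(W)=0$. Here $e_W\equiv0$, so every representative $(P,A,u)$ satisfies $F_A=0$, $d_Au=0$ and $\mu\circ u=0$; as $\C$ is simply connected, $(P,A)$ is gauge equivalent to the trivial flat connection, and $d_Au=0$ then forces $u$ to be covariantly constant, so in a product trivialization $W$ is the class of a constant map into $\mu^{-1}(0)$, determined by a point of $\BAR M$, and is fixed by \emph{every} translation, i.e.\ $\Ga_W=\TR$. Thus the $\TR$-action is free precisely on $\{W\in\ME\mid\E(W)>0\}$, which is the form in which the assertion enters the proof of Proposition~\ref{prop:simple}: a vertex $\al\in T_1$ of a simple stable map with $\E(W_\al)=0$ is pinned down not through Lemma~\ref{le:trans} but through its special points, which by the stability condition together with condition~(\ref{defi:st dist}) number at least two, one of them at $\infty$, so the only translation fixing it is the identity. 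The transformation law, the periodicity argument, and the barycenter variant are all routine; the one subtle point is precisely that the zero-energy classes form a copy of $\BAR M$ inside $\ME$ on which $\TR$ acts trivially, so freeness is genuinely an assertion about the positive-energy locus.
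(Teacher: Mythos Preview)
Your core argument is the same as the paper's: both use the transformation law $e_{\phi_a^*W}=e_W\circ\phi_a$ to deduce that if $\phi_a^*W=W$ with $a\neq0$ then $e_W$ is $a$-periodic, whence $E(W)=\infty$ or $e_W\equiv0$. The paper's proof is a single sentence stating exactly this trichotomy and then declaring ``Lemma~\ref{le:trans} follows from this.''

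You have, however, been more careful than the paper on one point. As you correctly observe, zero-energy classes lie in $\ME$ (the definition only requires $E(W)<\infty$, not $E(W)>0$), and such classes---being gauge equivalent to a constant map into $\mu^{-1}(0)$---are fixed by every translation. So the lemma as literally stated is false; what the periodicity argument actually establishes is that $\TR$ acts freely on $\{W\in\ME:E(W)>0\}$. The paper glosses over this. Your diagnosis of how this gap is filled in the application to Proposition~\ref{prop:simple} is exactly right: for a vertex $\al\in T_1$ with $E(W_\al)=0$, the stability condition~(\ref{defi:st st}) supplies at least one special point in $\C$ (plus the nodal point at $\infty$), and a translation fixing a point of $\C$ is the identity. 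So the proposition survives, but strictly speaking Lemma~\ref{le:trans} needs the hypothesis $E(W)>0$.

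Your barycenter alternative is a nice shortcut for the positive-energy case and gives the same conclusion.
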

\begin{proof}[Proof of Lemma \ref{le:trans}] \setcounter{claim}{0} Assume that $W$ is a smooth vortex class over $\C$ and $v\in\C$ is a vector, such that defining $\phi:\C\to\C$ by $\phi(z):=z+v$, we have $\phi^*W=W$. Then $e_W(z+nv)=e_W(z)$ for every $z\in\C$ and $n\in\Z$. It follows that $E(W)=\infty$, $e_W\const0$, or $v=0$. Lemma \ref{le:trans} follows from this.
\end{proof}
\begin{proof}[Proof of Proposition \ref{prop:simple}]\setcounter{claim}{0} This follows from an elementary argument, using the stability condition (\ref{defi:st st}), Lemma \ref{le:trans}, and the fact that every simple holomorphic sphere is somewhere injective (see \cite[Proposition 2.5.1]{MS04}).
\end{proof}
\begin{rmk}\label{rmk:Isom +}\rm The action of $\TR$ on $\ME$ extends to an action of the group $\Isom^+(\C)$ of orientation preserving isometries of $\C$. Hence one may be tempted to adjust the definition of the reparametrization group $G_T$ and its action on $\M^*(T)$ accordingly. However, for the purpose of defining the quantum Kirwan map, this is not possible. The reason is that in general there is no continuous evaluation map on $\ME$ that is invariant under the action of $\Isom^+(\C)$. By definition such an evaluation map $\ev$ assigns a point in the Borel construction $(M\x\EG)/G$ to each pair $(W,z)$, where $W$ is a vortex class and $z\in\C$, in such a way that $\pi\circ\ev(W,z)\in\bar u_W(z)$. Here $\bar u_W$ is defined as in (\ref{eq:BAR u W}) and $\pi:(M\x\EG)/G\to M/G$ denotes the canonical projection. Hence when studying the vortex equations over $\C$ we need to restrict our attention to a subgroup of the group of global symmetries of the equations. This is a crucial difference between vortices and pseudo-holomorphic curves. 

Note also that the action of $\Isom^+(\C)$ on the set of vortex classes of positive energy is not always free, as the next example shows. $\Box$
\end{rmk}
\begin{xpl}\label{xpl:rot} Consider the action of $G:=S^1\sub \C$ on $M:=\C$ by multiplication. Let $d\in\N_0$ be an integer. By Proposition \ref{prop:class} there exists a unique finite energy vortex class $W$ over $\C$ such that 
\[\deg_W(z)=\left\{\begin{array}{ll}
d,&\textrm{if }z=0,\\
0,&\textrm{otherwise.}
\end{array}\right.\]
For every rotation $R\in\SO(2)$ we have
\[\deg_{R^*W}=R^*\deg_W=\deg_W,\]
where $\SO(2)$ acts in a natural way on $\Sym^d(\C)$. Thus the action of $\Isom^+(\C)$ on the set of vortex classes of positive energy is not free. $\Box$
\end{xpl}
\section{Compactness modulo bubbling and gauge for rescaled vortices}\label{sec:comp}
In this section we formulate and prove a crucial ingredient (Proposition \ref{prop:cpt mod}) of the proof of Theorem \ref{thm:bubb}, which states the following. Consider a sequence of rescaled vortices over $\C$ with image in a fixed compact subset of $M$ and uniformly bounded energies. We assume that $(M,\om)$ is aspherical. Then there exists a subsequence that away from finitely many bubbling points and up to regauging, converges to a rescaled vortex over $\C$. If the rescalings converge to $\infty$, then the limit object corresponds to a $\bar J$-holomorphic sphere in $\BAR M$. 

The proof of this result is based on compactness for rescaled vortices over the punctured plane with uniformly bounded energy densities (Proposition \ref{prop:cpt bdd} below). It also uses the fact that at each bubbling point at least the energy $\Emin>0$ is lost, which is the minimal energy of a vortex over $\C$ or pseudo-holomorphic sphere in $\BAR M$. This is the content of Proposition \ref{prop:quant en loss} below, which is proved by a hard rescaling argument, using Proposition \ref{prop:cpt bdd} and Hofer's lemma. Another ingredient of the proof of Proposition \ref{prop:cpt mod} is Lemma \ref{le:conv e} below, which says that the energy densities of a convergent sequence of rescaled vortices converge to the density of the limit. 

In order to explain the main result of this section, let $M,\om,G,\g,\lan\cdot,\cdot\ran_\g,\mu,J$, $\Si,j$, and $\om_\Si$ be as in Chapter \ref{chap:main}. We fix a triple $w=(P,A,u)\in\BB_\Si$ (defined as in (\ref{eq:BB})). Recall the definition (\ref{eq:e W}) of the energy density $e^{\om_\Si,j}_w=e_w$.
\begin{rmk}\rm\label{rmk:trafo e vort} This density has the following transformation property: Let $\Si'$ be another surface, and $\phi:\Si'\to\Si$ a smooth immersion. Consider the pullback
\[\phi^*w:=\big(\phi^*P,\Phi^*A,u\circ\Phi\big),\] 
where the bundle isomorphism $\Phi:\phi^*P\to P$ is defined by $\Phi(z,p):=p$. A straight-forward calculation shows that 
\begin{equation}\label{eq:e phi *}e^{\phi^*(\om_\Si,j)}_{\phi^*w}=e^{\om_\Si,j}_w\circ\phi.
\end{equation} 
Note also that $w$ is a vortex with respect to $(\om_\Si,j)$ if and only if $\phi^*w$ is a vortex with respect to $\phi^*(\om_\Si,j)$. $\Box$
\end{rmk}
\begin{rmk}\rm\label{rmk:e vort} If $w$ is a vortex (with respect to $(\om_\Si,j)$) then 
\begin{equation}\label{eq:e vort}e^{\om_\Si,j}_w=|\dd_{J,A}u|^2+|\mu\circ u|^2,
\end{equation}
where $\dd_{J,A}u$ denotes the complex linear part of $d_Au=du+L_uA$, viewed as a one-form on $\Si$ with values in the complex vector bundle $(u^*TM)/G\to\Si$.%
\footnote{The complex structure on this bundle is induced by $J$.}
 This follows from the vortex equations (\ref{eq:BAR dd J A u},\ref{eq:F A mu}). $\Box$ 
\end{rmk}
Let $R\in[0,\infty]$ and $w\in\BB_\Si$. Consider first the case $0<R<\infty$. Then we define the \emph{$R$-energy density of $w$} to be
\begin{equation}\label{eq:e R W R}e^R_w:=R^2e^{R^2\om_\Si,j}_w.\end{equation}
This means that 
\begin{equation}\label{eq:e R W 1 2}e^R_w=\frac12\Big(|d_Au|_{\om_\Si}^2+R^{-2}|F_A|_{\om_\Si}^2+R^2|\mu\circ u|^2\Big),
\end{equation}
where the subscript ``$\om_\Si$'' means that the norms are taken with respect to the metric $\om_\Si(\cdot,j\cdot)$. 

If $R=0$ or $\infty$ then we define 
\begin{equation}\nn e^R_w:=\frac12|d_Au|_{\om_\Si}^2.
\end{equation}

We define the \emph{$R$-energy of $w$} on a measurable subset $X\sub\Si$ to be
\begin{equation}\label{eq:E R w X} E^R(w,X):=\int_Xe^R_w\om_\Si\in [0,\infty].\end{equation}
\begin{Rmk} Consider the case $(\Si,j)=\C$, equipped with the standard area form $\om_0$. Assume that $0<R<\infty$, and consider the map $\phi:\C\to\C$ defined by $\phi(z):=Rz$. Then equality (\ref{eq:e phi *}) implies that
\[e^R_{\phi^*w}=R^2e^{\om_0,i}_w\circ\phi.\]
Hence in the present setting the $R$-energy transforms via
\[E^R\big(\phi^*w,\phi^{-1}(X)\big)=E(w,X):=E^1(w,X).\Box\]
\end{Rmk}
The \emph{(symplectic) $R$-vortex equations} are the equations (\ref{eq:BAR dd J A u},\ref{eq:F A mu}) with $\om_\Si$ replaced by $R^2\om_\Si$, i.e., the equations 
\begin{eqnarray}\label{eq:R dd J A u}&\bar\dd_{J,A}(u)=0,&\\
\label{eq:F A R}&F_A+R^2(\mu\circ u)\om_\Si=0.&
\end{eqnarray}
In the case $R=\infty$ we interpret the equation (\ref{eq:F A R}) as
\[\mu\circ u=0.\]
We call a solution $(A,u)\in\WWW^p_\Si$ of equations (\ref{eq:R dd J A u},\ref{eq:F A R}) an \emph{$R$-vortex} over $\Si$. 
\begin{Rmks} Consider the case $(\Si,j)=\C$, equipped with $\om_0$, and let $0<R<\infty$. We define the map $\phi:\C\to\C$ by $\phi(z):=Rz$. Then $w\in\BB_{\C}$ is a vortex if and only if $\phi^*w$ is an $R$-vortex.

The rescaled energy density has the following important property. Let $R_\nu\in(0,\infty)$ be a sequence that converges to some $R_0\in[0,\infty]$, and for $\nu\in\N_0$ let $w_\nu=(P_\nu,A_\nu,u_\nu)$ be an $R_\nu$-vortex. If on compact sets $A_\nu$ converges to $A_0$ in $C^0$ and $u_\nu$ converges to $u_0$ in $C^1$ then 
\begin{equation}\nn e^{R_\nu}_{w_\nu}\to e^{R^0}_{w_0}\end{equation} 
in $C^0$ on compact sets. (See Lemma \ref{le:conv e} below.) In the proof of Theorem \ref{thm:bubb}, this will be used in order to show that locally on $\C$ no energy is lost in the limit $\nu\to\infty$. $\Box$ 
\end{Rmks}
We define the \emph{minimal energy $\Emin$} as follows. Recall from (\ref{eq:M}) that $\MM$ denotes the class consisting of smooth vortices, and that the energy of a $\bar J$-holomorphic map $f:S^2\to\BAR M$ is given by $E(f)=\int_{S^2}f^*\BAR\om$. We define%
\footnote{Here we use the convention that $\inf\emptyset=\infty$.}%
\begin{equation}\label{eq:E V}E_1:=\inf\big(\big\{E(P,A,u)\,\big|\,(P,A,u)\in\MM:\,\BAR{u(P)}\textrm{ compact}\big\}\cap(0,\infty)\big),
\end{equation}
\[E_\infty:=\inf\big(\big\{E(f)\,\big|\,f\in C^\infty(S^2,\BAR M):\,\bar\dd_{\bar J}(f)=0\big\}\cap(0,\infty)\big),\]
\begin{equation}\label{eq:Emin}\Emin:=\min\{E_1,E_\infty\}.\end{equation}
Assume that $M$ is equivariantly convex at $\infty$. Then Corollary \ref{cor:quant} in Appendix \ref{sec:vort} implies that $E_1>0$. Furthermore, our standing assumption (H) implies that $\BAR M$ is closed. It follows that $E_\infty>0$.%
\footnote{See e.g.~\cite[Proposition 4.1.4]{MS04}.}
 Hence the number $\Emin$ is positive. 
\begin{Rmk} The infima (\ref{eq:E V}) and (\ref{eq:Emin}) are attained, and hence the name ``minimal energy'' for $\Emin$ is justified. (This fact is not used anywhere in this memoir.) That (\ref{eq:Emin}) is attained follows from the fact that for every $C\in\R$ there exist only finitely many homotopy classes $\BAR B\in\pi_2(\BAR M)$ with $\lan[\BAR\om],\BAR B\ran\leq C$ that can be represented by a $\BAR J$-holomorphic map $S^2\to\BAR M$.%
\footnote{This is a corollary to Gromov compactness, see e.g.~\cite[Corollary 5.3.2]{MS04}.}
 That (\ref{eq:E V}) is attained follows from the fact that for every $C\in\R$ there exist only finitely many homology classes $B\in H^G_2(M,\Z)$ with $\big\lan[\om-\mu],B\big\ran\leq C$ that can be represented by a finite energy vortex whose image has compact closure. This is a consequence of Theorem \ref{thm:bubb} and \cite[Proposition 5.4]{ZiPhD} (Conservation of equivariant homology class). $\Box$
\end{Rmk}
The results of this and the next section are formulated for connections and maps of Sobolev regularity. This is a natural setup for the relevant analysis. Furthermore, we restrict our attention to the trivial bundle $\Si\x G$.%
\footnote{Since every smooth bundle over $\C$ is trivializable, this suffices for the proof of the main result.}

We fix $p>2$
\footnote{Recall that throughout this memoir, $p<\infty$, unless otherwise stated.}
 and naturally identify the affine space of connections on $\Si\x G$ of local Sobolev class $W^{1,p}_\loc$ with the space of one-forms on $\Si$ with values in $\g$, of class $W^{1,p}_\loc$. Furthermore, we identify the space of $G$-equivariant maps from $\Si\x G$ to $M$ of class $W^{1,p}_\loc$ with $W^{1,p}_\loc(\Si,M)$. Finally, we identify the gauge group%
\footnote{i.e., the group of gauge transformations}
 on $\Si\x G$ of class $W^{2,p}_\loc$ with $W^{2,p}_\loc(\Si,G)$. We denote
\[\WWW_\Si:=\Om^1(\Si,\g)\x C^\infty(\Si,M),\]
\[\WWW^p_\Si:=\big\{\textrm{one-form on }\Si\textrm{ with values in }\g\textrm{, of class }W^{1,p}_\loc\big\}\x W^{1,p}_\loc(\Si,M).\]
The gauge group $W^{2,p}_\loc(\Si,G)$ acts on $\WWW^p_\Si$ by
\[g^*(A,u):=\big(\Ad_{g^{-1}}A+g^{-1}dg,g^{-1}u\big),\]
where $\Ad_{g_0}:\g\to\g$ denotes the adjoint action of an element $g_0\in G$. For $r>0$ we denote by $B_r\sub\C$ the open ball of radius $r$, around 0.
\begin{prop}[Compactness modulo bubbling and gauge]\label{prop:cpt mod} Assume that $(M,\om)$ is aspherical. Let $R_\nu\in(0,\infty)$ be a sequence that converges to some $R_0\in (0,\infty]$, $r_\nu\in(0,\infty)$ a sequence that converges to $\infty$, and for every $\nu\in\N$ let $w_\nu=(A_\nu,u_\nu)\in\WWW^p_{B_{r_\nu}}$ be an $R_\nu$-vortex (with respect to $(\om_0,i)$). Assume that there exists a compact subset $K\sub M$ such that $u_\nu(B_{r_\nu})\sub K$, for every $\nu$. Suppose also that 
\[\sup_\nu E^{R_\nu}(w_\nu,B_{r_\nu})<\infty.\] 
Then there exist a finite subset $Z\sub\C$, an $R_0$-vortex $w_0:=(A_0,u_0)\in\WWW_{\C\wo Z}$, and, passing to some subsequence, gauge transformations $g_\nu\in W^{2,p}_\loc(\C\wo Z,G)$, such that the following conditions hold. 
\begin{enui}\item\label{prop:cpt mod:<} If $R_0<\infty$ then $Z=\emptyset$ and the sequence $g_\nu^*(A_\nu,u_\nu)$ converges to $w_0$ in $C^\infty$ on every compact subset of $\C$. 
\item\label{prop:cpt mod:=} If $R_0=\infty$ then on every compact subset of $\C\wo Z$, the sequence $g_\nu^*A_\nu$ converges to $A_0$ in $C^0$, and the sequence $g_\nu^{-1}u_\nu$ converges to $u_0$ in $C^1$.
\item\label{prop:cpt mod lim nu E eps} Fix a point $z\in Z$ and a number $\eps_0>0$ so small that $B_{\eps_0}(z)\cap Z=\{z\}$. Then for every $0<\eps<\eps_0$ the limit 
\[E_z(\eps):=\lim_{\nu\to\infty}E^{R_\nu}(w_\nu,B_\eps(z))\] 
exists and 
\[E_z(\eps)\geq\Emin.\] 
Furthermore, the function $(0,\eps_0)\ni\eps\mapsto E_z(\eps)\in[\Emin,\infty)$ is continuous.
\end{enui}
\end{prop}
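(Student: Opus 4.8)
The plan is to run the usual ``compactness modulo bubbling'' scheme adapted to $R$-vortices, assembling the three ingredients advertised above: an interior (mean value) estimate on the energy density of an $R$-vortex, the compactness statement for $R$-vortices of locally bounded energy density (Proposition \ref{prop:cpt bdd}), the quantization of energy loss at a bubbling point (Proposition \ref{prop:quant en loss}), and the continuity of the energy density under convergence (Lemma \ref{le:conv e}). First I would pass to a subsequence along which the measures $e^{R_\nu}_{w_\nu}\om_0$ on $B_{r_\nu}$, whose total masses are bounded by $\sup_\nu E^{R_\nu}(w_\nu,B_{r_\nu})<\infty$, converge weakly, in the sense of testing against compactly supported continuous functions on $\C$, to a Radon measure $\mu$ of finite total mass; this is a diagonal argument over an exhaustion of $\C$ by balls together with the Banach--Alaoglu theorem. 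Let $\hbar>0$ be the threshold below which the interior density estimate applies, and set
\[ Z:=\bigcap_{\eps>0}\Big\{z\in\C\ \Big|\ \limsup_{\nu\to\infty}E^{R_\nu}\big(w_\nu,B_\eps(z)\big)\geq\hbar\Big\}. \]
By the quantization of energy loss (Proposition \ref{prop:quant en loss}) every $z\in Z$ carries an atom $\mu(\{z\})\geq\Emin$, and since $\mu$ is finite, $Z$ is a finite set.

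If $z\in\C\wo Z$ then there is $\eps>0$ with $\limsup_\nu E^{R_\nu}(w_\nu,B_\eps(z))<\hbar$, so by the interior density estimate $e^{R_\nu}_{w_\nu}$ is, for $\nu$ large, uniformly bounded on $B_{\eps/2}(z)$; covering a compact subset of $\C\wo Z$ by finitely many such balls shows that the energy densities $e^{R_\nu}_{w_\nu}$ are uniformly bounded on compact subsets of $\C\wo Z$. Proposition \ref{prop:cpt bdd}, applied over an exhaustion of $\C\wo Z$ together with the standard patching of the locally obtained Coulomb gauges, then produces, after passing to a further subsequence, gauge transformations $g_\nu\in W^{2,p}_\loc(\C\wo Z,G)$ and an $R_0$-vortex $w_0=(A_0,u_0)\in\WWW_{\C\wo Z}$ such that $g_\nu^*(A_\nu,u_\nu)$ converges to $w_0$ with exactly the regularity asserted in \ref{prop:cpt mod:<} and \ref{prop:cpt mod:=}: in $C^\infty$ on compact subsets of $\C$ when $R_0<\infty$, and in $C^0$ for the connection together with $C^1$ for the map when $R_0=\infty$. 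Elliptic regularity in Coulomb gauge gives smoothness of $w_0$, and when $R_0=\infty$ the limit satisfies $\bar\dd_{J,A_0}u_0=0$ and $\mu\circ u_0=0$, so by Proposition \ref{prop:bar del J} it corresponds to a $\bar J$-holomorphic map $\C\wo Z\to\BAR M$.

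It remains to rule out $Z\neq\emptyset$ in the case $R_0<\infty$. Here I would invoke the hard-rescaling argument underlying Proposition \ref{prop:quant en loss}: choosing, via Hofer's lemma, points $z_\nu\to z\in Z$ and radii $\eps_\nu\downarrow0$ so that the pullbacks of $w_\nu$ by $\zeta\mapsto z_\nu+\eps_\nu\zeta$ have (locally) bounded energy density but do not lose all their energy near the origin, one obtains in the limit a nonconstant finite-energy solution of the $0$-vortex equations on $\C$, since the rescaled parameters $\eps_\nu R_\nu$ tend to $0$ because $R_\nu$ stays bounded. Thus the limiting connection is flat, hence gauge-trivial on the simply connected plane, and the limit is a nonconstant $J$-holomorphic map $\C\to M$ whose image lies in the compact set $K$; by removal of singularities it extends to a nonconstant $J$-holomorphic sphere in $M$ of positive $\om$-area, contradicting asphericity. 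Hence $Z=\emptyset$ in this case, as claimed.

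Finally, for part \ref{prop:cpt mod lim nu E eps}, fix $z\in Z$ and $\eps_0>0$ with $B_{\eps_0}(z)\cap Z=\{z\}$. By Lemma \ref{le:conv e} and the convergence on $\C\wo Z$, the densities $e^{R_\nu}_{w_\nu}$ converge locally uniformly to $e^{R_0}_{w_0}$ on $B_{\eps_0}(z)\wo\{z\}$; hence the restriction of $\mu$ to that punctured ball equals $e^{R_0}_{w_0}\om_0$, which is absolutely continuous, so $\mu(\dd B_\eps(z))=0$ for every $\eps\in(0,\eps_0)$. Weak convergence then gives $E_z(\eps)=\lim_\nu E^{R_\nu}(w_\nu,B_\eps(z))=\mu(B_\eps(z))=\mu(\{z\})+\int_{B_\eps(z)\wo\{z\}}e^{R_0}_{w_0}\om_0$, which is $\geq\mu(\{z\})\geq\Emin$ by Proposition \ref{prop:quant en loss} and depends continuously on $\eps$ by dominated convergence. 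I expect the genuine difficulty to lie not in this bookkeeping but in the two analytic inputs I have deferred to Propositions \ref{prop:cpt bdd} and \ref{prop:quant en loss}: proving an interior energy-density bound for $R$-vortices that stays uniform as $R\to\infty$ (this is where the convexity hypothesis enters, through the a priori bound on the momentum map component $\mu\circ u$), and patching the locally obtained Coulomb gauges into one sequence $g_\nu$ on $\C\wo Z$ while preserving convergence in the degenerate regime $R_\nu\to\infty$, where only $C^0$ control of the connection and $C^1$ control of the map are available.
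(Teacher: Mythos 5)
Your proof is correct, but it is organized quite differently from the paper's. The paper constructs the bubbling set $Z$ by a ``terminating induction'' over the (a priori unknown) number of bubbling points: it builds a nested sequence of finite sets $Z_\ell$, adding one new point per step whenever the densities still fail to be locally bounded on the complement of $Z_{\ell-1}$, and uses the lower bound $\Emin$ from Proposition \ref{prop:quant en loss}(ii) together with the uniform energy bound to show the induction must halt after at most $\lfloor\sup_\nu E^{R_\nu}/\Emin\rfloor$ steps. You instead pass to a weak-$*$ limiting Radon measure $\mu$ and define $Z$ as the $\hbar$-concentration set, which is the standard Gromov-compactness organization; finiteness of $Z$ then drops out of $\mu(\{z\})\geq\Emin$ and the finiteness of $\mu$. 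Both routes rely on exactly the same analytic ingredients (Proposition \ref{prop:cpt bdd}, Proposition \ref{prop:quant en loss}, Lemma \ref{le:conv e}, and the scale-invariant mean-value inequality of Lemma \ref{le:a priori}), so this is a genuine difference in bookkeeping rather than in substance; the measure-theoretic version is arguably more transparent, while the paper's induction avoids introducing the weak-$*$ machinery. Two further small points. First, to rule out $Z\neq\emptyset$ when $R_0<\infty$ you re-run the hard-rescaling-plus-asphericity argument from scratch, whereas this is exactly the content of Proposition \ref{prop:quant en loss}(\ref{prop:hard R e}): since $\inf_\nu R_\nu>0$ when $R_0<\infty$, that statement already gives uniform bounds on $e^{R_\nu}_{w_\nu}$ on compacta, hence $Z=\emptyset$. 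Your reasoning is correct but reproves a lemma you could have cited. Second, when you assert that every $z\in Z$ carries an atom $\mu(\{z\})\geq\Emin$ you are tacitly passing from ``densities blow up on some compact $Q$ near $z$, so by Proposition \ref{prop:quant en loss}(\ref{prop:hard limsup}) there is \emph{some} $z_0\in Q$ with concentration $\geq\Emin$'' to ``$z$ itself has concentration $\geq\Emin$''; this does follow by shrinking $Q$ to $\{z\}$, but it deserves a sentence, and the paper's inductive construction sidesteps the issue because each new point $z_0$ is produced \emph{with} the lower bound already attached.
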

\begin{Rmk} Convergence in conditions (\ref{prop:cpt mod:<},\ref{prop:cpt mod:=}) should be understood as convergence of the subsequence labelled by those indices $\nu$ for which $B_{r_\nu}$ contains the given compact set. $\Box$ 
\end{Rmk}
This proposition will be proved on page \pageref{prop:cpt mod proof}. The strategy of the proof is the following. Assume that the energy densities $e^{R_\nu}_{w_\nu}$ are uniformly bounded on every compact subset of $\C$. Then the statement of Proposition \ref{prop:cpt mod} with $Z=\emptyset$ follows from an argument involving Uhlenbeck compactness, an estimate for $\bar\dd_J$, elliptic bootstrapping (for statement (\ref{prop:cpt mod:<})), and a patching argument. 

If the densities are not uniformly bounded then we rescale the maps $w_\nu$ by zooming in near a bubbling point $z_0$ in a ``hard way'', to obtain a positive energy $\wt R_0$-vortex in the limit, with $\wt R_0\in\{0,1,\infty\}$. If $R_0<\infty$ then $\wt R_0=0$, and we obtain a $J$-holomorphic sphere in $M$. This contradicts symplectic asphericity, and thus this case is impossible. 

If $R_0=\infty$ then either $\wt R_0=1$ or $\wt R_0=\infty$, and hence either a vortex over $\C$ or a pseudo-holomorphic sphere in $\BAR M$ bubbles off. Therefore, at least the energy $\Emin$ is lost at $z_0$. Our assumption that the energies of $w_\nu$ are uniformly bounded implies that there can only be finitely many bubbling points. On the complement of these points a subsequence of $w_\nu$ converges modulo gauge. 

The bubbling part of this argument is captured by Proposition \ref{prop:quant en loss} below, whereas the convergence part is the content of the following result. 
\begin{prop}[Compactness with bounded energy densities]\label{prop:cpt bdd} Let $Z\sub\C$ be a finite subset, $R_\nu\geq 0$ be a sequence of numbers that converges to some $R_0\in [0,\infty]$, $\Om_1\sub\Om_2\sub\ldots\sub \C\wo Z$ open subsets such that $\bigcup_\nu\Om_\nu=\C\wo Z$, and for $\nu\in\N$ let $w_\nu=(u_\nu,A_\nu)\in\WWW^p_{\Om_\nu}$ be an $R_\nu$-vortex. Assume that there exists a compact subset $K\sub M$ such that for $\nu$ large enough
\begin{equation}
  \label{eq:u nu Om K}u_\nu(\Om_\nu)\sub K. 
\end{equation}
Suppose also that for every compact subset $Q\sub \C\wo Z$, we have 
\begin{equation}
    \label{eq:sup e}
\sup\big\{\Vert e_{w_\nu}^{R_\nu}\Vert_{L^{\infty}(Q)}\,\big|\,\nu\in\N:\,Q\sub\Om_\nu\big\}<\infty.
  \end{equation}
Then there exists an $R_0$-vortex $w_0:=(A_0,u_0)\in\WWW_{\C\wo Z}$, and passing to some subsequence, there exist gauge transformations $g_\nu\in W^{2,p}_\loc(\C\wo Z,G)$, such that the following conditions are satisfied.
\begin{enui}
\item\label{prop:cpt bdd < infty} If $R_0<\infty$ then $g_\nu^*w_\nu$ converges to $w_0$ in $C^\infty$ on every compact subset of $\C\wo Z$. 
\item\label{prop:cpt bdd = infty} If $R_0=\infty$ then on every compact subset of $\C\wo Z$, $g_\nu^*A_\nu$ converges to $A_0$ in $C^0$, and $g_\nu^{-1}u_\nu$ converges to $u_0$ in $C^1$. 
\end{enui}
\end{prop}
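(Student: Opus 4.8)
The plan is to establish the result first locally, on small balls around points of $\C\wo Z$, by combining Uhlenbeck's local gauge--fixing theorem with elliptic bootstrapping for the $R_\nu$-vortex equations, and then to glue the local gauge transformations into a single sequence $g_\nu$ on the (fixed) punctured plane $\C\wo Z$. So fix $z_0\in\C\wo Z$ and a closed ball $\BAR B=\BAR B_\de(z_0)\sub\C\wo Z$. From the hypothesis $\sup_\nu\Vert e^{R_\nu}_{w_\nu}\Vert_{L^\infty(B)}<\infty$ and the defining formula for $e^R_w$ one reads off, uniformly in $\nu$, the pointwise bounds $|d_{A_\nu}u_\nu|\le c$, $|F_{A_\nu}|\le cR_\nu$ and $|\mu\circ u_\nu|\le c/R_\nu$ on $B$, together with $u_\nu(B)\sub K$. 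Granting a local convergence statement on each such ball that depends only on $w_\nu|_{\BAR B}$, the global conclusion follows by exhausting $\C\wo Z$ by compact sets, passing to a diagonal subsequence, and a standard patching argument: two local limits over overlapping balls differ by a gauge transformation, and these transition transformations converge (their values lie in the compact group $G$ and, by the equations, their derivatives are uniformly controlled by the converging connections), so the local gauges can be interpolated into a consistent global sequence $g_\nu$ and a limit $w_0\in\WWW_{\C\wo Z}$.

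In the case $R_0<\infty$ the curvature is uniformly bounded, $|F_{A_\nu}|\le c\sup_\nu R_\nu<\infty$. After shrinking $\de$ so that $\Vert F_{A_\nu}\Vert_{L^1(B)}$ lies below the threshold in Uhlenbeck's theorem, I would find gauge transformations $g_\nu$ on $B$ putting $\tilde A_\nu:=g_\nu^*A_\nu$ into Coulomb gauge relative to the product connection, with $\Vert\tilde A_\nu\Vert_{W^{1,q}(B)}\le c\Vert F_{A_\nu}\Vert_{L^q(B)}$ for a suitable $q>2$; in particular $\tilde A_\nu$ is bounded in $C^0$. With $A$ bounded in $C^0$, the equation $\bar\dd_{J,A}(u)=0$ is a perturbed Cauchy--Riemann equation for $u$; together with the Coulomb condition and the curvature equation $F_A=-R_\nu^2(\mu\circ u)\om_0$ it makes $(\tilde A_\nu,\tilde u_\nu:=g_\nu^{-1}u_\nu)$, with $\tilde u_\nu(B)\sub K$, a solution of an elliptic system whose coefficients are uniformly $L^\infty$-bounded. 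Elliptic regularity, bootstrapped through this system as in \cite{CGMS,GS}, yields uniform $W^{k,q}$-bounds on every compact subset of $B$ for all $k$, hence a $C^\infty_\loc$-convergent subsequence whose limit is an $R_0$-vortex on $B$. (When $R_0=0$ the curvature tends to $0$, so $\tilde A_\nu\to0$ modulo gauge and the limit map is $J$-holomorphic; the argument is the same.)

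The case $R_0=\infty$ is the main obstacle, since then $|F_{A_\nu}|\le cR_\nu\to\infty$ and Coulomb gauge no longer controls $\tilde A_\nu$. Here I would instead use that $|\mu\circ u_\nu|\le c/R_\nu\to0$, so that for $\nu$ large $u_\nu(B)$ lies in an arbitrarily small neighbourhood of $\mu^{-1}(0)\cap K$, on which the $G$-action is free with uniform local slices by (H). Choosing $g_\nu:B\to G$ so that $\tilde u_\nu:=g_\nu^{-1}u_\nu$ takes values in a fixed family of such slices, the vertical component of $d_{\tilde A_\nu}\tilde u_\nu=d\tilde u_\nu+L_{\tilde u_\nu}\tilde A_\nu$ equals $L_{\tilde u_\nu}\tilde A_\nu$ up to a lower-order term determined by the chosen slices; since $L_x$ is injective with a bounded left inverse near $\mu^{-1}(0)$ and $|d_{\tilde A_\nu}\tilde u_\nu|\le c$, this forces a uniform $C^0$-bound on $\tilde A_\nu$. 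The equation $\bar\dd_{J,A}(u)=0$ then bootstraps $\tilde u_\nu$ to a uniform $W^{2,q}\hookrightarrow C^1$ bound on compact subsets of $B$, so that, after passing to a subsequence, $\tilde A_\nu\to A_0$ in $C^0$ and $\tilde u_\nu\to u_0$ in $C^1$; and since $\mu\circ\tilde u_\nu\to0$ and $\bar\dd_{J,A}(u)=0$ passes to the limit, $(A_0,u_0)$ is an $\infty$-vortex. Carrying this out carefully --- in particular tracking exactly which norms survive the adiabatic degeneration, and arranging the slice gauges so that the patching of the first paragraph still applies --- is the technical heart of the proof, and it follows the adiabatic-limit analysis of Gaio and Salamon \cite{GS}.
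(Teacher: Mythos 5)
Your treatment of the case $R_0<\infty$ is essentially the paper's: local Uhlenbeck/Coulomb gauge, elliptic bootstrapping through the vortex system, then a patching argument for the gauges over an exhaustion of $\C\wo Z$ (the paper packages the local step as Theorem \ref{thm:cpt cpt} and spells out the patching with retractions $\rho^i$ and transition maps $h^i_\nu$, but the content is the same).

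Your plan for $R_0=\infty$ has a genuine gap. From the pointwise energy density bound you get $|F_{A_\nu}|\leq cR_\nu\to\infty$ and $|\mu\circ u_\nu|\leq c/R_\nu$, and you then propose a slice gauge in which $\PR^u d\tilde u_\nu$ essentially vanishes, so that $L_{\tilde u_\nu}\tilde A_\nu=\PR^u(d_{\tilde A_\nu}\tilde u_\nu)$ is $L^\infty$-bounded, hence $\tilde A_\nu$ is $C^0$-bounded. This is correct as far as it goes, but an $L^\infty$ bound on $\tilde A_\nu$ is not sufficient for either of your subsequent steps. First, the bootstrap $\bar\dd_{J,\tilde A_\nu}(\tilde u_\nu)=0\Rightarrow\tilde u_\nu\in W^{2,q}_\loc$ requires $\tilde A_\nu$ in $W^{1,q}_\loc$ (taking one derivative of the equation produces a $d\tilde A_\nu$ term), and neither the slice condition nor the vortex equations give you a $W^{1,q}$-bound from a $C^0$-bound. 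Second, a $C^0$-bound on $\tilde A_\nu$ does not give a $C^0$-convergent subsequence: you need equicontinuity or some compact embedding, i.e., again a bound on a stronger norm. So the conclusions "$\tilde A_\nu\to A_0$ in $C^0$, $\tilde u_\nu\to u_0$ in $C^1$" are not justified from what you establish.

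The missing ingredient — and the heart of the paper's argument for $R_0=\infty$ — is the Gaio--Salamon estimate on the momentum map component (Lemma \ref{le:mu u} in Appendix \ref{sec:vort}), which upgrades the naive pointwise bound $|\mu\circ u_\nu|\leq c/R_\nu$ to the much stronger bound $R_\nu^2\,\Vert\mu\circ u_\nu\Vert_{L^p(Q)}\leq C$ (uniformly in $\nu$) on each compact $Q\sub\C\wo Z$. Via the second vortex equation $F_{A_\nu}=-R_\nu^2(\mu\circ u_\nu)\om_0$ this gives a uniform $L^p$-bound on $\Vert F_{A_\nu}\Vert_{L^p(Q)}$ even though $|F_{A_\nu}|_{L^\infty}\to\infty$. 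With that bound in hand one applies Uhlenbeck compactness (with compact base) to obtain gauges in which $A_\nu$ is bounded in $W^{1,p}$, converging weakly in $W^{1,p}$ and strongly in $C^0$, and then uses a compactness result for $\bar\dd_J$ (Proposition \ref{prop:compactness delbar}) to get $u_\nu$ in $W^{2,p}$ and converging in $C^1$. This route avoids the slice gauge altogether; if you want to salvage your slice gauge idea, you still need the curvature $L^p$-bound for the derivative control, at which point you may as well use Uhlenbeck directly.
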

The proof of this result is an adaption of the argument of Step 5 in the proof of Theorem A in the paper by R.~Gaio and D.~A.~Salamon \cite{GS}. The proof of statement (\ref{prop:cpt bdd < infty}) is based on a compactness result for the case of a compact surface $\Si$ (possibly with boundary). (See Theorem \ref{thm:cpt cpt} in Appendix \ref{sec:vort}. That result follows from an argument by K.~Cieliebak et al.~in \cite{CGMS}.) The proof also involves a patching argument for gauge transformations, which are defined on an exhausting sequence of subsets of $\C\wo Z$. 

To prove statement (\ref{prop:cpt bdd = infty}), we will show that curvatures of the connections $A_\nu$ are uniformly bounded in $W^{1,p}$. This uses the second rescaled vortex equations and a uniform upper bound on $\mu\circ u_\nu$ (Lemma \ref{le:mu u} in Appendix \ref{sec:vort}), due to R.~Gaio and D.~A.~Salamon. The statement then follows from Uhlenbeck compactness with compact base, compactness for $\bar\dd_J$, and a patching argument.
\begin{proof}[Proof of Proposition \ref{prop:cpt bdd}]\setcounter{claim}{0} We may assume w.l.o.g.~that there exists a $G$-invariant compact subset $K\sub M$ satisfying (\ref{eq:u nu Om K}). (To see this, we choose a compact subset $K$ satisfying this condition and consider the set $GK$.) We choose $i_0\in\N$ so big that the balls $\bar B_{1/i_0}(z)$, $z\in Z$, are disjoint and contained in $B_{i_0}$. For every $i\in\N_0$ we define 
\[X^i:=\bar B_{i+i_0}\wo\bigcup_{z\in Z}B_{\frac1{i+i_0}}(z)\sub\C.\]

We prove {\bf statement (\ref{prop:cpt bdd < infty})}. Assume that $R_0<\infty$. Using the hypotheses (\ref{eq:u nu Om K},\ref{eq:sup e}), it follows from Theorem \ref{thm:cpt cpt} in Appendix \ref{sec:vort} (with $\Si:=X^2$) that there exist an infinite subset $I^1\sub\N$ and gauge transformations $g^1_\nu\in W^{2,p}(X^1,G)$ ($\nu\in I^1$), such that $X^1\sub\Om_\nu$ and
\[w^1_\nu:=(A^1_\nu,u^1_\nu):=(g^1_\nu)^*(w_\nu|X^1)\] 
is smooth, for every $\nu\in I^1$, and the sequence $(w^1_\nu)_{\nu\in I^1}$ converges to some $R_0$-vortex $w^1\in\WWW_{X^1}$, in $C^\infty$ on $X^1$.

Iterating this argument, for every $i\geq2$ there exist an infinite subset $I^i\sub I^{i-1}$ and gauge transformations $g^i_\nu\in W^{2,p}(X^i,G)$ ($\nu\in I^i$), such that $X^i\sub\Om_\nu$ and
\[w^i_\nu:=(A^i_\nu,u^i_\nu):=(g^i_\nu)^*(w_\nu|X^i)\] 
is smooth, for every $\nu\in I^i$, and the sequence $(w^i_\nu)_{\nu\in I^i}$ converges to some $R_0$-vortex $w^i\in\WWW_{X^1}$, in $C^\infty$ on $X^i$. 

Let $i\in\N$. For every $\nu\in I^i$ we define $h^i_\nu:=(g^{i+1}_\nu|X^i)^{-1}g^i_\nu$. We have $(h^i_\nu)^*(A^{i+1}_\nu|X^i)=A^i_\nu$. Furthermore, the sequences $(A^{i+1}_\nu)_{\nu\in I^{i+1}}$ and $(A^i_\nu)_{\nu\in I^{i+1}}$ are bounded in $W^{k,p}$ on $X^i$, for every $k\in\N$. Hence it follows from Lemma \ref{le:g smooth} (Appendix \ref{sec:add}) that the sequence $(h^i_\nu)_{\nu\in I^{i+1}}$ is bounded in $W^{k,p}$ on $X^i$, for every $k\in\N$. Hence, using Morrey's embedding theorem and the Arzel\`a-Ascoli theorem, it has a subsequence that converges to some gauge transformation $h^i\in C^\infty(X^i,G)$, in $C^\infty$ on $X^i$. Note that 
\begin{equation}\label{eq:h i w i}(h^i)^*(w^{i+1}|X^i)=w^i.
\end{equation}
We choose a map $\rho^i:X^{i+1}\to X^i$ such that $\rho^i=\id$ on $X^{i-1}$. We define%
\footnote{This patching construction follows the lines of the proofs of \cite[Theorems 3.6 and A.3]{FrPhD}.}
 $k^1:=h^1$, and recursively,
\begin{equation}\label{k i h i}k^i:=h^i(k^{i-1}\circ\rho^{i-1})\in C^\infty(X^i,G),\quad\forall i\geq2.
\end{equation}
Using (\ref{eq:h i w i}) and the fact $\rho^{i-1}=\id$ on $X^{i-2}$, we have, for every $i\geq2$, 
\[(k^i)^*w^{i+1}=(k^{i-1}\circ\rho^{i-1})^*w^i=(k^{i-1})^*w^i,\quad\textrm{on }X^{i-2}.\]
It follows that there exists a unique $w\in\WWW_{\C\wo Z}$ that restricts to $(k^{i+1})^*w^{i+2}$ on $X^i$, for every $i\in\N$. Let $i\in\N$. We choose $\nu_i\in I^{i+1}$ such that $\nu_i\geq i$ and a map $\tau^i:\C\wo Z\to X^i$ that is the identity on $X^{i-1}$. We define
\[g_i:=(g^{i+1}_{\nu_i}k^i)\circ\tau^i\in C^\infty(\C\wo Z,G).\] 
The sequence $g_i^*w_{\nu_i}$ converges to $w$, in $C^\infty$ on every compact subset of $\C\wo Z$. (Here we use the $C^\infty$-convergence on $X^i$ of $(w^i_\nu)_{\nu\in I^i}$ to $w^i$ and the facts $X_1\sub X_2\sub\cdots$ and $\bigcup_{i\in\N}X_i=\C\wo Z$.) Statement (\ref{prop:cpt bdd < infty}) follows. 

We prove {\bf statement (\ref{prop:cpt bdd = infty})}. Assume that $R_0=\infty$. 
\begin{claim}\label{claim:kappa nu p} For every compact subset $Q\sub \C\wo Z$ we have
  \begin{equation}
    \label{eq:sup kappa nu}\sup_\nu\big\{\Vert F_{A_\nu}\Vert_{L^p(Q)}\,\big|\,\nu\in\N:\,Q\sub\Om_\nu\big\}<\infty.
  \end{equation}
\end{claim}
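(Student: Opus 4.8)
The plan is to prove Claim~\ref{claim:kappa nu p} by exploiting the second $R_\nu$-vortex equation $F_{A_\nu}+R_\nu^2(\mu\circ u_\nu)\om_0=0$, which identifies the curvature with $-R_\nu^2(\mu\circ u_\nu)\om_0$ pointwise. Thus on a compact set $Q\sub\C\wo Z$ contained in $\Om_\nu$ we have $|F_{A_\nu}|=R_\nu^2|\mu\circ u_\nu|$ with respect to the standard metric, so it suffices to bound $R_\nu^2\|\mu\circ u_\nu\|_{L^p(Q)}$ uniformly in $\nu$. First I would invoke the uniform upper bound on the momentum map component of a vortex due to Gaio and Salamon (Lemma~\ref{le:mu u} in Appendix~\ref{sec:vort}): since $u_\nu(\Om_\nu)$ lies in a fixed compact set $K$ and the $R_\nu$-energies are uniformly bounded (hypothesis \eqref{eq:sup e} gives local $L^\infty$-bounds on $e^{R_\nu}_{w_\nu}$), that lemma yields a pointwise estimate of the form $R_\nu^2|\mu\circ u_\nu(z)|\le c$ on $Q$ for $\nu$ large, with $c$ depending only on $Q$, $K$, and the energy bound. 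Multiplying by the (finite) measure of $Q$ then gives \eqref{eq:sup kappa nu}.

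More concretely, I expect the argument to go as follows. Fix $Q$ and choose a slightly larger compact set $Q'\sub\C\wo Z$ with $Q$ in its interior. By \eqref{eq:sup e} there is a constant $C_{Q'}$ with $\|e^{R_\nu}_{w_\nu}\|_{L^\infty(Q')}\le C_{Q'}$ whenever $Q'\sub\Om_\nu$; in particular, from \eqref{eq:e R W 1 2}, $R_\nu|\mu\circ u_\nu|\le\sqrt{2C_{Q'}}$ on $Q'$, i.e.\ $|\mu\circ u_\nu|\le\sqrt{2C_{Q'}}/R_\nu\to0$ uniformly on $Q'$. Feeding this, together with the interior elliptic estimate for the momentum map component from Lemma~\ref{le:mu u}, one upgrades the bound on $R_\nu|\mu\circ u_\nu|$ to a bound on $R_\nu^2|\mu\circ u_\nu|$ on the smaller set $Q$: the point of Lemma~\ref{le:mu u} is precisely that $\mu\circ u$ satisfies a differential inequality whose effect is to gain an extra factor of $R_\nu$ in the sup-norm on compact subsets, once $R_\nu|\mu\circ u_\nu|$ is already known to be small. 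Hence $\|F_{A_\nu}\|_{L^\infty(Q)}=R_\nu^2\|\mu\circ u_\nu\|_{L^\infty(Q)}\le c_Q$, and since $|Q|<\infty$, also $\|F_{A_\nu}\|_{L^p(Q)}\le c_Q|Q|^{1/p}$.

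The main obstacle is making precise the passage from the crude bound $R_\nu|\mu\circ u_\nu|\le\mathrm{const}$ to the sharper $R_\nu^2|\mu\circ u_\nu|\le\mathrm{const}$ on interior compact sets; this is where the real analytic content lies and where one must cite Lemma~\ref{le:mu u} carefully, checking that its hypotheses (image in a fixed compact set, a bound on the relevant energy, the defining $R_\nu$-vortex equations) are all in force here. One also has to be mildly careful that the exhausting sets $\Om_\nu$ eventually contain $Q'$, so that only finitely many $\nu$ are excluded and the supremum in \eqref{eq:sup kappa nu} is genuinely finite. Once Claim~\ref{claim:kappa nu p} is in hand, it feeds into the subsequent argument for statement~(\ref{prop:cpt bdd = infty}) of Proposition~\ref{prop:cpt bdd}: the uniform $L^p$-bound on curvature, combined with the uniform $W^{1,p}$-bound one gets by differentiating the second equation and using the bound on $d_Au_\nu$, sets up Uhlenbeck compactness on compact subsets, after which compactness for $\bar\dd_J$ and the patching argument finish the proof.
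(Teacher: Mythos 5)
Your overall strategy is the same as the paper's: use the second $R_\nu$-vortex equation to reduce bounding $\Vert F_{A_\nu}\Vert_{L^p(Q)}$ to bounding $R_\nu^2\Vert\mu\circ u_\nu\Vert_{L^p(Q)}$, then invoke Lemma \ref{le:mu u}. However, there are two genuine gaps.

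First, you misread the conclusion of Lemma \ref{le:mu u}. You repeatedly assert a \emph{pointwise} estimate of the form $R_\nu^2|\mu\circ u_\nu|\leq c$ on $Q$, and base your final step ($\Vert F_{A_\nu}\Vert_{L^\infty(Q)}\leq c_Q$, then integrate over $Q$) on it. The lemma does not provide this: its sup-norm conclusion is $\sup_Q|\mu\circ u|\leq C_pR^{\frac2p-2}$, so $R^2\sup_Q|\mu\circ u|\leq C_pR^{2/p}$, which diverges as $R\to\infty$ since $p>2$ is finite. Your $L^\infty$ route therefore fails. What does work — and what the paper uses — is the lemma's $L^p$ conclusion, $\int_Q|\mu\circ u|^p\leq C_pR^{-2p}$, which directly gives $R^2\Vert\mu\circ u\Vert_{L^p(Q)}\leq C_p^{1/p}$ with no intermediate sup-norm step.

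Second, you list the hypotheses of Lemma \ref{le:mu u} as ``image in a fixed compact set, a bound on the relevant energy, the defining $R_\nu$-vortex equations,'' but omit the one that actually needs work: the existence of a constant $c$ with $|\xi|\leq c|L_{u(p)}\xi|$ for all $p\in P$, $\xi\in\g$. The compact set $K$ appearing in the hypothesis of Proposition \ref{prop:cpt bdd} is arbitrary and $G$ need not act freely on it, so this condition is not automatic. The paper verifies it precisely via the smallness of $\mu\circ u_\nu$ that you observe: since $R_\nu\to\infty$ and $|\mu\circ u_\nu|\leq\sqrt{e^{R_\nu}_{w_\nu}}/R_\nu$, one gets $\Vert\mu\circ u_\nu\Vert_{L^\infty(\Om)}<\delta$ for $\nu$ large, where $\delta$ is chosen (using hypothesis (H) and properness of $\mu$) so that $\mu^{-1}(\BAR B_\delta)$ is compact and $G$ acts freely on it, hence $\sup\{|\xi|/|L_x\xi|\,:\,x\in\mu^{-1}(\BAR B_\delta),\,0\neq\xi\in\g\}<\infty$. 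Your proposal notices that $\mu\circ u_\nu\to0$ uniformly but treats this as an input to some differential-inequality ``upgrade'' rather than as the verification of this specific hypothesis — that is where your account of ``the real analytic content'' misses the actual structure of the argument.
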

\begin{proof}[Proof of Claim \ref{claim:kappa nu p}] Let $\Om\sub\C$ be an open subset containing $Q$ such that $\BAR\Om$ is compact and contained in $\C\wo Z$. Hypothesis (\ref{eq:sup e}) implies that
\begin{equation}\label{eq:sup nu Vert}\sup_\nu\Vert d_{A_\nu}u_\nu\Vert_{L^\infty(\BAR\Om)}<\infty. 
\end{equation}
It follows from our standing hypothesis (H) that there exists $\delta>0$ such that $G$ acts freely on
\[K:=\big\{x\in M\,\big|\,|\mu(x)|\leq\delta\big\}.\]
Since $\mu$ is proper the set $K$ is compact. Recall that $L_x:\g\to T_xM$ denotes the infinitesimal action at $x$. It follows that 
\begin{equation}\label{eq:sup xi}\sup\left\{\frac{|\xi|}{|L_x\xi|}\,\bigg|\,x\in K,\,0\neq\xi\in\g\right\}<\infty.\end{equation}
Using the second $R_\nu$-vortex equation, we have
\[|\mu\circ u_\nu|\leq\frac{\sqrt{e^{R_\nu}_{w_\nu}}}{R_\nu}.\] 
Hence by hypothesis (\ref{eq:sup e}) and the assumption $R_\nu\to\infty$, we have $\Vert\mu\circ u_\nu\Vert_{L^\infty(\Om)}<\delta$, for $\nu$ large enough. Using (\ref{eq:sup nu Vert},\ref{eq:sup xi}), Lemma \ref{le:mu u} in Appendix \ref{sec:vort} implies that 
\[\sup_\nu R_\nu^2\Vert\mu\circ u_\nu\Vert_{L^p(Q)}<\infty.\]
Estimate (\ref{eq:sup kappa nu}) follows from this and the second $R_\nu$-vortex equation. This proves Claim \ref{claim:kappa nu p}.
\end{proof}
Using Claim \ref{claim:kappa nu p}, Theorem \ref{thm:Uhlenbeck compact} (Uhlenbeck compactness) in Appendix \ref{sec:add} implies that there exist an infinite subset $I^1\sub\N$ and, for each $\nu\in I^1$, a gauge transformation $g^1_\nu\in W^{2,p}(X^1,G)$, such that $X^1\sub\Om_\nu$ and the sequence $A^1_\nu:=(g^1_\nu)^*(A_\nu|X^1)$ converges to some $W^{1,p}$-connection $A^1$ over $X^1$, weakly in $W^{1,p}$ on $X^1$. By Morrey's embedding theorem and the Arzel\`a-Ascoli theorem, shrinking $I^1$, we may assume that $A^1_\nu$ converges (strongly) in $C^0$ on $X^1$. 

Iterating this argument, for every $i\geq2$ there exist an infinite subset $I^i\sub I^{i-1}$ and gauge transformations $g^i_\nu\in W^{2,p}(X^1,G)$, for $\nu\in I^i$, such that $X^i\sub\Om_\nu$, for every $\nu\in I^i$, and the sequence $A^i_\nu:=(g^i_\nu)^*(A_\nu|X^i)$ converges to some $W^{1,p}$-connection $A^i$ over $X^i$, weakly in $W^{1,p}$ and in $C^0$ on $X^i$. 

Let $i\in\N$. For $\nu\in I^i$ we define $h^i_\nu:=(g^{i+1}_\nu|X^i)^{-1}g^i_\nu$. An argument as in the proof of statement (\ref{prop:cpt bdd < infty}), using Lemma \ref{le:g smooth} (Appendix \ref{sec:add}), implies that the sequence $(h^i_\nu)_{\nu\in I^i}$ has a subsequence that converges to some gauge transformation $h^i\in W^{2,p}(X^i,G)$, weakly in $W^{2,p}$ on $X^i$. 

Repeating the construction in the proof of statement (\ref{prop:cpt bdd < infty}) and using the weak $W^{1,p}$- and strong $C^0$-convergence of $A^i_\nu$ on $X^i$, we obtain an index $\nu_i\geq i+1$ and a gauge transformation $g_i\in W^{2,p}(\C\wo Z,G)$, for every $i\in\N$, such that $\nu_i\in I^{i+1}$ and $g_i^*A_{\nu_i}$ converges to some $W^{1,p}$-connection $A$ over $\C\wo Z$, weakly in $W^{1,p}$ and in $C^0$ on every compact subset of $\C\wo Z$. Passing to the subsequence $(\nu_i)_i$, we may assume w.l.o.g.~that $A_\nu$ converges to $A$, weakly in $W^{1,p}$ and in $C^0$ on every compact subset of $\C\wo Z$.
\begin{claim}\label{claim:hyp:prop:compactness delbar} The hypotheses of Proposition \ref{prop:compactness delbar} (Appendix \ref{sec:add}) with $k=1$ are satisfied.  
\end{claim}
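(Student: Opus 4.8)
The plan is to confirm, condition by condition, that the hypotheses of Proposition~\ref{prop:compactness delbar} (Appendix~\ref{sec:add}) with $k=1$ are met by the objects assembled so far in the proof of statement~(\ref{prop:cpt bdd = infty}). We are in the case $R_0=\infty$, and after passing to the subsequence produced just above, the connections $A_\nu$ converge to a $W^{1,p}$-connection $A_0$ over $\C\wo Z$, weakly in $W^{1,p}$ and in $C^0$ on every compact subset of $\C\wo Z$; this is exactly the convergence Proposition~\ref{prop:compactness delbar} requires of the sequence of connections. Moreover the open sets $\Om_\nu$ carrying the $R_\nu$-vortices $w_\nu=(A_\nu,u_\nu)$ exhaust $\C\wo Z$ by the standing hypotheses of Proposition~\ref{prop:cpt bdd}, which handles the requirement on the domains.

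For the map components: each $u_\nu$ is of class $W^{1,p}_\loc$ and, $w_\nu$ being an $R_\nu$-vortex, satisfies the first $R_\nu$-vortex equation $\bar\dd_{J,A_\nu}(u_\nu)=0$ (see~(\ref{eq:R dd J A u})), an equation that does not involve $R_\nu$. At the outset of the proof we arranged the compact set $K$ with $u_\nu(\Om_\nu)\sub K$ (hypothesis~(\ref{eq:u nu Om K})) to be $G$-invariant, giving the compact-target hypothesis. Finally, the uniform bound on the covariant derivatives is read off from hypothesis~(\ref{eq:sup e}): since $\tfrac12|d_{A_\nu}u_\nu|^2\le e^{R_\nu}_{w_\nu}$ by~(\ref{eq:e R W 1 2}), we get $\sup_\nu\Vert d_{A_\nu}u_\nu\Vert_{L^\infty(Q)}<\infty$ for every compact $Q\sub\C\wo Z$ (already noted as~(\ref{eq:sup nu Vert})); together with the $C^0$-bound on $A_\nu$ and boundedness of the infinitesimal action $L$ on $K$, this also yields a uniform $W^{1,p}$-bound for $u_\nu$ on compact subsets of $\C\wo Z$ --- the quantitative input needed by Proposition~\ref{prop:compactness delbar}.

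I expect the only mildly delicate point --- the main obstacle in writing this out --- to be the regularity bookkeeping: if Proposition~\ref{prop:compactness delbar} with $k=1$ demands that the $u_\nu$ be of class $W^{2,p}_\loc$ for the bootstrap to start, one must observe that this follows from interior regularity for the Cauchy--Riemann type operator $\bar\dd_{J,A_\nu}$ applied to $\bar\dd_{J,A_\nu}(u_\nu)=0$, using $p>2$ so that $A_\nu$ and $u_\nu$ are continuous. With all hypotheses matched, the Claim is proved, and one then invokes Proposition~\ref{prop:compactness delbar} to extract a further subsequence along which $u_\nu$ converges in $C^1$ on compact subsets of $\C\wo Z$ to a map $u_0$ with $\bar\dd_{J,A_0}(u_0)=0$, producing the limiting $R_0$-vortex $w_0=(A_0,u_0)$ required for statement~(\ref{prop:cpt bdd = infty}).
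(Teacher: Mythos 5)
Your proposal verifies conditions~(\ref{eq:u nu K}) and~(\ref{eq:du nu}) of Proposition~\ref{prop:compactness delbar}, but it never verifies condition~(\ref{eq:sup k p}), which for $k=1$ is the uniform $W^{1,p}$-bound on $\bar\dd_J u_\nu$ over compact $\Om\sub\C\wo Z$. This is the hypothesis that actually drives the conclusion: the proposition bootstraps regularity of $u_\nu$ from regularity of $\bar\dd_J u_\nu$, and without a uniform $W^{1,p}$-bound on $\bar\dd_J u_\nu$ there is no $W^{2,p}$-weak / $C^1$-convergence. Note in particular that Proposition~\ref{prop:compactness delbar} has no hypothesis about a sequence of connections at all --- it is a statement purely about maps $u_\nu$ and the untwisted operator $\bar\dd_J$ --- so the sentence asserting that the $W^{1,p}$-weak and $C^0$-convergence of $A_\nu$ ``is exactly the convergence Proposition~\ref{prop:compactness delbar} requires of the sequence of connections'' is reading a hypothesis into the proposition that is not there.

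You do assemble nearly all the needed ingredients elsewhere in the argument, so the gap is local but real. The missing step goes as follows: from $\bar\dd_{J,A_\nu}(u_\nu)=0$ one has $\bar\dd_J u_\nu = -(L_{u_\nu}A_\nu)^{0,1}$. To bound this uniformly in $W^{1,p}(\Om)$ one needs (a) the $L^p(\Om)$-bound on $A_\nu$ and, crucially, (b) the $L^p(\Om)$-bound on $dA_\nu$ coming from the \emph{weak $W^{1,p}$-convergence} of $A_\nu$ (i.e.\ the uniform $W^{1,p}$-bound $\sup_{\nu\ge\nu_0}\Vert A_\nu\Vert_{W^{1,p}(\Om)}<\infty$), together with (c) the $L^p(\Om)$-bound on $du_\nu$ from condition~(\ref{eq:du nu}) and (d) the fact that $u_\nu(\Om)\sub K$ compact so that $L_u$ and $\nabla L$ are uniformly bounded there. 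You only invoke the $C^0$-bound on $A_\nu$, which is not enough: the $C^0$-bound controls $\Vert A_\nu\Vert_{L^p(\Om)}$ but not $\Vert dA_\nu\Vert_{L^p(\Om)}$, and the latter is indispensable for the $W^{1,p}$-bound on $\bar\dd_J u_\nu$. Finally, the "mildly delicate point" you flag --- whether the proposition demands $u_\nu\in W^{2,p}_\loc$ to start the bootstrap --- is a non-issue: Proposition~\ref{prop:compactness delbar} only requires $u_\nu$ of class $W^{1,p}_\loc$ with $\bar\dd_J u_\nu$ of class $W^{k,p}_\loc$; the $W^{k+1,p}$-regularity of $u_\nu$ is the \emph{output}, not an input, so no interior elliptic regularity argument for the $u_\nu$ is needed before invoking the proposition.
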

\begin{proof}[Proof of Claim \ref{claim:hyp:prop:compactness delbar}] Let $\Om\sub \C\wo Z$ be an open subset with compact closure, and $\nu_0\in\N$ be such that $\Om\sub\Om_{\nu_0}$. Since the sequence $(A_\nu)$ converges to $A$, weakly in $W^{1,p}(\Om)$, we have 
\begin{equation}
  \label{eq:A nu W 1 p}
\sup_{\nu\geq\nu_0}\Vert A_\nu\Vert_{W^{1,p}(\Om)}<\infty.
\end{equation}
{\bf Condition (\ref{eq:u nu K})} is satisfied by assumption (\ref{eq:u nu Om K}). We check {\bf condition (\ref{eq:du nu})}: We denote by $|\Om|$ the area of $\Om$ and choose a constant $C>0$ such that $L_x\xi\leq C|\xi|$, for every $x\in K$ and $\xi\in\g$. For $\nu\geq\nu_0$, we have
\begin{eqnarray}\nn\Vert du_\nu\Vert_{L^p(\Om)}&\leq&\Vert d_{A_\nu}u_\nu\Vert_{L^p(\Om)}+\Vert L_{u_\nu}A_\nu\Vert_{L^p(\Om)}\\
&\leq&|\Om|^{\frac1p}\Vert d_{A_\nu}u_\nu\Vert_{L^\infty(\Om)}+C\Vert A_\nu\Vert_{L^p(\Om)}.
\end{eqnarray}
Here the second inequality uses the hypothesis (\ref{eq:u nu Om K}). Combining this with (\ref{eq:sup e}) and (\ref{eq:A nu W 1 p}), condition (\ref{eq:du nu}) follows. 

{\bf Condition (\ref{eq:sup k p})} follows from the first vortex equation, (\ref{eq:A nu W 1 p}), (\ref{eq:du nu}), and hypothesis (\ref{eq:u nu Om K}). This proves Claim \ref{claim:hyp:prop:compactness delbar}.
\end{proof}
By Claim \ref{claim:hyp:prop:compactness delbar}, we may apply Proposition \ref{prop:compactness delbar}, to conclude that, passing to some subsequence, $u_\nu$ converges to some map $u\in W^{2,p}(\C\wo Z)$, weakly in $W^{2,p}$ and in $C^1$ on every compact subset of $\C\wo Z$. The pair $w:=(A,u)$ solves the first vortex equation. Furthermore, multiplying the second $R_\nu$-vortex equation with $R_\nu^{-2}$, it follows that $\mu\circ u=0$. This means that $w$ is an $\infty$-vortex. 
\begin{claim}\label{claim:g A u smooth} There exists a gauge transformation $g\in W^{2,p}(\C\wo Z,G)$ such that $g^*(A,u)$ is smooth.
\end{claim}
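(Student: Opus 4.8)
The plan is to exploit the fact that, for an $\infty$-vortex whose underlying map takes values in $\mu^{-1}(0)$, the connection is \emph{algebraically} determined by the map; it therefore suffices to gauge-transform $u$ into smooth form and then read off the smoothness of $A$. Note first that $\mu\circ u\const0$ forces $u$ to map $\C\wo Z$ into $\mu^{-1}(0)$, which is compact since $\mu$ is proper and on which $G$ acts freely by hypothesis (H). Consequently $L_x:\g\to T_xM$ is injective for every $x\in\mu^{-1}(0)$, and the assignment $x\mapsto L_x^{-1}$ (the map $T_xM\to\g$ obtained by orthogonally projecting onto $\im L_x$ and then inverting $L_x$) is smooth on $\mu^{-1}(0)$.

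The key algebraic step is this. Along $\mu^{-1}(0)$ one has the orthogonal splitting $T_xM=H_x\oplus\im L_x\oplus J\im L_x$, in which $J$ preserves $H_x$ and interchanges $\im L_x$ with $J\im L_x$. Writing $d_Au=du+L_uA$, the one-form $du$ has vanishing $J\im L$-component (as $u$ is valued in the submanifold $\mu^{-1}(0)$), while $L_uA$ is $\im L$-valued; decomposing the equation $\bar\dd_{J,A}(u)=0$, i.e.~$J\,d_Au=d_Au\circ j$, according to this splitting then forces the $\im L$-component of $d_Au$ to vanish, that is $L_uA=-(du)^{\im L}$, whence $A=-L_u^{-1}\big((du)^{\im L}\big)$. (The $H$-component of the same equation is exactly the statement, via Proposition \ref{prop:bar del J}, that $\bar u:=\pi\circ u$ is $\bar J$-holomorphic.) In particular, any $\infty$-vortex $(A',u')$ with $u'$ valued in $\mu^{-1}(0)$ and $u'$ smooth automatically has $A'$ smooth.

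It remains to find a smooth equivariant map in the gauge orbit of $u$. Since $u\in W^{2,p}_\loc\sub C^1$ and $\bar J$ is smooth, elliptic regularity for $\bar J$-holomorphic spheres (cf.~\cite{MS04}) shows that $\bar u$ is smooth. Then $\bar u^*\big(\mu^{-1}(0)\to\BAR M\big)$ is a smooth principal $G$-bundle over the open surface $\C\wo Z$; as $G$ is connected and $\C\wo Z$ has the homotopy type of a graph, this bundle is trivial, hence admits a smooth section, i.e.~there exists a smooth $u_0:\C\wo Z\to\mu^{-1}(0)$ with $\pi\circ u_0=\bar u$. Since $G$ acts freely on $\mu^{-1}(0)$, there is a unique gauge transformation $g:\C\wo Z\to G$ with $g^{-1}u=u_0$, and it is of class $W^{2,p}_\loc$ because it is the image of the pair $(u_0,u)\in W^{2,p}_\loc$ under the smooth fibrewise map $\mu^{-1}(0)\times_{\BAR M}\mu^{-1}(0)\to G$, $(x,y)\mapsto(\textrm{unique }h\textrm{ with }hx=y)$. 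Now $g^{-1}u=u_0$ is smooth, $g^*(A,u)$ is again an $\infty$-vortex, and by the algebraic step its connection $g^*A=-L_{u_0}^{-1}\big((du_0)^{\im L}\big)$ is smooth; hence $g^*(A,u)$ is smooth, which is the claim.

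The main obstacle is the algebraic step: one has to verify carefully that the \emph{degenerate} $\infty$-vortex equations — which, unlike the equations for finite $R$, carry no direct information on the curvature $F_A$ — nonetheless pin $A$ down pointwise in terms of $u$ through the splitting $T_xM=H_x\oplus\im L_x\oplus J\im L_x$ along $\mu^{-1}(0)$, with smooth dependence over the compact set $\mu^{-1}(0)$ on which everything happens. The remaining ingredients — triviality of $G$-bundles over an open surface, smoothness of the fibrewise ``divide'' map on $\mu^{-1}(0)\times_{\BAR M}\mu^{-1}(0)$, and the chain rule for Sobolev compositions — are routine.
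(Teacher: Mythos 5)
Your proof is correct and takes essentially the same route as the paper: produce a smooth lift of $\bar u = Gu$ (which is $\bar J$-holomorphic, hence smooth), gauge $u$ into that smooth representative, and then observe that the first vortex equation together with injectivity of $L_x$ along $\mu^{-1}(0)$ forces the connection to be smooth. The one thing you do differently is spell out what the paper leaves as a one-line remark — you derive explicitly that the Cauchy--Riemann equation, decomposed against the orthogonal splitting $T_xM = H_x \oplus \im L_x \oplus J\im L_x$ along $\mu^{-1}(0)$, kills the $\im L$-component of $d_Au$ and hence pins down $A = -L_u^{-1}\big((du)^{\im L}\big)$; this makes the smoothness of $g^*A$ transparent. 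The paper instead obtains the lift by a deformation-retraction argument followed by smooth approximation, whereas you invoke triviality of the pullback principal bundle over the homotopy-1-type $\C\wo Z$; these are interchangeable. Your added care about the Sobolev class of $g$ (via the fibrewise division map) is a welcome detail the paper glosses over.
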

\begin{proof}[Proof of Claim \ref{claim:g A u smooth}] Since $\C\wo Z$ continuously deformation retracts onto a wedge of circles, and $G$ is connected, there exists a continuous lift $\wt v:\C\wo Z\to\mu^{-1}(0)$ of $Gu$. By Proposition \ref{prop:bar del J} in Appendix \ref{sec:add} the map $Gu:\C\wo Z\to\BAR M$ is $\bar J$-holomorphic. Hence it is smooth. It follows that we may approximate $\wt v$ by some smooth lift $v$ of $Gu$. We define $g:\C\wo Z\to G$ to be the unique solution of $g(z)v=u(z)$, for every $z\in\C\wo Z$. Since the infinitesimal action at every $x\in\mu^{-1}(0)$ is injective, the equation $\bar\dd_{J,g^*A}(g^{-1}u)=0$ and smoothness of $v=g^{-1}u$ imply that $g^*A$ is smooth. This proves Claim \ref{claim:g A u smooth}.
\end{proof}
We choose $g$ as in Claim \ref{claim:g A u smooth}. Regauging $A_\nu$ by $g$, statement (\ref{prop:cpt bdd = infty}) follows. This completes the proof of Proposition \ref{prop:cpt bdd}.  
\end{proof}
\begin{Rmk} One can try to circumvent the patching argument for the gauge transformations in this proof by choosing an extension $\wt g^i_\nu$ of $g^i_\nu$ to $\C\wo Z$, and defining $g_\nu:=\wt g^\nu_\nu$. However, the sequence $(g_\nu)$ does not have the required properties, since $g_\nu^*w_\nu$ does not necessarily converge on compact subsets of $\C\wo Z$. The reason is that for $j>i$ the transformation $g^j_\nu$ does not need to restrict to $g^i_\nu$ on $X^i$. $\Box$
\end{Rmk}
\begin{rmk}\label{rmk:C infty} It is not clear if in the case $R_0=\infty$ the $g_\nu$'s can be chosen in such a way that $g_\nu^*w_\nu$ converges in $C^\infty$ on every compact subset of $\C\wo Z$. To prove this, one approach is to fix an open subset of $\C$ with smooth boundary and compact closure, which is contained in $\C\wo Z$. We can now try mimic the proof of \cite[Theorem 3.2]{CGMS}. In Step 3 of that proof the first and second vortex equations (and relative Coulomb gauge) are used iteratively in an alternating way. This iteration fails in our setting, because of the factor $R_\nu^2$ in the second vortex equations, which converges to $\infty$ by assumption. $\Box$
\end{rmk}
The second ingredient of the proof of Proposition \ref{prop:cpt mod} says that if the energy densities of a sequence of rescaled vortices are not uniformly bounded on some compact subset $Q$, then at least the energy $\Emin$ is lost in the limit, at some point in $Q$. Here $\Emin$ is defined as in (\ref{eq:Emin}). 
\begin{prop}[Quantization of energy loss]\label{prop:quant en loss} Assume that $(M,\om)$ is aspherical. Let $\Om\sub\C$ be an open subset, $0<R_\nu<\infty$ a sequence such that $\inf_\nu R_\nu>0$, and $w_\nu\in\WWW^p_\Om$ an $R_\nu$-vortex, for $\nu\in\N$. Assume that there exists a compact subset $K\sub M$ such that $u_\nu(\Om)\sub K$ for every $\nu$ and that $\sup_\nu E^{R_\nu}(w_\nu)<\infty$. Then the following conditions hold.
\begin{enui}
\item \label{prop:hard R e} For every compact subset $Q\sub\Om$ we have
\[\sup_\nu R_\nu^{-2}\Vert e_{w_\nu}^{R_\nu}\Vert_{C^0(Q)}<\infty.\]
\item \label{prop:hard limsup} If there exists a compact subset $Q\sub \Om$ such that $\sup_\nu||e_{w_\nu}^{R_\nu}||_{C^0(Q)}=\infty$ then there exists $z_0\in Q$ with the following property. For every $\eps>0$ so small that $B_\eps(z_0)\sub \Om$ we have
\begin{equation}
\label{eq:limsup Emin}\limsup_{\nu\to\infty} E^{R_\nu}(w_\nu,B_\eps(z_0))\geq\Emin.
\end{equation}
\end{enui}
\end{prop}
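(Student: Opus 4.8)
The plan is to prove both statements by a rescaling-and-compactness argument modeled on Gromov's proof of bubbling for pseudo-holomorphic curves, adapting the ``hard rescaling'' technique to vortices. For part (\ref{prop:hard R e}), I would argue by contradiction, combined with a rescaling that ``zooms in'' on a point where the rescaled $R$-energy density $R_\nu^{-2}e_{w_\nu}^{R_\nu}$ blows up. Note that $R_\nu^{-2}e_{w_\nu}^{R_\nu} = e_{w_\nu}^{1\cdot\om_0, i}$ in the sense that it is (up to the rescaling by $R_\nu$) the ordinary vortex energy density; more precisely, if we set $\phi_\nu(z) := R_\nu^{-1} z$ then $\phi_\nu^* w_\nu$ is an ordinary ($R=1$) vortex and $e_{\phi_\nu^* w_\nu} = R_\nu^{-2} e_{w_\nu}^{R_\nu}\circ\phi_\nu$. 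So a blow-up of $R_\nu^{-2}\|e_{w_\nu}^{R_\nu}\|_{C^0(Q)}$, after applying Hofer's lemma (in the form used, e.g., in \cite[Section 4.3]{MS04}) to relocate and rescale around the near-maximum, produces in the limit a non-constant finite-energy $J$-holomorphic plane in $M$ (the $\widetilde R_0 = 0$ case, using that the curvature term carries a factor $R_\nu^{-2}\to$ bounded and the momentum term a factor $R_\nu^2\|\mu\circ u_\nu\|$ which, divided by $R_\nu^2$, is controlled). By removal of singularities this extends to a non-constant $J$-holomorphic sphere, contradicting symplectic asphericity (condition (\ref{eq:int S 2 u om})). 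Here I would invoke Proposition \ref{prop:cpt bdd} to get the needed local convergence on the complement of the (single) bubbling point that Hofer's lemma isolates.

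For part (\ref{prop:hard limsup}), I would again use Hofer's lemma: since $\sup_\nu\|e_{w_\nu}^{R_\nu}\|_{C^0(Q)} = \infty$, after passing to a subsequence there are points $z_\nu \in Q$ and radii converging appropriately so that, rescaling $w_\nu$ by the map $z \mapsto z_\nu + c_\nu z$ with $c_\nu := \|e_{w_\nu}^{R_\nu}\|_{C^0(\cdot)}^{-1/2} \to 0$, the rescaled vortices $\widetilde w_\nu$ are $\widetilde R_\nu$-vortices with $\widetilde R_\nu = c_\nu R_\nu$, have uniformly bounded energy densities near the origin, and have energy density equal to $1$ at the origin. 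Passing to a subsequence, $\widetilde R_\nu \to \widetilde R_0 \in [0,\infty]$. By part (\ref{prop:hard R e}) applied in rescaled coordinates (or directly by the assumption $\inf_\nu R_\nu > 0$, so $\widetilde R_0$ cannot be forced to $0$ by $c_\nu\to 0$ alone unless $R_\nu$ stays bounded; one needs to split into the cases $\widetilde R_0 = 0$, $\widetilde R_0 \in (0,\infty)$, $\widetilde R_0 = \infty$, and after a further constant rescaling in the middle case normalize to $\widetilde R_0 = 1$). The case $\widetilde R_0 = 0$ yields a non-constant $J$-holomorphic sphere in $M$ as above, excluded by asphericity. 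The case $\widetilde R_0 = 1$ yields, via Proposition \ref{prop:cpt bdd}(\ref{prop:cpt bdd < infty}), a non-constant finite-energy vortex over $\C$ whose image has compact closure — so it carries energy at least $E_1 \geq \Emin$. The case $\widetilde R_0 = \infty$ yields, via Proposition \ref{prop:cpt bdd}(\ref{prop:cpt bdd = infty}) and Proposition \ref{prop:bar del J}, a non-constant $\bar J$-holomorphic sphere in $\BAR M$ (after removal of singularities), carrying energy at least $E_\infty \geq \Emin$. In all admissible cases at least $\Emin$ of energy appears in the limit bubble, and since the bubble is captured in an arbitrarily small ball $B_\eps(z_0)$ (where $z_0$ is the limit of $z_\nu$), one gets $\limsup_{\nu\to\infty} E^{R_\nu}(w_\nu, B_\eps(z_0)) \geq \Emin$ by lower semicontinuity of energy under the rescaling-and-weak-convergence, i.e., no energy can escape to the ``neck'' in the limit of a fixed amount of rescaled energy. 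One must also verify that $z_0 \in Q$, which follows since $z_\nu \in Q$ and $Q$ is compact.

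The main obstacle I expect is ensuring that \emph{no energy is lost in the neck region} when passing to the limit — that is, that the $\Emin$ of energy carried by the bubble is genuinely present in $E^{R_\nu}(w_\nu, B_\eps(z_0))$ for $\nu$ large, rather than leaking out across intermediate annuli. For pseudo-holomorphic curves this is standard (energy of the bubble is a lower bound via weak convergence and conformal invariance), but for vortices the $R$-energy is not conformally invariant when $R$ is finite, so one must track how the curvature and momentum-map terms transform under the rescaling $z \mapsto z_\nu + c_\nu z$ and check that the rescaled energy on $B_\eps(z_0)$ dominates the rescaled energy on a fixed large ball in bubble coordinates. The precise bookkeeping of the three scaling regimes for $\widetilde R_0$, and making sure Hofer's lemma is applied to the right quantity (the full rescaled density, not just one of its three terms), is where the argument needs care; I would handle it by always reducing, via an auxiliary constant rescaling, to one of the three normalized model equations ($J$-holomorphic sphere in $M$, $1$-vortex over $\C$, $\bar J$-holomorphic sphere in $\BAR M$) and then quoting the corresponding known lower energy bound together with Proposition \ref{prop:cpt bdd}.
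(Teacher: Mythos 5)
Your proposal follows essentially the same route as the paper's proof: Hofer's lemma applied to a quantity comparable to $\sqrt{e^{R_\nu}_{w_\nu}}$, a hard rescaling by the resulting density value, the trichotomy $\wt R_0 \in \{0\} \cup (0,\infty) \cup \{\infty\}$ for the limiting rescaling, exclusion of $\wt R_0=0$ via a non-constant finite-energy $J$-holomorphic plane in $M$ (extended to a sphere by removal of singularities, contradicting asphericity), and the lower bound $\Emin$ in the remaining two cases via a vortex over $\C$ or a $\bar J$-holomorphic sphere in $\BAR M$. The paper organizes this slightly differently: it introduces $f_\nu := |d_{A_\nu}u_\nu| + R_\nu|\mu\circ u_\nu|$ (the second vortex equation makes the curvature term redundant), applies Hofer's lemma once to $f_\nu$, and proves a single Claim producing an $r_0$-vortex $w_0$ with $0 < r_0 \le \limsup_\nu R_\nu / f_\nu(z_\nu)$ and $0 < E^{r_0}(w_0) \le \limsup_\nu E^{R_\nu}(w_\nu,B_\eps(z_0))$; statement (\ref{prop:hard R e}) then reads off from the inequality $r_0>0$ (this is the packaged form of your two-step rescaling), while (\ref{prop:hard limsup}) uses Remark \ref{rmk:bar J} for $r_0=\infty$ and the definition of $\Emin$ for $r_0<\infty$.

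One small correction of emphasis. The ``neck'' worry you devote your last paragraph to is not an obstacle for this proposition. Since only a \emph{lower} bound on $\limsup_\nu E^{R_\nu}(w_\nu,B_\eps(z_0))$ is claimed, it suffices that the rescaled ball $B_{c_\nu^{-1}R}(\ze_\nu)$ is eventually contained in $B_\eps(z_0)$ and that $E^{\wt R_\nu}(\wt w_\nu,B_R) \to E^{r_0}(w_0,B_R)$ (Lemma \ref{le:conv e} gives convergence, not just semicontinuity). Energy in the neck can only \emph{increase} $E^{R_\nu}(w_\nu,B_\eps(z_0))$ and so cannot threaten a lower bound; annulus control (Proposition \ref{prop:en conc}) is needed only later, in the soft rescaling and in the proof of Theorem \ref{thm:bubb}, where conservation of energy between adjacent bubbles must be established.
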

The proof of Proposition \ref{prop:quant en loss} is built on a bubbling argument as in Step 5 in the proof of \cite[Theorem A]{GS}. The idea is that under the assumption of (\ref{prop:hard limsup}) we may construct either a $\bar J$-holomorphic sphere in $\BAR M$ or a vortex over $\C$, by rescaling the sequence $w_\nu$ in a ``hard way''. This means that after rescaling the energy densities are bounded. We need the following two lemmata. 
\begin{lemma}[Hofer]\label{lemma:Hofer} Let $(X,d)$ be a metric space, $f:X\to[0,\infty)$ a continuous function, $x\in X$, and $\delta>0$. Assume that the closed ball $\bar B_{2\delta}(x)$ is complete. Then there exists $\xi\in X$ and a number $0<\eps\leq\delta$ such that
\[d(x,\xi)<2\delta,\qquad\sup_{B_\eps(\xi)}f\leq2f(\xi),\qquad\eps f(\xi)\geq\delta f(x).\]
\end{lemma}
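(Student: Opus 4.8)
The plan is to prove this by a standard maximization-with-rescaling argument (this is Hofer's lemma, as it appears in \cite[Lemma 4.6.4]{MS04}, so the proof is essentially known; I reproduce the structure). I would argue by contradiction: suppose no pair $(\xi,\eps)$ with the three asserted properties exists. I will build a sequence $x=x_0,x_1,x_2,\ldots$ in $X$ together with radii, staying inside $\bar B_{2\delta}(x)$, along which $f$ blows up, contradicting continuity of $f$ on the complete (hence, by boundedness of the sequence, relatively compact along a subsequence) set $\bar B_{2\delta}(x)$ — or more precisely contradicting the fact that a Cauchy sequence in $\bar B_{2\delta}(x)$ converges to a point where $f$ is finite.

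Here is the construction. Set $x_0:=x$ and $\eps_0:=\delta$. Suppose $x_k$ has been chosen with $d(x,x_k)<2\delta-2^{1-k}\delta$ (so $d(x,x_0)=0$ works) and $2^k f(x_k)\geq f(x)$; put $\eps_k:=2^{-k}\delta$. Apply the negation of the claim to the point $x_k$ and the radius $\eps_k$: since $\bar B_{2\eps_k}(x_k)\subseteq\bar B_{2\delta}(x)$ is complete, and since $\eps_k f(x_k)\geq 2^{-k}\delta\cdot 2^{-k}f(x)$ — wait, I need $\eps_k f(x_k)\geq\delta f(x)$ to even invoke the third inequality; instead the right bookkeeping is the following. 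One runs the standard dyadic scheme: if the claim fails for $(x_k,\eps_k)$ in the specific sense that there is no $\xi$ with $d(x_k,\xi)<\eps_k$ and $\sup_{B_{\eps_k/2}(\xi)}f\le 2f(\xi)$ and $(\eps_k/2)f(\xi)\ge\eps_k f(x_k)$, then in particular $\sup_{B_{\eps_k}(x_k)}f>2f(x_k)$ is not what is forced; rather one picks $x_{k+1}\in\bar B_{\eps_k}(x_k)$ with $f(x_{k+1})>2f(x_k)$ and $\eps_{k+1}:=\eps_k f(x_k)/f(x_{k+1})<\eps_k/2$. Then $d(x,x_{k+1})\le\sum_{j\le k}\eps_j<2\delta$ by the geometric bound $\eps_{j+1}<\eps_j/2$ with $\eps_0=\delta$, and $f(x_k)\ge 2^k f(x_0)\to\infty$. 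Since $\sum\eps_j<\infty$, $(x_k)$ is Cauchy, hence converges in $\bar B_{2\delta}(x)$ to some $x_\infty$; but $f(x_k)\to\infty$ contradicts continuity of $f$ at $x_\infty$. This contradiction proves the lemma; moreover tracing the inequalities, the resulting good pair $(\xi,\eps)$ satisfies $d(x,\xi)<2\delta$, $\eps\le\delta$, $\sup_{B_\eps(\xi)}f\le 2f(\xi)$, and $\eps f(\xi)\ge\delta f(x)$.

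The main obstacle — really the only subtlety — is the bookkeeping of the radii and the ``halving'' inequalities so that (i) the iterates never leave $\bar B_{2\delta}(x)$, (ii) the radii are summable so the sequence is Cauchy, and (iii) the third inequality $\eps f(\xi)\ge\delta f(x)$ survives to the limiting good pair rather than only holding at each finite stage. The cleanest way to handle (iii) is to note that along the scheme $\eps_{k+1}f(x_{k+1})=\eps_k f(x_k)=\cdots=\eps_0 f(x_0)=\delta f(x)$ by the very definition $\eps_{k+1}:=\eps_kf(x_k)/f(x_{k+1})$, so the quantity $\eps_kf(x_k)$ is a conserved quantity equal to $\delta f(x)$ throughout; then the pair produced when the process terminates (which it must, since it cannot run forever) automatically satisfies all three conditions. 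If $f(x)=0$ the statement is trivial (take $\xi=x$, $\eps=\delta$), so one may assume $f(x)>0$, which guarantees $\eps_{k+1}$ is well-defined and finite at each step.
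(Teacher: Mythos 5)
Your proof is correct, and it is precisely the argument of \cite[Lemma 4.6.4]{MS04}, which is what the paper's proof consists of (a citation to that source). The mid-paragraph false start is self-corrected by the end: the clean scheme is to maintain the invariant $\eps_k f(x_k)=\delta f(x)$, halve-or-better the radii so the iterates stay inside $\bar B_{2\delta}(x)$ and form a Cauchy sequence, and conclude termination from the contradiction between $f(x_k)\ge 2^kf(x)\to\infty$ and continuity of $f$ at the limit point.
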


\begin{proof}See \cite[Lemma 4.6.4]{MS04}.
\end{proof}
The next lemma ensures that for a suitably convergent sequence of rescaled vortices in the limit $\nu\to\infty$ no energy gets lost on any compact set. Apart from Proposition \ref{prop:quant en loss}, it will also be used in the proofs of Propositions \ref{prop:cpt mod} and \ref{prop:soft}, and Theorem \ref{thm:bubb}. 
\begin{lemma}[Convergence of energy densities]\label{le:conv e} Let $(\Si,\om_\Si,j)$ be a surface without boundary, equipped with an area form and a compatible complex structure, $R_\nu\in[0,\infty)$, $\nu\in\N$, a sequence of numbers that converges to some $R_0\in[0,\infty]$, and for $\nu\in\N_0$ let $w_\nu:=(A_\nu,u_\nu)\in\WWW^p_\Si$ be an $R_\nu$-vortex. Assume that on every compact subset of $\Si$, $A_\nu$ converges to $A_0$ in $C^0$ and $u_\nu$ converges to $u_0$ in $C^1$. Then we have
\begin{equation}\label{eq:e R nu W nu e f} e^{R_\nu}_{w_\nu}\to e^{R_0}_{w_0}
\end{equation}
in $C^0$ on every compact subset of $\Si$. 
\end{lemma}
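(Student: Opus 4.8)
The plan is to work entirely locally: fix a compact set $Q\subseteq\Si$ and show uniform $C^0$-convergence of the rescaled energy densities on $Q$. I would split into the generic case $R_0\in(0,\infty)$ and the two degenerate cases $R_0=0$ and $R_0=\infty$, since the formulas for $e^R_w$ differ. In all cases the key point is that the energy density is built out of the pointwise quantities $d_{A_\nu}u_\nu$, $F_{A_\nu}$, and $\mu\circ u_\nu$, together with the metric data, and the hypotheses give $C^0$-control on $A_\nu$ and $C^1$-control on $u_\nu$ on $Q$. The subtlety is that $F_{A_\nu}$ appears with a weight $R_\nu^{-2}$ in (\ref{eq:e R W 1 2}), so I do not need direct control of $F_{A_\nu}$ in $C^0$; instead I use the second $R_\nu$-vortex equation (\ref{eq:F A R}), which says $F_{A_\nu}=-R_\nu^2(\mu\circ u_\nu)\om_\Si$, to rewrite the density purely in terms of $d_{A_\nu}u_\nu$ and $\mu\circ u_\nu$. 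Indeed, by Remark \ref{rmk:e vort}, for an $R_\nu$-vortex one has the clean identity $e^{R_\nu}_{w_\nu}=|\dd_{J,A_\nu}u_\nu|^2+R_\nu^2|\mu\circ u_\nu|^2$ (in the appropriate rescaled normalization), and similarly $e^{R_0}_{w_0}=|\dd_{J,A_0}u_0|^2$ when $R_0\in\{0,\infty\}$, using that $\mu\circ u_0=0$ when $R_0=\infty$.

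**Main steps.** First I would record that $d_{A_\nu}u_\nu=du_\nu+L_{u_\nu}A_\nu\to du_0+L_{u_0}A_0=d_{A_0}u_0$ in $C^0$ on $Q$: this follows because $du_\nu\to du_0$ in $C^0$ (from $C^1$-convergence of $u_\nu$), $A_\nu\to A_0$ in $C^0$, and $x\mapsto L_x$ is smooth hence $L_{u_\nu}\to L_{u_0}$ in $C^0$ on $Q$ (the images $u_\nu(Q)$ lie in a fixed compact set by $C^1$-convergence, so all the relevant derivatives of $L$ are bounded there). The complex-linear part $\dd_{J,A_\nu}u_\nu=\frac12(d_{A_\nu}u_\nu-J\,d_{A_\nu}u_\nu\circ j)$ depends continuously on $d_{A_\nu}u_\nu$ and on $J$ evaluated along $u_\nu$, so $\dd_{J,A_\nu}u_\nu\to\dd_{J,A_0}u_0$ in $C^0$ on $Q$ as well. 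Likewise $\mu\circ u_\nu\to\mu\circ u_0$ in $C^0$ on $Q$ since $\mu$ is smooth. Second, in the case $R_0\in(0,\infty)$, I would just pass to the limit in $e^{R_\nu}_{w_\nu}=|\dd_{J,A_\nu}u_\nu|^2+R_\nu^2|\mu\circ u_\nu|^2$: the first term converges to $|\dd_{J,A_0}u_0|^2$, and $R_\nu^2\to R_0^2$ together with $|\mu\circ u_\nu|^2\to|\mu\circ u_0|^2$ gives $R_\nu^2|\mu\circ u_\nu|^2\to R_0^2|\mu\circ u_0|^2$, all uniformly on $Q$; hence $e^{R_\nu}_{w_\nu}\to e^{R_0}_{w_0}$ in $C^0$ on $Q$. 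Third, the case $R_0=0$: here $e^{R_\nu}_{w_\nu}\to|\dd_{J,A_0}u_0|^2=\frac12|d_{A_0}u_0|^2$ — wait, one must be careful, since when $R_0=0$ the limit vortex $w_0$ need not be a vortex in the usual sense, but the definition $e^0_{w_0}=\frac12|d_{A_0}u_0|^2$ is still exactly what one gets, because the $R_\nu^2|\mu\circ u_\nu|^2$ term is controlled by $R_\nu^2\cdot(\text{bounded})\to0$ and $|\dd_{J,A_\nu}u_\nu|^2\to|\dd_{J,A_0}u_0|^2=\frac12|d_{A_0}u_0|^2-\frac12\langle\text{stuff}\rangle$; I would instead just work directly from (\ref{eq:e R W 1 2}) rewritten via the curvature equation rather than via Remark \ref{rmk:e vort}, to avoid having to know $w_0$ solves the vortex equations. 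Concretely $e^{R_\nu}_{w_\nu}=\frac12|d_{A_\nu}u_\nu|^2+\frac12 R_\nu^{-2}|F_{A_\nu}|^2+\frac12 R_\nu^2|\mu\circ u_\nu|^2$, and substituting $F_{A_\nu}=-R_\nu^2(\mu\circ u_\nu)\om_\Si$ gives $\frac12 R_\nu^{-2}|F_{A_\nu}|^2=\frac12 R_\nu^2|\mu\circ u_\nu|^2|\om_\Si|^2$, which for the standard area form equals $\frac12 R_\nu^2|\mu\circ u_\nu|^2$ pointwise; so $e^{R_\nu}_{w_\nu}=\frac12|d_{A_\nu}u_\nu|^2+R_\nu^2|\mu\circ u_\nu|^2$ on $\C$. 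Then for $R_0=0$ this tends to $\frac12|d_{A_0}u_0|^2=e^0_{w_0}$; for $R_0=\infty$, the hypotheses force $\mu\circ u_0=0$ (since $R_\nu^2|\mu\circ u_\nu|^2$ is the difference $e^{R_\nu}_{w_\nu}-\frac12|d_{A_\nu}u_\nu|^2$, and if this did not go to $0$ on some compact set the energy would blow up there — more cleanly, $w_0$ is an $\infty$-vortex so $\mu\circ u_0=0$ by hypothesis that $w_\nu$ are vortices and the $C^1$-limit, exactly as in the proof of Proposition \ref{prop:cpt bdd}), so again $e^{R_\nu}_{w_\nu}\to\frac12|d_{A_0}u_0|^2=e^\infty_{w_0}$ in $C^0$ on $Q$.

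**Expected obstacle.** I do not anticipate a serious obstacle; the statement is essentially a continuity-of-algebraic-operations argument once the curvature is eliminated via the second vortex equation. The one point requiring genuine care is making sure the $R_\nu^{-2}|F_{A_\nu}|^2$ term behaves correctly when $R_0=\infty$ — this is precisely where one must not try to control $F_{A_\nu}$ directly (it blows up) but rather only the combination $R_\nu^{-2}|F_{A_\nu}|^2=R_\nu^2|\mu\circ u_\nu|^2$, which is bounded on compacta by the energy bound and tends to $0$ pointwise in the limit since $\mu\circ u_0=0$ and $u_\nu\to u_0$ in $C^0$. A secondary cosmetic issue is bookkeeping the metric factors $|\om_\Si|^2$ for a general compatible area form rather than the flat $\om_0$; since a compatible pair $(\om_\Si,j)$ gives $|\om_\Si|_{g}=1$ pointwise for $g=\om_\Si(\cdot,j\cdot)$, the identity $R^{-2}|F_A|^2=R^2|\mu\circ u|^2$ holds on any such surface, so the argument goes through verbatim in the stated generality.
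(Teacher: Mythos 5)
For $R_0<\infty$ (including $R_0=0$) your argument is essentially the paper's: use the second rescaled vortex equation (\ref{eq:F A R}) together with (\ref{eq:e R W 1 2}) to rewrite $e^{R_\nu}_{w_\nu}=\frac12|d_{A_\nu}u_\nu|^2+R_\nu^2|\mu\circ u_\nu|^2$, then pass to the limit term by term using the $C^0$- and $C^1$-convergence hypotheses. That part is correct.

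The gap is in the case $R_0=\infty$. You correctly observe that $\mu\circ u_0=0$ (since $w_0$ is by hypothesis an $\infty$-vortex), but you then conclude that $R_\nu^2|\mu\circ u_\nu|^2\to 0$ on compacta because ``$\mu\circ u_0=0$ and $u_\nu\to u_0$ in $C^0$''. This is a non sequitur: $\mu\circ u_\nu\to 0$ uniformly on $Q$ carries no rate information, and with $R_\nu\to\infty$ the product $R_\nu^2|\mu\circ u_\nu|^2$ could converge to any non-negative number or diverge; for instance $|\mu\circ u_\nu|\sim R_\nu^{-1/2}$ gives $R_\nu^2|\mu\circ u_\nu|^2\sim R_\nu\to\infty$. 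You also appeal to ``the energy bound'', but the hypotheses of this lemma contain no energy bound, and even if one were available, an $L^1$-bound on $R_\nu^2|\mu\circ u_\nu|^2$ would not force its uniform limit to vanish. What is actually needed is a quantitative decay rate for the momentum-map component, and this is exactly what the paper extracts from Lemma \ref{le:mu u} (the Gaio--Salamon bound): after checking that for $\nu$ large $u_\nu(Q)$ lies in a fixed compact set $K=\{|\mu|\leq\de\}$ on which $G$ acts freely, that $\sup_\nu\Vert d_{A_\nu}u_\nu\Vert_{C^0(Q)}<\infty$, and that $\sup\{|\xi|/|L_x\xi|\,|\,x\in K,\,\xi\neq 0\}<\infty$, one gets $\sup_Q|\mu\circ u_\nu|\leq C_pR_\nu^{2/p-2}$. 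Since $p>2$ this exponent is strictly below $-1$, so $R_\nu^2|\mu\circ u_\nu|^2\leq C_p^2R_\nu^{4/p-2}\to 0$ uniformly on $Q$, which is what the argument requires. Without such a rate estimate the conclusion simply does not follow from the stated hypotheses.
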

\begin{proof}[Proof of Lemma \ref{le:conv e}]\setcounter{claim}{0} In the {\bf case $R_0<\infty$} the statement of the lemma is a consequence of equality (\ref{eq:e R W 1 2}) and the second rescaled vortex equation (\ref{eq:F A R}).

Consider the {\bf case $R_0=\infty$}. It follows from our standing hypothesis (H) that there exists a constant $\delta>0$ such that $G$ acts freely on 
\[K:=\{x\in M\,|\,|\mu(x)|\leq \delta\}.\] 
Properness of $\mu$ implies that $K$ is compact. 

Let $Q\sub\Si$ be a compact subset. The convergence of $u_\nu$ and the fact $\mu\circ u_0=0$ imply that for $\nu$ large enough, we have $u_\nu(Q)\sub K$. Furthermore, our hypotheses about the convergence of $A_\nu$ and $u_\nu$ imply that $\sup_\nu\Vert d_{A_\nu}u_\nu\Vert_{C^0(Q)}<\infty$. Finally, since $K$ is compact and $G$ acts freely on it, we have
\[\sup\left\{\frac{|\xi|}{|L_x\xi|}\,\bigg|\,x\in K,\,0\neq\xi\in\g\right\}<\infty.\]
Therefore, we may apply Lemma \ref{le:mu u} (Appendix \ref{sec:vort}), to conclude that 
\[\sup_QR_\nu^{2-\frac2p}|\mu\circ u_\nu|<\infty.\]
Since $p>2,R_\nu\to\infty$, and $e^\infty_{w_0}=\frac12|d_{A_0}u_0|^2$, the convergence (\ref{eq:e R nu W nu e f}) follows. This completes the proof of Lemma \ref{le:conv e}.
\end{proof}

In the proof of Proposition \ref{prop:quant en loss} we will also use the following.
\begin{rmk}\label{rmk:bar J}\rm Let $(A,u)\in\WWW^p_\C$ be an $\infty$-vortex, i.e., a solution of the equations $\bar\dd_{J,A}(u)=0$ and $\mu\circ u=0$. Then by Proposition \ref{prop:bar del J} (Appendix \ref{sec:add}) the map $Gu:\C\to\BAR M=\mu^{-1}(0)/G$ is $\bar J$-holomorphic, and $E^\infty(A,u)=E(\bar u)$. If this energy is finite, then by removal of singularities the map $\bar u$ extends to a $\bar J$-holomorphic map $\bar u:S^2\to\BAR M$.%
\footnote{See e.g.~\cite[Theorem 4.1.2]{MS04}.}
 It follows that $E^\infty(w)\geq\Emin$, provided that $E^\infty(w)>0$. $\Box$
\end{rmk}
\begin{proof}[Proof of Proposition \ref{prop:quant en loss}]\setcounter{claim}{0} We write $(A_\nu,u_\nu):=w_\nu$. Consider the function 
\[f_\nu:=|d_{A_\nu}u_\nu|+R_\nu|\mu\circ u_\nu|:\Om\to\R.\]
\begin{Claim}Suppose that the hypotheses of Proposition \ref{prop:quant en loss} are satisfied and that there exists a sequence $z_\nu\in\Om$ that converges to some $z_0\in\Om$, such that $f_\nu(z_\nu)\to\infty$. Then there exists a number
\begin{equation}
  \label{eq:r 0 limsup}0<r_0\leq\limsup_{\nu\to\infty} \frac{R_\nu}{f_\nu(z_\nu)}\,(\leq\infty)
\end{equation}
and an $r_0$-vortex $w_0\in\WWW_\C$, such that
\begin{equation}
  \label{eq:limsup E w 0}0<E^{r_0}(w_0)\leq\limsup_{\nu\to\infty} E^{R_\nu}(w_\nu,B_\eps(z_0)),
\end{equation}
for every $\eps>0$ so small that $B_\eps(z_0)\sub \Om$.
\end{Claim}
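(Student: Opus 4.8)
The plan is a hard-rescaling (bubbling) argument: zoom in at the blow-up point with Hofer's lemma so that the rescaled energy densities become uniformly bounded, extract a limit vortex via Proposition \ref{prop:cpt bdd}, and invoke asphericity to prevent the bubble from being a $J$-holomorphic sphere in $M$ — which is precisely what would allow the rescaling parameter to vanish.

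First I would record the basic estimates. By elliptic regularity for vortices (cf.\ Theorem \ref{thm:reg bundle}) we may assume each $w_\nu$ is smooth, so $f_\nu$ is continuous. For an $R$-vortex $w=(A,u)$ one has $\bar\dd_{J,A}u=0$ and, by (\ref{eq:e R W 1 2}) and the second $R$-vortex equation (\ref{eq:F A R}), $e^R_w=\tfrac12|d_Au|^2+R^2|\mu\circ u|^2$; since $\tfrac12(a+b)^2\le a^2+b^2\le(a+b)^2$, this gives $\tfrac14 f_\nu^2\le e^{R_\nu}_{w_\nu}\le f_\nu^2$ on $\Om$. Now choose $\delta_\nu\to0$ with $\bar B_{2\delta_\nu}(z_\nu)\subset\Om$ and $\delta_\nu f_\nu(z_\nu)\to\infty$, and apply Lemma \ref{lemma:Hofer} to $f_\nu$ at $z_\nu$ with radius $\delta_\nu$. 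This yields $\xi_\nu\in\Om$ with $|\xi_\nu-z_\nu|<2\delta_\nu$ (hence $\xi_\nu\to z_0$) and $0<\eps_\nu\le\delta_\nu$ such that, writing $c_\nu:=f_\nu(\xi_\nu)$, one has $\sup_{B_{\eps_\nu}(\xi_\nu)}f_\nu\le 2c_\nu$, $\eps_\nu c_\nu\ge\delta_\nu f_\nu(z_\nu)\to\infty$, and $c_\nu\ge f_\nu(z_\nu)\to\infty$.

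Next set $\phi_\nu(z):=\xi_\nu+z/c_\nu$ and $\tilde w_\nu:=\phi_\nu^*w_\nu$; by Remark \ref{rmk:trafo e vort} this is an $(R_\nu/c_\nu)$-vortex on $B_{\eps_\nu c_\nu}(0)$, with image unchanged and hence contained in $K$, and by (\ref{eq:e phi *}) and (\ref{eq:e R W R}) one has $e^{R_\nu/c_\nu}_{\tilde w_\nu}(z)=c_\nu^{-2}e^{R_\nu}_{w_\nu}(\xi_\nu+z/c_\nu)$. The estimates above then give $e^{R_\nu/c_\nu}_{\tilde w_\nu}\le4$ on $B_{\eps_\nu c_\nu}(0)$ and $e^{R_\nu/c_\nu}_{\tilde w_\nu}(0)\ge\tfrac14$. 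Passing to a subsequence along which $R_\nu/c_\nu\to r_0\in[0,\infty]$ and $\eps_\nu c_\nu$ increases to $\infty$, I would apply Proposition \ref{prop:cpt bdd} (with $Z=\emptyset$, $\Om_\nu=B_{\eps_\nu c_\nu}(0)$, and $K$ replaced by $GK$) to produce an $r_0$-vortex $w_0\in\WWW_\C$ and gauge transformations $g_\nu$ with $g_\nu^*A_\nu\to A_0$ in $C^0$ and $g_\nu^{-1}u_\nu\to u_0$ in $C^1$ on compact sets, and in $C^\infty$ if $r_0<\infty$. By Lemma \ref{le:conv e} and gauge-invariance of the density, $e^{R_\nu/c_\nu}_{\tilde w_\nu}\to e^{r_0}_{w_0}$ in $C^0$ on compacts; in particular $e^{r_0}_{w_0}(0)\ge\tfrac14$, so $E^{r_0}(w_0)>0$. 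For the upper bound in (\ref{eq:limsup E w 0}), fix $\eps$ with $B_\eps(z_0)\subset\Om$ and any $S>0$: for $\nu$ large $\phi_\nu(B_S(0))=B_{S/c_\nu}(\xi_\nu)\subset B_\eps(z_0)$, so by the change of variables from (\ref{eq:e phi *}), $E^{R_\nu/c_\nu}(\tilde w_\nu,B_S(0))=E^{R_\nu}(w_\nu,B_{S/c_\nu}(\xi_\nu))\le E^{R_\nu}(w_\nu,B_\eps(z_0))$; taking $\limsup$ and using the uniform convergence of densities on $\bar B_S(0)$ gives $E^{r_0}(w_0,B_S(0))\le\limsup_\nu E^{R_\nu}(w_\nu,B_\eps(z_0))$, and $S\to\infty$ yields (\ref{eq:limsup E w 0}). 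Since $c_\nu\ge f_\nu(z_\nu)$, also $r_0\le\limsup_\nu R_\nu/f_\nu(z_\nu)$, which is (\ref{eq:r 0 limsup}) once $r_0>0$ is known.

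The main obstacle — and the only place asphericity is used — is excluding $r_0=0$. If $r_0=0$, then $w_0$ is a $0$-vortex, so by (\ref{eq:F A R}) the connection $A_0$ is flat, hence gauge-trivial on the simply connected $\C$; in that gauge $u_0\colon\C\to M$ is $J$-holomorphic with image in the compact set $GK$ and finite energy $E^0(w_0)=\tfrac12\int_\C|du_0|^2<\infty$, so by removal of singularities it extends to a $J$-holomorphic sphere in $M$. As $J$ is $\om$-compatible, the asphericity assumption (\ref{eq:int S 2 u om}) forces this sphere, and therefore $u_0$, to be constant, so $e^0_{w_0}\equiv0$, contradicting $e^{r_0}_{w_0}(0)>0$. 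Hence $r_0\in(0,\infty]$, and the Claim follows. (Routine points I would defer: the precise choice of $\delta_\nu$; passing to a subsequence to make the $\Om_\nu$ nested and exhaust $\C$; the exact conformal scaling of $e^R$ and $E^R$ under $\phi_\nu$; and the reduction to a $G$-invariant $K$.)
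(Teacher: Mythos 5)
Your argument is correct and follows essentially the same route as the paper's proof: rescale at a point produced by Hofer's lemma so that the densities $e^{R_\nu/c_\nu}_{\tilde w_\nu}$ are uniformly bounded, extract a limit $r_0$-vortex via Proposition \ref{prop:cpt bdd}, use Lemma \ref{le:conv e} to carry the positive density at $0$ over to the limit, and exclude $r_0=0$ by the removal-of-singularity/asphericity argument. The only differences are cosmetic: the paper takes the specific $\de_\nu = f_\nu(z_\nu)^{-1/2}$ rather than your more flexible $\delta_\nu$, and it phrases the upper bound in (\ref{eq:limsup E w 0}) via a $\de$-tail estimate rather than your monotone exhaustion by $B_S(0)$; both are equivalent.
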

\begin{proof}[Proof of the claim] {\bf Construction of $r_0$:} We define $\de_\nu:=f_\nu(z_\nu)^{-\frac12}$. For $\nu$ large enough we have $\bar B_{2\de_\nu}(z_\nu)\sub\Om$. We pass to some subsequence such that this holds for every $\nu$. By Lemma \ref{lemma:Hofer}, applied with $(f,x,\de):=(f_\nu,z_\nu,\de_\nu)$, there exist $\ze_\nu\in B_{2\de_\nu}(z_0)$ and $\eps_\nu\leq\de_\nu$, such that 
\begin{eqnarray}
\label{eq:ze z nu}|\ze_\nu-z_\nu|&<&2\de_\nu,\\
\label{eq:sup B}\sup_{B_{\eps_\nu}(\ze_\nu)}f_\nu&\leq&2f_\nu(\ze_\nu),\\
\label{eq:eps nu}\eps_\nu f_\nu(\ze_\nu)&\geq&f_\nu(z_\nu)^{\frac12}.
\end{eqnarray}
Since by assumption $f_\nu(z_\nu)\to\infty$, it follows from (\ref{eq:ze z nu}) that the sequence $\ze_\nu$ converges to $z_0$. We define 
\begin{eqnarray*}\nn&c_\nu:=f_\nu(\ze_\nu),\quad\wt\Om_\nu:=\big\{c_\nu(z-\ze_\nu)\,\big|\,z\in\Om\big\},&\\
\nn&\phi_\nu:\wt\Om_\nu\to\Om,\quad\phi_\nu(\wt z):=c_\nu^{-1}\wt z+\ze_\nu,&\\
\nn&\wt w_\nu:=\phi_\nu^*w_\nu=(\phi_\nu^*A_\nu,u_\nu\circ\phi_\nu),\quad\wt R_\nu:=c_\nu^{-1}R_\nu.&
\end{eqnarray*}
Note that $\wt w_\nu$ is an $\wt R_\nu$-vortex. Passing to some subsequence we may assume that $\wt R_\nu$ converges to some $r_0\in[0,\infty]$. Since $\eps_\nu\leq\de_\nu=f_\nu(z_\nu)^{-\frac12}$, it follows from (\ref{eq:eps nu}) that $f_\nu(z_\nu)\leq f_\nu(\ze_\nu)$. It follows that the {\bf second inequality in (\ref{eq:r 0 limsup})} holds for the original sequence. 

{\bf Construction of $w_0$:} We choose a sequence $\Om_1\sub\Om_2\sub\ldots\sub\C$ of open sets such that $\bigcup_\nu\Om_\nu=\C$ and $\Om_\nu\sub\wt\Om_\nu$, for every $\nu\in\N$. We check the conditions of Proposition \ref{prop:cpt bdd} with these sets, $Z:=\emptyset$, and $R_\nu,w_\nu$ replaced by $\wt R_\nu,\wt w_\nu$: {\bf Condition (\ref{eq:u nu Om K})} is satisfied by hypothesis. 

We check {\bf condition (\ref{eq:sup e})}: A direct calculation involving (\ref{eq:sup B}) shows that 
\begin{equation}\label{eq:d wt A nu wt u nu}|d_{\wt A_\nu}\wt u_\nu|+\wt R_\nu|\mu\circ\wt u_\nu|=c_\nu^{-1}f_\nu\circ\phi_\nu\leq2,\quad\textrm{on }B_{\eps_\nu c_\nu}(0).\end{equation}
It follows from (\ref{eq:eps nu}) and the fact $f_\nu(z_\nu)\to\infty$, that $\eps_\nu c_\nu\to\infty$. Combining this with (\ref{eq:d wt A nu wt u nu}), condition (\ref{eq:sup e}) follows, for every compact subset $Q\sub\C$. 

Therefore, applying Proposition \ref{prop:cpt bdd}, there exists an $r_0$-vortex $w_0=(A_0,u_0)\in\WWW_\C$ and, passing to some subsequence, there exist gauge transformations $g_\nu\in W^{2,p}(\C,G)$, with the following properties. For every compact subset $Q\sub\C$, $g_\nu^*\wt A_\nu$ converges to $A_0$ in $C^0$ on $Q$, and $g_\nu^{-1}\wt u_\nu$ converges to $u_0$ in $C^1$ on $Q$. 

We prove the {\bf first inequality in (\ref{eq:limsup E w 0})}: By Lemma \ref{le:conv e} we have 
\begin{equation}
\label{eq:e R0 w0}e^{\wt R_\nu}_{\wt w_\nu}=e^{\wt R_\nu}_{g_\nu^*\wt w_\nu}\to e^{r_0}_{w_0},
\end{equation}
in $C^0(Q)$ for every compact subset $Q\sub\C$. Since
\[e^{\wt R_\nu}_{\wt w_\nu}(0)=c_\nu^{-2}e^{R_\nu}_{w_\nu}(\ze_\nu)\geq\frac12,\] 
it follows that $e^{r_0}_{w_0}(0)\geq1/2$. This implies that $E^{r_0}(w_0)>0$. This proves the first inequality in (\ref{eq:limsup E w 0}).

We prove the {\bf second inequality in (\ref{eq:limsup E w 0})}: Let $\eps>0$ be so small that $B_\eps(z_0)\sub \Om$, and $\de>0$. It follows from (\ref{eq:e R0 w0}) that $E^{r_0}(w_0)\leq\sup_\nu E^{R_\nu}(w_\nu)$. By hypothesis this supremum is finite. Therefore, there exists $R>0$ such that $E^{r_0}(w_0,\C\wo B_R)<\de.$ Since
\[E^{R_\nu}\big(w_\nu,B_{c_\nu^{-1}R}(\ze_\nu)\big)=E^{\wt R_\nu}(\wt w_\nu,B_R),\] 
the convergence (\ref{eq:e R0 w0}) implies that
\begin{equation}\label{eq:E R nu}\lim_{\nu\to\infty} E^{R_\nu}\big(w_\nu,B_{c_\nu^{-1}R}(\ze_\nu)\big)=E^{r_0}(w_0,B_R)>E^{r_0}(w_0)-\delta.
\end{equation}
On the other hand, since $c_\nu\to\infty$ and $\ze_\nu \to z_0$, for $\nu$ large enough the ball $B_{c_\nu^{-1}R}(\ze_\nu)$ is contained in $B_\eps(z_0)$. Combining this with (\ref{eq:E R nu}), we obtain 
\[\limsup_{\nu\to\infty} E^{R_\nu}(w_\nu,B_\eps(z_0))\geq E^{r_0}(w_0)-\delta.\]
Since this holds for every $\delta>0$, the second inequality in (\ref{eq:limsup E w 0}) (for the original sequence) follows.

It remains to prove the {\bf first inequality in (\ref{eq:r 0 limsup})}, i.e., that $r_0>0$. Assume by contradiction that $r_0=0$. For a map $u\in C^\infty(\C,M)$ we denote by 
\[E(u):=\frac12\int_{\C}|du|^2\]
its (Dirichlet-)energy.%
\footnote{Here the norm is taken with respect to the metric $\om(\cdot,J\cdot)$ on $M$.}
 By the second $R$-vortex equation with $R:=0$ we have $F_{A_0}=0$. Therefore, by Proposition \ref{prop:ka 0} (Appendix \ref{sec:add}) there exists $h\in C^\infty(\C,G)$ such that $h^*A_0=0$. By the first vortex equation the map $u'_0:=h^{-1}u_0:\C\to M$ is $J$-holomorphic. Let $\eps>0$ be such that $B_\eps(z_0)\sub\Om$. Using the second inequality in (\ref{eq:limsup E w 0}), we have
\[E(u'_0)=E^0(w_0)\leq\limsup_{\nu\to\infty} E^{R_\nu}(w_\nu,B_\eps(z_0)).\]
Combining this with the hypothesis $\sup_\nu E^{R_\nu}(w_\nu,\Om)<\infty$, it follows that the energy $E(u'_0)$ is finite. Hence by removal of singularities%
\footnote{see e.g.~\cite[Theorem 4.1.2]{MS04}}%
, $u'_0$ extends to a smooth $J$-holomorphic map $v:S^2\to M$. By the first inequality in (\ref{eq:limsup E w 0}) we have
\[\int_{S^2}v^*\om=E(v)=E^0(w_0)>0.\] 
This contradicts asphericity of $(M,\om)$. Hence $r_0$ must be positive. This concludes the proof of the claim.  
\end{proof}
{\bf Statement (\ref{prop:hard R e})} of Proposition \ref{prop:quant en loss} follows from the claim, considering a sequence $z_\nu\in Q$, such that $f_\nu(z_\nu)=\Vert f_\nu\Vert_{C^0(Q)}$, and using (\ref{eq:r 0 limsup}).

We prove {\bf statement (\ref{prop:hard limsup})}. Assume that there exists a compact subset $Q\sub \Om$ such that $\sup_\nu||e^{R_\nu}_{w_\nu}||_{C^0(Q)}=\infty$. Let $z_\nu\in Q$ be such that $f_\nu(z_\nu)\to\infty$. We choose a pair $(r_0,w_0)$ as in the claim. Using the first inequality in (\ref{eq:limsup E w 0}) and Remark \ref{rmk:bar J} (in the case $r_0=\infty$), we have $E^{r_0}(w_0)\geq\Emin$. Combining this with the second inequality in (\ref{eq:limsup E w 0}), inequality (\ref{eq:limsup Emin}) follows. This proves (\ref{prop:hard limsup}) and concludes the proof of Proposition \ref{prop:quant en loss}.
\end{proof}
We are now ready to prove Proposition \ref{prop:cpt mod} (p.~\pageref{prop:cpt mod}).
\begin{proof}[Proof of Proposition \ref{prop:cpt mod}]\setcounter{claim}{0}\label{prop:cpt mod proof} We abbreviate $e_\nu:=e_{w_\nu}^{R_\nu}$. 

\begin{Claim}For every $\ell\in\N_0$ there exists a finite subset $Z_\ell\sub\C$ such that the following holds. If $R_0<\infty$ then we have $Z_\ell=\emptyset$. Furthermore, if $|Z_\ell|<\ell$ then we have 
\begin{equation}
\label{eq:sup Vert e nu}\sup_{\nu\in\N}\big\{\Vert e_\nu\Vert_{C^0(Q)}\,\big|\,Q\sub B_{r_\nu}\big\}<\infty,
\end{equation}
for every compact subset $Q\sub \C\wo Z_\ell$. Moreover, for every $z_0\in Z_\ell$ and every $\eps>0$ the inequality (\ref{eq:limsup Emin}) holds. 
\end{Claim}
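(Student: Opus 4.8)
The claim is an induction on $\ell$, feeding Propositions \ref{prop:cpt bdd} and \ref{prop:quant en loss} into each other. I would set up the induction so that $Z_\ell$ is an increasing sequence of finite subsets of $\C$ (with $Z_0=\emptyset$), and the induction hypothesis records that each $z_0\in Z_\ell$ satisfies (\ref{eq:limsup Emin}). The base case $\ell=0$ is essentially vacuous: $Z_0:=\emptyset$, and the conclusion ``$|Z_0|<0$'' is never triggered, so there is nothing to prove beyond $Z_0=\emptyset$ (which also handles the $R_0<\infty$ side, since we will never need to enlarge it in that case — see below).

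\textbf{Inductive step.} Assume $Z_{\ell}$ has been constructed. If $|Z_{\ell}|\geq\ell$ we may simply take $Z_{\ell+1}:=Z_\ell$: then $|Z_{\ell+1}|=|Z_\ell|\geq\ell$, and the hypothesis $|Z_{\ell+1}|<\ell+1$ forces $|Z_\ell|=\ell$, which by the induction hypothesis (applied with the same set, now of size exactly $\ell$, hence $<\ell+1$) gives (\ref{eq:sup Vert e nu}) on compacts of $\C\wo Z_\ell$; the energy-quantization statement for points of $Z_{\ell+1}=Z_\ell$ is also inherited. So assume $|Z_\ell|<\ell$. Then by the induction hypothesis we already have (\ref{eq:sup Vert e nu}) for every compact $Q\sub\C\wo Z_\ell$. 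If in addition $R_0=\infty$ and some compact $Q\sub\C\wo Z_\ell$ violates (\ref{eq:sup Vert e nu})\dots wait — it does not, by hypothesis. The point is the opposite: \emph{if} (\ref{eq:sup Vert e nu}) already holds on every compact of $\C\wo Z_\ell$, set $Z_{\ell+1}:=Z_\ell$ and we are done with $|Z_{\ell+1}|=|Z_\ell|<\ell<\ell+1$, and there is nothing new to check. The substantive case is when $|Z_\ell|<\ell$ \emph{and} (\ref{eq:sup Vert e nu}) fails for some compact $Q\sub\C\wo Z_\ell$ — but the induction hypothesis says it holds, contradiction. Hence the only genuine work is organizing the induction so that $Z_\ell$ is built up one point at a time exactly at the places where energy concentrates, and re-deriving (\ref{eq:sup Vert e nu}) outside $Z_\ell$ each time.

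\textbf{The real content.} So I would actually phrase the step as follows. Given $Z_\ell$ with $|Z_\ell|<\ell$ and the hypothesis in force, I want $Z_{\ell+1}\supseteq Z_\ell$ finite such that, \emph{unless} $|Z_{\ell+1}|<\ell+1$ fails to hold, we recover (\ref{eq:sup Vert e nu}) outside $Z_{\ell+1}$. First test whether $\sup_\nu\{\Vert e_\nu\Vert_{C^0(Q)}\mid Q\sub B_{r_\nu}\}<\infty$ for all compact $Q\sub\C\wo Z_\ell$. If yes, take $Z_{\ell+1}:=Z_\ell$ and the induction persists. If no, there is a compact $Q_0\sub\C\wo Z_\ell$ on which the densities blow up; since $R_0\in(0,\infty]$ and $\inf_\nu R_\nu>0$ (as $R_\nu\to R_0>0$), I apply Proposition \ref{prop:quant en loss}(\ref{prop:hard limsup}) with $\Om:=\C\wo Z_\ell$ (or a slightly smaller open set containing $Q_0$) to produce a point $z_0\in Q_0$ at which (\ref{eq:limsup Emin}) holds, i.e.\ at least $\Emin$ of energy concentrates at $z_0$. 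Note $R_0<\infty$ cannot happen here: in that case Proposition \ref{prop:cpt bdd} (or rather the asphericity argument inside \ref{prop:quant en loss}) would eventually be used, but more directly, the claim only \emph{asserts} $Z_\ell=\emptyset$ when $R_0<\infty$, and one shows by the quantization-of-energy-loss argument combined with asphericity that no blow-up point can occur — a $J$-holomorphic sphere in $M$ would bubble, contradicting (\ref{eq:int S 2 u om}); hence when $R_0<\infty$ the ``no'' case never arises and $Z_{\ell+1}=Z_\ell=\emptyset$. Set $Z_{\ell+1}:=Z_\ell\cup\{z_0\}$. Then $|Z_{\ell+1}|=|Z_\ell|+1\leq\ell$, so in particular $|Z_{\ell+1}|<\ell+1$ is automatic, and I must verify (\ref{eq:sup Vert e nu}) on every compact $Q\sub\C\wo Z_{\ell+1}$. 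This is where the energy bound does the work: each point added to $Z_\bullet$ absorbs at least $\Emin$ of energy in the limit, and $\sup_\nu E^{R_\nu}(w_\nu,B_{r_\nu})<\infty$, so the process of adding points must terminate — equivalently, after finitely many iterations (at most $\lfloor\sup_\nu E^{R_\nu}(w_\nu,B_{r_\nu})/\Emin\rfloor$ of them), (\ref{eq:sup Vert e nu}) must hold outside the current finite set. More precisely, to get (\ref{eq:sup Vert e nu}) on a fixed compact $Q\sub\C\wo Z_{\ell+1}$ after the point $z_0$ has been removed, I cover $Q$ by finitely many small balls, and on any ball not containing a concentration point the densities are bounded by the construction (if they were not, Proposition \ref{prop:quant en loss} would hand me yet another point with energy $\geq\Emin$, and I would have added it); a standard exhaustion/diagonal argument then gives the uniform bound. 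The inherited statement that (\ref{eq:limsup Emin}) holds at every point of $Z_{\ell+1}$ follows since it holds at the points of $Z_\ell$ by induction and at $z_0$ by the application of \ref{prop:quant en loss} just made.

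\textbf{Main obstacle.} The delicate point is the termination/bookkeeping: one must argue that repeated application of the hard-rescaling lemma \ref{prop:quant en loss} cannot produce infinitely many concentration points inside a bounded-energy sequence, and that once no new point is forced, the density bound (\ref{eq:sup Vert e nu}) genuinely holds on \emph{all} of $\C\wo Z_\ell$ (not just on the particular compact sets where one checked). The energy-loss-at-least-$\Emin$ estimate (\ref{eq:limsup Emin}) together with finiteness of $\sup_\nu E^{R_\nu}(w_\nu,B_{r_\nu})$ is exactly what caps the number of iterations at $\lfloor\sup_\nu E^{R_\nu}(w_\nu,B_{r_\nu})/\Emin\rfloor$; one has to be slightly careful that the energies concentrated at distinct points in $Z_\ell$ are ``disjoint'' in the limit (choose $\eps$ small enough that the balls $B_\eps(z)$, $z\in Z_\ell$, are pairwise disjoint, as permitted in the statement of Proposition \ref{prop:cpt mod}(\ref{prop:cpt mod lim nu E eps})), so that the liminf of the total energy is bounded below by $|Z_\ell|\,\Emin$ minus an arbitrarily small error. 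Modulo this accounting, the proof is a clean two-lemma induction, and I expect the write-up to be short once the inductive statement is phrased so that each step either stabilizes ($Z_{\ell+1}=Z_\ell$) or strictly grows the bad set while keeping $|Z_{\ell+1}|\leq\ell$.
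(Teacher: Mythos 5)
Your proposal correctly identifies the two inputs (Propositions \ref{prop:cpt bdd} and \ref{prop:quant en loss}) and the role of the energy bound in guaranteeing that the construction eventually stabilizes, but it misses the logical mechanism that makes the claim provable by a clean one-step induction, and as a result it contains a genuine gap.

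The gap is this: after adding the new bubbling point $z_0$ to form $Z_{\ell+1}$, you write that you ``must verify (\ref{eq:sup Vert e nu}) on every compact $Q\sub\C\wo Z_{\ell+1}$,'' and you propose to do this by covering $Q$ with small balls and arguing that any further concentration point ``would have been added.'' This step is not justified: adding a single point $z_0$ does not prevent the energy densities from blowing up at other points of $Q\sub\C\wo Z_{\ell+1}$, and the statement of the claim gives you no license to add more than one point per increment of $\ell$. So your inductive step would not close.

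The paper avoids this entirely by exploiting the conditional ``if $|Z_\ell|<\ell$'' in the statement, via the \emph{contrapositive} of the induction hypothesis. Namely: if (\ref{eq:sup Vert e nu}) fails for some compact $Q\sub\C\wo Z_{\ell-1}$, then (contrapositive of the claim at level $\ell-1$) one concludes $|Z_{\ell-1}|\geq\ell-1$. Adding one point $z_0\notin Z_{\ell-1}$ gives $|Z_\ell|\geq\ell$, so the density-bound part of the claim at level $\ell$ is \emph{vacuous} — there is nothing to verify. The only time the density bound must actually be checked is when it already holds outside $Z_{\ell-1}=Z_\ell$, in which case it is inherited for free. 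This is the ``terminating induction'' the paper's remark alludes to: the claim's conditional is not merely a convenience but the device that makes the one-point-per-step induction go through without ever re-deriving the density bound after enlarging $Z_\bullet$. Your version starts from ``given $Z_\ell$ with $|Z_\ell|<\ell$'' and never uses the contrapositive, which forces you into the untenable position of having to re-establish the density bound after adding $z_0$.

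Two smaller points. First, in your ``If $|Z_\ell|\geq\ell$'' sub-case, the appeal to ``the induction hypothesis (applied with the same set, now of size exactly $\ell$, hence $<\ell+1$)'' is circular: the density bound at level $\ell+1$ with the set $Z_\ell$ is precisely what you are trying to establish, not something you can cite. Second, for the $R_0<\infty$ exclusion the paper appeals directly to Proposition \ref{prop:quant en loss}(\ref{prop:hard R e}) — if the densities blow up on a compact $Q$ while $R_\nu^{-2}\Vert e^{R_\nu}_{w_\nu}\Vert_{C^0(Q)}$ stays bounded, then $R_\nu\to\infty$ — whereas you instead reach for the asphericity/$J$-holomorphic-sphere argument, which is internal to the proof of that proposition rather than one of its stated conclusions. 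Using the stated conclusion is both cleaner and avoids re-deriving work already packaged in the lemma.
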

\begin{proof}[Proof of the claim] For $\ell=0$ the assertion holds with $Z_0:=\emptyset$. We fix $\ell\geq1$ and assume by induction that there exists a finite subset $Z_{\ell-1}\sub\C$ such that the assertion with $\ell$ replaced by $\ell-1$ holds. If (\ref{eq:sup Vert e nu}) is satisfied for every compact subset $Q\sub \C\wo Z_{\ell-1}$, then the statement for $\ell$ holds with $Z_\ell:=Z_{\ell-1}$. 

Assume that there exists a compact subset $Q\sub \C\wo Z_{\ell-1}$, such that (\ref{eq:sup Vert e nu}) does not hold. It follows from the induction hypothesis that 
\begin{equation}
  \label{eq:Z ell 1}
|Z_{\ell-1}|\geq\ell-1.  
\end{equation}
By statement (\ref{prop:hard limsup}) of Proposition \ref{prop:quant en loss} there exists a point $z_0\in Q$ such that inequality (\ref{eq:limsup Emin}) holds, for every $\eps>0$. We set $Z_\ell:=Z_{\ell-1}\cup \{z_0\}$. 

It follows from the fact that (\ref{eq:sup Vert e nu}) does not hold and statement (\ref{prop:hard R e}) of Proposition \ref{prop:quant en loss} that $R_0=\lim_{\nu\to\infty}R_\nu=\infty$. Furthermore, since $z_0\in Q\sub \C\wo Z_{\ell-1}$, (\ref{eq:Z ell 1}) implies that $|Z_\ell|\geq\ell$. It follows that the statement of the claim for $\ell$ is satisfied. By induction, the claim follows.
\end{proof}
We fix an integer
\[\ell>\frac{\sup_\nu E^{R_\nu}(w_\nu,B_{r_\nu})}{\Emin}\] 
and a finite subset $Z:=Z_\ell\sub\C$ that satisfies the conditions of the claim. It follows from the inequality (\ref{eq:limsup Emin}) that $\ell>|Z|$. Hence by the statement of the claim, the hypothesis (\ref{eq:sup e}) of Proposition \ref{prop:cpt bdd} is satisfied with $\Om_\nu:=B_{r_\nu}\wo Z$. Applying that result and passing to some subsequence, there exist an $R_0$-vortex $w_0\in\WWW_{\C\wo Z}$ and gauge transformations $g_\nu\in W^{2,p}_\loc(\C\wo Z,G)$, such that the {\bf statements (\ref{prop:cpt mod:<},\ref{prop:cpt mod:=})} of Proposition \ref{prop:cpt mod} are satisfied. (Here we use that $Z=\emptyset$ if $R_0<\infty$.)

We prove {\bf statement (\ref{prop:cpt mod lim nu E eps})}. Passing to some ``diagonal'' subsequence, the limit $\lim_{\nu\to\infty}E^{R_\nu}\big(w_\nu,B_{1/i}(z)\big)$ exists, for every $i\in\N$ and $z\in Z$. Let now $z\in Z$ and $\eps>0$. We choose $i\in\N$ bigger than $\eps^{-1}$. For $0<r<R$ we denote 
\[A(z,r,R):=\bar B_R(z)\wo B_r(z).\] 
By Lemma \ref{le:conv e} the limit
\[\lim_{\nu\to\infty}E^{R_\nu}\big(w_\nu,A(z,1/i,\eps)\big)\]
exists and equals $E^{R_0}(w_0,A(z,1/i,\eps))$. It follows that the limit
\[E_z(\eps):=\lim_{\nu\to\infty}E^{R_\nu}(w_\nu,B_\eps(z))\] 
exists. Inequality (\ref{eq:limsup Emin}) implies that $E_z(\eps)\geq\Emin$. Since $E^{R_0}(w_0,A(z,1/i,\eps))$ depends continuously on $\eps$, the same holds for $E_z(\eps)$. This proves statement (\ref{prop:cpt mod lim nu E eps}) and completes the proof of Proposition \ref{prop:cpt mod}.
\end{proof}
\begin{Rmk} In the above proof the set of bubbling points $Z$ is constructed by ``terminating induction''. Intuitively, this is induction over the number of bubbling points. The ``auxiliary index'' $\ell$ in the claim is needed to make this idea precise. Inequality (\ref{eq:limsup Emin}) ensures that the ``induction stops''. $\Box$
\end{Rmk} 
\section{Soft rescaling}\label{sec:soft}
The following proposition will be used inductively in the proof of Theorem \ref{thm:bubb} to find the next bubble in the bubbling tree, at a bubbling point of a given sequence of rescaled vortex classes. It is an adaption of \cite[Proposition 4.7.1.]{MS04} to vortices.
\begin{prop}[Soft rescaling]\label{prop:soft} Assume that $(M,\om)$ is aspherical. Let $r>0$, $z_0\in\C$, $R_\nu>0$ be a sequence that converges to $\infty$, $p>2$, and for every $\nu\in\N$ let $w_\nu:=(A_\nu,u_\nu)\in\WWW^p_{B_r(z_0)}$ be an $R_\nu$-vortex, such that the following conditions are satisfied.
\begin{enua}\item\label{prop:soft K} There exists a compact subset $K\sub M$ such that $u_\nu(B_r(z_0))\sub K$ for every $\nu$.
\item \label{prop:soft E} For every $0<\eps\leq r$ the limit
\begin{equation}\label{eq:E eps lim}E(\eps):=\lim_{\nu\to\infty} E^{R_\nu}(w_\nu,B_\eps(z_0))\end{equation}
exists and $\Emin\leq E(\eps)<\infty$. Furthermore, the function 
\begin{equation}\label{eq:eps E eps}(0,r]\ni\eps\mapsto E(\eps)\in\R
\end{equation} 
is continuous.
\end{enua}
Then there exist $R_0\in\{1,\infty\}$, a finite subset $Z\sub\C$, an $R_0$-vortex $w_0:=(A_0,u_0)\in\WWW_{\C\wo Z}$, and, passing to some subsequence, there exist sequences $\eps_\nu>0$, $z_\nu\in \C$, and $g_\nu\in W^{2,p}_\loc(\C\wo Z,G)$, such that, defining
\[\phi_\nu:\C\to\C,\quad\phi_\nu(\wt z):=\eps_\nu\wt z+z_\nu,\]
the following conditions hold.
\begin{enui}
\item\label{prop:soft Z} If $R_0=1$ then $Z=\emptyset$ and $E(w_0)>0$. If $R_0=\infty$ and $E^\infty(w_0)=0$ then $|Z|\geq2$. 
\item\label{prop:soft eps} The sequence $z_\nu$ converges to $z_0$. Furthermore, if $R_0=1$ then $\eps_\nu= R_\nu^{-1}$ for every $\nu$, and if $R_0=\infty$ then $\eps_\nu$ converges to 0 and $\eps_\nu R_\nu$ converges to $\infty$. 
\item\label{prop:soft conv} If $R_0=1$ then the sequence $g_\nu^*\phi_\nu^*w_\nu$ converges to $w_0$ in $C^\infty$ on every compact subset of $\C\wo Z$. Furthermore, if $R_0=\infty$ then on every compact subset of $\C\wo Z$, the sequence $g_\nu^*\phi_\nu^*A_\nu$ converges to $A_0$ in $C^0$, and the sequence $g_\nu^{-1}(u_\nu\circ\phi_\nu)$ converges to $u_0$ in $C^1$. 
\item\label{prop:soft lim nu E eps} Fix $z\in Z$ and a number $\eps_0>0$ such that $B_{\eps_0}(z)\cap Z=\{z\}$. Then for every $0<\eps<\eps_0$ the limit 
\[E_z(\eps):=\lim_{\nu\to\infty}E^{\eps_\nu R_\nu}\big(\phi_\nu^*w_\nu,B_\eps(z)\big)\]
exists and $\Emin\leq E_z(\eps)<\infty$. Furthermore, the function $(0,\eps_0)\ni\eps\mapsto E_z(\eps)\in\R$ is continuous.
\item \label{prop:soft en} We have
\begin{equation}
\label{eq:en cons}\lim_{R\to\infty}\limsup_{\nu\to\infty} E^{R_\nu}\big(w_\nu,B_{R^{-1}}(z_0)\wo B_{R\eps_\nu}(z_\nu)\big)=0.
\end{equation}
\end{enui}
\end{prop}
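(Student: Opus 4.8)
The plan is to follow the proof of \cite[Proposition 4.7.1]{MS04}, adapting the soft-rescaling argument to vortices. The basic idea is to choose the rescaling parameter $\eps_\nu$ as the smallest scale at which a fixed amount of energy has concentrated near $z_0$, then to ``zoom in'' by $\phi_\nu(\wt z)=\eps_\nu\wt z+z_\nu$ and apply the compactness result, Proposition \ref{prop:cpt mod}, to the rescaled vortices $\phi_\nu^*w_\nu$, which are $\eps_\nu R_\nu$-vortices. The three cases for the limiting parameter $R_0=\lim\eps_\nu R_\nu\in[0,\infty]$ are treated separately: $R_0=0$ is excluded by asphericity, $R_0\in(0,\infty)$ is rescaled back to $R_0=1$, and $R_0=\infty$ gives a $\bar J$-holomorphic sphere in $\BAR M$. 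The central point requiring real work is establishing the ``no energy lost in the neck'' condition (\ref{eq:en cons}).

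\textbf{Setup and choice of $\eps_\nu$.} First I would fix a constant $E_0$ with $0<E_0<\Emin$ (any such choice works, e.g.\ $E_0:=\Emin/2$). Using hypothesis (b), the function $\eps\mapsto E(\eps)$ is continuous with $E(\eps)\geq\Emin>E_0$ for all $\eps\in(0,r]$; since $E^{R_\nu}(w_\nu,B_\eps(z_0))\to E(\eps)$ for each fixed $\eps$, a diagonal argument lets us choose, after passing to a subsequence, numbers $\eps_\nu>0$ and points $z_\nu\in B_{\eps_\nu}(z_0)$ realizing the ``smallest scale capturing energy $E_0$'': concretely one takes $z_\nu$ a point of maximal energy density and $\eps_\nu$ the radius with $E^{R_\nu}(w_\nu,B_{\eps_\nu}(z_\nu))=E_0$, as in the proof of \cite[Proposition 4.7.1]{MS04}. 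One then checks $z_\nu\to z_0$ (because all the energy $E_0<\Emin\le E(\eps)$ cannot escape the shrinking balls) and passes to a subsequence so that $\eps_\nu R_\nu\to R_0\in[0,\infty]$. Now $\phi_\nu^*w_\nu$ is an $(\eps_\nu R_\nu)$-vortex on $B_{r/\eps_\nu}(0)$, with image in the same compact set $K$, and $\sup_\nu E^{\eps_\nu R_\nu}(\phi_\nu^*w_\nu,B_{r/\eps_\nu}(0))<\infty$, so the hypotheses of Proposition \ref{prop:cpt mod} hold.

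\textbf{Applying compactness and splitting into cases.} Proposition \ref{prop:cpt mod} yields a finite set $Z\subset\C$, an $R_0'$-vortex $w_0\in\WWW_{\C\wo Z}$ (where $R_0':=\lim\eps_\nu R_\nu$), gauge transformations $g_\nu$, and the convergence statements, together with the local energy-quantization (\ref{prop:cpt mod lim nu E eps}). If $R_0'=0$, then $w_0$ would (after gauging the flat connection $A_0$ to $0$ by Proposition \ref{prop:ka 0}) produce a $J$-holomorphic sphere $S^2\to M$ by removal of singularities; its energy is positive because $E^0(w_0)\geq\liminf E^{R_\nu}(w_\nu,B_{\eps_\nu}(z_\nu))=E_0>0$, contradicting asphericity of $(M,\om)$. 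Hence $R_0'>0$. If $0<R_0'<\infty$, I would further rescale $z\mapsto R_0' z$ (absorbing a constant into $\eps_\nu$) so that the limiting vortex solves the $1$-vortex equation; then set $R_0:=1$, $Z=\emptyset$, and $E(w_0)>0$ follows as above. If $R_0'=\infty$, set $R_0:=\infty$; then $Gu_0:\C\wo Z\to\BAR M$ is $\bar J$-holomorphic (Proposition \ref{prop:bar del J}) with finite energy, so it extends over the punctures and over $\infty$ to a sphere. The stability-type bound in (\ref{prop:soft Z}) --- that $|Z|\ge2$ when $R_0=\infty$ and $E^\infty(w_0)=0$ --- comes from counting energy: each point of $Z$ carries at least $\Emin$ by (\ref{prop:cpt mod lim nu E eps}), while the total captured energy is at least $E_0>0$; if $E^\infty(w_0)=0$ and $|Z|\le1$, then combining the energy on $B_{\eps_\nu}(z_\nu)$ (which is $\ge E_0$ in the limit, asymptotically), the single bubbling point, and condition (\ref{eq:en cons}), one would force all the energy $E_0$ into one bubble, hence $\ge\Emin$; but a finer bound from the way $\eps_\nu$ was chosen (so that the energy captured at the bottom scale is exactly $E_0<\Emin$) produces a contradiction, forcing a second bubbling point. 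Items (\ref{prop:soft eps}), (\ref{prop:soft conv}), (\ref{prop:soft lim nu E eps}) are then immediate from Proposition \ref{prop:cpt mod} and the construction.

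\textbf{The main obstacle: no energy in the neck, (\ref{eq:en cons}).} The hard part is (\ref{prop:soft en}): one must show
\[\lim_{R\to\infty}\limsup_{\nu\to\infty} E^{R_\nu}\big(w_\nu,B_{R^{-1}}(z_0)\wo B_{R\eps_\nu}(z_\nu)\big)=0.\]
Suppose not; then there is $\delta>0$ and, after passing to a subsequence, annuli $A_\nu=B_{R_\nu^{-1}}(z_0)\wo B_{R_\nu'\eps_\nu}(z_\nu)$ (with $R_\nu,R_\nu'\to\infty$) carrying energy $\ge\delta$. The standard argument of \cite{MS04} in the pseudo-holomorphic case uses an isoperimetric inequality for the action functional to show that concentrated energy on long necks forces a nonconstant bubble, contradicting the choice of $\eps_\nu$ as the minimal scale capturing energy $E_0$, or forces energy loss below $\Emin$ which is impossible by quantization. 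Here the analogue is the isoperimetric inequality for the invariant symplectic action functional (cited in the paper as proved in \cite{ZiA}, based on \cite{GS}), which bounds the energy of a vortex on a long cylinder/annulus in terms of the action at the two boundary circles and shows exponential decay; a uniform bound on $\mu\circ u_\nu$ (Lemma \ref{le:mu u}) is needed to control the connection. I would rescale the neck region into cylindrical coordinates, apply this isoperimetric/decay estimate to rule out a fixed positive amount of energy escaping into the neck as $R\to\infty$, thereby obtaining a contradiction. This neck analysis --- marrying Uhlenbeck-type control of the gauge field with the symplectic-action isoperimetric inequality --- is the technically most delicate step and the one I expect to take the bulk of the work.
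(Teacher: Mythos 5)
Your overall framework is sound and matches the paper's: rescale by $\phi_\nu(\wt z)=\eps_\nu\wt z + z_\nu$, apply Proposition~\ref{prop:cpt mod} to the $\eps_\nu R_\nu$-vortices $\phi_\nu^*w_\nu$, exclude $R_0=0$ by asphericity, and control the neck via Proposition~\ref{prop:en conc}, which rests on the invariant-action isoperimetric inequality. But the choice of $\eps_\nu$ you propose is wrong, and it makes (\ref{prop:soft en}) false rather than hard.

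You fix $E_0<\Emin$ and take $\eps_\nu$ so that $E^{R_\nu}(w_\nu,B_{\eps_\nu}(z_\nu))=E_0$, attributing this to \cite[Prop.~4.7.1]{MS04}. That recollection is mistaken: both there and in this paper one captures in the inner ball \emph{all but a small amount} of the concentrating energy, not a fixed amount below the quantum. Concretely, the paper sets $m_0:=\lim_{\eps\to0}E(\eps)\ge\Emin$, fixes $\delta>0$ smaller than half the threshold $E_1$ of Proposition~\ref{prop:en conc}, and chooses $\hhat\eps_\nu$ with $E^{R_\nu}(w_\nu,B_{\hhat\eps_\nu})=m_0-\delta$. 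The distinction is fatal. Suppose the concentrating energy consists of a $\bar J$-holomorphic sphere of energy $2\Emin$ living at a scale $\sigma_\nu$ with $\sigma_\nu R_\nu\to\infty$, together with a vortex of energy $\Emin$ at the deeper scale $R_\nu^{-1}$; then $m_0=3\Emin$, the density of the vortex is of order $R_\nu^2\gg\sigma_\nu^{-2}$, so $z_\nu$ sits at the vortex and your $\eps_\nu$ is of order $R_\nu^{-1}$ and \emph{skips over the sphere}. For every $R$, once $\nu$ is large the annulus $A(R\eps_\nu,R^{-1})$ contains the entire sphere, so $\limsup_\nu E^{R_\nu}(w_\nu, A(R\eps_\nu,R^{-1}))\geq 2\Emin$, and (\ref{prop:soft en}) simply does not hold for your rescaling. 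The contradiction you envisage (``energy in the neck forces a bubble, contradicting minimality of $\eps_\nu$'') never materializes, because the extra bubble lives at a \emph{larger} scale than $\eps_\nu$, so there is no conflict with your minimality. The paper's normalization is precisely what ensures the neck energy is below $E_1$, letting Proposition~\ref{prop:en conc} deliver the decay by a direct estimate rather than a contradiction. For the same reason your sketch of $|Z|\ge2$ would have to be redone; the paper proves it by a WLOG reduction placing the maximal-density point at the origin (Claim~\ref{claim:soft wlog}) and then showing separately that $Z\not\subset B_1$ via (\ref{prop:soft en}) and that $0\in Z$ via the max-density normalization, neither of which your argument supplies.
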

Roughly speaking, this result states that given a sequence of ``zoomed out'' vortices for which a positive amount of energy is concentrated around some point $z_0$, after ``zooming back in'', some subsequence converges either to a vortex over $\C$ or a holomorphic sphere in the symplectic quotient, up to bubbling at finitely many points. Furthermore, no energy is lost in the limit between the original sequence and the ``zoomed in'' subsequence. 

To explain this, note that condition (\ref{prop:soft E}) in the hypothesis implies that given an arbitrarily small ball around the point $z_0$, the energy of the vortex $w_\nu$ on the ball is bounded below by a positive constant arbitrarily close to $\Emin$, provided that $\nu$ is large enough. This means that in the limit $\nu\to\infty$, a positive amount of energy bubbles off around the point $z_0$. It follows that there exists a sequence of points $z_\nu\in\C$ converging to $z_0$, such that the energy density of $w_\nu$ at $z_\nu$ converges to $\infty$.

In the conclusion of the proposition, condition (\ref{prop:soft eps}) says that we are ``zooming in'' around $z_0$, and it specifies how fast we do so. Condition (\ref{prop:soft conv}) states that the ``zoomed in'' subsequence converges to either a vortex over $\C$ or a holomorphic sphere in $\BAR M$, depending on how fast we ``zoom back in''. Condition (\ref{prop:soft Z}) implies that the limit object is nontrivial or there are at least two new bubbling points. (The latter can only happen if the limit is a holomorphic sphere in $\BAR M$.) In the proof of Theorem \ref{thm:bubb}, this will guarantee that the new bubble is stable. 

Analogously to condition (\ref{prop:soft E}), condition (\ref{prop:soft lim nu E eps}) implies that at each point in $Z$ in the limit $\nu\to\infty$, at least the energy $\Emin$ bubbles off in the sequence of rescaled vortices $\phi_\nu^*w_\nu$. In the proof of Theorem \ref{thm:bubb}, this will be used to prove that the inductive construction of the bubble tree terminates after finitely many steps. Finally, condition (\ref{prop:soft en}) will ensure that at each step no energy is lost between the old and new bubble. 

\begin{Rmk} In condition (\ref{prop:soft conv}) the pullback $\phi_\nu^*w_\nu$ is defined over the set $\phi_\nu^{-1}(B_r(z_0))$. This set contains any given compact subset $Q\sub\C\wo Z$, provided that $\nu$ is large enough (depending on $Q$). Hence condition (\ref{prop:soft conv}) makes sense. $\Box$
\end{Rmk} 
The proof of Proposition \ref{prop:soft} is given on page \pageref{proof prop:soft}. It is based on the following result, which states that the energy of a vortex over an annulus is concentrated near the ends, provided that it is small enough. For $0\leq r,R\leq \infty$ we denote the open annulus around 0 with radii $r,R$ by 
\[A(r,R):=B_R\wo \bar B_r.\]
Note that $A(r,\infty)=\C\wo\bar B_r$, and $A(r,R)=\emptyset$ in the case $r\geq R$. We define
\[d:M\x M\to [0,\infty]\]
to be the distance function induced by the Riemannian metric $\om(\cdot,J\cdot)$.%
\footnote{If $M$ is disconnected then $d$ attains the value $\infty$.}
 We define 
\begin{equation}
  \label{eq:bar d}\bar d:M/G\x M/G\to [0,\infty],\qquad \bar d(\bar x,\bar y):=\min_{x\in\bar x,\,y\in\bar y}d(x,y).
\end{equation}
By Lemma \ref{le:metr} in Appendix \ref{sec:add} this is a distance function on $M/G$ which induces the quotient topology. 
\begin{prop}[Energy concentration near ends] \label{prop:en conc} There exists a constant $r_0>0$ such that for every compact subset $K\sub M$ and every $\eps>0$ there exists a constant $E_0$, such that the following holds. Assume that $r\geq r_0,R\leq\infty$, $p>2$, and $w:=(A,u)\in\WWW^p_{A(r,R)}$ is a vortex (with respect to $(\om_0,i)$), such that 
\begin{eqnarray}\nn&u(A(r,R))\sub K,&\\
\label{eq:E w A E 0}&E(w)=E\big(w,A(r,R)\big)\leq E_0.&
\end{eqnarray} 
Then we have%
\footnote{Note that for $a>\sqrt{R/r}$ we have $A(ar,a^{-1}R)=\emptyset$, hence (\ref{eq:E w A ar},\ref{eq:sup z z' bar d}) are non-trivial only for $a\leq\sqrt{R/r}$.}%
\begin{eqnarray}
\label{eq:E w A ar}&E\big(w,A(ar,a^{-1}R)\big)\leq4a^{-2+\eps}E(w),\quad\forall a\geq2,&\\
\label{eq:sup z z' bar d}&\sup_{z,z'\in A(ar,a^{-1}R)}\bar d(Gu(z),Gu(z'))\leq100a^{-1+\eps}\sqrt{E(w)},\,\,\forall a\geq4.&
\end{eqnarray}
(Here $Gx\in M/G$ denotes the orbit of a point $x\in M$.)
\end{prop}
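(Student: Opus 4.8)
The plan is to pass to logarithmic coordinates on the annulus, where the vortex equations become a negative gradient flow of the invariant symplectic action functional weighted by an exponential conformal factor, and to deduce exponential decay of the ``energy lying beyond a given radius'' from the isoperimetric inequality for that functional proved in \cite{ZiA}; both estimates then follow by elementary manipulations. This is a quantitative refinement of \cite[Proposition~11.1]{GS}. In detail: write $z=e^{s+it}$, so that $A(r,R)$ becomes the cylinder $Z:=(\log r,\log R)\x(\R/2\pi\Z)$; by Remark~\ref{rmk:trafo e vort} the pullback of $w$ along $s+it\mapsto e^{s+it}$ is a vortex over $Z$ with respect to $e^{2s}\,ds\wedge dt$, with the same energy. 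Since every $G$-bundle over $A(r,R)$ is trivializable, represent this pulled-back vortex by a triple $(\Phi,\Psi,u)\colon Z\to\g\x\g\x M$ as in Remark~\ref{rmk:trivial}, the second of whose equations now carries the weight $e^{2s}$. For $s\in(\log r,\log R)$ let $x_s:=u(s,\cdot)$, $\xi_s:=\Psi(s,\cdot)$; using the vortex equations and Remark~\ref{rmk:e vort}, a computation shows that the energy on a sub-cylinder $(s_0,s_1)\x S^1$ equals the difference of the invariant symplectic actions (in the sense of \cite{ZiA}) of the boundary loops $(x_{s_0},\xi_{s_0})$ and $(x_{s_1},\xi_{s_1})$. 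In particular $a(s):=E\big(w,\{|z|>e^s\}\cap A(r,R)\big)$ is non-increasing, tends to $E(w)$ as $s\downarrow\log r$, and coincides up to an additive constant with the action of the slice loop at $s$.

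\textbf{Decay of the tail energy.} I would first fix $r_0$, and then, given $K$ and $\eps$, fix $E_0$ small. Because $r\ge r_0$ makes the weight $e^{2s}$ large throughout $Z$, Lemma~\ref{le:mu u} together with $E(w)\le E_0$ forces $\mu\circ u$ to be small and the slice loops $(x_s,\xi_s)$ to lie in the regime where the isoperimetric inequality of \cite{ZiA} applies --- this is where the a priori control of the $\g$-component $\xi_s=\Psi(s,\cdot)$ is needed. Applied slice by slice, that inequality bounds $a(s)$ by $(2-\eps)^{-1}$ times the energy flux $-a'(s)$, the conformal weight $e^{2s}$ being precisely what replaces the ``$1$'' of the flat cylinder by ``$2-\eps$''. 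Hence $a'(s)\le-(2-\eps)\,a(s)$ and, by integration, $a(s)\le e^{-(2-\eps)(s-\log r)}E(w)$; running the argument from the inner end instead gives $E\big(w,\{|z|<e^s\}\cap A(r,R)\big)\le e^{-(2-\eps)(\log R-s)}E(w)$.

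\textbf{Conclusion.} It suffices to treat $a\le\sqrt{R/r}$, since otherwise $A(ar,a^{-1}R)=\emptyset$. For $a\ge 2$ one has $A(ar,a^{-1}R)\sub\{|z|>ar\}$, so $E\big(w,A(ar,a^{-1}R)\big)\le a(\log(ar))\le a^{-2+\eps}E(w)$; splitting $A(ar,a^{-1}R)$ at $|z|=\sqrt{rR}$ and estimating each half from the end it is nearer to gives (\ref{eq:E w A ar}), with the constant $4$ to spare. For (\ref{eq:sup z z' bar d}), join $z,z'\in A(ar,a^{-1}R)$ by a path $\ga$ inside that annulus made of one radial segment and one circular arc. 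Since the horizontal part of $d_Au$ represents $d(Gu)$, $\bar d\big(Gu(z),Gu(z')\big)\le\int_\ga|d(Gu)|\le\int_\ga|d_Au|\le\sqrt2\int_\ga\sqrt{e_w}$; on the radial segment, passing to $\sigma=\log|z|$ turns $\int\sqrt{e_w}$ into $\int\sqrt{-a'(\sigma)}\,d\sigma$ (using whichever of the two decay estimates is relevant), and Cauchy--Schwarz on successive unit $\sigma$-intervals combined with the decay of $a$ bounds the $j$-th piece by a constant times $a^{-1+\eps/2}e^{(-1+\eps/2)j}\sqrt{E(w)}$; the circular arc contributes a comparable amount by Cauchy--Schwarz on a single circle. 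Summing the geometric series in $j$ yields (\ref{eq:sup z z' bar d}), the constant $100$ being generous.

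\textbf{Main obstacle.} The delicate step is the tail-energy decay: setting up the invariant symplectic action functional on radial loops in this gauge-dependent, conformally weighted situation, checking that ``small total energy on an annulus of large inner radius'' genuinely puts these loops in the range of validity of the isoperimetric inequality of \cite{ZiA} (where Gaio and Salamon's a priori bound on $\mu\circ u$ enters to control $\xi_s$), and --- above all --- extracting the sharp decay exponent $2-\eps$, since it is exactly this exponent that produces the powers $a^{-2+\eps}$ in (\ref{eq:E w A ar}) and $a^{-1+\eps}$ in (\ref{eq:sup z z' bar d}). Handling the two ends of the annulus symmetrically is the main bookkeeping burden.
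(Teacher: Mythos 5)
Your overall strategy matches the paper's exactly: pass to logarithmic coordinates, use Proposition~\ref{prop:en act} to identify the tail energy $a(s)$ with the invariant symplectic action of the slice loop, invoke the isoperimetric inequality of \cite{ZiA} (Theorem~\ref{thm:isoperi}) with the sharp constant $1/(2-\eps)$ to obtain the differential inequality $a'(s)\le -(2-\eps)a(s)$, and integrate. That is precisely the engine of the paper's argument, and your geometric reasoning for (\ref{eq:E w A ar}) is sound once the differential inequality is established.

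There are, however, two genuine gaps in the details. First, to put the slice loops in the range of validity of Theorem~\ref{thm:isoperi} and of Proposition~\ref{prop:en act}, one needs the loop $\bar u\circ\iota_s$ to lie in a fixed compact subset $K'$ of $M^*$ \emph{and} to have length $\bar\ell$ below the threshold $\de$. You invoke Lemma~\ref{le:mu u} for this, but Lemma~\ref{le:mu u} only controls $\mu\circ u$ (it does not bound the length of the loop), and moreover its own hypotheses require an a priori pointwise bound on $|d_Au|$ which is not available from small total energy alone. The missing ingredient is the Heinz-type a priori estimate of Lemma~\ref{le:a priori}: applied with balls of radius comparable to $|z|$, it converts the smallness of $E(w)$ into a pointwise bound $e_w(z)\le\min\{\de_0^2,\de^2/(4\pi^2|z|^2)\}$, which simultaneously gives $u\in K'$ and the length bound. (This is also where $r\ge r_0$ and the precise form of $E_0$ enter.) Second, in your derivation of (\ref{eq:sup z z' bar d}), the claim that $\int_\ga\sqrt{e_w}$ over a radial segment ``turns into $\int\sqrt{-a'(\sigma)}\,d\sigma$'' is false: $-a'(\sigma)$ is the energy density integrated over the \emph{entire circle} of radius $e^\sigma$, not its value along a single radial ray, so there is no such change of variables. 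The paper instead uses the Heinz bound $e_w(z)\le\frac{32}{\pi|z|^2}E\big(w,B_{|z|/2}(z)\big)$ together with (\ref{eq:E w A ar}) to obtain the pointwise decay $|d_Au(z)|\lesssim r^{1-\eps/2}|z|^{-2+\eps/2}\sqrt{E(w)}$, and then integrates this pointwise bound along the radial and angular segments. Without the pointwise control via Lemma~\ref{le:a priori}, your integral estimate does not close.
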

The proof of this result is modeled on the proof of \cite[Theorem 1.3]{ZiA}, which in turn is based on the proof of \cite[Proposition 11.1]{GS}. It is based on an isoperimetric inequality for the invariant symplectic action functional (Theorem \ref{thm:isoperi} in Appendix \ref{sec:action}). It also relies on an identity relating the energy of a vortex over a compact cylinder with the actions of its end-loops (Proposition \ref{prop:en act} in Appendix \ref{sec:action}). The proof of (\ref{eq:sup z z' bar d}) also uses the following remark.
\begin{rmk}\rm\label{rmk:BAR d BAR x 0 BAR x 1} Let $\big(M,\lan\cdot,\cdot\ran_M\big)$ be a Riemannian manifold, $G$ a compact Lie group that acts on $M$ by isometries, $P$ a $G$-bundle over $[0,1]$
\footnote{Such a bundle is trivializable, but we do not fix a trivialization here.}
, $A\in\A(P)$ a connection, and $u\in C^\infty_G(P,M)$ a map. We define 
\[\ell(A,u):=\int_0^1|d_Au|dt,\]
where $d_Au=du+L_uA$, and the norm is taken with respect to the standard metric on $[0,1]$ and $\lan\cdot,\cdot\ran_M$. Furthermore, we define
\[\bar u:[0,1]\to M/G,\quad\bar u(t):=Gu(p),\]
where $p\in P$ is any point over $t$. We denote by $d$ the distance function induced by $\lan\cdot,\cdot\ran_M$, and define $\bar d$ as in (\ref{eq:bar d}). Then for every pair of points $\bar x_0,\bar x_1\in M/G$, we have
\[\bar d(\bar x_0,\bar x_1)\leq\inf\big\{\ell(A,u)\,\big|\,(P,A,u)\textrm{ as above: }\bar u(i)=\bar x_i,\,i=0,1\big\}.\]
This follows from a straight-forward argument. $\Box$
\end{rmk}
\begin{proof}[Proof of Proposition \ref{prop:en conc}]\setcounter{claim}{0} For every subset $X\sub M$ we define 
\[m_X:=\inf\big\{|L_x\xi|\,\big|\,x\in X,\,\xi\in\g:\,|\xi|=1\big\},\]
where the norms are with respect to $\om(\cdot,J\cdot)$ and $\lan\cdot,\cdot\ran_\g$. We set
\begin{equation}\label{eq:R 0 2 pi m}r_0:=m_{\mu^{-1}(0)}^{-1}.
\end{equation}
Let $K\sub M$ be a compact subset and $\eps>0$. Replacing $K$ by $GK$, we may assume w.l.o.g.~that $K$ is $G$-invariant. An elementary argument using our standing hypothesis (H) shows that there exists a number $\de_0>0$ such that $G$ acts freely on $K':=\mu^{-1}(\bar B_{\de_0})$, and 
\begin{equation}\label{eq:m K 1 eps}m_{K'}\geq\sqrt{1-\frac\eps2}m_{\mu^{-1}(0)}.
\end{equation}
We choose a constant $\de$ as in Theorem \ref{thm:isoperi} in Appendix \ref{sec:action} (Isoperimetric inequality), corresponding to
\[\lan\cdot,\cdot\ran_M:=\om(\cdot,J\cdot),\quad K',\quad c:=\frac1{2-\eps}.\]
Shrinking $\de$ we may assume that it satisfies the condition of Proposition \ref{prop:en act} in Appendix \ref{sec:action} (Energy action identity) for $K'$. We choose a constant $\wt E_0>0$ as in Lemma \ref{le:a priori} in Appendix \ref{sec:vort} (called $E_0$ there), corresponding to $K$. We define 
\begin{equation}\label{eq:E 0 de 0 de}E_0:=\min\left\{\wt E_0,\frac\pi{32}r_0^2\de_0^2,\frac{\de^2}{128\pi}\right\}.
\end{equation}
Assume that $r,R,p,w$ are as in the hypothesis. Without loss of generality, we may assume that $r<R$. 

Consider first the {\bf case $R<\infty$}, and assume that $w$ extends to a smooth vortex over the compact annulus of radii $r$ and $R$. We show that {\bf inequality (\ref{eq:E w A ar})} holds. We define the function 
\begin{equation}\label{eq:E s}E:[0,\infty)\to[0,\infty),\quad E(s):=E\big(w,A(re^s,Re^{-s})\big).
\end{equation}
\begin{Claim}For every $s\in\big[\log2,\log(R/r)/2\big)$ we have
\begin{equation}\label{eq:d ds E}\frac d{ds}E(s)\leq-(2-\eps)E(s).
\end{equation}
\end{Claim}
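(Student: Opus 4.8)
The plan is to express $E(s)$ and its derivative in terms of loop integrals over the circles $|z|=re^s$ and $|z|=Re^{-s}$, and then dominate the energy of the vortex on the sub-annulus by a constant times the sum of the (invariant symplectic) actions of these two boundary loops, using the energy-action identity (Proposition~\ref{prop:en act}); finally the isoperimetric inequality (Theorem~\ref{thm:isoperi}) will convert the action of each end-loop back into a bound by the derivative of $E$, yielding the exponential decay rate $2-\eps$. Concretely, first I would introduce ``logarithmic'' coordinates $\tau+i\phi$ with $z=e^{\tau+i\phi}$, so that the annulus $A(re^s,Re^{-s})$ becomes a finite cylinder $[\log r+s,\log R-s]\times S^1$, and the vortex $w$ pulls back to a vortex on the cylinder (the area form transforms conformally, and the vortex equations are conformally covariant in the sense recorded in Remark~\ref{rmk:trafo e vort}). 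In these coordinates $E(s)=E\big(w,[\log r+s,\log R-s]\times S^1\big)$, and by the fundamental theorem of calculus
\[
\frac{d}{ds}E(s)=-\int_{\{\tau=\log r+s\}}e_w\,d\phi-\int_{\{\tau=\log R-s\}}e_w\,d\phi,
\]
i.e. the derivative is minus the energy density integrated over the two boundary circles of the cylinder.

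Next I would invoke the a~priori estimate of Lemma~\ref{le:a priori} (Appendix~\ref{sec:vort}): since $E_0\leq\wt E_0$ and $u$ maps into the compact set $K$, the pointwise energy density of $w$ on $A(r_0,\infty)$ (hence on our annulus, as $r\geq r_0$) is controlled, and in particular $|\mu\circ u|$ is so small that the image of $u$ over the slightly shrunk annulus $A(2r,R/2)$ lies in $K'=\mu^{-1}(\bar B_{\de_0})$, on which $G$ acts freely with $m_{K'}\geq\sqrt{1-\eps/2}\,m_{\mu^{-1}(0)}$ by \eqref{eq:m K 1 eps}. For $s\in[\log2,\log(R/r)/2)$ the whole annulus $A(re^s,Re^{-s})$ is contained in $A(2r,R/2)$, so Proposition~\ref{prop:en act} applies on each half of the cylinder: the energy of the vortex on a compact sub-cylinder equals the difference of the invariant symplectic actions of its two end-loops (up to the curvature correction, which is absorbed once the total energy is $\leq E_0\leq \de^2/(128\pi)$). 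Combining this with Remark~\ref{rmk:BAR d BAR x 0 BAR x 1} and the choice of $\de$, the isoperimetric inequality of Theorem~\ref{thm:isoperi} (applied with $c=\tfrac1{2-\eps}$) bounds the action of each end-loop $\ell$ by $c$ times $\int_\ell e_w\,d\phi$ — that is, each end-loop's action is at most $\frac{1}{2-\eps}$ times the corresponding boundary-circle energy integral appearing in the formula for $-\frac{d}{ds}E(s)$.

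Putting the pieces together: $E(s)$, being the energy of $w$ on $A(re^s,Re^{-s})$, is at most the sum of the actions of its two end-loops (energy-action identity, with the curvature term controlled by $E_0$), which is at most $\frac{1}{2-\eps}$ times $\big(\int_{\{\tau=\log r+s\}}e_w\,d\phi+\int_{\{\tau=\log R-s\}}e_w\,d\phi\big)=-\frac{1}{2-\eps}\frac{d}{ds}E(s)$; rearranging gives \eqref{eq:d ds E}. The smooth-extension assumption is harmless: one first proves the inequality for $w$ smooth on the closed annulus, then notes that $E(s)$ for $s$ in the open range only sees the interior, and a standard exhaustion/regularity argument (Theorem~\ref{thm:reg bundle}, or simply applying the smooth case to slightly smaller annuli and passing to the limit) removes it. The main obstacle I anticipate is bookkeeping the constants so that all three ingredients — the a~priori $C^0$-bound forcing $u$ into $K'$, the curvature error term in the energy-action identity, and the constant $\de$ feeding the isoperimetric inequality — are simultaneously controlled by the single threshold $E_0$ defined in \eqref{eq:E 0 de 0 de}; in particular one must check that the loss from replacing $m_{\mu^{-1}(0)}$ by $m_{K'}$ (a factor $\sqrt{1-\eps/2}$) together with the isoperimetric constant $c=\frac1{2-\eps}$ indeed produces exactly the rate $2-\eps$ and not something weaker, which is the reason for the particular bookkeeping of $\eps$ in \eqref{eq:m K 1 eps}.
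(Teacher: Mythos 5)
Your proposal identifies the correct ingredients and the correct overall architecture — logarithmic coordinates, $\frac{d}{ds}E(s)$ as minus the boundary-circle energy integrals, the a~priori bound (Lemma~\ref{le:a priori}) forcing $u$ into $K'$ with small loop lengths, the energy-action identity (Proposition~\ref{prop:en act}), and the isoperimetric inequality with $c=\tfrac1{2-\eps}$. This is essentially the paper's strategy. But the step you yourself flag as the ``main obstacle'' is precisely where the proof is not yet done, and it is not a matter of bookkeeping that can be waved away; two concrete computational facts are needed that your sketch does not supply.

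First, the isoperimetric inequality does not bound $|\A|$ by $c\int_\ell e_w$; it bounds $|\A|$ by $c\Vert\iota_{s_0}^*d_{\wt A}\wt u\Vert_2^2+\frac1{2m_{K'}^2}\Vert\mu\circ\wt u\circ\iota_{s_0}\Vert_2^2$, and passing from this to $\frac1{2-\eps}\int_\ell e_{\wt w}$ requires the first vortex equation. Since $\bar\dd_{J,A}(u)=0$, the $\dd_s$- and $\dd_t$-components of $d_Au$ have equal norm, so $|\iota_{s_0}^*d_{\wt A}\wt u|^2=\tfrac12e^{2s_0}|d_Au|^2$ and the first term is exactly $\tfrac1{2-\eps}\cdot\tfrac12e^{2s_0}|d_Au|^2$, matching the $|d_Au|^2$-part of $e_{\wt w}=e^{2s_0}e_w\circ\phi$ with nothing to spare. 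Without invoking the vortex equation here you would only get $|\iota_{s_0}^*d_{\wt A}\wt u|^2\leq e^{2s_0}|d_Au|^2$, which overshoots by a factor of $2$ and destroys the rate $2-\eps$. Second, the $\mu\circ u$-term must be reconciled against the factor $e^{2s_0}$ in $e_{\wt w}$, and this is where the specific choice $r_0:=m_{\mu^{-1}(0)}^{-1}$ in (\ref{eq:R 0 2 pi m}) together with $m_{K'}\geq\sqrt{1-\eps/2}\,m_{\mu^{-1}(0)}$ is used: one has $\frac1{2m_{K'}^2}\leq\frac{r_0^2}{2-\eps}\leq\frac{e^{2s_0}}{4(2-\eps)}$ since $r\geq r_0$ and $e^{s_0}\geq 2r$ in the stated range of $s_0$, and then the vortex equation $|F_A|=|\mu\circ u|$ gives $e_w=\tfrac12|d_Au|^2+|\mu\circ u|^2$, so the $\mu$-term is dominated with room. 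Neither of these two points appears in your argument; together they are the computational core, and the claim is exactly the assertion that they balance to give the rate $2-\eps$.

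One smaller inaccuracy: you describe Proposition~\ref{prop:en act} as holding ``up to a curvature correction absorbed by $E_0\leq\de^2/(128\pi)$.'' There is no correction term; it is an exact identity $E(w)=-\A(\iota_{s_+}^*w)+\A(\iota_{s_-}^*w)$. The smallness of $E_0$ enters only as a hypothesis ensuring the loop lengths $\bar\ell$ are small enough for the identity (and the isoperimetric inequality) to apply, not as an error term. Relatedly, you apply the identity ``on each half of the cylinder,'' but it is applied once to the full sub-cylinder $[\log r+s,\log R-s]\times S^1$; the two actions in the identity are those of its two boundary circles.
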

\begin{proof}[Proof of the claim] Using the fact $r\geq r_0$ and (\ref{eq:E w A E 0},\ref{eq:E 0 de 0 de}), it follows from Lemma \ref{le:a priori} in Appendix \ref{sec:vort} (with $r$ replaced by $|z|/2$) that
\begin{equation}\label{eq:e w z 0 8}e_w(z)\leq\min\left\{\de_0^2,\frac{\de^2}{4\pi^2|z|^2}\right\},\quad\forall z\in A(2r,R/2).\end{equation}
We define
\begin{eqnarray*}&\Si_s:=\big(s+\log r,-s+\log R\big)\x S^1,\,\forall s\in\R,&\\
&\phi:\Si_0\to\C,\,\phi(z):=e^z,\quad\wt w:=(\wt A,\wt u):=\phi^*w.&
\end{eqnarray*}
(Here we identify $\Si_0\iso\C/\sim$, where $z\sim z+2\pi in$, for every $n\in\Z$.) Let $s_0\in\big[\log(2r),\log(R/2)\big]$. Combining (\ref{eq:e w z 0 8}) with the fact $|\mu\circ u|\leq\sqrt{e_w}$ and Remark \ref{rmk:BAR d BAR x 0 BAR x 1}, it follows that
\begin{equation}\label{eq:wt u bar ell}\wt u(s_0,t)\in K'=\mu^{-1}(\bar B_{\de_0}),\,\forall t\in S^1,\quad\bar\ell(G\wt u(s_0,\cdot))\leq\de.
\end{equation}
Hence the hypotheses of Theorem \ref{thm:isoperi} (Appendix \ref{sec:action}) are satisfied with $K$ replaced by $K'$ and $c:=1/(2-\eps)$. By the statement of that result the loop $\wt u(s_0,\cdot)$ is admissible, and defining
\[\iota_{s_0}:S^1\to\Si_0,\quad\iota_{s_0}(t):=(s_0,t),\]
we have
\begin{equation}\label{eq:A iota s 0}\big|\A\big(\iota_{s_0}^*\wt w\big)\big|\leq\frac1{2-\eps}\Vert \iota_{s_0}^*d_{\wt A}\wt u\Vert_2^2+\frac1{2m_{K'}^2}\big\Vert\mu\circ\wt u\circ\iota_{s_0}\big\Vert_2^2.
\end{equation}
Here $\A$ denotes the invariant symplectic action, as defined in Appendix \ref{sec:action}. Furthermore, the $L^2$-norms are with respect to the standard metric on $S^1\iso\R/(2\pi\Z)$, the metric $\om(\cdot,J\cdot)$ on $M$, and the operator norm $|\cdot|_\op:\g^*\to\R$, induced by $\lan\cdot,\cdot\ran_\g$. 

By (\ref{eq:R 0 2 pi m},\ref{eq:m K 1 eps}) and the fact $2r\leq e^{s_0}$, we have
\begin{equation}\label{eq:frac 1 2 2 eps}\frac1{2-\eps}|\iota_{s_0}^*d_{\wt A}\wt u|_0^2(t)+\frac1{2m_{K'}^2}\big|\mu\circ\wt u\circ\iota_{s_0}\big|^2(t)\leq \frac1{2-\eps}e^{2s_0}e_w(e^{s_0+it}),\quad\forall t\in S^1.\end{equation}
Here the norm $|\cdot|_0$ is with respect to the standard metric on $S^1\iso\R/(2\pi\Z)$, and we used the fact that for every $\phi\in\g^*$,
\[|\phi|_\op\leq|\phi|:=\sqrt{\phi(\xi)}\]
where $\xi\in\g$ is determined by $\lan\xi,\cdot\ran_\g=\phi$. We fix $s\in\big[\log2,\log(R/r)/2\big)$. Recalling (\ref{eq:E s}), we have
\[E(s)=\int_{\Si_s}e^{2s_0}e_w(e^{s_0+it})dt\,ds_0.\]
Combining this with (\ref{eq:A iota s 0},\ref{eq:frac 1 2 2 eps}), it follows that
\begin{equation}\label{eq:A iota}-\A\big(\iota_{-s+\log R}^*\wt w\big)+\A\big(\iota_{s+\log r}^*\wt w\big)\leq-\frac1{2-\eps}\frac d{ds}E(s).\end{equation}
Using (\ref{eq:wt u bar ell}), the hypotheses of Proposition \ref{prop:en act} are satisfied with $K$ replaced by $K'$. Applying that result, we have
\[E(s)=-\A\big(\iota_{-s+\log R}^*\wt w\big)+\A\big(\iota_{s+\log r}^*\wt w\big).\]
Combining this with (\ref{eq:A iota}), inequality (\ref{eq:d ds E}) follows. This proves the claim.
\end{proof}
By the claim the derivative of the function
\[\left[\log2,\frac{\log\left(\frac Rr\right)}2\right)\ni s\mapsto E(s)e^{(2-\eps)s}\]
is non-positive, and hence this function is non-increasing. Inequality (\ref{eq:E w A ar}) follows.

{\bf We prove (\ref{eq:sup z z' bar d})}. Let $z\in A(4r,\sqrt{rR})$. Using 
(\ref{eq:E w A E 0}) and the fact $E_0\leq\wt E_0$, it follows from Lemma \ref{le:a priori} (with $r$ replaced by $|z|/2$) that 
\begin{equation}\label{eq:e w z}e_w(z)\leq\frac{32}{\pi|z|^2}E\left(w,B_{\frac{|z|}2}(z)\right).
\end{equation} 
We define $a:=|z|/(2r)$. Then $a\geq2$ and $B_{|z|/2}(z)$ is contained in $A(ar,a^{-1}R)$. Therefore, by (\ref{eq:E w A ar}) we have 
\[E\big(w,B_{\frac{|z|}2}(z)\big)\leq16r^{2-\eps}|z|^{-2+\eps}E(w).\]
Combining this with (\ref{eq:e w z}) and the fact $|d_Au|(z)\leq\sqrt{2e_w(z)}$, it follows that 
\begin{equation}\label{eq:d A u C 2r}|d_Au(z)v|\leq Cr^{1-\frac\eps2}|z|^{-2+\frac\eps2}\sqrt{E(w)}|v|,\quad\forall z\in A(4r,\sqrt{rR}),\,v\in\C.
\end{equation}
where $C:=32/\sqrt\pi$. A similar argument shows that 
\begin{equation}\label{eq:d A u C R}|d_Au(z)v|\leq CR^{-1+\frac\eps2}|z|^{-\frac\eps2}\sqrt{E(w)}|v|,\quad\forall z\in A(\sqrt{rR},R/4).
\end{equation}
Let now $a\geq4$ and $z,z'\in A(ar,a^{-1}R)$. Assume that $\eps\leq1$. (This is no real restriction.) We define $\ga:[0,1]\to\C$ to be the radial path of constant speed, such that $\ga(0)=z$ and $|\ga(1)|=|z'|$. Furthermore, we choose an angular path $\ga':[0,1]\to\C$ of constant speed, such that $\ga'(0)=\ga(1)$, $\ga'(1)=z'$, and $\ga'$ has minimal length among such paths. (See Figure \ref{fig:path}.)
\begin{figure}
\centering
\leavevmode\epsfbox{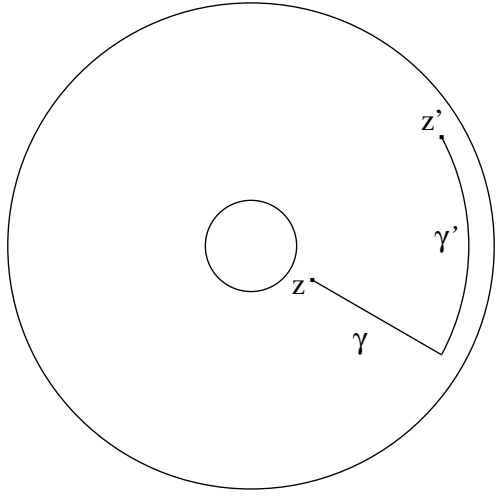}
\caption{The paths $\ga$ and $\ga'$ described in the proof of inequality (\ref{eq:sup z z' bar d}) (Proposition \ref{prop:en conc}).}
\label{fig:path} 
\end{figure}

Consider the ``twisted length'' of $\ga^*(A,u)$, given by
\[\int_0^1\big|d_Au\,\dot\ga(t)\big|dt.\]
It follows from (\ref{eq:d A u C 2r},\ref{eq:d A u C R}) and the fact $\eps\leq1$, that this length is bounded above by $4C\sqrt{E(w)}a^{-1+\eps/2}$. Similarly, it follows that the ``twisted length'' of ${\ga'}^*(A,u)$ is bounded above by $C\pi\sqrt{E(w)}a^{-1+\eps/2}$. Therefore, using Remark \ref{rmk:BAR d BAR x 0 BAR x 1}, inequality (\ref{eq:sup z z' bar d}) with $\eps$ replaced by $\eps/2$ follows. 

Assume now that $w$ is not smooth. By Theorem \ref{thm:reg gauge bdd} in Appendix \ref{sec:vort} the restriction of $w$ to any compact cylinder contained in $A(r,R)$ is gauge equivalent to a smooth vortex. Hence the inequalities (\ref{eq:E w A ar},\ref{eq:sup z z' bar d}) follow from what we just proved, using the $G$-invariance of $K$. 

In the {\bf case $R=\infty$} the inequalities (\ref{eq:E w A ar},\ref{eq:sup z z' bar d}) follow from what we just proved, by considering an arbitrarily large radius $R<\infty$. This completes the proof of Proposition \ref{prop:en conc}.
\end{proof}
We are now ready for the proof of Proposition \ref{prop:soft} (p.~\pageref{prop:soft}).
\begin{proof}[Proof of Proposition \ref{prop:soft}]\label{proof prop:soft}\setcounter{claim}{0} The function $E$ as in (\ref{eq:eps E eps}) is increasing and, by hypothesis (\ref{prop:soft E}), bounded below by $\Emin$. Hence the limit 
\begin{equation}
  \label{eq:m 0 lim}m_0:=\lim_{\eps\to0}E(\eps)
\end{equation}
exists and is bounded below by $\Emin$. We fix a compact subset $K\sub M$ as in hypothesis (\ref{prop:soft K}). We choose a constant $E_0>0$ as in Lemma \ref{le:a priori} in Appendix \ref{sec:vort} (Bound on energy density), depending on $K$. Considering the pullback of $w_\nu$ by the translation by $z_0$, we may assume w.l.o.g.~that $z_0=0$.
\begin{claim}\label{claim:soft wlog} We may assume w.l.o.g.~that 
\begin{equation}
  \label{eq:e C 0}\Vert e_{w_\nu}^{R_\nu}\Vert_{C^0(\bar B_r)}=e_{w_\nu}^{R_\nu}(0).
\end{equation}
\end{claim}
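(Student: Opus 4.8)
The plan is the following reduction. We will pass to a subsequence, apply a translation of $\C$ carrying a suitable maximum point of the energy density to the origin, and shrink the ball $B_r(z_0)$, so as to obtain a new sequence of $R_\nu$-vortices $\tilde w_\nu=(\tilde A_\nu,\tilde u_\nu)$ on a smaller ball $B_{\tilde r}(0)$ which still satisfies hypotheses (\ref{prop:soft K}) and (\ref{prop:soft E}) (recall $z_0=0$ has already been arranged) and in addition satisfies $\|e^{R_\nu}_{\tilde w_\nu}\|_{C^0(\bar B_{\tilde r})}=e^{R_\nu}_{\tilde w_\nu}(0)$. Since the conclusion of Proposition \ref{prop:soft} involves neither $r$ nor the precise base point, and transforms correctly under a translation of $\C$ (with $z_0=0$ unchanged, because $\Emin\le E(\eps)=\lim_\nu E^{R_\nu}(\tilde w_\nu,B_\eps(0))$ by the verification below), it suffices to prove the proposition for such a normalized sequence.

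The crucial point is that $0$ is a \emph{bubbling point}, i.e.\ $\limsup_{\nu\to\infty}\|e^{R_\nu}_{w_\nu}\|_{C^0(\bar B_\rho(0))}=\infty$ for every small $\rho>0$. If this failed, the densities $e^{R_\nu}_{w_\nu}$ would be uniformly bounded on some fixed $\bar B_\rho(0)\subset B_r(z_0)$, and since $R_\nu\to\infty$, the argument of part (\ref{prop:cpt bdd = infty}) in the proof of Proposition \ref{prop:cpt bdd} — Uhlenbeck compactness (Theorem \ref{thm:Uhlenbeck compact}), the a priori bound on $\mu\circ u_\nu$ (Lemma \ref{le:mu u}), and Proposition \ref{prop:compactness delbar} — specialized to the compact ball $\bar B_{\rho/2}(0)$, where no patching over an exhaustion is needed, would produce gauge transformations $g_\nu$ with $g_\nu^*A_\nu\to A_0$ in $C^0$ and $g_\nu^{-1}u_\nu\to u_0$ in $C^1$, where $w_0:=(A_0,u_0)$ is an $\infty$-vortex on $B_{\rho/2}(0)$. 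By Lemma \ref{le:conv e}, $e^{R_\nu}_{w_\nu}\to e^\infty_{w_0}$ in $C^0$ on compact subsets, so for $0<\eps<\rho/2$ we would get $E(\eps)=\lim_\nu E^{R_\nu}(w_\nu,B_\eps(0))=E^\infty(w_0,B_\eps(0))$; letting $\eps\to0$ and using smoothness of $w_0$ (which forces $E^\infty(w_0,B_\eps(0))\to0$) contradicts $E(\eps)\ge\Emin>0$ from hypothesis (\ref{prop:soft E}). Moreover $E(r)=\lim_\nu E^{R_\nu}(w_\nu,B_r(z_0))<\infty$, so by quantization of energy loss (Proposition \ref{prop:quant en loss}(\ref{prop:hard limsup})), applied iteratively as in the terminating induction in the proof of Proposition \ref{prop:cpt mod}, the set $Z\subset B_r(z_0)$ of bubbling points is finite; we fix $0<r'\le r$ with $\bar B_{r'}(0)\cap Z=\{0\}$, so that the densities are uniformly bounded on $\bar B_{r'}(0)\setminus B_\rho(0)$ for every $\rho>0$.

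Passing to a subsequence along which $\|e^{R_\nu}_{w_\nu}\|_{C^0(\bar B_{r'/2}(0))}\to\infty$ (possible since $0\in Z$), choose $z_\nu\in\bar B_{r'/2}(0)$ with $e^{R_\nu}_{w_\nu}(z_\nu)=\|e^{R_\nu}_{w_\nu}\|_{C^0(\bar B_{r'/2}(0))}$, so $e^{R_\nu}_{w_\nu}(z_\nu)\to\infty$. Then $z_\nu\to0$: otherwise a subsequence converges to some $z_*\in\bar B_{r'/2}(0)\setminus\{0\}$, which is not a bubbling point, hence the densities stay bounded on a neighbourhood of $z_*$ that eventually contains $z_\nu$ — a contradiction. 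Now set $\tilde r:=r'/4$, $\psi_\nu(z):=z+z_\nu$, and $\tilde w_\nu:=\psi_\nu^*w_\nu$, which for large $\nu$ is defined on $B_{\tilde r}(0)$. Since $\bar B_{\tilde r}(z_\nu)\subset\bar B_{r'/2}(0)$ for large $\nu$, the point $z_\nu$ maximizes $e^{R_\nu}_{w_\nu}$ over $\bar B_{\tilde r}(z_\nu)$, giving $\|e^{R_\nu}_{\tilde w_\nu}\|_{C^0(\bar B_{\tilde r})}=e^{R_\nu}_{w_\nu}(z_\nu)=e^{R_\nu}_{\tilde w_\nu}(0)$, which is (\ref{eq:e C 0}). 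Hypothesis (\ref{prop:soft K}) is immediate since $\tilde u_\nu(B_{\tilde r}(0))=u_\nu(B_{\tilde r}(z_\nu))\subset u_\nu(B_r(z_0))\subset K$; and for hypothesis (\ref{prop:soft E}) one squeezes $E^{R_\nu}(\tilde w_\nu,B_\eps(0))=E^{R_\nu}(w_\nu,B_\eps(z_\nu))$ between $E^{R_\nu}(w_\nu,B_{\eps'}(0))$ and $E^{R_\nu}(w_\nu,B_{\eps''}(0))$ for $\eps'<\eps<\eps''$, and uses $|z_\nu|\to0$ together with the continuity of $E$ to conclude $\lim_\nu E^{R_\nu}(\tilde w_\nu,B_\eps(0))=E(\eps)$, which then inherits all required properties. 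The main obstacle is the bubbling-point step: ruling out that the energy density near $0$ stays bounded requires running a localized version of the compactness machinery of Section \ref{sec:comp}; the rest is elementary manipulation with translations and nested balls.
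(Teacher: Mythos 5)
Your proof is correct, but your key step — showing that a sequence of maximizers of the rescaled energy density converges to the origin — goes through heavier machinery than the paper's. The paper argues quantitatively and without any compactness: after shrinking the radius so that $E(4\hhat r)<m_0+E_0$, where $E_0$ is the threshold from Lemma \ref{le:a priori} and $m_0:=\lim_{\eps\to0}E(\eps)$, it applies the Heinz-type mean-value estimate (Lemma \ref{le:a priori}) on balls $B_{\eps/2}(z)$ for $z$ in the annulus $A(\eps,2\hhat r)$, where the energy is eventually $<E_0$; this bounds the density by $32E_0/(\pi\eps^2)$ there, while the sup of the density on a small ball around $0$ carrying energy $\geq m_0/2$ is eventually larger, forcing the maximizer into $B_\eps$. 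You instead establish that $0$ is a bubbling point by running a localized instance of Proposition \ref{prop:cpt bdd}(\ref{prop:cpt bdd = infty}) plus Lemma \ref{le:conv e} to produce an $\infty$-vortex limit $w_0$ and derive a contradiction with $E(\eps)\geq\Emin$, then invoke the terminating-induction argument from the proof of Proposition \ref{prop:cpt mod} to get a finite bubbling set, shrink to isolate $0$, and conclude $z_\nu\to0$ qualitatively. Your route is sound and conceptually natural, but it brings in Uhlenbeck compactness, bootstrapping, and a bubbling-set analysis where a single application of the a priori density bound suffices; the trade-off is that the paper's argument requires setting up the precise quantitative relation between the energy threshold $E_0$ and the chosen radii, whereas yours does not. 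One small imprecision: you invoke "smoothness of $w_0$" to conclude $E^\infty(w_0,B_\eps(0))\to0$, but all that is actually needed is local boundedness of $e^\infty_{w_0}=\frac12|d_{A_0}u_0|^2$, which already follows from the $W^{1,p}$-regularity of the limit with $p>2$. The translation-and-shrinking bookkeeping at the end (choice of $\tilde r$, verification of the hypotheses for the translated sequence) is essentially the same in both proofs.
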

\begin{proof} [Proof of Claim \ref{claim:soft wlog}] Suppose that we have already proved the proposition under this additional assumption, and let $r,z_0=0,R_\nu,w_\nu$ be as in the hypotheses of the proposition. We choose $0<\hhat r\leq r/4$ so small that 
 \begin{equation}
   \label{eq:E hhat r}E(4\hhat r)=\lim_{\nu\to\infty}E^{R_\nu}(w_\nu,B_{4\hhat r})<m_0+E_0.
 \end{equation}
For $\nu\in\N$ we choose $\wt z_\nu\in\bar B_{2\hhat r}$ such that 
\begin{equation}
    \label{eq:e R ze}e^{R_\nu}_{w_\nu}(\wt z_\nu)=\Vert e^{R_\nu}_{w_\nu}\Vert_{C^0(\bar B_{2\hhat r})}.
\end{equation}
\begin{claim}\label{claim:z nu} The sequence $\wt z_\nu$ converges to $0$.
\end{claim}
\begin{proof}[Proof of Claim \ref{claim:z nu}] Recall that $A(r,R)$ denotes the open annulus of radii $r$ and $R$. Let $0<\eps\leq 2\hhat r$. Inequality (\ref{eq:E hhat r}) implies that there exists $\nu(\eps)\in\N$ such that 
\[E^{R_\nu}\big(w_\nu,A(\eps/2,4\hhat r)\big)<E_0,\]
for every $\nu\geq\nu(\eps)$. Hence by Lemma \ref{le:a priori} with $r=\eps/2$ we have
\begin{equation}\label{eq:e 4 C}e^{R_\nu}_{w_\nu}(z)<\frac{32E_0}{\pi\eps^2},\quad\forall\nu\geq\nu(\eps),\,\forall z\in A(\eps,2\hhat r).
\end{equation}
We define
\[\de_0:=\min\left\{2\hhat r,\eps\sqrt{\frac{m_0}{64E_0}}\right\}.\]
Increasing $\nu(\eps)$, we may assume that $E^{R_\nu}(w_\nu, B_{\de_0})>m_0/2$ for every $\nu\geq\nu(\eps)$, and therefore
\[\Vert e^{R_\nu}_{w_\nu}\Vert_{C^0(\bar B_{\de_0})}>\frac{32E_0}{\pi\eps^2}.\]
Combining this with (\ref{eq:e R ze},\ref{eq:e 4 C}) and the fact $\de_0\leq2\hhat r$, it follows that $\wt z_\nu\in B_\eps$, for every $\nu\geq\nu(\eps)$. This proves Claim \ref{claim:z nu}. 
\end{proof}
By Claim \ref{claim:z nu} we may pass to some subsequence such that $|\wt z_\nu|<\hhat r$ for every $\nu$. We define 
\[\psi_\nu:B_{\hhat r}\to\C,\quad\psi_\nu(z):=z+z_\nu,\quad\wt w_\nu:=(\wt A,\wt u):=\psi_\nu^*w_\nu.\]
Then (\ref{eq:e C 0}) with $w_\nu,r$ replaced by $\wt w_\nu,\hhat r$ is satisfied. By elementary arguments the hypotheses of Proposition \ref{prop:soft} are satisfied with $(w_\nu, r, z_0)$ replaced by $(\wt w_\nu,\hhat r,0)$. Assuming that we have already proved the statement of the proposition for $\wt w_\nu$, a straight-forward argument using Claim \ref{claim:z nu} shows that it also holds for $w_\nu$. This proves Claim \ref{claim:soft wlog}.
\end{proof}
{\bf So we assume w.l.o.g.~that (\ref{eq:e C 0}) holds.}\\

{\bf Construction of $R_0,Z,$ and $w_0$:} Recall the definition (\ref{eq:m 0 lim}) of $m_0$, and that we have chosen $E_0>0$ as in Lemma \ref{le:a priori}. We choose constants $r_0$ and $E_1$ as in Proposition \ref{prop:en conc}, the latter (called $E_0$ there) corresponding to the compact set $K$ and $\eps:=1$. We fix a constant 
\begin{equation}\label{eq:de min}0<\de<\min\left\{m_0,\frac{E_0}2,\frac{E_1}2\right\}.
\end{equation} 
We pass to some subsequence such that 
\begin{equation}\label{eq:E R nu w nu >}E^{R_\nu}(w_\nu,B_r(0))> m_0-\de,\quad\forall\nu\in\N.
\end{equation}
For every $\nu\in\N$, there exists $0<\hhat\eps_\nu<r$, such that
\begin{equation}
  \label{eq:E R =}E^{R_\nu}(w_\nu,B_{\hhat\eps_\nu})=m_0-\de.  
\end{equation}
It follows from the definition of $m_0$ that 
\begin{equation}
  \label{eq:eps nu 0}\hhat\eps_\nu\to0.%
\footnote{Otherwise, there exists a constant $\eps>0$, such that passing to some subsequence, we have $\hhat\eps_\nu\geq\eps$. Combining this with (\ref{eq:E R =}) and considering the limit $\nu\to\infty$, we obtain a contradiction to (\ref{eq:m 0 lim}).}%
\end{equation}
\begin{claim}\label{claim:inf nu hhat eps nu} We have
\begin{equation}
  \label{eq:hhat eps nu R nu}\inf_\nu \hhat\eps_\nu R_\nu>0.
\end{equation}
\end{claim}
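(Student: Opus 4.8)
The plan is to argue by contradiction. Assume $\inf_\nu\hhat\eps_\nu R_\nu=0$; passing to a subsequence we may assume $\rho_\nu:=\hhat\eps_\nu R_\nu\to0$. The idea is to pass from the $R_\nu$-vortices $w_\nu$, which live on the \emph{fixed} ball $B_r(0)$, to the corresponding ordinary ($1$-)vortices on the \emph{growing} balls $B_{R_\nu r}(0)$, extract a limit vortex over all of $\C$ using the compactness already proved in this section, and observe that this limit vortex would have to carry a positive amount of energy concentrated at the single point $0\in\C$ --- which a smooth vortex cannot do.

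First I would invoke the correspondence between $R$-vortices and $1$-vortices recorded in the Remarks preceding Proposition \ref{prop:cpt mod}: setting $\phi_\nu(z):=R_\nu z$ and $v_\nu:=(\phi_\nu^{-1})^*w_\nu$, the triple $v_\nu$ is a $1$-vortex (of class $W^{1,p}_\loc$) on $B_{R_\nu r}(0)$, its underlying map still has image in the compact set $K$, and $E(v_\nu,B_s(0))=E^{R_\nu}(w_\nu,B_{s/R_\nu}(0))$ whenever $s<R_\nu r$. In particular, by (\ref{eq:E R =}),
\[E(v_\nu,B_{\rho_\nu}(0))=E^{R_\nu}(w_\nu,B_{\hhat\eps_\nu}(0))=m_0-\de,\]
while $\sup_\nu E(v_\nu,B_{R_\nu r}(0))=\sup_\nu E^{R_\nu}(w_\nu,B_r(0))<\infty$ by hypothesis (\ref{prop:soft E}) of Proposition \ref{prop:soft}. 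Next I would apply Proposition \ref{prop:cpt mod} (Compactness modulo bubbling and gauge) to the sequence $v_\nu$, taking for the rescaling parameters the constant sequence $1$ and for the exhausting radii $r_\nu:=R_\nu r$, which tend to $\infty$ since $R_\nu\to\infty$. As the limiting parameter $1$ is finite, alternative (\ref{prop:cpt mod:<}) applies: there is no bubbling ($Z=\emptyset$), and after passing to a further subsequence and regauging by suitable $g_\nu\in W^{2,p}_\loc(\C,G)$, the $v_\nu$ converge to a smooth $1$-vortex $v_0\in\WWW_\C$ in $C^\infty$ on every compact subset of $\C$.

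To conclude, fix $\rho>0$. For $\nu$ large we have $\rho_\nu\le\rho<R_\nu r$, hence $E(v_\nu,B_\rho(0))\ge E(v_\nu,B_{\rho_\nu}(0))=m_0-\de$. Since the energy density is gauge invariant and converges uniformly on compact sets under the $C^\infty$-convergence above (Lemma \ref{le:conv e}, with constant parameter $1$), passing to the limit yields $E(v_0,B_\rho(0))\ge m_0-\de$ for every $\rho>0$. Letting $\rho\to0$, the left-hand side tends to $0$ because $v_0$ is smooth, so $m_0\le\de$, contradicting the choice $0<\de<m_0$ in (\ref{eq:de min}). This contradiction proves the claim.

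I expect the only delicate points to be bookkeeping: keeping track of the two nested rescalings --- the passage from $w_\nu$ to $v_\nu$ by the factor $R_\nu$, combined with the scale $\hhat\eps_\nu$ buried inside $\rho_\nu=\hhat\eps_\nu R_\nu$ --- and checking that all hypotheses of Proposition \ref{prop:cpt mod} and Lemma \ref{le:conv e} hold along the exhaustion $B_{R_\nu r}(0)\uparrow\C$, in particular that the convergence statements are read in the ``eventually contained'' sense. The conceptual heart of the argument, that a positive amount of energy cannot concentrate at a point of a smooth limiting vortex, is precisely where symplectic asphericity has already been used, namely inside Proposition \ref{prop:cpt mod}, which for a finite limiting scale parameter produces a genuine vortex limit with no bubbles off.
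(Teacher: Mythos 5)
Your proof is correct, but it takes a genuinely different route from the paper. The paper's argument is a short arithmetic computation: by the normalization (\ref{eq:e C 0}), the density $e^{R_\nu}_{w_\nu}$ is maximized at $0$ on $\bar B_r$, so $E^{R_\nu}(w_\nu,B_{\hhat\eps_\nu})\leq\pi\hhat\eps_\nu^2\,e^{R_\nu}_{w_\nu}(0)$; Proposition \ref{prop:quant en loss}(\ref{prop:hard R e}) applied with $Q=\{0\}$ gives $\sup_\nu R_\nu^{-2}e^{R_\nu}_{w_\nu}(0)<\infty$; and since the left-hand side equals $m_0-\de>0$ by (\ref{eq:E R =}), combining the two yields an explicit positive lower bound for $\hhat\eps_\nu R_\nu$. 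Your approach instead passes (by contradiction) to the corresponding $1$-vortices $v_\nu$ on the exhausting balls $B_{R_\nu r}$, applies Proposition \ref{prop:cpt mod} with the finite limiting parameter $R_0=1$ to obtain a smooth limit $v_0$ with no bubbling, and then exploits that a positive amount of energy cannot concentrate at the single point $0$ of the smooth $v_0$. Both arguments ultimately use Proposition \ref{prop:quant en loss}(\ref{prop:hard R e}) and hence the same asphericity input — yours through the internal machinery of Proposition \ref{prop:cpt mod}, the paper's directly — so there is no circularity. What the paper's version buys is brevity and a quantitative constant; what yours buys is conceptual transparency, plus the small structural advantage of not requiring the normalization (\ref{eq:e C 0}) (which is nonetheless in force at that stage of the proof). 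Two minor remarks: since $R_0=1<\infty$, Proposition \ref{prop:cpt mod}(\ref{prop:cpt mod:<}) already gives $C^\infty$-convergence of the gauged sequence on compacta, so invoking Lemma \ref{le:conv e} is unnecessary — the energy densities converge directly; and you should note that the conclusion holds for the original sequence, not merely a subsequence, because any subsequence of a sequence with $\inf=0$ that converges to a positive limit would be ruled out by the same argument (equivalently, run the argument on an arbitrary subsequence along which $\hhat\eps_\nu R_\nu\to 0$).
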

\begin{proof}[Proof of Claim \ref{claim:inf nu hhat eps nu}] 
Equality (\ref{eq:e C 0}) implies that
\begin{equation}
  \label{eq:E eps nu C}E^{R_\nu}(w_\nu,B_{\hhat\eps_\nu})\leq \pi\hhat\eps_\nu^2e_{w_\nu}^{R_\nu}(0).
\end{equation}
The hypotheses $R_\nu\to\infty$, (\ref{prop:soft K}), and (\ref{prop:soft E}) imply that the hypotheses of Proposition \ref{prop:quant en loss} (Quantization of energy loss) are satisfied with $\Om:=B_r$. Thus by assertion (\ref{prop:hard R e}) of that proposition with $Q:=\{0\}$, we have 
\begin{equation}\nn\inf_\nu \frac{R_\nu^2}{e_{w_\nu}^{R_\nu}(0)}>0.
\end{equation}
Combining this with (\ref{eq:E eps nu C},\ref{eq:E R =}) and the fact $\de<m_0$, inequality (\ref{eq:hhat eps nu R nu}) follows. This proves Claim \ref{claim:inf nu hhat eps nu}.
\end{proof}
Passing to some subsequence, we may assume that the limit 
\begin{equation}
  \label{eq:hhat eps R}\hhat R_0:=\lim_{\nu\to\infty}\hhat \eps_\nu R_\nu\in[0,\infty] 
\end{equation}
exists. By Claim \ref{claim:inf nu hhat eps nu} we have $\hhat R_0>0$. We define
\begin{equation}\label{eq:R 0 eps nu}(R_0,\eps_\nu):=\left\{
  \begin{array}{ll}(\infty,\hhat\eps_\nu),&\textrm{if }\hhat R_0=\infty,\\
(1,R_\nu^{-1}),&\textrm{otherwise,}
  \end{array}
\right.  
\end{equation}
\[\wt R_\nu:=\eps_\nu R_\nu,\quad\phi_\nu:B_{\eps_\nu^{-1}r}\to B_r,\,\phi_\nu(z):=\eps_\nu z,\quad\wt w_\nu:=(\wt A_\nu,\wt u_\nu):=\phi_\nu^*w_\nu.\]
By Proposition \ref{prop:cpt mod} with $R_\nu,$ $w_\nu$ replaced by $\wt R_\nu$, $\wt w_\nu$ and $r_\nu:=r/\eps_\nu$ there exist a finite subset $Z\sub\C$ and an $R_0$-vortex $w_0=(A_0,u_0)\in\WWW_{\C\wo Z}$, and passing to some subsequence, there exist gauge transformations $g_\nu\in W^{2,p}_\loc(\C\wo Z,G)$, such that the conclusions of that proposition hold. 

We check the {\bf conditions of Proposition \ref{prop:soft}} with $z_\nu:=z_0:=0$: {\bf Condition \ref{prop:soft}(\ref{prop:soft eps})} holds by (\ref{eq:eps nu 0},\ref{eq:hhat eps R},\ref{eq:R 0 eps nu}). {\bf Condition \ref{prop:soft}(\ref{prop:soft conv})} follows from \ref{prop:cpt mod}(\ref{prop:cpt mod:<},\ref{prop:cpt mod:=}), and {\bf condition \ref{prop:soft}(\ref{prop:soft lim nu E eps})} follows from \ref{prop:cpt mod}(\ref{prop:cpt mod lim nu E eps}).

We prove {\bf condition \ref{prop:soft}(\ref{prop:soft en})}: We define
\[\psi_\nu:B_{R_\nu^{-1}r}\to B_r,\,\psi_\nu(z):=\hhat R_\nu^{-1}z,\quad w'_\nu:=\psi_\nu^*w_\nu.%
\footnote{In the case $R_0=1$ we have $\psi_\nu=\phi_\nu$ and hence $w'_\nu=\wt w_\nu$.}%
\]
We choose $0<\eps\leq r$ so small that
\[\lim_{\nu\to\infty}E^{R_\nu}(w_\nu,B_\eps)<m_0+\frac{E_1}2.\]
Furthermore, we choose an integer $\nu_0$ so large that for $\nu\geq\nu_0$, we have $E^{R_\nu}(w_\nu,B_\eps)<m_0+E_1/2$. We fix $\nu\geq\nu_0$. Using (\ref{eq:E R =},\ref{eq:de min}), it follows that
\[E\big(w'_\nu,A(\hhat\eps_\nu R_\nu,\eps R_\nu)\big)<E_1.\]
It follows that the requirements of Proposition \ref{prop:en conc} are satisfied with $r$, $R$, $w_\nu$ replaced by $\max\{r_0,\hhat\eps_\nu R_\nu\}$, $\eps R_\nu$, $w'_\nu$. Therefore, we may apply that result (with ``$\eps$'' equal to $1$), obtaining
\[E^{R_\nu}\Big(w_\nu,A\big(a\max\{R_\nu^{-1}r_0,\hhat\eps_\nu\},a^{-1}\eps\big)\Big)\leq4a^{-1}E_1,\quad\forall a\geq2.\]
Using (\ref{eq:hhat eps R},\ref{eq:R 0 eps nu}) and the fact $z_\nu=z_0=0$, the equality (\ref{eq:en cons}) follows. This proves \ref{prop:soft}(\ref{prop:soft en}).

To see that {\bf condition \ref{prop:soft}(\ref{prop:soft Z})} holds, assume first that $R_0=1$. Then $Z=\emptyset$ by statement (\ref{prop:cpt mod:<}) of Proposition \ref{prop:cpt mod}. The same statement and Lemma \ref{le:conv e} imply that
\[E(w_0,B_{2\hhat R_0})=\lim_{\nu\to\infty}E(\wt w_\nu,B_{2\hhat R_0}).\]
It follows from the convergence $\hhat \eps_\nu R_\nu\to \hhat R_0>0$ and (\ref{eq:E R =},\ref{eq:de min}) that this limit is positive. This proves condition \ref{prop:soft}(\ref{prop:soft Z}) in the case $R_0=1$. 

{\bf Assume now that $R_0=\infty$ and $E^\infty(w_0)=0$.} Then condition \ref{prop:soft}(\ref{prop:soft Z}) is a consequence of the following two claims.
\begin{claim}\label{claim:Z B 1} The set $Z$ is not contained in the open ball $B_1$.
\end{claim}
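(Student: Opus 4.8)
The plan is to argue by contradiction: suppose $Z\subseteq B_1$. Then for every $R\geq1$ the set $\bar B_R\wo B_1=\{z\in\C:1\leq|z|\leq R\}$ is a compact subset of $\C\wo Z$, so the convergence supplied by Proposition \ref{prop:cpt mod}(\ref{prop:cpt mod:=}), transferred in condition \ref{prop:soft}(\ref{prop:soft conv}), holds there: $g_\nu^*\phi_\nu^*A_\nu\to A_0$ in $C^0$ and $g_\nu^{-1}(u_\nu\circ\phi_\nu)\to u_0$ in $C^1$ on $\bar B_R\wo B_1$. Since in the present case $E^\infty(w_0)=0$ and $e^\infty_{w_0}\geq0$, the density $e^\infty_{w_0}$ vanishes identically. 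Applying Lemma \ref{le:conv e} to the $\wt R_\nu$-vortices $g_\nu^*\wt w_\nu$ — whose energy densities agree with those of $\wt w_\nu$ by gauge invariance — together with the scaling law $e^{\wt R_\nu}_{\wt w_\nu}=\hhat\eps_\nu^2\,e^{R_\nu}_{w_\nu}\circ\phi_\nu$ that follows from (\ref{eq:e phi *}) (here $\eps_\nu=\hhat\eps_\nu$ and $\wt R_\nu=\hhat\eps_\nu R_\nu$, because $\hhat R_0=\infty$ in this case), one obtains, for each fixed $R\geq1$, the identity $E^{R_\nu}(w_\nu,\bar B_{\hhat\eps_\nu R}\wo B_{\hhat\eps_\nu})=E^{\wt R_\nu}(\wt w_\nu,\bar B_R\wo B_1)$ for all $\nu$, together with the limit $E^{\wt R_\nu}(\wt w_\nu,\bar B_R\wo B_1)\to E^\infty(w_0,\bar B_R\wo B_1)=0$ as $\nu\to\infty$.

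Next I would fix $R$ large and, for $\nu$ so large that $\hhat\eps_\nu<R^{-2}$ (so that the three sets below are pairwise disjoint with union $B_{R^{-1}}$), write
\[E^{R_\nu}(w_\nu,B_{R^{-1}})=E^{R_\nu}(w_\nu,B_{\hhat\eps_\nu})+E^{R_\nu}\big(w_\nu,\bar B_{\hhat\eps_\nu R}\wo B_{\hhat\eps_\nu}\big)+E^{R_\nu}\big(w_\nu,B_{R^{-1}}\wo\bar B_{\hhat\eps_\nu R}\big).\]
Letting $\nu\to\infty$: the left side tends to $E(R^{-1})$ by hypothesis \ref{prop:soft}(\ref{prop:soft E}), the first term on the right equals $m_0-\de$ by (\ref{eq:E R =}), and the second term tends to $0$ by the previous paragraph; hence the third term converges, with limit $E(R^{-1})-(m_0-\de)\geq0$. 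Since that third term is at most $E^{R_\nu}(w_\nu,B_{R^{-1}}\wo B_{R\hhat\eps_\nu})$, and the already established condition \ref{prop:soft}(\ref{prop:soft en}) (applied with $z_0=z_\nu=0$ and $\eps_\nu=\hhat\eps_\nu$) gives $\lim_{R\to\infty}\limsup_{\nu\to\infty}E^{R_\nu}(w_\nu,B_{R^{-1}}\wo B_{R\hhat\eps_\nu})=0$, it follows that $E(R^{-1})-(m_0-\de)\to0$, i.e. $E(R^{-1})\to m_0-\de$ as $R\to\infty$. But by the definition (\ref{eq:m 0 lim}) of $m_0$ one also has $E(R^{-1})\to m_0$. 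This forces $\de=0$, contradicting the choice $\de>0$ in (\ref{eq:de min}); hence $Z\not\subseteq B_1$.

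The routine parts of this are the change-of-variables bookkeeping for the rescaled energy $E^{R}$ under $\phi_\nu$, the invocation of gauge invariance of the energy density so that Lemma \ref{le:conv e} applies to the regauged sequence $g_\nu^*\wt w_\nu$, and the additivity of energy over the three disjoint regions. The one point that needs care — and the main obstacle — is the order of the iterated limits: one must first send $\nu\to\infty$ for each fixed $R$, using hypothesis \ref{prop:soft}(\ref{prop:soft E}), equality (\ref{eq:E R =}), and the ``clean convergence on $\bar B_R\wo B_1$'' step to identify the limit of each of the three pieces, and only afterwards let $R\to\infty$ so that condition \ref{prop:soft}(\ref{prop:soft en}) can be fed in; performing the limits in the other order would not close the argument.
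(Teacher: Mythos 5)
Your argument is correct, and it uses the same three ingredients as the paper's proof: the exact identity (\ref{eq:E R =}) on the inner ball $B_{\hhat\eps_\nu}$, the convergence on compact subsets of $\C\wo Z$ supplied by \ref{prop:cpt mod}(\ref{prop:cpt mod:=}) together with Lemma \ref{le:conv e} on the middle annulus $\bar B_{R\hhat\eps_\nu}\wo B_{\hhat\eps_\nu}$, and condition \ref{prop:soft}(\ref{prop:soft en}) on the outer annulus $B_{R^{-1}}\wo B_{R\hhat\eps_\nu}$. Where you diverge from the paper is in how the contradiction is extracted. The paper fixes a single $R$ from condition (\ref{prop:soft en}) so that $\limsup_\nu E^{R_\nu}(w_\nu,A(R\eps_\nu,R^{-1}))<\de$, then combines this with (\ref{eq:lim E R nu}) to get $\liminf_\nu E^{R_\nu}(w_\nu,A(\eps_\nu,\eps_\nu R))>0$, and contradicts $E^\infty(w_0)=0$ directly by sending $\nu\to\infty$ at that fixed $R$. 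You instead feed $E^\infty(w_0)=0$ in first (forcing the middle annulus energy to vanish), solve for the outer annulus energy, and then take the additional limit $R\to\infty$ so that condition (\ref{prop:soft en}) forces $\lim_R E(R^{-1})=m_0-\de$, contradicting $m_0=\lim_{\eps\to0}E(\eps)$ and the choice $\de>0$ in (\ref{eq:de min}). Both are valid; the paper's version dispenses with the second, iterated limit (it never needs $R\to\infty$), so it is slightly leaner, while yours makes the "energy conservation across the neck" bookkeeping fully explicit, which some readers may find more transparent. Your flagging of the order of limits as the delicate point is apt, and the order you chose is the one that works.
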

\begin{proof}[Proof of Claim \ref{claim:Z B 1}] By \ref{prop:soft}(\ref{prop:soft en}) (which we proved above) there exists $R>0$ so that
\begin{equation}
  \label{eq:limsup de}\limsup_{\nu\to\infty}E^{R_\nu}\big(w_\nu,A(R\eps_\nu,R^{-1})\big)<\de.
\end{equation}
(Here we used that $z_0=z_\nu=0$.) Since $R_0=\infty$, we have $\hhat \eps_\nu=\eps_\nu$. Hence it follows from (\ref{eq:E R =}) and (\ref{eq:E eps lim}) that 
\begin{equation}\label{eq:lim E R nu}\lim_{\nu\to\infty}E^{R_\nu}\big(w_\nu,A(\eps_\nu,R^{-1})\big)=E(R^{-1})-m_0+\de.
\end{equation}
By the definition (\ref{eq:m 0 lim}) of $m_0$, the right hand side is bounded below by $\de$. Therefore, combining (\ref{eq:lim E R nu}) with (\ref{eq:limsup de}), it follows that 
\begin{equation}\label{eq:liminf E A}\liminf_{\nu\to\infty}E^{R_\nu}(w_\nu,A(\eps_\nu,\eps_\nu R))>0.\end{equation}
Suppose by contradiction that $Z\sub B_1$. Then by \ref{prop:cpt mod}(\ref{prop:cpt mod:=}), the connection $g_\nu^*\wt A_\nu$ converges to $A_0$ in $C^0$ on $\bar A(1,R):=\BAR B_R\wo B_1$, and the map $g_\nu^{-1}\wt u_\nu$ converges to $u_0$ in $C^1$ on $\bar A(1,R)$. Hence Lemma \ref{le:conv e} implies that
\[E^\infty\big(w_0,A(1,R)\big)=\lim_{\nu\to\infty}E^{\wt R_\nu}\big(\wt w_\nu,A(1,R)\big).\]
Combining this with (\ref{eq:liminf E A}), we arrive at a contradiction to our assumption $E^\infty(w_0)=0$. This proves Claim \ref{claim:Z B 1}.
\end{proof}
\begin{claim}\label{claim:0 Z} The set $Z$ contains $0$.
\end{claim}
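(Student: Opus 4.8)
The plan is to argue by contradiction: assume $0\notin Z$ and contradict the standing hypothesis $E^\infty(w_0)=0$ of the present case. The idea is that the normalization (\ref{eq:e C 0}), together with the fact that the fixed energy quantum $m_0-\de$ sits inside the tiny ball $B_{\hhat\eps_\nu}$, pins a definite amount of energy \emph{density} at the origin after rescaling, and this lower bound persists in the limit whenever $0$ is not a bubbling point.

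First I would record the scaling behaviour of the rescaled energy density under $\phi_\nu(z)=\eps_\nu z$: from (\ref{eq:e R W 1 2}) together with $\wt R_\nu=\eps_\nu R_\nu$ one computes directly that $e^{\wt R_\nu}_{\wt w_\nu}(z)=\eps_\nu^2\,e^{R_\nu}_{w_\nu}(\eps_\nu z)$, so in particular $e^{\wt R_\nu}_{\wt w_\nu}(0)=\eps_\nu^2\,e^{R_\nu}_{w_\nu}(0)$. Then, using inequality (\ref{eq:E eps nu C}) (established in the proof of Claim \ref{claim:inf nu hhat eps nu}), the defining equality (\ref{eq:E R =}), and the fact that $\eps_\nu=\hhat\eps_\nu$ in the case $R_0=\infty$ by (\ref{eq:R 0 eps nu}), I obtain
$$e^{\wt R_\nu}_{\wt w_\nu}(0)=\hhat\eps_\nu^2\,e^{R_\nu}_{w_\nu}(0)\ \ge\ \frac1\pi\,E^{R_\nu}(w_\nu,B_{\hhat\eps_\nu})\ =\ \frac{m_0-\de}{\pi}\ >\ 0,$$
the last inequality by the choice (\ref{eq:de min}) of $\de$.

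Next I would suppose $0\notin Z$. Since $\C\wo Z$ is a surface without boundary containing $0$, on whose compact subsets $g_\nu^*\wt A_\nu\to A_0$ in $C^0$ and $g_\nu^{-1}\wt u_\nu\to u_0$ in $C^1$ by \ref{prop:cpt mod}(\ref{prop:cpt mod:=}), Lemma \ref{le:conv e} applied to the $\wt R_\nu$-vortices $g_\nu^*\wt w_\nu$ gives $e^{\wt R_\nu}_{g_\nu^*\wt w_\nu}\to e^\infty_{w_0}$ in $C^0$ on compact subsets of $\C\wo Z$; by gauge invariance of the energy density this means $e^{\wt R_\nu}_{\wt w_\nu}(0)\to e^\infty_{w_0}(0)$. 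With the lower bound above, $e^\infty_{w_0}(0)\ge(m_0-\de)/\pi>0$. As $w_0$ is a smooth $\infty$-vortex over $\C\wo Z$, the function $e^\infty_{w_0}=\frac12|d_{A_0}u_0|^2$ is continuous, hence positive on a neighbourhood of $0$, so $E^\infty(w_0)=\int_{\C\wo Z}e^\infty_{w_0}\,\om_0>0$, contradicting $E^\infty(w_0)=0$. Therefore $0\in Z$, and combined with Claim \ref{claim:Z B 1} this yields $|Z|\ge2$, which is exactly condition \ref{prop:soft}(\ref{prop:soft Z}) in the remaining case.

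I do not expect a genuine obstacle: the substantive point is simply to notice that (\ref{eq:e C 0}) and the energy-quantum equality (\ref{eq:E R =}) together force the rescaled density at the origin to stay bounded below, which was in effect already extracted in the proof of Claim \ref{claim:inf nu hhat eps nu}; after that, the argument is a routine application of Lemma \ref{le:conv e} and continuity of $e^\infty_{w_0}$. The only places to be slightly careful are that Lemma \ref{le:conv e} must be invoked on the boundaryless surface $\C\wo Z$ rather than on a closed ball, and that, as throughout the proof, all statements are understood along the subsequence fixed when $(Z,w_0,g_\nu)$ was constructed.
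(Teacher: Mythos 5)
Your proof is correct, and it takes a genuinely different, somewhat more elementary route than the paper's. The paper's argument starts from Claim~\ref{claim:Z B 1} (which guarantees a bubbling point $z\in Z\wo B_1$), and uses the normalization (\ref{eq:e C 0}) together with condition (\ref{prop:soft lim nu E eps}) to conclude that $e^{\wt R_\nu}_{\wt w_\nu}(0)\ge \|e^{\wt R_\nu}_{\wt w_\nu}\|_{C^0(\bar B_\eps(z))}\gtrsim \Emin/(\pi\eps^2)$ for every small $\eps$, hence that the rescaled density at the origin \emph{diverges}; this directly contradicts the finite limit $e^\infty_{w_0}(0)$ supplied by Lemma~\ref{le:conv e} when $0\notin Z$. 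Your argument instead combines (\ref{eq:E eps nu C}), the energy-quantum identity (\ref{eq:E R =}), the scaling identity $e^{\wt R_\nu}_{\wt w_\nu}(0)=\hhat\eps_\nu^2\,e^{R_\nu}_{w_\nu}(0)$ and the choice (\ref{eq:de min}) to obtain the uniform \emph{positive lower bound} $e^{\wt R_\nu}_{\wt w_\nu}(0)\ge (m_0-\de)/\pi>0$; then Lemma~\ref{le:conv e} (if $0\notin Z$) gives $e^\infty_{w_0}(0)>0$, and continuity of $e^\infty_{w_0}$ on $\C\wo Z$ (using smoothness of $w_0$) yields $E^\infty(w_0)>0$, contradicting the standing hypothesis $E^\infty(w_0)=0$. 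Both paths rely on (\ref{eq:e C 0}) and Lemma~\ref{le:conv e}, but yours decouples Claim~\ref{claim:0 Z} from Claim~\ref{claim:Z B 1} and from condition (\ref{prop:soft lim nu E eps}), at the modest cost of one extra step (passing from a positive density at a point to positive total energy); the paper's version is shorter once Claim~\ref{claim:Z B 1} is available and makes the blow-up of the density explicit.
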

\begin{pf}[Proof of Claim \ref{claim:0 Z}] By Claim \ref{claim:Z B 1} the set $Z\wo B_1$ is nonempty. We choose a point $z\in Z\wo B_1$ and a number $\eps_0>0$ so small that $B_{\eps_0}(z)\cap Z=\{z\}$. We fix $0<\eps<\eps_0$. Since $\eps_\nu\to0$ (as $\nu\to\infty$), (\ref{eq:e C 0}) implies that
\[e^{\wt R_\nu}_{\wt w_\nu}(0)=\Vert e^{\wt R_\nu}_{\wt w_\nu}\Vert_{C^0(\bar B_\eps(z))},\]
for $\nu$ large enough. Combining this with condition \ref{prop:soft}(\ref{prop:soft lim nu E eps}) (which we proved above), it follows that
\[\liminf_{\nu\to\infty}e^{\wt R_\nu}_{\wt w_\nu}(0)\geq\frac{\Emin}{\pi\eps^2}.\]
Since $\eps\in(0,\eps_0)$ is arbitrary, it follows that 
\begin{equation}\label{eq:e wt R nu wt w nu 0}e^{\wt R_\nu}_{\wt w_\nu}(0)\to\infty,\quad\textrm{as }\nu\to\infty.
\end{equation}
If $0$ did not belong to $Z$, then by \ref{prop:cpt mod}(\ref{prop:cpt mod:=}) and Lemma \ref{le:conv e} the energy density $e^{\wt R_\nu}_{\wt w_\nu}(0)$ would converge to $e^\infty_{w_0}(0)$, a contradiction to (\ref{eq:e wt R nu wt w nu 0}). This proves Claim \ref{claim:0 Z}, and completes the proof of \ref{prop:soft}(\ref{prop:soft Z}) and therefore of Proposition \ref{prop:soft}.
\end{pf}
\end{proof}
\begin{rmk}\label{rmk:soft proof} \rm Assume that $R_0,Z,w_0$ are constructed as in the proof of condition (\ref{prop:soft Z}) of Proposition \ref{prop:soft}, and that $R_0=\infty$ and $E^\infty(w_0)=0$. Then $Z\sub\bar B_1$ (and hence $Z\cap S^1\neq\emptyset$ by Claim \ref{claim:Z B 1}). This follows from the inequalities
\[\lim_{\nu\to\infty}E^{\wt R_\nu}(\wt w_\nu,A(1,R))\leq\de<\Emin,\quad\forall R>1.\]
Here the first inequality is a consequence of condition (\ref{eq:E R =}). $\Box$
\end{rmk}
\section{Proof of the bubbling result}\label{sec:proof:thm:bubb}
Based on the results of the previous sections, we are now ready to prove the first main result of this memoir, Theorem \ref{thm:bubb}. The proof is an adaption of the proof of \cite[Theorem 5.3.1]{MS04} to the present setting. The strategy is the following: Consider first the case $k=0$, i.e., the only marked point is $z_0^\nu=\infty$. We rescale the sequence $W_\nu$ so rapidly that all the energy is concentrated around the origin in $\C$. Then we ``zoom back in'' in a soft way, to capture the bubbles (spheres in $\BAR M$ and vortices over $\C$) in an inductive way. (See Claim \ref{claim:tree} below.) 

Next we show that at each stage of this construction, the total energy of the components of the tree plus the energy loss at the unresolved bubbling points equals the limit of the energies $E(W^\nu)$. Here we call a bubbling point ``unresolved'' if we have not yet constructed any sphere or vortex which is attached to this point. (See Claim \ref{claim:f E}.) Furthermore, we prove that the number of vertices of the tree is uniformly bounded above. (See inequality (\ref{eq:N E}).) This implies that the inductive construction terminates at some point. 

We also show that the vortices over $\C$ and the bubbles in $\BAR M$ have the required properties and that the data fits together to a stable map, which is the limit of a subsequence of $W^\nu$. (See Claims \ref{claim:MMM bar u i} and \ref{claim:st conv}.)

For $k\geq1$ we then prove the statement of the theorem inductively, using the statement for $k=0$. At each induction step we need to handle one additional marked point in the sequence of vortex classes and marked points. In the limit there are three possibilities for the location of this point:\\

\noi{\bf (I)} It does not coincide with any special point.

\noi{\bf (II)} It coincides with the marked point $z_i$ (lying on the vertex $\al_i$), for some $i$.

\noi{\bf (III)} It lies between two already constructed bubbles.\\

In case (I) we just include the new marked point into the bubble tree. In case (II), we introduce either 
\begin{itemize}
\item a ``ghost bubble'', which carries the two marked points and is connected to $\al_i$, or 
\item a ``ghost vortex'', which is connected to $\al_i$, and a ``ghost bubble of type 0'', which is connected to the ``ghost vortex'' and carries the two marked points.
\end{itemize}
The second option will only occur if $\al_i\in T_\infty$. Which of the two options we choose depends on the speed at which the two marked points come together. In case (III) we introduce a ``ghost bubble'' between the two bubbles, which carries the new marked point. 
\begin{proof}[Proof of Theorem \ref{thm:bubb} (p.~\pageref{thm:bubb})]\label{thm:bubb proof}\setcounter{claim}{0}We consider first the {\bf case $k=0$.} Let $W_\nu$ be a sequence of vortex classes as in the hypothesis. For each $\nu\in\N$ we choose a representative $w_\nu:=(P_\nu,A_\nu,u_\nu)$ of $W_\nu$, such that $P_\nu=\C\x G$. Passing to some subsequence we may assume that $E(w_\nu)$ converges to some constant $E$. The hypothesis $E(W_\nu)>0$ (for every $\nu$) implies that $E\geq\Emin$. We choose a sequence $R_\nu\geq1$ such that 
\begin{equation}
  \label{eq:E B E}E(W_\nu,B_{R_\nu})\to E.  
\end{equation}
We define 
\begin{eqnarray*}&R_0^\nu:=\nu R_\nu,\quad \phi_\nu:\C\to\C,\,\phi_\nu(z):=R_0^\nu z,\quad w^\nu_0:=\phi_\nu^*w_\nu,&\\
&j_1:=0,\quad z_1:=0,\quad Z_0:=\{0\},\quad z_0^\nu:=0.&
\end{eqnarray*}
The next claim provides an inductive construction of the bubble tree. (Some explanations are given below. See also Figure \ref{fig:claim}.)
\begin{figure}
  \centering
\leavevmode\epsfbox{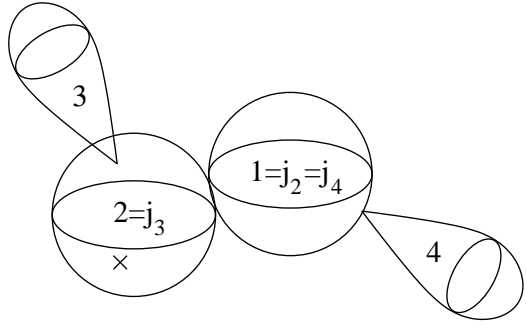}
\caption{This is a ``partial stable map'' as in Claim \ref{claim:tree}. It is a possible step in the construction of the stable map of Figure \ref{fig:stable map}. The cross denotes a bubbling point that has not yet been resolved. When adding marked points, a ``ghost sphere'' in $\BAR M$ will be introduced between the components 1 and 4.} 
\label{fig:claim} 
\end{figure}
\begin{claim}\label{claim:tree} For every integer $\ell\in\N$, passing to some subsequence, there exist an integer $N:=N(\ell)\in\N$ and tuples
\[(R_i,Z_i,w_i)_{i\in\{1,\ldots,N\}},\quad(j_i,z_i)_{i\in\{2,\ldots,N\}},\quad(R_i^\nu,z_i^\nu)_{i\in\{1,\ldots,N\},\,\nu\in\N},\]
where $R_i\in\{1,\infty\}$, $Z_i\sub \C$ is a finite subset, $w_i=(A_i,u_i)\in\WWW_{\C\wo Z_i}$ is an $R_i$-vortex, $j_i\in \{1,\ldots,i-1\}$, $z_i\in\C$, $R_i^\nu>0$, and $z_i^\nu\in\C$, such that the following conditions hold.
\begin{enui}
\item\label{claim:tree Z dist} For every $i=2,\ldots,N$ we have $z_i\in Z_{j_i}$. Moreover, if $i,i'\in \{2,\ldots,N\}$ are such that $i\neq i'$ and $j_i=j_{i'}$ then $z_i\neq z_{i'}$.
\item\label{claim:tree stab} Let $i=1,\ldots,N$. If $R_i=1$ then $Z_i=\emptyset$ and $E(w_i)>0$. If $R_i=\infty$ and $E^\infty(w_i)=0$ then $|Z_i|\geq2$.
\item\label{claim:tree R} Fix $i=1,\ldots,N$. If $R_i=1$ then $R_i^\nu=1$ for every $\nu$, and if $R_i=\infty$ then $R_i^\nu\to\infty$. Furthermore, 
\begin{equation}
\label{eq:R i nu}\frac{R_i^\nu}{R_{j_i}^\nu}\to 0,\qquad \frac{z_i^\nu-z_{j_i}^\nu}{R_{j_i}^\nu}\to z_i.
\end{equation}

In the following we set $\phi_i^\nu(z):=R_i^\nu z +z_i^\nu$, for $i=0,\ldots,N$ and $\nu\in\N$. 
\item\label{claim:tree conv} For every $i=1,\ldots,N$ there exist gauge transformations $g_i^\nu\in W^{2,p}_\loc(\C\wo Z_i,G)$ such that the following holds. If $R_i=1$ then $(g_i^\nu)^*(\phi_i^\nu)^*w_\nu$ converges to $w_i$ in $C^\infty$ on every compact subset of $\C$. Furthermore, if $R_i=\infty$ then on every compact subset of $\C\wo Z_i$ the sequence $(g_i^\nu)^*(\phi_i^\nu)^*A_\nu$ converges to $A_i$ in $C^0$, and the sequence $(g_i^\nu)^*(\phi_i^\nu)^*u_\nu$ converges to $u_i$ in $C^1$. 
\item\label{claim:tree lim nu E eps} Let $i=1,\ldots,N$, $z\in Z_i$ and $\eps_0>0$ be such that $B_{\eps_0}(z)\cap Z_i=\{z\}$. Then for every $0<\eps<\eps_0$ the limit 
\[E_z(\eps):=\lim_{\nu\to\infty} E^{R_i^\nu}\big((\phi_i^\nu)^*w_\nu,B_\eps(z)\big)\] 
exists, and $\Emin\leq E_z(\eps)<\infty$. Furthermore, the function
\[(0,\eps_0)\ni\eps\mapsto E_z(\eps)\in[\Emin,\infty)\]
is continuous.
\item\label{claim:tree B B}For every $i=1,\ldots,N$, we have
\[\lim_{R\to\infty}\limsup_{\nu\to\infty} E\big(w_\nu,B_{R_{j_i}^\nu/R}(z_{j_i}^\nu+R_{j_i}^\nu z_i)\wo B_{RR_i^\nu}(z_i^\nu)\big)=0.\]
\item\label{claim:tree compl} If $\ell>N$ then for every $j=1,\ldots,N$ we have 
\begin{equation}\label{eq:Z j}Z_j=\big\{z_i\,|\,j<i\leq N,j_i=j\big\}.
\end{equation}
\end{enui}
\end{claim}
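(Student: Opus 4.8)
The plan is to prove Claim \ref{claim:tree} by induction on $\ell$, following the scheme of \cite[Theorem 5.3.1]{MS04} but substituting the vortex versions of the analytic inputs established above. The base case $\ell=1$ is handled directly: apply Proposition \ref{prop:cpt mod} (compactness modulo bubbling and gauge) to the rescaled sequence $w_0^\nu = \phi_\nu^* w_\nu$ with rescalings $R_0^\nu = \nu R_\nu \to \infty$, observing that the energy hypothesis $\sup_\nu E^{R_0^\nu}(w_0^\nu, B_{r_\nu}) < \infty$ follows from (\ref{eq:E B E}) and the transformation law for the rescaled energy. This produces $R_1 \in \{1,\infty\}$ (one rules out $R_1 = 0$ because asphericity forbids a nonconstant $J$-holomorphic sphere in $M$, exactly as in Proposition \ref{prop:cpt mod}), a finite bubbling set $Z_1$, an $R_1$-vortex $w_1$, and gauge transformations realizing the convergence; conditions (\ref{claim:tree conv})–(\ref{claim:tree B B}) for $i=1$ are then read off from the conclusions of Proposition \ref{prop:cpt mod} together with Lemma \ref{le:conv e}, and (\ref{claim:tree stab}) for $i=1$ from the $R_0=\infty$ case of the hard-rescaling analysis (positive energy bubbles off, so either $w_1$ is nontrivial or $|Z_1|\geq 2$). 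One also needs to check that if $R_1 = \infty$ then no energy escapes to infinity in $\C$: this is the content of condition (\ref{claim:tree B B}) and follows from the soft-rescaling conclusion \ref{prop:soft}(\ref{prop:soft en}) or directly from Proposition \ref{prop:en conc}.

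For the inductive step, suppose the tuples have been constructed for $\ell$ with $N = N(\ell)$ vertices. If $\ell \geq N+1$ and already $Z_j = \{z_i \mid j < i \leq N,\ j_i = j\}$ for all $j$ (condition (\ref{claim:tree compl})), there is nothing to do — set $N(\ell+1) := N$. Otherwise there is some vertex $j$ and some $z_* \in Z_j$ not yet resolved, i.e.\ not of the form $z_i$ with $j_i = j$. I would apply Proposition \ref{prop:soft} (soft rescaling) to the rescaled sequence $(\phi_j^\nu)^* w_\nu$ at the point $z_* \in \C$: its hypotheses (\ref{prop:soft K}) and (\ref{prop:soft E}) are exactly guaranteed by the already-established conditions (\ref{claim:tree conv}) and (\ref{claim:tree lim nu E eps}) for index $j$ (compactness of the image of the rescaled vortices in a fixed compact set, and existence/continuity of the limit energy function with lower bound $\Emin$). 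Proposition \ref{prop:soft} then yields $R_{N+1} \in \{1,\infty\}$, a finite set $Z_{N+1}$, an $R_{N+1}$-vortex $w_{N+1}$, rescaling parameters $\eps_\nu, z_\nu$, and gauge transformations; I set $j_{N+1} := j$, $z_{N+1} := z_*$, $R_{N+1}^\nu := R_j^\nu \eps_\nu$ (so that $\phi_{N+1}^\nu = \phi_j^\nu \circ (\text{soft rescaling})$, giving the second relation in (\ref{eq:R i nu}) from $\phi_\nu(\widetilde z) = \eps_\nu \widetilde z + z_\nu$ with $z_\nu \to z_*$), and $z_{N+1}^\nu := z_j^\nu + R_j^\nu z_\nu$. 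Conditions (\ref{claim:tree Z dist})–(\ref{claim:tree B B}) for the new index $N+1$ translate directly from \ref{prop:soft}(\ref{prop:soft Z}), (\ref{prop:soft eps}), (\ref{prop:soft conv}), (\ref{prop:soft lim nu E eps}), (\ref{prop:soft en}), after composing rescalings and using the chain-rule behavior of energy densities under the holomorphic reparametrizations $\phi_i^\nu$; in particular (\ref{claim:tree B B}) is precisely (\ref{eq:en cons}) transported to the $j$-th scale. Finally I pass to the diagonal subsequence so that all the limits in conditions (\ref{claim:tree lim nu E eps}) and (\ref{claim:tree B B}) persist simultaneously.

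The remaining point is termination, i.e.\ that condition (\ref{claim:tree compl}) is actually attainable — equivalently, that $N(\ell)$ stabilizes as $\ell \to \infty$. This is where the quantization of energy enters: each newly created vertex $i$ with $R_i = \infty$, $E^\infty(w_i) = 0$ carries $|Z_i| \geq 2$ bubbling points, each of which by (\ref{claim:tree lim nu E eps}) absorbs at least $\Emin > 0$ of energy in the limit, while each vertex with positive energy component carries at least $\Emin$ directly; and — crucially — condition (\ref{claim:tree B B}) guarantees that no energy is lost in the ``necks'' between a vertex and its children, so these energy contributions are genuinely disjoint and additive. Summing over the tree and comparing with the fixed total $E = \lim_\nu E(W_\nu) < \infty$ gives a uniform bound $N(\ell) \leq 2E/\Emin$ on the number of vertices, independent of $\ell$. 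Hence the construction stabilizes, and for $\ell$ large enough (\ref{claim:tree compl}) holds. I expect the \textbf{main obstacle} to be the bookkeeping in the energy-conservation argument: one must verify that the ``unresolved bubbling point'' energies and the ``resolved component'' energies genuinely partition the total energy with no overlap and no leakage, which requires carefully chaining the neck-energy estimate (\ref{claim:tree B B}) through all scales $R_{j_i}^\nu$ simultaneously — the type of estimate that is conceptually clear from \cite[Chapter 5]{MS04} but delicate to make precise in the gauge-theoretic setting because of the regauging $g_i^\nu$ at each vertex; one uses that $e_{w_\nu}$ is gauge-invariant (see (\ref{eq:e W})) to sidestep this, but the composition of rescalings still demands care. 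The precise form of inequality $N(\ell) \leq 2E/\Emin$ and the additivity (\ref{eq:E V bar T}) will be extracted in the subsequent Claims \ref{claim:f E} and the bound (\ref{eq:N E}).
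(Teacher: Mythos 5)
Your inductive step and the termination sketch match the paper's strategy exactly: Proposition \ref{prop:soft} (Soft rescaling) at each unresolved bubbling point, then Lemma \ref{le:weight} together with the energy bookkeeping of Claim \ref{claim:f E} to obtain a uniform bound on $N(\ell)$. The gap is in the base case $\ell=1$.

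You apply Proposition \ref{prop:cpt mod} directly to $w_0^\nu = \phi_\nu^* w_\nu$ with rescaling factor $R_0^\nu = \nu R_\nu \to \infty$. That proposition produces convergence (after regauging) of the \emph{given} sequence of rescaled vortices, without introducing any further rescaling: the output scale $R_0$ is simply $\lim R_0^\nu = \infty$, and no parameters $\eps_\nu, z_\nu$ are generated. Since $R_0^\nu = \nu R_\nu$ was chosen precisely so that the entire energy $E$ concentrates on arbitrarily small balls around the origin, the bubbling set produced by Proposition \ref{prop:cpt mod} is $\{0\}$ and the limit carries zero energy. You would then have $R_1 = \infty$, $E^\infty(w_1) = 0$, and $|Z_1| = 1$, which violates the stability condition (\ref{claim:tree stab}). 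Moreover, without the auxiliary scale $\eps_\nu$ and center $z_\nu$ you have no way to define $R_1^\nu$ and $z_1^\nu$ satisfying (\ref{eq:R i nu}), and condition (\ref{claim:tree B B}) for $i=1$ becomes vacuous — the whole point of that neck estimate is that $R_1^\nu / R_0^\nu \to 0$.

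Your sentence ``This produces $R_1 \in \{1,\infty\}$ (one rules out $R_1 = 0$ because asphericity\dots)'' describes the conclusion of Proposition \ref{prop:soft}, not of Proposition \ref{prop:cpt mod}, and your appeal to the ``hard-rescaling analysis'' for condition (\ref{claim:tree stab}) conflates Proposition \ref{prop:quant en loss} with Proposition \ref{prop:soft}. Hard rescaling (Hofer's lemma, Proposition \ref{prop:quant en loss}) locates bubbling points and guarantees energy loss $\geq \Emin$, but it does not produce the ``zoom back in'' to the scale of the first nontrivial bubble, and does not give the dichotomy $E(w_0)>0$ or $|Z|\geq 2$. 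That dichotomy comes from \ref{prop:soft}(\ref{prop:soft Z}), which in turn rests on the specific calibration of $\hhat\eps_\nu$ in (\ref{eq:E R =}). To repair the base case, replace the invocation of Proposition \ref{prop:cpt mod} with Proposition \ref{prop:soft} applied to $(R_0^\nu, w_0^\nu)$ at $z_0=0$, $r=1$; its hypothesis (\ref{prop:soft K}) follows from Proposition \ref{prop:bounded} via equivariant convexity, and hypothesis (\ref{prop:soft E}) follows because $\lim_\nu E^{R_0^\nu}(w_0^\nu, B_\eps) = E \geq \Emin$ for every $\eps > 0$, so the energy function $E(\eps)$ is constant.
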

To understand this claim, note that the collection $(j_i)_{i\in\{2,\ldots,N\}}$ describes a tree with vertices the numbers $1,\ldots,N$ and unordered edges $\big\{(i,j_i),(j_i,i)\big\}$. Attached to the vertices of this tree are vortex classes and $\infty$-vortex classes. (The latter will give rise to holomorphic spheres in $\BAR M$.) Each pair $(R_i^\nu,z_i^\nu)$ defines a rescaling $\phi_i^\nu$, which is used to obtain the $i$-th limit vortex or $\infty$-vortex. (See condition (\ref{claim:tree conv}).)

The point $z_i$ is the nodal point on the $j_i$-th vertex, at which the $i$-th vertex is attached. The corresponding nodal point on the $i$-th vertex is $\infty$. The set $Z_i$ consists of the nodal points except $\infty$ (if $i\geq2$) on the $i$-th vertex together with the bubbling points that have not yet been resolved. 

Condition (\ref{claim:tree Z dist}) implies that the nodal points at a given vertex are distinct. Condition (\ref{claim:tree stab}) guarantees that once all bubbling points have been resolved, the $i$-th component will be stable.%
\footnote{Note that in the case $i\geq2$ there is another nodal point at $\infty$, and for $i=1$ there will be a marked point at $\infty$, which is the limit of the sequence $z_0^\nu$.}

Condition (\ref{claim:tree R}) implies that the rescalings $\phi_i^\nu$ ``zoom out'' less than the rescalings $\phi_{j_i}^\nu$. A consequence of condition (\ref{claim:tree lim nu E eps}) is that at every nodal or unresolved bubbling point at least the energy $\Emin$ concentrates in the limit. Condition (\ref{claim:tree B B}) means that no energy is lost in the limit between each pair of adjacent bubbles. Finally, condition (\ref{claim:tree compl}) means that in the case $\ell>N$ all bubbling points have been resolved. 
\begin{proof}[Proof of Claim \ref{claim:tree}]We show that the statement holds for $\ell:=1$. We check the conditions of Proposition \ref{prop:soft} (Soft rescaling) with $z_0:=0$, $r:=1$ and $R_\nu$, $w_\nu$ replaced by $R_0^\nu$, $w_0^\nu$. Condition \ref{prop:soft}(\ref{prop:soft K}) follows from Proposition \ref{prop:bounded} in Appendix \ref{sec:vort}, using the hypothesis that $M$ is equivariantly convex at $\infty$. Condition \ref{prop:soft}(\ref{prop:soft E}) follows from the facts
\[\lim_{\nu\to\infty} E^{R_0^\nu}(w_0^\nu,B_\eps)=E,\,\forall\eps>0,\quad E\geq\Emin.\]
The first condition is a consequence of the facts $R_0^\nu=\nu R_\nu$, $E(w_\nu)\to E$, and (\ref{eq:E B E}).

Thus by Proposition \ref{prop:soft}, there exist $R_0\in \{1,\infty\}$, a finite subset $Z\sub\C$, and an $R_0$-vortex $w_0\in\WWW^p_{\C\wo Z_1}$, and passing to some subsequence, there exist sequences $\eps_\nu>0$, $z_\nu$, and $g_\nu$, such that the conclusions of Proposition \ref{prop:soft} with $R_\nu,w_\nu$ replaced by $R_0^\nu,w_0^\nu$ hold. We define
\[N:=N(1):=1,\quad R_1:=R_0,\quad Z_1:=Z,\quad w_1:=w_0,\quad R_1^\nu:=\eps_\nu R_0^\nu,\quad z_1^\nu:=R_0^\nu z_\nu.\] 

We check {\bf conditions (\ref{claim:tree Z dist})-(\ref{claim:tree compl})} of Claim \ref{claim:tree} with $\ell=1$: Conditions (\ref{claim:tree Z dist},\ref{claim:tree compl}) are void. Furthermore, conditions (\ref{claim:tree stab})-(\ref{claim:tree B B}) follow from \ref{prop:soft}(\ref{prop:soft Z})-(\ref{prop:soft en}). This proves the statement of the Claim for $\ell=1$. 

Let $\ell\in\N$ and assume, by induction, that we have already proved the statement of Claim \ref{claim:tree} for $\ell$. We show that it holds for $\ell+1$. By assumption there exists a number $N:=N(\ell)$ and there exist collections
\[(R_i,Z_i,w_i)_{i\in\{1,\ldots,N\}},\quad(j_i,z_i)_{i\in\{2,\ldots,N\}},\quad (R_i^\nu,z_i^\nu)_{i\in\{1,\ldots,N\},\,\nu\in\N},\]
such that conditions (\ref{claim:tree Z dist})-(\ref{claim:tree compl}) hold. If
\[Z_j=\big\{z_i\,|\,j< i\leq N,\,j_i=j\big\},\quad\forall j=1,\ldots,N\]
then conditions (\ref{claim:tree Z dist})-(\ref{claim:tree compl}) hold with $N(\ell+1):=N$, and we are done. Hence assume that there exists a $j_0\in\{1,\ldots,N\}$ such that 
\begin{equation}
  \label{eq:Z j 0}Z_{j_0}\neq\big\{z_i\,|\,j_0<i\leq N,\,j_i=j_0\big\}.  
\end{equation}
We set $N(\ell+1):=N+1$ and choose an element 
\begin{equation}\label{eq:z N 1}z_{N+1}\in Z_{j_0}\wo\big\{z_i\,|\,j<i\leq N,\,j_i=j_0\big\}.
\end{equation}
We fix a number $r>0$ so small that $B_r(z_{N+1})\cap Z_{j_0}=\{z_{N+1}\}$. We apply Proposition \ref{prop:soft} with $z_0:=z_{N+1}$ and $R_\nu$, $w_\nu$ replaced by $R_{j_0}^\nu$, $(\phi_{j_0}^\nu)^*w_\nu.$ Condition \ref{prop:soft}(\ref{prop:soft K}) holds by hypothesis. Furthermore, by condition (\ref{claim:tree lim nu E eps}) for $\ell$, condition \ref{prop:soft}(\ref{prop:soft E}) is satisfied. Hence passing to some subsequence, there exist $R_0\in\{1,\infty\}$, a finite subset $Z\sub \C$, an $R_0$-vortex $w_0\in\WWW^p_{\C\wo Z}$, and sequences $\eps_\nu>0$, $z_\nu$, such that the conclusion of Proposition \ref{prop:soft} holds. We define
\begin{eqnarray*}&R_{N+1}:=R_0,\quad Z_{N+1}:=Z,\quad w_{N+1}:=w_0,\quad j_{N+1}:=j_0,&\\
&R_{N+1}^\nu:=\eps_\nu R_{j_0}^\nu,\quad z_{N+1}^\nu:=R_{j_0}^\nu z_\nu+z_{j_0}^\nu.&
\end{eqnarray*}

We check {\bf conditions (\ref{claim:tree Z dist})-(\ref{claim:tree compl})} of Claim \ref{claim:tree} with $\ell$ replaced by $\ell+1$, i.e., $N$ replaced by $N+1$. Condition (\ref{claim:tree Z dist}) follows from the induction hypothesis and (\ref{eq:z N 1}). Conditions (\ref{claim:tree stab})-(\ref{claim:tree B B}) follow from \ref{prop:soft}(\ref{prop:soft Z})-(\ref{prop:soft en}).

We show that (\ref{claim:tree compl}) holds with $N$ replaced by $N+1$: By the induction hypothesis, it holds for $N$. Hence (\ref{eq:Z j 0}) implies that $N+1\geq\ell+1$. So there is nothing to check. This completes the induction and the proof of Claim \ref{claim:tree}. \end{proof}

Let $\ell\in\N$ be an integer and $N:=N(\ell)$, $(R_i,Z_i,w_i)$, $(j_i,z_i)$, $(R_i^\nu,z_i^\nu)$ be as in Claim \ref{claim:tree}. Recall that $Z_0=\{0\}$ and $z_0^\nu:=0$. We fix $i=0,\ldots,N$. We define $\phi_i^\nu(z):=R_i^\nu z+z_i^\nu$, for every measurable subset $X\sub\C$ we denote
\[E_i(X):=E^{R_i}(w_i,X),\quad E_i:=E_i(\C\wo Z_i),\quad E_i^\nu(X):=E^{R_i^\nu}((\phi_i^\nu)^*w_\nu,X).\]
Furthermore, for $z\in Z_i$ we define
\begin{equation}
  \label{eq:m i z}m_i(z):=\lim_{\eps\to 0}\lim_{\nu\to\infty}E_i^\nu(B_\eps(z)).
\end{equation}
For $i=0$ it follows from (\ref{eq:E B E}) and $R_0^\nu=\nu R_\nu$ that the limit $m_0(0)$ exists and equals $E$. For $i=1,\ldots,N$ it follows from condition (\ref{claim:tree lim nu E eps}) that the limit (\ref{eq:m i z}) exists and that $m_i(z)\geq \Emin$. 
For $j,k=0,\ldots, N$ we define 
\[Z_{j,k}:=Z_j\wo\{z_i\,|\,j<i\leq k,\,j_i=j\}\]
(This is the set of points on the $j$-th sphere that have not been resolved after the construction of the $k$-th bubble.) We define the function 
\begin{equation}\label{eq:f i}
f:\{1,\ldots,N\}\to [0,\infty),\quad f(i):=E_i+\sum_{z\in Z_{i,N}}m_i(z).  
\end{equation}
\begin{claim}\label{claim:f E} 
\begin{equation}\nn\sum_{i=1}^Nf(i)=E.
\end{equation}
\end{claim}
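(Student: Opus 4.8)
The plan is to establish, for each $i=1,\dots,N$, the pointwise energy-splitting identity
\[m_{j_i}(z_i)=E_i+\sum_{z\in Z_i}m_i(z),\]
with the convention $m_0(0):=E$, and then to sum over $i$ and telescope. This convention is consistent: since $R_0^\nu=\nu R_\nu$, the equality $E^{R_0^\nu}(w_0^\nu,B_\eps)=E(w_\nu,B_{\eps\nu R_\nu})$ together with (\ref{eq:E B E}), the bound $E(w_\nu,B_{R_\nu})\le E(w_\nu,B_{\eps\nu R_\nu})\le E(w_\nu)$ for $\nu$ large, and $E(w_\nu)\to E$ force $\lim_\nu E^{R_0^\nu}(w_0^\nu,B_\eps)=E$ for every $\eps>0$, hence $m_0(0)=E$.

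Granting the identity, I would finish as follows. By \ref{claim:tree}(\ref{claim:tree Z dist}) the points $z_k$ with $i<k\le N$ and $j_k=i$ are distinct and all lie in $Z_i=Z_{j_k}$, so $Z_i=Z_{i,N}\disj\{z_k\,|\,i<k\le N,\,j_k=i\}$ and $m_i(z_k)=m_{j_k}(z_k)$; thus $f(i)=E_i+\sum_{z\in Z_{i,N}}m_i(z)=m_{j_i}(z_i)-\sum_{k>i,\,j_k=i}m_{j_k}(z_k)$. Summing over $i=1,\dots,N$, each $k\in\{2,\dots,N\}$ occurs exactly once in the double sum $\sum_{i=1}^N\sum_{k>i,\,j_k=i}m_{j_k}(z_k)=\sum_{k=2}^N m_{j_k}(z_k)$ (the term $k=1$ drops out since $j_1=0$), while $\sum_{i=1}^N m_{j_i}(z_i)=m_0(0)+\sum_{i=2}^N m_{j_i}(z_i)=E+\sum_{i=2}^N m_{j_i}(z_i)$; the two sums cancel, leaving $\sum_{i=1}^N f(i)=E$.

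It remains to prove the identity. Fix $i$ and set $v_\nu:=(\phi_{j_i}^\nu)^*w_\nu$, an $R_{j_i}^\nu$-vortex. By the definitions of $R_i^\nu,z_i^\nu$ in the proof of Claim \ref{claim:tree} one has $\phi_i^\nu=\phi_{j_i}^\nu\circ\chi_\nu$ with $\chi_\nu(\wt z)=\eps_\nu\wt z+z_\nu$, $\eps_\nu\to0$, $z_\nu\to z_i$; by the scaling law for the rescaled energy density (cf.\ the transformation law around (\ref{eq:e R W 1 2})), $E^{R_i^\nu}\big((\phi_i^\nu)^*w_\nu,X\big)=E^{R_{j_i}^\nu}\big(v_\nu,\chi_\nu(X)\big)$ and $E\big(w_\nu,\phi_{j_i}^\nu(X)\big)=E^{R_{j_i}^\nu}(v_\nu,X)$. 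For $R>0$ with $\partial B_R\cap Z_i=\emptyset$, splitting $B_R$ into a small-ball neighbourhood of $Z_i\cap B_R$ and its complement and using Lemma \ref{le:conv e} together with \ref{claim:tree}(\ref{claim:tree conv}) and \ref{claim:tree}(\ref{claim:tree lim nu E eps}), the limit
\[g(R):=\lim_{\nu\to\infty}E^{R_i^\nu}\big((\phi_i^\nu)^*w_\nu,B_R\big)=E^{R_i}(w_i,B_R\wo Z_i)+\!\!\sum_{z\in Z_i,\,|z|<R}\!\!m_i(z)\]
exists and $g(R)\to E_i+\sum_{z\in Z_i}m_i(z)$ as $R\to\infty$. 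Since $\chi_\nu(\bar B_R)\sub B_\eps(z_i)$ for $\nu$ large and the energy density is nonnegative, $E^{R_{j_i}^\nu}(v_\nu,B_\eps(z_i))\ge E^{R_i^\nu}((\phi_i^\nu)^*w_\nu,B_R)$; letting $\nu\to\infty$, then $\eps\to0$, then $R\to\infty$ gives $m_{j_i}(z_i)\ge E_i+\sum_{z\in Z_i}m_i(z)$. For the reverse inequality, rewrite \ref{claim:tree}(\ref{claim:tree B B}) in the $v_\nu$-picture as $\lim_{R\to\infty}\limsup_\nu E^{R_{j_i}^\nu}\big(v_\nu,B_{1/R}(z_i)\wo\chi_\nu(B_R)\big)=0$; for $\eps\le1/R$ one has $B_\eps(z_i)\wo\chi_\nu(B_R)\sub B_{1/R}(z_i)\wo\chi_\nu(B_R)$, so $E^{R_{j_i}^\nu}(v_\nu,B_\eps(z_i))\le E^{R_i^\nu}((\phi_i^\nu)^*w_\nu,B_R)+E^{R_{j_i}^\nu}(v_\nu,B_{1/R}(z_i)\wo\chi_\nu(B_R))$, and taking $\limsup_\nu$ and then $R\to\infty$ along $\eps=1/R$ (perturbing $R$ off the finite set $\{|z|:z\in Z_i\}$) gives $m_{j_i}(z_i)\le E_i+\sum_{z\in Z_i}m_i(z)$. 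The case $i=1$, i.e.\ $j_i=0$, is covered verbatim with $v_\nu=w_0^\nu$ and $m_0(0)=E$.

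The main obstacle is this last identity, and within it the vanishing of the ``neck'' (annular) energy contributions, which is exactly the content of the no-energy-loss condition \ref{claim:tree}(\ref{claim:tree B B}); a secondary point is to check that $\lim_\nu E^{R_i^\nu}((\phi_i^\nu)^*w_\nu,B_R)$ genuinely exists (not merely $\liminf$/$\limsup$), which rests on \ref{claim:tree}(\ref{claim:tree conv}), \ref{claim:tree}(\ref{claim:tree lim nu E eps}) and Lemma \ref{le:conv e}. The remaining ingredients — disjointness of small coordinate balls around the points of $Z_i$, the choice of generic radii, and monotonicity of $\eps\mapsto E^{R_i^\nu}((\phi_i^\nu)^*w_\nu,B_\eps(z))$ in $\eps$ — are routine.
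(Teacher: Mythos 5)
Your proof is correct and follows essentially the same route as the paper's. Both arguments hinge on the pointwise energy-splitting identity $m_{j_i}(z_i)=E_i+\sum_{z\in Z_i}m_i(z)$ (the paper's Claim \ref{claim:E Z}), which you derive by the same ingredients — Lemma \ref{le:conv e}, condition \ref{claim:tree}(\ref{claim:tree lim nu E eps}), and the no-neck-energy condition \ref{claim:tree}(\ref{claim:tree B B}) — just spelled out as a two-sided sandwich where the paper delegates to "a straight-forward argument." The only cosmetic difference is in the bookkeeping step afterwards: you sum over $i$ directly and cancel via a telescoping double sum, while the paper proves the partial identity $\sum_{i=1}^k\big(E_i+\sum_{z\in Z_{i,k}}m_i(z)\big)=E$ by finite induction on $k$; these are the same computation organized differently.
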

\begin{proof}[Proof of Claim \ref{claim:f E}]We show by induction that 
\begin{equation}
\label{eq:k f E}\sum_{i=1}^k\Bigg(E_i+\sum_{z\in Z_{i,k}}m_i(z)\Bigg)=E,
\end{equation}
for every $k=1,\ldots,N$. Claim \ref{claim:f E} is a consequence of this with $k=N$. For the proof of equality (\ref{eq:k f E}) we need the following.
\begin{claim}\label{claim:E Z} For every $i=1,\ldots,N$ we have
\begin{equation}
    \label{eq:m j i z i}m_{j_i}(z_i)=E_i+\sum_{z\in Z_i}m_i(z).
\end{equation}
\end{claim}
\begin{proof}[Proof of Claim \ref{claim:E Z}] Let $i=1,\ldots,N$. We choose a number $\eps>0$ so small that 
\[\bar B_{\eps}(z_i)\cap Z_{j_i}=\{z_i\},\qquad Z_i\sub B_{\eps^{-1}-\eps},\]
and if $z\neq z'$ are points in $Z_i$ then $|z-z'|>2\eps$. By condition (\ref{claim:tree lim nu E eps}) of Claim \ref{claim:tree}, for each $z\in Z_i$ the limit $\lim_{\nu\to\infty}E_i^\nu(B_\eps(z))$ exists. Lemma \ref{le:conv e} implies that
\[\lim_{\nu\to\infty}E_i^\nu(B_{\eps^{-1}})=E_i\left(B_{\eps^{-1}}\wo\bigcup_{z\in Z_i}B_\eps(z)\right)+\sum_{z\in Z_i}\lim_{\nu\to\infty}E_i^\nu(B_\eps(z)).\]
Combining this with condition (\ref{claim:tree B B}) of Claim \ref{claim:tree}, equality (\ref{eq:m j i z i}) follows from a straight-forward argument. This proves Claim \ref{claim:E Z}. 
\end{proof}
Since $Z_{1,1}=Z_1$, equality (\ref{eq:k f E}) for $k=1$ follows from Claim \ref{claim:E Z} and the fact $m_0(0)=E$. Let now $k=1,\ldots,N-1$ and assume that we have proved (\ref{eq:k f E}) for $k$. An elementary argument using Claim \ref{claim:E Z} with $i:=k+1$ shows (\ref{eq:k f E}) with $k$ replaced by $k+1$. By induction, Claim \ref{claim:f E} follows. 
\end{proof}
Consider the tree relation $E$ on $T:=\{1,\ldots,N\}$ defined by $i E i'$ iff $i=j_{i'}$ or $i'=j_i$. Lemma \ref{le:weight} in Appendix \ref{sec:add} with $f$ as in (\ref{eq:f i}), $k:=1$, $\al_1:=1\in T$, and $E_0:=\Emin$, implies that 
\begin{equation} \label{eq:N E} N\leq \frac{2E}\Emin+1.
\end{equation}
(Hypothesis (\ref{eq:f al E}) follows from conditions (\ref{claim:tree stab},\ref{claim:tree lim nu E eps}) of Claim \ref{claim:tree}.) Assume now that we have chosen $\ell>2E/\Emin+1$. By (\ref{eq:N E}) we have $\ell>N$, and therefore by condition (\ref{claim:tree compl}) of Claim \ref{claim:tree}, equality (\ref{eq:Z j}) holds, for every $j=1,\ldots,N$. We define
\[T:=\{1,\ldots,N\},\quad T_0:=\emptyset,\quad T_1:=\{i\in T\,|\,R_i=1\},\quad T_\infty:=T\wo T_1,\] 
and the tree relation $E$ on $T$ by 
\[i Ei'\iff i=j_{i'}\textrm{ or }i'=j_i.\] 
Furthermore, for $i,i'\in T$ such that $iEi'$ we define the nodal points 
\[z_{ii'}:=\left\{
\begin{array}{ll}\infty,&\textrm{if }i'=j_i,\\
z_{i'},&\textrm{if }i=j_{i'}.
  \end{array}
\right.\]
Moreover, we define the marked point 
\[(\al_0,z_0):=(1,\infty)\in T\x S^2.\]
\begin{claim}\label{claim:MMM bar u i} Let $i\in T$. If $i\in T_1$ then $E(w_i)<\infty$ and $u_i(\C\x G)$ has compact closure. Furthermore, if $i\in T_\infty$ then the map
\[Gu_i:\C\wo Z_i\to \BAR M=\mu^{-1}(0)/G\]
extends to a smooth $\bar J$-holomorphic map 
\[\bar u_i:S^2\iso\C\cup\{\infty\}\to \BAR M.\]
\end{claim}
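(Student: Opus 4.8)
The claim splits into two cases according to whether $i\in T_1$ (so $R_i=1$) or $i\in T_\infty$ (so $R_i=\infty$), and in each case the strategy is to transport the compactness already established in Claim \ref{claim:tree}(\ref{claim:tree conv}) into statements about the limit object $w_i$. I would first record the observation, valid in both cases, that Claim \ref{claim:f E} together with the fact that $m_i(z)\geq\Emin$ for $z\in Z_i$ gives an a priori upper bound $E_i\le E<\infty$ for the ($R_i$-)energy of $w_i$ on $\C\wo Z_i$. This is the one global input that prevents energy from escaping.

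\emph{Case $i\in T_1$.} Here $R_i=1$, so $w_i=(A_i,u_i)$ is a genuine vortex over $\C\wo Z_i$, but in fact $Z_i=\emptyset$ by condition (\ref{claim:tree stab}) of Claim \ref{claim:tree}, so $w_i$ is a vortex over all of $\C$. Its finite energy is the bound from the previous paragraph, $E(w_i)=E_i\le E<\infty$; the energy is moreover positive, again by (\ref{claim:tree stab}). It remains to see that $u_i(\C\times G)$ has compact closure. For this I would invoke the equivariant convexity hypothesis on $M$: by Claim \ref{claim:tree}(\ref{claim:tree conv}), there are gauge transformations $g_i^\nu$ such that $(g_i^\nu)^*(\phi_i^\nu)^*w_\nu\to w_i$ in $C^\infty$ on every compact subset of $\C$, and the maps $u_\nu$ (hence $(\phi_i^\nu)^*u_\nu$, hence its regauged version) take values in the fixed compact set $K\subset M$ of Claim \ref{claim:tree}(\ref{claim:tree compl})'s setup (Proposition \ref{prop:bounded}). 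Since $C^\infty_{\loc}$-convergence forces the $G$-orbit of the image of $u_i$ to lie in the closed set $GK$, the orbit $\BAR{u_i(\C\times G)}\subseteq GK$ is compact. (Alternatively one quotes Proposition \ref{prop:bounded} directly: a finite-energy vortex over $\C$ into an equivariantly convex-at-$\infty$ target has image with compact closure.)

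\emph{Case $i\in T_\infty$.} Here $R_i=\infty$, so by definition $w_i=(A_i,u_i)$ solves $\bar\dd_{J,A_i}(u_i)=0$ and $\mu\circ u_i=0$; that is, $w_i$ is an $\infty$-vortex over $\C\wo Z_i$. By Remark \ref{rmk:bar J} (via Proposition \ref{prop:bar del J} in Appendix \ref{sec:add}), the induced map $Gu_i:\C\wo Z_i\to\BAR M=\mu^{-1}(0)/G$ is $\bar J$-holomorphic, with $E^\infty(w_i)=E(Gu_i)$. The finiteness of this energy is exactly the bound $E_i\le E<\infty$ from the first paragraph, now interpreted over $\C\wo Z_i$. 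Each puncture $z\in Z_i$ is an isolated point with a neighbourhood on which the energy is finite, so the removal-of-singularities theorem for $\bar J$-holomorphic maps (e.g. \cite[Theorem 4.1.2]{MS04}) extends $Gu_i$ smoothly across each $z\in Z_i$; applying it also at $\infty$ after an inversion extends $Gu_i$ to a smooth $\bar J$-holomorphic map $\bar u_i:S^2\to\BAR M$. (That $\BAR M$ is closed, needed for removal of singularities, is our standing hypothesis (H).)

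\emph{Main obstacle.} The genuinely substantive point is the a priori finiteness and positivity of the energy of $w_i$ — i.e. that no energy was lost or gained in passing to the limit — which is not a local matter but rests on the global energy-conservation identity of Claim \ref{claim:f E}, and that in turn rests on the ``no energy lost between adjacent bubbles'' condition (\ref{claim:tree B B}) of Claim \ref{claim:tree}, ultimately on the energy-concentration-near-ends estimate (Proposition \ref{prop:en conc}). Once finiteness is in hand, both removal of singularities and the compact-image statement are standard; the only mild care needed is to handle the punctures in $Z_i$ and the point at infinity uniformly, and to phrase the convergence in the quotient $M/G$ correctly, which is routine given Proposition \ref{prop:bar del J} and Lemma \ref{le:metr}.
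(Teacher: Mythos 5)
Your proof takes essentially the same route as the paper's: bound the energy of the limit object, then invoke Proposition \ref{prop:bounded} for the compact-image claim in the $T_1$ case and Proposition \ref{prop:bar del J} plus removal of singularities in the $T_\infty$ case. Two small remarks. First, the paper obtains the energy bound $E(w_i)\le E<\infty$ directly by Fatou's lemma applied to the converging energy densities of $w_i^\nu:=(g_i^\nu)^*(\phi_i^\nu)^*w_\nu$, rather than via the global identity of Claim \ref{claim:f E}; both are correct, though Fatou is more elementary and avoids a forward dependency on the full energy-conservation statement. Second, your parenthetical alternative for the compact-image step — ``quote Proposition \ref{prop:bounded} directly: a finite-energy vortex over $\C$ into an equivariantly convex-at-$\infty$ target has image with compact closure'' — misstates that proposition: it takes compactness of $\BAR{u(P)}$ as a \emph{hypothesis} and concludes that the image lies in a fixed $K_0$; it does not assert compactness of the image from finite energy alone. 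So you cannot apply it directly to $w_i$. Your primary argument, however, is exactly the paper's: apply Proposition \ref{prop:bounded} to the original sequence $w_\nu$ (whose images do have compact closure by hypothesis of Theorem \ref{thm:bubb}), get a uniform $G$-invariant compact $K_0\supset u_i^\nu(\C)$ for all $\nu$, and pass to the pointwise limit to conclude $u_i(\C)\subset K_0$.
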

\begin{proof}[Proof of Claim \ref{claim:MMM bar u i}] We choose gauge transformations $g_i^\nu\in W^{2,p}_\loc(\C\wo Z_i,G)$ as in condition (\ref{claim:tree conv}) of Claim \ref{claim:tree}, and define $w_i^\nu:=(g_i^\nu)^*(\phi_i^\nu)^*w_\nu$. 

{\bf Assume that $i\in T_1$.} It follows from Fatou's lemma that 
\[E(w_i)\leq\liminf_{\nu\to\infty}E(w_i^\nu)=E<\infty.\] 
Furthermore, since by hypothesis $M$ is equivariantly convex at $\infty$, by Proposition \ref{prop:bounded} in Appendix \ref{sec:vort} there exists a $G$-invariant compact subset $K_0\sub M$ such that $u_i^\nu(\C)\sub K_0$, for every $\nu\in\N$. Since $u_i^\nu$ converges to $u_i$ pointwise, it follows that $u_i(\C)\sub K_0$. Hence $w_i$ has the required properties.

{\bf Assume now that $i\in T_\infty$.} By Proposition \ref{prop:bar del J} in Appendix \ref{sec:add} the map
\[G u_i:\C\wo Z_i\to \BAR M=\mu^{-1}(0)/G\]
is $\bar J$-holomorphic, and $e_{G u_i}=e^\infty_{w_i}$. It follows from Fatou's lemma that
\[E^\infty(w_i,\C\wo Z_i)\leq\liminf_{\nu\to\infty}E^{R_i^\nu}(w_i^\nu)=E<\infty.\]
Therefore, by removal of singularities, it follows that $G u_i$ extends to a smooth $\bar J$-holomorphic map $\bar u_i:S^2\to\BAR M$.%
\footnote{See e.g.~\cite[Theorem 4.1.2]{MS04}.}
 This proves Claim \ref{claim:MMM bar u i}. \end{proof}
\begin{claim}\label{claim:st conv} The tuple
\[(\W,\z):=\Big(T_0,T_1,T_\infty,E,([w_i])_{i\in T_1},(\bar u_i)_{i\in T_\infty},(z_{ii'})_{iEi'},(\al_0:=1,z_0:=\infty)\Big)\]
is a stable map in the sense of Definition \ref{defi:st}, and the sequence $([w_\nu],z_0^\nu:=\infty)$ converges to $(\W,\z)$ in the sense of Definition \ref{defi:conv}. (Here $[w_i]$ denotes the gauge equivalence class of $w_i$.)
\end{claim}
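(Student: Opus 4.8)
The plan is to verify directly that the tuple $(\W,\z)$ satisfies the five conditions (Combinatorics), (Special points), (Connectedness), (Stability) of Definition~\ref{defi:st}, and then to exhibit the M\"obius transformations required by Definition~\ref{defi:conv} and check conditions (i)--(iv) there. First I would record the combinatorial facts already built into Claim~\ref{claim:tree}: the functions $j_i\in\{1,\dots,i-1\}$ make $E$ a tree relation on $T=\{1,\dots,N\}$ rooted at $1=\al_0$; since $T_0=\emptyset$ the first bullet of (Combinatorics) (that $\al_0\in T_1\cup T_\infty$) and the path condition \eqref{eq:be i be i 1} hold trivially for the edges $\be_i\in T_1\cup T_\infty$ case, while the $T_0$-case is vacuous. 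For (Special points): if $R_1=1$ then $\al_0\in T_1$ and $z_0=\infty$ by construction; the requirement $z_{\al\be}=\infty$ when $\al\in T_1,\be\in T_\infty$ needs a small argument, namely that whenever $i\in T_1$ (so $R_i=1$) then by condition~(\ref{claim:tree stab}) $Z_i=\emptyset$, hence $i$ has no children, so there is no edge $iEi'$ with $i=j_{i'}$; thus the only edge at $i$ runs to its parent $j_i$ and carries the nodal point $z_{i\,j_i}=\infty$. Distinctness of the special points on a fixed vertex $\al$ follows from condition~(\ref{claim:tree Z dist}) together with the observation that the unique marked point $z_0=\infty$ lies on vertex $1$ and (when $1\in T_1$) vertex $1$ has no nodal points in $\C$.

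Next, (Connectedness): for an edge $iEi'$ I need $\barev_{z_{ii'}}(W_i)=\barev_{z_{i'i}}(W_{i'})$, where one of $z_{ii'},z_{i'i}$ equals $\infty$ and the other equals $z_{i'}$ (resp.\ $z_i$). This is the assertion that the bubble attached at $z_{i'}\in Z_i$ "connects": the limit of $\bar u_{W_\nu}$ evaluated along shrinking circles around the unresolved point $z_{i'}$ agrees with the value at $\infty$ of the rescaled bubble $w_{i'}$. I would prove this using the energy-concentration estimate \eqref{eq:sup z z' bar d} of Proposition~\ref{prop:en conc} applied to the annuli $A\big(RR_{i'}^\nu,\,R_{j_i}^\nu/R\big)$ sitting between the two scales, exactly as condition~(\ref{claim:tree B B}) controls; the vanishing of the oscillation of $\bar u$ on these necks as first $\nu\to\infty$ then $R\to\infty$ forces the two evaluations to coincide. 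Concretely, on the $i$-side the annulus $A(\rho,1/\rho)$ around $z_{i'}$ in the $\phi_i^\nu$-chart gets arbitrarily small oscillation of $Gu_\nu$ by \eqref{eq:sup z z' bar d} and the bound $m_i(z_{i'})<\infty$; on the $i'$-side the same holds near $\infty$ in the $\phi_{i'}^\nu$-chart; and these two necks overlap for large $\nu$, giving a common limiting orbit in $\mu^{-1}(0)/G=\BAR M$, which is by definition $\barev_\infty(W_{i'})=\bar u_{i'}(\infty)$ (using Proposition~\ref{prop:bar u} and Claim~\ref{claim:MMM bar u i}).

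For (Stability): fix $\al=i\in T$. If $i\in T_1$ and $\E(W_i)=0$ then by condition~(\ref{claim:tree stab}) $Z_i=\emptyset$ forces --- wait, that case cannot occur, since (\ref{claim:tree stab}) says $R_i=1\Rightarrow E(w_i)>0$; so every $T_1$-vertex already carries positive energy and there is nothing to check. If $i\in T_\infty$ and $E(\bar u_i)=E^\infty(w_i)=0$, then by condition~(\ref{claim:tree stab}) $|Z_i|\ge2$, and by \eqref{eq:Z j} (available since we chose $\ell>2E/\Emin+1>N$) the set $Z_i$ equals the set of children $z_{i'}$ with $j_{i'}=i$; hence $i$ has at least two children, and adding the parent edge (for $i\ge2$) or the marked point $z_0=\infty$ (for $i=1$) gives at least three special points on $i$. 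Since $T_0=\emptyset$ there are no further cases. This establishes that $(\W,\z)$ is a stable map.

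Finally, for the convergence statement I would set $\phi_i^\nu(z):=R_i^\nu z+z_i^\nu$ for $i\in T_1$ (translations composed with the dilation $R_i^\nu=1$, i.e.\ genuine translations since $R_i=1$ implies $R_i^\nu=1$) and, for $i\in T_\infty$, compose with a fixed M\"obius map $\psi_i$ sending $\infty$ to $z_{i,0}$ so that $\phi_i^\nu\circ\psi_i$ has derivative blowing up (this is condition~\ref{defi:conv}(\ref{defi:conv phi z}), using $R_i^\nu\to\infty$ from condition~(\ref{claim:tree R})). Condition~\ref{defi:conv}(\ref{defi:conv al be}) on the transition maps $(\phi_i^\nu)^{-1}\circ\phi_{i'}^\nu$ is exactly the content of the rescaling relations \eqref{eq:R i nu} in condition~(\ref{claim:tree R}). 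Condition~\ref{defi:conv}(\ref{defi:conv w}) --- convergence on compact subsets, in $\tau_{\BAR\Om}$ for $T_1$-vertices and in $C^1$ for $T_\infty$-vertices --- is condition~(\ref{claim:tree conv}) of Claim~\ref{claim:tree} together with Claim~\ref{claim:MMM bar u i} (for the $T_\infty$-case one also uses that $u_\nu$ has image in a fixed compact subset of $M^*$ near the relevant points, which follows from $\mu\circ u_\nu\to0$ there and $\mu^{-1}(0)\subset M^*$). Condition~\ref{defi:conv}(\ref{defi:conv z}) is vacuous since $k=0$. The energy-conservation identity \eqref{eq:E V bar T} is precisely Claim~\ref{claim:f E} combined with \eqref{eq:Z j} (which makes $Z_{i,N}=\emptyset$, so $f(i)=E_i=E(W_i)$ or $E(\bar u_i)$). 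The main obstacle is the (Connectedness) verification: making rigorous that no energy and no "position information" is lost on the connecting necks requires carefully chaining the estimate \eqref{eq:sup z z' bar d} across the two rescaling charts and interchanging the limits $\nu\to\infty$ and $R\to\infty$; everything else is bookkeeping against Claim~\ref{claim:tree}.
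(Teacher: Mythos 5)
Your proposal is correct and follows essentially the same route as the paper: you verify the conditions of Definitions~\ref{defi:st} and \ref{defi:conv} one by one by matching each against the corresponding item in Claim~\ref{claim:tree}, use condition~(\ref{claim:tree compl}) (the ``all bubbling points resolved'' equality (\ref{eq:Z j})) to reconcile $Z_i$ with the set of children, invoke Claim~\ref{claim:f E} for energy conservation, and handle the (Connectedness) requirement via the annular oscillation bound (\ref{eq:sup z z' bar d}) of Proposition~\ref{prop:en conc} together with the no-loss condition~(\ref{claim:tree B B}); this is exactly the outline the paper gives, and your expanded discussion of the stability cases and the $T_1$--to--parent nodal points fills in what the paper leaves to the reader.

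One very minor presentational point: for $\al\in T_\infty$ the M\"obius transformations should simply be taken to be $\phi_i^\nu(z)=R_i^\nu z+z_i^\nu$ as in Claim~\ref{claim:tree}(\ref{claim:tree R}); you do not need to precompose with $\psi_i$ as part of the data. Since $z_{\al,0}=\infty$ for every vertex (by the construction of the marked point at $(1,\infty)$), the condition $\phi_\al^\nu(z_{\al,0})=\infty$ is already satisfied, and the $\psi_\al$ appearing in Definition~\ref{defi:conv}(\ref{defi:conv phi z}) is merely a test object used to phrase the derivative-blowup condition, which indeed reduces to $R_i^\nu\to\infty$ for $i\in T_\infty$, as you correctly note.
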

\begin{proof}[Proof of Claim \ref{claim:st conv}] We check the conditions of Definition \ref{defi:st}. {\bf Condition (\ref{defi:st comb})} is a consequence of the definitions of $T_0,T_1$ and $T_\infty$. {\bf Condition (\ref{defi:st dist})} follows from condition (\ref{claim:tree Z dist}) of Claim \ref{claim:tree} and the fact $Z_i=\emptyset$, for $i\in T_1$. (This follows from condition (\ref{claim:tree stab}) of Claim \ref{claim:tree}.)

{\bf Condition (\ref{defi:st conn})} follows from an elementary argument using Claim \ref{claim:tree}(\ref{claim:tree R},\ref{claim:tree conv},\ref{claim:tree B B}) and Proposition \ref{prop:en conc}. {\bf Condition (\ref{defi:st st})} follows from Claim \ref{claim:tree}(\ref{claim:tree stab}). Hence all conditions of Definition \ref{defi:st} are satisfied.

We check the {\bf conditions of Definition \ref{defi:conv}}. {\bf Condition (\ref{eq:E V bar T})} follows from Claim \ref{claim:f E}, using condition (\ref{claim:tree compl}) of Claim \ref{claim:tree}. {\bf Condition \ref{defi:conv}(\ref{defi:conv phi z})} follows from a straight-forward argument, using Claim \ref{claim:tree}(\ref{claim:tree R}).

{\bf Condition \ref{defi:conv}(\ref{defi:conv al be})} follows from Claim \ref{claim:tree}(\ref{claim:tree R}) by an elementary argument. {\bf Condition \ref{defi:conv}(\ref{defi:conv w})} follows from Claim \ref{claim:tree}(\ref{claim:tree conv}). Finally, {\bf condition \ref{defi:conv}(\ref{defi:conv z})} is void, since $k=0$. This proves Claim \ref{claim:st conv}.
\end{proof}
Thus we have proved Theorem \ref{thm:bubb} in the case $k=0$.\\

{\bf We prove that the theorem holds for every $k\geq1$:} We prove by induction that for every $k\in\N_0$ and every tuple $\big(W_\nu,z_1^\nu,\ldots,z_k^\nu\big)$ as in the hypotheses of Theorem \ref{thm:bubb}, there exists a stable map $(\W,\z)$ as in (\ref{eq:W z}) and a collection $(\phi_\al^\nu)$ of M\"obius transformations, such that the conditions of Definition \ref{defi:conv} hold, 
\begin{equation}\label{eq:al T infty Q}
\forall\al\in T_\infty,\,Q\sub\C\wo Z_\al\textrm{ compact: }\lim_{\nu\to\infty}E\big(W_\nu,\phi_\al^\nu(Q)\big)=E(\bar u_\al,Q),
\end{equation}
\begin{eqnarray}
\label{eq:phi al nu infty}&\phi_\al^\nu(\infty)=\infty,&\\ 
\label{eq:lim de limsup nu}&\lim_{R\to\infty}\limsup_{\nu\to\infty}E\big(W_\nu,\C\wo\phi_{\al_0}^\nu(B_R)\big)=0,&
\end{eqnarray}
and for every edge $\al E\be$ such that $\al\in T_1\cup T_\infty$ and $\be$ lies in the chain of vertices from $\al$ to $\al_0$, we have
\begin{equation}\label{eq:lim limsup be al}\lim_{R\to\infty}\limsup_{\nu\to\infty}E\big(W_\nu,\phi_\be^\nu(B_{R^{-1}}(z_{\be\al}))\wo \phi_\al^\nu(B_R)\big)=0. 
\end{equation}
For $k=0$ we proved this above. In that construction condition (\ref{eq:al T infty Q}) follows from statement (\ref{prop:soft conv}) of Proposition \ref{prop:soft}. Furthermore, condition (\ref{eq:lim de limsup nu}) is a consequence of condition (\ref{claim:tree B B}) of Claim \ref{claim:tree} with $i=1$, and the facts $E(W_\nu,B_{R_\nu})\to E$ and $R_0^\nu=\nu R_\nu$. Finally, condition (\ref{eq:lim limsup be al}) follows from Claim \ref{claim:tree}(\ref{claim:tree B B}). 

Let now $k\in\N$ and assume that we have proved the statement for $k-1$. Passing to some subsequence, we may assume that for every $i=1,\ldots,k-1$, the limit 
\begin{equation}\label{eq:z k i}z_{ki}:=\lim_{\nu\to\infty}(z_k^\nu-z_i^\nu)\in\C\cup\{\infty\}\end{equation}
exists. We set 
\[z_{k0}:=\infty.\] 
Passing to a further subsequence, we may assume that the limit 
\[z_{\al k}:=\lim_{\nu\to\infty}(\phi_\al^\nu)^{-1}(z_k^\nu)\in S^2\] 
exists, for every $\al\in T:=T_0\disj T_1\disj T_\infty$. There are three cases.\\

\noi{\bf Case (I)} There exists a vertex $\al\in T$, such that $z_{\al k}$ is not a special point of $(\W,\z)$ at $\al$.\\

\noi{\bf Case (II)} There exists an index $i\in\{0,\ldots,k-1\}$ such that $z_{\al_ik}=z_i$.\\

\noi{\bf Case (III)} There exists an edge $\al E\be$ such that $z_{\al k}=z_{\al\be}$ and $z_{\be k}=z_{\be\al}$.\\

These three cases exclude each other. For the combination of the cases (II) and (III) this follows from the last part of condition (\ref{defi:st dist}) (distinctness of the special points) in Definition \ref{defi:st}. 

\begin{claim}\label{claim:I III} One of the three cases always applies. 
\end{claim}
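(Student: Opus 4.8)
The plan is to show that the three cases (I), (II), (III) exhaust all possibilities by a pigeonhole-type argument along the tree, walking from a vertex where the new marked point lands toward the root $\al_0$, and using the fact that at each node the limiting position $z_{\al k}$ must be \emph{some} point of $S^2$, which either is or is not a special point. Concretely, I would start from the vertex $\al_0$ and the point $z_{\al_0 k}\in S^2$, and argue by induction on the distance in the tree from $\al_0$.

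\textbf{Key steps.} First, suppose Case (I) does not hold. Then for \emph{every} vertex $\al\in T$, the point $z_{\al k}$ is a special point of $(\W,\z)$ at $\al$, i.e.\ it lies in $Z_\al\cup\{z_i\mid \al_i=\al\}$ (using the notation $Y_\al$ for the set of special points on $\al$). In particular $z_{\al_0 k}\in Y_{\al_0}$. Next I distinguish: either $z_{\al_0 k}=z_i$ for some $i$ with $\al_i=\al_0$ (in which case we are in Case (II) with that index $i$, since $z_{\al_0 k}=z_{\al_i k}$), or $z_{\al_0 k}=z_{\al_0\be}$ is a nodal point, i.e.\ there is an edge $\al_0 E\be$ with $z_{\al_0 k}=z_{\al_0\be}$. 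In the latter situation I examine $z_{\be k}$: since Case (I) fails, $z_{\be k}\in Y_\be$. If $z_{\be k}=z_{\be\al_0}$, then the edge $\al_0 E\be$ witnesses Case (III). Otherwise $z_{\be k}$ is a special point on $\be$ distinct from $z_{\be\al_0}$; it is either a marked point $z_i$ with $\al_i=\be$ (giving Case (II)), or a nodal point $z_{\be\ga}$ for a \emph{different} edge $\be E\ga$ with $\ga\neq\al_0$. In the last subcase I repeat the argument at $\ga$, moving one step further away from $\al_0$ in the tree. Because $T$ is finite and $E$ is a tree relation (no cycles), this walk $\al_0,\be,\ga,\dots$ can never revisit a vertex and hence must terminate; termination can only occur when we hit either a marked point (Case (II)) or an edge $\al E\be$ with $z_{\al k}=z_{\al\be}$ and $z_{\be k}=z_{\be\al}$ (Case (III)). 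Thus one of the three cases always applies.

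\textbf{Main obstacle.} The delicate point is the "convergence compatibility" used implicitly when chaining from one vertex to the next: one must check that the limiting positions $z_{\al k}$ at adjacent vertices $\al E\be$ are genuinely compatible with the rescalings $\phi_\al^\nu,\phi_\be^\nu$ in the sense encoded by Definition \ref{defi:conv}(\ref{defi:conv al be}), namely $(\phi_\al^\nu)^{-1}\circ\phi_\be^\nu\to z_{\al\be}$ uniformly on compact subsets of $S^2\wo\{z_{\be\al}\}$. This is exactly what guarantees that if $z_{\be k}\neq z_{\be\al}$, then $z_{\al k}=z_{\al\be}$ (the node toward $\be$), so that the walk is forced to continue through the edge to $\be$ rather than stalling. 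I expect the bulk of the work in the full proof to be making this "if the marked point does not escape toward $\be$ along $\be$'s side, then on $\al$'s side it sits exactly at the node $z_{\al\be}$" implication precise, which is a direct consequence of the uniform convergence in Definition \ref{defi:conv}(\ref{defi:conv al be}) together with the definition of $z_{\al k}$ as the limit of $(\phi_\al^\nu)^{-1}(z_k^\nu)$.
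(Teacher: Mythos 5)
Your proof is correct and supplies precisely the elementary tree-walk that the paper leaves as an exercise: once Case (I) is assumed to fail, every $z_{\al k}$ is a special point, and the walk from $\al_0$ along nodal directions must terminate (since $T$ is finite and acyclic) either at a marked point (Case (II)) or at an edge traversed in both senses (Case (III)). Note, however, that your ``main obstacle'' is not one: the implication ``if $z_{\be k}\neq z_{\be\al}$ then $z_{\al k}=z_{\al\be}$'' does follow from Definition \ref{defi:conv}(\ref{defi:conv al be}), but your walk never invokes it, since you only step from $\al$ to $\be$ \emph{after} having already observed $z_{\al k}=z_{\al\be}$ --- the claim is purely combinatorial once Case (I) is assumed to fail.
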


\begin{proof}[Proof of Claim \ref{claim:I III}] This follows from an elementary argument, using that $T$ is finite and does not contain cycles. 
\end{proof}

{\bf Assume that Case (I) holds.} We fix a vertex $\al\in T$ such that $z_{\al k}$ is not a special point.%
\footnote{This vertex is unique, but we will not use this.}
 We define $\al_k:=\al$ and introduce a new marked point 
\[z_k:=z_{\al_kk}\]
on the $\al_k$-sphere. Then $(\W,\z)$ augmented by $(\al_k,z_k)$ is again a stable map and the sequence $(W_\nu,z_0^\nu,\ldots,z_k^\nu)$ converges to this new stable map via $(\phi_\al^\nu)_{\al\in T}$. The induction step in Case (I) follows.\\

{\bf Assume that Case (II) holds.} We fix an index $0\leq i\leq k-1$ such that $z_{\al_ik}=z_i$. (It is unique.)\\

Consider first {\bf Case (IIa): $i\neq0$ and the condition $z_{ki}\neq0$ or $\al_i\in T_0\cup T_1$ holds}. In this case we extend the tree $T$ as follows. We define 
\[j:=\left\{\begin{array}{ll}
0,&\textrm{if }z_{ki}=0,\\
1,&\textrm{if }z_{ki}\neq0,\infty,\\
\infty,&\textrm{if }z_{ki}=\infty,
\end{array}\right.
\]
and introduce an additional vertex $\ga$, which has type $j$ and carries the new marked point. This vertex is adjacent to $\al_i$ in the new tree. We move the $i$-th marked point from the vertex $\al_i$ to the vertex $\ga$ and introduce an additional marked point on $\ga$. More precisely, we define
\begin{eqnarray*}&T_j^\new:=T_j\disj\{\ga\},\quad T_{j'}^\new:=T_{j'},\,j'\in\{0,1,\infty\}\wo\{j\},&\\
&T^\new:=T\disj\{\ga\},\quad E^\new:=E\disj\big\{(\al_i,\ga),(\ga,\al_i)\big\},&\\
&\al_i^\new:=\al_k^\new:=\ga,\quad z_{\ga\al_i}^\new:=\infty,\quad z_{\al_i\ga}^\new:=z_i,&\\
&z_i^\new:=0,\quad z_k^\new:=\left\{\begin{array}{ll}
z_{ki},&\textrm{if }j=1,\\
1,&\textrm{if }j=0,\infty.\end{array}\right.&
\end{eqnarray*}
If $j=1$, i.e., $\ga\in T^\new_1$, then we introduce the following ``ghost vortex'': We choose a point $x_0$ in the orbit $\bar u_{\al_i}(z_i)\sub\mu^{-1}(0)$, and define $A_\ga:=0\in\Om^1(\C,\g)$ and $u_\ga:\C\to M$ to be the map which is constantly equal to $x_0$. We identify $A_\ga$ with a connection on $\C\x G$ and $u_\ga$ with a $G$-equivariant map $\C\x G\to M$, and set 
\begin{equation}\label{eq:W ga}W_\ga:=\big[\C\x G,A_\ga,u_\ga\big].
\end{equation}
If $j=\infty$, i.e., $\ga\in T_\infty^\new$, then we define $\bar u_{\ga}:S^2\to\BAR M$ to be the constant map
\[\bar u_{\ga}\const \bar u_{\al_i}(z_i).\] 

We denote by $(\W^\new,\z^\new)$ the tuple obtained from $(\W,\z)$ by making the changes described above. By elementary arguments this is again a stable map. 

We define the sequence of M\"obius transformations $\phi_{\ga}^\nu:S^2\to S^2$ by 
\begin{equation}
\label{eq:phi ga nu}\phi_\ga^\nu(z):=\left\{
\begin{array}{ll}z+z_i^\nu,&\textrm{if }j=1,\\ 
(z_k^\nu-z_i^\nu)z+z_i^\nu,&\textrm{if }j=0,\infty. 
\end{array}\right.
\end{equation}
\begin{claim}\label{claim:w new II} There exists a subsequence of $\big(W_\nu,z_0^\nu,\ldots,z_k^\nu\big)$ that converges to $(\W^\new,\z^\new)$ via the M\"obius transformations $(\phi_\al^\nu)_{\al\in T^\new,\,\nu\in\N}$. 
\end{claim}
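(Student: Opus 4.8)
\textbf{Proof proposal for Claim \ref{claim:w new II}.}

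The plan is to verify each condition of Definition \ref{defi:conv} for the augmented data, by reducing almost everything to the corresponding condition for the original convergence $(W_\nu,z_0^\nu,\ldots,z_{k-1}^\nu)\to(\W,\z)$, which holds by the induction hypothesis, together with the energy identities (\ref{eq:al T infty Q})--(\ref{eq:lim limsup be al}) that were carried along the induction. First I would treat the energy-conservation identity (\ref{eq:E V bar T}): since $W_\ga$ (resp.\ $\bar u_\ga$) is a ghost (a constant map, hence zero energy), the right-hand sum is unchanged, so (\ref{eq:E V bar T}) for $(\W^\new,\z^\new)$ is literally the same identity as for $(\W,\z)$. Next, condition \ref{defi:conv}(\ref{defi:conv phi z}): for all $\al\in T$ we keep $\phi_\al^\nu$ unchanged, so only the new component $\ga$ needs checking. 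If $j=1$ then $\phi_\ga^\nu$ is the translation $z\mapsto z+z_i^\nu$, as required for $\ga\in T_1^\new$. If $j\in\{0,\infty\}$, then $\phi_\ga^\nu(z)=(z_k^\nu-z_i^\nu)z+z_i^\nu$; one checks $\phi_\ga^\nu(z_{\ga\al_i}^\new)=\phi_\ga^\nu(\infty)=\infty$, and using $z_k^\nu-z_i^\nu\to z_{ki}\in\{0,\infty\}$ (case (IIa) with $z_{ki}\neq0$, i.e.\ $z_{ki}=\infty$, forces $j=\infty$; $z_{ki}=0$ forces $j=0$) together with the definition (\ref{eq:z al i}),(\ref{eq:z al i }) of $z_{\ga,0}$, the required divergence of the derivatives $(\phi_\ga^\nu\circ\psi_\ga)'(z)\to\infty$ follows from a direct computation.

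Then I would verify the matching condition \ref{defi:conv}(\ref{defi:conv al be}) for the new edge $(\al_i,\ga)$. The composition $(\phi_{\al_i}^\nu)^{-1}\circ\phi_\ga^\nu$ evaluated on compact subsets of $S^2\setminus\{z_{\ga\al_i}^\new\}=S^2\setminus\{\infty\}$ should converge to $z_{\al_i\ga}^\new=z_i$: this follows because $(\phi_{\al_i}^\nu)^{-1}(z_i^\nu)\to z_i$ (which is part of \ref{defi:conv}(\ref{defi:conv z}) for the $(k-1)$-data, using $\al_{i}=\al_i$ and the convention $z_{ki}$), combined with the contraction of $\phi_\ga^\nu$ near $\infty$ (either $\phi_\ga^\nu$ is a translation, or its linear coefficient $z_k^\nu-z_i^\nu$ tends to $0$ when $j=0$; when $j=\infty$ one argues in the reciprocal coordinate near $\infty$). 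Conversely the convergence $(\phi_\ga^\nu)^{-1}\circ\phi_{\al_i}^\nu\to z_{\al_i\ga}^\new=z_i$ away from $z_{\ga\al_i}=\infty$ is the same statement read in the other direction. For all old edges $\al E\be$ the condition is unchanged. Condition \ref{defi:conv}(\ref{defi:conv w}) is immediate for old components; for $\ga$ it is the statement that the restriction of $(\phi_\ga^\nu)^*W_\nu$ converges to a ghost (constant) object, which follows from (\ref{eq:al T infty Q}) or (\ref{eq:lim limsup be al}): the energy captured by $\phi_\ga^\nu$ on any fixed compact set tends to $0$ (since $\phi_\ga^\nu$ is ``squeezed between'' two already-resolved reparametrizations with no energy lost in between), and since $u_\nu$ has image in a fixed compact set and $\bar u_{W_\nu}\circ\phi_\ga^\nu$ stays near $\bar u_{\al_i}(z_i)\in\mu^{-1}(0)/G$, the limit is the constant $\bar u_\ga\equiv\bar u_{\al_i}(z_i)$ (using Proposition \ref{prop:bar del J} and Proposition \ref{prop:en conc} to control the diameter of the image on the relevant annulus). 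Finally \ref{defi:conv}(\ref{defi:conv z}): $(\phi_{\al_i^\new}^\nu)^{-1}(z_i^\nu)=(\phi_\ga^\nu)^{-1}(z_i^\nu)\to z_i^\new$ and $(\phi_{\al_k^\new}^\nu)^{-1}(z_k^\nu)=(\phi_\ga^\nu)^{-1}(z_k^\nu)\to z_k^\new$ by direct substitution into (\ref{eq:phi ga nu}) and the definitions of $z_i^\new,z_k^\new$; the remaining marked points are handled by the induction hypothesis.

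The main obstacle I expect is the energy bookkeeping underlying condition \ref{defi:conv}(\ref{defi:conv w}) for the new ghost component, i.e.\ showing that \emph{no} energy escapes into the scale $\phi_\ga^\nu$ and that the rescaled maps genuinely $C^1$- (resp.\ $C^\infty$-)converge to the constant object rather than to some hidden nontrivial bubble. This is exactly where one must invoke the preserved identities (\ref{eq:lim limsup be al}) and (\ref{eq:lim de limsup nu}) (the ``no energy lost between adjacent bubbles'' statements), the uniform image bound from Proposition \ref{prop:bounded}, and the energy-concentration estimate (\ref{eq:sup z z' bar d}) of Proposition \ref{prop:en conc} to conclude that the diameter of $\bar u_{W_\nu}\circ\phi_\ga^\nu$ on compact sets goes to $0$. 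One also needs to propagate the auxiliary identities (\ref{eq:al T infty Q})--(\ref{eq:lim limsup be al}) to $(\W^\new,\z^\new)$ so the induction can continue; for the new edge this again reduces to the squeezing estimate between the scales of $\al_i$ and $\ga$. Cases (IIb) (the sub-case $i=0$ or $z_{ki}=0,\al_i\in T_\infty$, where a ghost vortex together with a type-$0$ ghost sphere is inserted) and (III) (a ghost sphere inserted on an existing edge) are handled by the same template, with the bookkeeping of which energy identity to cite adjusted accordingly; I would state those verifications as routine variants of the argument above.
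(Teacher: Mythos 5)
Your overall plan---verify the conditions of Definition \ref{defi:conv} one by one, using the induction hypothesis for the old vertices and treating the new vertex $\ga$ separately---matches the paper's. Where you diverge, and where I think your argument doesn't quite go through as written, is in the verification of \ref{defi:conv}(\ref{defi:conv w}) for the new component $\ga$.

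Your proposal makes the energy-squeeze identities (\ref{eq:lim limsup be al}), (\ref{eq:lim de limsup nu}) the central tool, describing $\phi_\ga^\nu$ as ``squeezed between two already-resolved reparametrizations.'' But in Case (IIa) the vertex $\ga$ is a new \emph{leaf} attached to $\al_i$; there is no second bubble on its other side, so there is no squeeze. (That intuition is the right one for Case (III), where $\ga$ is inserted along an existing edge, and is used in the paper for Case (IIb), where the ambient frame provides the outer scale.) The paper instead proceeds directly. The key observation---which you do not make---is that in the relevant sub-cases $j\in\{1,\infty\}$ the combinatorics (\ref{defi:st comb}) force $\al_i\in T_\infty^\new$: a vertex of $T^\new_1$ or $T^\new_\infty$ adjacent to $\ga$ on the path toward $\al_0$ must lie in $T_\infty$. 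This is what makes the argument work: by \ref{defi:conv}(\ref{defi:conv w}) for the $(k-1)$-data, $\bar u_{\al_i}^\nu\to\bar u_{\al_i}$ in $C^1$ near $z_i$; combining with $\phi_{\al_i\ga}^\nu:=(\phi_{\al_i}^\nu)^{-1}\circ\phi_\ga^\nu\to z_i$ uniformly on compacts, one gets $\bar u_\ga^\nu=\bar u_{\al_i}^\nu\circ\phi_{\al_i\ga}^\nu\to\bar u_{\al_i}(z_i)$ directly. For $j=\infty$ this already gives the required $C^1$-convergence to the constant sphere. For $j=1$ one additionally applies Proposition \ref{prop:cpt mod} (with $R_\nu:=1$) to the rescaled sequence $W_\ga^\nu$ to extract a $\tau_\C$-limit $\wt W_\ga$, and then identifies $\wt W_\ga=W_\ga$ by regauging so the map is constantly $x_0\in\mu^{-1}(0)$ (using freeness of the action and Lemma \ref{le:g smooth}), which forces the connection to be trivial by the first vortex equation. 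Your proposal gestures at the limit being constant but doesn't explain how to pin down the specific vortex class representative $W_\ga$, nor does it invoke a compactness theorem to obtain $\tau_\C$-convergence of the vortex class (rather than just of the projected orbit maps).

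In short: the step you flag as ``the main obstacle'' is indeed the crux, but the mechanism you propose (energy squeeze) is aimed at the wrong case, and you are missing the combinatorial fact $\al_i\in T_\infty^\new$ that makes the direct composition argument (which the paper uses) go through. The auxiliary identities (\ref{eq:al T infty Q})--(\ref{eq:lim limsup be al}) are needed only to \emph{propagate} the induction (and to handle Cases (IIb) and (III)), not to prove Claim \ref{claim:w new II} itself.
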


\begin{proof}[Proof of Claim \ref{claim:w new II}] Condition (\ref{eq:E V bar T}) (energy conservation) holds for every subsequence, since the new component $\ga$ carries no energy. {\bf Conditions (\ref{defi:conv phi z},\ref{defi:conv al be}) of Definition \ref{defi:conv}} hold (for the new collection of M\"obius transformations), by elementary arguments. 

{\bf We check condition \ref{defi:conv}(\ref{defi:conv w})} up to some subsequence. For every $\al\in T^\new_1\cup T^\new_\infty$ we write  
\begin{equation}\label{eq:W al nu}W_\al^\nu:=(\phi_\al^\nu)^*W_\nu,\quad\bar u_\al^\nu:=\bar u_{W_\al^\nu}:\C\to M/G,
\end{equation}
where $\bar u_{W_\al^\nu}$ is defined as in (\ref{eq:BAR u W}). In the {\bf case $j=0$} condition \ref{defi:conv}(\ref{defi:conv w}) does not contain any new requirement. 

{\bf Assume that $j=1$.} It follows from Proposition \ref{prop:cpt mod} (Compactness modulo bubbling and gauge) with $R_\nu:=1$, $r_\nu:=\nu$ and $W_\nu$ replaced by $W_\ga^\nu$, that there exists a vortex class $\wt W_\ga$ over $\C$ such that passing to some subsequence, the sequence $W_\ga^\nu$ converges to $\wt W_\ga$, with respect to $\tau_{\C}$ (as in Definition \ref{defi:tau Si}), and the sequence $\bar u_\ga^\nu$ converges to $\bar u_{\wt W_\ga}$, uniformly on every compact subset of $\C$. 
\begin{claim}\label{claim:wt W ga W ga} We have $\wt W_\ga=W_\ga$ (defined as in (\ref{eq:W ga})).
\end{claim} 
\begin{proof}[Proof of Claim \ref{claim:wt W ga W ga}] A straight-forward argument implies that $\al_i\in T_\infty^\new$. Therefore, condition \ref{defi:conv}(\ref{defi:conv w}) for the sequence $\big(W_\nu,z_0^\nu,\ldots,z_{k-1}^\nu\big)$ implies that the sequence $\bar u_{\al_i}^\nu$ converges to $\bar u_{\al_i}$, in $C^1$ on some neighborhood of $z_i$. (Here we used that $i\neq0$.)

Since $\phi_{\al_i\ga}^\nu:=(\phi_{\al_i}^\nu)^{-1}\circ\phi_\ga^\nu$ converges to $z_{\al_i\ga}=z_i$, uniformly on every compact subset of $S^2\wo\{z_{\ga\al_i}^\new\}=\C$, it follows that 
\[\bar u_\ga^\nu=\bar u_{\al_i}^\nu\circ\phi_{\al_i\ga}^\nu\to\bar x_0:=\bar u_{\al_i}(z_i),\]
uniformly on every compact subset of $\C$. It follows that $\bar u_{\wt W_\ga}\const\bar x_0$. We choose representatives
\[(\wt A_\ga,\wt u_\ga)\in\Om^1(\C,G)\x C^\infty(\C,G)\]
of $\wt W_\ga$ and $x_0\in\mu^{-1}(0)$ of $\bar x_0$. By hypothesis (H) the action of $G$ on $\mu^{-1}(0)$ is free. Hence, after regauging, we may assume that $\wt u_\ga\const x_0$. (Here we use Lemma \ref{le:g smooth} (Appendix \ref{sec:add}), which ensures that the gauge transformation is smooth.) It follows from the first vortex equation that $\wt A_\ga=0$. This shows that $\wt W_\ga=W_\ga$ and hence proves Claim \ref{claim:wt W ga W ga}. 
\end{proof}
This proves condition \ref{defi:conv}(\ref{defi:conv w}) in the case $j=1$. 

{\bf Assume now that $j=\infty$.} We have again $\al_i\in T_\infty^\new$. Hence condition \ref{defi:conv}(\ref{defi:conv w}) for the sequence $\big(W_\nu,z_0^\nu,\ldots,z_{k-1}^\nu\big)$ implies that the sequence $\bar u_{\al_i}^\nu$ converges to $\bar u_{\al_i}$, in $C^1$ on some neighborhood of $z_i$. (Here we used that $i\neq0$.) Let $Q\sub\C=S^2\wo Z_\ga$ be a compact subset. Since $\phi_{\al_i\ga}^\nu$ converges to $z_i$, in $C^\infty$ on $Q$, it follows that $\bar u_\ga^\nu=\bar u_{\al_i}^\nu\circ\phi_{\al_i\ga}^\nu$ converges to $\bar u_\ga\const\bar u_{\al_i}(z_i)$ in $C^1$ on $Q$, as required. This proves condition \ref{defi:conv}(\ref{defi:conv w}) in all cases.

{\bf Condition \ref{defi:conv}(\ref{defi:conv z})} is a consequence of the definition (\ref{eq:phi ga nu}) of $\phi_\ga^\nu$. This proves Claim \ref{claim:w new II}. 
\end{proof}

To see that condition (\ref{eq:al T infty Q}) holds, observe that we need to prove this only in the case $j=\infty$ and then only for $\al=\ga$. In this case it follows from the same condition for $\al=\al_i$ and \ref{defi:conv}(\ref{defi:conv al be}) with $\al=\al_i$ and $\be=\ga$. (Here we used that $z_i\neq\infty$ and it does not coincide with any nodal point at $\al_i$.) The same conditions also imply (\ref{eq:lim limsup be al}) for $\al:=\ga$ and $\be:=\al_i$. (For the other pairs of adjacent vertices (\ref{eq:lim limsup be al}) follows from the induction hypothesis.)

The induction step in Case (IIa) follows.\\

Consider the {\bf Case (IIb): $i=0$}. We define 
\begin{eqnarray*}&T^\new_\infty:=T_\infty\disj\{\ga\},\quad T^\new_j:=T_j,\,j=0,1,\quad E^\new:=E\disj\big\{(\al_0,\ga),(\ga,\al_0)\big\},&\\
&\al_0^\new:=\al_k^\new:=\ga,\quad z_{\ga\al_0}^\new:=0,\quad z_{\al_0\ga}^\new:=z_0=\infty,\quad z^\new_0:=\infty,\quad z^\new_k:=1,&\\
&\bar u_{\ga}\const \bar u_{\al_0}(z_0).&
\end{eqnarray*}
The tuple $(\W^\new,\z^\new)$ obtained from $(\W,\z)$ by making these changes, is again a stable map. It follows from our assumption (\ref{eq:phi al nu infty}) that there exist $\lam_\nu\in\C\wo\{0\}$ and $z_\nu\in\C$, such that $\phi_{\al_0}^\nu(z)=\lam_\nu z+z_\nu$. We define the sequence of M\"obius transformations $\phi_\ga^\nu$ by 
\[\phi_\ga^\nu(z):=(z_k^\nu-z_\nu)z+z_\nu.\]
We show that some subsequence of $\big(W_\nu,z_0^\nu,\ldots,z_k^\nu\big)$ converges to $(\W^\new,\z^\new)$ via the M\"obius transformations $(\phi_\al^\nu)_{\al\in T^\new,\,\nu\in\N}$: Condition (\ref{eq:E V bar T}) holds with $T$ replaced by $T^\new$, since the map $\bar u_{\ga}$ is constant. Conditions \ref{defi:conv}(\ref{defi:conv phi z},\ref{defi:conv al be},\ref{defi:conv z}) follow from straight-forward arguments. 

We show that {\bf condition \ref{defi:conv}(\ref{defi:conv w})} holds. Recall the definition (\ref{eq:W al nu}) of $\bar u_\al^\nu$. An elementary argument using condition \ref{defi:conv}(\ref{defi:conv w}) with $\al:=\al_0$, our assumption (\ref{eq:lim de limsup nu}), and Proposition \ref{prop:en conc}, shows that for every $\eps>0$ there exist numbers $R\geq r_0$ and $\nu_0\in\N$ such that
\begin{equation}\nn\bar d\big(\barev_{z_0=\infty}(W_{\al_0}),\bar u_{\al_0}^\nu(z)\big)<\eps,\quad\forall\nu\geq\nu_0,\,z\in\C\wo B_R,
\end{equation}
where $W_{\al_0}:=\bar u_{\al_0}$ if $\al_0\in T_\infty$, and $\barev_\infty$ is defined as in (\ref{eq:barev},\ref{eq:barev w}). The sequence $\phi_{\al_0\ga}^\nu$ converges to $z^\new_{\al_0\ga}=z_0=\infty$, uniformly on every compact subset of $S^2\wo\{z^\new_{\ga\al_0}=0\}$. It follows that $\bar u_\ga^\nu$ converges to the constant map $\bar u_\ga\const\barev_\infty(W_{\al_0})$, uniformly on every compact subset of $\C\wo\{0\}=S^2\wo\{z^\new_{\ga\al_0},z^\new_0\}$. Condition \ref{defi:conv}(\ref{defi:conv w}) with $\al=\ga$ is a consequence of the following.
\begin{claim}\label{claim:conv C 1} Passing to some subsequence the convergence is in $C^1$ on every compact subset of $\C\wo\{0\}$.
\end{claim}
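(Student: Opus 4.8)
The plan is to recognize $\bar u_\ga^\nu$ as the sequence of ``$\bar M$-components'' of a sequence of rescaled vortices over $\C$ and to invoke Proposition~\ref{prop:cpt mod}. Fix representatives $w_\nu=(\C\x G,A_\nu,u_\nu)$ of $W_\nu$ and, using triviality of $(\phi_\ga^\nu)^*(\C\x G)$, view $(\phi_\ga^\nu)^*w_\nu$ as a pair $\hat w_\nu=(\hat A_\nu,\hat u_\nu)\in\WWW^p_\C$, so that $\bar u_\ga^\nu=G\hat u_\nu$. Writing $\phi_{\al_0}^\nu(z)=\lam_\nu z+z_\nu$ and $\phi_\ga^\nu(z)=(z_k^\nu-z_\nu)z+z_\nu$, the transformation rule for the rescaled energy density shows that $\hat w_\nu$ is an $R_\ga^\nu$-vortex with $R_\ga^\nu:=|z_k^\nu-z_\nu|$, and $E^{R_\ga^\nu}(\hat w_\nu,\C)=E(W_\nu,\C)$ is uniformly bounded. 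Since we are in Case~(IIb) we have $(\phi_{\al_0}^\nu)^{-1}(z_k^\nu)=(z_k^\nu-z_\nu)/\lam_\nu\to z_{\al_0 k}=z_0=\infty$, and $\lam_\nu=1$ if $\al_0\in T_1$ while $\lam_\nu\to\infty$ if $\al_0\in T_\infty$ (condition \ref{defi:conv}(\ref{defi:conv phi z})); in either case $R_\ga^\nu\to\infty$. The images of $\hat u_\nu$ lie in a fixed compact subset of $M$ by Proposition~\ref{prop:bounded} (using equivariant convexity at $\infty$). Hence Proposition~\ref{prop:cpt mod} applies with $R_0=\infty$: after passing to a subsequence there are a finite set $Z_\ga\sub\C$, an $\infty$-vortex $w_\ga^0=(A_\ga^0,u_\ga^0)\in\WWW_{\C\wo Z_\ga}$, and gauge transformations $g_\nu\in W^{2,p}_\loc(\C\wo Z_\ga,G)$ such that $g_\nu^{-1}\hat u_\nu\to u_\ga^0$ in $C^1$ on compact subsets of $\C\wo Z_\ga$, and at every $z\in Z_\ga$ at least the energy $\Emin$ concentrates.

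The crux is to show $Z_\ga\sub\{0\}$. By the induction hypothesis condition (\ref{eq:lim de limsup nu}) holds for the $(k-1)$-stable map, i.e. $\lim_{R\to\infty}\limsup_\nu E(W_\nu,\C\wo\phi_{\al_0}^\nu(B_R))=0$. A direct computation gives $(\phi_\ga^\nu)^{-1}\phi_{\al_0}^\nu(B_R)=\tfrac{\lam_\nu}{z_k^\nu-z_\nu}B_R$, whose radius tends to $0$; hence for every $\rho>0$ and $R>0$ we have $\phi_{\al_0}^\nu(B_R)\sub\phi_\ga^\nu(B_\rho)$ for $\nu$ large, so $\limsup_\nu E(W_\nu,\C\wo\phi_\ga^\nu(B_\rho))\leq\limsup_\nu E(W_\nu,\C\wo\phi_{\al_0}^\nu(B_R))$. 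Letting $R\to\infty$ yields $\lim_\nu E(W_\nu,\C\wo\phi_\ga^\nu(B_\rho))=0$, that is $E^{R_\ga^\nu}(\hat w_\nu,\C\wo B_\rho)\to0$ for every $\rho>0$. Thus no energy concentrates at any point of $\C\wo\{0\}$, and by Proposition~\ref{prop:cpt mod}(\ref{prop:cpt mod lim nu E eps}) the set $Z_\ga$ contains no point of $\C\wo\{0\}$; therefore $Z_\ga\sub\{0\}$.

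It follows that $\C\wo Z_\ga\cont\C\wo\{0\}$. Since the quotient map $M^*\to M^*/G$ is a submersion (as $G$ acts freely on $M^*\cont\mu^{-1}(0)$) and $u_\ga^0$, hence also $g_\nu^{-1}\hat u_\nu$ for $\nu$ large, take values in a neighborhood of $\mu^{-1}(0)$ inside $M^*$, the $C^1$-convergence of $g_\nu^{-1}\hat u_\nu$ descends to $C^1$-convergence of $\bar u_\ga^\nu=G\hat u_\nu=G(g_\nu^{-1}\hat u_\nu)$ on every compact subset of $\C\wo\{0\}$. Its limit coincides with the $C^0$-limit established just before the claim, namely the constant map $\barev_\infty(W_{\al_0})=\bar u_\ga$. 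This proves Claim~\ref{claim:conv C 1}. The only genuinely delicate point is the energy-concentration argument of the second paragraph ruling out bubbling away from $0$; the rest is a bookkeeping application of the compactness results of Section~\ref{sec:comp} together with the descent from $M$ to $M/G$.
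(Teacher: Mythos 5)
Your proof is correct and follows essentially the same route as the paper: identify $W_\ga^\nu=(\phi_\ga^\nu)^*W_\nu$ as an $R_\nu$-vortex with $R_\nu=|z_k^\nu-z_\nu|\to\infty$, invoke Proposition~\ref{prop:cpt mod} to obtain $C^1$-convergence away from a finite set $Z$, and rule out bubbling points other than $0$ using the energy bound~(\ref{eq:lim de limsup nu}). The paper states the last step in a single sentence, and your explicit computation of $(\phi_\ga^\nu)^{-1}\phi_{\al_0}^\nu(B_R)$ showing $\phi_{\al_0}^\nu(B_R)\sub\phi_\ga^\nu(B_\rho)$ for large $\nu$, together with the verification that $R_\nu\to\infty$ in both subcases $\al_0\in T_1$ and $\al_0\in T_\infty$, supplies exactly the bookkeeping the paper leaves to the reader.
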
 
\begin{proof}[Proof of Claim \ref{claim:conv C 1}] We denote $R_\nu:=|z_k^\nu-z_\nu|$, and choose a representative $w_\ga^\nu$ of $W_\ga^\nu$. This is an $R_\nu$-vortex. Our assumption $z_{\al_0k}=z_0=\infty$ implies that $R_\nu\to R_0:=\infty$. Hence by Proposition \ref{prop:cpt mod} there exists a finite subset $Z\sub\C$ and an $\infty$-vortex $w_\ga:=(A_\ga,u_\ga)$, and passing to some subsequence, there exist gauge transformations $g_\ga^\nu\in W^{2,p}_\loc(\C\wo Z,G)$, such that the assertions \ref{prop:cpt mod}(\ref{prop:cpt mod:=},\ref{prop:cpt mod lim nu E eps}) hold with $w_\nu:=w_\ga^\nu$. It follows that $\bar u_\ga^\nu$ converges in $C^1$ on every compact subset of $\C\wo Z$. Furthermore, it follows from (\ref{eq:lim de limsup nu}) and \ref{prop:cpt mod}(\ref{prop:cpt mod lim nu E eps}) that $Z\sub\{0\}$. Claim \ref{claim:conv C 1} follows. 
\end{proof}
Conditions (\ref{eq:al T infty Q},\ref{eq:lim de limsup nu}) (for $\al^\new_0=\ga$) and (\ref{eq:lim limsup be al}) (for $\al:=\al_0$ and $\be:=\ga$) follow from condition (\ref{eq:lim de limsup nu}) for $\al_0$, which holds by the induction hypothesis. This proves the induction step in Case (IIb).\\

Consider now {\bf Case (IIc): $z_{ki}=0$ and $\al_i\in T_\infty$}. Then we introduce a vertex $\ga\in T_1$ adjacent to $\al_i$ and a vertex $\de\in T_0$ adjacent to $\ga$, such that $\de$ carries the two new marked points. More precisely, we define
\begin{eqnarray*}&T_0^\new:=T_0\disj\{\de\},\quad T_1^\new:=T_1\disj\{\ga\},\quad T_\infty^\new:=T_\infty,&\\
&T^\new:=T\disj\{\ga,\de\},\quad E^\new:=E\disj\big\{(\al_i,\ga),(\ga,\al_i),(\ga,\de),(\de,\ga)\big\},&\\
&z_{\ga\al_i}^\new:=\infty,\quad z_{\al_i\ga}^\new:=z_i,\quad z_{\ga\de}:=0,\quad z_{\de\ga}:=\infty,&\\
&\al_i^\new:=\al_k^\new:=\de,\quad z_i^\new:=0,\quad z_k^\new:=1,&
\end{eqnarray*}
and $W_\ga$ as in case (IIa) with $j=1$. We thus obtain a new stable map. The proof of convergence to this stable map and of conditions (\ref{eq:al T infty Q}) (for $\ga$) and (\ref{eq:lim limsup be al}) (for $\al=\ga$ and $\be=\al_i$) is now analogous to the proof in Case (IIa). This proves the induction step in Case (IIc).\\

The Cases (IIa,b,c) cover all instances of Case (II), hence we have proved the induction step in Case (II).\\

{\bf Assume now that Case (III) holds.} In this case we introduce a new vertex $\ga$ between $\al$ and $\be$. Hence $\al$ and $\be$ are no longer adjacent, but are separated by $\ga$. We define $j:=0$ if $\al$ or $\be$ lies in $T_0$, and $j:=\infty$, otherwise.%
\footnote{In that case $\al$ or $\be$ lies in $T_\infty$.}
 We may assume w.l.o.g.~that $\be$ lies in the chain of vertices from $\al$ to $\al_0$. We define
\begin{eqnarray*}&T_j^\new:=T_j\disj\{\ga\},\quad T_{j'}^\new:=T_{j'},\,j'\in\{0,1,\infty\}\wo\{j\},&\\
&z_{\al\ga}^\new:=z_{\al\be},\,z_{\be\ga}^\new:=z_{\be\al},\,z_{\ga\al}^\new:=0,\,z_{\ga\be}^\new:=\infty,\,\al_k^\new:=\ga,\,z_k^\new:=1.&
\end{eqnarray*}
In the case $j=\infty$ we define $\bar u_\ga:S^2\to \BAR M$ to be the constant map equal to $\bar u_\be(z_{\be\al})$. Again, we obtain a new stable map. By condition (\ref{eq:phi al nu infty}) there exist $\lam_\al^\nu\in\C\wo\{0\}$ and $z_\al^\nu\in\C$, such that $\phi_\al^\nu(z)=\lam_\al^\nu z+z_\al^\nu$. We define 
\[\phi_\ga^\nu(z):=(z_k^\nu-z_\al^\nu)z+z_\al^\nu.\]
The proof of convergence proceeds now analogously to the proof in Case (IIb), using (\ref{eq:lim limsup be al}) for the proof of condition \ref{defi:conv}(\ref{defi:conv w}) rather than (\ref{eq:lim de limsup nu}). Furthermore, conditions (\ref{eq:al T infty Q}) with $\al=\ga$ and (\ref{eq:lim limsup be al}) with $(\al,\be)$ replaced by $(\al,\ga)$ and $(\ga,\be)$ follow from condition (\ref{eq:lim limsup be al}) for $(\al,\be)$, which holds by the induction hypothesis. This proves the induction step in Case (III), and hence terminates the proof of Theorem \ref{thm:bubb} in the case $k\geq1$.
\end{proof}
\begin{Rmk} In the above proof for $k=0$ the stable map $(\W,\z)$ is constructed by ``terminating induction''. Intuitively, this is induction over the integer $N$ occurring in Claim \ref{claim:tree}. The ``auxiliary index'' $\ell$ in Claim \ref{claim:tree} is needed to make this idea precise. Condition (\ref{claim:tree compl}) and the inequality (\ref{eq:N E}) ensure that the ``induction stops''. $\Box$
\end{Rmk}
\section{Proof of the result in Section \ref{SEC:EXAMPLE} characterizing convergence}\label{sec:proof:prop:conv S 1 C}
Using Compactness modulo bubbling and gauge for (rescaled) vortices, which was established in Section \ref{sec:comp} (Proposition \ref{prop:cpt mod}), in this section we prove Proposition \ref{PROP:CONV S 1 C}. The proof is based on the following result. Let $G:=S^1\sub\C$ act on $M:=\C$ by multiplication, with momentum map $\mu:\C\to i\R$ given by $\mu(z):=\frac i2(1-|z|^2)$, and let $d\in\N_0$. Recall the definition (\ref{eq:iota Sym}) of the map
\[\iota_d:\disj_{d'\leq d}\Sym^{d'}(\C)\to \Sym^d(S^2),\]
the definitions (\ref{eq:M d Sym},\ref{eq:deg W z}) of the set $\M_d$ of finite energy vortex classes of degree $d$ and of the local degree map $\deg_W:\C\to\N_0$, and the Definition \ref{defi:tau Si} of the ``$C^\infty$-topology $\tau_\Si$ on $\B_\Si$''.
\begin{prop}\label{prop:loc conv S 1 C} Let $0\leq d'\leq d$, $W\in\M_{d'}$ and $W_\nu\in\M_d$, for $\nu\in\N$. Then the following conditions are equivalent. 
\begin{enui}
\item\label{prop:loc conv g} For every open subset $\Om\sub\C$ with compact closure and smooth boundary the restriction $W_\nu|_{\BAR\Om}$ converges to $W|_{\BAR\Om}$ to $\BAR\Om$, as $\nu\to\infty$, with respect to the topology $\tau_{\BAR\Om}$.
\item\label{prop:loc conv deg} The point in the symmetric product
\[\deg_{W_\nu}\in\Sym^d(\C)\sub\Sym^d(S^2)\]
converges to $\iota_d(\deg_W)\in\Sym^d(S^2)$.
\end{enui}
\end{prop}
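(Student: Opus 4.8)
The plan is to prove the equivalence of (\ref{prop:loc conv g}) and (\ref{prop:loc conv deg}) by relating both to the behaviour of the zero set of a local representative $v_\nu := u_\nu\circ\si_\nu$ of $W_\nu$, where $\si_\nu$ is a smooth section of the underlying bundle. Throughout, recall (Proposition \ref{prop:S 1 C image} and Corollary \ref{cor:S 1 C image}) that any positive-energy vortex over $\C$ has image exactly the open unit ball, and that (Proposition \ref{prop:class}) $\M_d$ is in bijection with $\Sym^d(\C)=\wt\Sym^d(\C)$ via $W\mapsto\deg_W$, where $\deg_W(z)$ counts with multiplicity the zeros of a local section representative near $z$. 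The key soft analytic input is Proposition \ref{prop:cpt mod} (Compactness modulo bubbling and gauge) specialized to $G=S^1$, $M=\C$: since $\BAR M$ is a point, the $R_0=\infty$ case degenerates and, in the relevant $R_\nu\equiv 1$ regime, a subsequence of $W_\nu$ converges in $C^\infty_{\loc}$ on $\C\setminus Z$ for a finite set $Z$ of bubbling points, with at least energy $\Emin=\pi$ lost at each point of $Z$ (here $E_\infty=\infty$ because $\BAR M$ is a point, so $\Emin=E_1$, and by Proposition \ref{prop:E d} and Proposition \ref{prop:class} in fact $E_1=\pi$).

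First I would prove (\ref{prop:loc conv g})$\Then$(\ref{prop:loc conv deg}). Assuming $C^\infty$-convergence of $W_\nu|_{\BAR\Om}$ to $W|_{\BAR\Om}$ on every compact $\BAR\Om$ with smooth boundary, I choose representatives so that (after regauging) the local profiles $v_\nu\to v$ in $C^\infty_{\loc}(\C)$, where $v$ is a local representative of $W$. Fix $R$ large enough that $\deg_W$ is supported in $B_R$ and that $v$ has no zero on $\partial B_R$ (possible since $v$ has finitely many zeros by \cite[Chapter III, Theorem 2.2]{JT}). By Hurwitz-type stability of zeros under $C^1$-convergence (the argument principle applied on small circles around each zero of $v$, plus uniform lower bounds $|v_\nu|\geq c>0$ on $\partial B_R$ and away from the zeros of $v$), for $\nu$ large the zeros of $v_\nu$ inside $B_R$, counted with multiplicity, cluster exactly at the zeros of $v$ with the correct multiplicities, hence $\deg_{W_\nu}|_{B_R}\to\deg_W$ in $\Sym^{\le d}(\C)$. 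It remains to control the $d-d'$ ``missing'' zeros of $v_\nu$ lying outside $B_R$: by the energy identity $E(W_\nu)=\pi d$ and $E(W)=\pi d'$ (Proposition \ref{prop:E d}) together with the convergence $E(W_\nu,\BAR\Om)\to E(W,\BAR\Om)$ that follows from Lemma \ref{le:conv e}, exactly energy $\pi(d-d')$ escapes to infinity; combined with the energy-concentration estimate (\ref{eq:sup z z' bar d}) of Proposition \ref{prop:en conc}, which forces any escaping degree to leave every fixed ball, the $d-d'$ remaining zeros of $v_\nu$ must converge to $\infty\in S^2$. Hence $\deg_{W_\nu}\to\iota_d(\deg_W)$ in $\Sym^d(S^2)$.

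Next, (\ref{prop:loc conv deg})$\Then$(\ref{prop:loc conv g}). Suppose $\deg_{W_\nu}\to\iota_d(\deg_W)$ in $\Sym^d(S^2)$; I want $C^\infty_{\loc}$-convergence of (suitable gauge representatives of) $W_\nu$ on $\C$ to $W$. Apply Proposition \ref{prop:cpt mod} with $R_\nu\equiv 1$: after passing to a subsequence and regauging, $W_\nu\to W_\infty$ in $C^\infty$ on $\C\setminus Z$ for some vortex class $W_\infty\in\M_{d''}$ (finite energy, since by Fatou $E(W_\infty)\le\liminf E(W_\nu)=\pi d<\infty$) and a finite bubbling set $Z$, where at each $z\in Z$ at least energy $\pi$ is lost. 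By the already-established implication (\ref{prop:loc conv g})$\Then$(\ref{prop:loc conv deg}) applied to the subsequence, $\deg_{W_\nu}$ converges in $\Sym^d(S^2)$ to $\iota_d(\deg_{W_\infty})$ plus a contribution from $Z$ and from escape to $\infty$; matching this with the assumed limit $\iota_d(\deg_W)$ forces $Z\subseteq\{\infty\}$ (hence $Z=\emptyset$ since $Z\subset\C$), no energy concentration at any interior point, and $\deg_{W_\infty}=\deg_W$ — that is, $W_\infty=W$ by Proposition \ref{prop:class}. Since this holds for every subsequence, the full sequence $W_\nu$ converges to $W$ in $C^\infty_{\loc}$, i.e., condition (\ref{prop:loc conv g}) holds; unwinding the regauging via Lemma \ref{le:X G} and a patching argument gives $\tau_{\BAR\Om}$-convergence on each $\BAR\Om$.

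\textbf{Main obstacle.} The delicate point is bookkeeping of degree (equivalently energy) that escapes to $\infty$ versus degree that concentrates at interior bubbling points of $Z$: one must show that the identity $d = d'' + (\text{degree at }Z) + (\text{degree at }\infty)$ holds with \emph{no loss}, i.e., that degree is conserved in the limit and cannot ``disappear'' in the annular necks. This is exactly where the energy-action isoperimetric machinery enters — via Proposition \ref{prop:en conc}, whose conclusion (\ref{eq:sup z z' bar d}) with $R=\infty$ already underlies Proposition \ref{prop:bar u} (continuity at $\infty$) — to rule out nontrivial degree hiding in the ends of long annuli where the energy density is small. Once no-neck-loss is in hand, everything else is the standard argument-principle/Hurwitz stability of holomorphic-type zero sets under $C^1_{\loc}$-convergence, combined with the rigidity of $\M_d$ from \cite{JT}.
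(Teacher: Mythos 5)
Your overall strategy matches the paper's in both directions, but the execution diverges in ways worth flagging. For (\ref{prop:loc conv g})$\Then$(\ref{prop:loc conv deg}), the paper also proceeds via winding numbers on boundary circles (Lemma \ref{le:x C 0} applied to local representatives $u_\nu$), but the degree that "escapes to $\infty$" is handled purely arithmetically: since $\deg(W_\nu)=d$ for every $\nu$, once $d'$ zeros (with multiplicity) are accounted for in a fixed ball $B_R$, the remaining $d-d'$ zeros lie outside $B_R$ automatically; combined with the subbasis description of the topology on $\Sym^d(S^2)$ from Lemma \ref{le:subbasis}, this closes the argument. The energy-concentration estimate (\ref{eq:sup z z' bar d}) of Proposition \ref{prop:en conc} that you invoke is not needed for this direction.

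More significantly, your (\ref{prop:loc conv deg})$\Then$(\ref{prop:loc conv g}) argument is built around a phantom difficulty. When Proposition \ref{prop:cpt mod} is applied with $R_\nu\equiv1$, its part (\ref{prop:cpt mod:<}) applies because $R_0=1<\infty$: the bubbling set $Z$ is \emph{automatically empty} and the convergence (after regauging) is in $C^\infty$ on all compact subsets of $\C$, not just $\C\setminus Z$. A nonempty $Z$ in this regime would require a $J$-holomorphic sphere to bubble off in $M$, which the asphericity hypothesis rules out — this is exactly why that case of Proposition \ref{prop:cpt mod} is stated with $Z=\emptyset$. So the "main obstacle" you describe — the bookkeeping of degree lost at interior points of $Z$ versus degree escaping to $\infty$, and no-neck-loss — simply does not arise. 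Beyond being unnecessary, your route has a logical gap as written: you apply the forward implication (i)$\Then$(ii) to a sequence that you say only converges on $\C\setminus Z$, but that implication requires convergence on every $\BAR\Om\subset\C$ and is silent about what happens if there is a finite exceptional set; you would need a separate degree-conservation lemma at points of $Z$, which you do not prove. This is harmless only because $Z$ is actually empty, but you never establish that. With $Z=\emptyset$ in hand, the paper's argument needs only Fatou (to get $\deg(W_0)\leq d$), the forward implication on the subsubsequence, injectivity of $W\mapsto\deg_W$ on $\M_{d_0}$ (Proposition \ref{prop:class}), and the subsequence trick Lemma \ref{le:subsubseq} to upgrade to the full sequence.
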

In the proof of this result we will use the following. Let $X$ be a topological space and $d\in\N_0$. We denote by $\Sym^d(X):=X^d/S_d$ the $d$-fold symmetric product of $X$, and canonically identify it with the set $\wt\Sym^d(X)$ of all maps $m:X\to\N_0$ such that $\sum_{x\in X}m(x)=d$.%
\footnote{Implicitly, here we require $m(x)$ to vanish, except for finitely many points $x\in X$.}
 We endow $\Sym^d(X)$ with the quotient topology. For every subset $Y\sub X$ and $d_0\in\N_0$, we define
\begin{equation}\label{eq:V Y d 0}V_Y^{d_0}:=\left\{m\in\wt\Sym^d(X)\,\Big|\,\sum_{x\in Y}m(x)=d_0,\,m(x)=0,\,\forall x\in\dd Y\right\},
\end{equation}
where $\dd Y\sub X$ denotes the boundary of $Y$. 
\begin{lemma}\label{le:subbasis} Let $\UU$ be a basis for the topology of $X$. If $X$ is Hausdorff then the sets $V_U^{d_0}$, $U\in\UU$, $d_0\in\N_0$, form a subbasis for the topology of $\wt\Sym^d(X)\iso\Sym^d(X)$. 
\end{lemma}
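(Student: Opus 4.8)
The goal is to show that the family $\{V_U^{d_0} : U \in \UU,\ d_0 \in \N_0\}$ generates the quotient topology on $\Sym^d(X)$. Let $q : X^d \to \Sym^d(X)$ denote the quotient projection. First I would check that each $V_U^{d_0}$ is open in the quotient topology, i.e. that $q^{-1}(V_U^{d_0})$ is open in $X^d$. Writing a point of $X^d$ as $(x_1,\dots,x_d)$, membership of $q(x_1,\dots,x_d)$ in $V_U^{d_0}$ says that exactly $d_0$ of the coordinates lie in $U$ and none lies in $\dd U$; equivalently, $d_0$ of the coordinates lie in the open set $U$ and the remaining $d-d_0$ lie in the open set $X \wo \BAR U$. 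Hence $q^{-1}(V_U^{d_0})$ is the union over all $d_0$-element subsets $S \subseteq \{1,\dots,d\}$ of the open box $\prod_{i\in S} U \times \prod_{i\notin S}(X\wo\BAR U)$, so it is open. This step uses that $\UU$ is a basis only insofar as $U$ is open; Hausdorffness is not needed here.

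\textbf{The main step} is the converse: every open set in $\Sym^d(X)$ is a union of finite intersections of the $V_U^{d_0}$'s. Equivalently, given a point $m \in \wt\Sym^d(X)$ and a quotient-open neighborhood $W \ni m$, I must produce $U_1,\dots,U_r \in \UU$ and $d_1,\dots,d_r$ with $m \in \bigcap_j V_{U_j}^{d_j} \subseteq W$. Write $m$ concretely as supported on distinct points $y_1,\dots,y_s \in X$ with multiplicities $m_1,\dots,m_s$ summing to $d$. Pick a representative $(\hat x_1,\dots,\hat x_d) \in q^{-1}(m)$ listing each $y_l$ exactly $m_l$ times. Since $q^{-1}(W)$ is open in $X^d$ and contains this representative, there is a basic open box $\prod_{i=1}^d O_i$ around it inside $q^{-1}(W)$, with each $O_i \in \UU$ chosen to contain the corresponding $\hat x_i$. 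Now, using Hausdorffness, I would \emph{separate} the distinct support points: shrink the $O_i$'s (still within $\UU$, refining if necessary) so that for each pair $l \neq l'$ the sets assigned to copies of $y_l$ are disjoint from those assigned to copies of $y_{l'}$. Concretely, for each $l$ let $U_l \in \UU$ be a common refinement choice containing $y_l$ with $\BAR{U_l}$ small enough that the $U_l$ are pairwise disjoint and $y_{l'} \notin \BAR{U_l}$ for $l' \neq l$ — here is where Hausdorff (indeed, one only needs the $T_1$/separation afforded by Hausdorff plus that $\UU$ is a basis) is essential. Then set $d_l := m_l$.

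\textbf{Verification that the intersection works.} With $U_1,\dots,U_s$ and $d_l = m_l$ as above, clearly $m \in \bigcap_{l=1}^s V_{U_l}^{d_l}$, since $m$ puts multiplicity exactly $m_l$ in $U_l$ and zero on $\dd U_l$ (as $y_l \notin \dd U_l$ and the $U_l$ are disjoint from the other support points). Conversely, suppose $m' \in \bigcap_{l=1}^s V_{U_l}^{d_l}$. Because $\sum_l d_l = d$ and the $U_l$ are pairwise disjoint, every point of the support of $m'$ lies in exactly one $U_l$, and $m'$ puts total multiplicity $m_l$ inside $U_l$. Lifting $m'$ to a representative $(x_1',\dots,x_d')$, I can match the copies landing in $U_l$ with the indices $i$ for which $\hat x_i = y_l$; after such a matching, $x_i' \in U_l \subseteq O_i$ provided the $O_i$ were chosen inside $U_l$ in the shrinking step. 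Hence this representative lies in $\prod_i O_i \subseteq q^{-1}(W)$, so $m' = q(x_1',\dots,x_d') \in W$. This gives $\bigcap_{l=1}^s V_{U_l}^{d_l} \subseteq W$, completing the argument. The one genuine subtlety — and the step I expect to be most delicate to write carefully — is the bookkeeping in the shrinking step: one must choose the basic box $\prod O_i$ and the separating sets $U_l$ \emph{simultaneously} so that each $O_i$ is contained in the $U_l$ corresponding to its support point, which is where a careful induction over the finitely many support points, using that $\UU$ is a basis and $X$ is Hausdorff, is needed.
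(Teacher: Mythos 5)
Your proposal is correct and follows essentially the same route as the paper: show openness of the $V_U^{d_0}$ by computing $q^{-1}(V_U^{d_0})$ as a finite union of open boxes, and for the converse use Hausdorffness to choose pairwise disjoint basic neighborhoods $U_1,\dots,U_s\in\UU$ of the support points, one for each, inside a product neighborhood of a lift of $m$, then verify $m\in\bigcap_l V_{U_l}^{m_l}\subseteq W$. The only stylistic difference is that you phrase $q^{-1}(V_U^{d_0})$ as a union over $d_0$-element subsets of indices rather than as the $S_d$-orbit of a single box.
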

\begin{proof}[Proof of Lemma \ref{le:subbasis}]\setcounter{claim}{0} We denote by $\pi:X^d\to\Sym^d(X)$ the canonical projection. For every $U\in\UU$ and $d_0\in\N_0$, we have
\[\pi^{-1}(V_U^{d_0})=S_d\cdot\big(U^{d_0}\x(X\wo\BAR U)^{d-d_0}\big)\sub X^d,\]
where $\cdot$ denotes the action of $S_d$ on $X^d$. Since $|S_d|<\infty$, this set is open, i.e., $V_U^{d_0}$ is open. Now let $V\sub\wt\Sym^d(X)$ be an open set, and $m\in V$. It suffices to show that there exists a finite set $S\sub\UU\x\N_0$, such that the intersection $\bigcap_{(U,d_0)\in S}V_U^{d_0}$ contains $m$ and is contained in $V$. To see this, we denote by $x_1,\ldots,x_k\in X$ the distinct points at which $m$ does not vanish, and abbreviate $d_i:=m(x_i)$. For $i=1,\ldots,k$ we choose a neighborhood $U_i\in\UU$ of $x_i$, such that the sets $U_1,\ldots,U_k$ are disjoint, and 
\begin{equation}\label{eq:U 1 U k}U_1\x\cdots\x U_1\x\cdots\x U_k\x\cdots\x U_k\sub\pi^{-1}(V).
\end{equation}
Here the factor $U_i$ occurs $d_i$ times. (The sets exist by Hausdorffness of $X$, openness of $\pi^{-1}(V)$, the definition of the product topology on $X^d$, and the fact 
\[\big(x_1,\ldots,x_1,\ldots,x_k,\ldots,x_k\big)\in\pi^{-1}(m)\sub\pi^{-1}(V).)\] 
We define $S:=\big\{(U_i,d_i)\,\big|\,i=1,\ldots,k\big\}$. We have
\[\bigcap_{i=1}^kV_{U_i}^{d_i}=\pi\Big(U_1\x\cdots\x U_1\x\cdots\x U_k\x\cdots\x U_k\Big).\]
Combining this with (\ref{eq:U 1 U k}), it follows that $\bigcap_{i=1}^kV_{U_i}^{d_i}\sub V$. On the other hand, since $x_i\in U_i$, we have $m\in\bigcap_{i=1}^kV_{U_i}^{d_i}$. Hence the set $S$ has the required properties. This proves Lemma \ref{le:subbasis}.
\end{proof}
We will also use the following.
\begin{lemma}\label{le:x C 0} Let $x\in C\big(S^1,\C\wo\{0\}\big)$ be a loop. Then there exists a $C^0$-neighborhood $\UU\sub C\big(S^1,\C\wo\{0\}\big)$ of $x$, such that every $x'\in\UU$ is homotopic to $x$. 
\end{lemma}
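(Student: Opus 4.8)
The statement is the elementary fact that the winding number (degree) is locally constant on the space of based-free loops in $\C\wo\{0\}$ with the $C^0$-topology. The plan is to use compactness of $S^1$ to produce a uniform lower bound on $|x|$, and then observe that a sufficiently small $C^0$-perturbation stays in a convex sector around each value of $x$, so the straight-line homotopy does not hit $0$.

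First I would set $\eps:=\min_{t\in S^1}|x(t)|$, which is positive since $S^1$ is compact and $x$ is continuous and nowhere zero. Define $\UU:=\{x'\in C(S^1,\C\wo\{0\})\,|\,\Vert x'-x\Vert_\infty<\eps\}$, a $C^0$-neighborhood of $x$. For $x'\in\UU$ consider the homotopy $H:S^1\x[0,1]\to\C$, $H(t,s):=(1-s)x(t)+sx'(t)$. For each $t$ and $s$ we have $|H(t,s)|\geq|x(t)|-s|x'(t)-x(t)|\geq\eps-\Vert x'-x\Vert_\infty>0$, so $H$ takes values in $\C\wo\{0\}$. Thus $H$ is a homotopy in $\C\wo\{0\}$ from $x$ to $x'$, which is exactly the claim.

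There is essentially no obstacle here: the only point requiring a word is that $\min_{t}|x(t)|>0$, which is immediate from continuity and compactness, and that the convex combination estimate is uniform in $t$ because we bounded everything by the sup-norm. I would keep the proof to the three or four lines above.

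\begin{proof}[Proof of Lemma \ref{le:x C 0}]\setcounter{claim}{0}
Since $S^1$ is compact and $x$ is continuous with values in $\C\wo\{0\}$, the number
\[\eps:=\min_{t\in S^1}|x(t)|\]
is positive. We define
\[\UU:=\big\{x'\in C\big(S^1,\C\wo\{0\}\big)\,\big|\,\Vert x'-x\Vert_{C^0}<\eps\big\},\]
which is a $C^0$-neighborhood of $x$. Let $x'\in\UU$ and consider the map
\[H:S^1\x[0,1]\to\C,\quad H(t,s):=(1-s)x(t)+sx'(t).\]
For every $t\in S^1$ and $s\in[0,1]$ we have
\[|H(t,s)|\geq|x(t)|-s|x'(t)-x(t)|\geq\eps-\Vert x'-x\Vert_{C^0}>0,\]
so $H$ takes values in $\C\wo\{0\}$. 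Since $H(\cdot,0)=x$ and $H(\cdot,1)=x'$, the loop $x'$ is homotopic to $x$ in $\C\wo\{0\}$. This proves Lemma \ref{le:x C 0}.
\end{proof}
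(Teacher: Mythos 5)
Your proof is correct and takes essentially the same approach as the paper: both define $\UU$ to be the open $C^0$-ball of radius $\min_{t\in S^1}|x(t)|$ around $x$ and use the straight-line homotopy, which stays in $\C\wo\{0\}$ by the triangle inequality. Your write-up just makes the nonvanishing estimate explicit.
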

\begin{proof}[Proof of Lemma \ref{le:x C 0}]\setcounter{claim}{0} We define $\UU$ to be the open $C^0$-ball around $x$, with radius $\min_{z\in S^1}|x(z)|$. Let $x'\in\UU$. We define
\[h:[0,1]\x S^1\to\C,\quad h(t,z):=tx'(z)+(1-t)x(z).\]
This map does not vanish anywhere, and therefore is a homotopy between $x$ and $x'$ inside $\C\wo\{0\}$. This proves Lemma \ref{le:x C 0}.
\end{proof}
\begin{proof}[Proof of Proposition \ref{prop:loc conv S 1 C}]\setcounter{claim}{0} We canonically identify the set $\B=\B_{\C}$ of gauge equivalence classes of smooth triples $(P,A,u)$ (as defined in (\ref{eq:B})) with the set of gauge equivalence classes of smooth pairs
\[(A,u)\in\WWW_\C:=\Om^1(\C,\g)\x C^\infty(\C,M).\] 

We show that {\bf (\ref{prop:loc conv g}) implies (\ref{prop:loc conv deg})}. Assume that (\ref{prop:loc conv g}) holds. We denote by $m:S^2=\C\cup\{\infty\}\to\N_0$ the map given by $\deg_W$ on $\C$, and $m(\infty):=d-\deg(W)$. We define 
\[\UU:=\big\{B_r(z)\,\big|\,z\in\C,\,r>0\big\}\cup\big\{(\C\wo\BAR B_r)\cup\{\infty\}\,\big|\,r\in[1,\infty)\big\}.\]
This is a basis for the topology of $S^2$. 
\begin{Claim}Let $U\sub\UU$ be such that $\deg_W$ vanishes on $\dd U$. Then there exists $\nu_0\in\N$ such that for every $\nu\geq\nu_0$ we have $\deg_{W_\nu}\in V_U^{d_0}$, where
\[d_0:=\sum_{z\in U}m(z).\]
\end{Claim}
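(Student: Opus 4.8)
The plan is to reduce the claim to a statement about winding numbers of the map $u$ along small circles, using that $\tau_{\BAR\Om}$-convergence on compact sets with smooth boundary gives $C^\infty$-convergence of representatives (up to gauge) by the Remark following Definition~\ref{defi:conv}. Fix $U\in\UU$ with $\deg_W$ vanishing on $\dd U$. I would treat the two types of elements of $\UU$ (a ball $B_r(z)$, or the complement of a closed ball together with $\infty$) in parallel; in both cases $\dd U$ in $\C$ is a single circle $S^1_r(z)$, and the condition $\deg_W\equiv 0$ on $\dd U$ means, by definition of $\deg_W$ via \cite[Chapter III, Theorem 2.2]{JT}, that a representative section $v=u\circ\si$ of $W$ does not vanish on that circle. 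The number $d_0=\sum_{z'\in U}m(z')$ equals, by the degree count in the proof of Proposition~\ref{prop:E d} (applied on the disc bounded by $\dd U$, or its complement plus the contribution $m(\infty)=d-\deg(W)$ coming from the total degree $d$ of $W_\nu$), the winding number of $v/|v|$ around the appropriate boundary circle.

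First I would pick an open set $\Om$ with compact closure and smooth boundary whose boundary is exactly the relevant circle $\dd U$ (for the second type of $U$, take $\Om$ to be a large annulus containing $\dd U$, together with an application near $\infty$ of the continuity-at-$\infty$ estimate \eqref{eq:sup z z' bar d} of Proposition~\ref{prop:en conc} to control the degree contribution from the neighborhood of $\infty$). By hypothesis~(\ref{prop:loc conv g}), $W_\nu|_{\BAR\Om}\to W|_{\BAR\Om}$ in $\tau_{\BAR\Om}$, so there exist representatives $(A_\nu,u_\nu)$, $(A,u)$ over $\BAR\Om$ (after gauge) with $u_\nu\to u$ in $C^\infty$ on $\BAR\Om$, in particular in $C^0$ on the circle $\dd U$. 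Since $v=u\circ\si$ is non-vanishing on $\dd U$, Lemma~\ref{le:x C 0} applies to the loop $v|_{\dd U}$: for $\nu$ large, $v_\nu:=u_\nu\circ\si_\nu$ restricted to $\dd U$ lies in the $C^0$-neighborhood $\UU$ of Lemma~\ref{le:x C 0}, hence is homotopic to $v|_{\dd U}$ in $\C\wo\{0\}$, hence has the same winding number. Therefore $v_\nu$ is non-vanishing on $\dd U$, i.e.\ $\deg_{W_\nu}(z')=0$ for $z'\in\dd U$, and the total local degree of $W_\nu$ inside $U$ equals the winding number of $v_\nu/|v_\nu|$ around $\dd U$, which equals that of $v/|v|$, which is $d_0$. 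This is precisely the condition $\deg_{W_\nu}\in V_U^{d_0}$ from \eqref{eq:V Y d 0}, with $\nu_0$ the threshold from Lemma~\ref{le:x C 0}.

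The main obstacle I anticipate is the bookkeeping at $\infty$ for the second type of basis element $U=(\C\wo\BAR B_r)\cup\{\infty\}$: the convergence hypothesis~(\ref{prop:loc conv g}) only controls $W_\nu$ on compact subsets of $\C$, so to conclude that $\sum_{z'\in U}m(z')$ (which includes the ``mass at $\infty$'' $d-\deg(W)$) is matched by $\deg_{W_\nu}$ near $\infty$, one must use the energy/degree relation $E(W_\nu)=\pi d$, $E(W)=\pi\deg(W)$ (Proposition~\ref{prop:E d}) together with energy conservation on the annulus $\BAR B_R\wo\BAR B_r$ for $R$ large — i.e.\ no degree escapes to $\infty$ along $\dd U$ provided $v$ is non-vanishing there, and the winding number around the large circle $S^1_R$ stabilizes. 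This is essentially the same argument as in the proof of Proposition~\ref{prop:E d}, applied to the exterior region, and I would invoke Proposition~\ref{prop:en conc} to make the ``no energy near $\infty$'' statement quantitative. Once this is in place the rest is the homotopy-invariance argument above, which is routine.
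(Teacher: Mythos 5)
Your argument for the ball case $U=B_r(z)$ is essentially the paper's: pick $C^\infty$-convergent gauged representatives over $\bar B_r(z)$ (via Lemma~\ref{le:X G} / the Remark after Definition~\ref{defi:conv}), observe that $\deg_W\equiv 0$ on $S^1_r(z)$ means $u$ is nonvanishing on that circle, apply Lemma~\ref{le:x C 0} to get $u_\nu$ nonvanishing there and homotopic to $u|_{S^1_r(z)}$ in $\C\wo\{0\}$ for $\nu$ large, and read off the degree count. No issues there.

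For the complement case $U=(\C\wo\BAR B_r)\cup\{\infty\}$ the paper's step is purely arithmetical and much shorter than what you outline, and your proposed detour contains a genuine misstep. Since $\dd U=S^1_r=\dd B_r$, one simply applies the ball case to $B_r$ (same boundary circle, same non-vanishing hypothesis) to obtain, for $\nu$ large, that $\deg_{W_\nu}\equiv 0$ on $S^1_r$ and $\sum_{z'\in B_r}\deg_{W_\nu}(z')=\sum_{z'\in B_r}\deg_W(z')$. Combined with $\deg(W_\nu)=d$ (which follows from $W_\nu\in\M_d$ and Proposition~\ref{prop:E d}), this already gives $\sum_{z'\in\C\wo\bar B_r}\deg_{W_\nu}(z')=d-\sum_{z'\in B_r}\deg_W(z')=\sum_{z'\in U}m(z')=d_0$. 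No winding numbers near $\infty$ and no energy estimate on an annulus are needed. Moreover, the invocation of Proposition~\ref{prop:en conc} you envisage to make a ``no energy near $\infty$'' statement would in general fail: when $\deg(W)<d$ (allowed by the hypothesis $d'\leq d$ of Proposition~\ref{prop:loc conv S 1 C}), the energy of $W_\nu$ on $\C\wo B_R$ does not become small uniformly in $\nu$ as $R\to\infty$ — it tends to $\pi\big(d-\deg(W)\big)>0$, which is exactly the mass that $\iota_d$ assigns to $\infty$ — so the small-energy hypothesis of Proposition~\ref{prop:en conc} is not satisfied on the exterior region. The winding-number-around-$S^1_R$ bookkeeping you sketch can be made to work without that estimate, but it is a longer route to the same arithmetic.
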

\begin{proof}[Proof of the claim] Let $U$ be as in the hypothesis of the claim. Consider first the {\bf case $U=B_r(z)$}, for some $z\in\C$, $r>0$. We choose a (smooth) representative $w=(A,u)\in\WWW_{\BAR B_r(z)}$ of $W|_{\BAR B_r(z)}$. Since by assumption, $\deg_W$ vanishes on $\dd U=S^1_r(z)$, $u$ is nonzero on $S^1_r(z)$. 

The $C^\infty$-topology on the set of smooth connections on $\BAR B_r(z)$ is second-countable. Furthermore, the action of the gauge group on this space is continuous. Therefore, by assumption (\ref{prop:loc conv g}) and Lemma \ref{le:X G} in Appendix \ref{sec:add}, for each $\nu\in\N$, there exists a representative $w_\nu=(A_\nu,u_\nu)\in\WWW_{\BAR B_r(z)}$ of $W_\nu$, such that $w_\nu$ converges to $w$ in $C^\infty$, as $\nu\to\infty$. Hence, using Lemma \ref{le:x C 0}, there exists $\nu_0\in\N$ such that for $\nu\geq\nu_0$ the restriction of $u_\nu$ to $S^1_r(z)$ does not vanish anywhere, and is homotopic to $u|_{S^1_r(z)}$, as a map with target $\C\wo\{0\}$. It follows that
\[\deg\left(\frac{u_\nu}{|u_\nu|}:S^1_r(z)\to S^1\right)=\deg\left(\frac u{|u|}:S^1_r(z)\to S^1\right),\]
for $\nu\geq\nu_0$. By elementary arguments, the left hand side of this equality agrees with $\sum_{z'\in B_r(z)}\deg_{u_\nu}(z')$, and the right hand side equals $\sum_{z'\in B_r(z)}\deg_u(z')$. Since by definition, $\deg_{u_\nu}(z')=\deg_{W_\nu}(z')$ and $\deg_u(z')=\deg_W(z')$, it follows that
\[\sum_{z'\in B_r(z)}\deg_{W_\nu}(z')=\sum_{z'\in B_r(z)}\deg_W(z')=d_0,\]
and therefore, $\deg_{W_\nu}\in V_U^{d_0}$. This proves the claim in the case $U=B_r(z)\sub\C$. 

Consider now the {\bf case $U=(\C\wo\BAR B_r)\cup\{\infty\}$}, for some $r\geq1$. By what we just proved, there exists an index $\nu_0$, such that for every $\nu\geq\nu_0$ 
\[\sum_{z'\in B_r}\deg_{W_\nu}(z')=\sum_{z'\in B_r}\deg_W(z'),\]
It follows that 
\[\sum_{z'\in\C\wo\BAR B_r}\deg_{W_\nu}(z')=\sum_{z'\in(\C\wo\BAR B_r)\cup\{\infty\}}m(z')=d_0,\]
and therefore, $\deg_{W_\nu}\in V_U^{d_0}$, for every $\nu\geq\nu_0$. This proves the claim in the case $U=(\C\wo\BAR B_r)\cup\{\infty\}$.
\end{proof}
The claim implies that if $d_0\in\N$ and $U\in\UU$ are such that $m\in V_U^{d_0}$ then there exists $\nu_0\in\N$ such that for every $\nu\geq\nu_0$ we have $\deg_{W_\nu}\in V_U^{d_0}$. Combining this with Lemma \ref{le:subbasis} (with $X:=S^2$), condition (\ref{prop:loc conv deg}) follows. This proves that {\bf (\ref{prop:loc conv g}) implies (\ref{prop:loc conv deg})}.

We show the {\bf opposite implication}: Assume that (\ref{prop:loc conv deg}) is satisfied. We prove that given an open subset $\Om\sub\C$ with compact closure and smooth boundary, the class $W_\nu|_{\BAR\Om}$ converges to $W|_{\BAR\Om}$, as $\nu\to\infty$, with respect to the topology $\tau_{\BAR\Om}$. By Lemma \ref{le:subsubseq} (Appendix \ref{sec:add}) it suffices to show that for every such $\Om$ and every subsequence $(\nu_i)_{i\in\N}$ there exists a further subsequence $(i_j)_{j\in\N}$ such that $W_{\nu_{i_j}}|_{\BAR\Om}$ converges to $W|_{\BAR\Om}$, as $j\to\infty$. Let $(\nu_i)$ be a subsequence. Let $i\in\N$. We choose a representative $(A_i,u_i)\in\WWW_\C$ of $W_{\nu_i}$. It follows from Proposition \ref{prop:S 1 C image} that the image of $u_i$ is contained in the closed ball $\BAR B_1\sub\C$.%
\footnote{This is also true if $E(W_{\nu_i})=0$, since then the image of $u_i$ equals $S^1$.}
 Furthermore, by Proposition \ref{prop:E d}, we have $E(w_i)=d\pi$. Therefore, Proposition \ref{prop:cpt mod} (Compactness modulo bubbling and gauge) implies that there exist a smooth vortex $w_0:=(A_0,u_0)$ over $\C$, a subsequence $(i_j)_{j\in\N}$, and gauge transformations $g_j\in W^{2,p}_\loc(\C,G)$ (for $j\in\N$), such that the pair $w_j':=g_j^*(A_{i_j},u_{i_j})$ is smooth and converges to $w_0$, as $j\to\infty$, in $C^\infty$ on every compact subset of $\C$. We denote by $W_0$ the equivalence class of $w_0$. It follows from Lemma \ref{le:g smooth} (Appendix \ref{sec:add}) that $g_j$ is smooth, and hence $w_j'$ represents $W^j:=W_{\nu_{i_j}}$, for every $j$. It follows that $W^j|_{\BAR\Om}$ converges to $W_0|_{\BAR\Om}$ with respect to $\tau_{\BAR\Om}$, as $j\to\infty$, for every open subset $\Om\sub\C$ with compact closure and smooth boundary. 

It remains to show that $W_0=W$. The energy densities $e_{W^j}$ converge to $e_{W_0}$ in $C^\infty$ on compact subsets of $\C$. Therefore, we have
\[E(W_0)\leq\liminf_{j\to\infty}E(W^j)=d\pi.\]
By Proposition \ref{prop:E d} we have $E(W_0)=\pi\deg(W_0)$, and therefore, $d_0:=\deg(W_0)\leq d$. Hence $\iota_d(\deg_{W_0})\in\Sym^d(S^2)$ is well-defined. It follows from the implication (\ref{prop:loc conv g})$\then$(\ref{prop:loc conv deg}) (which we proved above), that the point
\[\deg_{W^j}\in\Sym^d(\C)\sub\Sym^d(S^2)\]
converges to $\iota_d(\deg_{W_0})$, as $j\to\infty$. By our assumption (\ref{prop:loc conv deg}), $\deg_{W^j}$ also converges to $\iota_d(\deg_W)$. It follows that $\iota_d(\deg_{W_0})=\iota_d(\deg_W)$, hence $\deg_{W_0}=\deg_W$, and therefore, $W_0=W$. (Here in the last step we used that the map (\ref{eq:M d Sym}) is injective.) This proves that {\bf (\ref{prop:loc conv deg}) implies (\ref{prop:loc conv g})} and concludes the proof of Proposition \ref{prop:loc conv S 1 C}.
\end{proof}
For the proof of Proposition \ref{PROP:CONV S 1 C} we also need the following lemma.
\begin{lemma}\label{le:Q Q'} Let $k$ be a positive integer, $\phi_0^\nu,\ldots,\phi_k^\nu$ be sequences of M\"obius transformations, and let
\[z_0,\ldots,z_{k-1},w_1,\ldots,w_k\in S^2\]
be points. Assume that $z_1\neq w_1,\ldots,z_{k-1}\neq w_{k-1}$, and that 
\[(\phi_i^\nu)^{-1}\circ\phi_{i+1}^\nu\to z_i,\]
uniformly on compact subsets of $S^2\wo \{w_{i+1}\}$, for $i=0,\ldots,k-1$. Let $Q\sub S^2\wo \{z_0\}$, $Q'\sub S^2\wo \{w_k\}$ be compact subsets. Then for $\nu$ large enough we have
\begin{equation}
\label{eq:phi 0 k}\phi_0^\nu(Q)\cap \phi_k^\nu(Q')=\emptyset.
\end{equation}
\end{lemma}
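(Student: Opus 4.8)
\textbf{Proof proposal for Lemma \ref{le:Q Q'}.}

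The plan is to prove the statement by induction on $k$, peeling off one ``bubble level'' at a time and using the nested convergence of the $\phi_i^\nu$'s to push the compact set $Q$ away from $z_0$ after conjugation. First I would treat the base case $k=1$ directly: the hypothesis says $(\phi_0^\nu)^{-1}\circ\phi_1^\nu\to z_0$ uniformly on the compact set $Q'\subseteq S^2\wo\{w_1\}$, so for $\nu$ large the image $(\phi_0^\nu)^{-1}\big(\phi_1^\nu(Q')\big)$ lies in any prescribed neighborhood $V$ of $z_0$. Since $Q$ is compact and $z_0\notin Q$, I can choose $V$ disjoint from $Q$ (using that $S^2$ is Hausdorff and metrizable), hence $Q\cap(\phi_0^\nu)^{-1}\big(\phi_1^\nu(Q')\big)=\emptyset$, which is equivalent to \eqref{eq:phi 0 k}.

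For the inductive step, suppose the statement holds for $k-1$ with $k\geq2$. Given compact sets $Q\subseteq S^2\wo\{z_0\}$ and $Q'\subseteq S^2\wo\{w_k\}$, I would set $\widetilde Q':=(\phi_{k-1}^\nu)^{-1}\big(\phi_k^\nu(Q')\big)$; by the hypothesis for $i=k-1$, this set is contained, for $\nu$ large, in an arbitrarily small neighborhood of $z_{k-1}$, and since $z_{k-1}\neq w_{k-1}$ I can arrange $\widetilde Q'\subseteq S^2\wo\{w_{k-1}\}$ — but the subtlety is that $\widetilde Q'$ depends on $\nu$, so I should instead fix once and for all a small closed neighborhood $Q''$ of $z_{k-1}$ with $Q''\subseteq S^2\wo\{w_{k-1}\}$, and note that $\phi_k^\nu(Q')\subseteq\phi_{k-1}^\nu(Q'')$ for all large $\nu$. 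Applying the inductive hypothesis to the sequences $\phi_0^\nu,\ldots,\phi_{k-1}^\nu$, the points $z_0,\ldots,z_{k-2}$, $w_1,\ldots,w_{k-1}$, and the compact sets $Q$ and $Q''$, I obtain $\phi_0^\nu(Q)\cap\phi_{k-1}^\nu(Q'')=\emptyset$ for $\nu$ large; since $\phi_k^\nu(Q')\subseteq\phi_{k-1}^\nu(Q'')$, this yields $\phi_0^\nu(Q)\cap\phi_k^\nu(Q')=\emptyset$ as desired.

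The one point requiring a little care — and the main obstacle — is the uniformity over $\nu$ when passing from ``$(\phi_{k-1}^\nu)^{-1}\circ\phi_k^\nu(Q')$ is eventually near $z_{k-1}$'' to ``$\phi_k^\nu(Q')\subseteq\phi_{k-1}^\nu(Q'')$ for a \emph{fixed} $Q''$''. This is handled by choosing $Q''$ to be, say, a closed metric ball $\overline B_\de(z_{k-1})$ with $\de$ small enough that $w_{k-1}\notin\overline B_\de(z_{k-1})$ (possible since $z_{k-1}\neq w_{k-1}$), and then invoking the uniform convergence $(\phi_{k-1}^\nu)^{-1}\circ\phi_k^\nu\to z_{k-1}$ on the compact set $Q'\subseteq S^2\wo\{w_k\}$: there is $\nu_0$ so that $(\phi_{k-1}^\nu)^{-1}\big(\phi_k^\nu(Q')\big)\subseteq B_\de(z_{k-1})\subseteq Q''$ for all $\nu\geq\nu_0$, equivalently $\phi_k^\nu(Q')\subseteq\phi_{k-1}^\nu(Q'')$. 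Everything else is a routine combination of compactness of $S^2$, the Hausdorff property, and the fact that the two required ``exceptional point'' separations $z_i\neq w_i$ are exactly what is assumed for $i=1,\ldots,k-1$ (the cases $i=0$ and the target point $w_k$ appear only as the centers/excluded points of $Q$ and $Q'$ respectively, consistent with the hypotheses $Q\subseteq S^2\wo\{z_0\}$, $Q'\subseteq S^2\wo\{w_k\}$).
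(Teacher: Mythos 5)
Your proof is correct. The paper itself gives no details (it says only ``This follows from an elementary argument''), and your induction on $k$ is a clean, complete way to make that argument precise. The key observation — that the $\nu$-dependence of $(\phi_{k-1}^\nu)^{-1}\bigl(\phi_k^\nu(Q')\bigr)$ can be removed by trapping it inside a \emph{fixed} closed ball $Q''=\overline B_\delta(z_{k-1})$ chosen small enough to avoid $w_{k-1}$ (using $z_{k-1}\neq w_{k-1}$) — is exactly the step that makes the induction go through, and you correctly identified it as the only point requiring care. The base case correctly uses only the hypotheses $Q\subseteq S^2\wo\{z_0\}$, $Q'\subseteq S^2\wo\{w_1\}$, and the bijectivity of $\phi_0^\nu$, while the inequalities $z_i\neq w_i$ for $i=1,\ldots,k-1$ are used exactly once each across the induction, consistent with the hypotheses. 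No gaps.
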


\begin{proof}[Proof of Lemma \ref{le:Q Q'}] This follows from an elementary argument. 
\end{proof}
\begin{proof}[Proof of Proposition \ref{PROP:CONV S 1 C}] \setcounter{claim}{0} {\bf Assume that (\ref{prop:conv S 1 C w}) holds}. Then Proposition \ref{prop:E d} and (\ref{eq:E V bar T}) imply that equality (\ref{eq:deg w nu}) holds for $\nu$ large enough, as claimed. The second part of condition (\ref{prop:conv S 1 C deg}) follows from Proposition \ref{prop:loc conv S 1 C}.

{\bf Suppose} now on the contrary that {\bf condition (\ref{prop:conv S 1 C deg})  holds}. Then Proposition \ref{prop:E d} and equality (\ref{eq:deg w nu}) imply (\ref{eq:E V bar T}). Let $\phi_\al^\nu$ (for $\al\in T$) be as in condition (\ref{prop:conv S 1 C deg}). We show \ref{defi:conv}(\ref{defi:conv w}): The first part of this condition (concerning $\al\in T_1$) follows from Proposition \ref{prop:loc conv S 1 C}. We prove the second part of the condition: Let $\al\in T_\infty$, and $Q \sub S^2\wo (Z_\al\cup\{z_{\al,0}\})$ be a compact subset. We denote by $\bar x_0$ the orbit $S^1\sub \C$. 
\begin{claim}\label{claim:u nu i j} For every subsequence $(\nu_i)_{i\in\N}$ there exists a further subsequence $(i_j)_{j\in\N}$, such that
\begin{eqnarray*}&u^{\nu_{i_j}}\circ\phi^{\nu_{i_j}}_\al(Q)\sub M^*=\C\wo\{0\},\quad\forall j\in\N,&\\
&G u^{\nu_{i_j}}\circ\phi^{\nu_{i_j}}_\al\to\bar x_0\textrm{ in }C^1\textrm{ on }Q\textrm{, as }j\to\infty.&
\end{eqnarray*}
\end{claim}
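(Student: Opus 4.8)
\textbf{Proof proposal for Claim \ref{claim:u nu i j}.}

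The plan is to reduce this to the compactness result, Proposition \ref{prop:cpt mod}, applied to the rescaled vortex classes $(\phi_\al^\nu)^*W_\nu$, and then to identify the limit as the constant ``sphere'' over $\bar x_0$ using the energy-vanishing of the ghost component $\bar u_\al$. First I would choose, for each $\nu$, a representative $(A^\nu,u^\nu)\in\WWW_\C$ of $W_\nu$ on the trivial bundle; since $\al\in T_\infty$, condition \ref{defi:conv}(\ref{defi:conv phi z}) guarantees that $\phi_\al^\nu(z_{\al,0})=\infty$, so the pullback $(\phi_\al^\nu)^*W_\nu$ is a vortex class over $\C$ (with respect to a rescaled area form $|\lam_\al^\nu|^2\om_0$) that is defined near any compact $Q\sub S^2\wo(Z_\al\cup\{z_{\al,0}\})$ for $\nu$ large. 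Because every $u^\nu$ has image in $\BAR B_1\sub\C$ (Proposition \ref{prop:S 1 C image} and the remark after it, covering also the zero-energy case) and $E(W_\nu)=d\pi$ by Proposition \ref{prop:E d}, the hypotheses of Proposition \ref{prop:cpt mod} are met with $R_\nu:=|\lam_\al^\nu|\to\infty$ (using the last part of \ref{defi:conv}(\ref{defi:conv phi z})) after we exhaust $S^2\wo\{z_{\al,0}\}\iso\C$ by balls $B_{r_\nu}$. Passing to a subsequence $(i_j)$, Proposition \ref{prop:cpt mod}(\ref{prop:cpt mod:=},\ref{prop:cpt mod lim nu E eps}) yields a finite bubbling set $Z'\sub\C$, an $\infty$-vortex $w_0=(A_0,u_0)\in\WWW_{\C\wo Z'}$, and gauge transformations $g_j\in W^{2,p}_\loc(\C\wo Z',G)$ so that $g_j^*(\phi_\al^{\nu_{i_j}})^*A^{\nu_{i_j}}\to A_0$ in $C^0$ and $g_j^{-1}(u^{\nu_{i_j}}\circ\phi_\al^{\nu_{i_j}})\to u_0$ in $C^1$ on compact subsets of $\C\wo Z'$.

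Next I would pin down $Z'$ and the limit. By the energy-conservation condition \ref{defi:conv}(\ref{eq:E V bar T}) together with Lemma \ref{le:conv e}, no energy can escape to a bubbling point of $(\phi_\al^\nu)^*W_\nu$ lying in $S^2\wo(Z_\al\cup\{z_{\al,0}\})$, because all of the energy of $W_\nu$ is accounted for by the components of the stable map; more precisely the second part of \ref{defi:conv}(\ref{defi:conv w}) (which is exactly what we are proving, so I would argue directly instead): since $\bar u_\al$ has zero energy (the symplectic quotient here is a point, so \emph{every} holomorphic sphere in $\BAR M$ is constant and carries no energy), and since by \ref{defi:conv}(\ref{defi:conv al be}) the rescalings $\phi_\al^\nu$ separate the nodal points $z_{\al\be}\in Z_\al$ and $z_{\al,0}$ from any fixed compact $Q\sub S^2\wo(Z_\al\cup\{z_{\al,0}\})$, one shows via Proposition \ref{prop:en conc} (energy concentration near ends) that $\limsup_\nu E^{R_\nu}\big((\phi_\al^\nu)^*W_\nu,Q\big)=0$. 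Feeding this into Lemma \ref{le:conv e} forces $e^\infty_{w_0}\equiv 0$ on $\C\wo Z'$, hence $d_{A_0}u_0=0$; combined with $\mu\circ u_0=0$ (the $\infty$-vortex equation) and the fact that $\mu^{-1}(0)=S^1$ is a single orbit, we get that the $\bar J$-holomorphic map $Gu_0:\C\wo Z'\to\BAR M$ is constant equal to $\bar x_0$, and in fact $Z'=\emptyset$ (a genuine bubbling point would carry at least $\Emin>0$, contradicting $E_z(\eps)\geq\Emin$ in \ref{prop:cpt mod}(\ref{prop:cpt mod lim nu E eps})). Therefore $u_0$ maps into $S^1\sub\C$, in particular $u_0(\C)\sub M^*=\C\wo\{0\}$.

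Finally I would transfer this back to the original maps. Since $g_j^{-1}(u^{\nu_{i_j}}\circ\phi_\al^{\nu_{i_j}})\to u_0$ in $C^1$ on $Q$ and $u_0(Q)$ is a compact subset of $\C\wo\{0\}$, for $j$ large the maps $g_j^{-1}(u^{\nu_{i_j}}\circ\phi_\al^{\nu_{i_j}})$, hence also $u^{\nu_{i_j}}\circ\phi_\al^{\nu_{i_j}}$ (which differs only by the $S^1$-action, preserving $\C\wo\{0\}$), take values in $\C\wo\{0\}=M^*$ on $Q$; this gives the first displayed assertion. For the second, note $Gu^{\nu_{i_j}}\circ\phi_\al^{\nu_{i_j}}=G\big(g_j^{-1}(u^{\nu_{i_j}}\circ\phi_\al^{\nu_{i_j}})\big)$ since the orbit map is $G$-invariant, and this converges in $C^1$ on $Q$ to $Gu_0\equiv\bar x_0$, as the quotient map $\C\wo\{0\}\to(\C\wo\{0\})/S^1$ is smooth. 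The main obstacle is the middle step: establishing that no energy leaks into $Q$ and hence that the limit is the \emph{constant} ghost and that $Z'$ is empty. This is where one genuinely uses the energy-concentration estimate of Proposition \ref{prop:en conc} together with the separation property \ref{defi:conv}(\ref{defi:conv al be}) of the reparametrizations; handling the possibility of a nonempty $Z'$ and ruling it out via the lower energy bound $\Emin$ requires care, but it is exactly parallel to the corresponding step in the proof of Theorem \ref{thm:bubb}.
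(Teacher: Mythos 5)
Your opening move---apply Proposition \ref{prop:cpt mod} to the rescaled sequence with $R_\nu\to\infty$, identify the limit as an $\infty$-vortex with $e^\infty_{w_0}\equiv 0$ (using that $\BAR M$ is a point), and transfer back via the orbit map---does match the structure of the paper's proof, including the observation that the bubbling lower bound $\Emin=\pi$ is the key lever. The paper is slightly more careful at the very start: since $\phi_\al^\nu(z_{\al,0})=\infty$ but $z_{\al,0}$ need not be $\infty$, the paper precomposes with a fixed M\"obius map $\psi$ sending $\infty\mapsto z_{\al,0}$ so that $\phi_\al^\nu\circ\psi$ restricts to an affine automorphism of $\C$, and takes $R_\al^\nu:=|\phi_\al^\nu\circ\psi(1)-\phi_\al^\nu\circ\psi(0)|$; your $R_\nu:=|\lam_\al^\nu|$ silently assumes $z_{\al,0}=\infty$.

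The real gap is in the middle. You assert $Z'=\emptyset$; that is false in general and is also not what is needed. The bubbling set of the limit $\infty$-vortex is genuinely nonempty whenever some $T_1$-vertex is attached to $\al$ at a nodal point in $Z_\al$ other than $z_{\al,0}$: under $(\phi_\al^\nu)^*$ the positive energy of that $T_1$-component concentrates precisely at the corresponding point of $Z_\al$, so $\Emin$-worth of energy bubbles off there. The statement that actually has to be proved (and is the content of the paper's Claim \ref{claim:Z Z al}) is $\psi(Z')\sub Z_\al$, i.e., \emph{no bubbling away from the nodal points}. Your argument ``a genuine bubbling point would carry at least $\Emin>0$, contradicting \ref{prop:cpt mod}(\ref{prop:cpt mod lim nu E eps})'' is only a contradiction once you have an \emph{upper} bound $<\Emin$ on the energy near that point; the lower bound alone contradicts nothing, and your proposed upper bound ``$\limsup_\nu E^{R_\nu}((\phi_\al^\nu)^*W_\nu,Q)=0$'' applies only to $Q$ that avoid $Z_\al$, so at best it excludes bubbling outside $Z_\al\cup\{z_{\al,0}\}$.

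Moreover that upper bound is asserted but not derived, and your reference to Proposition \ref{prop:en conc} does not reflect what the paper does. The paper never invokes energy concentration near ends here. Instead it exploits the Ginzburg--Landau degree hypothesis (\ref{eq:deg w nu}) together with Proposition \ref{prop:E d}: $\sum_{\be\in T_1}E(W_\be)=E(W_\nu)$, so almost all the energy is accounted for by the $T_1$-components. Using the already-established first part of \ref{defi:conv}(\ref{defi:conv w}) to get $E(W_\nu,\phi_\be^\nu(B_R))\to E(W_\be,B_R)$ for each $\be\in T_1$, Lemma \ref{le:Q Q'} to make the sets $\phi_\be^\nu(\bar B_R)$ disjoint for large $\nu$, and the tail bound (\ref{eq:be V Emin}), one gets $E\big(W_{\nu_{i_j}},\C\wo\bigcup_{\be\in T_1}\phi_\be^{\nu_{i_j}}(B_R)\big)<\pi/2<\Emin$ for large $j$. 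Combined with a second application of Lemma \ref{le:Q Q'} to separate $\phi_\al^\nu(Q)$ from $\phi_\be^\nu(\bar B_R)$, this gives the needed upper bound at any candidate bubbling point away from $Z_\al\cup\{z_{\al,0}\}$, contradicting $E_z(\eps)\geq\Emin$. Trying to replace this degree-counting argument by Proposition \ref{prop:en conc} would still require establishing smallness of the energy on an annulus before the proposition applies, which is essentially the same accounting work you would have to do anyway; as written, that step is missing.
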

\begin{proof}[Proof of Claim \ref{claim:u nu i j}] We fix a subsequence $(\nu_i)_{i\in\N}$. We choose a M\"obius transformation $\psi$ such that $\psi(\infty)=z_{\al,0}$. By condition \ref{defi:conv}(\ref{defi:conv phi z}) we have
\[\phi_\al^\nu\circ\psi(\infty)=\phi_\al^\nu(z_{\al,0})=\infty,\]
and hence $\phi_\al^\nu\circ\psi$ restricts to an automorphism of $\C$. We define
\[W_\al^\nu:=(\phi_\al^\nu\circ\psi)^*W_\nu,\quad R_\al^\nu:=\big|\phi_\al^\nu\circ\psi(1)-\phi_\al^\nu\circ\psi(0)\big|\in(0,\infty).\]
Then $W_\al^\nu$ is an $R_\al^\nu$-vortex class over $\C$. We choose a representative $w_\al^\nu$ of $W_\al^\nu$. We check the hypotheses of Proposition \ref{prop:cpt mod} with the sequences $R_i:=R_\al^{\nu_i}$, $w_i:=w_\al^{\nu_i}$, and $R_0:=\infty$. By condition \ref{defi:conv}(\ref{defi:conv phi z}), $R_\al^{\nu_i}$ converges to $\infty$, as $i\to\infty$. Furthermore, by Proposition \ref{prop:S 1 C image} the image of $u_\al^{\nu_i}$ is contained in the compact set $K:=\BAR B_1\sub\C$. Finally, Proposition \ref{prop:E d} implies that the energies of the vortices $w_\al^{\nu_i}$ ($i\in\N$) are uniformly bounded. Hence the hypotheses of Proposition \ref{prop:cpt mod} are satisfied. Applying this result, there exist a finite subset $Z\sub \C$, an $\infty$-vortex $w_0:=(A_0,u_0)\in\WWW_{\C\wo Z}$, a subsequence $(i_j)$, and gauge transformations $g_j\in W^{2,p}_\loc\big(\C\wo Z,S^1\big)$, such that assertions of the proposition hold, with the sequence $(w_\nu)$ replaced by $(w_\al^{\nu_{i_j}})$. By condition \ref{prop:cpt mod}(\ref{prop:cpt mod:=}) the map $g_j^{-1}\big(u_{\nu_{i_j}}\circ\phi_\al^{\nu_{i_j}}\big)$ converges to $u_0\circ\psi^{-1}$ in $C^1$ on every compact subset of $\psi(\C\wo Z)=S^2\wo\psi\big(Z\cup\{\infty\}\big)$. Therefore, using the equality $\psi(\infty)=z_{\al,0}$, the statement of Claim \ref{claim:u nu i j} is a consequence of the following:
\begin{claim}\label{claim:Z Z al} The set $\psi(Z)$ is contained in $Z_\al$.
\end{claim}
\begin{pf}[Proof of Claim \ref{claim:Z Z al}] We choose a number $R>0$ so large that
\begin{equation}
\label{eq:be V Emin}\sum_{\be\in T_1}E\big(W_\be,\C\wo B_R\big)<\frac{\pi}4.
\end{equation}
Let $\be\in T_1$ be a vertex. We denote $W_\be^\nu:=(\phi_\be^\nu)^*W_\nu$. By what we have already proved, the restriction $W_\be^\nu|_{\BAR B_R}$ converges to $W_\be|_{\BAR B_R}$, as $\nu\to\infty$, with respect to the topology $\tau_{\BAR B_R}$. Therefore, using Lemmas \ref{le:X G} (Appendix \ref{sec:add}) and \ref{le:conv e}, it follows that 
\[E\big(W_\nu,\phi_\be^\nu(B_R)\big)=E(W_\be^\nu,B_R)\to E(W_\be,B_R),\]
as $\nu\to\infty$. We choose an index $j_0$ so large that for $j\geq j_0$ and every $\be\in T_1$ we have 
\begin{equation}
\label{eq:E w nu be}E(W_{\nu_{i_j}},\phi_\be^{\nu_{i_j}}(B_R))>E(W_\be,B_R)-\frac{\pi}{4|T_1|}.
\end{equation}
Let $\be\neq\ga\in T$ be vertices. We denote by $\big(\be,\be_1,\ldots,\be_{k-1},\ga\big)$ the chain of vertices connecting $\be$ with $\ga$, and write $\be_0:=\be$, $\be_k:=\ga$. By conditions \ref{defi:st}(\ref{defi:st dist}) and \ref{defi:conv}(\ref{defi:conv al be}) the hypothesis of Lemma \ref{le:Q Q'} with 
\[\phi_i^\nu:=\phi_{\be_i}^\nu,\quad z_i:=z_{\be_i\be_{i+1}},\quad w_i:=z_{\be_i\be_{i-1}}\]
are satisfied. It follows that for every compact subset $Q\sub S^2\wo Z_\be$ and $Q'\sub S^2\wo Z_\ga$, for $\nu$ large enough, we have
\begin{equation}
\label{eq:phi be nu Q}\phi_\be^\nu(Q)\cap \phi_\ga^\nu(Q')=\emptyset.
\end{equation}
Applying this several times with $\be,\ga\in T_1$ and $Q:=Q':=\bar B_R$, it follows that for $\nu$ large enough the sets $\phi_\be^\nu(\BAR B_R)$, $\be\in T_1$, are disjoint. Increasing $j_0$ we may assume w.l.o.g.~that this holds for $\nu\geq\nu_{i_{j_0}}$. Therefore, for $j\geq j_0$, we have
\begin{eqnarray}\nn E\Bigg(W_{\nu_{i_j}},\bigcup_{\be\in T_1}\phi_\be^{\nu_{i_j}}(B_R)\Bigg)&=&\sum_{\be\in T_1}E\big(W_{\nu_{i_j}},\phi_\be^{\nu_{i_j}}(B_R)\big)\\
\nn &>&\Bigg(\sum_{\be\in T_1}E(W_\be,B_R)\Bigg)-\frac\pi4\\
\label{eq:pi d Emin}&>&\sum_{\be\in T_1}E(W_\be,\C)-\frac\pi2.
\end{eqnarray}
Here in the second line we used (\ref{eq:E w nu be}) and in the third line we used (\ref{eq:be V Emin}). Our assumption (\ref{eq:deg w nu}) and Proposition \ref{prop:E d} imply that
\[\sum_{\be\in T_1}E(W_\be,\C)=E(W_\nu,\C),\]
for $\nu$ large enough. Hence (\ref{eq:pi d Emin}) implies that
\begin{equation}\label{eq:E W nu i j}E\Bigg(W_{\nu_{i_j}},\C\wo \bigcup_{\be\in T_1}\phi_\be^{\nu_{i_j}}(B_R)\Bigg)<\frac\pi2,
\end{equation}
for $j$ large enough. 

Let now $z\in S^2\wo (Z_\al\cup\{z_{\al,0}\})$. We show that $z$ does not belong to $\psi(Z)$. We choose a number $\eps>0$ so small that $\bar B_\eps(\psi^{-1}(z))\sub \C\wo \psi^{-1}(Z_\al)$. We define $Q:=\psi\big(\bar B_\eps(\psi^{-1}(z))\big)$. By (\ref{eq:phi be nu Q}), we have 
\[\phi_\al^\nu(Q)\cap \phi_\be^\nu(\bar B_R)=\emptyset,\]
for $\nu$ large enough. Therefore, by (\ref{eq:E W nu i j}) implies that 
\[E\big(W_{\nu_{i_j}},\phi_\al^{\nu_{i_j}}(Q)\big)<\frac\pi2,\]
for $j$ large enough. On the other hand, if $z$ belonged to $\psi(Z)$, then by condition \ref{prop:cpt mod}(\ref{prop:cpt mod lim nu E eps}) we would have 
\[\lim_{j\to\infty}E\big(W_{\nu_{i_j}},\phi_\al^{\nu_{i_j}}(Q)\big)=\lim_{j\to\infty}E\big(W_\al^{\nu_{i_j}},\bar B_\eps(\psi^{-1}(z))\big)\geq\Emin=\pi.\]
This contradiction proves that $z\not\in\psi(Z)$. It follows that $\psi(Z)\sub Z_\al\cup\{z_{\al,0}\}$. Since $\psi(\infty)=z_{\al,0}$, Claim \ref{claim:Z Z al} follows. This concludes the proof of Claim \ref{claim:u nu i j}.
\end{pf}
\end{proof}
Claim \ref{claim:u nu i j} and an elementary argument imply that $u_\nu\circ\phi_\al^\nu(Q)\sub M^*$ for $\nu$ large enough. Furthermore, it follows from the same claim and Lemma \ref{le:subsubseq} in Appendix \ref{sec:add} that $G u_\nu\circ\phi_\al^\nu\to\bar x_0$ in $C^1$ on $Q$, as $\nu\to\infty$. This proves the second part of \ref{defi:conv}(\ref{defi:conv w}). Hence all conditions of Definition \ref{defi:conv} are satisfied, and therefore, {\bf condition \ref{PROP:CONV S 1 C}(\ref{prop:conv S 1 C w}) holds}. This concludes the proof of Proposition \ref{PROP:CONV S 1 C}.
\end{proof}

\chapter{Fredholm theory for vortices over the plane}\label{chap:Fredholm}
In this chapter the equivariant homology class $[W]$ of an equivalence class $W$ of triples $(P,A,u)$ is defined, and the equivariant Chern number of $[W]$ is interpreted as a certain Maslov index. This number entered the index formula (\ref{eq:ind}). Furthermore, the second main result of this memoir, Theorem \ref{thm:Fredholm}, is proven. It states the Fredholm property for the vertical differential of the vortex equations over the plane $\C$, viewed as equations for equivalence classes $W$ of triples $(P,A,u)$.  Let $M,\om,G,\g,\lan\cdot,\cdot\ran_\g,\mu$ and $J$ be as in Chapter \ref{chap:main} %
\footnote{As always, we assume that hypothesis (H) (see Chapter \ref{chap:main}) is satisfied.}%
, and $(\Si,j):=\C$, equipped with the standard area form $\om_{\C}=\om_0$. 
\section{Equivariant homology, the Chern number, and the Maslov index}\label{SEC:HOMOLOGY CHERN} 
We fix numbers
\[p>2,\quad\lam>1-\frac2p.\] 
Then every equivalence class $W\in\B^p_\lam$ of triples $(P,A,u)\in\BB^p_\lam$ (defined as in (\ref{eq:BB p lam})) carries an equivariant homology class $[W]\in H_2^G(M,\Z)$, whose definition is based on the following lemma. Let $P$ be a topological (principal) $G$-bundle over $\C$
\footnote{Such a bundle is trivializable, but we do not fix a trivialization here.}%
, and $u:P\to M$ a continuous equivariant map. By an extension of the pair $(P,u)$ to $S^2$ we mean a triple $(\wt P,\iota,\wt u)$, where $\wt P$ is a topological $G$-bundle over $S^2$, $\iota:P\to\wt P$ a morphism of topological $G$-bundles which descends to the inclusion $\C\to\C\cup\{\infty\}=S^2$, and $\wt u:\wt P\to M$ a continuous $G$-equivariant map satisfying $\wt u\circ\iota=u$.
\begin{lemma}\label{le:P} The following statements hold.
\begin{enui}
\item\label{le:P:exists} For every triple $(P,A,u)\in\BB^p_\lam$ there exists an extension $(\wt P,\iota,\wt u)$ of the pair $(P,u)$, such that $G$ acts freely on $\wt u(\wt P_\infty)\sub M$, where $\wt P_\infty$ denotes the fiber of $\wt P$ over $\infty$.
\item\label{le:P:iso} Let $P$ be a topological $G$-bundle over $\C$, $u:P\to M$ a continuous $G$-equivariant map, and $(\wt P,\iota,\wt u)$ an extension of $(P,u)$, such that $G$ acts freely on $\wt u(\wt P_\infty)$. Furthermore, let $\big(P',u',\wt P',\iota',\wt u'\big)$ be another such tuple, and $\Phi:P'\to P$ an isomorphism of topological $G$-bundles, such that $u'=u\circ\Phi$. Then there exists a unique isomorphism of topological $G$-bundles $\wt\Phi:\wt P'\to\wt P$, satisfying 
\begin{equation}\label{eq:wt Phi}\wt\Phi\circ\iota'=\iota\circ\Phi,\quad\wt u'=\wt u\circ\wt\Phi.
\end{equation}
\end{enui}
\end{lemma}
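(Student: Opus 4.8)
\textbf{Proof proposal for Lemma \ref{le:P}.}

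The plan is to treat the two parts separately, with part (\ref{le:P:exists}) being an explicit extension construction and part (\ref{le:P:iso}) a more-or-less formal uniqueness-and-gluing argument. For part (\ref{le:P:exists}), first I would use the finiteness of the weighted energy: since $\Vert\sqrt{e_w}\Vert_{p,\lam}<\infty$ with $\lam>1-2/p$, the triple $w$ has finite energy (this is exactly the estimate recorded in Remark \ref{rmk:geometry}), and by Proposition \ref{prop:bar u} (Continuity at $\infty$) the map $\bar u_W:\C\to M/G$ extends continuously to $f:S^2\to M/G$ with $f(\infty)\in\BAR M=\mu^{-1}(0)/G$. In particular the orbit $f(\infty)$ is a free orbit by hypothesis (H). The idea is then to build $\wt P$ by adjoining a single fiber over $\infty$: concretely, work in a trivialization $P|_{\C\wo B_R}\iso(\C\wo B_R)\x G$ for $R$ large, so that $u$ is represented by a continuous map $\hat u:\C\wo B_R\to M$ with $G\hat u(z)=\bar u_W(z)\to f(\infty)$; since $G$ acts freely on a neighborhood of $\mu^{-1}(0)$ and $\bar u_W(z)$ converges in $M/G$, the lift $\hat u$ itself converges in $M$ along, say, radial directions after a gauge correction, and using that $\C\wo B_R$ deformation retracts onto a circle one can choose the trivialization so that $\hat u$ extends continuously to $z=\infty$ with $\hat u(\infty)$ lying in the free orbit $f(\infty)$. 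This produces a topological $G$-bundle $\wt P$ over $S^2$ obtained by gluing $P$ with the trivial bundle over a disc neighborhood of $\infty$, together with $\iota$ and an extension $\wt u$ with $\wt u(\wt P_\infty)=G\hat u(\infty)$, on which $G$ acts freely. I would phrase the gluing cleanly using the clutching description of bundles over $S^2$.

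For part (\ref{le:P:iso}), the plan is: over $\C$ the isomorphism $\wt\Phi$ is forced to be $\iota\circ\Phi\circ(\iota')^{-1}$, so the only content is extending it across $\infty$ and checking it is a well-defined bundle isomorphism there. I would argue that, in trivializations near $\infty$ of $\wt P'$ and $\wt P$ as constructed in part (\ref{le:P:exists}), the map $\Phi$ is represented over a punctured disc by a continuous map into $G$, and the requirement $\wt u'=\wt u\circ\wt\Phi$ together with freeness of the $G$-action on $\wt u(\wt P_\infty)$ pins down the value of this $G$-valued map at $\infty$ uniquely: if $\wt u'$ and $\wt u$ both send the fiber over $\infty$ into free orbits and agree via a $G$-equivariant map, the connecting element of $G$ is determined. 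Continuity of the extension then follows from continuity of $\Phi$ on the punctured disc and the convergence of $\wt u',\wt u$ at $\infty$. Uniqueness is immediate from freeness: two extensions of $\wt\Phi$ across the single point $\infty$ differ by a gauge transformation of $\wt P$ supported at $\infty$ that fixes $\wt u(\wt P_\infty)$ pointwise, hence is trivial.

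I expect the main obstacle to be part (\ref{le:P:exists}): making rigorous the claim that after a suitable gauge transformation near $\infty$ the lifted map $\hat u$ extends continuously to $\infty$, rather than merely its projection $\bar u_W$. The subtlety is precisely the one flagged in Remark \ref{rmk:geometry} and in the discussion of the splitting (\ref{eq:im L C}) --- the component of $u$ along $\im L^\C_u$ can wind or drift --- so one must use finiteness of energy (via Proposition \ref{prop:bar u}, and hence ultimately the isoperimetric/energy-concentration estimate (\ref{eq:sup z z' bar d}) in Proposition \ref{prop:en conc}) to control the oscillation of $\hat u$ over large circles and thereby choose the trivialization in which $\hat u(\infty)$ exists. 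Everything else --- the clutching construction, the equivariance bookkeeping, and the uniqueness argument in part (\ref{le:P:iso}) --- is routine once this convergence is in hand.
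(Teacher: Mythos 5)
Your overall plan for part (\ref{le:P:exists}) --- build a trivialization near $\infty$ in which the lift $\hat u$ itself converges, then extend by a single fiber over $\infty$ --- is exactly what the paper does, via Lemma \ref{le:si}. However, you justify the key convergence by appealing to Proposition \ref{prop:bar u}, and that appeal is not legitimate here: Proposition \ref{prop:bar u} is stated for $W\in\ME$, i.e.\ for finite-energy \emph{vortex} classes, whereas $\BB^p_\lam$ consists of arbitrary Sobolev triples $(P,A,u)$ with compact image and $\sqrt{e_w}\in L^p_\lam$; no vortex equation is assumed. The same caveat applies to your invocation of the energy-concentration estimate (\ref{eq:sup z z' bar d}) of Proposition \ref{prop:en conc}, which again presupposes the vortex equations. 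What the paper actually does is prove the convergence of $\bar u$ in $M/G$ and the existence of the good section $\si$ directly from the weighted bound, via the Hardy-type inequality (Proposition \ref{prop:Hardy}) applied first to $|\mu\circ u|^2$ and then to $\iota\circ\bar u$ for an embedding $\iota:M^*/G\hookrightarrow\R^n$; the local slice theorem around the limit orbit then yields the section along which $u\circ\si$ converges. You correctly identify this convergence of the lift as the crux --- the subtlety you flag about the $\im L^\C_u$ component drifting is real --- but the tool you propose to close it would not apply to general elements of $\BB^p_\lam$.

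For part (\ref{le:P:iso}), your argument is correct in spirit, and uniqueness via density of $\C$ in $S^2$ (or via freeness, as you say) is fine. Two remarks. First, you write ``in trivializations near $\infty$ of $\wt P'$ and $\wt P$ as constructed in part (\ref{le:P:exists})'', but the lemma is stated for \emph{arbitrary} extensions, not just those you produced; you would need to pick local trivializations of the given $\wt P,\wt P'$ rather than assuming a particular construction. Second, the paper's proof avoids trivializations altogether by observing that, on $\wt U:=\wt f^{-1}(M^*/G)$, the map $\wt\Psi(\wt p):=(\wt\pi(\wt p),\wt u(\wt p))$ identifies $\wt\pi^{-1}(\wt U)$ with the pullback $\wt f|_{\wt U}^*M^*$ of the principal bundle $M^*\to M^*/G$; since $\wt f=\wt f'$, the isomorphism $\wt\Phi:=\wt\Psi^{-1}\circ\wt\Psi'$ is canonical and continuous across $\infty$, and agrees with $\iota\circ\Phi\circ(\iota')^{-1}$ over $\C$ by the equivariance identities. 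This is cleaner than the trivialization-by-trivialization approach and sidesteps the continuity check at $\infty$ that you leave informal.
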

The proof of this lemma is postponed to the appendix (page \pageref{proof:le:P}). For the definition of $[W]$ we also need the following. 
\begin{rmk}\label{rmk:homology} Let $G$ be a topological group, $X$ a closed%
\footnote{i.e., compact and without boundary}
 oriented topological (Hausdorff) manifold of dimension $k$, $P\to X$ a topological $G$-bundle, $Y$ a topological space, equipped with a continuous action of $G$, and $u:P\to Y$ a $G$-equivariant map. Then $u$ carries an equivariant homology class $[u]_G\in H_k^G(Y,\Z)$, defined as follows. We choose a universal $G$-bundle $\EG\to\BG$ and a continuous $G$-equivariant map $\theta:P\to\EG$.%
\footnote{Such a map descends to a classifying map $X\to\BG$.}
 The map $(u,\theta):P\to Y\x\EG$ descends to a map $u_G:X\to(Y\x\EG)/G$. We define 
\[[u]_G\in H_k^G(Y,\Z)=H_k\big((Y\x\EG)/G,\Z\big)\]
to be the push-forward of the fundamental class of $X$, under the map $u_G$. This class does not depend on the choice of $\theta$, nor on $\EG$ in the following sense. If $\EG'\to\BG'$ is another universal $G$-bundle, then the corresponding class $[u]_G'\in H_k\big((Y\x\EG')/G,\Z\big)$ is mapped to $[u]_G$ under the canonical isomorphism%
\footnote{This isomorphism is induced by an arbitrary continuous equivariant map from $\EG'$ to $\EG$.}%
\[H_k\big((Y\x\EG')/G,\Z\big)\to H_k\big((Y\x\EG)/G,\Z\big).\Box\]
\end{rmk}
\begin{defi}[Equivariant homology class]\label{defi:homology} We define the \emph{equivariant homology class $[W]\in H_2^G(M,\Z)$} of an element
\[W\in\Bigg(\bigcup_{p>2,\,\lam>1-\frac2p}\BB^p_\lam\Bigg)\Slash\sim_p\] 
to be the equivariant homology class of $\wt u:\wt P\to M$, where $(\wt P,\iota,\wt u)$ is an extension as in Lemma \ref{le:P}, corresponding to any representative $(P,A,u)$ of $W$. Here the equivalence relation $\sim_p$ is defined as in Section \ref{sec:Fredholm}.
\end{defi}
It follows from Lemma \ref{le:P} that this class does not depend on the choices of $(\wt P,\iota,\wt u)$ and $w$. 
\begin{rmk}\label{rmk:W well-defined} The condition $\lam>1-2/p$ is needed for $[W]$ to be well-defined, for every $W\in\B^p_\lam$. Consider for example the case $M:=\R^2,\om:=\om_0,J:=i$ and $G:=\{\one\}$. Let $p>2$. We choose a smooth map $u:\C\x\{\one\}\iso\C\to\R^2$ such that
\[u(z)=\left(\sin\left(\sqrt{\log|z|}\right),0\right),\quad\forall z\in\C\wo B_1.\]
Then the equivalence class $W$ of $\big(P:=\C\x\{\one\},0,u\big)$ lies in $\B^p_\lam$, for every $\lam\leq1-2/p$. However, there is no extension of $(P,u)$ as in Lemma \ref{le:P}, since $u(z)$ diverges, as $|z|\to\infty$. Therefore, $[W]$ is not well-defined. $\Box$
\end{rmk}
\begin{rmk}\label{rmk:homology vortex} Let $p>2$ and $W\in\B^p_\loc=\BB^p_\loc\Slash\sim_p$ be a gauge equivalence class of triples $(P,A,u)$ of Sobolev class.%
\footnote{Recall from Section \ref{sec:Fredholm} that each such triple consists of a $G$-bundle of class $W^{2,p}_\loc$, and a connection $A$ and a $G$-equivariant map $u:P\to M$ of class $W^{1,p}_\loc$.}
 Assume that every representative of $W$ satisfies the vortex equations (\ref{eq:BAR dd J A u},\ref{eq:F A mu}), the energy of $W$ is finite, and the closure of its image compact. As explained in Remark \ref{rmk:geometry} in Section \ref{sec:remarks}, we have $W\in\B^p_\lam$, for every $\lam<2-2/p$. Therefore, the equivariant homology class of $W$ is well-defined. $\Box$
\end{rmk}
The contraction appearing in formula (\ref{eq:ind}) has a concrete geometric meaning. It can be interpreted as the Maslov index of a certain loop of linear symplectic transformations, as follows. Let $(M,\om)$ be a symplectic manifold and $G$ a connected compact Lie group, acting on $M$ in a Hamiltonian way, with momentum map $\mu$. Assume that $G$ acts freely on $\mu^{-1}(0)$. Let $\Si$ be a compact, connected, oriented topological surface%
\footnote{i.e., real two-dimensional topological manifold}
 with non-empty boundary. We associate to this data a map $m_{\Si,\om,\mu}$, called ``Maslov index'', as follows. The domain of this map consists of the weak $(\om,\mu)$-homotopy classes of $(\om,\mu)$-admissible maps from $\Si\to M$, and it takes values in the even integers. 

Here we call a continuous map $u:\Si\to M$ $(\om,\mu)$-admissible, iff for every connected component $X$ of the boundary $\dd\Si$ there exists a $G$-orbit which is contained in $\mu^{-1}(0)$, and contains the set $u(X)$. We denote by $C\big(\Si,M;\om,\mu\big)$ the set of such maps. We call two maps $u_0,u_1\in C\big(\Si,M;\om,\mu\big)$ \emph{weakly $(\om,\mu)$-homotopic}, iff they are homotopic through such maps.%
\footnote{This means that there exists a continuous map $u:[0,1]\x\Si\to M$, such that $u(i,\cdot)=u_i$, for $i=0,1$, and for every $t\in[0,1]$ we have $u(t,\cdot)\in C\big(\Si,M;\om,\mu\big)$.}
 This defines an equivalence relation on $C\big(\Si,M;\om,\mu\big)$. We call the corresponding equivalence classes \emph{weak $(\om,\mu)$-homotopy classes}. 
\begin{xpl}\label{xpl:S 1 C n} Consider $\R^{2n}=\C^n$, equipped with the standard symplectic form $\om:=\om_0$, and the action of $S^1\sub\C$ given by 
\[z\cdot(z_1,\ldots,z_n):=\big(zz_1,\ldots,zz_n\big),\]
with momentum map 
\begin{equation}\label{eq:mu C n}\mu:\C^n\to(\Lie S^1)^*\iso\Lie S^1=i\R,\quad\mu(z_1,\ldots,z_n):=\frac i2\Bigg(1-\sum_{j=1}^n|z_j|^2\Bigg).\end{equation}
We have $\mu^{-1}(0)=S^{2n-1}\sub\C^n$, and the $S^1$-orbits contained in the unit sphere are the fibers of the Hopf fibration $S^{2n-1}\to\CP^{n-1}$. Consider the case in which $\Si$ is the unit disk $\DDD\sub\C$. Then for each integer $d\in\Z$ and each vector $v\in S^{2n-1}$ the map 
\begin{equation}\label{eq:u}u_{d,v}:\DDD\to\C^n,\quad u_{d,v}(z):=z^dv
\end{equation}
is $(\om_0,\mu)$-admissible. Furthermore, $u_{d,v}$ and $u_{d',v'}$ are weakly $(\om_0,\mu)$-homotopic if and only if $d=d'$. To see this, note that if $d=d'$ then a homotopy between the two maps is given by $[0,1]\x\DDD\ni(t,z)\mapsto z^dv(t)$, where $v\in C\big([0,1],S^{2n-1}\big)$ is any path joining $v$ and $v'$. Conversely, assume that $u_{d,v}$ and $u_{d',v'}$ are weakly $(\om_0,\mu)$-homotopic. Then by an elementary argument, there exists a \emph{smooth} weak $(\om_0,\mu)$-homotopy $h:[0,1]\x\DDD\to\C^n$ between these maps. It follows that 
\[d'\pi=\int_\DDD u_{d',v'}^*\om_0=\int_\DDD u_{d,v}^*\om_0+\int_{[0,1]\x\dd\DDD}h^*\om_0=d\pi+0,\]
where in the last equality we used the fact that for every $v\in T\dd\DDD$, $dh(t,\cdot)v$ is tangent to the characteristic distribution on $S^{2n-1}$
\footnote{This follows from the fact that by assumption, $h(t,\dd\DDD)$ is contained in a Hopf circle, for every $t\in[0,1]$.}%
, and therefore $\om_0\big(\cdot,dh(t,\cdot)v\big)$ vanishes on vectors tangent to $S^{2n-1}$. Therefore, we have $d=d'$. This proves the claimed equivalence. $\Box$
\end{xpl}

The definition of the map $m_{\Si,\om,\mu}$ is based on the following Maslov index of a regular symplectic transport over a curve. Let $X$ be an oriented, connected, closed topological curve%
\footnote{i.e., a real one-dimensional topological manifold}%
, and $(V,\om)$ a symplectic vector space. We denote by $\Aut\om$ the group of linear symplectic automorphisms of $V$. We define a \emph{regular symplectic transport over $X$} to be a continuous map $\Phi:X\x X\to\Aut\om$ satisfying 
 \[\Phi(z,z)=\one,\quad\Phi(z'',z)=\Phi(z'',z')\Phi(z',z),\quad z,z',z''\in X.\] 
 We define the \emph{Maslov index} of such a map to be 
\begin{equation}\label{eq:m X om}m_{X,\om}(\Phi):=2\deg\left(X\ni z\mapsto\rho_\om\circ\Phi(z,z_0)\in S^1\right).
\end{equation}
Here $z_0\in X$ is an arbitrary point, $\rho_\om:\Aut(\om)\to S^1$ denotes the Salamon-Zehnder map (see \cite[Theorem 3.1.]{SZ}), and $\deg$ the degree of a map from $X$ to $S^1$. This definition does not depend on the choice of $z_0$, by some homotopy argument. 

Let now $M,\om,G,\mu,\Si$ be as before, and $a$ a weak $(\om,\mu)$-homotopy class. To define $m_{\om,\mu}(a)$, we choose a representative $u:\Si\to M$ of $a$, a symplectic vector space $(V,\Om)$ of dimension $\dim M$ and a symplectic trivialization $\Psi:\Si\x V\to u^*TM$.%
\footnote{Such a trivialization exists, since the group $\Aut\Om$ is connected, and the surface $\Si$ deformation retracts onto a wedge of circles. Here we use that $\Si$ is connected and has non-empty boundary.}
 Let $X$ be a connected component of $\dd\Si$. We define the map 
\[\Phi_X:X\x X\to\Aut\Om,\quad\Phi_X(z',z):=\Psi_{z'}^{-1}g_{z',z}\cdot\Psi_z.\]
Here $g_{z',z}\in G$ denotes the unique element satisfying $u(z')=g_{z',z}u(z)$, $g_{z',z}\cdot$ denotes the induced action of $g_{z',z}$ on $TM$, and $\Psi_z:=\Psi(z,\cdot)$. The map $\Phi_X$ is a regular symplectic transport. 
\begin{defi}[Maslov index]\label{defi:Maslov} We define 
\begin{equation}\label{m Si om mu a}m_{\Si,\om,\mu}(a):=\sum m_{X,\Om}(\Phi_X),
\end{equation}
where the sum runs over all connected components $X$ of $\dd\Si$. 
\end{defi}
This definition does not depend on the choices of the symplectic vector space $(V,\Om)$ and the trivialization $\Psi$. This follows from the fact that (\ref{m Si om mu a}) agrees with the Maslov index associated with $a$ and the coisotropic submanifold $\mu^{-1}(0)\sub M$, as defined in \cite{ZiMaslov}. See \cite[Lemma 45]{ZiMaslov}.
\begin{Rmk} This Maslov index is based on a definition by D.~A.~Salamon and R.~Gaio (for $\Si=\DDD$), see \cite{GS}. $\Box$
\end{Rmk}
\begin{xpl}\label{xpl:u d v} Consider the situation of Example \ref{xpl:S 1 C n}, with $u_{d,v}$ defined as in (\ref{eq:u}). This map carries the Maslov index 
\[m_{\DDD,\om_0,\mu}([u_{d,v}])=2dn,\]
where $[\cdots]$ denotes the $(\om_0,\mu)$-homotopy class. This follows from a straight-forward calculation. $\Box$
\end{xpl}
We now return to the setting of the beginning of this section. We define $\Si$ to be the compact oriented topological surface obtained from $\C$ by ``gluing a circle at $\infty$'' to it.%
\footnote{This surface is homeomorphic to the closed disk $\DDD\sub\C$.}
 The class $W$ carries a weak $(\om,\mu)$-homotopy class, whose definition relies on the following lemma.
\begin{lemma}\label{le:section} Let $p>2$, $\lam>1-2/p$, and $(P,A,u)\in\BB^p_\lam$. There exists a section $\si:\C\to P$ of class $W^{1,p}_\loc$, such that the map $u\circ\si:\C\to M$ continuously extends to $\Si$. Furthermore, for every such section $\si$ the corresponding extension $\wt u:\Si\to M$ is $(\om,\mu)$-admissible. 
\end{lemma}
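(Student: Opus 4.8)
The statement has two parts: existence of a section $\si$ whose composite $u\circ\si$ extends continuously across $\infty$, and admissibility of any such extension. For the existence part, the plan is to use the decay built into the hypothesis $w=(P,A,u)\in\BB^p_\lam$ with $\lam>1-2/p$. By Remark \ref{rmk:geometry} (more precisely the estimate $\Vert\sqrt{e_w}\Vert_{p,\lam}<\infty$ together with the regularity result from Appendix \ref{sec:smooth}) one knows that on $\C\wo B_1$ the covariant derivative $d_Au$ decays in a weighted $L^p$ sense. The first step is therefore to choose a trivialization of $P$ over $\C\wo B_1$ — which exists since $P$ is trivializable and $\C\wo B_1$ is homotopy equivalent to a circle — and within it to find a section $\si$ of class $W^{1,p}_\loc$ along which the connection one-form has controlled decay; concretely one can solve a first-order ODE in the radial direction (parallel transport out to $\infty$ along rays, then patch using the weighted estimate on the angular derivative) so that $v:=u\circ\si$ satisfies $\Vert dv\Vert_{L^p(\C\wo B_1),\lam}<\infty$. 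Since $\lam>1-2/p$ gives $\lam+2/p>1$, a weighted Sobolev/Morrey embedding (the same Hardy-type estimate invoked in Section \ref{sec:Fredholm}, or directly the computation $\int_{|z|\ge r}|dv|\,\le\,\Vert dv\Vert_{p,\lam}\,\Vert\langle\cdot\rangle^{-\lam}\Vert_q$ with $q=p/(p-1)$, which is finite exactly when $\lam q>2$, i.e. $\lam>2/q=2(p-1)/p=2-2/p$ — so one must be slightly more careful and instead use the pointwise decay $|dv(z)|\lesssim|z|^{-1-\eps}$ that follows from elliptic regularity plus the weighted bound) shows $v(z)$ has a limit as $|z|\to\infty$ uniformly in the angle. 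This produces the continuous extension $\wt u:\Si\to M$.

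\textbf{Existence, alternative route.} In fact the cleanest argument is to bypass delicate weighted estimates and invoke Lemma \ref{le:P}(\ref{le:P:exists}): there is a topological extension $(\wt P,\iota,\wt u)$ of $(P,u)$ to $S^2$. Restricting $\wt P$ to a closed disk-neighbourhood $D$ of $\infty$ and trivializing it there (possible since $D$ is contractible), we obtain a continuous section of $\wt P$ over $D$, hence by restriction a continuous section $\si_\infty$ of $P$ over $D\wo\{\infty\}$ along which $\wt u\circ\si_\infty$ manifestly extends continuously to $\infty$. One then extends $\si_\infty$ to a section $\si$ of class $W^{1,p}_\loc$ over all of $\C$ by a partition-of-unity/patching argument over the compact remaining region, using that $P$ is a $W^{2,p}_\loc$-bundle and transition/extension of $W^{1,p}_\loc$-sections is unobstructed (a ball is contractible and $W^{1,p}_\loc\hookrightarrow C^0$ for $p>2$). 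This $\si$ has the required property, and crucially the construction shows the limit $\lim_{|z|\to\infty}u\circ\si(z)$ exists in $M$.

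\textbf{Admissibility.} For the second part, let $\si$ be \emph{any} $W^{1,p}_\loc$-section such that $\wt u:=$ (continuous extension of $u\circ\si$) is defined on $\Si$; write $\bar u_W:\C\to M/G$ for the induced map and recall from Proposition \ref{prop:bar u} that $\bar u_W$ extends continuously to $S^2$ with $\bar u_W(\infty)\in\BAR M=\mu^{-1}(0)/G$. The boundary circle $X=\dd\Si$ of $\Si$ maps under $\wt u$ into the fiber over $\infty$, and since $\wt u(X)$ is the image of a single point $u\circ\si(\infty)$ of $P_\infty$ (the section is single-valued at $\infty$ in the disk-model of $\Si$), $\wt u(X)$ is literally one point of $M$; that point lies in $\mu^{-1}(0)$ because its $G$-orbit is $\bar u_W(\infty)\in\mu^{-1}(0)/G$ and $\mu$ is $G$-invariant. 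A fortiori $\wt u(X)$ is contained in a single $G$-orbit inside $\mu^{-1}(0)$, which is exactly the definition of $(\om,\mu)$-admissibility (Definition preceding Example \ref{xpl:S 1 C n}); note $\dd\Si$ has only this one component. Hence $\wt u\in C(\Si,M;\om,\mu)$.

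\textbf{Main obstacle.} The genuinely nontrivial point is the \emph{existence} of a section making $u\circ\si$ extend continuously, i.e. establishing that $u$ has a well-defined ``value at $\infty$'' up to gauge and that this can be realized by a global $W^{1,p}_\loc$-section rather than merely on a neighbourhood of $\infty$. I expect to lean on Lemma \ref{le:P}(\ref{le:P:exists}) for the topological extension and on standard patching of Sobolev sections over the contractible complement; the estimate-based route is available as a backup but requires being careful that the correct borderline exponent for uniform convergence at $\infty$ is $\lam>1-2/p$ (the threshold for the equivariant homology class to exist, Remark \ref{rmk:W well-defined}), which is precisely the hypothesis we are given. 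Everything else is bookkeeping with the already-established Propositions \ref{prop:bar u} and the regularity results of the appendix.
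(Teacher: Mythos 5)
Your admissibility argument contains two genuine gaps, and it is worth being precise about them since they are easy to overlook.

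First, you cite Proposition \ref{prop:bar u} for the continuous extension of $\bar u_W$ to $S^2$ with value in $\mu^{-1}(0)/G$ at $\infty$. But that proposition is stated only for $W\in\ME$, i.e.\ for \emph{vortex} classes of finite energy with compact image. The hypothesis here is merely $(P,A,u)\in\BB^p_\lam$; there is no vortex equation in play, so Proposition \ref{prop:bar u} does not apply. The fact you need is instead Claim \ref{claim:bar u} inside the proof of Lemma \ref{le:si}, which is proved for general $\BB^p_\lam$ from the weighted bound $\Vert\sqrt{e_w}\Vert_{p,\lam}<\infty$ and the Hardy-type inequality (Proposition \ref{prop:Hardy}); this is also the reference the paper's own proof uses.

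Second, and more seriously, the assertion that ``$\wt u(X)$ is literally one point of $M$'' is false in general. Recall that $\Si$ is the disk obtained from $\C$ by gluing a \emph{circle} at infinity, so $X=\dd\Si$ is a circle, and the ``Furthermore'' clause of the lemma asks you to treat \emph{every} $W^{1,p}_\loc$-section $\si$ for which $u\circ\si$ extends continuously to $\Si$. For most such $\si$ the extension $\wt u$ is non-constant on $\dd\Si$. Indeed, the paper's own construction in the first half of the proof patches a section $\si_0$ over $\DDD$ with the section $\wt\si$ from Lemma \ref{le:si} via a twist $\si(z)=\wt\si(z)g_\infty(z/|z|)^{-1}$; by equivariance, $u\circ\si(re^{i\phi})\to g_\infty(e^{i\phi})x_\infty$, a genuinely non-constant loop in the orbit $Gx_\infty$ whenever $g_\infty$ is non-constant. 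The correct argument is the conclusion you state at the end: $G\wt u:\Si\to M/G$ descends through the collapse map $\Si\to S^2$ and equals $\bar u_W(\infty)\in\mu^{-1}(0)/G$ on all of $\dd\Si$ by Claim \ref{claim:bar u}, whence $\wt u(\dd\Si)$ lies in the single orbit over $\bar u_W(\infty)\sub\mu^{-1}(0)$ — which is admissibility. You arrive at the right place, but the intermediate claim is wrong and should be removed; only the orbit, not the point, is well-defined.

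On existence: your Route 2 via Lemma \ref{le:P}(\ref{le:P:exists}) is sound, modulo the standard regularization from continuous to $W^{1,p}_\loc$ sections (which the paper also performs). It is, however, an indirect repackaging of the same machinery: Lemma \ref{le:P}(\ref{le:P:exists}) is itself proved from Lemma \ref{le:si}, which the paper instead uses directly, patching $\si_0|_\DDD$ and the Lemma \ref{le:si} section via the transition map $g_\infty$ on $S^1$. Your Route 1 you correctly flag as problematic: the Hölder threshold is $\lam>2-2/p$, not the given $\lam>1-2/p$, and the elliptic-regularity substitute is unavailable since $(A,u)$ is not assumed to be a vortex; drop it.
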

The proof of this lemma is postponed to the appendix (page \pageref{proof:le:section}). Let 
\[W\in\Bigg(\bigcup_{p>2,\,\lam>1-\frac2p}\BB^p_\lam\Bigg)\Slash\sim_p.\]
\begin{defi}\label{defi:homotopy W} We define the \emph{$(\om,\mu)$-homotopy class of $W$} to be the weak $(\om,\mu)$-homotopy class of the continuous extension $\wt u:\Si\to M$ of the map $u\circ\si:\C\to M$. Here $(P,A,u)$ is a representative of $W$, and $\si$ a section of $P$ as in the previous lemma.
\end{defi}
By Lemma \ref{le:homotopic} in Appendix \ref{sec:proofs homology} this homotopy class is independent of the choice of the section $\si$. Furthermore, it follows from a straight-forward argument that it is independent of the choice of the representative $(P,A,u)$. The contraction appearing in formula (\ref{eq:ind}) can now be expressed as follows. 
\begin{prop}[Chern number and Maslov index]\label{prop:Chern Maslov} We have
\[2\big\lan c_1^G(M,\om),[W]\big\ran=m_{\Si,\om,\mu}\big((\om,\mu)\textrm{-homotopy class of }W\big).\]
\end{prop}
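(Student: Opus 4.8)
The plan is to relate both sides of the claimed identity to the coisotropic Maslov index of \cite{ZiMaslov}, thereby reducing the statement to a comparison of two well-known ``Chern number equals Maslov index'' type formulas. First I would fix a representative $(P,A,u)$ of $W$ and an extension $(\wt P,\iota,\wt u)$ to $S^2$ as in Lemma \ref{le:P}, so that $G$ acts freely on $\wt u(\wt P_\infty)$; in particular $\wt u(\wt P_\infty)$ lies in an orbit on which the action is free, but not necessarily in $\mu^{-1}(0)$. The equivariant homology class $[W]$ is by definition $[\wt u]_G\in H_2^G(M,\Z)$, and hence $\lan c_1^G(M,\om),[W]\ran$ is computed by pulling the equivariant first Chern class back along $\wt u_G:S^2\to(M\x\EG)/G$. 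The point is that the complex line bundle $\det_\C(\wt u^*TM)/G\to S^2$ whose Euler number is $\lan c_1^G(M,\om),[W]\ran$ can be described concretely: over $\C$ it is trivialized using a section $\si$ of $P$ as in Lemma \ref{le:section} and a symplectic trivialization $\Psi$ of $u^*TM$ over $\Si$ (which exists since $\Si$ retracts onto a wedge of circles), and the clutching over the circle at $\infty$ is governed exactly by the loop of symplectic transports $\Phi_X$ built from the elements $g_{z',z}\in G$.

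Concretely, the key steps are: (1) choose the section $\si$ and the trivialization $\Psi:\Si\x V\to (u\circ\si)^*TM$ of Lemma \ref{le:section} and Definition \ref{defi:Maslov}; (2) observe that the extension $\wt u$ of $u\circ\si$ to $S^2$ (obtained by collapsing the boundary circle $X=\dd\Si$ to the point $\infty$, using that $\wt u(X)$ lies in a single orbit) pulls $TM$ back to a complex vector bundle over $S^2$ which is obtained from the trivial bundle $\Si\x V$ by the clutching function $z\mapsto g_{z,z_0}\cdot\Psi_z\Psi_{z_0}^{-1}$ along $X$, up to the contractible ambiguity of $\Psi$; (3) take determinants: the first Chern number of $\det_\C(\wt u^*TM)$, which equals $\lan c_1^G(M,\om),[W]\ran$ by Remark \ref{rmk:homology} and the definition of the equivariant Chern class, is the degree of the loop $z\mapsto \det_\C(g_{z,z_0}\cdot)$ composed with the trivialization, i.e.\ the winding number of the $S^1$-valued clutching map; (4) compare this winding number with the Salamon--Zehnder definition (\ref{eq:m X om}) of $m_{X,\Om}(\Phi_X)$: the Salamon--Zehnder map $\rho_\om$ restricted to the image of the compact group action agrees, up to a factor, with $\det_\C$ for a compatible complex structure, so that $m_{X,\Om}(\Phi_X)=2\deg(z\mapsto\rho_\om(\Phi_X(z,z_0)))$ equals twice the Euler number of $\det_\C(\wt u^*TM)$. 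Summing over the (single, in this case) boundary component $X$ then yields $m_{\Si,\om,\mu}$ of the $(\om,\mu)$-homotopy class of $W$, which is $2\lan c_1^G(M,\om),[W]\ran$.

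Alternatively, and probably more cleanly, I would invoke \cite[Lemma 45]{ZiMaslov}, which already identifies $m_{\Si,\om,\mu}$ with the coisotropic Maslov index associated to $\mu^{-1}(0)\sub M$; it then remains only to prove that this coisotropic Maslov index equals $2\lan c_1^G(M,\om),[W]\ran$. This last equality is a statement purely about the relation between the relative first Chern number of a disk (capped by an orbit in $\mu^{-1}(0)$) and the absolute equivariant Chern number of the closed-up sphere in the Borel construction; it is the natural ``gluing'' compatibility of Chern classes and can be checked by the clutching/degree computation sketched above, using that the orbit $Gx\subset\mu^{-1}(0)$ over which one caps off is contained in a contractible piece of $(M\x\EG)/G$ after quotienting, so the capping disk contributes nothing to the Chern number beyond the boundary clutching data.

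The main obstacle I anticipate is \textbf{keeping careful track of the two different extensions and trivializations}: on the one hand the topological extension $(\wt P,\iota,\wt u)$ to $S^2$ used to define $[W]\in H_2^G(M,\Z)$ (where the fiber at $\infty$ need only land in a free orbit), and on the other the admissible extension $\wt u:\Si\to M$ of $u\circ\si$ used to define the Maslov index (where the boundary lands in an orbit inside $\mu^{-1}(0)$). One must verify that deforming $u$ near infinity so that its asymptotic orbit moves into $\mu^{-1}(0)$ changes neither $\lan c_1^G(M,\om),[W]\ran$ nor the Maslov index, which requires invoking the decay estimates of Remark \ref{rmk:geometry} (finite-energy vortices have $\bar u_W$ extending continuously to $\infty$ with $\barev_\infty(W)\in\BAR M$) together with a homotopy argument showing independence of all choices. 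Once that bookkeeping is in place, the remaining content is the standard identification of a clutching-function winding number with a relative Chern number, plus the Salamon--Zehnder normalization $\rho_\om|_{U(n)}=\det_\C$, which are routine.
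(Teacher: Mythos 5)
Your primary route (steps (1)--(4)) is essentially the same as the paper's: both reduce the identity to a clutching argument, identifying the Chern number of the extended bundle $TM^{\wt u}\to S^2$ with the winding number of the loop of symplectic automorphisms $\Phi_X$ along the boundary circle, and then invoking the Salamon--Zehnder normalization $\rho_\Om|_{\U(V)}=\det_\C$. In the paper this is packaged as Lemma \ref{le:Chern bundle} (which identifies $\lan c_1^G(M,\om),[W]\ran$ with $\lan c_1(TM^{\wt u},\wt\om),[S^2]\ran$) followed by Remark \ref{rmk:c 1 m} (the clutching/degree identity). The ``obstacle'' you anticipate about the two extensions is in fact a non-issue: the proof of Lemma \ref{le:P}(\ref{le:P:exists}) constructs the extension $(\wt P,\iota,\wt u)$ via Lemma \ref{le:si}, whose limit point $x_\infty$ already lies in $\mu^{-1}(0)$, so one may choose $\wt u(\wt P_\infty)\sub\mu^{-1}(0)$ from the outset; there is no need to deform $u$ near infinity or appeal separately to decay estimates. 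With that simplification your argument matches the paper's. Your alternative route via \cite[Lemma 45]{ZiMaslov} is not the one taken in the paper --- there that lemma is used only to establish that the Maslov index is independent of the choices $(V,\Om)$ and $\Psi$, not to carry the Chern--Maslov comparison.
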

The proof of this result is postponed to the appendix (page \pageref{proof:prop:Chern Maslov}). 
\begin{Xpl}[Maslov index of a vortex] Let $(M,\om,J,G):=(\R^2,\om_0,i,S^1)$, $\lan\cdot,\cdot\ran_\g$ be the standard inner product on $\g:=\Lie(S^1)=i\R$, the action of $S^1\sub\C$ on $\R^2=\C$ be given by multiplication, with momentum map as in (\ref{eq:mu C n}) with $n=1$, $\Si:=\C$, equipped with the area form $\om_0$, and $W$ a gauge equivalence class of smooth finite energy vortices. By Proposition \ref{prop:S 1 C image} and Remark \ref{rmk:homology vortex} the equivariant homology class $[W]$ is well-defined. Furthermore, we have
\[\big\lan c_1^G(M,\om),[W]\big\ran=\deg(W),\] 
where the degree $\deg(W)$ is defined as in (\ref{eq:deg W sum}). This follows from Proposition \ref{prop:Chern Maslov} and a straight-forward calculation of the Maslov index of the $(\om,\mu)$-homotopy class of $W$. $\Box$
\end{Xpl}
\section{Proof of the Fredholm result}\label{sec:proof:Fredholm}
In this section Theorem \ref{thm:Fredholm} is proved, based on a Fredholm theorem for the augmented vertical differential and the existence of a bounded right inverse for $L_w^*$, the formal adjoint of the infinitesimal action of the gauge group on pairs $(A,u)$. In the present section we always assume that 
\[\bar n:=(\dim M)/2-\dim G>0.\] 
\subsection{Reformulation of the Fredholm theorem}\label{subsec:reform}
In this subsection we state the two results mentioned above and deduce Theorem \ref{thm:Fredholm} from them. To formulate the first result, let $p>2$, $\lam\in\R$, $\BB^p_\lam$ be defined as in (\ref{eq:BB p lam}), and $w:=(P,A,u)\in\BB^p_\lam$. We denote 
\[\im L:=\{(x,L_x\xi)\,|\,x\in M,\,\xi\in\g\},\] 
and by $\PR:TM\to TM$ the orthogonal projection onto $\im L$. $\PR$ induces an orthogonal projection 
\[{\PR}^u:TM^u:=(u^*TM)/G\to TM^u\] 
onto $(u^*\im L)/G$. Recall that $\wone(\g_P)$ denotes the bundle of one-forms on $\R^2$ with values in $\g_P$. We write
\[{\PR^u}\ze:=(\al,{\PR^u}v),\quad\forall\ze=(\al,v)\in\wone(\g_P)\oplus TM^u.\]
Note that $\im L$ is in general not a subbundle of $TM$, since the dimension of $\im L_x$ may vary with $x\in M$. Recall that $\Ga^{1,p}_\loc(E)$ denotes the space of $W^{1,p}_\loc$-sections of a vector bundle $E$. For $\ze\in\Ga^{1,p}_\loc\big(\wone(\g_P)\oplus TM^u\big)$ we define 
\[\Vert\ze\hhat\Vert_{w,p,\lam}:=\Vert\ze\Vert_{w,p,\lam}+\Vert {\PR}^u\ze\Vert_{p,\lam},\] 
where $\Vert\ze\Vert_{w,p,\lam}$ is defined as in (\ref{eq:ze w p lam}). Recall the definition (\ref{eq:YY w}) of $\YY_w^{p,\lam}$. We denote by $\Ga^p_\lam(\g_P)$ the space of $L^p_\lam$-sections of $\g_P$. We define 
\begin{eqnarray}\label{eq:X w p lam}&\XXX_w:=\XXX_w^{p,\lam}:=\big\{\ze\in\Ga^{1,p}_\loc\big(\wone(\g_P)\oplus TM^u\big)\,\big|\,\Vert\ze\hhat\Vert_{w,p,\lam}<\infty\big\},&\\ 
\label{eq:Y w p lam}&\YYY_w:=\YYY_w^{p,\lam}:=\YY_w^{p,\lam}\oplus\Ga^p_\lam(\g_P),&
\end{eqnarray}
Recall the definition (\ref{eq:L w *}) of the formal adjoint map for the infinitesimal action of the gauge group on pairs $(A,u)$. Restricting the domain and target, this becomes the operator
\[L_w^*:\XXX_w^{p,\lam}\to\Ga^p_\lam(\g_P),\quad L_w^*(\al,v):=-d_A^*\al+L_u^*v.\]
It follows from the fact $L_x^*=L_x^*\PR_x$ (for every $x\in M$) and compactness of $\BAR{u(P)}$ that this operator is well-defined and bounded. We define the \emph{augmented vertical differential of the vortex equations over $\C$ at $w$} to be the map
\begin{eqnarray}
\label{eq:DD w p lam}&\DD_w:=\DD_w^{p,\lam}:\XXX_w^{p,\lam}\to\YYY_w^{p,\lam},&\\
\label{eq:DD w ze}&\DD_w\ze:=\left(\begin{array}{c}
\big(\na^Av+L_u\al\big)^{0,1}-\frac12J(\na_vJ)(d_Au)^{1,0}\\
d_A\al+\om_0\, d\mu(u)v\\
L_w^*\ze
\end{array}\right).&
\end{eqnarray}
\begin{thm}[Fredholm property for the augmented vertical differential]\label{thm:Fredholm aug} Let $p>2$ and $\lam>-2/p+1$ be real numbers, and $w:=(P,A,u)\in\BB^p_{\lam}$. Then the following statements hold. 
  \begin{enui}\item\label{thm:Fredholm X w} The normed spaces $\big(\XXX^{p,\lam}_w,\Vert\cdot\hhat\Vert_{w,p,\lam}\big),\YY^{p,\lam}_w$ and $\Ga^p_\lam(\g_P)$ are complete. 
\item\label{thm:Fredholm DD w} Assume that $-2/p+1<\lam<-2/p+2$. Then the operator $\DD_w$ (as in (\ref{eq:DD w p lam},\ref{eq:DD w ze})) is Fredholm of real index 
\[\ind\DD_w=2\bar n+2\big\lan c_1^G(M,\om),[[w]]\big\ran,\]
where $[w]$ denotes the equivalence class of $w$, and $[[w]]$ denotes the equivariant homology class of $[w]$.
\end{enui}
\end{thm}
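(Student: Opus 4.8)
The strategy is to reduce the Fredholm statement for $\DD_w$ to a model operator on $\C$ and then apply the analysis of weighted Sobolev spaces due to Lockhart and McOwen. First I would address completeness, statement (\ref{thm:Fredholm X w}): the spaces $\YY^{p,\lam}_w$ and $\Ga^p_\lam(\g_P)$ are weighted $L^p$-spaces, so their completeness is standard; for $\XXX^{p,\lam}_w$ one argues that a Cauchy sequence in $\Vert\cdot\hhat\Vert_{w,p,\lam}$ converges in $L^\infty$ and in $W^{1,p}_\loc$ (using that $\BAR{u(P)}$ is compact, so that $d\mu(u)$ and the various bundle metrics are uniformly controlled), and that the weighted norms of the limit are finite by Fatou. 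The point is that $\Vert\cdot\hhat\Vert_{w,p,\lam}$ contains both an $L^\infty$-term (making it non-degenerate) and weighted $L^p$-control of $\na^A\ze$, $d\mu(u)v$, $\al$, and $\PR^u\ze$, which together with a Hardy-type inequality (Proposition \ref{prop:Hardy} in the appendix) pins down the sections near $\infty$.

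\textbf{The main step.} For statement (\ref{thm:Fredholm DD w}), I would proceed as outlined after the statement of Theorem \ref{thm:Fredholm}: choose, for $R$ large and $z\in\C\wo B_R$, a complex trivialization of $\wone(\g_P)\oplus TM^u$ that respects the splitting $T_{u(p)}M=(\im L^\C_{u(p)})^\perp\oplus\im L^\C_{u(p)}$ from (\ref{eq:im L C}). In such a trivialization the operator $\DD_w$ becomes, up to terms that are lower order or compactly supported, a block-diagonal operator: on the $(\im L^\C)^\perp$-summand it looks like the standard Cauchy–Riemann operator $\dd_{\bar z}$ twisted by the complex line bundle carrying the Chern number $\lan c_1^G(M,\om),[[w]]\ran$, and on the $\im L^\C$-summand (together with the $\g_P$-forms and the gauge-fixing row $L_w^*$) it looks like a fixed constant-coefficient matrix differential operator coupling $\al$, the $\im L^\C$-part of $v$, and the curvature/moment-map terms. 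I would then invoke the two propositions (cited in the excerpt as being based on \cite{Lockhart PhD,Lockhart Fred,LM ell R n,McOwen Fred}, etc.) stating that $\dd_{\bar z}$ and this matrix operator are Fredholm between the relevant weighted Sobolev spaces precisely when $\lam+2/p\notin\Z$, with indices computable from the weight. The compatibility of the weight interval $1-2/p<\lam<2-2/p$ with these model results is exactly what makes the index come out to $2\bar n+2\lan c_1^G(M,\om),[[w]]\ran$: the $\dd_{\bar z}$-block on a rank-$\bar n$ complex bundle with first Chern number $\lan c_1^G,[[w]]\ran$ contributes $2\bar n+2\lan c_1^G,[[w]]\ran$ in this weight range, and the remaining block (the ``$\im L$-directions plus gauge fixing'') contributes $0$, since the gauge-fixing row $L_w^*$ exactly cancels the contribution of the $\im L$-directions — this is where the hypothesis $\bar n>0$ and the freeness of the gauge action (Lemma \ref{le:free}) enter.

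\textbf{Gluing the pieces.} To pass from the model operators on $\C$ to the actual operator on the bundle, I would use a standard cutting argument: write $\XXX^{p,\lam}_w$ and $\YYY^{p,\lam}_w$ as (approximate) direct sums of a piece supported on a large ball $B_R$ — where the operator is a compact perturbation of an elliptic operator on a compact manifold with the interior being irrelevant to the Fredholm index — and a piece supported near $\infty$ where the trivialization above applies and $\DD_w$ differs from the model by an operator whose coefficients decay (this decay is where the bound $e_w(z)\leq C|z|^{-4+\eps}$ and more precisely the asymptotic behavior of finite-energy vortices, cf.\ Remark \ref{rmk:geometry}, is used to show the perturbation is relatively compact). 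Fredholmness and the index formula then follow from the model results together with the logarithmic law for the Fredholm index under gluing of weighted spaces. I expect the main obstacle to be the construction of the complex trivialization respecting (\ref{eq:im L C}) and the verification that, in it, the error between $\DD_w$ and the model matrix operator is genuinely a relatively compact perturbation between the weighted spaces — this requires combining the decay estimates for $d_Au$, $\na_vJ$, and $\mu\circ u$ with the mapping properties of the Cauchy–Riemann operator in weighted norms, and it is the only place where one cannot simply quote an off-the-shelf result.
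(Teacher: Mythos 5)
Your proposal follows essentially the same route as the paper: Proposition \ref{prop:triv} supplies a good complex trivialization respecting the splitting (\ref{eq:im L C}), Proposition \ref{prop:X X w}(\ref{prop:X X w iso}) identifies $\XXX^{p,\lam}_w,\YYY^{p,\lam}_w$ with the model weighted spaces $\XXX_d,\YYY_d$ (which also yields completeness, so your separate Cauchy-sequence argument is an equivalent alternative), and Proposition \ref{prop:X X w}(\ref{prop:X X w Fredholm}) shows the trivialized operator is globally a compact perturbation of $\dd^{\C^{\bar n}}_{\bar z}\oplus\left(\begin{smallmatrix}\dd^{\g^\C}_{\bar z}&\id/2\\ S_\infty&2\dd^{\g^\C}_z\end{smallmatrix}\right)$; one then sums indices via Corollary \ref{cor:d L L} (giving $2\bar n+2d$) and Proposition \ref{prop:d A B d} (giving $0$). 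There is no separate cut-and-glue between a compact piece and the end; the compactness of the error term is proved directly via the cutoff estimates in Proposition \ref{prop:X X w}(\ref{prop:X X w Fredholm}). The one place where you should be more careful is the assertion that the second block has index zero ``since the gauge-fixing row $L_w^*$ exactly cancels the contribution of the $\im L$-directions''; this does not happen by any pointwise cancellation. What actually occurs is that the paired variables $(\al,\be)\in\g^\C\oplus\g^\C$ and the paired rows (linearized curvature equation plus $L_w^*$) produce the specific first-order elliptic system $\left(\begin{smallmatrix}\dd_{\bar z}&A\\ B&\dd_z\end{smallmatrix}\right)$ with $A,B$ positive, whose index-zero property is the content of Proposition \ref{prop:d A B d} (via Calder\'on's theorem applied to $-\Delta+A^{1/2}BA^{1/2}$, after conjugating and using Proposition \ref{prop:lam d Morrey}(\ref{prop:lam W cpt}) to pass from $\lam=0$ to general $\lam$); this is a non-trivial input, not a formal cancellation. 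Also, positivity of the zeroth-order block entries comes from injectivity of $L_{x_\infty}$ on $\mu^{-1}(0)$, which is hypothesis (H), not Lemma \ref{le:free} (the latter concerns freeness of the gauge-group action on $\BB^p_\lam(P)$ and is used elsewhere, to identify $\X_W^{p,\lam}$ with $\XX^{p,\lam}_w$), and the hypothesis $\bar n>0$ is needed for the $\C^{\bar n}$-block and the spaces $\XXX'_{p,\lam,d},\XXX''_{p,\lam}$ to be meaningful rather than entering at the index-zero step.
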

This theorem will be proved in Section \ref{subsec:proof:Fredholm aug}, based on the existence of a suitable trivialization of $\wone(\g_P)\oplus TM^u$ in which the operator $\DD_w$ becomes standard up to a compact perturbation. 

The second ingredient of the proof of Theorem \ref{thm:Fredholm} is the following.
\begin{thm}\label{thm:L w * R} Let $p>2$, $\lam>1-2/p$, and $w:=(P,A,u)\in\BB^p_\lam$. Then the map $L_w^*:\XXX_w^{p,\lam}\to\Ga^p_\lam(\g_P)$ admits a bounded (linear) right inverse. 
\end{thm}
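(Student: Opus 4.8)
The strategy is to construct an explicit bounded right inverse $R_w$ for $L_w^*$ by solving an inhomogeneous elliptic PDE on $\C$ in the $\al$-component and taking $v=0$. Concretely, given $\eta\in\Ga^p_\lam(\g_P)$, I want to find $\al\in\Ga^{1,p}_\loc(\wone(\g_P))$ with $\Vert\widehat{(\al,0)}\Vert_{w,p,\lam}\lesssim\Vert\eta\Vert_{p,\lam}$ and $-d_A^*\al=\eta$, and then set $R_w\eta:=(\al,0)$; since $L_w^*(\al,0)=-d_A^*\al=\eta$ this is a right inverse, and boundedness is exactly the estimate. The natural way to produce such an $\al$ is to look for $\al$ of the form $\al=d_A\xi+{*}d_A\zeta$ or, more simply, $\al=-d_A\xi$ (a ``Coulomb-type'' ansatz), so that $-d_A^*\al=d_A^*d_A\xi=\Delta_A\xi$. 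Thus the problem reduces to inverting the twisted Laplace-type operator $\Delta_A=d_A^*d_A$ on weighted Sobolev spaces over $\C$, i.e. to solving $\Delta_A\xi=\eta$ with $\xi\in W^{2,p}_\lam$-type regularity and controlling $\Vert d_A\xi\Vert_{p,\lam}$, $\Vert d_A d_A\xi\Vert_{p,\lam}$ and $\Vert d_A\xi\Vert_\infty$.

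First I would reduce to the trivial bundle $P=\C\times G$ (every $G$-bundle over $\C$ is trivializable, cf.\ Remark~\ref{rmk:bundle}), so that $\al$ is a $\g$-valued one-form, $A$ is a $\g$-valued one-form of class $W^{1,p}_\loc$, and $d_A^*d_A$ is a perturbation of the flat scalar Laplacian $\Delta=\partial_{\bar z}\partial_z$ (acting diagonally on the components of $\g$) by first- and zeroth-order terms involving $A$ and $dA$. The compactness of $\overline{u(P)}$ together with the standing hypothesis (H) — which forces $L_x$ to be injective on a neighborhood of $\mu^{-1}(0)$, hence a uniform lower bound $|L_x\xi|\ge c|\xi|$ along the image of a finite-energy representative, cf.\ Lemma~\ref{le:si} referenced in Remark~\ref{rmk:X} — is what makes the zeroth-order part of $L_w^*L_w$ coercive; I expect the cleaner route is actually to invert $L_w L_w^*$ or to work directly with the Laplacian $d_A^*d_A$ plus the $|L_u\al|$ term that appears in $\Vert\cdot\Vert_{w,p,\lam}$. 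The core analytic input is the Fredholm/isomorphism theory for the scalar operator $\partial_{\bar z}$ (equivalently the Laplacian) between weighted Sobolev spaces on $\C$, as developed by Lockhart--McOwen and cited in the discussion after Theorem~\ref{thm:Fredholm}: for $1-2/p<\lam<2-2/p$ the relevant weighted Laplacian is surjective with a right inverse, and the Hardy-type inequality (Proposition~\ref{prop:Hardy} in the appendix) supplies exactly the estimate converting $L^p_\lam$-control of $\Delta\xi$ into $L^p_\lam$-control of $|x|^{-1}d_A\xi$ and hence, via weighted Sobolev embedding, into the $L^\infty$-bound needed for $\Vert\cdot\Vert_{w,p,\lam}$.

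The steps in order: (1) trivialize $P$ and write $L_w^*$ in coordinates; (2) record the splitting $d_A^*d_A=\Delta+(\text{lower order in }A)$ and note the lower-order terms are relatively compact (using $p>2$, Morrey, and compactness of the image) so they do not affect solvability of the model problem; (3) quote the Lockhart--McOwen isomorphism for the model weighted Laplacian on $\C$ in the range $1-2/p<\lam<2-2/p$ to get a bounded right inverse $G_0$ of $\Delta$ between the appropriate weighted spaces; (4) use the Hardy inequality (Proposition~\ref{prop:Hardy}) together with the uniform injectivity of $L_x$ to upgrade the output $\xi=G_0\eta$ to an element $\al:=-d_A\xi$ with $\Vert\widehat{(\al,0)}\Vert_{w,p,\lam}\le C\Vert\eta\Vert_{p,\lam}$, in particular bounding the $\Vert\cdot\Vert_\infty$-term by weighted Sobolev embedding; (5) perturb: the bounded map $\eta\mapsto -d_A^*d_A\xi$ differs from the identity by a compact operator, so by a Neumann-series/parametrix argument one corrects $G_0$ to a genuine right inverse of $d_A^*d_A$, hence produces a bounded right inverse $R_w$ of $L_w^*$; (6) if one wants $\lambda$ outside the no-weight hypothesis one notes the relevant indicial roots of $\Delta$ on $\C$ are the integers and $1-2/p<\lambda<2-2/p$ avoids them, which is precisely why this range is imposed.

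**Main obstacle.** The delicate point is step (4)–(5): controlling the full weighted norm $\Vert\cdot\widehat{\phantom{.}}\Vert_{w,p,\lam}$, which involves $\Vert\na^A\ze\Vert_{p,\lam}$, $\Vert|\al|\Vert_{p,\lam}$ \emph{and} $\Vert\ze\Vert_\infty$, rather than just an interior $W^{1,p}$-estimate. The $\Vert\ze\Vert_\infty$-term is the genuinely non-standard one — it has no analogue in the cylindrical/logarithmic picture (cf.\ Remark~\ref{rmk:naive}) — and it is exactly here that the Hardy-type inequality of Proposition~\ref{prop:Hardy} must be invoked, together with the borderline weighted Sobolev embedding $W^{1,p}_\lam\hookrightarrow L^\infty$ valid precisely when $\lam>1-2/p$; keeping all constants uniform requires the connection $A$ to decay appropriately at infinity, which is built into the definition of $\BB^p_\lam$ via $\Vert\sqrt{e_w}\Vert_{p,\lam}<\infty$ and the decay estimates for finite-energy configurations. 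Managing the interplay of these three norm-pieces under the perturbation argument, while staying within the admissible weight range, is the heart of the proof.
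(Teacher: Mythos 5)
Your plan sets $v=0$ and tries to solve $-d_A^*\al=\eta$ globally by inverting the twisted Laplacian $d_A^*d_A$ on $\C$ via Lockhart--McOwen theory. This is a genuinely different route from the paper's proof, and it has two gaps, one of which is fatal.

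\emph{The off-by-one weight on $\al$.} The norm (\ref{eq:ze w p lam}) defining $\XXX_w^{p,\lam}$ contains $\big\Vert|\al|\big\Vert_{p,\lam}$ on the \emph{zeroth-order} piece, so you need $\al\in L^p_\lam$ — the same weight as the target $\eta$. With your ansatz $\al=-d_A\xi$ and $d_A^*d_A\xi=\eta\in L^p_\lam$, Lockhart--McOwen elliptic theory gives $\xi\in L^{2,p}_{\lam-2}$, hence $D^2\xi\in L^p_\lam$ but $D\xi\in L^p_{\lam-1}$ only. Since $\lam>1-2/p$, the inclusion $L^p_\lam\subsetneq L^p_{\lam-1}$ is strict, and generically $\al$ behaves like $|z|^{-1}$ at infinity, which fails $\al\in L^p_\lam$ precisely because $\lam+2/p>1$. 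Proposition \ref{prop:Hardy} controls $\xi$ in terms of $D\xi$; it does not create extra decay on $D\xi$ itself. So the output $(\al,0)$ does not lie in $\XXX_w^{p,\lam}$, and the scalar reduction cannot be closed. You also impose $\lam<2-2/p$ to avoid indicial roots, but the theorem is stated (and in the paper proved) for every $\lam>1-2/p$, with no upper bound.

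The paper's proof avoids both obstacles by exploiting the $v$-channel and working at unit scale. It covers the exterior $\C\wo(-N,N)^2$ by translated unit cells $\Om_i$ plus a compact central piece $\Om_0$; on each cell it invokes Proposition \ref{PROP:RIGHT}, a \emph{local} right inverse $R_i$ of $d_A^*$ with operator norm uniform over cells once $\Vert F_A\Vert_{L^p(\Om_i)}$ is small; and it patches via a partition of unity, $\hhat R\xi:=\sum_i\rho_iR_i(\xi|_{\Om_i})$. Because each local problem is scale-free, $\hhat R\xi$ carries the full weight $L^p_\lam$ — no loss of decay — and the cutoff errors $\xi-d_A^*\hhat R\xi=\sum_i*\big((d\rho_i)\wedge*\al_i\big)$ are supported in $\C\wo(-N,N)^2$. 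There $u$ stays in a neighborhood $\mu^{-1}(\bar B_\de)\sub M^*$ by Lemma \ref{le:si}, so $L_u^*L_u$ is invertible, and the error is cancelled exactly by the $v$-component $\wt R\xi:=L_u(L_u^*L_u)^{-1}\big(\xi-d_A^*\hhat R\xi\big)$. Then $R\xi:=(-\hhat R\xi,\wt R\xi)$ satisfies $L_w^*R=\id$, and the bound follows from the uniform local estimates plus the coercivity of $L_u$ near $\mu^{-1}(0)$. The invertibility of $L_u^*L_u$ near infinity is exactly the geometric input your $v=0$ reduction discards, and it is what makes a bounded right inverse possible despite the strong decay on $\al$ demanded by $\Vert\cdot\hhat\Vert_{w,p,\lam}$.
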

The proof of Theorem \ref{thm:L w * R} is postponed to Section \ref{subsec:proof:thm:L w * R} (page \pageref{proof:thm:L w * R}). It is based on the existence of a bounded right inverse for the operator $d_A^*$ over a compact subset of $\R^n$ diffeomorphic to $\bar B_1$ (Proposition \ref{PROP:RIGHT}) and the existence of a neighborhood $U\sub M$ of $\mu^{-1}(0)$, such that
\[\inf\big\{|L_x\xi|\,\big|\,x\in U,\,\xi\in\g:\,|\xi|=1\big\}>0.\]
Recall the definition (\ref{eq:M *}) of the subset $M^*$ of $M$. Theorem \ref{thm:Fredholm} can be reduced to Theorems \ref{thm:Fredholm aug} and \ref{thm:L w * R}, as follows:
\begin{proof}[Proof of Theorem \ref{thm:Fredholm} (p.~\pageref{thm:Fredholm})]\setcounter{claim}{0} Let $p>2$, $\lam>1-2/p$, and $W\in\B^p_\lam$. 

{\bf We prove statement (\ref{thm:Fredholm X w})}. 
\begin{Claim}We have
\[\XX_w:=\XX_w^{p,\lam}=K:=\ker\big(L_w^*:\XXX_w^{p,\lam}\to\Ga^p_\lam(\g_P)\big),\]
and the restriction of the norm $\Vert\cdot\hhat\Vert_{w,p,\lam}$ to $\XX_w$ is equivalent to $\Vert\cdot\Vert_{w,p,\lam}$. 
\end{Claim}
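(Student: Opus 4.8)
The plan is to establish the Claim and then assemble statements (\ref{thm:Fredholm:X Y}) and (\ref{thm:Fredholm:DDD}) from Theorems \ref{thm:Fredholm aug} and \ref{thm:L w * R}. I begin with the set-theoretic identity $\XX_w^{p,\lam}=\ker\big(L_w^*:\XXX_w^{p,\lam}\to\Ga^p_\lam(\g_P)\big)$. Unwinding the definitions (\ref{eq:XX w}) and (\ref{eq:X w p lam}), an element $\ze\in\Ga^{1,p}_\loc(\wone(\g_P)\oplus TM^u)$ lies in $\XX_w^{p,\lam}$ iff $L_w^*\ze=0$ and $\Vert\ze\Vert_{w,p,\lam}<\infty$, whereas it lies in $K$ iff $L_w^*\ze=0$ and $\Vert\ze\hhat\Vert_{w,p,\lam}=\Vert\ze\Vert_{w,p,\lam}+\Vert\PR^u\ze\Vert_{p,\lam}<\infty$. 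So the only thing to check is that on the subspace $\{L_w^*\ze=0\}$ the extra term $\Vert\PR^u\ze\Vert_{p,\lam}$ is controlled by $\Vert\ze\Vert_{w,p,\lam}$; this simultaneously gives the claimed norm equivalence on $\XX_w$, since trivially $\Vert\ze\Vert_{w,p,\lam}\le\Vert\ze\hhat\Vert_{w,p,\lam}$.

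The heart of the matter, and the step I expect to be the main obstacle, is the estimate $\Vert\PR^u\ze\Vert_{p,\lam}\le C\Vert\ze\Vert_{w,p,\lam}$ for $\ze=(\al,v)$ with $L_w^*\ze=0$. Write $\PR^u\ze=(\al,\PR^uv)$. The $\al$-component is already a summand of $\Vert\ze\Vert_{w,p,\lam}$ via the term $\big\Vert\,|\al|\,\big\Vert_{p,\lam}$, so it suffices to bound $\Vert\PR^uv\Vert_{p,\lam}$. Here I would use hypothesis (H) together with Lemma \ref{le:si} in Appendix \ref{sec:proofs homology} (invoked in the footnote to Remark \ref{rmk:X}), which says precisely that near $\mu^{-1}(0)$ the infinitesimal action $L_x$ is uniformly injective, i.e.\ $|\xi|\le c|L_x\xi|$ for $x$ in a neighborhood $U$ of $\mu^{-1}(0)$. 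Since $\BAR{u(P)}$ is compact, its image splits into the part lying in $U$ and the part lying in the complement $M\wo U$; on the latter, $\PR^u v$ and $d\mu(u)v$ both depend continuously and hence boundedly on the $C^0$-data, so it is controlled by $\Vert\ze\Vert_\infty\le\Vert\ze\Vert_{w,p,\lam}$ over the (compact) region where $u$ leaves $U$, after a cutoff. On the part where $u\in U$, I would use the equation $L_w^*\ze=-d_A^*\al+L_u^*v=0$: since $\PR^u v$ is the component of $v$ in $(u^*\im L)/G$, and $L_u^*$ is an isomorphism from that component onto $\g_P$ with uniformly bounded inverse over $U$, we get $|\PR^u v|\le c\,|L_u^*v|=c\,|d_A^*\al|$ pointwise there; and $|d_A^*\al|\le|\na^A\al|$ up to the connection term, which is a summand of $\Vert\ze\Vert_{w,p,\lam}$. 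Patching the two regions with a partition of unity subordinate to the cover $\{U,M\wo\overline{U'}\}$ of $\BAR{u(P)}$ (for a slightly smaller neighborhood $U'$) and using that the weight $(1+|\cdot|^2)^{\lam/2}$ is the same on both pieces, yields the desired estimate.

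Once the Claim is proved, statement (\ref{thm:Fredholm:X Y}) follows immediately: by Theorem \ref{thm:Fredholm aug}(\ref{thm:Fredholm X w}) the space $\big(\XXX_w^{p,\lam},\Vert\cdot\hhat\Vert_{w,p,\lam}\big)$ is complete and $\YY_w^{p,\lam}$ is complete; $\XX_w^{p,\lam}$ is the kernel of the bounded operator $L_w^*$ on $\XXX_w^{p,\lam}$, hence a closed subspace, hence complete in $\Vert\cdot\hhat\Vert_{w,p,\lam}$, and by the norm equivalence also complete in $\Vert\cdot\Vert_{w,p,\lam}$; then one passes to the gauge quotient using freeness of the gauge action when $\lam>1-2/p$ (Lemma \ref{le:free}), which identifies $\X_W^{p,\lam}$ with $\XX_w^{p,\lam}$ for any representative $w$, and similarly for $\Y_W^{p,\lam}$. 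For statement (\ref{thm:Fredholm:DDD}), assume $1-2/p<\lam<2-2/p$. Under the identification of the Claim, $\D_W^{p,\lam}$ is the restriction of the augmented operator to $\ker L_w^*$, landing in the first two components; equivalently $\DD_w=\D_W^{p,\lam}\oplus L_w^*$ under a splitting. Theorem \ref{thm:L w * R} gives a bounded right inverse of $L_w^*:\XXX_w^{p,\lam}\to\Ga^p_\lam(\g_P)$, so $L_w^*$ is surjective with complemented (closed) kernel $\XX_w^{p,\lam}$; combined with the block-triangular structure of $\DD_w$ this shows that $\DD_w$ is Fredholm iff $\D_W^{p,\lam}$ is, with equal index. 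Hence by Theorem \ref{thm:Fredholm aug}(\ref{thm:Fredholm DD w}),
\[
\ind\D_W^{p,\lam}=\ind\DD_w=2\bar n+2\big\lan c_1^G(M,\om),[W]\big\ran=\dim M-2\dim G+2\big\lan c_1^G(M,\om),[W]\big\ran,
\]
using $\bar n=(\dim M)/2-\dim G$ and that $[[w]]=[W]$ is well-defined for $\lam>1-2/p$ by Remark \ref{rmk:W well-defined} and Definition \ref{defi:homology}. This completes the reduction; the only genuinely new work is the pointwise/weighted estimate in the Claim described above.
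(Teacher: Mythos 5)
Your proposal matches the paper's proof of the Claim in all essentials: both reduce to the one-sided bounded inclusion $\XX_w\subset K$, and both bound $\Vert\PR^u v\Vert_{p,\lam}$ via the equation $L_u^*v=d_A^*\al$ (from $L_w^*\ze=0$) together with the pointwise estimate $|\PR^u v|\le c^{-1}|L_u^*v|$ of Remark \ref{rmk:c}, valid where $u$ lies in $\mu^{-1}(\bar B_\de)\subset M^*$, which by Lemma \ref{le:si} is the case outside a large ball $B_R$; your explicit patching of the outer region with the trivial $\Vert\ze\Vert_\infty$-bound on the compact inner region is essentially what the paper's terser single chain of inequalities tacitly uses. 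One small misattribution: Lemma \ref{le:si} does not itself assert uniform injectivity of $L_x$ near $\mu^{-1}(0)$ — that follows from hypothesis (H), properness of $\mu$ and compactness of $\mu^{-1}(\bar B_\de)$ — rather it supplies the asymptotic statement that $u\circ\si$ converges to a point $x_\infty\in\mu^{-1}(0)$ as $|z|\to\infty$, whence $u$ lands in $\mu^{-1}(\bar B_\de)$ outside a compact set.
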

\begin{proof}[Proof of the claim] It suffices to prove that $\XX_w\sub K$ and this inclusion is bounded. Hypothesis (H) implies that there exists $\de>0$ such that $\mu^{-1}(\bar B_\de)\sub M^*$. We have 
\[c:=\min\big\{|L_x\xi|\,\big|\,x\in\mu^{-1}(\bar B_\de),\,\xi\in\g:\,|\xi|=1\big\}>0.\] 
It follows from Lemma \ref{le:si} in Appendix \ref{sec:proofs homology} that there exists $R>0$ such that $u(P|_{\C\wo B_R})\sub \mu^{-1}(\bar B_\de)$. Let $\ze=(\al,v)\in\XX_w$. Then $L_u^*v=d_A^*\al$, and thus, using the last assertion of Remark \ref{rmk:c} in Appendix \ref{sec:add},
\[\Vert{\PR}^u v\Vert_{p,\lam}\leq c^{-1}\Vert L_u^*v\Vert_{p,\lam}\leq c^{-1} \Vert\na^A\al\Vert_{p,\lam}\leq c^{-1} \Vert\ze\Vert_{w,p,\lam}<\infty.\] 
Hence $\XX_w\sub K$, and this inclusion is bounded. This proves the claim.
\end{proof}
{\bf Statement (\ref{thm:Fredholm X w})} follows from Theorem \ref{thm:Fredholm aug}(\ref{thm:Fredholm:X Y}) and the claim.

{\bf Statement (\ref{thm:Fredholm:DDD})} follows from Theorem \ref{thm:Fredholm aug}(\ref{thm:Fredholm DD w}), Theorem \ref{thm:L w * R} and Lemma \ref{le:X Y Z} (Appendix \ref{sec:add}) with
\[X:=\XXX_w,\quad Y:=\YY_w,\quad Z:=\Ga^p_\lam(\g_P),\quad T:=L_w^*,\quad D':\XXX_w\to\YY_w,\]
where $D'\ze$ is defined to be the vector consisting of the first and second rows of $\DD_w\ze$ (as in (\ref{eq:DD w ze})). This proves Theorem \ref{thm:Fredholm}.
\end{proof}
\subsection{Proof of Theorem \ref{thm:Fredholm aug} (augmented vertical differential)}\label{subsec:proof:Fredholm aug}
This subsection contains the core of the proof of Theorem \ref{thm:Fredholm aug}. Here we introduce the notion of a good complex trivialization, and state an existence result for such a trivialization (Proposition \ref{prop:triv}). Furthermore, we formulate a result saying that every good trivialization transforms $\D_w$ into a compact perturbation of the direct sum of copies of $\dd_{\bar z}$ and a certain matrix operator (Proposition \ref{prop:X X w}). The results of this subsection will be proved in Subsection \ref{subsec:proofs}. 

We denote by $s$ and $t$ the standard coordinates in $\R^2=\C$. For $v\in\R^n$ and $d\in \Z$ we denote 
\[\lan v\ran:=\sqrt{1+|v|^2},\quad p_d:\C\to \C,\,p_d(z):=z^d.\] 
We equip the bundle $\wone(\g_P)$ with the (fiberwise) complex structure $J_P$ defined by $J_P\al:=-\al\,i$. Furthermore, we denote 
\[\g^\C:=\g\otimes_\R\C,\quad V:=\C^{\bar n}\oplus\g^\C\oplus\g^\C,\] 
and for $a\in\C$ we use the notation
\[a\cdot\oplus\id:V\to V,\quad\big(v^1,\ldots,v^{\bar n},\al,\be\big)\mapsto \big(av^1,v^2,\ldots,v^{\bar n},\al,\be\big).\] 
For $x\in M$ we write 
\[L^\C_x:\g^\C\to T_xM\] 
for the complex linear extension of $L_x$. We define 
\[H_x:=\ker d\mu(x)\cap(\im L_x)^\perp,\,\forall x\in M.\]
Note that in general, the union $H$ of all the $H_x$'s is not a smooth subbundle of $TM$, since the dimension of $H_x$ may depend on $x$. However, there exists an open neighborhood $U\sub M$ of $\mu^{-1}(0)$ such that $H|_U$ is a subbundle of $TM|_U$. Let $p>2$, $\lam>-2/p+1$ and $w:=(P,A,u)\in\BB^p_\lam$. We denote
\begin{equation}\label{eq:d lan}d:=\big\lan c_1^G(M,\om),[W]\big\ran.
\end{equation}
For $z\in\C$ we define 
\[H^u_z:=\big\{G\cdot(p,v)\,\big|\,p\in\pi^{-1}(z)\sub P,\,v\in H_{u(p)}\big\}.\] 
Consider a complex trivialization (i.e, a bundle isomorphism descending to the identity on the base $\C$)
\[\Psi:\C\x V\to\wone(\g_P)\oplus TM^u.\]
\begin{defi}\label{defi:triv} We call $\Psi$ \emph{good}, if the following properties are satisfied.
\begin{enui}\item {\bf (Splitting)}\label{defi:triv split} For every $z\in\C$ we have
  \begin{align}
    \label{eq:Psi z C bar n}\Psi_z(\C^{\bar n}\oplus\g^\C\oplus\{0\})=&\,\{0\}\oplus TM^u_z,\\
\label{eq:Psi z 0} \Psi_z(\{0\}\oplus\{0\}\oplus\g^\C)=&\,\wone(\g_P)\oplus\{0\}.
  \end{align}
Furthermore, there exists a number $R>0$, a section $\si$ of $P\to\C\wo B_1$, of class $W^{1,p}_\loc$, and a point $x_\infty\in\mu^{-1}(0)$, such that the following conditions are satisfied. For every $z\in\C\wo B_R$ we have
\begin{eqnarray}\label{eq:Psi infty x H}&\Psi_z(\C^{\bar n}\oplus\{0\}\oplus\{0\})=H^u_z,&\\
\nn&u\circ\si(re^{i\phi})\to x_\infty,\textrm{ uniformly in }\phi\in\R,\textrm{ as }r\to\infty,&\\
\nn&\si^*A\in L^p_\lam(\C\wo B_1,\g),& 
\end{eqnarray}
and for every $z\in\C\wo B_R$ and $\big(\al,\be=\phi+i\psi\big)\in\g^\C\oplus\g^\C$, we have
\begin{equation}\label{eq:Psi 0 al be}\Psi_z(0,\al,\be)=\big(G\cdot\big(\si(z),\phi ds+\psi dt\big),G\cdot\big(u\circ\si(z),L^\C_{u\circ\si(z)}(\al)\big)\big).
\end{equation}
\item\label{defi:triv C} There exists a number $C>0$ such that for every $(z,\ze)\in\C\x V$
  \begin{equation}
    \label{eq:C}C^{-1}|\ze|\leq \big|\Psi_z(\lan z\ran^d\cdot\oplus\id)\ze\big|\leq C|\ze|.
  \end{equation}
\item\label{defi:triv na} We have $\big|\na^A\big(\Psi(p_d\cdot\oplus\id)\big)\big|\in L^p_\lam(\C\wo B_1)$.
\end{enui}
\end{defi}
The first ingredient of the proof of Theorem \ref{thm:Fredholm aug} is the following result.
\begin{prop}\label{prop:triv} If $p>2$, $\lam>-2/p+1$, and $w:=(P,A,u)\in\BB^p_\lam$, then there exists a good complex trivialization of $\wone(\g_P)\oplus TM^u$. 
\end{prop}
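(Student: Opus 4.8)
The goal is to construct, given $w=(P,A,u)\in\BB^p_\lam$, a complex trivialization $\Psi:\C\x V\to\wone(\g_P)\oplus TM^u$ satisfying the (Splitting), boundedness (\ref{eq:C}), and decay (Definition \ref{defi:triv}(\ref{defi:triv na})) conditions. The overall strategy is to build $\Psi$ separately near infinity and on a large compact ball, and then patch. Near infinity the trivialization is essentially dictated by the asymptotic geometry of a finite energy vortex (as described in Remark \ref{rmk:geometry}): the image of $u$ approaches $\mu^{-1}(0)$, so that the splitting $T_{u(p)}M = (\im L^\C_{u(p)})^\perp \oplus \im L^\C_{u(p)}$ of (\ref{eq:im L C}) is available, and moreover $H|_U$ is a genuine subbundle of $TM|_U$ on a neighborhood $U$ of $\mu^{-1}(0)$. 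On the compact part the bundle is trivializable for soft reasons ($\C$ retracts to a point, $\Aut$ is connected), so the real content is entirely at the end $\C\wo B_R$.

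\textbf{Step 1: choice of data near infinity.} First I would invoke the regularity result (Theorem \ref{thm:reg bundle} in Appendix \ref{sec:smooth}) and the decay estimate of \cite[Corollary 1.4]{ZiA}, exactly as used in Remark \ref{rmk:geometry}, to find $R>0$ and a $W^{1,p}_\loc$-section $\si$ of $P$ over $\C\wo B_1$, together with a point $x_\infty\in\mu^{-1}(0)$, such that $u\circ\si(re^{i\phi})\to x_\infty$ uniformly in $\phi$ as $r\to\infty$, with $\si^*A\in L^p_\lam(\C\wo B_1,\g)$; this is where the decay $e_w(z)\le C|z|^{-4+\eps}$ together with $\lam<2-2/p$ (hence $\lam<2$) is used to get $L^p_\lam$-integrability of the relevant radial derivatives. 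Using $\si$ I would define the ``$\g^\C\oplus\g^\C$'' part of $\Psi$ by the explicit formula (\ref{eq:Psi 0 al be}): on $\wone(\g_P)$ the trivialization is $\be=\phi+i\psi\mapsto \phi\,ds+\psi\,dt$ transported by $\si$, and on the $\im L$-part of $TM^u$ it is $\al\mapsto L^\C_{u\circ\si}(\al)$. Since $x_\infty\in\mu^{-1}(0)$ and $G$ acts freely there with $L^\C_{x_\infty}$ injective onto $\im L^\C_{x_\infty}$, this is an isomorphism onto $\wone(\g_P)\oplus (u^*\im L)/G$ over $\C\wo B_R$ for $R$ large. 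For the remaining summand $\C^{\bar n}$ I would map it isomorphically onto $H^u_z$ — the descended horizontal distribution — which is legitimate precisely because $u$ takes values in the neighborhood $U$ where $H$ is a subbundle of rank $2\bar n$; here I would pick a unitary frame of $H$ pulled back along $u\circ\si$, adjusted by the power $p_d$ (with $d$ as in (\ref{eq:d lan})) to absorb the winding coming from the equivariant first Chern class, so that (\ref{defi:triv na}) holds after this twist.

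\textbf{Step 2: compact part and patching.} On $\BAR B_{R+1}$, since $P\to\C$ is trivializable and the bundle $\wone(\g_P)\oplus TM^u$ restricted to a contractible (or retractible) set admits a complex frame, I would choose any complex trivialization $\Psi_0$ respecting the two splitting equations (\ref{eq:Psi z C bar n}), (\ref{eq:Psi z 0}) fiberwise — this is arranged just by choosing frames compatible with the orthogonal decomposition $TM^u = (u^*\im L)/G \oplus$ (complement), and it can be done over all of $\C$ if we only ask for (\ref{defi:triv split})'s first two lines. Then on the overlap annulus $B_{R+1}\wo B_R$ the transition between $\Psi_0$ and the near-infinity trivialization from Step 1 is a $\GL$-valued $W^{1,p}_\loc$ map; I would homotope $\Psi_0$ on the annulus (keeping the splitting) so that it agrees with the near-infinity $\Psi$ outside $B_{R+1}$, using connectedness of the relevant structure groups ($\GL(\bar n,\C)$, $\GL(\g^\C)$), and glue with a cutoff in $z$. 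The resulting $\Psi$ satisfies the splitting conditions globally, equals the explicit formula (\ref{eq:Psi 0 al be}) and maps $\C^{\bar n}$ to $H^u$ outside $B_{R+1}$, and the uniform two-sided bound (\ref{eq:C}) holds: for $|z|\le R+1$ because of compactness and continuity, and for $|z|\ge R+1$ because $|u\circ\si(z)|$ stays in a compact subset of $U$ on which $L^\C$ and the horizontal frame are uniformly bounded and bounded below, with the $\lan z\ran^d$ factor exactly cancelling the $p_d$ twist. Finally (\ref{defi:triv na}) follows by differentiating (\ref{eq:Psi 0 al be}) and the horizontal-frame formula: each term involves either $\na^A$ of a frame of $H$ along $u$ (controlled by $|d_A u|\le\sqrt{2e_w}\in L^p_\lam$), or $\si^*A\in L^p_\lam$, or $d\mu(u)$ acting on $v$ (again $L^p_\lam$ by the decay of $e_w$), so $|\na^A(\Psi(p_d\cdot\oplus\id))|\in L^p_\lam(\C\wo B_1)$.

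\textbf{Main obstacle.} The delicate point is entirely at the end $\C\wo B_R$: one must simultaneously (i) arrange that $\C^{\bar n}$ trivializes the \emph{horizontal} subbundle $H^u$ — which requires knowing $u$ eventually maps into the neighborhood $U$ where $H$ has constant rank, hence requires the asymptotic result that finite energy vortices converge to $\mu^{-1}(0)$ — and (ii) get the right power $p_d$, i.e.\ match the twist of the frame to the equivariant Chern number $d=\lan c_1^G(M,\om),[W]\ran$, so that (\ref{defi:triv na}) and the $\lan z\ran^d$-compensated bound (\ref{eq:C}) both hold. Getting the weight exponents to fall in the admissible window forces the constraint $\lam<2-2/p$ implicitly through the $L^p_\lam$-integrability of $\si^*A$ and $\na^A$ of the frames; tracking these exponents carefully, using the quantitative decay $e_w(z)=O(|z|^{-4+\eps})$ and the Hardy-type inequality machinery, is the real work, whereas the compact part and the patching are routine.
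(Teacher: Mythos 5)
Your overall plan — build $\Psi$ explicitly near infinity from the asymptotic geometry of $u$ near $\mu^{-1}(0)$, use the $z^{-d}$ twist to control the winding, then extend over the compact part — matches the structure of the paper's proof. But there are two genuine gaps.

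First, your Step 1 invokes the pointwise decay $e_w(z)\le C|z|^{-4+\eps}$ of \cite[Corollary 1.4]{ZiA} and the constraint $\lam<2-2/p$, neither of which is available here: Proposition \ref{prop:triv} is stated for an arbitrary $w\in\BB^p_\lam$ (not a vortex), so there is no reason for $e_w$ to have the vortex decay rate, and the only hypothesis on $\lam$ is $\lam>1-2/p$. The tool that actually produces the section $\si$, the limit point $x_\infty\in\mu^{-1}(0)$, and the estimate $\si^*A\in L^p_\lam(\C\wo B_1,\g)$ for \emph{general} $w\in\BB^p_\lam$ is Lemma \ref{le:si} in Appendix \ref{sec:proofs homology}, whose proof goes through the Hardy-type inequality (Proposition \ref{prop:Hardy}) rather than through the decay rate of vortices. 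You need to replace your Step 1 by an appeal to that lemma (or rederive it); as written, the step does not apply in the generality of the proposition.

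Second, your patching argument glosses over the key topological step. You appeal to "connectedness of the relevant structure groups" to homotope the compact-part trivialization to match the near-infinity one on the overlap annulus, but connectedness of $\GL(\bar n,\C)\x\GL(\g^\C)$ does not make the transition map on $S^1_R$ null-homotopic: the obstruction lives in $\pi_1\cong\Z$, and vanishing after the $p_d$ twist requires knowing that the transition has winding number exactly $d=\lan c_1^G(M,\om),[W]\ran$. You assert that the $p_d$ factor "absorbs the winding coming from the equivariant first Chern class," but nothing in the proposal establishes that the transition's winding number \emph{is} $d$. This identification is precisely the content of the paper's Claim \ref{claim:lan deg} (proved via Lemma \ref{le:Chern bundle}) together with the fact that $[S^1,\Aut(V)]\to\Z$, $\Phi\mapsto\deg(f\circ\det\circ\Phi)$, is a bijection (Lemma \ref{le:Aut}); and there is a separate verification needed for the $\g^\C$-factor corresponding to $\wone(\g_P)$, where one uses $\det\Ad^\C_g\in\R$ to conclude that the degree is $0$ so no twist is required (Claim \ref{claim:hhat Psi 1}). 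Without some version of this Chern-number-equals-winding-number argument, there is a topological obstruction you have not shown to vanish, and the extension across $B_{R+1}$ need not exist.
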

The proof of this proposition is postponed to subsection \ref{subsec:proofs} (page \pageref{proof:triv}). The next result shows that a good trivialization transforms $\DD_w$ into a compact perturbation of some standard operator. We denote $\N_0:=\N\cup\{0\}$ and
\[|\al|:=\sum_{i=1}^n\al_i,\quad\dd^\al:=\dd_1^{\al_1}\cdots\dd_n^{\al_n},\quad\forall\al=(\al_1,\ldots,\al_n)\in\N_0^n.\] 
Let $1\leq p\leq \infty$, $n\in\N$, $k\in\N_0$, $\lam\in\R$, $\Om\sub\R^n$ be an open subset, $W$ a real or complex vector space,  and $u:\Om\to W$ a $k$-times weakly differentiable map. We define 
\begin{eqnarray}\nn\Vert u\Vert_{L^{k,p}_\lam(\Om,W)}&:=&\sum_{|\al|\leq k}\big\Vert\langle \cdot\rangle^{\lam+|\al|}\dd^\al u\big\Vert_{L^p(\Om,W)}\in[0,\infty],\\
\nn\Vert u\Vert_{W^{k,p}_\lam(\Om,W)}&:=&\sum_{|\al|\leq k}\Vert\langle\cdot\rangle^\lam\dd^\al u\Vert_{L^p(\Om,W)}\in[0,\infty],\\
\label{eq:L k p}L^{k,p}_\lam(\Om,W)&:=&\big\{u\in W^{k,p}_\loc(\Om,W)\,|\,\Vert u\Vert_{L^{k,p}_\lam(\Om,W)}<\infty\big\}\\
\label{eq:W k p} W^{k,p}_\lam(\Om,W)&:=&\big\{u\in W^{k,p}_\loc(\Om,W)\,|\,\Vert u\Vert_{W^{k,p}_\lam(\Om,W)}<\infty\big\}.
\end{eqnarray}
If $(X_i,\Vert\cdot\Vert_i)$, $i=1,\ldots,k$, are normed vector spaces then we endow $X_1\oplus\cdots\oplus X_k$ with the norm $\Vert(x_1,\ldots,x_k)\Vert:=\sum_i\Vert x_i\Vert_i$. Let $d\in\Z$. If $d<0$ then we choose $\rho_0\in C^\infty(\C,[0,1])$ such that $\rho_0(z)=0$ for $|z|\leq1/2$ and $\rho_0(z)=1$ for $|z|\geq1$. In the case $d\geq0$ we set $\rho_0:=1$. The isomorphism of Lemma \ref{le:X d iso} (Appendix \ref{sec:weighted}) induces norms on the vector spaces
\begin{equation}\label{eq:XXX' p lam d XXX'' p lam}\XXX'_{p,\lam,d}:=\C\rho_0p_d+L^{1,p}_{{\lam-1}-d}(\C,\C),\quad\XXX''_{p,\lam}:=\C^{\bar n-1}+L^{1,p}_{\lam-1}(\C,\C^{\bar n-1}).\end{equation} 
We define
\[\XXX_d:=\XXX^{p,\lam}_d:=\XXX'_{p,\lam,d}\oplus\XXX''_{p,\lam}\oplus W^{1,p}_{\lam}(\C,\g^\C\oplus\g^\C),\] 
\[\YYY_d:=\YYY^{p,\lam}_d:=L^p_{\lam-d}(\C,\C)\oplus L^p_{\lam}\big(\C,\C^{\bar n-1}\oplus\g^\C\oplus\g^\C\big).\] 
For a complex vector space $W$ we denote by $\dd^W_{\bar z}$ ($\dd^W_z$) the operator $\frac12(\dd_s+i\dd_t)$ ($\frac12(\dd_s-i\dd_t)$) acting on functions from $\C$ to $W$. We denote by $\lan\cdot,\cdot\ran_\g^\C$ the hermitian inner product on $\g^\C$ (complex anti-linear in its first argument) extending $\lan\cdot,\cdot\ran_\g$. Furthermore, we denote by $\wzeroone(TM^u)$ the bundle of complex anti-linear one-forms on $\C$ with values in $TM^u$, and define the isomorphisms 
\begin{eqnarray*}&F_1:TM^u\to \wzeroone(TM^u),\quad F_1(v):=(ds-Jdt)v,&\\
&F_2:\wone(\g_P)\to \wtwo(\g_P)\oplus\g_P,\quad F_2(\phi ds+\psi dt):=(\psi ds\wedge dt,\phi),&\\ 
&F:\wone(\g_P)\oplus TM^u\to\wzeroone(TM^u)\oplus\wtwo(\g_P)\oplus\g_P,\quad F(\al,v):=(F_1v,F_2\al).&
\end{eqnarray*}
We are now ready to formulate the second ingredient of the proof of Theorem \ref{thm:Fredholm aug}:
\begin{prop}[Operator in good trivialization]\label{prop:X X w} Let $p>2$, $\lam>-2/p+1$, $w:=(P,A,u)\in \BB^p_{\lam}$, and
\[\Psi:\C\x V\to\wone(\g_P)\oplus TM^u\]
be a good trivialization. We define $d$ as in (\ref{eq:d lan}). The following statements hold.
  \begin{enui}\item \label{prop:X X w iso} The following maps are well-defined isomorphisms of normed spaces:
  \begin{equation} \label{eq:X Y Psi}\XXX_d\ni\ze\mapsto \Psi\ze\in \XXX_w,\quad\YYY_d\ni\ze\mapsto F\Psi\ze\in \YYY_w
  \end{equation}
\item\label{prop:X X w Fredholm} There exists a positive%
\footnote{This means that $\lan S_\infty v,v\ran_\g^\C>0$ for every $0\neq v\in\g^\C$.}
 $\C$-linear map $S_\infty:\g^\C\to \g^\C$ such that the following operator is compact:
\begin{equation}\label{eq:F Psi DD}S:=(F\Psi)^{-1}\DD_w \Psi-\dd^{\C^{\bar n}}_{\bar z}\oplus\left(
  \begin{array}{cc}
\dd^{\g^\C}_{\bar z}&\id/2\\
S_\infty&2\dd^{\g^\C}_z
  \end{array}
\right):\XXX_d\to\YYY_d
\end{equation}
\end{enui}
\end{prop}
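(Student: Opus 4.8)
\textbf{Plan of proof for Proposition \ref{prop:X X w}.}

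The proof has two essentially independent parts, matching the two statements. For part (\ref{prop:X X w iso}), I would argue that the map $\ze\mapsto\Psi\ze$ carries each of the three ``pieces'' of $\XXX_d$ onto the corresponding piece of $\XXX_w$, and that it is bounded with bounded inverse. Concretely: the splitting conditions (\ref{eq:Psi z C bar n},\ref{eq:Psi z 0}) of Definition \ref{defi:triv}(\ref{defi:triv split}) guarantee that $\Psi$ respects the decomposition into the $TM^u$-part and the $\wone(\g_P)$-part, while the $\C^{\bar n}$ vs.\ $\g^\C$ splitting inside $TM^u$ is respected near $\infty$ by (\ref{eq:Psi infty x H}). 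The weight shift by $d$ on the first $\C$-factor is built into the definition of $\XXX'_{p,\lam,d}$ via $p_d$ and the isomorphism of Lemma \ref{le:X d iso}; the estimate (\ref{eq:C}) of Definition \ref{defi:triv}(\ref{defi:triv C}) shows that $\Psi$ composed with $(\lan z\ran^d\cdot\oplus\id)$ is uniformly bounded above and below pointwise, which handles the $L^\infty$-part and the $L^p_\lam$-part of the $0$-th order terms in $\Vert\cdot\hhat\Vert_{w,p,\lam}$; and Definition \ref{defi:triv}(\ref{defi:triv na}) controls the difference between $\na^A(\Psi\ze)$ and $\Psi(\dd\ze)$ in $L^p_\lam$ (away from $B_1$), so that the first-order part of the norms match up to a bounded correction. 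One also needs to check that the term $|d\mu(u)v|$ appearing in $\Vert\cdot\Vert_{w,p,\lam}$ is absorbed: this follows since $d\mu(u)$ annihilates $\im L^\C$ (it is the adjoint of $L$ up to $J$), vanishes on $H^u$, and has bounded norm on the compact set $\overline{u(P)}$, so under a good trivialization $d\mu(u)v$ is comparable to the $\C^{\bar n}$-component of $\ze$ times a bounded factor, already controlled by $W^{1,p}_\lam\hookrightarrow L^p_\lam$. The map $F$ is a fiberwise complex-linear isomorphism with $F$ and $F^{-1}$ bounded (the factors $ds-Jdt$ etc.\ are uniformly bounded since $J$ is $\om$-compatible and $\overline{u(P)}$ compact), so $\ze\mapsto F\Psi\ze$ is an isomorphism $\YYY_d\to\YYY_w$ by the same bookkeeping, but without any first-order terms.

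For part (\ref{prop:X X w Fredholm}), the strategy is to compute the conjugated operator $(F\Psi)^{-1}\DD_w\Psi$ explicitly and identify its leading-order part. Writing $\ze=(\al,v)\in\wone(\g_P)\oplus TM^u$ in the good trivialization, the vortex linearization $\DD_w$ has three rows (see (\ref{eq:DD w ze})): the Cauchy--Riemann-type row $(\na^Av+L_u\al)^{0,1}-\frac12 J(\na_v J)(d_Au)^{1,0}$, the curvature row $d_A\al+\om_0\,d\mu(u)v$, and the gauge-fixing row $L_w^*\ze=-d_A^*\al+L_u^*v$. In the trivialization, $\na^A$ becomes $d$ plus a zeroth-order term supported essentially on $B_1$ plus an $L^p_\lam$-small connection term by Definition \ref{defi:triv}(\ref{defi:triv na}) and the condition $\si^*A\in L^p_\lam$; hence on the $\C^{\bar n}$-block the $(0,1)$-part of $\na^Av$ becomes $\dd^{\C^{\bar n}}_{\bar z}$ up to a compact operator (compactness of multiplication by an $L^p_\lam$-function from a weighted $W^{1,p}$-space into a weighted $L^p$-space with a slightly better weight is the key analytic fact, which follows from Rellich-type compactness on annuli together with the weight decay — this is where Proposition \ref{prop:Hardy} and the Lockhart--McOwen machinery enter). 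On the $\g^\C\oplus\g^\C$-blocks, the terms $L_u\al$, $d_A\al$, $\om_0 d\mu(u)v$, $L_u^* v$ combine: near $\infty$, using (\ref{eq:Psi 0 al be}), $L_u^\C$ restricted to $\im L^\C$ is an isomorphism onto $\im L\subseteq TM$, and its ``size'' is governed by a positive self-adjoint endomorphism of $\g^\C$ which converges as $z\to\infty$ to a fixed positive map $S_\infty:\g^\C\to\g^\C$ (essentially $L_{x_\infty}^*L_{x_\infty}$ complexified, up to normalization). This produces the constant-coefficient $2\times 2$ block $\left(\begin{smallmatrix}\dd^{\g^\C}_{\bar z}&\id/2\\ S_\infty&2\dd^{\g^\C}_z\end{smallmatrix}\right)$ in the limit, with everything else — the $z$-dependence of the endomorphism, the lower-order connection terms, the $\frac12 J(\na_v J)(d_Au)^{1,0}$ term (which is zeroth order in $v$ and decays because $d_Au\in L^p_\lam$ by finite energy and Remark \ref{rmk:geometry}) — being a compact perturbation. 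I would assemble all error terms, show each is a bounded operator from $\XXX_d$ to $\YYY_d$ that factors through a compact inclusion (using the weight gain $\lam\mapsto\lam+|\al|$ in the definitions (\ref{eq:L k p},\ref{eq:W k p}) of the weighted spaces), and conclude.

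\textbf{Main obstacle.} The delicate point is not writing down the leading term but proving that \emph{all} the remainder terms are genuinely compact operators between the \emph{weighted} spaces $\XXX_d$ and $\YYY_d$. Multiplication by a fixed function $f\in L^p_\lam$ is compact from $W^{1,p}_{\lam-1}$ to $L^p_\lam$ on $\C$ only because of the interplay between the Sobolev gain (one derivative) and the decay of $f$ at infinity; on an unbounded domain this requires a careful cutoff-and-diagonal argument (exhaust $\C$ by balls, use Rellich on each ball, and use that the tails are uniformly small in operator norm because $\Vert f\Vert_{L^p(\C\setminus B_R)}\to 0$). One must also verify that the passage from $\na^A$ to $d$ in the trivialization really does produce only such multiplication-type errors — i.e.\ that the trivialization's structure constants lie in the right weighted space — which is exactly what Definition \ref{defi:triv}(\ref{defi:triv na}) and the hypotheses $\si^*A\in L^p_\lam$, $u\circ\si\to x_\infty$ are designed to give, but matching these up term by term with the explicit form of $\DD_w$ is the bulk of the work. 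A secondary subtlety is keeping track of the $d$-dependent weight on the single distinguished $\C$-factor (and the role of the cutoff $\rho_0$ when $d<0$, which is needed so that $\rho_0 p_d$ lies in a weighted $W^{1,p}$-space near the origin); this is handled by invoking Lemma \ref{le:X d iso} but must be threaded consistently through both statements.
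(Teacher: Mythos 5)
Your overall route matches the paper's: in part (i) you argue via Definition \ref{defi:triv}(\ref{defi:triv split},\ref{defi:triv C},\ref{defi:triv na}) and the weighted Sobolev estimates, and in part (ii) you identify $S_\infty$ as (the complexification of) $L_{x_\infty}^*L_{x_\infty}$ and show the remainder is a sum of block operators, each compact because of the decay at infinity built into Definition \ref{defi:triv}.

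There is, however, a real gap in part (i), and a related misattribution. You only sketch why $\ze\mapsto\Psi\ze$ is bounded, and wave at the inverse by saying it ``is bounded with bounded inverse.'' The hard direction is the inverse: given $\ze'\in\XXX_w$, you must show that $\Psi^{-1}\ze'$ actually lies in $\XXX_d=\XXX'_{p,\lam,d}\oplus\XXX''_{p,\lam}\oplus W^{1,p}_\lam$, i.e.\ that after applying $(\rho_0p_{-d}\cdot\oplus\id)$ the result has a well-defined limit $\ze_\infty\in V$ at infinity with $(\al_\infty,\be_\infty)=0$, and that subtracting $v_\infty\rho_0 p_d$ from the distinguished component leaves something in $L^{1,p}_{\lam-1-d}$. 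This is precisely where Proposition \ref{prop:Hardy} is used: the $L^p_\lam$-control on the differentiated quantity forces uniform radial convergence of $\wt\ze:=(\rho_0p_{-d}\cdot\oplus\id)\Psi^{-1}\ze'$ to a constant $\ze_\infty$, and the inequality bounds $\Vert(\wt\ze-\ze_\infty)|\cdot|^{\lam-1}\Vert_{L^p}$; the hypothesis $\lam>1-2/p$ then kills the $\g^\C$-components of $\ze_\infty$ because otherwise their weighted $L^p$-norm would diverge. Without some such argument, ``bounded inverse'' is not established. In your write-up, Proposition \ref{prop:Hardy} is instead cited in part (ii) as an ingredient for compactness; but compactness of the remainder blocks is handled by Proposition \ref{prop:lam d Morrey}(\ref{prop:lam Morrey},\ref{prop:lam W cpt}) — compact embedding $L^{1,p}_\lam\hookrightarrow C_b$ and compactness of multiplication by a function decaying to zero at infinity — not by Hardy. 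So you should move the Hardy argument to the inverse-map estimate in part (i) and fill in the extraction of $\ze_\infty$ there; the rest of your plan then goes through along the paper's lines.
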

The proof of Proposition \ref{prop:X X w} is postponed to subsection \ref{subsec:proofs} (page \pageref{proof:X X w}). It is based on some inequalities and compactness properties for weighted Sobolev spaces and a Hardy-type inequality (Propositions \ref{prop:lam d Morrey} and \ref{prop:Hardy} in Appendix \ref{sec:weighted}).
\begin{proof}[Proof of Theorem \ref{thm:Fredholm aug} (p.~\pageref{thm:Fredholm aug})]\setcounter{claim}{0} \label{proof:Fredholm aug} We fix $p>2$, $\lam>-2/p+1$, and a triple $w:=(P,A,u)\in \BB^p_\lam$. We prove {\bf part (\ref{thm:Fredholm X w})}. The space (\ref{eq:L k p}) is complete, see \cite{Lockhart PhD}. The same holds for the space (\ref{eq:W k p}) by Proposition \ref{prop:lam d Morrey}(\ref{prop:lam W}) (Appendix \ref{sec:weighted}). Combining this with Propositions \ref{prop:triv} and \ref{prop:X X w}(\ref{prop:X X w iso}), part (\ref{thm:Fredholm X w}) follows. 

{\bf Part (\ref{thm:Fredholm DD w})} follows from Propositions \ref{prop:triv} and \ref{prop:X X w}(\ref{prop:X X w Fredholm}), Corollary \ref{cor:d L L} and Proposition \ref{prop:d A B d} (Appendix \ref{sec:weighted}). This proves Theorem \ref{thm:Fredholm aug}.\end{proof}
\begin{rmk}\label{rmk:Fredholm aug} An alternative approach to prove Theorem \ref{thm:Fredholm aug} is to switch to ``logarithmic'' coordinates $\tau+i\phi$ (defined by $e^{\tau+i\phi}=s+it\in\C\wo\{0\}$). In these coordinates and a suitable trivialization the operator $\DD_w$ is of the form $\dd_\tau+A(\tau)$. Hence one can try to apply the results of \cite{RoSa}. However, this is not possible, since $A(\tau)$ contains the operator $v\mapsto e^{2\tau} d\mu(u)v\,d\tau\wedge d\phi$, which diverges for $\tau\to\infty$. $\Box$
\end{rmk}
\subsection{Proofs of the results of subsection \ref{subsec:proof:Fredholm aug}}\label{subsec:proofs}
\begin{proof}[Proof of Proposition \ref{prop:triv} (p.~\pageref{prop:triv})]\label{proof:triv}\setcounter{claim}{0} Let $p,\lam$ and $w$ be as in the hypothesis. We choose a section $\si$ of $P|_{\C\wo B_1}$ and a point $x_\infty\in\mu^{-1}(0)$ as in Lemma \ref{le:si} in Appendix \ref{sec:proofs homology}. 
\begin{claim}\label{claim:U triv} There exists an open $G$-invariant neighborhood $U\sub M$ of $x_\infty$ such that $H|_U$ is a smooth subbundle of $TM$ with the following property. There exists a smooth complex trivialization $\Psi^U:U\x\C^{\bar n}\to H|_U$ satisfying 
\[\Psi^U_{gx}v_0=g\Psi^U_xv_0:=g\Psi^U(x,v_0),\quad\forall g\in G,\,x\in U,\,v_0\in \C^{\bar n}.\] 
\end{claim}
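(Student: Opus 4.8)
\emph{Proof idea.} The plan is to descend the horizontal distribution $H$ to the quotient of a neighborhood of $\mu^{-1}(0)$ by the $G$-action, trivialize it there over a contractible chart, and pull the trivialization back. As a preliminary step I would shrink the neighborhood of $\mu^{-1}(0)$ on which $H$ is a subbundle so that, in addition, $G$ acts freely on it: hypothesis (H) gives freeness on $\mu^{-1}(0)$, and since $G$ is compact the set of points of $M$ with trivial stabilizer is open, so there is an open set $N\subseteq M$ with $\mu^{-1}(0)\subseteq N$ on which $G$ acts freely and on which $H|_N$ is a smooth complex subbundle of $TM|_N$ (the latter being exactly the statement recorded just before the claim, together with the fact that $J$ preserves $H$). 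Note $x_\infty\in\mu^{-1}(0)\subseteq N$, so the (compact) orbit $Gx_\infty$ lies in $N$.

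The key steps, in order, would be the following. First, $H$ is $G$-invariant, i.e.\ $g_*H_x=H_{gx}$: since $J$ is $G$-invariant, each $g$ is an isometry of $\om(\cdot,J\cdot)$; since $\mu$ is equivariant, $g_*\ker d\mu(x)=\ker d\mu(gx)$; and since $L_{gx}\xi=g_*L_x(\Ad_{g^{-1}}\xi)$, we get $g_*(\im L_x)^\perp=(\im L_{gx})^\perp$. Second, $G$ compact acts properly on $N$, and being free the quotient $\BAR N:=N/G$ is a smooth manifold with $\pi\colon N\to\BAR N$ a principal $G$-bundle; the $G$-invariant complex subbundle $H|_N$ then descends to a smooth complex vector bundle $\BAR H\to\BAR N$ of rank $\bar n$, with canonical isomorphisms $H_x\iso\BAR H_{\pi(x)}$ intertwined by the $G$-action. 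Third, choose an open neighborhood $V\subseteq\BAR N$ of $\pi(x_\infty)$ diffeomorphic to a Euclidean ball (hence contractible); a complex vector bundle over a contractible paracompact base is trivial, so fix a smooth complex trivialization $\Phi\colon V\x\C^{\bar n}\to\BAR H|_V$. Finally, put $U:=\pi^{-1}(V)$, which is open, $G$-invariant, and contains $x_\infty$, with $H|_U$ a smooth complex subbundle of $TM$; define $\Psi^U_x v_0\in H_x$ to be the preimage of $\Phi_{\pi(x)}v_0$ under $H_x\iso\BAR H_{\pi(x)}$, which is smooth and fiberwise $\C$-linear, and the $G$-equivariance $\Psi^U_{gx}v_0=g\Psi^U_x v_0$ is immediate from the fact that the identifications $H_x\iso\BAR H_{\pi(x)}$ and $H_{gx}\iso\BAR H_{\pi(x)}$ differ precisely by the action of $g$.

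I do not expect any serious obstacle here: the one genuinely substantive input—that $H$ is, near $\mu^{-1}(0)$, a smooth $G$-invariant complex subbundle of $TM$ of constant rank $\bar n$ (a rank count using freeness of the action and that $\mu$ is a submersion along $\mu^{-1}(0)$)—is already available from the discussion preceding the claim and from the $J$-invariance of $H$ noted there. The remaining work is the routine passage to the quotient principal bundle and the use of contractibility of a chart; the only thing to be slightly careful about is to perform all the shrinkings of neighborhoods on $\BAR N$ (equivalently, $G$-invariantly on $N$) so that $U$ comes out $G$-invariant, which the construction above does automatically.
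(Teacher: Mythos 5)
Your proof is correct and takes essentially the same approach as the paper: the paper invokes the local slice theorem to produce a contractible transversal $N$ over which it trivializes $H$ and then extends this $G$-equivariantly over $U=GN$, which is exactly what you do after first passing to the quotient manifold $N/G$, trivializing the descended bundle over a contractible chart, and pulling back along the projection. The one point the paper makes more explicit is \emph{why} $H$ is a smooth subbundle near $x_\infty$ (transversality of $\ker d\mu(x)$ and $(\im L_x)^\perp$), which you instead take from the remark preceding the claim; both are fine.
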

\begin{proof}[Proof of Claim \ref{claim:U triv}] By hypothesis (H) we have $x_\infty\in M^*$ (where $M^*$ is defined as in (\ref{eq:M *})). We choose a $G$-invariant neighborhood $U_0\sub M^*$ of $x_\infty$ so small that $\ker d\mu(x)$ and $(\im L_x)^\perp$ intersect transversely, for every $x\in U_0$. Then $H|_{U_0}$ is a smooth subbundle of $TM|_{U_0}$. Furthermore, by the local slice theorem there exists a pair $(U,N)$, where $U\sub U_0$ is a $G$-invariant neighborhood of $x_\infty$ and $N\sub U$ is a submanifold of dimension $\dim M-\dim G$ that intersects $Gx$ transversely in exactly one point, for every $x\in U$. We choose a complex trivialization of $H|_N$ and extend it in a $G$-equivariant way, thus obtaining a trivialization $\Psi^U$ of $H|_U$. This proves Claim \ref{claim:U triv}.
\end{proof}
We choose $U$ and $\Psi^U$ as in Claim \ref{claim:U triv}. It follows from Lemma \ref{le:si} that there exists $R>1$ such that $u(p)\in U$, for $p\in\pi^{-1}(z)\sub P$, if $z\in\C\wo B_R$. We define 
\begin{eqnarray*}&\wt\Psi^\infty:(\C\wo B_R)\x(\C^{\bar n}\oplus\g^\C)\to TM^u=(u^*TM)/G,&\\
&\wt\Psi^\infty_z(v_0,\al)=G\cdot\left(u\circ\si(z),\Psi^U_{u\circ\si(z)}(z^{-d}\cdot\oplus\id)v_0+L^\C_{u\circ\si(z)}\al\right).&
\end{eqnarray*}
This is a complex trivialization of $TM^u|_{\C\wo B_R}$ (of class $W^{1,p}_\loc$). 
\begin{claim}\label{claim:wt Psi 1} $\wt\Psi^\infty|_{\C\wo B_{R+1}}$ extends to a complex trivialization of $TM^u$. 
\end{claim}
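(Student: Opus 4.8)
The statement to prove is Claim \ref{claim:wt Psi 1}: the complex trivialization $\wt\Psi^\infty$ of $TM^u$ over $\C\wo B_R$, restricted to $\C\wo B_{R+1}$, extends to a complex trivialization of $TM^u$ over all of $\C$.

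The plan is to use the standard fact that a complex vector bundle over a contractible base is trivial, combined with a gluing argument over the annular overlap region. First I would observe that $TM^u$ is a complex vector bundle over $\C$ of rank $\bar n + \dim G$ (namely $\C^{\bar n}\oplus\g^\C$ in rank), and since $\C$ is contractible, $TM^u$ admits \emph{some} global complex trivialization $\Psi^0:\C\x(\C^{\bar n}\oplus\g^\C)\to TM^u$; one should be slightly careful here about regularity, since $P$ is only of class $W^{2,p}_\loc$ and $u$ of class $W^{1,p}_\loc$, so the bundle $TM^u$ is of class $W^{1,p}_\loc$, but a trivialization of this regularity exists by the same contractibility argument (one can first trivialize the underlying topological bundle and then, if desired, smooth it; but $W^{1,p}_\loc$-regularity of $\Psi^0$ suffices for the purposes of Definition \ref{defi:triv}, since a good trivialization is only required to be of class $W^{1,p}_\loc$ and the conditions (\ref{eq:C}), (\ref{defi:triv na}) on decay only concern the region $\C\wo B_1$ where we will use $\wt\Psi^\infty$ anyway).

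Next I would compare the two trivializations $\Psi^0$ and $\wt\Psi^\infty$ over the overlap annulus $A:=(\C\wo B_{R+1/2})\cap(\text{domain of }\Psi^0)=\C\wo B_{R+1/2}$. On this annulus the transition map $g:=(\Psi^0)^{-1}\circ\wt\Psi^\infty:A\to\GL(\C^{\bar n}\oplus\g^\C)$ is a $W^{1,p}_\loc$-map into the complex general linear group. Since $\GL(\C^m)$ is connected and the annulus $A$ deformation retracts onto a circle, $g$ is homotopic (through $W^{1,p}_\loc$, or even continuous, maps) to a constant; but more to the point, I need an actual extension, not just a homotopy. The clean way: choose a cutoff function $\beta\in C^\infty(\C,[0,1])$ with $\beta\equiv 1$ on $\C\wo B_{R+1}$ and $\beta\equiv 0$ on $B_{R+3/4}$, supported in $\C\wo B_{R+1/2}$, and try to interpolate. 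The obstruction to naive interpolation is that $\GL$ is not a vector space, so one cannot simply write $\beta g + (1-\beta)\id$. Instead I would use the polar-type decomposition, or more simply exploit that $\GL(\C^m)$ is connected and that over the contractible region between two circles one can \emph{patch bundles}: by the standard clutching/patching lemma for bundles over a manifold covered by two contractible-enough pieces (here $B_{R+1}$ and $\C\wo B_{R+1/2}$, whose intersection is an annulus), the trivialization $\wt\Psi^\infty$ over $\C\wo B_{R+1}$ determines $TM^u$ up to isomorphism as a bundle built by clutching along the core circle of the annulus, and since $\pi_0(\GL(\C^m))=0$ this clutching is trivial, so $TM^u$ restricted to $\C\wo B_{R+1}$ together with the given product structure there extends. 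Concretely: the transition function $g$ restricted to the core circle $S^1_{R+3/4}$ lifts through the exponential/connectedness of $\GL(\C^m)$ to a null-homotopy, i.e. there is $\wt g:\bar B_{R+1}\wo B_{R+1/2}\to\GL(\C^m)$ of class $W^{1,p}_\loc$ extending $g|_{\C\wo B_{R+1}}$ over the inner annulus and equal to $\id$ near $S^1_{R+1/2}$. Then define the extended trivialization to be $\Psi^0$ on $B_{R+1/2}$ (precomposed with nothing) glued with $\wt\Psi^\infty$ on $\C\wo B_{R+1}$ via $\wt g$ on the transition annulus — this is well-defined because on $\C\wo B_{R+1}$ it agrees with $\wt\Psi^\infty$ by construction of $\wt g$.

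The main obstacle is the regularity and the explicit construction of the interpolating transition map $\wt g$: one must produce, over the annulus $\bar B_{R+1}\wo B_{R+1/2}$, a $W^{1,p}_\loc$-map into $\GL(\C^m)$ that equals the given $g$ on the outer part $\bar B_{R+1}\wo B_{R+3/4}$ and equals $\id$ on the inner part $\bar B_{R+3/4}\wo B_{R+1/2}$. Since $g$ is $W^{1,p}_\loc$ and (by restricting $R$ larger if necessary and by the continuity of $g$ following from $p>2$ and Morrey embedding) close to a constant on a thin annulus, one can first homotope $g$ to a constant $c\in\GL(\C^m)$ within $\GL(\C^m)$ using a short path (taking $R$ large so that $g$ has small oscillation there, which one gets from the decay estimates on $A$ and $u$ underlying Lemma \ref{le:si}), then homotope $c$ to $\id$ using connectedness of $\GL(\C^m)$, and finally reparametrize these two homotopies in the radial variable using smooth cutoffs; the composite is the desired $\wt g$, and the radial reparametrization preserves the $W^{1,p}_\loc$-class since the cutoffs are smooth and the base annulus is bounded away from $0$ and $\infty$. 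Once $\wt g$ is in hand the extension is immediate, and I would close by noting that the resulting global trivialization restricts to $\wt\Psi^\infty$ on $\C\wo B_{R+1}$, which is all that is claimed; its behavior on the compact region $B_{R+1}$ is irrelevant to the eventual verification of Definition \ref{defi:triv}.
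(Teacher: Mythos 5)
There is a genuine gap, and it is exactly at the spot where the paper's proof does its real work. You argue that the transition map $g:=(\Psi^0)^{-1}\circ\wt\Psi^\infty$ on an exterior annulus is null-homotopic ``since $\GL(\C^m)$ is connected'', and later write ``since $\pi_0(\GL(\C^m))=0$ this clutching is trivial''. This confuses $\pi_0$ with $\pi_1$. The domain $\C\wo B_{R+1}$ deformation retracts onto a circle, so the obstruction to extending $g$ continuously over $B_{R+1}$ lives in $\pi_1(\GL(\C^m))\iso\Z$, and is the winding number $\deg\big(f\circ\det\circ g|_{S^1_r}\big)$. A trivialization of a bundle over $\C$ defined only outside a compact set does \emph{not} in general extend; this is precisely what that nonzero fundamental group records.

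The paper's proof of this claim therefore devotes its main effort to computing that winding number. The inner Claim \ref{claim:lan deg} identifies the degree of the transition function between $\Psi^0$ and the untwisted $\Psi^\infty$ on $S^1_R$ with the equivariant Chern number $d=\big\lan c_1^G(M,\om),[[w]]\big\ran$, and the factor $(z^{-d}\cdot\oplus\id)$ built into $\wt\Psi^\infty$ is inserted exactly to cancel that degree. Only after this cancellation is the transition function null-homotopic, at which point Lemma \ref{le:Aut} supplies the interpolating map $h$ on $\bar B_R\wo B_1$ and the piecewise definition of $\wt\Psi$ follows. Your proposal never performs this degree computation. Your fallback, that ``$g$ has small oscillation on a thin annulus'' because $u\circ\si$ converges to $x_\infty$, also fails: a thin annulus still wraps once around $0$, so small oscillation would require winding number zero, which is what needs to be proved. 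Moreover the convergence $u\circ\si\to x_\infty$ does not by itself control the winding; even ignoring the $z^{-d}$ twist the transition has winding number $d$, which is determined by the topology of $W$, not by any pointwise decay. To repair your argument you would have to reinstate the Chern-number computation of Claim \ref{claim:lan deg} (or an equivalent) before invoking any homotopy or patching lemma.
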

We define $f:\C\wo\{0\}\to S^1$ by $f(z):=z/|z|$. 
\begin{proof}[Proof of Claim \ref{claim:wt Psi 1}] We choose a complex trivialization  
\[\Psi^0:\bar B_R\x (\C^{\bar n}\oplus\g^\C)\to TM^u|_{\bar B_R}\]
of class $W^{1,p}_\loc$.%
\footnote{To see that such a trivialization exists, we first choose a continuous trivialization $\wt\Psi^0$ of the bundle. An argument using local trivializations of class $W^{1,p}_\loc$, shows that we may regularize $\wt\Psi^0$, so that it becomes of class $W^{1,p}_\loc$.}
 We define 
\begin{eqnarray}\label{eq:Phi S 1 R}&\Phi:S^1_R:=\{z\in\C\,|\,|z|=R\}\to \Aut(\C^{\bar n}\oplus\g^\C),&\\ 
\nn&\Phi_z(v_0,\al):=(\Psi^0_z)^{-1}\left(G\cdot\big(u\circ\si(z),\Psi^U_{u\circ\si(z)}v_0+L^\C_{u\circ\si(z)}\al\big)\right).&
\end{eqnarray}
For a continuous map $x:S^1_R\to S^1$ we denote by $\deg(x)$ its degree.
\begin{claim}\label{claim:lan deg} We have 
\begin{equation}\label{eq:c 1 G deg}\big\lan c_1^G(M,\om),[[w]]\big\ran=\deg(f\circ\det\circ\Phi).
\end{equation}
\end{claim}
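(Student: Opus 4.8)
The plan is to identify both sides of \eqref{eq:c 1 G deg} with the first Chern number of a complex vector bundle over $S^2$ built from the extension $(\wt P,\iota,\wt u)$ of $(P,u)$ provided by Lemma~\ref{le:P}. First I would recall that, by Definition~\ref{defi:homology}, $[[w]]$ is the equivariant homology class $[\wt u]_G\in H_2^G(M,\Z)$ of the $G$-equivariant map $\wt u:\wt P\to M$, and that by the standard description of Borel-equivariant characteristic classes (Remark~\ref{rmk:homology}) the pullback of $c_1^G(M,\om)$ along the induced map $\wt u_G:S^2\to(M\x\EG)/G$ is the first Chern class of the complex vector bundle $(\wt u_G)^*\big((TM\x\EG)/G\big)\iso(\wt u^*TM)/G=:TM^{\wt u}$ over $S^2$, of complex rank $(\dim M)/2$ (the complex structure coming from the $\om$-compatible $G$-invariant $J$). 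Hence $\big\lan c_1^G(M,\om),[[w]]\big\ran=\deg TM^{\wt u}$, the evaluation of $c_1(TM^{\wt u})$ on the fundamental class of $S^2$; independence of all choices here is guaranteed by Lemma~\ref{le:P} and Remark~\ref{rmk:homology}.

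Next I would compute $\deg TM^{\wt u}$ by a clutching construction adapted to the decomposition $S^2=\bar B_R\cup\Delta_\infty$, where $\Delta_\infty:=(\C\wo B_R)\cup\{\infty\}$, the two pieces overlapping in an annular neighbourhood of $S^1_R$. Over $\bar B_R$ I use the trivialization $\Psi^0$ of $TM^u|_{\bar B_R}=TM^{\wt u}|_{\bar B_R}$. Over $\Delta_\infty$ I use
\[
\Psi^\infty_z(v_0,\al):=G\cdot\big(u\circ\si(z),\Psi^U_{u\circ\si(z)}v_0+L^\C_{u\circ\si(z)}\al\big),\qquad z\in\C\wo B_R,
\]
i.e.\ exactly the map occurring in \eqref{eq:Phi S 1 R} before one applies $(\Psi^0_z)^{-1}$, and I claim it extends to a trivialization of $TM^{\wt u}|_{\Delta_\infty}$. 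This is where the properties of $\si$ enter: by Lemma~\ref{le:si} (cf.\ Lemma~\ref{le:section}) the section $\si$ is chosen so that $u\circ\si(re^{i\phi})\to x_\infty$ as $r\to\infty$ uniformly in $\phi$, and $u(p)\in U$ for $p$ over $\C\wo B_R$; since $\Psi^U$ and $L^\C$ depend continuously and $G$-equivariantly on the base point and $H|_U$ is a subbundle with $x_\infty\in U$, the trivialization $\Psi^\infty$ becomes over the end a deformation of a fixed trivialization of $TM^{\wt u}$ near $\infty$, hence extends (at worst after a homotopy of $\Psi^\infty|_{S^1_R}$ through trivializations, which does not change the Chern number).

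Finally I would invoke the classical fact that the degree of a rank-$k$ complex vector bundle over $S^2$ equals the degree of $f\circ\det$ applied to any transition (clutching) function $S^1\to\GL(k,\C)$ relating a trivialization over the disk containing $0$ to one over the disk containing $\infty$, over $S^1_R$ oriented as the boundary of $\bar B_R$. Since the transition function from $\Psi^0$ to $\Psi^\infty$ over $S^1_R$ is precisely the map $\Phi$ of \eqref{eq:Phi S 1 R}, combining this with the first step gives $\big\lan c_1^G(M,\om),[[w]]\big\ran=\deg TM^{\wt u}=\deg(f\circ\det\circ\Phi)$, as asserted. The main obstacle — beyond keeping the orientation and ``which disk is the inside'' conventions consistent, which I would pin down by comparison with a model case such as the one in Example~\ref{xpl:u d v} — is the extension step for $\Psi^\infty$ over $\infty$: one must verify that the convergence $u\circ\si\to x_\infty$ genuinely glues $\Psi^\infty$ to a trivialization of $TM^{\wt u}$ over $\Delta_\infty$, using that $\wt P$ is trivial over a neighbourhood of $\infty$ so that $\si$ may, if necessary, be modified near the end without affecting any degree.
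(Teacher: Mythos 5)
Your proposal is correct and follows essentially the same route as the paper: identify $\big\lan c_1^G(M,\om),[[w]]\big\ran$ with $\big\lan c_1(TM^{\wt u}),[S^2]\big\ran$ via the extension $(\wt P,\iota,\wt u)$ from Lemma \ref{le:P} (the paper packages this step as Lemma \ref{le:Chern bundle}, also noting $c_1(TM^{\wt u},\wt\om)=c_1(TM^{\wt u},\wt J)$), then read off that Chern number from the clutching function $\Phi=(\Psi^0)^{-1}\Psi^\infty$ over $S^1_R$. The paper handles the extension of $\Psi^\infty$ over $\infty$ by writing the fiber at $\infty$ explicitly as $\Psi^\infty_\infty(v_0,\al)=G\cdot\big([\infty,\one],\Psi^U_{x_\infty}v_0+L^\C_{x_\infty}\al\big)$, which is exactly the continuity argument you sketch, so no homotopy correction is actually needed.
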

\begin{proof}[Proof of Claim \ref{claim:lan deg}] We define $\wt P$ to be the quotient of $P\disj \big((S^2\wo\{0\})\x G\big)$ under the equivalence relation generated by $p\sim (z,g)$, where $g\in G$ is determined by $\si(z)g=p$, for $p\in \pi^{-1}(z)\sub P$, $z\in \C\wo\{0\}$. Furthermore, we define
\[\wt u:\wt P\to M,\quad\wt u([p]):=\left\{\begin{array}{ll}u(p),&\textrm{for }p\in P,\\
\wt u([\infty,g]):=g^{-1}x_\infty,&\textrm{for }g\in G.\end{array}\right.\]
The statement of Lemma \ref{le:si} implies that this map is continuous and extends $u$. The fiberwise pullback form $\wt u^*\om$ on $\wt P$ descends to a symplectic form $\wt\om$ on the vector bundle $TM^{\wt u}=(\wt u^*TM)/G\to S^2$. Similarly, the structure $J$ induces an complex structure $\wt J$ on $TM^{\wt u}$. The structures $\wt\om$ and $\wt J$ are compatible, and therefore, we have
\[c_1(TM^{\wt u},\wt\om)=c_1\big(TM^{\wt u},\wt J\big).\]
Using Lemma \ref{le:Chern bundle} in Appendix \ref{sec:proofs homology}, it follows that
\begin{equation}\label{eq:c 1 G}\big\lan c_1^G(M,\om),[[w]]\big\ran=\big\lan c_1\big(TM^{\wt u},\wt J\big),[S^2]\big\ran.
\end{equation}
We define the local complex trivialization 
\begin{eqnarray*}&\Psi^\infty:(S^2\wo B_R)\x(\C^{\bar n}\oplus\g^\C)\to TM^{\wt u},&\\
&\Psi^\infty_z(v_0,\al):=\left\{\begin{array}{ll}
G\cdot\big([u\circ\si(z)],\Psi^U_{u\circ\si(z)}v_0+L^\C_{u\circ\si(z)}\al\big),&\textrm{if }z\in \C\wo B_R,\\
 G\cdot\big([\infty,\one],\Psi^U_{x_\infty}v_0+L^\C_{x_\infty}\al\big),&\textrm{if }z=\infty.
\end{array}\right.
\end{eqnarray*}
Recalling the definition (\ref{eq:Phi S 1 R}), we have 
\[\Phi_z=(\Psi^0_z)^{-1}\Psi^\infty_z,\quad\forall z\in S^1_R.\] 
Therefore, $\Phi$ is the transition map between $\Psi^0$ and $\Psi^\infty$. It follows that 
\[\big\lan c_1\big(TM^{\wt u},\wt J\big)[S^2]\big\ran=\deg(f\circ\det\circ\Phi).\]
Combining this with (\ref{eq:c 1 G}), equality (\ref{eq:c 1 G deg}) follows. This proves Claim \ref{claim:lan deg}.
\end{proof}
We denote $d:=\big\lan c_1^G(M,\om),[[w]]\big\ran$. By Claim \ref{claim:lan deg} and Lemma \ref{le:Aut} in Appendix \ref{sec:add} the maps $\Phi$ and 
\[S^1_R\ni z\mapsto (z^d\cdot\oplus\id)\in\Aut(\C^{\bar n}\oplus\g^\C)\] 
are homotopic. Hence there exists a continuous map 
\[h:\bar B_R\wo B_1\to \Aut(\C^{\bar n}\oplus\g^\C),\] 
such that $h_z:=h(z)=(z^d\cdot\oplus\id)$, if $z\in S^1_1$, and $h_z=\Phi(z)$, if $z\in S^1_R$. We define
\[\wt\Psi:\C\x(\C^{\bar n}\oplus\g^\C)\to TM^u,\quad\wt\Psi_z:=\left\{\begin{array}{ll}
\wt\Psi^\infty_z,&\textrm{for }z\in\C\wo B_R,\\ 
\Psi^0_zh_z(z^{-d}\cdot\oplus\id),&\textrm{for }z\in B_R\wo B_1,\\
\Psi^0_z,&\textrm{for }z\in B_1.
\end{array}\right.\]
Regularizing $\wt\Psi$ on the ball $B_{R+1}$, we obtain the required extension of $\wt\Psi^\infty|_{\C\wo B_{R+1}}$, of class $W^{1,p}_\loc$. This proves Claim \ref{claim:wt Psi 1}.
\end{proof}
We define the trivialization 
\begin{equation}\label{eq:hhat Psi infty}\hhat\Psi^\infty:(\C\wo B_R)\x\g^\C\to\wone\big(\g_P|_{\C\wo B_R}\big),\quad\hhat\Psi^\infty_z(\phi+i\psi):=G\cdot\big(\si(z),\phi ds + \psi dt\big).
\end{equation}
\begin{claim}\label{claim:hhat Psi 1} $\hhat\Psi^\infty|_{\C\wo B_{R+1}}$ extends to a complex trivialization of the bundle $\wone(\g_P)$ over $\C$. 
\end{claim}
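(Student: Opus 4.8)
\emph{Proof of Claim \ref{claim:hhat Psi 1}.} The plan is to use the fact that $\g_P|_{\C\wo B_1}$ is a trivial $G$-bundle's adjoint bundle and that over the ball $B_{R+1}$ we can choose an auxiliary trivialization and glue. First I would observe that $\si$ (the section of $P$ over $\C\wo B_1$ chosen above, of class $W^{1,p}_\loc$) provides a trivialization $P|_{\C\wo B_1}\iso(\C\wo B_1)\x G$, and hence an induced complex trivialization of $\g_P|_{\C\wo B_1}=(\g\otimes\C)$-bundle, namely exactly $\hhat\Psi^\infty$ together with the complex structure $J_P\al=-\al\,i$ on $\wone(\g_P)$. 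So $\hhat\Psi^\infty$ is already a genuine complex trivialization of $\wone(\g_P)$ over the whole punctured region $\C\wo B_1$; the only issue is extending it across $B_{R+1}$.

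Next I would choose, as in the proof of Claim \ref{claim:wt Psi 1}, an auxiliary complex trivialization $\hhat\Psi^0:\bar B_{R+1}\x\g^\C\to\wone(\g_P)|_{\bar B_{R+1}}$ of class $W^{1,p}_\loc$ (such a trivialization exists because the bundle is topologically trivial over a disk — one first picks a continuous trivialization and then regularizes using local $W^{1,p}_\loc$-trivializations, just as was done for $\Psi^0$ in the proof of Claim \ref{claim:wt Psi 1}). On the overlap circle $S^1_{R+1}$ the transition map $\hhat\Phi_z:=(\hhat\Psi^0_z)^{-1}\hhat\Psi^\infty_z$ is a continuous loop in $\Aut(\g^\C)=\GL(\g^\C)$, hence (since $\GL(\g^\C,\C)$ is connected) null-homotopic after multiplication by $z^{-e}$ for $e:=\deg(f\circ\det\circ\hhat\Phi)$; but in fact the bundle $\wone(\g_P)$ over $S^2$ obtained by gluing is $\hhat\Psi^0$-to-$\hhat\Psi^\infty$, and its first Chern number is $e$. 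One must check $e=0$: this holds because $\g_P$ is the adjoint bundle of a $G$-bundle $\wt P$ over $S^2$ with $G$ connected, and for connected $G$ every principal $G$-bundle over $S^2$ has trivial adjoint bundle as a complex bundle — equivalently $\g_P\otimes\C$ extends over $\infty$ as the constant bundle, which is what the extension $\wt P$ constructed earlier (using $\si$) furnishes. Concretely, $\hhat\Psi^\infty$ itself is defined via $\si$, and $\si$ extends to a continuous section-data on $\wt P$ away from a point, forcing the transition loop to be contractible in $\GL(\g^\C)$; thus $\hhat\Phi$ is homotopic to the constant map $\id$.

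Given this, I would finish exactly as in Claim \ref{claim:wt Psi 1}: choose a continuous $\hhat h:\bar B_{R+1}\wo B_{R+\frac12}\to\GL(\g^\C)$ with $\hhat h=\id$ on the inner circle and $\hhat h=\hhat\Phi$ on $S^1_{R+1}$, define
\[
\hhat\Psi_z:=\begin{cases}\hhat\Psi^\infty_z,&z\in\C\wo B_{R+1},\\ \hhat\Psi^0_z\hhat h_z,&z\in B_{R+1}\wo B_{R+\frac12},\\ \hhat\Psi^0_z,&z\in B_{R+\frac12},\end{cases}
\]
and then regularize $\hhat\Psi$ on a neighborhood of the gluing circle to obtain a trivialization of class $W^{1,p}_\loc$ agreeing with $\hhat\Psi^\infty$ on $\C\wo B_{R+1}$. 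This is the required extension, proving Claim \ref{claim:hhat Psi 1}. The main obstacle is the vanishing of the obstruction class $e$ — i.e., verifying that the complexified adjoint bundle of the extended principal bundle $\wt P$ is trivial over $S^2$; but this is immediate from connectedness of $G$ (so $\pi_1(G)$-classified $G$-bundles over $S^2$ have trivial associated bundles for the adjoint representation factoring through the identity component) and from the explicit extension $\wt P$ built from $\si$ in the proof of Claim \ref{claim:lan deg}, whose transition data is supported away from $\infty$.

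\hfill$\Box$
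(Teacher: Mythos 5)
Your proposal is correct and takes a route that is a recognizable variation on the paper's, so let me compare them. The paper picks a continuous section $\wt\si$ of $P|_{\bar B_R}$, so that the transition loop between the $\wt\si$-induced trivialization and $\hhat\Psi^\infty$ on $S^1_R$ is precisely $z\mapsto\Ad^\C_{g(z)}$, where $\si(z)=\wt\si(z)g(z)$; then $\det\Ad^\C_g=\det\Ad_g\in\R$ forces the degree of $f\circ\det\circ\Ad^\C_g$ to vanish, and Lemma~\ref{le:Aut} supplies the extension $\Phi$ of the loop over $\bar B_R$. You instead take an arbitrary complex trivialization $\hhat\Psi^0$ over $\bar B_{R+1}$ and argue that the obstruction $e$ equals the first Chern number of the complexified adjoint bundle of $\wt P$ over $S^2$, using that $\hhat\Psi^\infty$ is the $\si$-induced trivialization and that $\si$ extends over $S^2\wo\{0\}$ by the construction of $\wt P$. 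That identification of $e$ with the Chern number is valid, and both approaches reduce to the same degree computation; yours packages it through the extended bundle on $S^2$, while the paper never needs the $S^2$-picture because it computes the transition loop explicitly as $\Ad^\C_g$. Your version is slightly longer but makes the topological content (and the role of the extension $\wt P$) transparent.

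One thing to fix: your stated reason that the Chern number vanishes — that the adjoint representation ``factors through the identity component'' — is not the right one (for connected $G$ this is vacuous and by itself implies nothing about $c_1$). The actual reason, exactly as used in the paper, is that $\Ad^\C$ is the complexification of the real representation $\Ad$, so $\det\Ad^\C_g=\det\Ad_g$ is always a real number; hence $f\circ\det\circ\Ad^\C$ takes values in $\{\pm1\}\sub S^1$ and its degree on any loop $c:S^1\to G$ is zero. (Equivalently, $\Ad_g$ preserves $\lan\cdot,\cdot\ran_\g$, so it lies in $O(\g)$ and, by connectedness of $G$, $\det\Ad^\C_g\const1$.) With this replacement your proof is complete.
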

\begin{proof}[Proof of Claim \ref{claim:hhat Psi 1}] We denote by $\Ad$ and $\Ad^\C$ the adjoint representations of $G$ on $\g$ and $\g^\C$ respectively. We have 
\[\det(\Ad^\C_g)=\det(\Ad_g)\in\R,\quad\forall g\in G.\] 
We choose a continuous section $\wt\si$ of the restriction $P|_{\bar B_R}$. We define $g:S^1_R\to G$ to be the unique map such that $\si(z)=\wt\si(z) g(z)$, for every $z\in S^1_R$. Since $f\circ\det(\Ad^\C_g)\const \pm 1$, we have
\[\deg\left(S^1_R\ni z\mapsto f\circ\det(\Ad^\C_{g(z)})\right)=0.\] 
Hence Lemma \ref{le:Aut} (Appendix \ref{sec:add}) implies that there exists a continuous map $\Phi:\bar B_R\to \Aut(\g^\C)$ satisfying $\Phi_z:=\Phi(z)=\Ad^\C_{g(z)}$, for every $z\in S^1_R$. We define $\hhat\Psi:\C\x\g^\C\to \wone(\g_P)$ to be the trivialization that equals $\hhat\Psi^\infty$ on $\C\wo B_R$, and satisfies
\[\hhat\Psi_z\al:=G\cdot\big(\wt\si,\phi'ds + \psi' dt\big),\]
where $\phi'+ i\psi':=\Phi_z\al$, for every $z\in B_R$, $\al\in \g^\C$. Regularizing $\hhat\Psi$ on the ball $B_{R+1}$, we obtain an extension of $\hhat\Psi^\infty|_{\C\wo B_{R+1}}$, of class $W^{1,p}_\loc$. This proves Claim \ref{claim:hhat Psi 1}.
\end{proof}
We choose extensions $\wt\Psi$ and $\hhat\Psi$ of $\wt\Psi^\infty$ and $\hhat\Psi^\infty$ as in Claims \ref{claim:wt Psi 1} and \ref{claim:hhat Psi 1}, and we define
\[\Psi:\C\x V\to\wone(\g_P)\oplus TM^u,\quad\Psi(z;v_0,\al,\be):=\big(\hhat\Psi_z\be,\wt\Psi_z(v_0,\al)\big).\]
\begin{claim}\label{claim:Psi good} The map $\Psi$ is a good complex trivialization.
\end{claim}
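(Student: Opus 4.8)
The plan is to verify the three conditions of Definition \ref{defi:triv} one by one, using throughout the section $\si$ of $P|_{\C\wo B_1}$ and the point $x_\infty\in\mu^{-1}(0)$ fixed at the start of the proof (from Lemma \ref{le:si}), and taking the number called ``$R$'' in Definition \ref{defi:triv} to be the number $R+1$ above, beyond which $\wt\Psi$ and $\hhat\Psi$ agree with $\wt\Psi^\infty$ and $\hhat\Psi^\infty$.

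I would first dispatch the splitting condition \ref{defi:triv}(\ref{defi:triv split}). Identities (\ref{eq:Psi z C bar n}) and (\ref{eq:Psi z 0}) are immediate from $\Psi(z;v_0,\al,\be)=(\hhat\Psi_z\be,\wt\Psi_z(v_0,\al))$ and the fact that $\hhat\Psi_z$ and $\wt\Psi_z$ are fiberwise isomorphisms onto $\wone(\g_P)_z$ and $TM^u_z$. For $z\in\C\wo\bar B_{R+1}$ we have $\wt\Psi_z=\wt\Psi^\infty_z$, so $\Psi_z(v_0,0,0)=G\cdot(u\circ\si(z),\Psi^U_{u\circ\si(z)}(z^{-d}\cdot\oplus\id)v_0)\in H^u_z$, and this exhausts $H^u_z$ as $v_0$ runs over $\C^{\bar n}$ because $\Psi^U_{u\circ\si(z)}$ and $z^{-d}\cdot\oplus\id$ are isomorphisms; this is (\ref{eq:Psi infty x H}). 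The asymptotics $u\circ\si(re^{i\phi})\to x_\infty$ uniformly in $\phi$ and $\si^*A\in L^p_\lam(\C\wo B_1,\g)$ are exactly the content of Lemma \ref{le:si}, and (\ref{eq:Psi 0 al be}) follows by writing $\Psi_z(0,\al,\be)=(\hhat\Psi^\infty_z\be,\wt\Psi^\infty_z(0,\al))$ on $\C\wo\bar B_{R+1}$ and observing that the factor $z^{-d}\cdot\oplus\id$ inside $\wt\Psi^\infty$ touches only the $\C^{\bar n}$-coordinate and is therefore invisible on the $\al$-input.

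Next I would check the quantitative bound \ref{defi:triv}(\ref{defi:triv C}). On the compact set $\bar B_{R+1}$ the trivializations are of class $W^{1,p}_\loc$, hence continuous for $p>2$ by Morrey, and being fiberwise isomorphisms both $\Psi_z$ and $\Psi_z^{-1}$ are uniformly bounded there, as is $\lan z\ran^d$. On $\C\wo\bar B_{R+1}$, $\hhat\Psi^\infty_z$ is a fiberwise isometry onto $\wone(\g_P)_z$, while on the $TM^u$-part the composition $\wt\Psi^\infty_z\circ(\lan z\ran^d\cdot\oplus\id)$ equals $G\cdot(u\circ\si(z),\Psi^U_{u\circ\si(z)}((z^{-d}\lan z\ran^d)\cdot\oplus\id)v_0+L^\C_{u\circ\si(z)}\al)$; since $|z^{-d}\lan z\ran^d|=(\lan z\ran/|z|)^{d}$ stays in a fixed compact subinterval of $(0,\infty)$ for $|z|\ge R+1$, and $\Psi^U$ is smooth with $u\circ\si$ having image of compact closure in $U$ (it tends to $x_\infty\in U$), both $\|\cdot\|$ and $\|(\cdot)^{-1}\|$ are uniformly bounded. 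Combining the two regions gives (\ref{eq:C}).

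The one step I expect to require genuine care is \ref{defi:triv}(\ref{defi:triv na}). Here the cancellation $p_d(z)z^{-d}=1$ on the $\C^{\bar n}$-coordinate means that on $\C\wo\bar B_{R+1}$ the map $\Psi(p_d\cdot\oplus\id)$ is the $z$-frame
\[z\mapsto\big(G\cdot(\si(z),\phi\,ds+\psi\,dt),\ G\cdot(u\circ\si(z),\Psi^U_{u\circ\si(z)}v_0+L^\C_{u\circ\si(z)}\al)\big)\]
with no explicit power of $z$. Computing $\na^A$ of this in the local frame furnished by $\si$ (where it becomes $d$ plus terms linear in $\ad(\si^*A)$ coming from the induced connection on $\g_P$, plus Christoffel-type contributions of $\na$ at $u\circ\si$ paired with $d(u\circ\si)$), one obtains a pointwise estimate $|\na^A(\Psi(p_d\cdot\oplus\id))|\le C(|d(u\circ\si)|+|\si^*A|)$ on $\C\wo\bar B_{R+1}$, with $C$ controlled by smoothness of $\Psi^U$, $L^\C$ and the connection coefficients over a compact neighbourhood of $x_\infty$ in $U$. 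Now $\si^*A\in L^p_\lam$ by Lemma \ref{le:si}, and $d(u\circ\si)=d_{\si^*A}(u\circ\si)-L_{u\circ\si}\si^*A\in L^p_\lam$ because $|d_Au|\le\sqrt2\,\sqrt{e_w}$ and $\Vert\sqrt{e_w}\Vert_{p,\lam}<\infty$ by the definition (\ref{eq:BB p lam}) of $\BB^p_\lam$; hence $\na^A(\Psi(p_d\cdot\oplus\id))\in L^p_\lam(\C\wo\bar B_{R+1})$. On the compact annulus $\bar B_{R+1}\wo B_1$, where $p_d$ is smooth and $\Psi$ is $W^{1,p}_\loc$, this same quantity lies in $L^p$ and the weight is bounded, so it lies in $L^p_\lam$ there as well. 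This completes \ref{defi:triv}(\ref{defi:triv na}) and hence shows that $\Psi$ is a good complex trivialization, proving Claim \ref{claim:Psi good}. The main obstacle is the bookkeeping in this last step: sorting the terms of $\na^A$ into those bounded by $|d(u\circ\si)|$ versus those bounded by $|\si^*A|$, and keeping the $z^{\pm d}$ cancellations straight.
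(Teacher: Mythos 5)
Your proof is correct and follows essentially the same route as the paper's. You verify the three clauses of Definition \ref{defi:triv} in the same order, using the same inputs (Lemma \ref{le:si} for the asymptotics of $u\circ\si$ and $\si^*A$, orthogonality of $H_x$ and $\im L^\C_x$, compactness of $\BAR{u(P)}$). The one mild stylistic difference is in the verification of (\ref{defi:triv na}): the paper separates the $\wt\Psi$- and $\hhat\Psi$-pieces and uses the identity $\wt\na^A_{\wt v}X=\na_{d_Au\cdot\wt v}X$ for pullback vector fields $X$ on $U$ to bound the $\wt\Psi$-contribution directly by $|d_Au|$, whereas you work in the frame $\si$, bound everything by $|d(u\circ\si)|+|\si^*A|$, and then control $|d(u\circ\si)|$ via $|d_Au|$ and $|\si^*A|$; the two estimates are equivalent, and yours merely introduces one extra conversion step. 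You are also slightly more explicit than the paper about the bounded ratio $(\lan z\ran/|z|)^d$ when passing from $z^d$ to $\lan z\ran^d$ in condition (\ref{defi:triv C}), which is a genuine (if minor) gap the paper elides.
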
 
\begin{pf}[Proof of Claim \ref{claim:Psi good}] Condition {\bf (\ref{defi:triv split})} of Definition \ref{defi:triv} follows from the construction of $\Psi$. (The condition $\si^*A\in L^p_\lam(\C\wo B_1,\g)$ follows from the statement of Lemma \ref{le:si}.) To prove {\bf (\ref{defi:triv C})}, note that for $z\in\C\wo B_{R+1}$ and $(v_0,\al,\be)\in V$, we have
\begin{equation}\label{eq:Psi z z}\big|\Psi_z(z^d\cdot\oplus\id)(v_0,\al,\be)\big|^2=|\be|^2+\big|\Psi^U_{u\circ\si(z)}v_0\big|^2+\big|L^\C_{u\circ\si(z)}\al\big|^2.\end{equation}
Here we used the fact $H_x=(\im L^\C_x)^\perp$, for every $x\in M$. By our choice of $U$, $H|_U\sub TM|_U$ is a smooth subbundle of rank $\dim M-2\dim G$. It follows that $\im L^\C|_U=H^\perp|_U$ is a smooth subbundle of $TM|_U$ of rank $2\dim G$. Hence $L^\C_x:\g^\C\to T_xM$ is injective, for every $x\in U$. Since by assumption $\BAR{u(P)}\sub M$ is compact, the same holds for the set $\BAR{u(P|_{\C\wo B_{R+1}})}\sub\BAR{u(P)}$. It follows that there exists a constant $C>0$ such that 
\[C^{-1}|v_0|\leq\big|\Psi^U_{u\circ\si(z)}v_0\big|\leq C|v_0|,\quad C^{-1}|\al|\leq \big|L^\C_{u\circ\si(z)}\al\big|\leq C|\al|,\]
for every $z\in \C\wo B_{R+1}$, $v_0\in\C^{\bar n}$, and $\al\in\g^\C$. Combining this with equality (\ref{eq:Psi z z}), condition (\ref{defi:triv C}) follows. 

We check condition {\bf (\ref{defi:triv na})}. Let 
\[\ze:=\big(v_0,\al,\be=\phi+i\psi\big)\in V,\quad z\in \C\wo B_{R+1},\quad v\in T_z\C.\]
We choose a point $p\in\pi^{-1}(z)\sub P$ and a vector $\wt v\in T_pP$ such that $\pi_*\wt v=v$. Then 
\[\na^A_v\big(\wt\Psi(p_d\cdot\oplus\id)(v_0,\al)\big)=G\cdot\big(u(p),\wt\na^A_{\wt v}(\Psi^U_uv_0+L^\C_u\al)\big),\] 
where $\wt\na^A_{\wt v}$ is defined as in (\ref{eq:wt na A}). Furthermore, for every smooth vector field $X$ on $U$ we have 
\[\wt\na^A_{\wt v}X=(u^*\na)_{\wt v-p(A\wt v)}X=\na_{d_Au\cdot\wt v}X.\]
We define 
\[C:=\max\big|\na_{v'}(\Psi^U_xv''+L^\C_x\al)\big|,\]
where the maximum is taken over all $v'\in T_xM$, $x\in \BAR{u(P|_{\C\wo B_{R+1}})}$ and $(v'',\al)\in \C^{\bar n}\oplus\g^\C$ such that $|v'|\leq1$, $|(v'',\al)|\leq1$. Furthermore, we define 
\[C':=\Vert d_Au\Vert_{L^p_\lam(\C\wo B_{R+1})}.\] 
It follows that 
\begin{equation}\label{eq:Vert na A v wt Psi}\big\Vert\na^A_v\big(\wt\Psi(p_d\cdot\oplus\id)(v_0,\al)\big)\big\Vert_{L^p_\lam(\C\wo B_{R+1})}\leq CC'|v||(v_0,\al)|.
\end{equation}
We now define $\wt\phi,\wt\psi:P\to\g$ to be the unique equivariant maps such that $\wt\phi\circ\si\const\phi$, $\wt\psi\circ\si\const\psi$. We have $d_A\wt\phi\, \si_*v=[(\si^*A)v,\phi]$, and similarly for $\wt\psi$. Since 
\[\wt\na^A_{\si_*v}(\wt\phi ds+\wt\psi dt)=(d_A\wt\phi\si_*v)ds+(d_A\wt\psi\si_*v)dt,\] 
using (\ref{eq:hhat Psi infty}), it follows that
\[\big|\na^A_v\big(\hhat\Psi(\phi ds+\psi dt)\big)\big|=\big|G\cdot\big(\si(z),\wt\na^A_{\si_*v}(\wt\phi ds+\wt\psi dt)\big)\big|=\big|\big[(\si^*A)v,\be\big]\big|.\]
Using the fact $\Vert\si^*A\Vert_{p,\lam}<\infty$ and inequality (\ref{eq:Vert na A v wt Psi}), condition (\ref{defi:triv na}) follows. This proves Claim \ref{claim:Psi good} and concludes the proof of Proposition \ref{prop:triv}.
\end{pf}
\end{proof}
\begin{proof}[Proof of Proposition \ref{prop:X X w} (p.~\pageref{prop:X X w})]\label{proof:X X w} \setcounter{claim}{0}
Let $p,\lam,w=(P,A,u)$ and $\Psi$ be as in the hypothesis. We choose $\rho_0\in C^\infty(\C,[0,1])$ such that $\rho_0(z)=0$ for $z\in B_{1/2}$ and $\rho_0(z)=1$ for $z\in\C\wo B_1$. We fix $R\geq1$, $\si$ and $x_\infty$ as in Definition \ref{defi:triv}(\ref{defi:triv split}). 

{\bf We prove statement (\ref{prop:X X w iso}).} For every $\ze\in W^{1,1}_\loc(\C,V)$ Leibnitz' rule implies that 
\begin{equation}\label{eq:na Phi Psi F ze}\na^A(\Psi\ze)=\big(\na^A\big(\Psi(p_d\cdot\oplus\id)\big)\big)(p_{-d}\cdot\oplus\id)\ze+\Psi(p_d\cdot\oplus\id)D\big((p_{-d}\cdot\oplus\id)\ze\big).
\end{equation}
\begin{claim}\label{claim:F X bdd} The first map in (\ref{eq:X Y Psi}) is well-defined and bounded.
\end{claim}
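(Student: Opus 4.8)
\textbf{Plan of proof for Claim \ref{claim:F X bdd}.}

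The plan is to show that the map $\ze\mapsto\Psi\ze$ sends $\XXX_d$ boundedly into $\XXX_w=\XXX_w^{p,\lam}$, i.e., that $\Vert\Psi\ze\hhat\Vert_{w,p,\lam}\lesssim\Vert\ze\Vert_{\XXX_d}$. Recall that $\Vert\Psi\ze\hhat\Vert_{w,p,\lam}=\Vert\Psi\ze\Vert_\infty+\big\Vert|\na^A(\Psi\ze)|+|d\mu(u)\pr_2(\Psi\ze)|+|\pr_1(\Psi\ze)|\big\Vert_{p,\lam}+\Vert{\PR}^u\Psi\ze\Vert_{p,\lam}$, where $\pr_1,\pr_2$ denote the two components in $\wone(\g_P)\oplus TM^u$. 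I would estimate each of these five contributions separately, working on $\C\wo B_R$ (where the good trivialization has the explicit form (\ref{eq:Psi z C bar n})--(\ref{eq:Psi 0 al be})) and on $\BAR B_R$ (where everything is a matter of a $W^{1,p}_\loc$-bundle isomorphism over a compact set, hence controlled by a constant depending on $\Psi$ alone) and then patching via $\rho_0$.

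The core of the argument is the identity (\ref{eq:na Phi Psi F ze}): write $\na^A(\Psi\ze)=\big(\na^A(\Psi(p_d\cdot\oplus\id))\big)(p_{-d}\cdot\oplus\id)\ze+\Psi(p_d\cdot\oplus\id)\,D\big((p_{-d}\cdot\oplus\id)\ze\big)$. For the first term I would use Definition \ref{defi:triv}(\ref{defi:triv na}), which gives $\big|\na^A(\Psi(p_d\cdot\oplus\id))\big|\in L^p_\lam(\C\wo B_1)$, together with the pointwise bound $\big|(p_{-d}\cdot\oplus\id)\ze\big|\leq\lan z\ran^{|d|}|\ze|$ and the sup-control coming from Proposition \ref{prop:lam d Morrey}(\ref{prop:lam Morrey}) / the weighted Morrey embedding in Appendix \ref{sec:weighted} applied to the components of $\ze$; so this term is bounded (in $L^p_\lam$) by $\Vert\ze\Vert_\infty\lesssim\Vert\ze\Vert_{\XXX_d}$. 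For the second term I would use Definition \ref{defi:triv}(\ref{defi:triv C}), $\big|\Psi_z(\lan z\ran^d\cdot\oplus\id)\eta\big|\leq C|\eta|$, with $\eta=D\big((p_{-d}\cdot\oplus\id)\ze\big)$, absorbing the discrepancy between $\lan z\ran^d$ and $p_d$ into a bounded factor on $\C\wo B_1$, and then recognizing $\lan z\ran^\lam\big|D\big((p_{-d}\cdot\oplus\id)\ze\big)\big|$ as precisely (a bounded multiple of) the integrand controlled by the definition of the norms on $\XXX'_{p,\lam,d}$, $\XXX''_{p,\lam}$ and $W^{1,p}_\lam(\C,\g^\C\oplus\g^\C)$ (the shift by $d$ in $\XXX'_{p,\lam,d}$ and $\YYY_d$ is exactly what compensates the $p_{-d}$). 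The $\Vert\Psi\ze\Vert_\infty$, $\big\Vert|\pr_1(\Psi\ze)|\big\Vert_{p,\lam}$, $\big\Vert|d\mu(u)\pr_2(\Psi\ze)|\big\Vert_{p,\lam}$ and $\Vert{\PR}^u\Psi\ze\Vert_{p,\lam}$ terms are then handled using (\ref{defi:triv C}) again: e.g. for the last one, (\ref{eq:Psi z C bar n}) and $H_x=(\im L^\C_x)^\perp$ show that on $\C\wo B_R$ the component $\Psi_z(\C^{\bar n}\oplus\{0\}\oplus\{0\})$ lies in $(\im L)^\perp$ while $\Psi_z(\{0\}\oplus\g^\C\oplus\{0\})$ lies in $\im L^\C$, so ${\PR}^u\Psi\ze$ only involves the $\g^\C$-component of $\ze$, which lies in $W^{1,p}_\lam$ hence in $L^p_\lam$; and $d\mu(u)$ annihilates $H$ up to the boundedly-varying, vanishing-at-$\infty$ discrepancy measured by $x_\infty\in\mu^{-1}(0)$ and Lemma \ref{le:si}, which one pairs with a Hardy-type estimate (Proposition \ref{prop:Hardy}) to recover an $L^p_\lam$ bound with the correct weight.

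I expect the main obstacle to be the bookkeeping of weights near $z=\infty$ in the $d<0$ case: there $p_{-d}=p_{|d|}$ genuinely grows, and one must check that every occurrence of $(p_{-d}\cdot\oplus\id)$ is matched either by the $\lan z\ran^d$ appearing in (\ref{defi:triv C}) or by the built-in weight shift in the definition (\ref{eq:XXX' p lam d XXX'' p lam}) of $\XXX'_{p,\lam,d}$, and that the cutoff $\rho_0$ (which is why $\XXX'_{p,\lam,d}=\C\rho_0 p_d+L^{1,p}_{\lam-1-d}$ rather than $\C p_d+\cdots$) does not spoil the estimate — its derivative is supported in the compact annulus $B_1\wo B_{1/2}$, so it contributes only a constant. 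Everything else (continuity of $\Psi$, compactness of $\BAR{u(P)}$, the behaviour of $d\mu(u)$) is routine, and the proof concludes by collecting the five estimates and invoking the isomorphism of Lemma \ref{le:X d iso} to identify the resulting bound with $\Vert\ze\Vert_{\XXX_d}$.
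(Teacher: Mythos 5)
Your plan follows essentially the same decomposition as the paper's proof — the identity (\ref{eq:na Phi Psi F ze}) for $\na^A(\Psi\ze)$, condition (\ref{defi:triv na}) of Definition \ref{defi:triv} for the first term, condition (\ref{defi:triv C}) together with Proposition \ref{prop:lam d Morrey}(\ref{prop:lam d iso}) for the second, and a separate compact-set argument on $B_R$ (the paper uses $B_1$ rather than a $\rho_0$-cutoff, but that is cosmetic). Two of your intermediate estimates, however, go wrong.

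First, you conclude that the first term of (\ref{eq:na Phi Psi F ze}) is bounded in $L^p_\lam$ ``by $\Vert\ze\Vert_\infty\lesssim\Vert\ze\Vert_{\XXX_d}$.'' For $d>0$ this fails: a typical $\ze\in\XXX_d$ has first component $\rho_0p_d$ modulo $L^{1,p}_{\lam-1-d}$, which is unbounded, so $\Vert\ze\Vert_\infty=\infty$. Nor does your crude pointwise bound $\big|(p_{-d}\cdot\oplus\id)\ze\big|\leq\lan z\ran^{|d|}|\ze|$ help, since it produces a \emph{growing} factor. What the weighted Morrey inequality (Proposition \ref{prop:lam d Morrey}(\ref{prop:lam Morrey})) actually gives, and what the paper uses as (\ref{eq:lan ran - d id ze}), is a bound on $\Vert(\lan\cdot\ran^{-d}\cdot\oplus\id)\ze\Vert_\infty$ in terms of $\Vert\ze\Vert_{\XXX_d}$; one then controls $\Vert(p_{-d}\cdot\oplus\id)\ze\Vert_{L^\infty(\C\wo B_1)}$ by this up to a factor $2^{|d|/2}$, using $|p_{-d}(z)|\leq 2^{|d|/2}\lan z\ran^{-d}$ for $|z|\geq1$. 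The quantity $\Vert\ze\Vert_\infty$ itself never enters.

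Second, and more substantially, your treatment of $d\mu(u)$ is based on a misconception. You propose to handle it via a ``vanishing-at-$\infty$ discrepancy'' paired with the Hardy inequality (Proposition \ref{prop:Hardy}), as if $d\mu(u)$ only approximately annihilated the $\C^{\bar n}$-block. But $H_x:=\ker d\mu(x)\cap(\im L_x)^\perp$ lies in $\ker d\mu(x)$ by definition, and condition (\ref{eq:Psi infty x H}) says that for $z\in\C\wo B_R$ the trivialization sends $\C^{\bar n}\oplus\{0\}\oplus\{0\}$ \emph{exactly} into $H^u_z$ — the trivialization is built from $H$ along $u$ itself, not along the constant $x_\infty$. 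Hence $d\mu(u)v'$ depends only on the $\g^\C$-component $\al$ of $\ze$ (through $L^\C_u\al$), and one bounds $\Vert d\mu(u)v'\Vert_{p,\lam}$ directly by $\sup_{\BAR{u(P)}}|d\mu|\cdot\Vert\al\Vert_{p,\lam}\leq C\Vert\ze\Vert_{\XXX_d}$, since $\al\in W^{1,p}_\lam\sub L^p_\lam$ and $\BAR{u(P)}$ is compact. No Hardy estimate is needed, and the ``discrepancy'' you invoke does not exist. (Proposition \ref{prop:Hardy} does appear in the proof of the \emph{inverse} bound, Claim \ref{claim:F -1 X bdd}, but for a different purpose: extracting the constant $\ze_\infty$ at infinity.)
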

\begin{proof}[Proof of Claim \ref{claim:F X bdd}] Proposition \ref{prop:lam d Morrey}(\ref{prop:lam Morrey}) in Appendix \ref{sec:weighted} and the fact $\lam>-2/p+1$ imply that there exists a constant $C_1$ such that 
\begin{equation}\label{eq:lan ran - d id ze}\big\Vert(\lan\cdot\ran^{-d}\cdot\oplus\id)\ze\big\Vert_\infty\leq C_1\Vert\ze\Vert_{\XXX_d},\quad\forall \ze\in\XXX_d\,.\end{equation}
We choose a constant $C_2:=C$ as in part (\ref{defi:triv C}) of Definition \ref{defi:triv}. Then by (\ref{eq:C}) and (\ref{eq:lan ran - d id ze}), we have 
\begin{equation}\label{eq:Psi ze infty}\Vert\Psi\ze\Vert_\infty\leq C_1C_2\Vert\ze\Vert_{\XXX_d}\quad\forall \ze\in\XXX_d\,.\end{equation}
It follows from (\ref{eq:Psi infty x H}) and (\ref{eq:Psi 0 al be}), the definition $H_x:=\ker d\mu(x)\cap \im L_x^\perp$ and the compactness of $\BAR{u(P)}$ that there exists $C_3\in\R$ such that, for every $\ze\in\XXX_d$,
\begin{equation}\label{eq:d mu u v'}\big\Vert \,|d\mu(u)v'|+|{\PR}^uv'|+|\al'|\,\big\Vert_{p,\lam}\leq C_3\Vert\ze\Vert_{\XXX_d},
\end{equation}
where $(v',\al'):=\Psi\ze$. For $r>0$ we denote 
\[B_r^C:=\C\wo B_r,\quad\Vert\cdot\Vert_{p,\lam;r}:=\Vert\cdot\Vert_{L^p_\lam(B_r^C)}.\] 
We define 
\[C_4:=\max\big\{\big\Vert\na^A\big(\Psi(p_d\cdot\oplus\id)\big)\big\Vert_{p,\lam;1},C_2\big\}.\] 
By condition (\ref{defi:triv na}) of Definition \ref{defi:triv} we have $C_4<\infty$. Let $\ze\in\XXX_d$. Then by (\ref{eq:na Phi Psi F ze}) we have
\begin{equation}\label{eq:na A Psi ze p lam}\Vert\na^A(\Psi\ze)\Vert_{p,\lam;1}\leq C_4\Big(\Vert(p_{-d}\cdot\oplus\id)\ze\Vert_{L^\infty(B_1^C)}+\big\Vert D\big((p_{-d}\cdot\oplus\id)\ze\big)\big\Vert_{p,\lam;1}\Big).
\end{equation}
Defining 
\[C_5:=\max\big\{-d2^{(-d+3)/2},2\big\},\]
we have, by Proposition \ref{prop:lam d Morrey}(\ref{prop:lam d iso}),
\[\big\Vert D\big((p_{-d}\cdot\oplus\id)\ze\big)\big\Vert_{p,\lam;1}\leq C_5\Vert\ze\Vert_{\XXX_d}.\] 
Combining this with (\ref{eq:na A Psi ze p lam}) and (\ref{eq:lan ran - d id ze}), we get
\begin{equation}\label{eq:na A Psi ze} \Vert\na^A(\Psi\ze)\Vert_{p,\lam;1}\leq C_4\big(2^{\frac{|d|}2}C_1+C_5\big)\Vert\ze\Vert_{\XXX_d}.\end{equation}
By a direct calculation there exists a constant $C_6$ such that 
\[\Vert\na^A(\Psi\ze)\Vert_{L^p(B_1)}\leq C_6\Vert\ze\Vert_{\XXX_d},\quad\forall\ze\in\XXX_d.\]
Claim \ref{claim:F X bdd} follows from this and (\ref{eq:Psi ze infty},\ref{eq:d mu u v'},\ref{eq:na A Psi ze}).
\end{proof} 
\begin{claim}\label{claim:F -1 X bdd} The map $\XXX_w\ni\ze'\mapsto \Psi^{-1}\ze'\in \XXX_d$ is well-defined and bounded. 
\end{claim}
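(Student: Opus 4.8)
The plan is to invert the estimates of the previous claim. Given $\ze'\in\XXX_w$, set $\ze:=\Psi^{-1}\ze'\in W^{1,1}_\loc(\C,V)$; we must show $\ze\in\XXX_d$ with $\Vert\ze\Vert_{\XXX_d}\leq C\Vert\ze'\hhat\Vert_{w,p,\lam}$. First I would record that the lower bound in (\ref{eq:C}) of Definition \ref{defi:triv}(\ref{defi:triv C}) gives, pointwise, $|(\lan\cdot\ran^{-d}\cdot\oplus\id)\ze|\leq C|\Psi\ze|=C|\ze'|$, so the $L^\infty$ part of $\Vert\ze\Vert_{\XXX_d}$ is controlled by $\Vert\ze'\Vert_\infty$. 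Next, I would solve (\ref{eq:na Phi Psi F ze}) for the derivative term, writing
\[
D\big((p_{-d}\cdot\oplus\id)\ze\big)=(p_d\cdot\oplus\id)^{-1}\Psi^{-1}\Big(\na^A(\Psi\ze)-\big(\na^A(\Psi(p_d\cdot\oplus\id))\big)(p_{-d}\cdot\oplus\id)\ze\Big),
\]
on $\C\wo B_1$. Using again the lower bound in (\ref{eq:C}), condition (\ref{defi:triv na}) of Definition \ref{defi:triv} (which bounds $\na^A(\Psi(p_d\cdot\oplus\id))$ in $L^p_\lam$), the already-established $L^\infty$ bound on $(p_{-d}\cdot\oplus\id)\ze$, and the fact that $\Vert\na^A(\Psi\ze)\Vert_{p,\lam}=\Vert\na^A\ze'\Vert_{p,\lam}\leq\Vert\ze'\Vert_{w,p,\lam}$, I obtain $\Vert D((p_{-d}\cdot\oplus\id)\ze)\Vert_{L^p_\lam(\C\wo B_1)}\leq C\Vert\ze'\hhat\Vert_{w,p,\lam}$.

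It remains to convert this into a bound on $\Vert\ze\Vert_{\XXX_d}$, i.e.\ to pass from the weighted $L^p_\lam$-norm of $D((p_{-d}\cdot\oplus\id)\ze)$ on $\C\wo B_1$ to the norms defining $\XXX'_{p,\lam,d}\oplus\XXX''_{p,\lam}\oplus W^{1,p}_\lam$. For the $\g^\C\oplus\g^\C$-component this is immediate since $p_{-d}\cdot\oplus\id$ acts as the identity there. For the $\C^{\bar n}$-component one splits off the leading term: by Definition \ref{defi:triv}(\ref{defi:triv split}) the trivialization is adapted to the splitting $H^u_z\oplus\cdots$, and the relevant decay statement is exactly the isomorphism of Lemma \ref{le:X d iso} (Appendix \ref{sec:weighted}) together with Proposition \ref{prop:lam d Morrey}(\ref{prop:lam d iso}), which identify the space $\XXX'_{p,\lam,d}$ (resp.\ $\XXX''_{p,\lam}$) with $\C\rho_0p_d$ (resp.\ $\C^{\bar n-1}$) plus a weighted $L^{1,p}$-space via the map $\ze\mapsto D((p_{-d}\cdot\oplus\id)\ze)$ up to lower-order terms. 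Applying that isomorphism (which is bounded with bounded inverse), together with the interior estimate $\Vert\ze\Vert_{W^{1,p}(B_1)}\leq C(\Vert\ze'\Vert_{W^{1,p}(B_1)})$ coming from smoothness of $\Psi$ on the compact ball $\bar B_1$, yields $\Vert\ze\Vert_{\XXX_d}\leq C\Vert\ze'\hhat\Vert_{w,p,\lam}$.

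Combining Claims \ref{claim:F X bdd} and \ref{claim:F -1 X bdd}, the first map in (\ref{eq:X Y Psi}) is a bounded linear isomorphism with bounded inverse, hence an isomorphism of normed spaces. The second map in (\ref{eq:X Y Psi}) is handled analogously, but it is easier: $F$ is a bundle isomorphism, $\Psi$ restricted to the relevant summands satisfies (\ref{eq:C}), and no derivatives are involved, so the pointwise estimate $C^{-1}|\ze|\leq|F\Psi(\lan\cdot\ran^d\cdot\oplus\id)\ze|\leq C|\ze|$ on $\C\wo B_1$ together with boundedness of $\Psi$ on $\bar B_1$ shows that $\ze\mapsto F\Psi\ze$ is a bounded isomorphism from $\YYY_d$ onto $\YYY_w$. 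This proves statement (\ref{prop:X X w iso}).

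The main obstacle is the bookkeeping in the $\C^{\bar n}$-component: separating the non-decaying ``leading coefficient'' direction $\C\rho_0 p_d$ from the genuinely decaying $L^{1,p}_{\lam-1-d}$-remainder, and checking that the change of variables $\ze\mapsto(p_{-d}\cdot\oplus\id)\ze$ interacts correctly with the weights, which is precisely what Lemma \ref{le:X d iso} and Proposition \ref{prop:lam d Morrey} are designed to supply. Everything else is a routine application of Leibniz' rule and the two-sided bound (\ref{eq:C}).

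The remaining statement (\ref{prop:X X w Fredholm}) will follow by computing $(F\Psi)^{-1}\DD_w\Psi$ in the good trivialization: on $\C\wo B_R$, conditions (\ref{eq:Psi infty x H})--(\ref{eq:Psi 0 al be}) of Definition \ref{defi:triv} make the principal part of $\DD_w$ equal to $\dd^{\C^{\bar n}}_{\bar z}$ on the horizontal block and to the indicated $2\times 2$ block on the $\g^\C\oplus\g^\C$ part, with $S_\infty$ arising from $\om_0\,d\mu(x_\infty)L^\C_{x_\infty}$ (positivity of $S_\infty$ coming from nondegeneracy of $d\mu$ on $\im L^\C$ at points of $\mu^{-1}(0)$, via hypothesis (H)); the error terms are supported on $\bar B_R$ or decay in $L^p_\lam$, hence define a compact operator by the compactness statement in Proposition \ref{prop:Hardy} and Proposition \ref{prop:lam d Morrey}. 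I will carry out that computation in detail after the present claim.
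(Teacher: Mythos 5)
Your plan for the $\C^{\bar n}$-component is a reasonable outline (though you would still need to invoke Proposition~\ref{prop:Hardy} explicitly to produce the constant $v_\infty$ at infinity and to control the remainder; Lemma~\ref{le:X d iso} by itself only provides the norm on the target, it does not tell you that $\Psi^{-1}\ze'$ lies in $\XXX'_{p,\lam,d}\oplus\XXX''_{p,\lam}$). But there is a genuine gap in your treatment of the $\g^\C\oplus\g^\C$-component, and it is not a bookkeeping issue.

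You assert that for the $\g^\C\oplus\g^\C$-component the passage to the $\XXX_d$-norm is ``immediate'' once you have bounded $D\bigl((p_{-d}\cdot\oplus\id)\ze\bigr)$ in $L^p_\lam$. It is not: that component of $\XXX_d$ is $W^{1,p}_\lam(\C,\g^\C\oplus\g^\C)$, whose norm contains $\Vert\lan\cdot\ran^\lam(\al,\be)\Vert_p$ -- the \emph{function} itself must lie in $L^p_\lam$, with the same weight as the derivative. Your derivative bound plus the Hardy-type inequality would only give $\Vert(\al,\be)-(\al_\infty,\be_\infty)\Vert_{p,\lam-1}<\infty$, which is strictly weaker than $L^p_\lam$ since $\lam>1-2/p>0$; and even after you argue $(\al_\infty,\be_\infty)=0$ (which you do not do) you would be one power of the weight short. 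This is exactly why the paper establishes the separate estimate
\[
\Vert(\al,\be)\Vert_{p,\lam;R'}\leq C_1\Vert\Psi(0,\al,\be)\Vert_{p,\lam;R'}\leq C_1\Vert\ze'\hhat\Vert_{w,p,\lam},
\]
before doing anything with $D\bigl((\rho_0p_{-d}\cdot\oplus\id)\ze\bigr)$. That estimate relies on the structural identity (\ref{eq:Psi 0 al be}) -- on the far region $\Psi(0,\al,\be)$ decomposes into the one-form part (controlled by the $|\al|$-term in (\ref{eq:ze w p lam})) and a tangent vector in $\im L^\C_{u\circ\si}$ (controlled by $|d\mu(u)v|$ and $|\PR^uv|$ via Lemma~\ref{le:U}). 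This is precisely the place where the augmented norm $\Vert\cdot\hhat\Vert_{w,p,\lam}=\Vert\cdot\Vert_{w,p,\lam}+\Vert\PR^u\cdot\Vert_{p,\lam}$ earns its keep: without the $\PR^u$-term you cannot close this estimate. Your argument, which only ``inverts'' the estimates of Claim~\ref{claim:F X bdd} using the two-sided bound (\ref{eq:C}) and condition (\ref{defi:triv na}), never uses Lemma~\ref{le:U} or the fact that the trivialization is adapted to the splitting $H\oplus\im L^\C$, and therefore cannot recover the $L^p_\lam$-bound on $(\al,\be)$.
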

\begin{proof}[Proof of Claim \ref{claim:F -1 X bdd}] We choose a neighborhood $U\sub M$ of $\mu^{-1}(0)$ as in Lemma \ref{le:U} (Appendix \ref{sec:add}), and define $c$ as in (\ref{eq:c inf}), and $C_1:=\max\{c^{-1},1\}$. Since $u\circ\si(re^{i\phi})$ converges to $x_\infty$, uniformly in $\phi\in\R$, as $r\to\infty$, there exists $R'\geq R$ such that $u(p)\in U$, for every $p\in\pi^{-1}(B_{R'}^C)\sub P$. Then (\ref{eq:Psi infty x H},\ref{eq:Psi 0 al be}) and (\ref{eq:c inf}) imply that 
\begin{equation}\label{eq:Vert al be Vert}\big\Vert(\al,\be)\big\Vert_{p,\lam;R'}\leq C_1\big\Vert\Psi(0,\al,\be)\big\Vert_{p,\lam;R'}\leq C_1\Vert\ze'\Vert_w,\end{equation}
where $(v_0,\al,\be):=\Psi^{-1}\ze'$, for every $\ze'\in\XXX_d$. 
\begin{claim}\label{claim:d rho 0 p} There exists a constant $C_2$ such that for every $\ze'\in\XXX_w$, we have
\begin{equation}
\label{eq:d rho 0 p}\big\Vert D\big((\rho_0p_{-d}\cdot\oplus\id)\Psi^{-1}\ze'\big)\big\Vert_{L^p_{\lam}(\C)}\leq C_2\Vert\ze'\Vert_w.
\end{equation}
\end{claim}
\begin{proof}[Proof of Claim \ref{claim:d rho 0 p}] It follows from equality (\ref{eq:na Phi Psi F ze}) and conditions (\ref{defi:triv C}) and (\ref{defi:triv na}) of Definition \ref{defi:triv} that there exist constants $C$ and $C'$ such that 
\begin{eqnarray}\label{eq:p - d id F}&\big\Vert D\big((p_{-d}\cdot\oplus\id)\Psi^{-1}\ze'\big)\big\Vert_{p,\lam;1}&\\
\nn&\leq C\Big(\Vert\na^A\ze'\Vert_{p,\lam}+\big\Vert\na^A\big(\Psi(p_d\cdot\oplus\id)\big)\big\Vert_{p,\lam}\Vert\ze'\Vert_\infty\Big)\leq C'\Vert\ze'\Vert_w,\quad \forall\ze'\in\XXX_w\,.&
\end{eqnarray}
On the other hand, Leibnitz' rule implies that 
\[D(\Psi^{-1}\ze')=\Psi^{-1}\big(\na^A\ze'-(\na^A\Psi)\Psi^{-1}\ze'\big).\] 
Hence by a short calculation, using Leibnitz' rule again, it follows that there exists a constant $C''$ such that 
\[\big\Vert D\big((\rho_0p_{-d}\cdot\oplus\id)\Psi^{-1}\ze'\big)\big\Vert_{L^p(B_1)}\leq C''\Vert\ze'\Vert_w,\quad\ze'\in \XXX_w.\] 
Combining this with (\ref{eq:p - d id F}), Claim \ref{claim:d rho 0 p} follows. 
\end{proof}
Let $\ze'\in\XXX_w$. We denote 
\[\wt\ze:=(\wt v_0,\wt\al,\wt\be):=(\rho_0p_{-d}\cdot\oplus\id)\Psi^{-1}\ze'.\] 
By inequality (\ref{eq:d rho 0 p}) the hypotheses of Proposition \ref{prop:Hardy} in Appendix \ref{sec:weighted} with $n:=2$ and $\lam$ replaced by $\lam-1$ are satisfied. It follows that there exists 
\[\ze_\infty:=\big(v_\infty,\al_\infty,\be_\infty\big)\in V=\C^{\bar n}\oplus\g^\C\oplus\g^\C,\] 
such that $\wt\ze(re^{i\phi})\to\ze_\infty$, uniformly in $\phi\in\R$, as $r\to\infty$, and 
\begin{equation}\label{eq:wt ze ze infty}\Vert(\wt\ze-\ze_\infty)|\cdot|^{\lam-1}\big\Vert_{L^p(\C)}\leq (\dim M+2\dim G)\frac p{\lam-1+\frac2p}\big\Vert D\wt\ze|\cdot|^{\lam}\big\Vert_{L^p(\C)}
\,.\end{equation}
Since $\lam>-2/p+1$, we have
\[\int_{B_{R'}^C}\lan\cdot\ran^{p\lam}=\infty.\]
Hence the convergence $(\wt\al,\wt\be)\to (\al_\infty,\be_\infty)$ and the estimate (\ref{eq:Vert al be Vert}) imply that $(\al_\infty,\be_\infty)=(0,0)$. We choose a constant $C>0$ as in part (\ref{defi:triv C}) of Definition \ref{defi:triv}. The convergence $\wt v_0\to v_\infty$ and the first inequality in (\ref{eq:C}) imply that 
\begin{equation}\label{eq:v infty 1}|v_\infty|\leq\Vert\wt v_0\Vert_\infty\leq 2^{\frac{|d|}2}C\Vert\ze'\Vert_\infty  
\,.\end{equation}
We define 
\[\big(v^1,\ldots,v^{\bar n},\al,\be\big):=\Psi^{-1}\ze'-\big(\rho_0p_dv_\infty^1,v_\infty^2,\ldots,v_\infty^{\bar n},0,0\big).\] 
Proposition \ref{prop:lam d Morrey}(\ref{prop:lam d iso}) in Appendix \ref{sec:weighted} and inequalities (\ref{eq:wt ze ze infty}) and (\ref{eq:d rho 0 p}) imply that there exists a constant $C_6$ (depending on $p,\lam,d$ and $\Psi$, but not on $\ze'$) such that 
\begin{equation}\label{eq:v 1 v 2 v bar n}\Vert v^1\Vert_{L^{1,p}_{{\lam-1}-d}(B_1^C)}+\big\Vert\big(v^2,\ldots,v^{\bar n},\al,\be\big)\big\Vert_{L^{1,p}_{\lam-1}(B_1^C)}\leq C_6\Vert\ze'\Vert_w\,.
\end{equation}
Finally, by a straight-forward argument, there exists a constant $C_7$ (independent of $\ze'$) such that
\[\Vert\Psi^{-1}\ze'\Vert_{W^{1,p}(B_{R'})}\leq C_7\Vert\ze'\Vert_w.\]
Combining this with (\ref{eq:Vert al be Vert},\ref{eq:v infty 1},\ref{eq:v 1 v 2 v bar n}), Claim \ref{claim:F -1 X bdd} follows.\end{proof}
Claims \ref{claim:F X bdd} and \ref{claim:F -1 X bdd} imply that the first map in (\ref{eq:X Y Psi}) is an isomorphism (of normed vector spaces). It follows from condition (\ref{defi:triv C}) of Definition \ref{defi:triv} that the second map in (\ref{eq:X Y Psi}) is an isomorphism. This completes the proof of {\bf (\ref{prop:X X w iso})}.

{\bf We prove statement (\ref{prop:X X w Fredholm}).} Recall that we have chosen $R>0,\si$ and $x_\infty$ as in Definition \ref{defi:triv}(\ref{defi:triv split}). We define $S_\infty:\g^\C\to\g^\C$ to be the complex linear extension of $L_{x_\infty}^*L_{x_\infty}:\g\to\g$. By our hypothesis (H) the Lie group $G$ acts freely on $\mu^{-1}(0)$. It follows that $L_{x_\infty}$ is injective. Therefore $S_\infty$ is positive with respect to $\lan\cdot,\cdot\ran_\g^\C$. By (\ref{eq:Psi z C bar n}) and (\ref{eq:Psi z 0}) there exist complex trivializations 
\[\Psi_1:\C\x(\C^{\bar n}\oplus\g^\C)\to TM^u,\quad\Psi_2:\C\x\g^\C\to \wone(\g_P),\] 
such that $\Psi_z(v_0,\al,\be)=\big((\Psi_2)_z\be,(\Psi_1)_z(v_0,\al)\big)$. We denote by 
\[\iota:\g^\C\to \C^{\bar n}\oplus\g^\C,\quad\pr:\C^{\bar n}\oplus\g^\C\to \g^\C\] 
the canonical inclusion and projection. We define 
\begin{eqnarray*}&\XXX_d^1:=L^{1,p}_{{\lam-1}-d}(\C,\C)\oplus L^{1,p}_{\lam-1}(\C,\C^{\bar n-1})\oplus W^{1,p}_{\lam}(\C,\g^\C),\quad\XXX_d^2:=W^{1,p}_{\lam}(\C,\g^\C),&\\
&\XXX_d':=\XXX_d^1\oplus\XXX_d^2,\quad \XXX_d^0:=\C\rho_0p_d\oplus\C^{\bar n-1}\oplus\{(0,0)\}\sub\XXX_d,&\\
&\YYY_d^1:=L^p_{\lam-d}(\C,\C)\oplus L^p_{\lam}(\C,\C^{\bar n-1}\oplus\g^\C),\quad \YYY_d^2:=L^p_\lam(\C,\g^\C).&
\end{eqnarray*}
Note that $\XXX_d=\XXX_d^0+\XXX_d'$ and $\YYY_d=\YYY_d^1\oplus\YYY_d^2$. We define $S:\XXX_d\to\YYY_d$ as in (\ref{eq:F Psi DD}). Since $\XXX_d^0$ is finite dimensional, $S|_{\XXX_d^0}$ is compact. Hence it suffices to prove that $S|_{\XXX_d'}$ is compact. To see this, we denote 
\[Q:=\left(\begin{array}{c}ds\wedge dt\,d\mu(u)\\
L_u^*
\end{array}\right),\quad T:=\left(\begin{array}{c}d_A\\
  -d_A^*
\end{array}\right),\]
and we define $S^i_{\phantom{i}j}:\XXX_d^j\to\YYY_d^i$ (for $i,j=1,2$) and $\wt S^1_{\phantom{1}1}:\XXX_d^1\to\YYY_d^1$ by 
\begin{eqnarray*}&S^1_{\phantom{1}1}v:=(F_1\Psi_1)^{-1}((\na^A\Psi_1)v)^{0,1},\,\wt S^1_{\phantom{1}1}v:=-(F_1\Psi_1)^{-1}\big(J(\na_{\Psi_1v}J)(d_Au)^{1,0}/2\big),&\\
&S^1_{\phantom{1}2}\al:=(F_1\Psi_1)^{-1}(L_u\Psi_2\al)^{0,1}-\iota\al/2,\, S^2_{\phantom{2}1}v:=\big((F_2\Psi_2)^{-1}Q\Psi_1- S_\infty\pr\big)v,&\\
&S^2_{\phantom{2}2}\al:=(F_2\Psi_2)^{-1}(T\Psi_2)\al.&
\end{eqnarray*}
Here $(T\Psi_2)\al:=T(\Psi_2\al)$, for $\al\in\g^\C$ (viewed as a constant section of $\C\x\g^\C$). (Recall also that $S_\infty:\g^\C\to\g^\C$ is the complex linear extension of $L_{x_\infty}^*L_{x_\infty}:\g\to\g$.) A direct calculation shows that 
\[S(v,\al)=\big(S^1_{\phantom{1}1}v+\wt S^1_{\phantom{1}1}v+S^1_{\phantom{1}2}\al,S^2_{\phantom{2}1}v+S^2_{\phantom{2}2}\al\big).\] 
For a subset $X\sub \C$ we denote by $\chi_X:\C\to\{0,1\}$ its characteristic function. It follows that $\chi_{B_R}S|_{\XXX_d'}$ is of 0-th order. Since it vanishes outside $B_R$, it follows that this map is compact. 
\begin{claim}\label{claim:S i j compact} The operators $\chi_{B_R^C}S^i_{\phantom{i}j}$, $i,j=1,2$, and $\chi_{B_R^C}\wt S^1_{\phantom{1}1}$ are compact. 
\end{claim}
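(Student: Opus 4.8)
\textbf{Plan for the proof of Claim \ref{claim:S i j compact}.}

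The plan is to show that each of the operators $\chi_{B_R^C}S^i_{\phantom{i}j}$ and $\chi_{B_R^C}\wt S^1_{\phantom{1}1}$ is a \emph{small-weight 0-th order} operator in the following sense: after the reduction to the good trivialization, each of them is an $L^\infty$-bounded multiplication (or a bounded $L^p_\mu$-valued multiplier), composed with pointwise-in-$z$ linear maps whose coefficient functions decay in $z$. On $B_R^C$ the domain norms $\Vert\cdot\Vert_{\XXX_d^j}$ control not only the $W^{1,p}_{\lam-1}$ (or $L^{1,p}$) norms but also, via Proposition \ref{prop:lam d Morrey}(\ref{prop:lam Morrey}) (the Morrey-type embedding for weighted spaces) and the fact $\lam>-2/p+1$, a uniform $L^\infty$ bound together with decay: a function in $\XXX_d'$ restricted to $B_r^C$ is small in $L^\infty(B_r^C)$ as $r\to\infty$. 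Hence I would first record this decay-at-infinity lemma for the domain spaces. The target spaces $\YYY_d^i$ carry a genuine weight $\langle\cdot\rangle^{\lam}$ (resp. $\langle\cdot\rangle^{\lam-d}$ in the first slot), so an operator that maps the domain boundedly into $L^p_{\lam-\eps}$ for some $\eps>0$ on $B_R^C$ is automatically compact into $\YYY_d^i$, because multiplication $L^p_{\lam-\eps}\hookrightarrow L^p_\lam$ restricted to a sequence bounded in $W^{1,p}_\loc$ is compact by the weighted Rellich lemma (a standard consequence of the Lockhart--McOwen analysis, also contained in Appendix \ref{sec:weighted}).

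The key computational step is to identify, for each of the five operators, the order of decay of its coefficient. For $S^1_{\phantom{1}1}$ and $S^2_{\phantom{2}2}$ the coefficients are built from $(F_i\Psi_i)^{-1}(\na^A\Psi_i)$; by Definition \ref{defi:triv}(\ref{defi:triv C},\ref{defi:triv na}) the combination $\na^A(\Psi(p_d\cdot\oplus\id))$ lies in $L^p_\lam(\C\wo B_1)$, so after conjugating by $p_d$ these coefficients lie in $L^p_\lam(B_1^C)$, i.e. they gain the full weight $\langle\cdot\rangle^{\lam}$; composing with the $L^\infty$-bounded domain element gives a map into $L^p_\lam$, hence compactness after the characteristic-function cutoff, exactly as in the argument just given for $\chi_{B_R}S$. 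For $\wt S^1_{\phantom{1}1}$ the coefficient involves $(d_Au)^{1,0}$, which is controlled in $L^p_\lam(\C\wo B_R)$ by the defining estimate $\Vert\sqrt{e_w}\Vert_{p,\lam}<\infty$ (and $|d_Au|\le\sqrt{2e_w}$), again yielding a map into $L^p_\lam$. For $S^1_{\phantom{1}2}$ the subtraction of $\iota\al/2$ is designed precisely so that $(F_1\Psi_1)^{-1}(L_u\Psi_2\al)^{0,1}-\iota\al/2$ has a coefficient that decays: near $\infty$ the trivialization is adapted to the splitting (\ref{eq:im L C}) and to the choice $x_\infty\in\mu^{-1}(0)$ via (\ref{eq:Psi 0 al be}), so the $(0,1)$-part of $L_u\Psi_2\al$ equals $\frac12\iota\al$ up to an error governed by $|\na^A(\Psi(p_d\cdot\oplus\id))|$ and $|d_Au|$, both in $L^p_\lam(B_1^C)$. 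Similarly for $S^2_{\phantom{2}1}$ the subtraction of $S_\infty\pr$ — where $S_\infty$ is the complex-linearization of $L_{x_\infty}^*L_{x_\infty}$ — cancels the leading term of $(F_2\Psi_2)^{-1}Q\Psi_1$ at $\infty$, using that $u\circ\si\to x_\infty$ uniformly (Definition \ref{defi:triv}(\ref{defi:triv split})) and the local-slice structure, leaving a remainder that is $o(1)$ in $L^\infty$ times a fixed $L^p_\lam$ weight, plus a $d\mu(u)$-contribution that is in $L^p_\lam$ because $\Vert d\mu(u)v\Vert_{p,\lam}$ is part of the domain norm (\ref{eq:ze w p lam}).

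Once each coefficient is seen to lie in $L^p_\lam(B_R^C)$ (or to be $L^\infty$ with decay, which is even better after multiplying by the target weight), I would assemble the compactness conclusion uniformly: write $\chi_{B_R^C}S^i_{\phantom{i}j} = m_{ij}\cdot(\text{bounded evaluation}) $ where $m_{ij}\in L^p_\lam$, take a bounded sequence in $\XXX_d'$, extract a subsequence converging weakly in $W^{1,p}_\loc$ and strongly in $L^p_\loc$ and in $L^\infty_\loc$ on every compact set, and use the weighted tail estimate (the $L^p_\lam$-norm of $m_{ij}$ outside a large ball is small) to upgrade $L^p_\loc$-convergence of the product to convergence in $\YYY_d^i$. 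The main obstacle I anticipate is the bookkeeping for $S^1_{\phantom{1}2}$ and $S^2_{\phantom{2}1}$: one must verify that the explicitly subtracted ``model'' terms $\iota\al/2$ and $S_\infty\pr\,v$ are exactly the $z\to\infty$ limits of the corresponding coefficients in the good trivialization, which requires unwinding the definitions (\ref{eq:Psi infty x H}), (\ref{eq:Psi 0 al be}) of $\Psi$ near infinity and carefully tracking the $(0,1)$- and Hodge-$*$-projections through the complex-linear extension $L^\C_{x}$ and the identification $F$. This is where the choice of a \emph{good} trivialization (rather than an arbitrary one) is used in an essential way, and it is the only genuinely non-routine part; the rest is the weighted Rellich argument applied five times.
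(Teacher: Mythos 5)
Your overall strategy matches the paper's: factor each piece $\chi_{B_R^C}S^i_{\phantom{i}j}$ as a zeroth-order multiplication operator whose coefficient is small at infinity, and then get compactness from the compact embedding of the domain into $C_b$ via Proposition \ref{prop:lam d Morrey}. However, several of your specific statements are off, and one of them would be a real problem if carried out literally.

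First, the ``decay lemma'' you propose for the domain is false as stated. A function in the first slot $L^{1,p}_{\lam-1-d}(\C,\C)$ of $\XXX_d^1$ satisfies only $|u(z)|\leq C\lan z\ran^{d-\lam+1-2/p}\Vert u\Vert$, which grows whenever $d>1$ (recall only $\lam>1-2/p$ is assumed). You would have to first conjugate by $p_{-d}$, as in the factorization $(\na^A\Psi_1)^{0,1}=\big(\na^A(\Psi_1(p_d\cdot\oplus\id))\big)^{0,1}(p_{-d}\cdot\oplus\id)$; this is exactly the step the paper takes before invoking Proposition \ref{prop:lam d Morrey}(\ref{prop:lam d iso},\ref{prop:lam Morrey}) to get the compact map $(\rho_0p_{-d}\cdot\oplus\id):\XXX_d^1\to C_b(\C,\C^{\bar n}\oplus\g^\C)$. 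Second, the sentence ``multiplication $L^p_{\lam-\eps}\hookrightarrow L^p_\lam$ restricted to a sequence bounded in $W^{1,p}_\loc$ is compact by the weighted Rellich lemma'' has the inclusion backwards (that map is not even bounded), and no Rellich-type gain is available between $L^p$ spaces with different weights; the compactness really comes from the embedding of the domain into $C_b$ (Proposition \ref{prop:lam d Morrey}(\ref{prop:lam Morrey})), which your last paragraph does eventually invoke, so this is more a bookkeeping confusion than a fatal gap.

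The genuinely substantive difference is that the paper exploits the good trivialization more tightly than you propose. For $S^1_{\phantom{1}2}$, the conditions (\ref{eq:Psi z C bar n}), (\ref{eq:Psi z 0}) and (\ref{eq:Psi 0 al be}) of Definition \ref{defi:triv}(\ref{defi:triv split}) give that $(F_1\Psi_1)^{-1}(L_u\Psi_2\al)^{0,1}$ equals $\iota\al/2$ \emph{identically} on $B_R^C$, so $\chi_{B_R^C}S^1_{\phantom{1}2}=0$. Your proposal to estimate the difference by $|\na^A(\Psi(p_d\cdot\oplus\id))|$ and $|d_Au|$ misjudges the structure: these derivatives do not enter the coefficient of this zeroth-order term; the cancellation is algebraic, not analytic. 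Similarly, for $S^2_{\phantom{2}1}$ the paper identifies $\chi_{B_R^C}S^2_{\phantom{2}1}=\chi_{B_R^C}f\pr$ with the explicit map $f(z)$ being the complex-linear extension of $L^*_{u\circ\si(z)}L_{u\circ\si(z)}-L^*_{x_\infty}L_{x_\infty}$, which tends to $0$ uniformly because $u\circ\si\to x_\infty$, and then invokes the dedicated compactness statement Proposition \ref{prop:lam d Morrey}(\ref{prop:lam W cpt}) for multiplication by a function decaying at infinity. Your ``decays up to an error'' approach would still yield compactness in principle, but the paper's exact cancellations make two of the five pieces either vanish or become an explicit decaying multiplier, which substantially shortens the argument and avoids the need to estimate error terms at all.
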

\begin{pf}[Proof of Claim \ref{claim:S i j compact}] To see that $\chi_{B_R^C}S^1_{\phantom{1}1}$ is compact, note that Leibnitz' rule and holomorphicity of $p_d$ imply that 
\[(\na^A\Psi_1)^{0,1}=\big(\na^A\big(\Psi_1(p_d\cdot\oplus\id)\big)\big)^{0,1}(p_{-d}\cdot\oplus\id),\quad\textrm{on }\C\wo\{0\}.\] 
For a normed vector space $V$ we denote by $C_b(\C,V)$ the space of bounded continuous maps from $\C$ to $V$. Since $\lam>1-2/p$, assertions (\ref{prop:lam d iso}) and (\ref{prop:lam Morrey}) of Proposition \ref{prop:lam d Morrey} imply that the map 
\[(\rho_0p_{-d}\cdot\oplus\id):\XXX_d^1\to C_b\big(\C,\C^{\bar n}\oplus\g^\C\big)\] 
is well-defined and compact. By Definition \ref{defi:triv}(\ref{defi:triv na}), the map 
\[\chi_{B_R^C}\big(\na^A\big(\Psi_1(p_d\cdot\oplus\id)\big)\big)^{0,1}:C_b\big(\C,\C^{\bar n}\oplus\g^\C\big)\to\Ga^p_{\lam}\big(\wzeroone(TM^u)\big)\] 
is bounded. Condition (\ref{defi:triv C}) of Definition \ref{defi:triv} implies boundedness of the map 
\[(F_1\Psi_1)^{-1}:\Ga^p_{\lam}\big(\wzeroone(TM^u)\big)\to\YYY_d^1.\] 
Compactness of $\chi_{B_R^C}S^1_{\phantom{1}1}$ follows. 

By the definition of $\BB^p_\lam$, we have $|d_Au|\in L^p_{\lam}(\C)$. This together with Proposition \ref{prop:lam d Morrey}(\ref{prop:lam d iso}) and (\ref{prop:lam Morrey}) and  Definition \ref{defi:triv}(\ref{defi:triv C}) implies that the map $\chi_{B_R^C}\wt S^1_{\phantom{1}1}$ is compact. Furthermore, it follows from Definition \ref{defi:triv}(\ref{defi:triv split}) that $\chi_{B_R^C}S^1_{\phantom{1}2}=0$.

To see that $\chi_{B_R^C}S^2_{\phantom{2}1}$ is compact, we define $f:B_R^C\to \End(\g^\C)$ by setting $f(z):\g^\C\to\g^\C$ to be the complex linear extension of the map 
\[L_{u\circ\si(z)}^*L_{u\circ\si(z)}-L_{x_\infty}^*L_{x_\infty}:\g\to\g.\] 
Since $u\circ\si(re^{i\phi})$ converges to $x_\infty$, uniformly in $\phi$, as $r\to\infty$, the map $f(re^{i\phi})$ converges to 0, uniformly in $\phi$, as $r\to \infty$. Hence by Proposition \ref{prop:lam d Morrey}(\ref{prop:lam W cpt}), the map 
\[W^{1,p}_{\lam}(\C,\g^\C)\ni\al\mapsto \chi_{B_R^C}f\al\in L^p_{\lam}(\C,\g^\C)\] 
is compact. Definition \ref{defi:triv}(\ref{defi:triv split}) implies that $\chi_{B_R^C}S^2_{\phantom{2}1}=\chi_{B_R^C}f\pr$. It follows that this operator is compact. 

Finally, Proposition \ref{prop:lam d Morrey}(\ref{prop:lam Morrey}) and parts (\ref{defi:triv na}) and (\ref{defi:triv C}) of Definition \ref{defi:triv} imply that the map $\chi_{B_R^C}S^2_{\phantom{2}2}$ is compact. Claim \ref{claim:S i j compact} follows. It follows that the operator $S:\XXX_d\to\YYY_d$ (as in (\ref{eq:F Psi DD})) is compact. This completes the proof of statement {\bf (\ref{prop:X X w Fredholm})} and hence of Proposition \ref{prop:X X w}.
\end{pf}
\end{proof}
\subsection{Proof of Theorem \ref{thm:L w * R} (Right inverse for $L_w^*$)}\label{subsec:proof:thm:L w * R}
For the proof of this result we need the following. Let $n\in\N$, $\ell\in\N_0$, $p>n/(\ell+1)$, $G$ be a compact Lie group, $\lan\cdot,\cdot\ran_\g$ an invariant inner product on $\g:=\Lie(G)$, $X$ a manifold (possibly with boundary), and $P\to X$ a $G$-bundle of class $W^{\ell+1,p}_\loc$. We denote by $\g_P:=(P\x\g)/G\to X$ the adjoint bundle, and by $\A^{\ell,p}_\loc(P)$ the affine space of connections on $P$ of class $W^{\ell,p}_\loc$, i.e., of class $W^{\ell,p}$ on every compact subset of $X$. If $X$ is compact then we abbreviate $\A^{\ell,p}(P):=\A^{\ell,p}_\loc(P)$. Let $\lan\cdot,\cdot\ran_X$ be a Riemannian metric on $X$ and $A\in\A^{\ell,p}_\loc(P)$. The connection $A$ induces a connection $d_A$ on the adjoint bundle $\g_P=(P\x\g)/G$. For every $i\in\N$ this connection and the Levi-Civita connection of $\lan\cdot,\cdot\ran_X$ induce a connection $\na_{\lan\cdot,\cdot\ran_X}^A$ on the bundle $(T^*X)^{\otimes i}\otimes\g_P$. We abbreviate these connections by $\na^A$. Let $k\in\{0,\ldots,\ell+1\}$. For a vector bundle $E$ over $X$ we denote by $\Ga^{k,p}_\loc(E)$ the space of $W^{k,p}_\loc$-sections of $E$. For $\al\in\Ga^{k,p}_\loc\big((T^*X)^{\otimes i}\otimes\g_P\big)$ we define 
\begin{equation}\label{eq:Vert al Vert}\Vert\al\Vert_{k,p,A}:=\Vert\al\Vert_{k,p,X,\lan\cdot,\cdot\ran_X,A}:=\sum_{j=0,\ldots,k}\big\Vert\,\big|(\na^A)^j\al\big|_{\lan\cdot,\cdot\ran_X,\lan\cdot,\cdot\ran_\g}\big\Vert_{L^p(X,\lan\cdot,\cdot\ran_X)},
\end{equation}
where $|\cdot|_{\lan\cdot,\cdot\ran_X,\lan\cdot,\cdot\ran_\g}$ denotes the pointwise norm induced by $\lan\cdot,\cdot\ran_X$ and $\lan\cdot,\cdot\ran_\g$, and $\Vert\cdot\Vert_{L^p(X,\lan\cdot,\cdot\ran_X)}$ denotes the $L^p$-norm of a function on $X$, induced by $\lan\cdot,\cdot\ran_X$. We denote by $\wi(\g_P)$ the bundle of $i$-forms on $X$ with values in $\g_P$, and 
\begin{eqnarray}\label{eq:Om i k p A}&\Om^i_{k,p,A}(\g_P):=\big\{\al\in\Ga^{k,p}_\loc\big(\wi(\g_P)\big)\,\big|\,\Vert\al\Vert_{k,p,A}<\infty\big\},&\\
\label{eq:Ga k p A}&\Ga^{k,p}_A(\g_P):=\Om^0_{k,p,A}(\g_P),\quad\Ga^p(\g_P):=\Ga^{0,p}_A(\g_P).&
\end{eqnarray} 
We denote by 
\begin{equation}\label{eq:d A *} d_A^*=-*d_A*:\Om^1_{1,p,A}(\g_P)\to\Ga^p(\g_P)
\end{equation}
the formal adjoint of $d_A$, with respect to the $L^2$-metrics induced by $\lan\cdot,\cdot\ran_X$ and $\lan\cdot,\cdot\ran_\g$. Here $*$ denotes the Hodge star operator. 
\begin{prop}[Right inverse for $d_A^*$]\label{PROP:RIGHT} Let $n,G$ and $\lan\cdot,\cdot\ran_\g$ be as above, $p>n$, and $(X,\lan\cdot,\cdot\ran_X)$ a Riemannian manifold. Assume that $X$ is diffeomorphic to $\BAR B_1\sub\R^n$. Then the following statements hold.
\begin{enui}
\item\label{prop:right:exists} For every $G$-bundle $P\to X$ of class $W^{2,p}$ and every connection one-form $A$ on $P$ of class $W^{1,p}$ there exists a bounded right inverse of the operator (\ref{eq:d A *}). 
\item\label{prop:right:eps C} There exist constants $\eps>0$ and $C>0$ such that for every $G$-bundle $P\to X$ of class $W^{2,p}$, and every $A^{1,p}\in\A(P)$ satisfying $\Vert F_A\Vert_p\leq\eps$, there exists a right inverse $R$ of the operator $d_A^*$ (as in (\ref{eq:d A *})), satisfying
\[\Vert R\Vert:=\sup\big\{\Vert R\xi\Vert_{1,p,A}\,\big|\,\xi\in\Ga^p(\g_P):\,\Vert\xi\Vert_p\leq1\big\}\leq C.\] 
\end{enui}
\end{prop}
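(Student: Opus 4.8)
\textbf{Proof strategy for Proposition \ref{PROP:RIGHT}.}
The plan is to reduce to the model situation $X = \BAR B_1 \subset \R^n$ with a fixed product metric and then perturb. For statement (\ref{prop:right:exists}), first I would use the fact that since $X$ is diffeomorphic to $\BAR B_1$, every $G$-bundle $P \to X$ of class $W^{2,p}$ is trivializable (by Lemma \ref{le:g smooth} in Appendix \ref{sec:add} one can smooth the transition data and then contract), so after choosing a trivialization and pulling back, $A$ becomes a $\g$-valued one-form $a$ of class $W^{1,p}$ on $\BAR B_1$, and $d_A = d + [a \wedge \cdot]$, $d_A^* = d^* - *[a \wedge *\cdot]$. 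Write $d_A^* = d^* + K$ where $K\xi = -*[a\wedge *\xi]$ is a zeroth-order operator; since $p > n$, Morrey's embedding gives $W^{1,p} \hookrightarrow C^0$ and a straightforward Hölder estimate shows $K: \Om^1_{1,p,A}(\g_P) \to \Ga^p(\g_P)$ is bounded with norm controlled by $\Vert a\Vert_{W^{1,p}}$. The core analytic input is then a bounded right inverse $R_0$ for the \emph{untwisted} $d^*$ on one-forms over $\BAR B_1$ with values in $\g$: this is the classical statement that the Neumann problem $\Delta h = \xi$, $\partial_\nu h = 0$ (after subtracting the mean of $\xi$, which however one can avoid on one-forms by instead solving $d^* \alpha = \xi$ with $\alpha = dh$ and suitable boundary data, or by working with the operator $d \oplus d^*$) is solvable, giving $R_0\xi := dh$ bounded $\Ga^p(\g_P) \to \Om^1_{1,p}(\g_P)$ componentwise in $\g$; this relies on standard elliptic regularity ($L^p$-estimates up to the boundary) for the Laplacian on the ball. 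Granting $R_0$, statement (\ref{prop:right:exists}) follows by a \emph{local} perturbation argument: for $A$ with small $\Vert a\Vert_{W^{1,p}}$ the operator $d^* + K$ has $(d^* + K)R_0 = \mathrm{id} + K R_0$ with $\Vert K R_0\Vert < 1$, so $R := R_0(\mathrm{id} + KR_0)^{-1}$ is a bounded right inverse; for \emph{arbitrary} $A$ one covers $X$ by finitely many small balls on which $A$ can be gauge-transformed to be $W^{1,p}$-small (using that $X$ is compact and that $d^*$ is gauge-equivariant in the appropriate sense), constructs local right inverses, and patches them with a partition of unity — here the patching must be done at the level of the \emph{solutions} rather than naively summing the operators, exactly as in the patching arguments already used in Section \ref{sec:comp}.

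For statement (\ref{prop:right:eps C}), the point is that the constants $\eps, C$ can be chosen uniformly over all bundles and connections with $\Vert F_A\Vert_p \leq \eps$. The standard device is the \emph{Uhlenbeck gauge}: by Uhlenbeck's theorem (Theorem \ref{thm:Uhlenbeck compact} in Appendix \ref{sec:add}, or rather its local companion with a curvature bound), there exist universal constants $\eps > 0$ and $c > 0$ such that whenever $\Vert F_A\Vert_p \leq \eps$, the connection $A$ on $P \to X \cong \BAR B_1$ is gauge-equivalent to a connection $\wt A$ in relative Coulomb gauge (or a Coulomb gauge adapted to the ball) whose connection-form $\wt a$ satisfies $\Vert \wt a\Vert_{W^{1,p}} \leq c\Vert F_A\Vert_p \leq c\eps$. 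Shrinking $\eps$ so that $c\eps$ is below the threshold in the Neumann-series argument of part (\ref{prop:right:exists}), one obtains a right inverse $\wt R$ for $d_{\wt A}^*$ with $\Vert \wt R\Vert \leq C_0 := 2\Vert R_0\Vert$, a constant depending only on $n, p, G, \lan\cdot,\cdot\ran_\g$ and $X$. Conjugating back by the gauge transformation — which acts on both $\Om^1_{1,p,A}(\g_P)$ and $\Ga^p(\g_P)$ by an isometry for the norms (\ref{eq:Vert al Vert}), since these norms are built gauge-invariantly from $\na^A$ and $\lan\cdot,\cdot\ran_\g$ — gives a right inverse $R$ for $d_A^*$ with $\Vert R\Vert \leq C_0$, as required.

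I expect the main obstacle to be the uniformity in the gauge-transformed estimate, i.e. making genuinely precise that the operator norms computed in the $A$-dependent norms $\Vert\cdot\Vert_{1,p,A}$ transform correctly under gauge change and that no hidden dependence on $A$ (beyond the curvature bound) creeps in through the regularity of the gauge transformation. The boundary behavior is the second delicate point: the right inverse of $d^*$ on the ball is not canonical (one must fix a boundary condition, e.g. $\partial_\nu$ of the potential, or equivalently work with $d \oplus d^*$ with absolute boundary conditions), and one must check that the chosen boundary condition is compatible with the gauge transformations and with the $W^{1,p}$-up-to-the-boundary elliptic estimate; this is where the hypothesis $X \cong \BAR B_1$ (so that there is essentially one model domain) is used in an essential way. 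The rest — the Hölder estimate on the zeroth-order term $K$, the Neumann series, the partition-of-unity patching — is routine and parallels techniques already invoked earlier in the memoir.
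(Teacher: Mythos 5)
Your approach to statement (\ref{prop:right:eps C}) is essentially the paper's: apply Uhlenbeck gauge (Proposition~\ref{prop:Uhlenbeck}) to bring $A$ close to a reference connection, invert for the reference, and perturb by a Neumann series (the paper does this via the twisted Laplacian $d_A^*d_A$ using the Newtonian potential and Calder\'on--Zygmund --- Proposition~\ref{prop:right d A * d A} and Claim~\ref{claim:flat right} --- but that is a cosmetic difference: working with $d_A^*d_A$ rather than $d_A^*$ directly simply makes the flat-connection right inverse explicit and its norm manifestly uniform). Your concerns about gauge covariance of the norms $\Vert\cdot\Vert_{1,p,A}$ and about extracting a right inverse for the untwisted $d^*$ on the ball are both well placed; the paper handles the second by extending by zero to $\R^n$ and convolving with the fundamental solution, which sidesteps the choice of boundary condition entirely.

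For statement (\ref{prop:right:exists}), however, there is a real gap in the patching step, and the paper takes a genuinely different route precisely to avoid it. You propose to cover $X$ by small balls $U_i$ on which $A$ is gauged small, take local right inverses $R_i$ of $d_A^*$, and combine them with a partition of unity $\{\rho_i\}$. But the natural candidate $R':=\sum_i\rho_i R_i$ satisfies $d_A^*R'\xi=\xi+\sum_i *\big((d\rho_i)\wedge *R_i\xi\big)$, so the defect $E:=d_A^*R'-\id$ is a zeroth-order operator built out of the fixed derivatives $d\rho_i$; its operator norm is not small, and nothing about shrinking the cover makes it small, because $R_i\xi$ is only bounded by $\Vert\xi\Vert_p$ (via Morrey) with a constant that does not improve as the balls shrink. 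Consequently, the Neumann series $R'(\id+E)^{-1}$ does not converge, and the ``patching at the level of solutions'' you appeal to is not available here: in Section~\ref{sec:comp} the patched objects are gauge transformations for a convergent sequence, not right inverses of an unbounded operator, and in the proof of Theorem~\ref{thm:L w * R} the analogous defect term is killed not by iteration but by exploiting the surjectivity of $L_u^*$ on the region where $\mu\circ u$ is small --- a device with no analogue in Proposition~\ref{PROP:RIGHT}. The paper instead constructs (Claim~\ref{claim:L W}, proved via Claim~\ref{claim:T}) an explicit operator $T$ with $d_A^*T=\id$ by integrating $\xi$ along the $x_1$-direction against $A$-parallel transport and multiplying by $-dx^1$; this gives an exact right inverse on $\Ga^{1,p}_A(\g_P)$ (one derivative more regular than $\Ga^p$, which is what the one-directional integration can absorb via Lemma~\ref{le:n p f}). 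Combining $T$ with a right inverse $R_0$ of $d_{A_0}^*d_{A_0}$ for an auxiliary flat $A_0$ via $R:=d_AR_0+T(\id-d_A^*d_AR_0)$ then yields a right inverse on all of $\Ga^p(\g_P)$ (Claim~\ref{claim:R}), with no smallness assumption on the curvature and no partition-of-unity defect to iterate away. You would need either to reproduce this one-directional trick or to replace the patching step by a correction mechanism that removes the $O(1)$ defect --- neither is supplied in the proposal as written.
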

We postpone the proof of Proposition \ref{PROP:RIGHT} to the appendix (page \pageref{proof:prop:right}).
\begin{proof}[Proof of Theorem \ref{thm:L w * R} (p.~\pageref{thm:L w * R})]\label{proof:thm:L w * R}\setcounter{claim}{0}
Let $p,\lam$ and $w=(P,A,u)$ be as in the hypothesis. We construct a map 
\begin{equation}\label{eq:R}R:L^p_\loc(\g_P)\to\Ga^p_\loc\big(\wone(\g_P)\oplus TM^u\big)
\end{equation}
such that $L_w^*R$ is well-defined and equals $\id$, and we show that $R$ restricts to a bounded map from $\Ga^p_\lam(\g_P)$ to $\XXX_w^{p,\lam}$. (See Claim \ref{claim:ze X w} below.) It follows from hypothesis (H) that there exists $\de>0$ such that $\mu^{-1}(\bar B_\de)\sub M^*$ (defined as in (\ref{eq:M *})). It follows from Lemma \ref{le:si} in Appendix \ref{SEC:HOMOLOGY CHERN} that there exists a number $a>0$ such that $u(p)\in \mu^{-1}(\bar B_\de)$, for every $p\in\pi^{-1}\big(\C\wo (-a,a)^2\big)\sub P$. We choose constants $\eps_1$ and $C_1$ as in the second assertion of Proposition \ref{PROP:RIGHT} (corresponding to $\eps$ and $C$, for $n=2$). Furthermore, we choose constants $C_2$ and $\eps_2$ as in Lemma \ref{le:infty 1 p A} in Appendix \ref{sec:proof:prop:right}  (corresponding to $C$ and $\eps$). We define $\eps:=\min\{\eps_1,\eps_2\}$. By assumption we have $|F_A|\in L^p_\lam(\C)$. Hence there exists an integer $N>a$ such that 
\begin{equation}\label{eq:F A N}\Vert F_A\Vert_{L^p_\lam\big(\C\wo(-N,N)^2\big)}<\eps.
\end{equation}
We choose a smooth function $\rho:[-1,1]\to[0,1]$ such that $\rho=0$ on $[-1,-3/4]\cup[3/4,1]$, $\rho=1$ on $[-1/4,1/4]$, and $\rho(-t)=\rho(t)$ and $\rho(t)+\rho(t-1)=1$, for all $t\in[0,1]$. We choose a bijection 
\[(\phi,\psi):\Z\wo\{0\}\to \Z^2\wo\{-N,\ldots,N\}^2.\] 
We define $\wt\rho:\R\to[0,1]$ by 
\[\wt\rho(t):=\left\{\begin{array}{ll}1,&\textrm{if } |t|\leq N,\\
\rho(|t|-N),&\textrm{if }N\leq|t|\leq N+1,\\
0,&\textrm{if }|t|\geq N+1,
\end{array}\right.\]
and $\rho_0:\C\to[0,1]$ by $\rho_0(s,t):=\wt\rho(s)\wt\rho(t)$. Furthermore, for $i\in\Z\wo\{0\}$ we define 
\[\rho_i:\C\to[0,1],\quad\rho_i(s,t):=\rho(s-\phi(i))\rho(t-\psi(i)).\] 
We choose a compact subset $K_0\sub [-N-1,N+1]^2$ diffeomorphic to $\bar B_1$, such that
\[[-N-3/4,N+3/4]^2\sub \Int K_0,\]
and we denote $\Om_0:=\Int K_0$. Furthermore, we choose a compact subset $K\sub [-1,1]^2$ diffeomorphic to $\bar B_1$, such that $[-3/4,3/4]^2\sub \Int K$. For $i\in\Z\wo\{0\}$ we define 
\[\Om_i:=\Int K+(\phi(i),\psi(i)).\] 
For $i\in\Z$ we define 
\[T_i:=d_A^*:\Om^1_{1,p,A}\big(\g_P|_{\Om_i})\to\Ga^p(\g_P|_{\Om_i}).\]
By the first assertion of Proposition \ref{PROP:RIGHT} there exists a bounded right inverse $R_0$ of $T_0$. We fix $i\in\Z\wo\{0\}$. Since $\lam>1-2/p>0$, we have $\Vert F_A\Vert_{L^p(\Om_i)}\leq\Vert F_A\Vert_{L^p_\lam(\Om_i)}$, and by inequality (\ref{eq:F A N}), the right hand side is bounded by $\eps$. Hence it follows from the statement of Proposition \ref{PROP:RIGHT} that there exists a right inverse $R_i$ of $T_i$, satisfying 
\begin{equation}\label{eq:R i 1 p}\Vert R_i\xi\Vert_{1,p,\Om_i,A}\leq C_1\Vert\xi\Vert_{L^p(\Om_i)},\quad\forall\xi\in\Ga^{1,p}_A(\g_P|_{\Om_i}). 
\end{equation}
We define 
\[\hhat R:L^p_\loc(\g_P)\to\Ga^{1,p}_\loc\big(\wone(\g_P)\big),\quad\hhat R\xi:=\sum_{i\in\Z}\rho_i\cdot R_i(\xi|_{\Om_i}).\] 
Each section $\xi:\C\to\g_P$ induces a section $L_u\xi:\C\to TM^u$. For every $p\in\pi^{-1}\big(\C\wo(-N,N)^2\big)\sub P$ we have $u(p)\in\mu^{-1}(\bar B_\de)\sub M^*$, and therefore the map $L_{u(p)}^*L_{u(p)}:\g\to\g$ is invertible. We define $\wt R:L^p_\loc(\g_P)\to\Ga^p_\loc(TM^u)$ by 
\begin{equation}\label{eq:wt R xi z}(\wt R\xi)(z):=\left\{\begin{array}{ll}
0,&\textrm{for }z\in (-N,N)^2,\\
L_u(L_u^*L_u)^{-1}\big(\xi-d_A^*\hhat R\xi\big)(z),&\textrm{for }z\in\C\wo(-N,N)^2.
\end{array}\right.\end{equation}
Furthermore, we define the map (\ref{eq:R}) by 
\[R\xi:=(-\hhat R\xi,\wt R\xi).\] 
The operator $L_w^*R$ is well-defined and equals $\id$. The statement of Theorem \ref{thm:L w * R} is now a consequence of the following. We denote by $\Ga^p_\lam(\g_P)$ the space of $L^p_\lam$-sections of $\g_P$. 
\begin{claim}\label{claim:ze X w} The map $R$ restricts to a bounded operator from $\Ga^p_\lam(\g_P)$ to $\XXX_w^{p,\lam}$ (defined as in (\ref{eq:X w p lam})). 
\end{claim}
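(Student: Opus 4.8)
The map $R\xi=(-\hhat R\xi,\wt R\xi)$ is a finite-overlap patching of the local right inverses $R_i$, so the strategy is to estimate each piece of $\Vert R\xi\hhat\Vert_{w,p,\lam}$ separately, using the weighted structure of the cover $\{\Om_i\}$ and the exponential/polynomial comparability of weights on each $\Om_i$. The key structural fact is that the weight $\lan\cdot\ran^\lam$ varies only by a bounded factor on each $\Om_i$ (since each $\Om_i$ has uniformly bounded diameter and, for $|i|$ large, $\lan z\ran\sim|\phi(i)|+|\psi(i)|$ on $\Om_i$), and that each point of $\C$ lies in at most a fixed finite number of the $\Om_i$. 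Hence for a weighted $\ell^p$-sum over $i$, applying the local bounds $\Vert R_i\xi\Vert_{1,p,\Om_i,A}\le C_1\Vert\xi\Vert_{L^p(\Om_i)}$ (for $i\ne0$, via Proposition \ref{PROP:RIGHT}(\ref{prop:right:eps C})) and the boundedness of $R_0$, together with $\Vert d\rho_i\Vert_\infty$ uniformly bounded, gives
\[\big\Vert\,|\na^A(\hhat R\xi)|\,\big\Vert_{p,\lam}+\big\Vert\hhat R\xi\big\Vert_{p,\lam}\lesssim\Big(\sum_{i\in\Z}\lan z_i\ran^{p\lam}\Vert\xi\Vert_{L^p(\Om_i)}^p\Big)^{1/p}\lesssim\Vert\xi\Vert_{p,\lam},\]
where $z_i$ is any point of $\Om_i$. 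I would carry this out by first recording the comparability of weights and the finite-overlap property as a short lemma-free paragraph, then bounding $\Vert\hhat R\xi\Vert_{p,\lam}$ and $\Vert d_A^*\hhat R\xi\Vert_{p,\lam}$ (the latter controlled by $\Vert\na^A\hhat R\xi\Vert_{p,\lam}$, using that $d_A^*$ is first order with coefficients given by $A$, plus the product rule for the cutoffs $\rho_i$).

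Next I would handle $\wt R\xi$. On $(-N,N)^2$ it vanishes, so only the region $\C\wo(-N,N)^2$ matters, where by construction $u$ maps into $\mu^{-1}(\bar B_\de)\subset M^*$, so $L_u^*L_u$ is invertible with norm and inverse-norm bounded on the compact set $\overline{u(P)}$. Thus $\wt R\xi=L_u(L_u^*L_u)^{-1}(\xi-d_A^*\hhat R\xi)$ is pointwise bounded by a constant times $|\xi|+|d_A^*\hhat R\xi|$, giving $\Vert\wt R\xi\Vert_{p,\lam}\lesssim\Vert\xi\Vert_{p,\lam}$ immediately. For the derivative term $\Vert\na^A(\wt R\xi)\Vert_{p,\lam}$ I would differentiate the product: the derivative hits $L_u$ (controlled by $|d_Au|\in L^p_\lam$ and $\overline{u(P)}$ compact, but this is a product of an $L^p_\lam$ factor with a bounded factor, hence in $L^p_\lam$ — actually one must be careful, $|d_Au||\xi-d_A^*\hhat R\xi|$ need only be estimated in $L^p_\lam$, which works since $\xi-d_A^*\hhat R\xi$ is bounded? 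No — so here one genuinely uses that $\na^A\wt R\xi$ lands in $L^p_\lam$ via $|d_Au|\in L^p_\lam$ times the $L^\infty$ bound coming from $W^{1,p}_\lam\hookrightarrow L^\infty$ applied to $\xi-d_A^*\hhat R\xi$, which in turn follows from the weighted Morrey embedding Proposition \ref{prop:lam d Morrey}(\ref{prop:lam Morrey}) once we know $\xi-d_A^*\hhat R\xi\in W^{1,p}_\lam$). It also hits $(L_u^*L_u)^{-1}$ (again controlled by $|d_Au|$ and compactness) and $\xi-d_A^*\hhat R\xi$ (controlled by $\Vert\na^A\xi\Vert_{p,\lam}$ — but $\xi$ is only $L^p_\lam$, not $W^{1,p}_\lam$; however $d_A^*\hhat R\xi$ is $W^{1,p}_\lam$ since $\hhat R\xi$ is patched from $W^{2,p}$ pieces — wait, $R_i$ lands in $W^{1,p}$, so $\hhat R\xi$ is $W^{1,p}_\loc$ and $d_A^*\hhat R\xi$ is only $L^p$, so $\na^A d_A^*\hhat R\xi$ is not controlled). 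This is the delicate point; the resolution is that $L_w^*R=\id$ is imposed, and one does not need a pointwise derivative bound on the full expression $\xi-d_A^*\hhat R\xi$ but rather uses the algebraic identity together with the structure $\wt R\xi\in\im L_u$ to re-express $\na^A\wt R\xi$ in terms that are controlled. Concretely, since $\wt R\xi$ takes values in $\im L_u$ and $|d_Au|\in L^p_\lam$, the quantity $|d\mu(u)\wt R\xi|$ vanishes (as $\im L_u\subset\ker d\mu(u)$ on $\mu^{-1}(0)$, approximately — on $\mu^{-1}(\bar B_\de)$ one gets a bound $|d\mu(u)\wt R\xi|\le C\de|\wt R\xi|$), and $|\al|=|\hhat R\xi|$ is already estimated, so the remaining term $|\na^A(\wt R\xi)|$ in $\Vert\cdot\hhat\Vert_{w,p,\lam}$ is the only one needing the derivative bound.

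\textbf{Main obstacle.} The hard part is exactly controlling $\Vert\na^A(\wt R\xi)\Vert_{p,\lam}$, because $\wt R\xi$ involves $d_A^*\hhat R\xi$, whose $\na^A$-derivative is not directly bounded by the $W^{1,p}$-estimates on the local inverses $R_i$ (those give $\Vert R_i\xi\Vert_{1,p,A}$, i.e., one derivative, so $d_A^*R_i\xi$ is only in $L^p$). The way around this is to differentiate the defining relation $L_u^*L_u\,(L_u^*L_u)^{-1}(\xi-d_A^*\hhat R\xi)=\xi-d_A^*\hhat R\xi$ and the identity $d_A^*(-\hhat R\xi)+L_u^*\wt R\xi=\xi$ and extract $\na^A\wt R\xi$ from terms of the form $(\na^A\text{ of bounded tensors})\cdot\wt R\xi$ plus $\na^A\hhat R\xi$ plus lower order; the genuinely new derivative that appears, $\na^A d_A^*\hhat R\xi$, gets paired (via integration against the structure of $\im L_u$) with $L_u$, and since $\wt R\xi\in\im L_u$ one uses $\na^A(L_u\cdot(\ldots))=(\na L_u)(d_Au)(\ldots)+L_u\na^A(\ldots)$ to move the derivative onto the $(L_u^*L_u)^{-1}(\xi-d_A^*\hhat R\xi)$ factor, at which point a further application of $L_w^*R=\id$ closes the estimate. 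I would organize this as: (i) establish the weight-comparability and finite-overlap bookkeeping; (ii) prove $\Vert(-\hhat R\xi,\wt R\xi)\Vert_\infty+\Vert d\mu(u)\wt R\xi+\hhat R\xi\Vert_{p,\lam}\lesssim\Vert\xi\Vert_{p,\lam}$ directly; (iii) prove the $L^p_\lam$ bound on $\na^A(-\hhat R\xi,\wt R\xi)$ by the differentiation-of-the-identity argument above, with the cutoff-derivative contributions absorbed via the $\ell^p$-sum. Together these give $\Vert R\xi\hhat\Vert_{w,p,\lam}\lesssim\Vert\xi\Vert_{p,\lam}$, which is Claim \ref{claim:ze X w} and hence Theorem \ref{thm:L w * R}.
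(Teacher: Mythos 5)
Your plan correctly identifies the overall structure (finite-overlap patching, weight comparability on the cells $\Om_i$, separate estimation of the $L^\infty$, weighted $L^p$, and weighted derivative pieces of $\Vert\cdot\hhat\Vert_{w,p,\lam}$) and it correctly flags the one genuinely delicate point: controlling $\Vert\na^A(\wt R\xi)\Vert_{p,\lam}$ requires a derivative of $\wt\xi:=\xi-d_A^*\hhat R\xi$, which is not visibly available from the $W^{1,p}$ bounds on the local right inverses $R_i$, since $\xi$ itself is only $L^p_\lam$. However, the resolution you propose does not close this gap. You write that the problematic term $\na^A d_A^*\hhat R\xi$ ``gets paired with $L_u$'' and that ``a further application of $L_w^*R=\id$ closes the estimate,'' but this is circular: that identity is precisely what defines $\wt R$, and differentiating it or reusing it cannot manufacture the missing derivative on $\xi$. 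Moving the derivative onto $(L_u^*L_u)^{-1}\wt\xi$ via Lemma \ref{le:na A L u} is the right manipulation, but it only reduces the problem to bounding $\Vert d_A\wt\xi\Vert_{p,\lam}$, which still seems to require $\na^A\xi$.

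The missing ingredient is a simple algebraic cancellation that you have not exploited. Because each $R_i$ is an exact local right inverse, $d_A^*\al_i=\xi$ on $\Om_i$ (where $\al_i:=R_i(\xi|_{\Om_i})$), so from $\hhat R\xi=\sum_i\rho_i\al_i$ and $\sum_i\rho_i\equiv 1$ one gets, by Leibnitz,
\[\wt\xi \;=\; \xi-d_A^*\hhat R\xi\;=\;\sum_{i\in\Z}*\big((d\rho_i)\wedge *\al_i\big).\]
In other words, $\wt\xi$ is not ``$\xi$ minus something of the same roughness''; the rough part of $\xi$ is exactly cancelled, and what remains is a finite-overlap sum of smooth cutoffs hit by the one-forms $\al_i$, each of which \emph{is} controlled in $W^{1,p}$ by $\Vert R_i\Vert$. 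Applying $d_A$ to this sum produces only $\na^A\al_i$ and $d^2\rho_i\cdot\al_i$ terms, so $\Vert d_A\wt\xi\Vert_{p,\lam}\lesssim\Vert\xi\Vert_{p,\lam}$ follows from the uniform bounds on $R_i$ and the weight comparability, with no derivative of $\xi$ needed. This is the content of the paper's Claim \ref{claim:d A * wt al}, and it is the input that lets the identity from Lemma \ref{le:na A L u} (expressing $L_u^*L_u\,d_A\eta$ in terms of $d_A\wt\xi$, $\rho(d_Au,L_u\eta)$, and $L_u^*\na_{d_Au}X_\eta$) actually close, together with $\Vert\wt\xi\Vert_\infty\lesssim\Vert\xi\Vert_{p,\lam}$ from the twisted Morrey inequality (Lemma \ref{le:infty 1 p A}) and $|d_Au|\in L^p_\lam$. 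Until you make the cancellation $\wt\xi=\sum_i*(d\rho_i\wedge*\al_i)$ explicit, step (iii) of your plan is not a proof.

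One smaller point: you assert $|d\mu(u)\wt R\xi|$ is controlled by $\de|\wt R\xi|$, invoking that $\im L_u$ is approximately in $\ker d\mu(u)$ near $\mu^{-1}(0)$. This extra precision is unnecessary and slightly misleading; the crude bound $|d\mu(u)\wt R\xi|\le\Vert d\mu\Vert_{L^\infty(\BAR{u(P)})}\,c^{-1}|\wt\xi|$ already does the job, since $\wt\xi\in L^p_\lam$ is available. Also, you should explicitly record the term $\Vert\PR^u\wt R\xi\Vert_{p,\lam}$, which appears in $\Vert\cdot\hhat\Vert_{w,p,\lam}$: since $\wt R\xi$ takes values in $\im L_u$, $\PR^u\wt R\xi=\wt R\xi$ and the bound $|\wt R\xi|\le c^{-1}|\wt\xi|$ from Remark \ref{rmk:c} suffices.
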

\begin{pf}[Proof of Claim \ref{claim:ze X w}] We choose a constant $C_3$ so big that 
\begin{equation}\label{eq:sup C 3}\sup_{z\in \Om_i}\lan z\ran^{p\lam}\leq C_3\inf_{z\in \Om_i}\lan z\ran^{p\lam},\quad\forall i\in\Z.
\end{equation}
For a weakly differentiable section $\xi:\C\to \g_P$ we denote 
\[\Vert\xi\Vert_{1,p,\lam,A}:=\Vert\xi\Vert_{p,\lam}+\Vert d_A\xi\Vert_{p,\lam}.\]
\begin{claim}\label{claim:d A * wt al} There exists a constant $C_4$ such that 
\[\Vert \xi-d_A^*\hhat R\xi\Vert_{1,p,\lam,A}\leq C_4\Vert\xi\Vert_{p,\lam},\quad\forall\xi\in\Ga^p_\lam(\g_P).\]
\end{claim}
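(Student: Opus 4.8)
The claim to prove is Claim~\ref{claim:d A * wt al}: that $\Vert\xi-d_A^*\hhat R\xi\Vert_{1,p,\lam,A}\leq C_4\Vert\xi\Vert_{p,\lam}$ for every $\xi\in\Ga^p_\lam(\g_P)$. The plan is to estimate the two pieces $\Vert\xi-d_A^*\hhat R\xi\Vert_{p,\lam}$ and $\Vert d_A(\xi-d_A^*\hhat R\xi)\Vert_{p,\lam}$ separately, reducing everything to the local right-inverse bounds \eqref{eq:R i 1 p} for the operators $R_i$ and the property that the weight $\lan z\ran^{p\lam}$ is comparable across each patch $\Om_i$ (inequality \eqref{eq:sup C 3}).

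First I would recall that $\hhat R\xi=\sum_{i\in\Z}\rho_i\cdot R_i(\xi|_{\Om_i})$, where $\{\rho_i\}$ is a partition-of-unity-like family subordinate to $\{\Om_i\}$ with uniformly bounded derivatives, and that for each $i$ we have $d_A^*R_i(\xi|_{\Om_i})=\xi|_{\Om_i}$. Applying $d_A^*$ to $\hhat R\xi$ via the Leibniz rule produces $d_A^*\hhat R\xi=\sum_i\big(\rho_i\,d_A^*R_i(\xi|_{\Om_i})-\iota_{\nabla\rho_i}R_i(\xi|_{\Om_i})\big)$, where the first batch of terms sums (using $\sum_i\rho_i\equiv1$, which holds by the choice of $\rho$) to $\xi$, and the remainder is a ``commutator'' term $C(\xi):=\sum_i\iota_{\nabla\rho_i}R_i(\xi|_{\Om_i})$. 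Thus $\xi-d_A^*\hhat R\xi=C(\xi)$, and the whole estimate reduces to bounding $\Vert C(\xi)\Vert_{1,p,\lam,A}$ in terms of $\Vert\xi\Vert_{p,\lam}$. Because the $\Om_i$ have uniformly bounded overlap (each point lies in at most a fixed finite number $\le 4$ of them) and $|\nabla\rho_i|$ is uniformly bounded, a standard patched-norm argument gives $\Vert C(\xi)\Vert_{p,\lam}^p\lesssim\sum_i\lan z_i\ran^{p\lam}\Vert R_i(\xi|_{\Om_i})\Vert_{L^p(\Om_i)}^p$ (using \eqref{eq:sup C 3} to pull the weight out of each patch and put it back), and similarly for the $d_A$-derivative one gets the analogous sum with $\Vert R_i(\xi|_{\Om_i})\Vert_{1,p,\Om_i,A}^p$, since $d_A\iota_{\nabla\rho_i}R_i=\iota_{\nabla\rho_i}d_A R_i+\iota_{\nabla\nabla\rho_i}R_i$ and both $\nabla\rho_i$ and $\nabla\nabla\rho_i$ are uniformly bounded. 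Invoking \eqref{eq:R i 1 p} (for $i\ne0$, uniformly with constant $C_1$) and the boundedness of $R_0$ converts each patch term into $\Vert\xi\Vert_{L^p(\Om_i)}^p$, and re-summing with \eqref{eq:sup C 3} and finite overlap yields $\Vert C(\xi)\Vert_{1,p,\lam,A}\lesssim\Vert\xi\Vert_{p,\lam}$, which is the claim with a suitable $C_4$.

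I expect the main obstacle to be bookkeeping rather than anything deep: one must be careful that the weighted $L^p$-norm genuinely ``localizes'' to a sum over the patches $\Om_i$ with a uniform constant — this requires the uniform comparability \eqref{eq:sup C 3} of $\lan z\ran^{p\lam}$ on each patch (which holds because the patches have uniformly bounded diameter) together with the uniformly finite overlap of the cover, and the uniform bound $C_1$ on the local right inverses $R_i$ coming from the curvature smallness \eqref{eq:F A N} on $\C\wo(-N,N)^2$. A secondary subtlety is that $\na^A$ of the cutoff contributions involves $d_A$ hitting $R_i(\xi|_{\Om_i})$, so one genuinely needs the full $W^{1,p}$-bound \eqref{eq:R i 1 p} on the local right inverses and not merely their $L^p$-boundedness; this is exactly why Proposition~\ref{PROP:RIGHT}\eqref{prop:right:eps C} is stated with the $\Vert\cdot\Vert_{1,p,A}$-norm. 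Once Claim~\ref{claim:d A * wt al} is in hand, Claim~\ref{claim:ze X w} will follow by combining it with the definition \eqref{eq:wt R xi z} of $\wt R$, the invertibility and uniform boundedness of $(L_u^*L_u)^{-1}$ on $\mu^{-1}(\bar B_\de)\subseteq M^*$, and the definition \eqref{eq:ze w p lam} of $\Vert\cdot\Vert_{w,p,\lam}$ (together with the $L^\infty$-bound on $\hhat R\xi$ and $\wt R\xi$ obtained from the weighted Morrey embedding, which holds since $\lam>1-2/p$).
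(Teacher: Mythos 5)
Your proposal is correct and matches the paper's proof in essence: the same identity $\xi - d_A^*\hhat R\xi = \sum_i *\big((d\rho_i)\wedge *\al_i\big)$ arising from $\sum_i\rho_i\equiv 1$, the same patch-wise estimation using the uniform right-inverse bound (\ref{eq:R i 1 p}) and the weight comparability (\ref{eq:sup C 3}), and the same pointwise finite-overlap bound (each point lies in at most four patches). The treatment of the derivative term via the product rule, the uniform bounds on $\rho',\rho''$, and the full $W^{1,p}$-strength of (\ref{eq:R i 1 p}) also mirrors the paper exactly.
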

\begin{proof}[Proof of Claim \ref{claim:d A * wt al}] Let $\xi\in\Ga^p_\lam(\g_P)$. We denote $\al:=\hhat R\xi$ and $\al_i:=R_i(\xi|_{\Om_i})$. Since $\sum_{i\in\Z}\rho_i=1$, a straight-forward calculation shows that 
\begin{equation}\label{eq:d A * wt al}d_A^*\al=\xi-\sum_{i\in\Z}*\big((d\rho_i)\wedge*\al_i\big).\end{equation} 
Fix $z\in\C$. Then $\big|\big\{i\in\Z\,|\,\rho_i(z)\neq0\big\}\big|\leq4$. Hence equality (\ref{eq:d A * wt al}) implies that 
\[\big|\big(\xi-d_A^*\al\big)(z)\big|^p\leq 4^{p-1}\Vert\rho'\Vert_\infty^p\sum_{i\in\Z}|\al_i(z)|^p.\] 
Combining this with (\ref{eq:R i 1 p},\ref{eq:sup C 3}), we obtain 
\[\Vert\xi-d_A^*\al\Vert_{p,\lam}^p\leq 4^p\Vert\rho'\Vert_\infty^p\max\big\{C_1^p,\Vert R_0\Vert^p\big\}C_3\sum_{i\in\Z}\Vert\xi\Vert_{L^p_\lam(\Om_i)}^p.\] 
By (\ref{eq:d A * wt al}), we have
\begin{equation}\nn\label{eq:d A d A *}\big|d_A\big(\xi-d_A^*\al\big)(z)\big|^p\leq 8^{p-1}\max\big\{\Vert\rho''\Vert_\infty^p,\Vert\rho'\Vert_\infty^p\big\}\sum_{i\in\Z}\big(|\al_i|^p+|\na^A\al_i|^p\big)(z).\end{equation}
Using again (\ref{eq:R i 1 p}), Claim \ref{claim:d A * wt al} follows.
\end{proof}
We choose $C_4$ as in Claim \ref{claim:d A * wt al}. The following will be used in the proofs of Claims \ref{claim:hat R wt R} and \ref{claim:na A p lam} below. Let $\xi\in\Ga^p_\lam(\g_P)$. We abbreviate $\wt\xi:=\xi-d_A^*\hhat R\xi$. By the fact $\wt\xi|_{(-N,N)^2}=0$, Lemma \ref{le:infty 1 p A} in Appendix \ref{sec:proof:prop:right} (Twisted Morrey's inequality, using (\ref{eq:F A N})), the fact $\lam>1-2/p>0$ and Claim \ref{claim:d A * wt al}, we have
\begin{equation}\label{eq:wt v infty}\Vert\wt\xi\Vert_\infty\leq C_2\Vert\wt\xi\Vert_{1,p,\lam,A}\leq C_2C_4\Vert\xi\Vert_{p,\lam}.\end{equation} 
Recall that we have chosen $\de>0$ such that $\mu^{-1}(\bar B_\de)\sub M^*$. We define 
\begin{equation}\label{eq:c inf L x xi}c:=\inf\left\{\frac{|L_x\xi|}{|\xi|}\,\bigg|\,x\in\mu^{-1}(\bar B_\de),\,0\neq\xi\in\g\right\}.
\end{equation}
Recall that ${\PR}^u:TM^u\to TM^u$ denotes the orthogonal projection onto $(u^*\im L)/G$. Claim \ref{claim:ze X w} is now a consequence of the following three claims. 
\begin{claim}\label{claim:hat R wt R} We have
\[\sup\Big\{\big\Vert R\xi\big\Vert_\infty+\big\Vert |d\mu(u)\wt R\xi|+|{\PR}^u \wt R\xi|+|\hhat R\xi|\big\Vert_{p,\lam}\,\big|\,\xi\in\Ga^p_\lam(\g_P):\,\Vert\xi\Vert_{p,\lam}\leq1\Big\}<\infty.\]
\end{claim}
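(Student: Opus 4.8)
\textbf{Plan for the proof of Claim \ref{claim:hat R wt R}.} The claim asserts a uniform bound (over the unit ball in $\Ga^p_\lam(\g_P)$) on three quantities built from $R\xi = (-\hhat R\xi, \wt R\xi)$: the sup-norm $\Vert R\xi\Vert_\infty$, and the weighted $L^p_\lam$-norms of $|d\mu(u)\wt R\xi|$, $|{\PR}^u\wt R\xi|$, and $|\hhat R\xi|$. The strategy is to treat the $\hhat R$-contribution and the $\wt R$-contribution separately, using the patching structure for the former and the explicit formula \eqref{eq:wt R xi z} together with the pointwise bound \eqref{eq:wt v infty} for the latter.

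First I would handle the terms involving $\hhat R\xi = \sum_{i\in\Z}\rho_i R_i(\xi|_{\Om_i})$. Fix $\xi$ with $\Vert\xi\Vert_{p,\lam}\le 1$. Since at each point $z$ at most four of the cutoffs $\rho_i$ are nonzero, and since each $R_i$ is bounded with a uniform constant ($C_1$ for $i\ne 0$ by \eqref{eq:R i 1 p}, and $\Vert R_0\Vert$ for $i=0$), combining the local $W^{1,p}$-bounds $\Vert R_i(\xi|_{\Om_i})\Vert_{1,p,\Om_i,A}\le C_1\Vert\xi\Vert_{L^p(\Om_i)}$ with the comparability \eqref{eq:sup C 3} of the weight over each $\Om_i$ gives $\Vert\hhat R\xi\Vert_{p,\lam}^p \lesssim \sum_i \Vert\xi\Vert_{L^p_\lam(\Om_i)}^p \lesssim \Vert\xi\Vert_{p,\lam}^p$ (the finite overlap of the $\Om_i$ makes the sum of local norms comparable to the global one). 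The sup-norm bound on $\hhat R\xi$ follows from the twisted Morrey inequality (Lemma \ref{le:infty 1 p A}) applied on each $\Om_i$ — since $p>2$ and $\Vert F_A\Vert_{L^p(\Om_i)}\le\eps$ — together with the same finite-overlap/weight-comparability bookkeeping; this controls $\Vert\hhat R\xi\Vert_\infty$, hence the $\hhat R$-part of $\Vert R\xi\Vert_\infty$.

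Next I would bound the $\wt R\xi$-terms. On $(-N,N)^2$ we have $\wt R\xi = 0$, so everything is trivial there. On $\C\wo(-N,N)^2$ the image of $u$ lies in $\mu^{-1}(\bar B_\de)\sub M^*$ (by the choice of $N>a$), so $L_u^*L_u$ is invertible with $\Vert(L_u^*L_u)^{-1}L_u\Vert$ and $\Vert L_u\Vert$ bounded in terms of $c^{-1}$ from \eqref{eq:c inf L x xi} and compactness of $\BAR{u(P)}$; thus $|\wt R\xi| \lesssim |\wt\xi|$ pointwise, where $\wt\xi := \xi - d_A^*\hhat R\xi$. The sup-norm bound $\Vert\wt\xi\Vert_\infty \le C_2C_4\Vert\xi\Vert_{p,\lam}$ is exactly \eqref{eq:wt v infty}, giving $\Vert\wt R\xi\Vert_\infty \lesssim \Vert\xi\Vert_{p,\lam}$ and completing the bound on $\Vert R\xi\Vert_\infty$. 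For the weighted-$L^p$ terms: $d\mu(u)\wt R\xi$ and ${\PR}^u\wt R\xi$ are both pointwise bounded by $C\,|\wt\xi|$ (using boundedness of $d\mu$ on the compact set $\BAR{u(P)}$ and that ${\PR}^u$ has operator norm $1$), and we also have the a-priori orthogonality ${\PR}^u L_u = L_u$, so in fact ${\PR}^u\wt R\xi = \wt R\xi$; in any case $\Vert\,|d\mu(u)\wt R\xi| + |{\PR}^u\wt R\xi|\,\Vert_{p,\lam} \lesssim \Vert\wt\xi\Vert_{p,\lam} \le \Vert\wt\xi\Vert_{1,p,\lam,A} \le C_4\Vert\xi\Vert_{p,\lam}$ by Claim \ref{claim:d A * wt al}. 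Together with the $\hhat R$-estimate from the previous paragraph, this establishes the claimed uniform bound.

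The only mild subtlety — the step I would be most careful about — is the bookkeeping that turns the \emph{local} $W^{1,p}$-bounds on $R_i(\xi|_{\Om_i})$ into a \emph{global} weighted bound: one must use that the $\Om_i$ have uniformly bounded overlap, that the cutoff derivatives $\Vert\rho'\Vert_\infty$ are uniformly bounded, and that the weight $\lan z\ran^{p\lam}$ is comparable to a constant on each $\Om_i$ (inequality \eqref{eq:sup C 3}), so that $\sum_i\Vert\cdot\Vert_{L^p_\lam(\Om_i)}^p$ is controlled by $\Vert\cdot\Vert_{p,\lam}^p$ up to a fixed constant. This is routine but is where the uniformity in $\xi$ genuinely comes from; everything else is a pointwise estimate followed by an application of \eqref{eq:wt v infty} or Claim \ref{claim:d A * wt al}.
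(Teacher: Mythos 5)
Your proposal follows essentially the same route as the paper's proof: bound $\Vert\wt R\xi\Vert_\infty$ and the weighted $L^p$-terms involving $\wt R\xi$ using the pointwise formula (\ref{eq:wt R xi z}), the bounds in Remark \ref{rmk:c}, inequality (\ref{eq:wt v infty}), and Claim \ref{claim:d A * wt al}; and bound $\hhat R\xi$ in both norms via the finite-overlap structure, the uniform right-inverse bound (\ref{eq:R i 1 p}), and the weight-comparability estimate (\ref{eq:sup C 3}). The identification $\PR^u\wt R\xi = \wt R\xi$, which you derive from $\PR^u L_u = L_u$, coincides with the paper's use of the last equality of (\ref{eq:L x * L x c}).

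One small gap, however: you claim the sup-norm bound on $\hhat R\xi$ follows from Lemma \ref{le:infty 1 p A} "applied on each $\Om_i$ --- since $p>2$ and $\Vert F_A\Vert_{L^p(\Om_i)}\le\eps$". This curvature-smallness hypothesis holds only for $i\neq 0$: the estimate (\ref{eq:F A N}) controls $\Vert F_A\Vert_{L^p}$ on $\C\wo(-N,N)^2$, but $\Om_0\sub[-N-1,N+1]^2$ and there is no a priori bound $\Vert F_A\Vert_{L^p(\Om_0)}\le\eps$. So Lemma \ref{le:infty 1 p A} cannot be applied directly on $\Om_0$. The paper addresses this by introducing a separate constant $C_5 := \sup\big\{\Vert\al\Vert_\infty\,\big|\,\Vert\al\Vert_{1,p,A}\leq1\big\}$ for the compact domain $\Om_0$, whose finiteness is established by covering $\Om_0$ with finitely many small balls on which the curvature \emph{is} small enough for Lemma \ref{le:infty 1 p A} to apply. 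Your argument for $\Vert\hhat R\xi\Vert_\infty$ would need this extra step to handle the single exceptional cell.
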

\begin{proof}[Proof of Claim \ref{claim:hat R wt R}] Let $\xi\in\Ga^p_\lam(\g_P)$ be such that
\[\Vert\xi\Vert_{p,\lam}\leq1.\]
We denote $\wt\xi:=\xi-d_A^*\hhat R\xi$. Inequality (\ref{eq:wt v infty}), Remark \ref{rmk:c}, and the assumption $\Vert\xi\Vert_{p,\lam}\leq1$ imply that 
\begin{equation}\label{eq:wt R xi}\Vert \wt R\xi\Vert_\infty\leq c^{-1}\Vert\wt\xi\Vert_{L^\infty(\C\wo (-N,N)^2)}\leq c^{-1}C_2C_4,
\end{equation}
where $c$ is defined as in (\ref{eq:c inf L x xi}). We fix $i\in\Z$ and denote $\al_i:=R_i(\xi|_{\Om_i})$. For $i\neq0$ Lemma \ref{le:infty 1 p A} in Appendix \ref{sec:proof:prop:right}, (\ref{eq:R i 1 p}), the fact $\lam>1-2/p>0$, and the assumption $\Vert\xi\Vert_{p,\lam}\leq1$ imply that
\[\Vert \al_i\Vert_\infty\leq C_2\Vert\al_i\Vert_{1,p,\Om_i,A}\leq C_2C_1\Vert\xi\Vert_{L^p(\Om_i)}\leq C_2C_1.\] 
Furthermore, we have
\[\Vert\al_0\Vert_\infty\leq C_5\Vert R_0\Vert\,\Vert\xi|_{\Om_0}\Vert_p\leq C_5\Vert R_0\Vert,\]
where 
\[C_5:=\sup\Big\{\Vert\al\Vert_\infty\,\Big|\,\al\in\Ga^{1,p}_A\big(\wone(\g_P|_{\Om_0})\big):\,\Vert\al\Vert_{1,p,A}\leq1\Big\}.\]
An argument involving a finite cover of $\Om_0$ by small enough balls and Lemma \ref{le:infty 1 p A}, implies that $C_5<\infty$. It follows that
\begin{equation}\label{eq:hhat R xi infty}\Vert\hhat R\xi\Vert_\infty\leq\sup_i\Vert \al_i\Vert_\infty\leq \max\big\{C_2C_1,C_5\Vert R_0\Vert\big\}<\infty.
\end{equation}
We define 
\[C:=\max\left\{|d\mu(x)|\,\big|\,x\in\BAR{u(P)}\right\}.\] 
The definition (\ref{eq:wt R xi z}), the second inequality in (\ref{eq:L x * L x c}) (Remark \ref{rmk:c} in Appendix \ref{sec:add}), the statement of Claim \ref{claim:d A * wt al}, and the assumption $\Vert\xi\Vert_{p,\lam}\leq1$ imply that
\begin{equation}\label{eq:Vert d mu u}\big\Vert d\mu(u)\wt R\xi\big\Vert_{p,\lam}\leq Cc^{-1}\Vert\wt\xi\Vert_{L^p_\lam(\C\wo (-N,N)^2)}\leq Cc^{-1}C_4.
\end{equation}
By the last equality in (\ref{eq:L x * L x c}), we have
\[{\PR^u}\wt R\xi=L_u(L_u^*L_u)^{-1}\wt\xi.\]
Hence using again the second inequality in (\ref{eq:L x * L x c}) and the statement of Claim \ref{claim:d A * wt al}, we obtain
\begin{equation}\label{eq:Vert PR wt R xi}\big\Vert {\PR}^u \wt R\xi\big\Vert_{p,\lam}\leq c^{-1}C_4.
\end{equation}
For every $z\in\C$ there are at most four indices $i\in\Z$ for which $\rho_i(z)\neq0$. Therefore, we have 
\[\Vert\hhat R\xi\Vert_{p,\lam}^p\leq4^p\sum_i\Vert \al_i\Vert_{L^p_\lam(\Om_i)}^p.\] 
Using (\ref{eq:R i 1 p},\ref{eq:sup C 3}) and the assumption $\Vert\xi\Vert_{p,\lam}\leq1$, it follows that 
\[\Vert\hhat R\xi\Vert_{p,\lam}^p\leq4^p\max\{C_1^p,\Vert R_0\Vert^p\}C_3.\]
Combining this with (\ref{eq:wt R xi},\ref{eq:hhat R xi infty},\ref{eq:Vert d mu u},\ref{eq:Vert PR wt R xi}), Claim \ref{claim:hat R wt R} follows.  
\end{proof}
\begin{claim}\label{claim:na A p lam} We have 
\[\sup\big\{\Vert\na^A(\wt R\xi)\Vert_{p,\lam}\,\big|\,\xi\in\Ga^p_\lam(\g_P):\,\Vert\xi\Vert_{p,\lam}\leq1\big\}<\infty.\] 
\end{claim}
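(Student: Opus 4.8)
\emph{Proof plan for Claim \ref{claim:na A p lam}.} The strategy is to use the formula $\na^A(\wt R\xi)=\na^A\big(L_u(L_u^*L_u)^{-1}\wt\xi\big)$ on $\C\wo(-N,N)^2$ (and $\wt R\xi=0$ inside), where $\wt\xi:=\xi-d_A^*\hhat R\xi$, and to differentiate by Leibniz' rule. This produces two kinds of terms: one in which $\na^A$ hits the factor $L_u(L_u^*L_u)^{-1}$, and one in which it hits $\wt\xi$. For the second kind, $\na^A\wt\xi=d_A\xi-d_A(d_A^*\hhat R\xi)$; here I would bound $\Vert d_A\xi-d_A d_A^*\hhat R\xi\Vert_{p,\lam}$ using Claim \ref{claim:d A * wt al} (which already controls $\Vert\wt\xi\Vert_{1,p,\lam,A}$, hence in particular $\Vert d_A\wt\xi\Vert_{p,\lam}$). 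So that contribution is $\leq C_4\Vert\xi\Vert_{p,\lam}$, which is $\leq C_4$ under the normalization $\Vert\xi\Vert_{p,\lam}\leq1$.

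For the first kind of term, I need a pointwise bound on $\big|\na^A\big(L_u(L_u^*L_u)^{-1}\big)\big|$ on $\C\wo(-N,N)^2$. Since $u(P)$ has compact closure contained (on that region) in $\mu^{-1}(\bar B_\de)\subset M^*$, the map $x\mapsto L_x(L_x^*L_x)^{-1}$ is smooth on a neighborhood of that compact set, so its covariant derivative along $u$ is controlled pointwise by $|d_Au|$ times a constant $C$ depending only on $K=\BAR{u(P)}$ and $\de$. Thus this contribution is bounded by $C\,\big\Vert\,|d_Au|\,|\wt R\xi|\,\big\Vert_{p,\lam}\leq C\Vert\wt R\xi\Vert_\infty\Vert d_Au\Vert_{p,\lam}$. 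Now $\Vert\wt R\xi\Vert_\infty$ is bounded by $c^{-1}C_2C_4\Vert\xi\Vert_{p,\lam}$ by inequality (\ref{eq:wt R xi}) in the proof of Claim \ref{claim:hat R wt R}, and $\Vert d_Au\Vert_{p,\lam}<\infty$ since $w\in\BB^p_\lam$ forces $\Vert\sqrt{e_w}\Vert_{p,\lam}<\infty$ and $|d_Au|\leq\sqrt{2e_w}$. Hence this term is $\leq C\,c^{-1}C_2C_4\Vert d_Au\Vert_{p,\lam}$ under the normalization, which is a finite constant independent of $\xi$.

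Combining the two contributions, $\Vert\na^A(\wt R\xi)\Vert_{p,\lam}$ is bounded uniformly over $\{\xi:\Vert\xi\Vert_{p,\lam}\leq1\}$, which is exactly the assertion. One should also note that $\na^A(\wt R\xi)$ picks up no distributional contribution across the boundary of $(-N,N)^2$: by Lemma \ref{le:infty 1 p A} and (\ref{eq:wt v infty}) the section $\wt\xi$, hence $\wt R\xi$, is continuous, and it vanishes on $(-N,N)^2$, so no singular boundary term arises (alternatively, one works on $\C\wo(-N,N)^2$ directly, where everything is of class $W^{1,p}_\loc$, and observes the traces match).

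The main obstacle is simply organizing the Leibniz expansion cleanly and making sure the constant multiplying $|d_Au|$ in the pointwise bound on $\na^A\big(L_u(L_u^*L_u)^{-1}\big)$ is genuinely independent of $\xi$ — this is where compactness of $\BAR{u(P)}$ together with hypothesis (H) (freeness on $\mu^{-1}(0)$, hence invertibility of $L_x^*L_x$ near $\mu^{-1}(0)$) is used, via Lemma \ref{le:si} to locate $u(P|_{\C\wo(-N,N)^2})$ inside $\mu^{-1}(\bar B_\de)$. Everything else is routine estimation reusing Claim \ref{claim:d A * wt al}, inequality (\ref{eq:wt R xi}), and Remark \ref{rmk:c}.
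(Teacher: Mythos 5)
Your proof is correct and follows essentially the same route as the paper's. The paper organizes the Leibniz computation via the two identities in Lemma \ref{le:na A L u} (the precise product rules for $\na^A$ acting on $L_u\eta$ and $L_u^*v$, which introduce the tensors $\na_{d_Au}X_\eta$ and $\rho(d_Au,\cdot)$), setting $\eta:=(L_u^*L_u)^{-1}\wt\xi$, deriving $L_u^*L_u\,d_A\eta=d_A\wt\xi-\rho(d_Au,L_u\eta)-L_u^*\na_{d_Au}X_\eta$, and then inverting $L_u^*L_u$ via Remark \ref{rmk:c} to get $\Vert\na^A(\wt R\xi)\Vert_{p,\lam}\leq C_0\big(\Vert d_A\wt\xi\Vert_{p,\lam}+\Vert d_Au\Vert_{p,\lam}\Vert\wt\xi\Vert_\infty\big)$; you reach the same bound by treating $x\mapsto L_x(L_x^*L_x)^{-1}$ as a smooth bundle map on a neighborhood of the compact set $\BAR{u(P)}\cap\mu^{-1}(\bar B_\de)$ and invoking the generic pointwise estimate $|\na^A(\cdot)|\leq C|d_Au|$. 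Both then finish using Claim \ref{claim:d A * wt al} for the $d_A\wt\xi$-term, the finiteness of $\Vert d_Au\Vert_{p,\lam}$ (from $w\in\BB^p_\lam$), and the already-established uniform $L^\infty$ bound. One small imprecision: in your Leibniz expansion the term where $\na^A$ hits $L_u(L_u^*L_u)^{-1}$ is pointwise bounded by $C|d_Au|\,|\wt\xi|$, not $C|d_Au|\,|\wt R\xi|$; but since $\wt\xi=L_u^*\wt R\xi$ and $L_u^*$ is uniformly bounded on the compact set, the two are equivalent up to a constant, and in fact the paper uses $\Vert\wt\xi\Vert_\infty$ (via (\ref{eq:wt v infty})) rather than $\Vert\wt R\xi\Vert_\infty$ at this point. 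This is a harmless difference, not a gap.
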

\begin{proof}[Proof of Claim \ref{claim:na A p lam}] Let $\xi\in\Ga^p_\lam(\g_P)$. We define 
\[\wt\xi:=\xi-d_A^*\hhat R\xi,\quad\eta:=(L_u^*L_u)^{-1}\wt\xi,\] 
and $\rho\in\Om^2(M,\g)$ as in (\ref{eq:xi rho v v'}) in Appendix \ref{sec:add}. By Lemma \ref{le:na A L u} in the same appendix, we have 
\begin{equation}\label{eq:na A L u eta}\na^A(L_u\eta)=L_ud_A\eta+\na_{d_Au}X_\eta,\end{equation} 
where $X_{\xi_0}$ denotes the vector field on $M$ generated by an element $\xi_0\in\g$. Using the second part of Lemma \ref{le:na A L u} (with $v:=L_u\eta$), it follows that 
\begin{equation}\label{eq:d A wt xi}L_u^*L_ud_A\eta =d_A\wt\xi-\rho(d_Au,L_u\eta)-L_u^*\na_{d_Au}X_\eta.\end{equation}
We choose a constant $C$ so big that 
\[|\rho(v,v')|\leq C|v\Vert v'|,\quad|\na_vX_{\xi_0}|\leq C|v\Vert\xi_0|,\quad\forall x\in\mu^{-1}(\bar B_\de),\,v,v'\in T_xM,\,\xi_0\in\g.\]
We define $C_0:=\max\big\{c^{-1},3Cc^{-2}\big\}$. Since $\wt R\xi=L_u\eta$, equalities (\ref{eq:na A L u eta},\ref{eq:d A wt xi}) and Remark \ref{rmk:c} imply that 
\[\Vert\na^A(\wt R\xi)\Vert_{p,\lam}\leq C_0\big(\big\Vert d_A\wt\xi\big\Vert_{p,\lam}+\Vert d_Au\Vert_{p,\lam}\big\Vert\wt\xi\big\Vert_\infty\big).\] 
Since $\Vert d_Au\Vert_{p,\lam}<\infty$, Claim \ref{claim:d A * wt al} and (\ref{eq:wt v infty}) now imply Claim \ref{claim:na A p lam}.
\end{proof}
\begin{claim}\label{claim:na A al p lam} We have 
\[\sup\big\{\Vert\na^A(\hhat R\xi)\Vert_{p,\lam}\,\big|\,\xi\in\Ga^p_\lam(\g_P):\,\Vert\xi\Vert_{p,\lam}\leq1\big\}<\infty.\]
\end{claim}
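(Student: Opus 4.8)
\textbf{Proof proposal for Claim \ref{claim:na A al p lam}.}

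The plan is to estimate $\na^A(\hhat R\xi)$ pointwise using the partition of unity $(\rho_i)_{i\in\Z}$ and the already-established local estimates for the local right inverses $R_i$. Recall $\hhat R\xi=\sum_{i\in\Z}\rho_i\cdot\al_i$ where $\al_i:=R_i(\xi|_{\Om_i})$. By Leibnitz' rule,
\[
\na^A(\hhat R\xi)=\sum_{i\in\Z}\big((d\rho_i)\otimes\al_i+\rho_i\na^A\al_i\big).
\]
First I would use the fact that for each fixed $z\in\C$ at most four of the functions $\rho_i$ (hence at most four of the $d\rho_i$) are non-zero at $z$, together with the uniform bound $\Vert\rho'\Vert_\infty<\infty$, to get a pointwise estimate
\[
\big|\na^A(\hhat R\xi)(z)\big|^p\le 4^{p-1}\max\{\Vert\rho'\Vert_\infty^p,1\}\sum_{i\in\Z}\big(|\al_i(z)|^p+|\na^A\al_i(z)|^p\big).
\]
Then I would integrate against $\lan z\ran^{p\lam}$ and use (\ref{eq:sup C 3}) to replace $\lan z\ran^{p\lam}$ by its infimum over $\Om_i$ (up to the constant $C_3$), reducing the estimate to a sum of local norms $\Vert\al_i\Vert_{1,p,\Om_i,A}^p$ weighted by $\inf_{\Om_i}\lan\cdot\ran^{p\lam}$.

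Next I would bound each local term: for $i\ne0$ inequality (\ref{eq:R i 1 p}) gives $\Vert\al_i\Vert_{1,p,\Om_i,A}\le C_1\Vert\xi\Vert_{L^p(\Om_i)}$, and for $i=0$ boundedness of $R_0$ gives $\Vert\al_0\Vert_{1,p,\Om_0,A}\le\Vert R_0\Vert\,\Vert\xi\Vert_{L^p(\Om_0)}$. Inserting these and using once more that the family $(\Om_i)_{i\in\Z}$ has bounded overlap (each point of $\C$ lies in at most a fixed finite number of the $\Om_i$, since they are translates of two fixed compact sets placed at distinct lattice points), I obtain
\[
\Vert\na^A(\hhat R\xi)\Vert_{p,\lam}^p\le C\sum_{i\in\Z}\Vert\xi\Vert_{L^p_\lam(\Om_i)}^p\le C'\Vert\xi\Vert_{p,\lam}^p,
\]
where $C,C'$ depend only on $p,\lam,\Vert\rho'\Vert_\infty,C_1,C_3,\Vert R_0\Vert$ and the overlap bound, but not on $\xi$. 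This is exactly the asserted uniform bound. The argument is essentially a repetition of the one in the proof of Claim \ref{claim:d A * wt al}, with $d_A^*$ replaced by $\na^A$ and the extra $(d\rho_i)\otimes\al_i$ terms handled by the same overlap counting.

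The only point that needs a little care — and the main (modest) obstacle — is controlling the $i=0$ term, since $R_0$ was produced by Proposition \ref{PROP:RIGHT}(\ref{prop:right:exists}) without an explicit norm bound; but it \emph{is} a bounded operator, so $\Vert R_0\Vert<\infty$ suffices, exactly as in the proof of Claim \ref{claim:d A * wt al}. With this, Claims \ref{claim:hat R wt R}, \ref{claim:na A p lam} and \ref{claim:na A al p lam} together show that $R$ maps $\Ga^p_\lam(\g_P)$ boundedly into $\XXX_w^{p,\lam}$, proving Claim \ref{claim:ze X w} and hence Theorem \ref{thm:L w * R}.
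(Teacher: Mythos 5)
Your proposal is correct and essentially identical to the paper's proof: same Leibniz-rule decomposition $\na^A(\hhat R\xi)=\sum_i(\rho_i\na^A\al_i+d\rho_i\otimes\al_i)$, same use of the bounded-overlap observation (at most four $\rho_i$ are non-zero at any point), same invocation of (\ref{eq:sup C 3}), (\ref{eq:R i 1 p}) for $i\neq0$, and the boundedness of $R_0$ for $i=0$. The only (harmless) discrepancy is a constant: since each of the four overlapping indices contributes \emph{two} terms, the convexity inequality yields a factor $8^{p-1}$ (as in the paper) rather than your $4^{p-1}\max\{\Vert\rho'\Vert_\infty^p,1\}$, but this has no effect on finiteness of the supremum.
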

\begin{pf}[Proof of Claim \ref{claim:na A al p lam}] Let $\xi\in\Ga^p_\lam(\g_P)$ be such that $\Vert\xi\Vert_{p,\lam}\leq1$. We write $\al_i:=R_i(\xi|_{\Om_i})$. Then we have 
\[\na^A(\hhat R\xi)=\sum_i\big(\rho_i\na^A\al_i + d\rho_i\otimes\al_i\big).\] 
Setting 
\[C:=8^p\Vert\rho'\Vert_\infty^pC_3\max\{C_1^p,\Vert R_0\Vert^p\},\] 
it follows that 
\[\Vert\na^A(\hhat R\xi)\Vert_{p,\lam}^p\leq8^{p-1}\sum_i\Big(\Vert\na^A\al_i\Vert_{p,\lam}^p + \Vert\rho'\Vert_\infty^p\Vert\al_i\Vert_{p,\lam}^p\Big)\leq C.\] 
Here in the second inequality we used the fact $\Vert\rho'\Vert_\infty\geq1$, and (\ref{eq:R i 1 p}). This proves Claim \ref{claim:na A al p lam}, and completes the proof of Claim \ref{claim:ze X w} and hence of Theorem \ref{thm:L w * R}. 
\end{pf}
\end{pf}
\end{proof}

\appendix

\chapter[Auxiliary results]{Auxiliary results about vortices, weighted spaces, and other topics}\label{chap:vort weight other}
In the appendix some additional results are recollected, which were used in the proofs of the main theorems of this memoir. As always, we denote $\N_0:=\N\cup\{0\}$, by $B_r\sub\C$ the open ball of radius $r$, by $S^1_r\sub\C$ the circle of radius $r$, centered at $0$, and by $\A(P)$ the affine space of smooth connection one-forms on a smooth principal bundle $P$. Let $M,\om,G,\g,\lan\cdot,\cdot\ran_\g,\mu,J$ be as in Chapter \ref{chap:main}, and $(\Si,j)$ a Riemann surface, equipped with a compatible area form $\om_\Si$. For $\xi\in\g$ and $x\in M$ we denote by $L_x\xi\in T_xM$ the (infinitesimal) action of $\xi$ at $x$. Let $p>2$, $P$ be a (principal) $G$-bundle over $\Si$ of class $W^{2,p}_\loc$, $A$ a connection one-form on $P$ of class $W^{1,p}_\loc$, and $u:P\to M$ a $G$-equivariant map of class $W^{1,p}_\loc$. We denote $w:=(P,A,u)$, define the energy density $e_w$ as in (\ref{eq:e W}), and denote by
\[E(w):=\int_\Si e_w\om_\Si\]
the energy of $w$.
\section{Auxiliary results about vortices}\label{sec:vort}
Let $M,\om,G,\g,\lan\cdot,\cdot\ran_\g,\mu,J$ be as in Chapter \ref{chap:main}.%
\footnote{As always, we assume that hypothesis (H) is satisfied.}
 The following result was used in the proof of Proposition \ref{prop:E d} in Section \ref{SEC:EXAMPLE}. 
\begin{prop}\label{prop:E int u om} Let $w:=(P,A,u)$ be a smooth vortex over $\C$ with finite energy, such that the closure of the image $u(P)\sub M$ is compact. Then there exists a smooth section $\si:\C\to P$, such that 
\begin{equation}\label{eq:int B R E}\int_{B_R}(u\circ\si)^*\om\to E(w),\quad\textrm{as }R\to\infty.
\end{equation}
\end{prop}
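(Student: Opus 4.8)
The statement says that for a finite-energy smooth vortex $w=(P,A,u)$ over $\C$ with $\overline{u(P)}$ compact, there is a smooth section $\sigma:\C\to P$ such that the integrals $\int_{B_R}(u\circ\sigma)^*\omega$ converge to $E(w)$ as $R\to\infty$. The point is that $\int_\Sigma u^*\omega$ over the associated bundle is \emph{not} the same as $E(w)$ (the energy has the extra curvature and momentum-map terms), but the discrepancy is governed by $\mu$, and $\mu\circ u$ decays at infinity. The first thing I would do is recall the local coordinate description from Remark \ref{rmk:trivial}: since every $G$-bundle over $\C$ is trivializable, fix a smooth trivialization, i.e.\ a smooth section $\sigma:\C\to P$, and write $v:=u\circ\sigma:\C\to M$ together with the corresponding connection data $\Phi,\Psi\in C^\infty(\C,\g)$. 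In these coordinates the first vortex equation $\bar\partial_{J,A}u=0$ becomes equation (\ref{eq:dd s f L f Phi}) and the second (\ref{eq:F A mu}) becomes (\ref{eq:dd s Psi}) for $(\Phi,\Psi,v)$.

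The key computation is to relate the integrand $v^*\omega$ to the energy density $e_w$ using the vortex equations. Writing $d_A u = du + L_u A$ and decomposing into $(1,0)$ and $(0,1)$ parts, the first vortex equation says $(d_Au)^{0,1}=0$, so $e_w = |d_Au|^2 + |\mu\circ u|^2$ by Remark \ref{rmk:e vort} (here $e_w$ is the density w.r.t.\ $\omega_0$). A standard pointwise identity (the same one underlying \cite[Proposition 3.1]{CGS}) gives, in the trivialization,
\begin{equation}\label{eq:v om density}
v^*\omega = \Big(\tfrac12|d_Au|^2 + \langle\mu\circ v,\ \partial_s\Psi - \partial_t\Phi + [\Phi,\Psi]\rangle_\g\Big)\,ds\wedge dt + d\beta
\end{equation}
for a suitable one-form $\beta$ (coming from the $L_uA$-terms in $du+L_uA$), where I have used $\langle\cdot,\cdot\rangle_\g$ to identify $\g^*\cong\g$. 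Substituting the second vortex equation (\ref{eq:dd s Psi}), namely $\partial_s\Psi-\partial_t\Phi+[\Phi,\Psi]=-\mu(v)$, the middle term becomes $-|\mu\circ v|^2\,ds\wedge dt$. Hence $v^*\omega = \big(e_w - 2|\mu\circ v|^2\big)\,ds\wedge dt + d\beta$ pointwise (up to fixing the precise constants, which I will not grind out here). So
\begin{equation}\label{eq:int B R beta}
\int_{B_R}v^*\omega = \int_{B_R}e_w\,\omega_0 - 2\int_{B_R}|\mu\circ v|^2\,\omega_0 + \int_{S^1_R}\beta.
\end{equation}
The first term converges to $E(w)$ by definition of the energy. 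The second term converges to a finite limit since $|\mu\circ v|^2 \le e_w \in L^1(\C)$; but in fact I want it to vanish, which is where the real input is needed.

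The two remaining points — showing $\int_\C|\mu\circ v|^2 = 0$ and $\int_{S^1_R}\beta\to 0$ — are the main obstacle, and both follow from decay of the relevant quantities at $\infty$. By Lemma \ref{le:a priori} in this appendix (the a priori bound on the energy density, applicable because $E(w)<\infty$ and $\overline{u(P)}$ is compact), one gets $e_w(z)\to 0$ as $|z|\to\infty$, hence $|\mu\circ v(z)|\to 0$. Actually the clean way is: pick the section $\sigma$ more carefully. Rather than forcing $\mu\circ v\equiv 0$ (which is false in general), I would instead redo the bookkeeping so that the ``defect'' term in (\ref{eq:int B R beta}) is exactly a boundary term that decays. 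The honest version of the argument is the one indicated in the footnote after the identity $E(W)=\langle[\omega-\mu],[W]\rangle$ in Section \ref{sec:defo}: apply \cite[Proposition 3.1]{CGS} with $\Sigma:=S^2\cong\C\cup\{\infty\}$ after a smoothing argument at $\infty$. Concretely, Proposition \ref{prop:bar u} (continuity at $\infty$) together with the decay estimate $e_w(z)\le C|z|^{-4+\varepsilon}$ from Remark \ref{rmk:geometry} lets one extend $(P,A,u)$ (after a gauge transformation near $\infty$ making $A$ small) to a smooth object over $S^2$ whose restriction to $\C$ is gauge-equivalent to $w$; choosing $\sigma$ to be a section that is smooth up to $\infty$ in this picture, the form $\beta$ extends continuously to a neighborhood of $\infty$ with $\beta\to 0$ there (because $d_Au\to 0$ and the connection is small), so $\int_{S^1_R}\beta\to 0$, and similarly the integral of $|\mu\circ v|^2$ over $\C$ equals the corresponding quantity over $S^2$ minus the contribution at the puncture, which vanishes. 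Thus the right-hand side of (\ref{eq:int B R beta}) tends to $E(w)-0+0=E(w)$. The main technical care is in the smoothing/gauge-fixing step at $\infty$: one must choose the section $\sigma$ compatibly with a trivialization in which the connection decays fast enough (in $L^p_\lambda$ with $\lambda$ in the admissible range, cf.\ Theorem \ref{thm:reg bundle} and \cite[Corollary 1.4]{ZiA}) so that all the boundary terms over $S^1_R$ genuinely go to zero; once that trivialization is fixed, the rest is the pointwise identity (\ref{eq:v om density}) plus dominated convergence.
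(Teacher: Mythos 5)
The central problem is that your pointwise identity (\ref{eq:v om density}) is wrong, and this derails everything that follows. For a vortex the correct identity coming out of \cite[Proposition 3.1]{CGS} is, in a trivialization $\si$ with $v:=u\circ\si$,
\[
e_w\,\om_0 \;=\; v^*\om \;-\; d\big\lan\mu\circ v,\,\si^*A\big\ran_\g,
\]
i.e.\ the entire discrepancy between the energy density and $v^*\om$ is exact; there is no residual $|\mu\circ v|^2$ term. Your version leads, after (\ref{eq:int B R beta}), to $\int_\C v^*\om = E(w) - 2\int_\C|\mu\circ v|^2 + \lim_R\int_{S^1_R}\be$, so if the boundary term vanishes you are forced to conclude $\int_\C|\mu\circ v|^2 = 0$. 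This is false: take the Ginzburg-Landau degree-$1$ vortex, where $E(w)=\pi$, the image of $v$ equals the open unit ball $B_1$, and $\int_\C v^*\om_0=\pi$; but $\mu\circ v$ is nonzero near the zero of $v$, so $\int_\C|\mu\circ v|^2>0$. You noticed the tension ("I want it to vanish, which is where the real input is needed") but then sweep it under the rug in the final paragraph with the phrase "the integral of $|\mu\circ v|^2$ over $\C$ equals the corresponding quantity over $S^2$ minus the contribution at the puncture, which vanishes" — a positive convergent integral over $\C$ is unchanged by a one-point compactification, so this cannot make it zero. Once the identity is corrected the spurious term disappears and there is no problem to explain away.

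The second gap is the choice of section. Even with the corrected identity one still has to show that $\int_{S^1_R}\lan\mu\circ v,\si^*A\ran_\g\to 0$, and this requires quantitative control of $\si^*A$ on large circles, not just the abstract existence of "a trivialization in which the connection decays fast enough." You gesture at a smoothing/gauge-fixing argument near $\infty$ over $S^2$; this can work, but it is considerably more work than what the paper does and you have not carried it out. The paper stays on $\C$ throughout: Lemma \ref{le:int S R si A} constructs $\si$ in radial gauge, which gives the explicit bound $\int_{S^1_R}|\si^*A|\,R\ga\le\int_{B_R\wo B_1}|F_A|\,d^2x + \int_{S^1_1}|\si^*A|\ga$. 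Combined with the decay estimate $\sqrt{e_w(z)}\le C|z|^{-2+\eps}$ from \cite[Corollary 1.4]{ZiA} and the pointwise bounds $|F_A|,|\mu\circ u|\le\sqrt{e_w}$, this makes the boundary integral manifestly $O(R^{-2+2\eps})$ for $\eps<1$, and one is done. So while you have the right high-level ingredients (CGS identity, decay at $\infty$), the identity itself needs correcting, and the section must be chosen by radial gauge on $\C$ (or an equivalent concrete construction) rather than by appeal to an unexecuted $S^2$ extension.
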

This result is a consequence of \cite[Proposition 11.1]{GS}. For the convenience of the reader we include a proof here.%
\footnote{This proof is similar to the one of \cite[Proposition 11.1]{GS}, however, it relies on the isoperimetric inequality for the invariant symplectic action functional proved in \cite{ZiA} (Theorem 1.2) rather than the earlier inequality \cite[Lemma 11.3]{GS}.}
 We need the following. We denote by $\ga$ the standard angular one-form on $\R^2\wo\{0\}$.%
\footnote{By our convention this form integrates to $2\pi$ over any circle centered at the origin.}%
\begin{lemma}\label{le:int S R si A}Let $P$ be a smooth $G$-bundle over $\R^2$ and $A\in\A(P)$. Then there exists a smooth section $\si:\R^2\to P$, such that 
\begin{equation}\label{eq:int S 1 R |}\int_{S^1_R}|\si^*A|R\ga\leq\int_{B_R\wo B_1}|F_A|d^2x+\int_{S^1_1}|\si^*A|\ga,\end{equation}
where the norms are with respect to the standard metric on $\R^2$. 
\end{lemma}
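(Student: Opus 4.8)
\textbf{Proof plan for Lemma \ref{le:int S R si A}.}

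The plan is to construct the section $\si$ radially, by transporting an initial section on $S^1_1$ outward along rays. First I would fix any smooth section $\si_1:S^1_1\to P$ on the unit circle, and also fix a smooth section $\tau$ of $P$ over a neighbourhood of the punctured plane $\R^2\wo\{0\}$ (such a $\tau$ exists because $\R^2\wo\{0\}$ deformation retracts onto $S^1$ and $G$-bundles over $S^1$ are trivial, so in particular $P|_{\R^2\wo\{0\}}$ is trivial); write $\si_1=\tau\cdot g_1$ on $S^1_1$ for a smooth $g_1:S^1_1\to G$. Next I would define $\si$ on each ray $\{re^{i\theta}:r\geq 1\}$ by parallel transport of $\si_1(e^{i\theta})$ with respect to $A$ in the radial direction; concretely, writing $\si=\tau\cdot g$ on $\R^2\wo B_1$, the map $g:\R^2\wo B_1\to G$ is the solution of the ODE $\dd_r g = -(\tau^*A)(\dd_r)\,g$ along each ray with initial value $g(e^{i\theta})=g_1(e^{i\theta})$. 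Standard ODE theory with parameters gives that $g$, hence $\si$, is smooth, and by construction $(\si^*A)(\dd_r)\equiv 0$ on $\R^2\wo B_1$. Finally extend $\si$ smoothly to all of $B_1$ (possible since $P|_{\bar B_1}$ is trivial and we only need to match $\si_1$ on $S^1_1$); the extension into $B_1$ is irrelevant to the asserted inequality.

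With this $\si$ the key computation is to control the angular component of $\si^*A$ on each circle $S^1_R$. Since $(\si^*A)(\dd_r)=0$ on $\R^2\wo B_1$, for $z=Re^{i\theta}$ we have $|\si^*A|_z$ supported on the angular direction, and I would use the structure equation: the curvature of $A$ pulled back is $\si^*F_A = d(\si^*A) + \tfrac12[\si^*A\wedge\si^*A]$. In the radial gauge $(\si^*A)(\dd_r)=0$, a direct calculation (this is the radial-gauge identity, essentially $\dd_r$ of the angular component equals the $\dd_r\dd_\theta$-component of $\si^*F_A$) shows
\[
\dd_r\big((\si^*A)(\dd_\theta)\big) = (\si^*F_A)(\dd_r,\dd_\theta),
\]
because the quadratic bracket term involves $(\si^*A)(\dd_r)=0$. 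Integrating in $r$ from $1$ to $R$ along each ray and taking norms,
\[
\big|(\si^*A)(\dd_\theta)\big|_{Re^{i\theta}} \leq \big|(\si^*A)(\dd_\theta)\big|_{e^{i\theta}} + \int_1^R \big|(\si^*F_A)(\dd_r,\dd_\theta)\big|\,dr.
\]
Then I would integrate this inequality over $\theta\in[0,2\pi)$. The left side, after accounting for the factor $R$ relating $\dd_\theta$ on $S^1_R$ to unit-speed arclength and the form $\ga$ (which satisfies $\int_{S^1_R}R\ga = $ the arclength measure up to the $2\pi$ normalization), becomes $\int_{S^1_R}|\si^*A|\,R\ga$; the first term on the right becomes $\int_{S^1_1}|\si^*A|\,\ga$; and the double integral $\int_0^{2\pi}\!\int_1^R |(\si^*F_A)(\dd_r,\dd_\theta)|\,dr\,d\theta$ is bounded by $\int_{B_R\wo B_1}|F_A|\,d^2x$, since $dr\,d\theta$ and $d^2x = r\,dr\,d\theta$ differ by the factor $r\geq 1$ on this region and the $(\dd_r,\dd_\theta)$-component of a two-form is dominated (up to the same Jacobian factor) by its full norm. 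Assembling these three facts yields exactly (\ref{eq:int S 1 R |}).

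The main obstacle I anticipate is purely bookkeeping rather than conceptual: getting the normalizations consistent between the angular form $\ga$, the coordinate vector field $\dd_\theta$, the Jacobian factor $r$, and the pointwise norm $|\si^*A|$ (which is the full norm of the one-form, not just its angular component). I would handle this by working throughout in polar coordinates with the orthonormal frame $\{\dd_r,\; r^{-1}\dd_\theta\}$, noting that in the radial gauge $|\si^*A|_z = R^{-1}|(\si^*A)(\dd_\theta)|_z$ at $|z|=R$, and that $\int_{S^1_R}f\,R\ga = \int_0^{2\pi} f(Re^{i\theta})\,R\,d\theta$ by the convention on $\ga$; the factor $R$ on the left of (\ref{eq:int S 1 R |}) then cancels the $R^{-1}$ from the norm, leaving precisely $\int_0^{2\pi}|(\si^*A)(\dd_\theta)|\,d\theta$, which is what the integration of the pointwise estimate produces. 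A secondary minor point is the existence and smoothness of the radial-transport section, which follows from smooth dependence of ODE solutions on initial conditions and parameters; since $A$ is assumed smooth here this causes no difficulty.
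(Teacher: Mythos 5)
Your proposal is correct and is essentially the paper's proof. The paper also constructs $\si$ by putting $\si^*A$ into radial gauge on $\R^2\wo B_1$ and then integrating the resulting radial-gauge identity; the only cosmetic difference is that the paper passes to logarithmic cylindrical coordinates $(s,t)$ with $r=e^s$, which absorbs the Jacobian factor and spares the polar bookkeeping you flagged as the main obstacle, whereas you stay in $(r,\theta)$ and track the $r$-factors explicitly — both work, and your accounting of the normalizations (the cancellation between the $R$ in $R\ga$ and the $R^{-1}$ in $|\si^*A|$, and the exact match $\int|F_A|\,d^2x=\int\!\int|\dd_r a_\theta|\,dr\,d\theta$ in radial gauge) is accurate.
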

\begin{proof}[Proof of Lemma \ref{le:int S R si A}]\setcounter{claim}{0} We choose $\si$ such that
\begin{equation}\label{eq:si A x x}(\si^*A)_xx=0,\quad\forall x\in\R^2\wo B_1.
\end{equation}
(This means that $\si^*A$ is in radial gauge on $\R^2\wo B_1$. Such a section exists, since (\ref{eq:si A x x}) corresponds to a family of ordinary differential equations, one for each direction $x\in S^1$.) We identify $\R/(2\pi\Z)$ with $S^1$ and define 
\[\phi:\Si:=[0,\infty)\x S^1\to\C=\R^2,\quad\phi(s,t):=e^{s+it},\quad\Psi:=\big((\si\circ\phi)^*A\big)_t:\Si\to\g,\]
where the subscript ``$t$'' refers to the $t$-component of the one-form $(\si\circ\phi)^*A$. It follows from (\ref{eq:si A x x}) that the $s$-component of this form vanishes, and therefore, 
\[(\si\circ\phi)^*A=\Psi dt,\quad(\si\circ\phi)^*F_A=\dd_s\Psi\,ds\,dt.\]
Using the estimate
\[|\Psi(s,t)|\leq\int_0^s|\dd_s\Psi(s',t)|ds'+|\Psi(0,t)|,\]
it follows that 
\[\int_{\{s\}\x S^1}\big|(\si\circ\phi)^*A\big|_\Si dt=\int_{[0,s]\x S^1}\big|(\si\circ\phi)^*F_A\big|_\Si ds'dt+\int_{\{0\}\x S^1}\big|(\si\circ\phi)^*A\big|_\Si dt,\]
for every $s\geq0$, where the subscript ``$\Si$'' indicates that the norms are taken with respect to the standard metric on $\Si$. Inequality (\ref{eq:int S 1 R |}) follows from this by a straight-forward calculation. This proves Lemma \ref{le:int S R si A}.
\end{proof}
\begin{proof}[Proof of Proposition \ref{prop:E int u om}]\setcounter{claim}{0} Let $w:=(P,A,u)$ be as in the hypothesis and $R>0$. We choose a section $\si:\C\to P$ as in Lemma \ref{le:int S R si A}. Denoting by $E(w,B_R)$ the energy of $w$ over $B_R$, we have, by \cite[Proposition 3.1]{CGS},
\begin{eqnarray}\nn E(w,B_R)&=&\int_{B_R}\big((u\circ\si)^*\om-d\big\lan\mu\circ u\circ\si,\si^*A\big\ran_\g\big)\\
\label{eq:E w B R int}&=&\int_{B_R}(u\circ\si)^*\om-\int_{S^1_R}\big\lan\mu\circ u\circ\si,\si^*A\big\ran_\g.
\end{eqnarray}
Let $\eps>0$. By \cite[Corollary 1.4]{ZiA} there exists a constant $C_1$ such that
\begin{equation}\label{eq:sqrt e w z}\sqrt{e_w(z)}\leq C_1|z|^{-2+\eps},\quad\forall z\in\C\wo B_1.\end{equation}
Combining this with the estimate $|F_A|\leq\sqrt{e_w}$, and using (\ref{eq:int S 1 R |}), it follows that
\[\int_{S^1_R}|\si^*A|R\ga\leq C_2R^\eps+C_3,\quad\textrm{where }C_2:=\frac{2\pi C_1}\eps,\,C_3:=\int_{S^1_1}|\si^*A|\ga.\]
Combining this with the inequality $|\mu\circ u|\leq\sqrt{e_w}$ and (\ref{eq:sqrt e w z}), it follows that 
\[\left|\int_{S^1_R}\big\lan\mu\circ u\circ\si,\si^*A\big\ran_\g\right|\leq C_1C_2R^{-2+2\eps}+C_1C_3R^{-2+\eps}.\]
By choosing $\eps\in(0,1)$ and using (\ref{eq:E w B R int}), the convergence (\ref{eq:int B R E}) follows. This proves Proposition \ref{prop:E int u om}.
\end{proof}
The next result was used in the proofs of Propositions \ref{prop:soft} and \ref{prop:en conc} (Section \ref{sec:soft}).
\begin{lemma}[Bound on energy density]\label{le:a priori} Let $K\sub M$ be a compact subset. Then there exists a constant $E_0>0$ such that the following holds. Let $r>0$, $P$ be a smooth $G$-bundle over $B_r$, $p>2$, and $(A,u)$ a vortex on $P$ of class $W^{1,p}_\loc$, such that
\begin{eqnarray}
\nn &u(P)\sub K,&\\
\nn&E(w,B_r)\leq E_0.&
\end{eqnarray}
(where $E(w,B_r)$ denotes the energy of $w$ over the ball $B_r$). Then we have 
\begin{equation}
\nn e_w(0)\leq \frac8{\pi r^2}E(w,B_r).
\end{equation}
\end{lemma}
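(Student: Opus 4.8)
The plan is to establish the $\eps$-regularity estimate for vortices by a now-standard argument combining a mean value inequality with an a priori bound on the energy density near the center of a small ball. The statement asserts that if the total energy of a vortex $w$ on a ball $B_r$ is smaller than a threshold $E_0$ depending only on the compact set $K\supseteq u(P)$, then the energy density at the center is controlled by the average energy, $e_w(0)\le \tfrac{8}{\pi r^2}E(w,B_r)$. The key analytic input is a differential inequality for $e_w$ of the form $\Delta e_w \ge -c\,e_w^2$ (more precisely $\Delta e_w\ge -c(e_w^2 + e_w)$, but the $e_w$ term can be absorbed or handled on the bounded set $K$), which holds pointwise wherever $w$ is a vortex with image in $K$. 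This is a Bochner-type or Weitzenböck-type computation on the energy density of a vortex; it is essentially the content of the estimates used in \cite{CGMS} and \cite{GS}, and I would cite it rather than rederive it, since the memoir already leans on those references for exactly this kind of estimate.

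First I would reduce to the smooth case: by Theorem \ref{thm:reg gauge bdd} (or the regularity results in Appendix \ref{sec:smooth}), a $W^{1,p}_\loc$ vortex is gauge-equivalent on compact subsets to a smooth one, and both $e_w$ and the energy are gauge-invariant, so it suffices to prove the estimate for smooth vortices. Next, on the compact set $K$ one has a uniform bound $\Delta e_w \ge -C_0 e_w^2$ for a constant $C_0=C_0(K,J,\om,\mu)$, valid on all of $B_r$. Then I would invoke the Heinz-type / mean value inequality for nonnegative functions $f$ with $\Delta f \ge -C_0 f^2$: there exist constants $\delta>0$ and $c_1>0$ (depending only on $C_0$) such that if $\int_{B_\rho}f \le \delta$ then $f(0)\le \tfrac{c_1}{\rho^2}\int_{B_\rho}f$. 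This is exactly \cite[Lemma 4.3.2]{MS04} or its analogue; the constants $\delta$ and $c_1$ can be tracked explicitly, and choosing $E_0 := \delta$ and arranging $c_1 \le 8/\pi$ (or rescaling the statement so the numerical constant comes out as $8/\pi$ — the precise numeral $8/\pi$ presumably comes from the version of the mean value inequality the author has in hand) gives the claim. The scaling behavior $e_w \mapsto R^2 e_w\circ(R\cdot)$ under $z\mapsto Rz$, which turns a vortex into an $R$-vortex, is what makes the $r^{-2}$ weight appear; since we work with a fixed vortex (not rescaled) the plain ball $B_r$ version suffices, but the scaling is what guarantees the constant is genuinely universal.

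The main obstacle — or rather the main bookkeeping point — is making the threshold $E_0$ depend \emph{only} on $K$ and not on $r$. This is handled by the scale-invariance of the hypothesis: $\int_{B_\rho} f\le\delta$ is a scale-invariant condition, and $\Delta f \ge -C_0 f^2$ is scale-invariant up to the same rescaling, so the mean value inequality applies uniformly in $\rho$. The only place $r$ enters is through the final $1/r^2$ factor. A secondary subtlety is the $+e_w$ term in the true differential inequality (coming from the curvature/momentum-map part rather than the holomorphic part): on the bounded set $K$ one has $|\mu\circ u|$ and $|d\mu(u)|$ bounded, so this lower-order term is dominated by $C_0' e_w$ with $C_0'$ depending on $K$, and one either absorbs it into a slightly worse quadratic bound on $B_r$ (using $E(w,B_r)\le E_0$ small again) or appeals directly to the version of the mean value inequality allowing $\Delta f\ge -C_0(f^2+f)$, which is what \cite{CGMS,GS} in fact prove for vortices. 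I would phrase the final proof as: reduce to smooth vortices; quote the vortex Bochner inequality from \cite{CGMS}; quote the mean value inequality \cite[Lemma 4.3.2]{MS04}; choose $E_0$ accordingly; done.
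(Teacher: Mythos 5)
Your proposal follows essentially the same route as the paper's proof: reduce to smooth vortices via Theorem \ref{thm:reg gauge bdd}, invoke the vortex Bochner-type inequality $\Delta e_w\ge-c\,e_w^2$ from Step 1 of the proof of \cite[Proposition 11.1]{GS}, and apply the Heinz/mean-value lemma (Lemma \ref{le:Heinz}, which is exactly \cite[Lemma 4.3.2]{MS04} and is the source of the numeral $8/\pi$ and the threshold $E_0=\pi/(8c)$). The only presentational difference is that the paper replaces $K$ by $GK$ at the outset so that the reduction to the smooth/gauged representative stays within the compact set; your discussion of the lower-order $+e_w$ term is not actually needed here, since boundedness on $K$ lets the entire right-hand side be dominated by a single quadratic term, which is what the cited calculation in \cite{GS} provides.
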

For the proof of Lemma \ref{le:a priori} we need the following lemma.
\begin{lemma}[Heinz]\label{le:Heinz} Let $r>0$ and $c\geq0$. Then for every function $f\in C^2(B_r,\R)$ satisfying the inequalities
\begin{equation}
\nn f\geq0,\qquad \La f\geq-cf^2,\qquad \int_{B_r}f<\frac\pi{8c}
\end{equation}
we have
\begin{equation}
\nn f(0)\leq\frac8{\pi r^2}\int_{B_r}f.
\end{equation}
\end{lemma}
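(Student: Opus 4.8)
\textbf{Proof plan for Lemma \ref{le:Heinz} (Heinz's inequality).} The statement to prove is the classical mean-value-type estimate for nonnegative subsolutions of a semilinear elliptic inequality, and the plan is to follow Heinz's original argument via a differential inequality for the spherical averages of $f$. First I would introduce, for $0<\rho\le r$, the quantity
\[
g(\rho):=\int_{B_\rho}f\,d^2x,\qquad\text{so that}\qquad g'(\rho)=\int_{S^1_\rho}f\,,
\]
and the radial average $h(\rho):=\frac1{2\pi\rho}\int_{S^1_\rho}f$, which satisfies $g'(\rho)=2\pi\rho\,h(\rho)$. The key analytic input is that, since $\La f\ge -cf^2$, integrating over $B_\rho$ and using the divergence theorem gives
\[
\int_{S^1_\rho}\dd_\nu f=\int_{B_\rho}\La f\ge-c\int_{B_\rho}f^2,
\]
while $\int_{S^1_\rho}\dd_\nu f=\frac{d}{d\rho}\int_{S^1_\rho}f=\frac{d}{d\rho}\big(2\pi\rho\,h(\rho)\big)$. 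To control $\int_{B_\rho}f^2$ in terms of $g$ one uses the sub-mean-value property coming again from $\La f\ge -cf^2$ together with the smallness hypothesis $\int_{B_r}f<\pi/(8c)$: a bootstrap shows $f(x)\le \frac{8}{\pi(r-|x|)^2}\int_{B_r}f$ on $B_r$ would already be circular, so instead I would argue directly at the level of $g$.

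The cleaner route, which I would carry out in detail, is the one-variable comparison argument. Set $\phi(\rho):=\frac{1}{\pi\rho^2}g(\rho)$; this is the "density" whose limit as $\rho\to0$ is $f(0)$ (by continuity of $f$), and the goal is $\phi(r)\ge f(0)$, i.e. $\phi$ is (weakly) increasing, or at least $\phi(0^+)\le\phi(r)$. Differentiating,
\[
\phi'(\rho)=\frac{1}{\pi\rho^2}g'(\rho)-\frac{2}{\pi\rho^3}g(\rho)=\frac{1}{\pi\rho^3}\big(\rho g'(\rho)-2g(\rho)\big).
\]
So it suffices to show $\rho g'(\rho)\ge 2g(\rho)$ for all $\rho\in(0,r)$. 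Here is where the differential inequality enters: from $\frac{d}{d\rho}\big(2\pi\rho h(\rho)\big)=\int_{S^1_\rho}\dd_\nu f\ge-c\int_{B_\rho}f^2$ and $g'(\rho)=2\pi\rho h(\rho)$ one gets $g''(\rho)=\frac{g'(\rho)}{\rho}+\big(\text{something}\ge-c\int_{B_\rho}f^2\big)$, i.e.
\[
g''(\rho)\ge\frac{g'(\rho)}{\rho}-c\int_{B_\rho}f^2.
\]
Then I would estimate $\int_{B_\rho}f^2$: using Cauchy–Schwarz on circles and the sub-mean-value bound, or more simply the crude estimate $\int_{B_\rho}f^2\le \big(\sup_{B_\rho}f\big)\int_{B_\rho}f$ combined with a preliminary weak $\sup$ bound, one reduces $c\int_{B_\rho}f^2$ to a term controlled by $\frac{8c}{\pi}\big(\int_{B_r}f\big)\cdot\frac{g'(\rho)}{\rho}$ times a constant $<1$ thanks to $\int_{B_r}f<\pi/(8c)$. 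Feeding this back yields $g''(\rho)\ge (1-\varepsilon)\frac{g'(\rho)}{\rho}$ for some $\varepsilon<1$ absorbed into constants, from which $\rho\mapsto g'(\rho)\rho^{-1}$ and hence $\rho\mapsto g(\rho)\rho^{-2}$ is monotone; integrating from $0$ to $r$ and using $g(\rho)\rho^{-2}\to\pi f(0)$ gives $\pi f(0)\le g(r)r^{-2}$, which is exactly the claim.

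\textbf{Main obstacle.} The delicate point is making the absorption rigorous: to bound $c\int_{B_\rho}f^2$ one needs an a priori pointwise or integral bound on $f$, but the only hypothesis available is the $L^1$ smallness $\int_{B_r}f<\pi/(8c)$, so one must run a bootstrap/continuity argument on the auxiliary function (e.g. showing that the set of $\rho$ where $\phi(\rho)\le\phi(\rho')$ fails is empty, or iterating a weak $\sup$ estimate) rather than a one-step inequality. The constant $\pi/(8c)$ must be tracked precisely through the Cauchy–Schwarz step on circles so that the resulting coefficient in front of $\frac{g'(\rho)}{\rho}$ is strictly less than the critical value; getting the factor $8$ (rather than a worse constant) is the whole content of choosing the threshold $\pi/(8c)$, and this is where essentially all the care goes. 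Everything else — the divergence-theorem manipulations, the passage $g(\rho)\rho^{-2}\to\pi f(0)$, the final integration — is routine. (Alternatively, one may simply cite Heinz's paper, as the surrounding literature does; but the argument above is the one I would write out if a self-contained proof were wanted.)
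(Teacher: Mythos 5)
The paper's own ``proof'' is a one-line citation to \cite[Lemma 4.3.2]{MS04}, so there is nothing to compare with structurally on the paper's side; the relevant question is whether your self-contained sketch would go through.

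As written, it would not, and the reason is exactly at the step you flag as delicate. Your plan reduces to showing that $\phi(\rho)=\frac{1}{\pi\rho^2}\int_{B_\rho}f$ satisfies $\phi(0^+)\le\phi(r)$, which you want to extract from the inequality $g''(\rho)\ge g'(\rho)/\rho - c\int_{B_\rho}f^2$ by ``absorbing'' $c\int_{B_\rho}f^2$ into a small multiple of $g'(\rho)/\rho$. But $g'(\rho)/\rho=2\pi h(\rho)$ is an angular \emph{average} of $f$ on the circle of radius $\rho$, while $\int_{B_\rho}f^2$ is an area integral of the \emph{square}; there is no inequality of the form $\int_{B_\rho}f^2\lesssim\big(\int_{B_r}f\big)\cdot g'(\rho)/\rho$ available without already possessing a pointwise bound $\sup_{B_\rho}f\lesssim(r-\rho)^{-2}\int_{B_r}f$, which is precisely the conclusion of the lemma applied at interior points. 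You note this circularity yourself (``a bootstrap shows $f(x)\le\frac{8}{\pi(r-|x|)^2}\int_{B_r}f$ \dots would already be circular''), but the ``argue directly at the level of $g$'' alternative does not escape it: the needed ``preliminary weak $\sup$ bound'' does not come from the $L^1$ smallness hypothesis alone, and without it the absorption has no foundation. Moreover, the underlying claim that $\phi$ is monotone (or even just $\phi(0^+)\le\phi(r)$) is simply false in general for solutions of $\Delta f\ge -cf^2$; it is a genuine feature of the $c=0$ (subharmonic) case that does not survive the perturbation without extra input.

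The argument that actually works --- and is what \cite[Lemma 4.3.2]{MS04} carries out, following Heinz --- is structurally different. One considers the auxiliary function $\rho\mapsto(r-\rho)^2\sup_{B_\rho}f$ on $[0,r]$, which is continuous and vanishes at $\rho=r$, and lets $\rho^*$ be a point where it attains its maximum, $c^*:=\sup_{B_{\rho^*}}f$, $\delta^*:=(r-\rho^*)/2$. The maximizing property gives the a priori bound $f\le 4c^*$ on $B_{\delta^*}(x^*)$ (where $x^*$ realizes the sup), hence $\Delta f\ge -4cc^*f\ge -16c(c^*)^2$ there, and the elementary sub-mean-value inequality for $w:=f+\frac{a}{4}|x-x^*|^2$ with $a:=16c(c^*)^2$ gives $c^*\le\frac{1}{\pi\delta^2}\int_{B_\delta(x^*)}f+\frac{a\delta^2}{8}$ for $\delta\le\delta^*$. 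A short two-case analysis (according to whether $4c(\delta^*)^2 c^*\le 1$ or not), using the threshold $\int_{B_r}f<\pi/(8c)$ to rule out the bad case, then yields $c^*\le\frac{2}{\pi(\delta^*)^2}\int_{B_r}f$, and finally $f(0)\le\frac{(r-\rho^*)^2}{r^2}c^*=\frac{4(\delta^*)^2}{r^2}c^*\le\frac{8}{\pi r^2}\int_{B_r}f$. The key idea your sketch is missing is this maximization trick, which manufactures the a priori pointwise bound that the hypotheses do not directly provide; the differential inequality for spherical averages cannot substitute for it. If you do not want to reproduce the full two-case argument, the honest thing to do is what the paper does: cite \cite[Lemma 4.3.2]{MS04}.
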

\begin{proof}[Proof of Lemma \ref{le:Heinz}] This is \cite[Lemma 4.3.2]{MS04}.
\end{proof}
\begin{proof}[Proof of Lemma \ref{le:a priori}] Since $G$ is compact, we may assume w.l.o.g.~that $K$ is $G$-invariant. The result then follows from Theorem \ref{thm:reg gauge bdd} below, the calculation in Step 1 of the proof of \cite[Proposition 11.1.]{GS}, and Lemma \ref{le:Heinz}.
\end{proof}
Lemma \ref{le:a priori} has the following consequence.
\begin{cor}[Quantization of energy]\label{cor:quant} If $M$ is equivariantly convex at $\infty$
\footnote{as defined in Chapter \ref{chap:main} on p.~\pageref{convex}}%
, then we have
\[\inf_wE(w)>0,\] 
where $w=(P,A,u)$ ranges over all vortices over $\C$ with $P$ smooth and $(A,u)$ of class $W^{1,p}_\loc$ for some $p>2$, such that $E(w)>0$ and $\bar u(P)$ is compact.
\end{cor}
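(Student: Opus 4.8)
The statement to prove is Corollary~\ref{cor:quant}: under the assumption that $M$ is equivariantly convex at $\infty$, the infimum of $E(w)$ over all vortices $w=(P,A,u)$ over $\C$ with $P$ smooth, $(A,u)$ of class $W^{1,p}_\loc$ for some $p>2$, $E(w)>0$, and $\overline{u(P)}$ compact, is strictly positive. The basic mechanism is a non-concentration argument: equivariant convexity forces the image of any such vortex to lie in a \emph{fixed} compact set $K\subset M$ (independent of $w$), and then the Bound on energy density (Lemma~\ref{le:a priori}) gives a uniform lower bound on the amount of energy that must be present once any energy is present at all. I would argue by contradiction.

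\textbf{Step 1: uniform a priori compactness of the image.} First I would invoke Proposition~\ref{prop:bounded} in Appendix~\ref{sec:vort} (which is used in the proof of Theorem~\ref{thm:bubb} and in Claim~\ref{claim:MMM bar u i}): since $M$ is equivariantly convex at $\infty$, there exists a single $G$-invariant compact subset $K_0\subset M$ such that every finite-energy vortex $w=(P,A,u)$ over $\C$ with $\overline{u(P)}$ compact satisfies $u(P)\subset K_0$. This is exactly the statement invoked, for instance, in the proof of Claim~\ref{claim:MMM bar u i} (the $i\in T_1$ case). Thus the $K$ appearing in Lemma~\ref{le:a priori} can be chosen to be this fixed $K_0$, and the constant $E_0>0$ produced by that lemma depends only on $K_0$, hence only on the data $(M,\omega,G,\mu,J)$.

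\textbf{Step 2: apply the energy density bound to conclude.} Suppose for contradiction that $\inf_w E(w)=0$. Then there is a sequence $w_\nu=(P_\nu,A_\nu,u_\nu)$ of vortices over $\C$, each of the allowed type, with $0<E(w_\nu)\to 0$. By Step~1 we have $u_\nu(P_\nu)\subset K_0$ for all $\nu$. Choose $\nu$ large enough that $E(w_\nu)<E_0$, where $E_0=E_0(K_0)$ is the constant from Lemma~\ref{le:a priori}. Now the rescaling/translation freedom enters: for any point $p\in P_\nu$ lying over $z\in\C$, translating $w_\nu$ by $z$ (cf.\ Remark~\ref{rmk:trafo e vort} and the pullback construction there) produces a vortex whose energy density at $0$ equals $e_{w_\nu}(z)$ and whose image still lies in $K_0$; applying Lemma~\ref{le:a priori} on $B_r$ for this translated vortex and letting $r\to\infty$ (using $E(w_\nu,B_r)\le E(w_\nu)<E_0$) gives $e_{w_\nu}(z)\le \tfrac{8}{\pi r^2}E(w_\nu)\to 0$, so $e_{w_\nu}(z)=0$. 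Since $z$ was arbitrary, $e_{w_\nu}\equiv 0$, hence $E(w_\nu)=\int_\C e_{w_\nu}\,\omega_0=0$, contradicting $E(w_\nu)>0$. Therefore $\inf_w E(w)>0$.

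\textbf{Main obstacle.} The only delicate point is making sure Lemma~\ref{le:a priori} is legitimately applicable in the form needed: it is stated for a vortex $(A,u)$ of class $W^{1,p}_\loc$ on a smooth $G$-bundle over a \emph{ball} $B_r$ with image in a compact $K$ and energy on $B_r$ at most $E_0$, and it yields the pointwise bound at the center. So the argument requires (a) that the translated restriction $w_\nu|_{B_r}$ (to any ball centered anywhere) still meets these hypotheses --- which is immediate since translation is an isometry of $(\C,\omega_0)$, preserves the Sobolev class, and does not enlarge the image --- and (b) that $E(w_\nu,B_r)\le E(w_\nu)$, which is clear. Given Proposition~\ref{prop:bounded} (uniform image compactness) and Lemma~\ref{le:a priori}, there is nothing further to do; the corollary is a short consequence and I would present it essentially as above, noting that one may equivalently phrase Step~2 without translation by applying the density bound on $B_r(z)$ directly, since Lemma~\ref{le:a priori}'s proof is translation-invariant.
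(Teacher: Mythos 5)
Your proof is correct and is essentially the argument the paper has in mind: the paper's own proof of Corollary~\ref{cor:quant} is the one-line statement that it ``is an immediate consequence of Proposition \ref{prop:bounded} below and Lemma \ref{le:a priori},'' and your Steps 1--2 simply unpack that observation (uniform image bound from equivariant convexity, then the density bound from Lemma~\ref{le:a priori} applied on arbitrarily large balls centered at an arbitrary point, sending $r\to\infty$). The only stylistic difference is that you wrap the conclusion in a sequential contradiction, whereas one can state it directly as $E(w)<E_0\Rightarrow e_w\equiv0\Rightarrow E(w)=0$, giving $\inf_w E(w)\geq E_0$.
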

\begin{proof}[Proof of Corollary \ref{cor:quant}] This is an immediate consequence of Proposition \ref{prop:bounded} below and Lemma \ref{le:a priori}.
\end{proof}
This corollary implies that the minimal energy $E_1$ of a vortex over $\C$ (defined as in (\ref{eq:E V})) is positive, and therefore $\Emin>0$ (defined as in (\ref{eq:Emin})). 

The next lemma was used in the proofs of Proposition \ref{prop:cpt bdd} and Lemma \ref{le:conv e} (Section \ref{sec:comp}). It is a consequence of \cite[Lemma 9.1]{GS}. Let $(\Si,\om_\Si,j)$ be a surface with an area form and a compatible complex structure.
\begin{lemma}[Bounds on the momentum map component]\label{le:mu u} Let $c>0$, $Q\sub\Si\wo\dd\Si$ and $K\sub M$ be compact subsets, and $p>2$. Then there exist positive constants $R_0$ and $C_p$ such that the following holds. Let $R\geq R_0$, $P$ be a smooth $G$-bundle over $\Si$, and $(A,u)$ an $R$-vortex on $P$ of class $W^{1,p}_\loc$, such that 
\begin{eqnarray}\nn &u(P)\sub K,&\\
\nn &\Vert d_Au\Vert_{L^\infty(\Si)}\leq c,&\\
\nn &|\xi|\leq c|L_{u(p)}\xi|,\quad\forall p\in P,\forall \xi\in\g.&
\end{eqnarray}
Then 
\[\int_Q|\mu\circ u|^p\om_\Si\leq C_pR^{-2p},\qquad \sup_Q|\mu\circ u|\leq C_pR^{\frac2p-2},\]
where we view $|\mu\circ u|$ as a function from $\Si$ to $\R$. 
\end{lemma}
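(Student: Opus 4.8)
The statement is a pointwise and integral decay estimate for the momentum map component $\mu\circ u$ of an $R$-vortex, under the hypotheses of a uniform image bound ($u(P)\subseteq K$), a uniform bound on the covariant derivative ($\Vert d_Au\Vert_{L^\infty}\le c$), and a uniform lower bound for the infinitesimal action ($|\xi|\le c|L_{u(p)}\xi|$). As announced in the excerpt, the goal is to deduce this from \cite[Lemma 9.1]{GS}, so the plan is essentially to set up the geometry so that that result applies, and then to extract the two claimed bounds from it.

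\textbf{Step 1: Reduce to a differential inequality for $|\mu\circ u|$.} Using the second $R$-vortex equation $F_A+R^2(\mu\circ u)\om_\Si=0$ and the first equation $\bar\dd_{J,A}u=0$, one computes a Bochner-type (Weitzenb\"ock) identity for the function $h:=|\mu\circ u|^2:\Si\to\R$ (or for $|\mu\circ u|$ directly). The point is that $\La h$ (Laplacian with respect to $\om_\Si(\cdot,j\cdot)$) is controlled from below by a term of the form $R^2$ times a quadratic expression $|L_u^*L_u(\mu\circ u)|$-type quantity, minus lower-order contributions bounded in terms of $c$ (via $\Vert d_Au\Vert_\infty\le c$ and $u(P)\subseteq K$). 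The hypothesis $|\xi|\le c|L_{u(p)}\xi|$ is exactly what converts the $L_u^*L_u$-quadratic term into a genuine coercive term $\ge \mathrm{const}\cdot R^2 h$. This is precisely the setup of \cite[Lemma 9.1]{GS}, which gives, for $R$ large, an exponential-type decay/mean-value estimate for $|\mu\circ u|$ on compact subsets away from $\dd\Si$, with constants depending only on $c$, $Q$, $K$, $\Si$.

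\textbf{Step 2: Extract the two bounds.} From the mean-value inequality of \cite[Lemma 9.1]{GS} one gets a pointwise bound of the form $\sup_Q|\mu\circ u|\le C\,R^{-2}\Vert\mu\circ u\Vert_{L^2(Q')}$ (with $Q\subseteq Q'\subseteq\Si\wo\dd\Si$ a slightly larger compact set), or more directly $\sup_Q|\mu\circ u|\le C_p R^{2/p-2}$ after interpolating with the uniform bound $|\mu\circ u|\le\sqrt{e_w^R}/R$ coming again from the second vortex equation together with $\Vert d_Au\Vert_\infty\le c$ (which bounds $e_w^R$ pointwise up to the $\mu$-term, hence bounds $|\mu\circ u|$ by a constant). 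Feeding the pointwise bound $|\mu\circ u|\le C_p R^{2/p-2}$ back into $\int_Q|\mu\circ u|^p\om_\Si\le |Q|\cdot(\sup_Q|\mu\circ u|)^p$ — or, more sharply, running the $L^p$ version of the mean-value estimate — yields $\int_Q|\mu\circ u|^p\om_\Si\le C_pR^{-2p}$; note these two bounds are consistent, since the $L^p$ bound is the stronger one and the $\sup$ bound only loses the factor $|Q|^{1/p}$ and a power $R^{2/p}$ from Morrey/mean-value. One must be careful that $R_0$ and $C_p$ depend only on $(c,Q,K,p)$ and not on the individual vortex; this follows because all constants entering \cite[Lemma 9.1]{GS} and the Bochner identity depend only on that data.

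\textbf{Main obstacle.} The genuinely delicate point is the regularity/gauge issue: the pair $(A,u)$ is only of class $W^{1,p}_\loc$, whereas the Bochner argument and \cite[Lemma 9.1]{GS} are naturally stated for smooth vortices. The plan to handle this is to invoke the interior regularity and local gauge-fixing results already available in this appendix (Theorem \ref{thm:reg gauge bdd} and the bootstrapping behind it): over any compact subset of $\Si\wo\dd\Si$ the vortex is gauge equivalent to a smooth one, and all the quantities appearing in the statement ($|\mu\circ u|$, $|d_Au|$, the metrics) are gauge invariant, so it suffices to prove the estimate for smooth representatives. A secondary (but routine) obstacle is bookkeeping the dependence of constants — keeping track that enlarging $Q$ to $Q'$ to $Q''$ in successive mean-value estimates stays inside $\Si\wo\dd\Si$, and that $R_0$ can be chosen uniformly — which is exactly the kind of argument \cite[Lemma 9.1]{GS} is designed to supply.
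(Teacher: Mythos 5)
Your approach matches the paper's: the actual proof is the one-line observation that the statement follows from the proof of \cite[Lemma 9.1]{GS} together with Theorem \ref{thm:reg gauge bdd} to handle the $W^{1,p}_\loc$ regularity, which is exactly the route you outline (Bochner/mean-value argument from \cite{GS} applied after gauge-fixing to a smooth representative, all relevant quantities being gauge-invariant). One small remark on Step 2: the uniform bound on $|\mu\circ u|$ is simply $\max_K|\mu|<\infty$ from compactness of $K$; the chain you sketch via $|\mu\circ u|\le\sqrt{e_w^R}/R$ and ``bounding $e_w^R$ up to the $\mu$-term'' is circular, since for an $R$-vortex $e_w^R=\frac12(|d_Au|^2+2R^2|\mu\circ u|^2)$ already contains $R^2|\mu\circ u|^2$.
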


\begin{proof}[Proof of Lemma \ref{le:mu u}] This follows from the proof of \cite[Lemma 9.1]{GS}, using Theorem \ref{thm:reg gauge bdd} below. 
\end{proof}
The next result was also used in the proofs of Proposition \ref{prop:en conc} (Section \ref{sec:soft}) and Lemma \ref{le:a priori}, and will be used in the proof of Proposition \ref{prop:reg gauge}.
\begin{thm}[Regularity modulo gauge over a compact surface]\label{thm:reg gauge bdd} Assume that $\Si$ is compact. Let $P$ be a smooth $G$-bundle over $\Si$, $p>2$, and $(A,u)$ a vortex on $P$ of class $W^{1,p}$. Then there exists a gauge transformation $g\in W^{2,p}(\Si,G)$ such that $g^*w$ is smooth over $\Si\wo\dd\Si$. 
\end{thm}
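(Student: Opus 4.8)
\textbf{Proof plan for Theorem~\ref{thm:reg gauge bdd}.}
The statement is a standard elliptic-regularity-modulo-gauge result for the symplectic vortex equations over a compact surface $\Si$, and the plan is to reduce it to local regularity by first achieving a good gauge. First I would put the connection $A$ into a suitable gauge: since $A$ is of class $W^{1,p}$ with $p>2$, Uhlenbeck's theorem (in the local form over a ball, or the relative Coulomb gauge version as in \cite{CGMS}) gives, on each coordinate chart $U\sub\Si$, a gauge transformation $g_U\in W^{2,p}(U,G)$ such that $g_U^*A$ satisfies a Coulomb-type condition $d^*(g_U^*A)=0$ (with appropriate boundary conditions). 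Patching these local gauge transformations over a finite cover of the compact surface $\Si$, using a partition-of-unity / iteration argument exactly as in the patching constructions appearing in the proof of Proposition~\ref{prop:cpt bdd} above (and in \cite[Theorems 3.6 and A.3]{FrPhD}), produces a single $g\in W^{2,p}(\Si,G)$ so that $A':=g^*A$ is in Coulomb gauge locally.

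Next I would run the elliptic bootstrap on the pair $(A',u'):=g^*(A,u)$. In Coulomb gauge the first vortex equation $\bar\dd_{J,A'}u'=0$ becomes, in local coordinates, a perturbed Cauchy--Riemann equation for $u'$ whose inhomogeneous term is algebraic in $A'$ and $u'$; combined with the second vortex equation $F_{A'}+(\mu\circ u')\om_\Si=0$, which in Coulomb gauge reads (schematically) $dA'=-\tfrac12[A'\wedge A']-(\mu\circ u')\om_\Si$, one gets an elliptic system for $(A',u')$. Starting from $(A',u')\in W^{1,p}$ with $p>2$, Morrey's embedding gives $C^0$-bounds, and then one alternately applies elliptic estimates for $\bar\dd_J$ (to improve $u'$) and for $d\oplus d^*$ (to improve $A'$), gaining one derivative at each step; this is precisely the iteration used in \cite[Theorem 3.2]{CGMS} (Step 3 of that proof). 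Since here the area form $\om_\Si$ is fixed (there is no diverging factor $R_\nu^2$, cf.\ Remark~\ref{rmk:C infty}), the iteration does not break down and yields $(A',u')\in C^\infty$ on $\Si\wo\dd\Si$.

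Finally I would assemble these pieces. The only genuine subtlety is keeping track of the gauge regularity: one must check that the patched gauge transformation $g$ is indeed of class $W^{2,p}$ and no worse, which follows from Lemma~\ref{le:g smooth} (or \cite[Lemma A.5]{FrPhD}) relating the regularity of $g$ to that of $A$ and $g^*A$ --- once $A'=g^*A$ is shown smooth in the interior, this lemma even upgrades $g$ itself to being smooth in the interior, although only $g\in W^{2,p}$ is claimed. I expect the main obstacle to be purely bookkeeping: carefully setting up the finite cover of $\Si$, the partition of unity, and the patching so that the local Coulomb gauges glue to a global $W^{2,p}$ gauge transformation, and verifying that the boundary behaviour near $\dd\Si$ is harmless since regularity is only asserted over $\Si\wo\dd\Si$. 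The analytic core --- Uhlenbeck compactness/gauge fixing plus the $\bar\dd_J$-and-$d^*$ bootstrap --- is entirely standard and is quoted wholesale from \cite{CGMS}.
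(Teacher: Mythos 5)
The paper's proof of Theorem~\ref{thm:reg gauge bdd} is a one-line citation: it says the statement follows from the proof of \cite[Theorem 3.1]{CGMS}, substituting a version of the local slice theorem that allows for boundary (\cite[Theorem 8.1]{We}). Your proposal reconstructs exactly the argument behind that citation --- local Coulomb gauge via the slice theorem, elliptic bootstrap for the pair using both vortex equations, and a patching step --- so the approach is essentially the same; you also correctly flag the boundary issue, which is the one place the paper adds anything to the CGMS reference.

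The one thing I would push back on is the order of your patching and bootstrap steps. You propose to first patch the local Coulomb gauge transformations $g_U$ into a global $g\in W^{2,p}(\Si,G)$ and then bootstrap the pair $g^*(A,u)$. But the patched $g$ does not put $A$ in Coulomb gauge in any chart: on an overlap $U\cap U'$ the transition $g_{U'}^{-1}g_U$ is a priori only $W^{2,p}$, so after patching, $d^*(g^*A)$ is nonzero and only as regular as $A$ itself, and the bootstrap has nothing to start from. The standard order (and the one the paper implicitly uses both here and in the proof of Proposition~\ref{prop:reg gauge}) is the reverse: bootstrap \emph{locally}, in each $U$ in Coulomb gauge, to get $g_U^*(A,u)$ smooth on $U$; then observe via Lemma~\ref{le:g smooth} that the transition functions $g_{U'}^{-1}g_U$ are smooth on overlaps because they relate two smooth connections; and only then patch the $g_U$ (e.g.\ by the cutoff/iteration construction of Proposition~\ref{prop:cpt bdd} or Proposition~\ref{prop:reg gauge}) to a global $g\in W^{2,p}(\Si,G)$ with $g^*w$ smooth. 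With this reordering your plan is correct and coincides with the argument the paper delegates to \cite{CGMS} and \cite{We}.
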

\begin{proof}[Proof of Theorem \ref{thm:reg gauge bdd}] This follows from the proof of \cite[Theorem 3.1]{CGMS}, using a version of the local slice theorem allowing for boundary (see \cite[Theorem 8.1]{We}).
\end{proof}
The next result will be used in the proof of Proposition \ref{prop:bounded} below.
\begin{prop}[Regularity modulo gauge over $\C$]\label{prop:reg gauge} Let $R\geq0$ be a number, $P$ a smooth $G$-bundle over $\C$, $p>2$, and $w:=(A,u)$ an $R$-vortex on $P$ of class $W^{1,p}_\loc$. Then there exists a gauge transformation $g$ on $P$ of class $W^{2,p}_\loc$ such that $g^*w$ is smooth.
\end{prop}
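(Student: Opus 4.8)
The plan is to reduce the non-compact case to the compact case already established in Theorem \ref{thm:reg gauge bdd}, via a patching argument over an exhaustion of $\C$ by compact surfaces with boundary. First I would fix an exhausting sequence of closed disks $\bar B_1 \sub \bar B_2 \sub \cdots$ with $\bigcup_i B_i = \C$, together with slightly larger disks $\bar B_i' := \bar B_{i+1/2}$ so that each $\bar B_i$ is contained in the interior of $\bar B_i'$. Since every $G$-bundle over a disk is trivializable (of the appropriate Sobolev class), and the restriction $w|_{\bar B_i'}$ is an $R$-vortex of class $W^{1,p}$ over a compact surface with boundary, Theorem \ref{thm:reg gauge bdd} provides, for each $i$, a gauge transformation $h_i \in W^{2,p}(\bar B_i', G)$ such that $h_i^* w$ is smooth over the interior $B_i'$, hence in particular over a neighborhood of $\bar B_i$.

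Next I would patch the $h_i$ together inductively. On the overlap $\bar B_i$ (contained in both $B_i'$ and $B_{i+1}'$) both $h_i^* w$ and $h_{i+1}^* w$ are smooth, so the transition $k_i := (h_{i+1}|_{\bar B_i})^{-1} h_i$ satisfies $k_i^*(h_{i+1}^* w) = h_i^* w$ with both sides smooth. By a twisted elliptic-regularity / bootstrapping argument for the equation relating a gauge transformation to two smooth connections — the same mechanism used in Lemma \ref{le:g smooth} in Appendix \ref{sec:add} — the transformation $k_i$ is itself smooth on $\bar B_i$. I would then modify $h_{i+1}$ on $B_{i+1}'$: using a cutoff supported near $\dd \bar B_i$, replace $h_{i+1}$ by a new $W^{2,p}_\loc$ gauge transformation $\tilde h_{i+1}$ that agrees with $h_i$ on (a neighborhood of) $\bar B_{i-1}$, still makes $\tilde h_{i+1}^* w$ smooth on a neighborhood of $\bar B_{i+1}$, and coincides with $h_{i+1}$ near $\dd \bar B_{i+1}'$. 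The point is that multiplying a gauge transformation that makes the connection smooth by another \emph{smooth} gauge transformation preserves smoothness of the pulled-back connection, so the freedom to interpolate by a smooth factor (namely $k_i$, or rather a smooth interpolation between $k_i$ and the identity) does not destroy the smoothness gained on the inner region. Iterating, the sequence $\tilde h_i$ stabilizes on every compact set, and the pointwise limit $g := \lim_i \tilde h_i$ is a well-defined gauge transformation on $P$ of class $W^{2,p}_\loc$ such that $g^* w$ restricts to a smooth pair on every $B_i$, hence is smooth on all of $\C$. This closely parallels the patching construction carried out in the proof of Proposition \ref{prop:cpt bdd} in Section \ref{sec:comp} (which itself follows \cite[Theorems 3.6 and A.3]{FrPhD}).

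The main obstacle I anticipate is making the interpolation step fully rigorous: one must check that the cutoff modification $\tilde h_{i+1}$ can indeed be chosen in $W^{2,p}_\loc$ (multiplying two $W^{2,p}_\loc$ transformations and cutting off stays in $W^{2,p}_\loc$ since $p>2$ makes $W^{2,p}_\loc$ a Banach algebra in dimension two), that it genuinely agrees with $h_i$ on the inner disk so that the smooth region is not spoiled, and that the limit $g$ is a bona fide $W^{2,p}_\loc$ section of the gauge bundle rather than just a measurable map — this requires the convergence to be eventually constant on compacta, which the construction guarantees by design. A secondary technical point is the invocation of twisted regularity to upgrade $k_i$ from $W^{2,p}$ to $C^\infty$; this is where smoothness of both connections $h_i^*A$ and $h_{i+1}^*A$ feeds into an elliptic estimate of arbitrarily high order for $k_i$, and one should cite Lemma \ref{le:g smooth} for exactly this statement. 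Once these points are in place, no further analysis of the vortex equations themselves is needed — all the PDE content is packaged inside Theorem \ref{thm:reg gauge bdd}.
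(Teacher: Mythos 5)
Your proposal follows essentially the same route as the paper: exhaust $\C$ by compact disks, apply Theorem \ref{thm:reg gauge bdd} to get $W^{2,p}$ gauge transformations $h_i$ making $w$ smooth on each disk, observe via Lemma \ref{le:g smooth} that the overlap transitions $k_i$ are smooth, modify the $h_i$ so that the sequence stabilizes on compacta, and take the limit.

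The one place you diverge — and make your own life harder — is the interpolation step. You impose the extra requirement that the modified $\tilde h_{i+1}$ coincide with $h_{i+1}$ near $\dd\bar B_{i+1}'$, which forces you to construct a smooth group-valued homotopy from $k_i$ to the identity $\one$; this can be done since the disk is contractible and $G$ is connected, but it is unnecessary. The paper drops that boundary constraint entirely and avoids interpolating in $G$: it chooses a smooth retraction $\rho:\bar B_{\ell+2}\to B_{\ell+1}$ with $\rho=\id$ on $B_\ell$ and sets $g_{\ell+1}:=h\bigl((h^{-1}g_\ell)\circ\rho\bigr)$, where $h$ is the transformation from Theorem \ref{thm:reg gauge bdd} on $B_{\ell+2}$. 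Because $h^{-1}g_\ell$ is smooth on $B_{\ell+1}$ (by Lemma \ref{le:g smooth}, as both $g_\ell^*w$ and $h^*w$ are smooth there) and $\rho$ takes values in $B_{\ell+1}$, the factor $(h^{-1}g_\ell)\circ\rho$ is smooth on all of $\bar B_{\ell+2}$, so $g_{\ell+1}^*w=\bigl((h^{-1}g_\ell)\circ\rho\bigr)^*h^*w$ is smooth there; and $g_{\ell+1}=g_\ell$ on $B_\ell$ because $\rho=\id$ there. The precomposition-with-a-retraction trick achieves exactly the stabilization you want with no homotopy and no cutoff, and is worth adopting in place of your interpolation.
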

\begin{proof}[Proof of Proposition \ref{prop:reg gauge}]\setcounter{claim}{0}%
\footnote{This proof follows the lines of the proofs of \cite[Theorems 3.6 and A.3]{FrPhD}.}%
\begin{Claim}There exists a collection $(g_j)_{j\in\N}$, where $g_j$ is a gauge transformation over $B_{j+1}$ of class $W^{2,p}$, such that for every $j\in\N$, we have
\begin{eqnarray}\label{eq:g j w}&g_j^*w\textrm{ is smooth over }B_{j+1},\\
\label{eq:g j g j-1}&g_{j+1}=g_j\textrm{ over }B_j.&
\end{eqnarray}
\end{Claim}
\begin{proof}[Proof of the claim] By Theorem \ref{thm:reg gauge bdd} there exists a gauge transformation $g_1\in W^{2,p}(B_2,G)$ such that $g_1^*w$ is smooth. Let $\ell\in\N$ be an integer and assume by induction that there exist gauge transformations $g_j\in W^{2,p}(B_{j+1},G)$, for $j=1,\ldots,\ell$, such that (\ref{eq:g j w}) holds for $j=1,\ldots,\ell$, and (\ref{eq:g j g j-1}) holds for $j=1,\ldots,\ell-1$. We show that there exists a gauge transformation $g_{\ell+1}\in W^{2,p}(B_{\ell+2},G)$ such that 
\begin{eqnarray}\label{eq:g ell+1}&g_{\ell+1}^*w\textrm{ smooth over }B_{\ell+2},&\\
\label{eq:g ell+1 g ell}&g_{\ell+1}=g_\ell\textrm{ over }B_\ell.& 
\end{eqnarray}
We choose a smooth function $\rho:\bar B_{\ell+2}\to B_{\ell+1}$ such that $\rho(z)=z$ for $z\in B_\ell$. By Theorem \ref{thm:reg gauge bdd} there exists a gauge transformation $h\in W^{2,p}(B_{\ell+2},G)$ such that $h^*w$ is smooth over $\bar B_{\ell+2}$. We define
\[g_{\ell+1}:=h\big((h^{-1}g_\ell)\circ\rho\big).\]
Then $g_{\ell+1}$ is of class $W^{2,p}$ over $B_{\ell+2}$, and (\ref{eq:g ell+1 g ell}) is satisfied. Furthermore, $g_\ell^*w=(h^{-1}g_\ell)^*h^*w$ is smooth over $B_{\ell+1}$. Therefore, using smoothness of $h^*w$ over $B_{\ell+2}$, Lemma \ref{le:g smooth}(\ref{le:g k+1 p}) below implies that $h^{-1}g_\ell$ is smooth over $B_{\ell+1}$. It follows that
\[g_{\ell+1}^*w=\big((h^{-1}g_\ell)\circ\rho\big)^*h^*w\]
is smooth over $B_{\ell+2}$. This proves (\ref{eq:g ell+1}), terminates the induction, and concludes the proof of the claim.  
\end{proof}
We choose a collection $(g_j)$ as in the claim, and define $g$ to be the unique gauge transformation on $P$ that restricts to $g_j$ over $B_j$. This makes sense by (\ref{eq:g j g j-1}). Furthermore, (\ref{eq:g j w}) implies that $g^*w$ is smooth. This proves Proposition \ref{prop:reg gauge}. 
\end{proof}
The next result was used in the proof of Proposition \ref{prop:cpt bdd} (Section \ref{sec:comp}).
\begin{thm}[Compactness for vortex classes over compact surface]\label{thm:cpt cpt} Let $\Si$ be a compact surface (possibly with boundary), $\om_\Si$ an area form, $j$ a compatible complex structure on $\Si$, $P$ a $G$-bundle over $\Si$, $K\sub M$ a compact subset, $R_\nu\in[0,\infty)$, $p>2$, and $(A_\nu,u_\nu)$ an $R_\nu$-vortex on $P$ of class $W^{1,p}$, for every $\nu\in\N$. Assume that $R_\nu$ converges to some $R_0\in[0,\infty)$, and
\[u_\nu(P)\sub K,\quad\sup_\nu\Vert d_{A_\nu}u_\nu\Vert_{L^p(\Si)}<\infty.\]
Then there exist a smooth $R_0$-vortex $(A_0,u_0)$ on $P|(\Si\wo\dd\Si)$ and gauge transformations $g_\nu$ on $P$ of class $W^{2,p}$, such that $g_\nu^*(A_\nu,u_\nu)$ converges to $(A_0,u_0)$, in $C^\infty$ on every compact subset of $\Si\wo\dd\Si$. 
\end{thm}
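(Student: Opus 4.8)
\textbf{Proof proposal for Theorem \ref{thm:cpt cpt}.} The plan is to combine Uhlenbeck compactness with compact base with an elliptic estimate for the twisted Cauchy--Riemann operator $\bar\dd_{J,A}$, proceeding in two stages: first control the connections modulo gauge, then bootstrap the maps. First I would argue that the curvatures $F_{A_\nu}$ are uniformly bounded in $L^p(\Si)$. Indeed, from the rescaled second vortex equation $F_{A_\nu}+R_\nu^2(\mu\circ u_\nu)\om_\Si=0$ and the fact that $u_\nu(P)\sub K$ with $K$ compact, we get $|F_{A_\nu}|\le R_\nu^2\sup_K|\mu|\cdot|\om_\Si|$, and since $R_\nu\to R_0<\infty$ this is uniformly bounded; hence $\sup_\nu\Vert F_{A_\nu}\Vert_{L^p(\Si)}<\infty$. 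Applying Uhlenbeck compactness with compact base (Theorem \ref{thm:Uhlenbeck compact}), after passing to a subsequence there exist gauge transformations $h_\nu\in W^{2,p}(\Si,G)$ such that $h_\nu^*A_\nu$ converges weakly in $W^{1,p}$, and hence strongly in $C^0$ by Morrey and Arzel\`a--Ascoli (after a further subsequence), to some $W^{1,p}$-connection $A_0'$ on $\Si$.

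Next I would pass to the maps. Replacing $(A_\nu,u_\nu)$ by $h_\nu^*(A_\nu,u_\nu)$, we have $A_\nu\to A_0'$ in $C^0$, the images $u_\nu(P)$ still lie in $K$, and $\Vert d_{A_\nu}u_\nu\Vert_{L^p(\Si)}$ is still uniformly bounded; since $du_\nu=d_{A_\nu}u_\nu-L_{u_\nu}A_\nu$, the bound on $d_{A_\nu}u_\nu$ together with the $C^0$-bound on $A_\nu$ and the compactness of $K$ gives $\sup_\nu\Vert du_\nu\Vert_{L^p(\Si)}<\infty$. Then I would apply a compactness result for $\bar\dd_J$ (an analogue of Proposition \ref{prop:compactness delbar} adapted to a compact surface with boundary, or more directly the relevant step in the proof of \cite[Theorem 3.1]{CGMS}): from the first vortex equation $\bar\dd_{J,A_\nu}(u_\nu)=0$ and the above $W^{1,p}$-bounds, a subsequence of $u_\nu$ converges weakly in $W^{2,p}_\loc$ and strongly in $C^1_\loc$ on $\Si\wo\dd\Si$ to a limit $u_0'\in W^{2,p}_\loc(\Si\wo\dd\Si,M)$. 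The pair $w_0':=(A_0',u_0')$ solves the first vortex equation, and since $A_\nu\to A_0'$ in $C^0$ and $u_\nu\to u_0'$ in $C^1_\loc$, while $R_\nu\to R_0$, passing to the limit in the second rescaled vortex equation shows $F_{A_0'}+R_0^2(\mu\circ u_0')\om_\Si=0$ on $\Si\wo\dd\Si$; so $w_0'$ is an $R_0$-vortex of Sobolev regularity. Finally I would invoke Theorem \ref{thm:reg gauge bdd} (regularity modulo gauge over a compact surface) to find a gauge transformation $g\in W^{2,p}(\Si,G)$ making $g^*w_0'$ smooth over $\Si\wo\dd\Si$; regauging the whole sequence by $g$ (i.e., setting $g_\nu$ to be the composite of the Uhlenbeck gauge with $g$, using Lemma \ref{le:g smooth} to keep the regularity class $W^{2,p}$ intact) yields the stated convergence with smooth limit $(A_0,u_0):=g^*w_0'$.

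The main technical obstacle is the step from $C^0$-convergence of the connections to $C^1_\loc$-convergence of the maps via the elliptic estimate for $\bar\dd_{J,A}$: one must be careful that the coefficients of the Cauchy--Riemann type operator depend on $A_\nu$, which are only converging in $C^0$, not in any higher norm, so the standard $L^p$-elliptic estimate gives $W^{1,p}$ control on $u_\nu$ with constants depending on $\Vert A_\nu\Vert_{C^0}$ (uniform) — but to get the $W^{2,p}_\loc$-bound and hence $C^1_\loc$-convergence one needs to differentiate the equation once, which brings in $dA_\nu\in L^p$, and this is exactly where the uniform $W^{1,p}$-bound on $A_\nu$ from Uhlenbeck compactness is essential. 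A secondary point is the boundary: all the bootstrapping happens on the interior $\Si\wo\dd\Si$, so I would exhaust $\Si\wo\dd\Si$ by an increasing sequence of compact subsets with smooth boundary and run the argument on each, then patch the gauge transformations by a diagonal argument as in the proof of Proposition \ref{prop:cpt bdd}; this is routine given the tools already developed. Everything else — the curvature bound, the application of Uhlenbeck, the final regularization step — is essentially bookkeeping with results quoted above.
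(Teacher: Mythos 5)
Your argument correctly establishes the uniform $L^p$-bound on $F_{A_\nu}$, applies Uhlenbeck compactness, and extracts a $W^{1,p}$-weak / $C^0$-strong limit of the connections together with a $W^{2,p}_\loc$-weak / $C^1_\loc$-strong limit of the maps via the elliptic estimate for $\bar\dd_{J,A}$. That is the correct opening move and matches the initial stage of the argument the paper cites ([CGMS, Theorem 3.2]). However, there is a genuine gap at the end: the theorem asserts $C^\infty$-convergence on compact subsets of $\Si\wo\dd\Si$, but your argument only delivers $C^1_\loc$-convergence of the maps and $C^0$-convergence of the connections. Regauging the whole sequence by the single transformation $g$ from Theorem \ref{thm:reg gauge bdd} makes the \emph{limit} smooth, but it does not upgrade the \emph{mode} of convergence: if $A_\nu\to A_0'$ only in $C^0$, then $g^*A_\nu\to A_0$ only in $C^0$.

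To close the gap you need a bootstrapping loop that alternates between the two vortex equations, which is exactly Step 3 of the proof of [CGMS, Theorem 3.2] (this is the step Remark \ref{rmk:C infty} singles out as the one that breaks when $R_0=\infty$; here $R_0<\infty$, so it works). Concretely: after Uhlenbeck, fix a local slice (relative Coulomb gauge, [We, Theorem 8.1]) for the connections around the limit; then the first vortex equation together with a $W^{k,p}$-bound on $A_\nu$ gives a $W^{k+1,p}$-bound on $u_\nu$ via elliptic regularity for $\bar\dd_{J,A_\nu}$, and the second vortex equation $F_{A_\nu}=-R_\nu^2(\mu\circ u_\nu)\om_\Si$ together with the Coulomb gauge condition and the $W^{k,p}$-bound on $u_\nu$ gives a $W^{k+1,p}$-bound on $A_\nu$. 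Iterating and using Rellich on increasing compact exhaustions of the interior (with a diagonal/patching argument exactly as you describe for the boundary issue) produces $C^\infty$-convergence on compact subsets. Without the gauge fixing, the curvature bound constrains $A_\nu$ only modulo gauge and the connections cannot be bootstrapped past $W^{1,p}$, so the $C^\infty$ conclusion is out of reach.
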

\begin{proof}[Proof of Theorem \ref{thm:cpt cpt}] This follows from a modified version of the proof of \cite[Theorem 3.2]{CGMS}: We use a version of Uhlenbeck compactness for a compact base with boundary, see Theorem \ref{thm:Uhlenbeck compact} below, and a version of the local slice theorem allowing for boundary, see \cite[Theorem 8.1]{We}. Note that the proof carries over to the case in which $R_\nu=0$ for some $\nu\in\N$, or $R_0=0$. 
\end{proof}
The following result was used in the proofs of Theorem \ref{thm:bubb} (Section \ref{sec:proof:thm:bubb}) and Corollary \ref{cor:quant}. 
\begin{prop}[Boundedness of image]\label{prop:bounded} Assume that $M$ is equivariantly convex at $\infty$. Then there exists a $G$-invariant compact subset $K_0\sub M$ such that the following holds. Let $p>2$, $P$ a $G$-bundle over $\C$, and $(A,u)$ a vortex on $P$ of class $W^{1,p}_\loc$, such that $E(w)<\infty$ and $\BAR{u(P)}$ is compact. Then we have $u(P)\sub K_0$.
\end{prop}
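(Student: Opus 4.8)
\textbf{Proof proposal for Proposition \ref{prop:bounded}.}

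The plan is to use the proper $G$-invariant function $f\in C^\infty(M,[0,\infty))$ and the constant $C\in[0,\infty)$ furnished by the definition of convexity at $\infty$ (see p.~\pageref{convex}), and to show that $f\circ u$ attains its supremum over $P$ at a point where $f$ is bounded by a constant depending only on $(M,\om,\mu,J,f,C)$ and not on the particular vortex. One then takes $K_0:=f^{-1}([0,C_0])$ for a suitable $C_0$; this is compact since $f$ is proper, and $G$-invariant since $f$ is $G$-invariant.

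First I would reduce to the smooth case: by Proposition \ref{prop:reg gauge} there is a gauge transformation $g$ of class $W^{2,p}_\loc$ such that $g^*w$ is smooth, and since $f$ is $G$-invariant the function $f\circ u$ is unchanged under gauge transformations, so it suffices to treat smooth vortices $w=(P,A,u)$ with $E(w)<\infty$ and $\overline{u(P)}$ compact. Next, working on the trivial bundle (every $G$-bundle over $\C$ is trivializable) and in the notation of Remark \ref{rmk:trivial}, the map $h:=f\circ u:P\to[0,\infty)$ descends to a function on $\C$; the key computation is to derive a differential inequality for $h$ on the region $\{h\geq C\}$. Using the vortex equations (\ref{eq:BAR dd J A u},\ref{eq:F A mu}) together with the two convexity inequalities
\[
\om(\na_v\na f(x),Jv)-\om(\na_{Jv}\na f(x),v)\geq0,\qquad df(x)JL_x\mu(x)\geq0,
\]
a standard Bochner-type calculation (as in \cite[Lemma 2.7, Example 2.8]{CGMS} or \cite{GS}) shows that $\Delta(f\circ u)\geq 0$ wherever $f\circ u\geq C$, i.e.\ $f\circ u$ is subharmonic there. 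The first convexity inequality controls the contribution of $\bar\partial_{J,A}u=0$, and the second handles the term coming from $F_A=-(\mu\circ u)\om_\Si$.

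Then I would invoke the maximum principle: since $E(w)<\infty$ and $\overline{u(P)}$ is compact, by Lemma \ref{le:a priori} the energy density $e_w$ decays, and in fact $\overline{u(P)}$ compact forces $f\circ u$ to be bounded, so $f\circ u$ extends to a bounded subharmonic function on $\{f\circ u\geq C\}\subseteq\C$ which, on the complement of a large ball, is small (the image converges to $\mu^{-1}(0)$ at infinity by Proposition \ref{prop:bar u}, and $f$ is continuous). A connected-component argument then shows $\sup_P f\circ u\leq\max\{C,\sup_{\text{large circle}}f\circ u\}$, and the latter supremum is controlled by $\bar d$-estimates via Proposition \ref{prop:en conc} together with the total energy bound; more simply, one shows that any superlevel set $\{f\circ u\geq C'\}$ with $C'>C$ on which $f\circ u$ is subharmonic and which is bounded must have its maximum on the boundary $\{f\circ u=C'\}$, giving a contradiction unless $\{f\circ u>C'\}=\emptyset$. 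This yields $u(P)\subseteq f^{-1}([0,C])=:K_0$.

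The main obstacle I expect is the differential inequality itself: verifying $\Delta(f\circ u)\geq 0$ on $\{f\circ u\geq C\}$ requires a careful Bochner computation intertwining $\na^A$, the complex structure $J$, the curvature identity $F_A=-(\mu\circ u)\om_\Si$, and the two convexity hypotheses, and one must be attentive to the fact that $f\circ u$ is only $W^{2,p}_\loc$ a priori (handled by the smoothing step above) and to signs in the identification $\g^*\cong\g$. This computation is essentially \cite[proof of Theorem 1.3 / Lemma 2.7]{CGMS}, adapted to allow $(A,u)$ of Sobolev class and an arbitrary $G$-bundle over $\C$; once it is in place the maximum-principle argument and the extraction of $K_0$ are routine.
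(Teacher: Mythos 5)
Your proposal is correct and takes essentially the same route as the paper: both reduce to the smooth case via Proposition \ref{prop:reg gauge} and then exploit the equivariant convexity function together with a maximum-principle argument, the only difference being that the paper puts the smooth vortex in radial gauge outside $B_1$ and then simply cites \cite[Proposition 11.1]{GS}, whereas you re-derive the subharmonicity estimate yourself. One minor slip in your last paragraph: the correct conclusion is $u(P)\sub f^{-1}([0,C_0])$ with $C_0:=\max\{C,\sup_{\mu^{-1}(0)}f\}$ (consistent with your opening sentence), not $f^{-1}([0,C])$, since the limit of $f\circ u$ at $\infty$ is only bounded by $\sup_{\mu^{-1}(0)}f$.
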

\begin{proof}[Proof of Proposition \ref{prop:bounded}] Let $P$ be a $G$-bundle over $\C$. By an elementary argument every smooth vortex on $P$ is smoothly gauge equivalent to a smooth vortex that is in radial gauge outside $B_1$. Using Proposition \ref{prop:reg gauge}, it follows that every vortex on $P$ of class $W^{1,p}_\loc$ is gauge equivalent to a smooth vortex that is in radial gauge outside $B_1$. Hence the statement of Proposition \ref{prop:bounded} follows from \cite[Proposition 11.1]{GS}.
\end{proof}
\section{The invariant symplectic action}\label{sec:action}
The proof of Proposition \ref{prop:en conc} (Energy concentration near ends) in Section \ref{sec:soft} was based on an isoperimetric inequality and an energy action identity for the invariant action functional (Theorem \ref{thm:isoperi} and Proposition \ref{prop:en act} below). Building on work by D.~A.~Salamon and R.~Gaio \cite{GS}, we define this functional as follows.%
\footnote{This is the definition from \cite{ZiA}, written in a more intrinsic way.}
 We first review the usual symplectic action functional: Let $(M,\om)$ be a symplectic manifold without boundary. We fix a Riemannian metric $\lan\cdot,\cdot\ran_M$ on $M$, and denote by $d,\exp,|v|,\iota_x>0$, and $\iota_X:=\inf_{x\in X}\iota_x\geq0$ the distance function, the exponential map, the norm of a vector $v\in TM$, and the injectivity radii of a point $x\in M$ and a subset $X\sub M$, respectively. We define the symplectic action of a loop $x:S^1\to M$ of length $\ell(x)<2\iota_{x(S^1)}$ to be
\begin{equation}
\nn \A(x):=-\int_\DDD u^*\om.
\end{equation}
Here $\DDD\sub\C$ denotes the (closed) unit disk, and $u:\DDD\to M$ is any smooth map such that 
\begin{equation}\nn u(e^{it})=x(t),\,\forall t\in \R/(2\pi\Z)\iso S^1,\quad d\big(u(z),u(z')\big)<\iota_{x(S^1)},\,\forall z,z'\in\DDD.
\end{equation}
\begin{lemma}\label{le:A} The action $\A(x)$ is well-defined, i.e., a map $u$ as above exists, and $\A(x)$ does not depend on the choice of $u$. 
\end{lemma}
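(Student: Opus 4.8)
The statement is the standard well-definedness of the symplectic action of a short loop, phrased intrinsically. The plan is to first observe that a map $u:\DDD\to M$ with the two stated properties exists, and then to show independence of the choice of such $u$ by a homotopy argument.

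For existence: since $\ell(x)<2\iota_{x(S^1)}$, the image $x(S^1)$ is contained in a geodesic ball of radius less than $\iota_{x(S^1)}$ around, say, $x(t_0)$ for any fixed $t_0$; more precisely, every pair of points on the loop is within distance $\ell(x)/2<\iota_{x(S^1)}$ (going around the shorter arc), and all points lie within the injectivity-radius ball of $x(t_0)$. First I would fix $p_0:=x(t_0)$ and define $u(z):=\exp_{p_0}\big((1-|z|)\cdot 0 + |z|\cdot\exp_{p_0}^{-1}x(z/|z|)\big)$ on $\DDD\setminus\{0\}$ — i.e.\ radially geodesically interpolate between the constant $p_0$ at the center and the loop on the boundary, using that $\exp_{p_0}^{-1}$ is defined and smooth on the ball of radius $\iota_{p_0}$ containing $x(S^1)$ — and set $u(0):=p_0$; smoothness at $0$ follows because the formula is $z\mapsto\exp_{p_0}$ of a smooth function of $z$ that vanishes at $0$. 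The distance bound $d(u(z),u(z'))<\iota_{x(S^1)}$ holds because all values of $u$ lie in the geodesic ball of the appropriate radius about $p_0$, so any two are joined by the unique minimizing geodesic inside that ball; one may need to shrink slightly or argue via $\ell(x)<2\iota$ with strict inequality to keep everything inside an embedded ball, but this is routine.

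For independence of $u$: given two maps $u_0,u_1:\DDD\to M$ both satisfying the conditions, I would show $\int_\DDD u_0^*\om=\int_\DDD u_1^*\om$. The key point is that for each $z\in\DDD$ the points $u_0(z)$ and $u_1(z)$ are close — indeed $d(u_0(z),u_1(z))\le d(u_0(z),x(z/|z|\cdot\text{something}))+\dots$, more carefully: both $u_i(\DDD)$ lie in a common geodesic ball of radius $<\iota_{x(S^1)}$, so $u_0(z)$ and $u_1(z)$ are joined by a unique short geodesic, giving a canonical homotopy $h:[0,1]\times\DDD\to M$, $h(s,z):=\exp_{u_0(z)}\big(s\exp_{u_0(z)}^{-1}u_1(z)\big)$, rel boundary (since $u_0=u_1=x$ on $S^1$). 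Then by Stokes,
\[
\int_\DDD u_1^*\om-\int_\DDD u_0^*\om=\int_{[0,1]\times\DDD}h^*d\om+\int_{[0,1]\times\partial\DDD}h^*\om=0,
\]
using $d\om=0$ and that $h$ is constant in $s$ on $[0,1]\times\partial\DDD$, so $h^*\om$ vanishes there. I would then conclude $\A(x)$ is well-defined.

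The main obstacle — really the only nontrivial bookkeeping — is making the injectivity-radius estimates precise enough to guarantee that all the geodesic interpolations (both in the existence construction and in the homotopy $h$) stay inside an embedded geodesic ball on which $\exp$ is a diffeomorphism, so that the formulas are smooth and Stokes applies. This is where the hypothesis $\ell(x)<2\iota_{x(S^1)}$ (with the factor $2$ and the strict inequality) is used; one should be slightly careful that the image of $u$ might have a larger injectivity radius issue than $x(S^1)$ alone, but since $u(\DDD)$ is contained in the closed ball of radius $\le\ell(x)/2<\iota_{x(S^1)}$ about $p_0$, and that ball is embedded, everything goes through. No deep input is needed beyond elementary Riemannian geometry and Stokes' theorem.
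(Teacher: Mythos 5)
Your overall strategy — construct a filling disk using the exponential map, then prove independence by building a geodesic homotopy between two candidate disks and applying Stokes' theorem with $d\om=0$ — is exactly the elementary argument the paper has in mind (its proof is one line: ``follows from an elementary argument, using the exponential map $\exp_{x(0+\Z)}:T_{x(0+\Z)}M\to M$''). The Stokes computation and the use of $d\om=0$ are correct. However, two technical steps are wrong as written.

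First, your explicit formula for $u$ is not smooth at the origin: in polar coordinates it reads $r\mapsto \exp_{p_0}\big(r\,f(e^{i\theta})\big)$ with $f:=\exp_{p_0}^{-1}\circ x$, and $r\,f(e^{i\theta})$ is merely Lipschitz at $r=0$ unless $f$ happens to be the restriction of a linear function. The justification ``$\exp_{p_0}$ of a smooth function of $z$ that vanishes at $0$'' is therefore false; you need, e.g., a radial cutoff $\rho$ with $\rho\equiv 0$ near $0$ and $\rho\equiv 1$ near $1$, and set $u(z):=\exp_{p_0}\big(\rho(|z|)f(z/|z|)\big)$. Second, the homotopy $h(s,z)=\exp_{u_0(z)}\big(s\exp_{u_0(z)}^{-1}u_1(z)\big)$ need not be well defined: the hypothesis controls $\iota_{x(S^1)}$, not $\iota_{u_0(z)}$ for interior points $u_0(z)$, and in general $d(u_0(z),u_1(z))$ can be up to $2\iota_{x(S^1)}$. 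Moreover, the step ``both lie in a common geodesic ball of radius $<\iota_{x(S^1)}$, so they are joined by a unique short geodesic'' is not valid — a ball of radius less than the injectivity radius at its center is not automatically geodesically convex. The clean fix (and what the paper's hint points to) is to fix $p_0:=x(0)$, set $V:=\exp_{p_0}^{-1}$ of the ball about $p_0$, push both $u_0,u_1$ to $V$ via $\exp_{p_0}^{-1}$, take the straight-line homotopy in the convex set $V\subset T_{p_0}M$, and push forward by $\exp_{p_0}$; this stays inside a domain where $\exp_{p_0}$ is a diffeomorphism, so $h$ is smooth and Stokes applies as you wrote.
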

\begin{proof} The lemma follows from an elementary argument, using the exponential map $\exp_{x(0+\Z)}:T_{x(0+\Z)}M\to M$. \end{proof}
Let now $G$ be a compact connected Lie group with Lie algebra $\g$. Suppose that $G$ acts on $M$ in a Hamiltonian way, with (equivariant) momentum map $\mu:M\to\g^*$, and that $\lan\cdot,\cdot\ran_M$ is $G$-invariant. We denote by $\lan\cdot,\cdot\ran:\g^*\x\g\to\R$ the natural contraction. Let $P$ be a smooth $G$-bundle over $S^1$ and $x\in C^\infty_G(P,M)$. We call $(P,x)$ \emph{admissible} iff there exists a section $s:S^1\to P$ such that $\ell(x\circ s)<2\iota_{x(P)}$, and 
\begin{equation}\nn\A(g\cdot(x\circ s))-\A(x\circ s)=\int_{S^1}\big\lan\mu\circ x\circ s,g^{-1}dg\big\ran,
\end{equation}
for every $g\in C^\infty(S^1,G)$ satisfying $\ell(g\cdot(x\circ s))\leq \ell(x\circ s)$. 
\begin{defi}\label{defi:action} Let $(P,x)$ be an admissible pair, and $A$ be a connection on $P$. We define the \emph{invariant symplectic action of $(P,A,x)$} to be 
\begin{equation}\nn\A(P,A,x):=\A(x\circ s)+\int_{S^1}\big\lan\mu\circ x\circ s,A\,ds\big\ran,
\end{equation}
where $s:S^1\to P$ is a section as above.
\end{defi}
To formulate the isoperimetric inequality, we need the following. If $X$ is a manifold, $P$ a $G$-bundle over $X$ and $u\in C^\infty_G(P,M)$, then we define $\bar u:X\to M$ by $\bar u(y):=Gu(p)$, where $p\in P$ is any point in the fiber over $y$. We define $M^*$ as in (\ref{eq:M *}). For a loop $\bar x:S^1\to M^*/G$ we denote by $\bar\ell(\bar x)$ its length \wrt the Riemannian metric on $M^*/G$ induced by $\lan\cdot,\cdot\ran_M$. Furthermore, for each subset $X\sub M$ we define 
\[m_X:=\inf\big\{|L_x\xi|\,\big|\,x\in X,\,\xi\in\g:\,|\xi|=1\big\}.\]
The first ingredient of the proof of Proposition \ref{prop:en conc} is the following.
\begin{thm}[Isoperimetric inequality]\label{thm:isoperi} Assume that there exists a $G$-invariant $\om$-compatible almost complex structure $J$ such that $\lan\cdot,\cdot\ran_M=\om(\cdot,J\cdot)$. Then for every compact subset $K\sub M^*$ and every constant $c>\frac12$ there exists a constant $\de>0$ with the following property. Let $P$ be a $G$-bundle over $S^1$ and $x\in C^\infty_G(P,M)$, such that $x(P)\sub K$ and $\bar\ell(\bar x)\leq\de$. Then $(P,x)$ is admissible, and for every connection $A$ on $P$ we have
\begin{equation}\nn|\A(P,A,x)|\leq c\Vert d_Ax\Vert_2^2+\frac1{2m_K^2}\Vert\mu\circ x\Vert_2^2.
\end{equation}
\end{thm}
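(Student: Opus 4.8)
The plan is to reduce the invariant action to the ordinary symplectic action on a carefully chosen local slice and then invoke the classical isoperimetric inequality for the ordinary action functional on a symplectic manifold. First I would fix the compact subset $K\sub M^*$ and the constant $c>\frac12$, and use compactness of $K$ together with freeness of the action near $K$ to obtain a positive lower bound $m_K>0$ on $|L_x\xi|$ for $x\in K$ and unit $\xi\in\g$, as well as a uniform injectivity-radius bound $\iota_K>0$. Next, using the local slice theorem along the (compact) image $\bar x(S^1)\sub M^*/G$, I would choose $\de>0$ small enough that whenever $\bar\ell(\bar x)\leq\de$, the loop $\bar x$ lies in a single slice chart; this both establishes admissibility of $(P,x)$ (one picks the section $s$ so that $x\circ s$ lands in the slice, which forces the length constraint $\ell(x\circ s)<2\iota_{x(P)}$ and the action-transformation identity defining admissibility to hold) and lets me write $\A(P,A,x)=\A(x\circ s)+\int_{S^1}\lan\mu\circ x\circ s,A\,ds\ran$ for that particular $s$.

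The heart of the argument is then a pointwise splitting of $d_Ax$ along the slice: writing $v:=x\circ s:S^1\to M$, one decomposes $\dot v$ into its component tangent to the $G$-orbit and its component in the horizontal (slice) direction, i.e. $\dot v=L_v\eta+v^H$ for a suitable $\eta:S^1\to\g$ with $|v^H|$ controlled by $|d_Ax|$ and $|L_v\eta|$ related to $|\mu\circ x|$ and $|d_Ax|$ via the second bound. The standard isoperimetric inequality for the symplectic action of a short loop in a Riemannian symplectic manifold (with the metric $\om(\cdot,J\cdot)$, $J$ compatible) gives $|\A(v)|\leq\frac1{4\pi}\ell(v)^2$ or, in the sharper $L^2$-form used for such estimates, $|\A(v)|\leq c'\Vert\dot v\Vert_2^2$ for $c'$ as close to $\frac1{4\pi}$ (hence certainly to any $c>\frac12$) as desired once $\de$ is small; one then absorbs the orbit-direction contribution and the boundary term $\int_{S^1}\lan\mu\circ x\circ s,A\,ds\ran$ into the two terms $c\Vert d_Ax\Vert_2^2$ and $\frac1{2m_K^2}\Vert\mu\circ x\Vert_2^2$, using $|\eta|\leq m_K^{-1}|L_v\eta|$ and the identity $|d_Ax|^2=|v^H|^2+|L_v(\,\cdot\,)|^2$-type orthogonality that holds because $J$ preserves the horizontal distribution near $\mu^{-1}(0)$. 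Choosing the constant $c'$ in the ordinary inequality and the slice size $\de$ appropriately matches the claimed constant $c$.

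I expect the main obstacle to be the bookkeeping in this last absorption step: keeping the constant in front of $\Vert d_Ax\Vert_2^2$ equal to the \emph{given} $c>\frac12$ (rather than some larger constant) requires the cross-terms between the orbit and horizontal components, and the $\mu$-dependent boundary term, to be estimated with a small multiplicative loss that can be made arbitrarily small by shrinking $\de$; this is where the precise form of the invariant action (the $\int\lan\mu\circ x\circ s,A\,ds\ran$ correction) and the $G$-equivariance of $\mu$ must be used to see that the gauge-dependence genuinely cancels. A secondary technical point is verifying admissibility uniformly — i.e. that the \emph{same} $\de$ works for all $(P,A,x)$ with $x(P)\sub K$ — which follows from the compactness of $K$ and $\bar x(S^1)$ but needs the slice charts to be chosen with uniform size, something guaranteed by covering $K/G$ by finitely many slices. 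Everything else (existence of the section $s$, independence of $\A$ from the chosen filling disk via Lemma \ref{le:A}) is routine given the results already established.
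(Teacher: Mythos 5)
The paper does not prove Theorem~\ref{thm:isoperi} internally; its ``proof'' is a citation to~\cite[Theorem~1.2]{ZiA}. Your plan --- slice chart, split $\dot v$ (for $v=x\circ s$) into orbit and horizontal parts, apply the ordinary isoperimetric inequality, absorb --- has the right skeleton, but the absorption step hides a genuine gap, and as written would produce a coefficient no better than $1$ rather than $c>\tfrac12$ in front of $\Vert d_Ax\Vert_2^2$. Concretely, with $\phi:=A\,s_*$ and $\dot v=w+L_v\eta$, $w\perp\im L_v$, one has $(d_Ax)(s_*)=w+L_v(\eta+\phi)$ and hence $\Vert d_Ax\Vert_2^2=\Vert w\Vert_2^2+\Vert L_v(\eta+\phi)\Vert_2^2$. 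Applying $|\A(v)|\leq(\tfrac12+\eps)\Vert\dot v\Vert_2^2$ to $v$ directly is useless: $\Vert\dot v\Vert_2$ is \emph{not} controlled by $\Vert d_Ax\Vert_2$, because the orbit piece $L_v\eta$ of $\dot v$ can be arbitrarily large for fixed $d_Ax$. Even if you replace $\dot v$ by its horizontal part $w$, the Young estimate on the boundary term $\int\lan\mu\circ v,\phi\ran$ already consumes the entire remaining $\tfrac12\Vert L_v(\eta+\phi)\Vert_2^2$, so there is no room for any residual orbit-direction contribution from $\A(v)$. What actually closes the estimate is the gauge-transformation identity for $\A$ --- exactly the identity written into the definition of admissibility. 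Applying it with the gauge $g$ that makes $g\cdot v$ horizontal, the term $\int\lan\mu\circ v,g^{-1}dg\ran$ it produces cancels the orbit contribution \emph{exactly} (not merely up to a small multiplicative loss), leaving $\A(g\cdot v)$ plus a boundary integral that Young's inequality bounds by $\tfrac1{2m_K^2}\Vert\mu\circ x\Vert_2^2+\tfrac12\Vert L_v(\eta+\phi)\Vert_2^2$; only after this does $(\tfrac12+\eps)\Vert d_Ax\Vert_2^2$ emerge. You correctly sense that ``the gauge-dependence genuinely cancels,'' but you never identify the admissibility identity as the cancellation mechanism; that identity is the content of the proof, not a bookkeeping remark. (Your claim that ``$|L_v\eta|$ [is] related to $|\mu\circ x|$'' is also not a step of the proof: $\eta$ depends only on $v$ and the metric, not on $\mu$; the $\mu$-dependence enters only through the boundary integral.)

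One smaller slip: in the $L^2$-form the sharp constant of the ordinary isoperimetric inequality is $\tfrac12$, not $\tfrac1{4\pi}$. Cauchy--Schwarz gives $\ell(v)^2\leq2\pi\Vert\dot v\Vert_2^2$ on $\R/(2\pi\Z)$, converting $|\A(v)|\leq\tfrac1{4\pi}\ell(v)^2$ into $|\A(v)|\leq\tfrac12\Vert\dot v\Vert_2^2$; this is exactly why the hypothesis is $c>\tfrac12$, and why there is no slack whatsoever to spare in the absorption step.
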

Here we view $d_Ax$ as a one-form on $S^1$ with values in the bundle $(x^*TM)/G\to S^1$, and $\mu\circ x$ as a section of the co-adjoint bundle $(P\x\g^*)/G\to S^1$. Furthermore, $S^1$ is identified with $\R/(2\pi\Z)$, and the norms are taken with respect to the standard metric on $\R/(2\pi\Z)$, the metric $\lan\cdot,\cdot\ran_M$ on $M$, and the operator norm on $\g^*$ induced by $\lan\cdot,\cdot\ran_\g$.%
\footnote{Note that in \cite[Theorem 1.2]{ZiA} $S^1$ was identified with $\R/\Z$ instead. Note also that hypothesis (H) is not needed for Theorem \ref{thm:isoperi}.}%
\begin{proof}[Proof of Theorem \ref{thm:isoperi}] This is \cite[Theorem 1.2]{ZiA}. 
\end{proof}

The second ingredient of the proof of Proposition \ref{prop:en conc} is the following. For $s\in\R$ we denote by $\iota_s:S^1\to\R\x S^1$ the map given by $\iota_s(t):=(s,t)$. Let $X,X'$ be manifolds, $f\in C^\infty(X',X)$, $P$ a $G$-bundle over $X$, $A$ a connection on $P$, and $u\in C^\infty_G(P,M)$. Then the pullback triple $f^*(P,A,u)$ consists of a $G$-bundle $P'$ over $X'$, a connection on $P'$, and an equivariant map from $P'$ to $M$. 
\begin{prop}[Energy action identity]\label{prop:en act} For every compact subset $K\sub M^*$ there exists a constant $\de>0$ with the following property. Let $s_-\leq s_+$ be numbers, $\Si:=[s_-,s_+]\x S^1$, $\om_\Si$ an area form on $\Si$, $j$ a compatible complex structure, and $w:=(A,u)$ a smooth vortex over $\Si$, such that $u(P)\sub K$ and $\bar\ell(\bar u\circ\iota_s)<\de$, for every $s\in [s_-,s_+]$. Then the pairs $\iota_{s_\pm}^*(P,u)$ are admissible, and 
\begin{equation}
\nn E(w)=-\A\big(\iota_{s_+}^*(P,A,u)\big)+\A\big(\iota_{s_-}^*(P,A,u)\big).
\end{equation}
\end{prop}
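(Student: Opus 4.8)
\textbf{Proof proposal for Proposition \ref{prop:en act} (Energy action identity).}

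The plan is to reduce the identity to the classical energy-action identity for the symplectic action functional of a loop, applied fiberwise after a suitable choice of sections and gauge. First I would fix a compact set $K\sub M^*$ and choose $\de>0$ small enough that Theorem \ref{thm:isoperi} applies (so the relevant pairs are admissible) and also so that the disks $u$ used in the definition of $\A$ have image of diameter less than the injectivity radius $\iota_K$; shrinking further, I would ensure that all ``end-loops'' $\bar u\circ\iota_s$ have length $<2\iota_{u(P)}$, guaranteeing the action of each $\iota_s^*(P,u)$ is defined. Since every $G$-bundle over $\Si=[s_-,s_+]\x S^1$ is trivializable, I would pick a global smooth section $s:\Si\to P$, thereby writing $w=(A,u)$ concretely as a pair $(\Phi\,ds+\Psi\,dt,\,f)$ with $\Phi,\Psi:\Si\to\g$, $f:\Si\to M$ (cf.\ Remark \ref{rmk:trivial}); one may additionally use a gauge transformation to put $A$ in temporal gauge $\Phi\const0$ on $\Si$, which simplifies the boundary terms without changing either side of the claimed identity (both $E(w)$ and $\A$ are gauge invariant).

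Next I would compute $E(w)=\int_\Si e_w\,\om_\Si$ using Remark \ref{rmk:e vort}, $e_w=|\dd_{J,A}u|^2+|\mu\circ u|^2$, and rewrite the integrand via the vortex equations and Stokes' theorem on the surface-with-boundary $\Si$. The standard manipulation (this is exactly the computation underlying \cite[Proposition 3.1]{CGS} / \cite{GS}, specialized to a cylinder) expresses $\int_\Si\big((u\circ s)^*\om-d\langle\mu\circ u\circ s,\,s^*A\rangle_\g\big)$ as a difference of the two boundary contributions $\int_{\{s_\pm\}\x S^1}\big((\text{disk term})+\langle\mu\circ u\circ s,\,A\,ds\rangle\big)$. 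Matching each boundary contribution against Definition \ref{defi:action}: the term $\int_{\{s_\pm\}\x S^1}\langle\mu\circ u\circ s,A\,ds\rangle$ is literally the second summand in $\A(\iota_{s_\pm}^*(P,A,u))$, while the remaining ``disk'' piece is $\A(u\circ s\circ\iota_{s_\pm})=-\int_\DDD(\cdot)^*\om$; here the key point is that, because $\bar\ell(\bar u\circ\iota_s)<\de$ for all $s$, the family $s\mapsto u\circ s\circ\iota_s$ is a homotopy of short loops in $M$ through which the disk-areas glue up to exactly $\int_\Si(u\circ s)^*\om$ minus the two end-disk areas, so the orientations and signs produce $E(w)=-\A(\iota_{s_+}^*(P,A,u))+\A(\iota_{s_-}^*(P,A,u))$. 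Admissibility of $\iota_{s_\pm}^*(P,u)$ follows from Theorem \ref{thm:isoperi} applied to these end-loops.

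The main obstacle I expect is bookkeeping of the boundary terms and, in particular, checking that the ``disk term'' in the Stokes computation genuinely equals the symplectic action $\A$ of the appropriate short end-loop, independently of the auxiliary disk chosen — this is where Lemma \ref{le:A} (well-definedness of $\A$) and the smallness constant $\de$ (forcing all loops in sight into a single geodesically convex-enough neighborhood) must be invoked carefully, so that the homotopy filling the cylinder can be compared with the two capping disks. A secondary technical point is justifying the use of Stokes' theorem and the pointwise identity $e_w=\dots$ at the stated regularity: $w$ is assumed smooth here, so no regularization is needed, but one should note the argument is insensitive to the choice of $\om_\Si$ and $j$ (only the conformal class enters, via $e_w\om_\Si$), which is why the statement is phrased for an arbitrary compatible area form on the cylinder. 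Once these points are in place the identity follows by collecting terms.
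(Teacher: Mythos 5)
The paper's own proof is a one-line citation of \cite[Proposition 3.1]{ZiA}, so any from-scratch argument is by construction a different route. Your plan is close to the right computation: reduce to the pointwise vortex identity $e_w\om_\Si=(u\circ\si)^*\om-d\lan\mu\circ u\circ\si,\si^*A\ran_\g$ from \cite[Proposition 3.1]{CGS}, integrate over $\Si$ by Stokes, and match the boundary contributions against the two summands of $\A$ in Definition \ref{defi:action}. The matching of the $\mu$-terms is indeed immediate once the boundary orientations are accounted for.

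However, there is a genuine gap in the remaining step, where you want $\int_\Si(u\circ\si)^*\om=-\A(u\circ\si\circ\iota_{s_+})+\A(u\circ\si\circ\iota_{s_-})$. First, the hypothesis only controls $\bar\ell(\bar u\circ\iota_{s_0})$, the length of the loop in the quotient $M^*/G$, while Lemma \ref{le:A} needs the lifted loop $u\circ\si\circ\iota_{s_0}$ in $M$ to have length less than $2\iota_{u(P)}$ for $\A(u\circ\si\circ\iota_{s_0})$ to be defined at all. A lift along an arbitrary section can wind around a $G$-orbit and be long even when the quotient loop is tiny, and the temporal gauge you propose does not remedy this (it kills the $ds$-component of $\si^*A$, which is unrelated to the length of the $\iota_{s_0}$-loops). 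You must argue, using $\bar\ell<\de$, compactness of $K\sub M^*$, and freeness of the $G$-action on $K$, that a \emph{single} section $\si:\Si\to P$ exists whose restrictions $u\circ\si\circ\iota_{s_0}$ are short for every $s_0\in[s_-,s_+]$ --- not only at $s_\pm$. Second, the phrase ``disk-areas glue up to exactly $\int_\Si(u\circ\si)^*\om$'' is in effect a sphere-closure argument; that would require $(M,\om)$ aspherical, but this proposition is stated and used without that hypothesis. The correct mechanism is the local derivative identity $\tfrac{d}{ds_0}\A(u\circ\si\circ\iota_{s_0})=-\int_{S^1}\om\bigl(\dd_s(u\circ\si),\dd_t(u\circ\si)\bigr)\,dt$, integrated over $[s_-,s_+]$, and this is precisely where you need \emph{every} loop in the one-parameter family, not just the two ends, to be short.
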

\begin{proof}[Proof of Proposition \ref{prop:en act}] This follows from \cite[Proposition 3.1]{ZiA}.
\end{proof}
\section{Proofs of the results of Section \ref{SEC:HOMOLOGY CHERN}}\label{sec:proofs homology}
This section contains the proofs of Lemmas \ref{le:P},\ref{le:section} and Proposition \ref{prop:Chern Maslov}, which were stated in Section \ref{SEC:HOMOLOGY CHERN}. We also state and prove Lemma \ref{le:homotopic}, which was used in Definition \ref{defi:homotopy W} in that section. Let $M,\om,G,,\g,\lan\cdot,\cdot\ran_\g,\mu$ and $J$ be as in Chapter \ref{chap:main}, $\Si:=\C$, $\om_\Si$ the standard area form $\om_0$, $p>2$, and $\lam>1-2/p$. Assume that hypothesis (H) holds.

Lemma \ref{le:P} was used in the definition of the equivariant homology class of an equivalence class of triples $(P,A,u)$ (Definition \ref{defi:homology}). Its proof is based on the following result, which was also used in the proofs of Theorem \ref{thm:Fredholm} (Section \ref{subsec:reform}), \ref{thm:L w * R} (Section \ref{subsec:proof:thm:L w * R}) and Proposition \ref{prop:triv} (Section \ref{subsec:proof:Fredholm aug}). 
\begin{lemma}\label{le:si} For every $(P,A,u)\in\BB^p_\lam$ there exists a section $\si$ of the restriction of the bundle $P$ to $B_1^C:=\C\wo B_1$, of class $W^{1,p}_\loc$, and a point $x_\infty\in\mu^{-1}(0)$, such that $u\circ \si(re^{i\phi})$ converges to $x_\infty$, uniformly in $\phi\in\R$, as $r\to\infty$, and $\si^*A\in L^p_\lam(B_1^C)$. 
\end{lemma}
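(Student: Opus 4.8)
\textbf{Proof proposal for Lemma \ref{le:si}.}

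The plan is to exploit the decay estimate for finite energy vortices together with an Uhlenbeck-type gauge-fixing argument on the annuli $A(2^k, 2^{k+2})$, $k \geq 0$, and then patch the resulting local sections. First I would observe that by Remark \ref{rmk:geometry} (together with Theorem \ref{thm:reg bundle} and \cite[Corollary 1.4]{ZiA}), the energy density of $w := (P,A,u)$ satisfies $e_w(z) \leq C|z|^{-4+\eps}$ on $\C \wo B_1$ for every $\eps > 0$; in particular $|F_A|$ and $|d_A u|$ both lie in $L^p_\lambda(B_1^C)$ once $\lambda < 2 - 2/p$, and more to the point they are pointwise $O(|z|^{-2+\eps/2})$. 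The estimate (\ref{eq:sup z z' bar d}) in Proposition \ref{prop:en conc}, applied with $r := 2^k$ and $R := \infty$ (or a large finite $R$ followed by a limit), shows that $\bar d(Gu(z), Gu(z')) \leq 100 a^{-1+\eps}\sqrt{E(w)}$ for $z,z'$ in the annulus $A(2^k a, \infty)$; hence $\bar u_W$ is uniformly Cauchy as $|z| \to \infty$ in the metric $\bar d$ on $M/G$ (Lemma \ref{le:metr}), so it converges to a point $\bar x_\infty \in M/G$. Because $\mu \circ u \to 0$ (from $|\mu \circ u| \leq \sqrt{e_w} \to 0$) and $\mu$ is proper, $\bar x_\infty$ lies in the compact set $\mu^{-1}(0)/G$, so we may pick $x_\infty \in \mu^{-1}(0)$ representing it.

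Next I would construct the section. On each dyadic annulus $A_k := A(2^{k-1}, 2^{k+1})$ the restriction $P|_{A_k}$ is trivializable, and by Theorem \ref{thm:reg gauge bdd} (regularity modulo gauge over a compact surface, applied to a slightly larger compact annulus) we can choose a $W^{1,p}_\loc$ section $\si_k$ of $P|_{A_k}$ in which $\si_k^* A$ is in, say, Coulomb or radial gauge; the key point is to arrange $\|\si_k^* A\|_{L^p(A_k)} \lesssim \|F_A\|_{L^p(A_k)} + (\text{boundary term})$ by an Uhlenbeck estimate on the fixed-geometry annulus, rescaled. Since $\|F_A\|_{L^p_\lambda(A_k)} \to 0$ as $k \to \infty$ and the annuli have bounded overlap, summing these $L^p_\lambda$ contributions is finite. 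I would then patch the $\si_k$ into a single section $\si$ on $\C \wo B_1$ using a partition of unity subordinate to $\{A_k\}$ and the transition functions $g_k \in W^{2,p}_\loc(A_k \cap A_{k+1}, G)$ relating $\si_k$ and $\si_{k+1}$; because the $g_k$ are close to constant (their logarithmic derivatives are controlled by $\|\si_k^*A\|_{L^p} + \|\si_{k+1}^*A\|_{L^p} \to 0$, via Lemma \ref{le:g smooth}), the patching can be done so that the resulting $\si^* A$ still lies in $L^p_\lambda(B_1^C)$ — this is exactly the kind of patching argument already used in the proof of Proposition \ref{prop:cpt bdd} and in \cite[Theorems 3.6, A.3]{FrPhD}. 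Finally, the uniform convergence $u \circ \si(re^{i\phi}) \to x_\infty$ follows: $\bar u \circ \si = \bar u_W \to \bar x_\infty$ uniformly by the $\bar d$-Cauchy estimate above, and once we are in the region where $u \circ \si$ stays in a slice neighborhood of $x_\infty$ (using that the $G$-action on $\mu^{-1}(0)$ is free, hypothesis (H)), the lift is determined and the uniform convergence in $M$ follows from the uniform convergence in $M/G$ plus the smallness of $\|\si^* A\|_{L^p}$ controlling how much the chosen gauge "twists".

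The main obstacle I expect is the patching step: ensuring that gluing the locally gauge-fixed sections $\si_k$ does not destroy the $L^p_\lambda$-bound on $\si^* A$. This requires quantitative control on the transition functions $g_k$ — one needs that $g_k^{-1} dg_k$ is bounded in $L^p_\lambda(A_k \cap A_{k+1})$ by the sum of the two local connection norms, which in turn relies on $\|\si_k^* A\|_{L^p}$ being small enough (for large $k$) that the relevant ODE/elliptic estimate for $g_k$ closes. A clean way to sidestep some of this is to fix the gauge globally in one shot: choose $\si$ on $\C \wo B_1$ to be in radial gauge (so $(\si^* A)_r = 0$), which exists by integrating along rays, and then estimate $\si^* A$ directly from $F_A$ via $\partial_s \Psi = (\si^*F_A)_{sr}$ in logarithmic coordinates, exactly as in Lemma \ref{le:int S R si A} — this reduces the whole $L^p_\lambda$-bound on $\si^*A$ to the already-established decay of $|F_A|$, and the convergence of $u \circ \si$ then follows because in radial gauge $u \circ \si$ solves an ODE along rays with right-hand side controlled by $|d_A u| + |\si^*A| \in L^1$ along rays (by the $O(|z|^{-2+\eps/2})$ decay), forcing uniform convergence to the limit $x_\infty$.
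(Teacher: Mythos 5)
The lemma is stated for an arbitrary triple $(P,A,u)\in\BB^p_\lam$, and the only quantitative information available is that $\sqrt{e_w}\in L^p_\lam(\C)$ together with compactness of $\BAR{u(P)}$. You are not given that $w$ is a vortex. Your proposal is built on two vortex-specific facts: the pointwise decay $e_w(z)\leq C|z|^{-4+\eps}$ from Remark \ref{rmk:geometry} (whose proof via \cite[Corollary 1.4]{ZiA} genuinely uses the vortex equations), and Proposition \ref{prop:en conc} (energy concentration near the ends), whose proof runs through the isoperimetric inequality for the action functional and the energy-action identity, again for vortices only. For a general element of $\BB^p_\lam$ neither of these is available, so the very first step of your argument --- establishing that $\bar u_W$ is uniformly Cauchy at infinity --- is unsupported. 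This matters: Lemma \ref{le:si} feeds into Definition \ref{defi:homology}, Proposition \ref{prop:triv}, and Theorem \ref{thm:L w * R}, all of which apply to arbitrary $W\in\B^p_\lam$, so the lemma cannot be weakened to the vortex case.

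The paper's route avoids the vortex equations entirely by running everything through the Hardy-type inequality (Proposition \ref{prop:Hardy}). One applies it first to $f:=|\mu\circ u|^2$ (whose differential $df=2\lan d\mu(u)d_Au,\mu\circ u\ran_\g$ lies in $L^p_\lam$ because $|d_Au|\leq\sqrt{e_w}\in L^p_\lam$ and the compact image bounds $d\mu(u)$ and $\mu\circ u$), yielding $|\mu\circ u|\to 0$ uniformly at $\infty$; this puts $u$ eventually into the free region $\mu^{-1}(\bar B_\de)\sub M^*$. Then one embeds $M^*/G$ into some $\R^n$ and applies Proposition \ref{prop:Hardy} again to $\iota\circ\bar u$ (using $|d\bar u|\leq|d_Au|$) to get uniform convergence $\bar u(re^{i\phi})\to\bar x_\infty\in\mu^{-1}(0)/G$. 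Crucially, the section $\si$ is then not obtained by Uhlenbeck gauge fixing and patching, but by lifting $\bar u$ through a fixed local slice $\wt\si:\bar U\to M^*$ at $\bar x_\infty$: the condition $\wt\si\circ\bar u=u\circ\si'$ determines $\si'$ uniquely on $B_R^C$. Finally $\si^*A$ is estimated not from $F_A$ but from the identity $\si^*L_uA=\si^*du-\si^*d_Au$ together with the uniform lower bound $\inf|L_{u(p)}\xi|/|\xi|>0$ near $\mu^{-1}(0)$; both $\si^*du=d\wt\si\,d\bar u$ and $\si^*d_Au$ are bounded in $L^p_\lam$ by $\Vert d_Au\Vert_{p,\lam}$, and this is what you actually control. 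This bypasses the delicate weight-tracking you would need in a dyadic-annulus patching argument (where the Uhlenbeck constant on the annulus $A(2^k,2^{k+2})$ is not scale-invariant in the relevant norms), and it bypasses the difficulty in your radial-gauge alternative that Lemma \ref{le:int S R si A} only gives an $L^1(S^1_R)$-type control on $\si^*A$, not an $L^p_\lam$ bound.
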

\begin{proof}[Proof of Lemma \ref{le:si}]\label{proof:le:si}\setcounter{claim}{0} 
\begin{claim}\label{claim:R u} The expression $|\mu\circ u|(re^{i\phi})$ converges to 0, uniformly in $\phi\in\R$, as $r\to\infty$. 
\end{claim}
\begin{proof}[Proof of Claim \ref{claim:R u}] We define the function $f:=|\mu\circ u|^2:M\to\R$. It follows from the ad-invariance of $\lan\cdot,\cdot\ran_\g$ that 
\begin{equation}\label{eq:d mu u 2}df=2\big\lan d_A(\mu\circ u),\mu\circ u\big\ran_\g=2\big\lan d\mu(u)d_Au,\mu\circ u\big\ran_\g.\end{equation}
Since $\BAR{u(P)}\sub M$ is compact, we have $\sup_{\C}|d\mu(u)|<\infty$ and $\sup_{\C}|\mu\circ u|<\infty$. Furthermore, $|d_Au|\leq \sqrt{e_w}\in L^p_\lam$. Combining this with (\ref{eq:d mu u 2}), it follows that $df\in L^p_\lam$. Therefore, by Proposition \ref{prop:Hardy} in the next section (Hardy-type inequality, applied with $u$ replaced by $f$ and $\lam$ replaced by $\lam-1$) the expression $f(re^{i\phi})$ converges to some number $y_\infty\in \R$, as $r\to\infty$, uniformly in $\phi\in\R$. Since $|\mu\circ u|\leq\sqrt{e_w}\in L^p_\lam$, it follows that $y_\infty=0$. 
This proves Claim \ref{claim:R u}.
\end{proof}
It follows from hypothesis (H) that there exists a $\de>0$ such that $\mu^{-1}(\bar B_\de)\sub M^*$ (defined as in (\ref{eq:M *})). We choose $R>0$ so big that $|\mu\circ u|(z)\leq\de$ if $z\in B_{R-1}^C=\C\wo B_{R-1}$. Since $G$ is compact, the action of it on $M$ is proper. Hence the local slice theorem implies that $M^*/G$ carries a unique manifold structure such that the canonical projection $\pi^{M^*}:M^*\to M^*/G$ is a submersion. Consider the map $\bar u:B_{R-1}^C\to M^*/G$ defined by $\bar u(z):=G u(p)$, where $p\in\pi^{-1}(z)\sub P$ is arbitrary. 
\begin{claim}\label{claim:bar u} The point $\bar u(re^{i\phi})$ converges to some point $\bar x_\infty\in \mu^{-1}(0)/G$, uniformly in $\phi\in\R$, as $r\to\infty$. 
\end{claim}
\begin{proof}[Proof of Claim \ref{claim:bar u}] We choose $n\in\N$ and an embedding $\iota:M^*/G\to \R^n$. Furthermore, we choose a smooth function $\rho:\C\to\R$ that vanishes on $B_{R-1}$ and equals 1 on $B_R^C$. We define $f:\C\to \R^n$ to be the map given by $\rho \cdot\iota\circ\bar u$ on $B_{R-1}^C$ and by 0 on $B_{R-1}$. It follows that 
\begin{equation}\label{eq:df}\Vert df\Vert_{L^p_\lam(B_R^C)}\leq\big\Vert d\iota(\bar u)d\bar u\big\Vert_{L^p_\lam(B_R^C)}+\Vert\iota\circ\bar u\,d\rho\Vert_{L^p_\lam(B_R\wo B_{R-1})}.\end{equation}
A short calculation shows that $|d\bar u|\leq|d_Au|$, and therefore,
\begin{equation}\label{eq:d iota bar u}\big\Vert d\iota(\bar u)d\bar u\big\Vert_{L^p_\lam(B_R^C)}\leq\Vert d\iota(\bar u)\Vert_{L^\infty(B_R^C)}\Vert d_Au\Vert_{L^p_\lam(B_R^C)}.
\end{equation} 
Our assumption $w=(P,A,u)\in\BB^p_\lam$ implies that $\Vert d_Au\Vert_{L^p_\lam(B_R^C)}<\infty$. Furthermore, $\mu$ is proper by the hypothesis (H), hence the set $\mu^{-1}(\bar B_\de)$ is compact. Thus the same holds for the set $\pi^{M^*}(\mu^{-1}(\bar B_\de))$. This set contains the image of $\bar u$. It follows that $\Vert d\iota(\bar u)\Vert_{L^\infty(B_R^C)}<\infty$. Combining this with (\ref{eq:df},\ref{eq:d iota bar u}), we obtain the estimate
\[\Vert df\Vert_{L^p_\lam(\C)}\leq\Vert df\Vert_{L^p_\lam(B_R)}+\Vert df\Vert_{L^p_\lam(B_R^C)}<\infty.\]
Hence the hypotheses of Proposition \ref{prop:Hardy} (with $\lam$ replaced by $\lam-1$) are satisfied. It follows that the point $f(re^{i\phi})$ converges to some point $y_\infty\in\R^n$, uniformly in $\phi\in\R$, as $r\to\infty$. Claim \ref{claim:bar u} follows.
\end{proof}
Let $\bar x_\infty$ be as in Claim \ref{claim:bar u}. We choose a local slice around $\bar x_\infty$, i.e., a pair $(\bar U,\wt\si)$, where $\bar U\sub M^*/G$ is an open neighborhood of $\bar x_\infty$, and $\wt\si:\bar U\to M^*$ is a smooth map satisfying $\pi^{M^*}\circ \wt\si=\id_{\bar U}$. Then there exists a unique section $\si'$ of $P|_{B_R^C}$, of class $W^{1,p}_\loc$, such that $\wt\si\circ\bar u=u\circ\si'$. By the continuous homotopy lifting property of $P$ we may extend this to a continuous section $\si''$ of $P|_{B_1^C}$. Regularizing $\si''$ on $B_{R+1}\wo B_1$, we obtain a section $\si$ of $P|_{B_1^C}$, of class $W^{1,p}_\loc$.%
\footnote{For this we regularize $\si''$ suitably in local trivializations.}
 We define $x_\infty:=\wt\si(\bar x_\infty)$. It follows from Claim \ref{claim:bar u} that $u\circ\si(re^{i\phi})$ converges to $x_\infty$, uniformly in $\phi$, for $r\to\infty$. Furthermore, 
\begin{eqnarray*}&\Vert\si^*L_uA\Vert_{L^p_\lam(B_{R+1}^C)}\leq \Vert\si^*du\Vert_{L^p_\lam(B_{R+1}^C)}+\Vert\si^*d_Au\Vert_{L^p_\lam(B_{R+1}^C)},&\\
&\si^*du=du\,d\si=d\wt\si\,d\bar u,\,\textrm{on }B_{R+1}^C,\quad |d\bar u|\leq |d_Au|.\end{eqnarray*}
Since
\[\inf\Big\{|L_u(p)\xi|\,\big|\,p\in P|_{B_{R+1}^C},\,\xi\in\g:\,|\xi|=1\Big\}>0,\quad\Vert d_Au\Vert_{p,\lam}<\infty,\]
it follows that $\si^*A\in L^p_\lam(B_1^C)$. This proves Lemma \ref{le:si}.
\end{proof}
\begin{proof}[Proof of Lemma \ref{le:P}]\setcounter{claim}{0}\label{proof:le:P} We prove {\bf statement (\ref{le:P:exists})}: We denote by $\pi:P\to\C$ the bundle projection, and identify $S^2\iso\C\cup\{\infty\}$. We choose $\si$ and $x_\infty$ as in Lemma \ref{le:si}, and we define $\wt P$ to be the quotient of $P\disj\big((S^2\wo\{0\})\x G\big)$ under the equivalence relation generated by $p\sim (\pi(p),g)$, where $g\in G$ is determined by $(\si\circ\pi(p))g=p$, for $p\in P$. We define $\iota$ to be the canonical map from $P$ to $\wt P$, and $\wt u:\wt P\to M$ to be the unique map satisfying 
\[\wt u\circ\iota=u,\quad\wt u([(\infty,g)]):=g^{-1}x_\infty,\,\forall g\in G.\]
The statement of Lemma \ref{le:si} implies that this map is continuous. This proves (\ref{le:P:exists}).

We prove {\bf statement (\ref{le:P:iso})}. Uniqueness of $\wt\Phi$ follows from the condition $\wt\Phi\circ\iota'=\iota\circ\Phi$ and continuity of $\wt\Phi$. We prove existence: We define the map $\phi:\wt P'_\infty\to\wt P_\infty$ as follows. The map $\wt u$ descends to a continuous map $\wt f:S^2\to M/G$. Recall that $M^*$ denotes the set of points in $M$ where $G$ acts freely. We denote $\wt U:=\wt f^{-1}(M^*/G)$. Since $M^*$ is open, the set $\wt U$ is, as well. Since $G$ is compact, the canonical map $M^*\to M^*/G$ defines a smooth $G$-bundle. We denote by $\wt\pi:\wt P\to S^2$ the projection map. We define
\begin{equation}\label{eq:Psi wt P}\wt\Psi:\wt\pi^{-1}(\wt U)\to\wt f|_{\wt U}^*M^*,\quad\wt\Psi(\wt p):=\big(\wt\pi(\wt p),\wt u(\wt p)\big).
\end{equation}
Furthermore, we define $\wt f',\wt U',\wt\pi',\wt\Psi'$ in an analogous way, using $\wt P'$ and $\wt u'$. Since $\Phi$ descends to the identity on $\C$, and $u'=u\circ\Phi$, the maps $u$ and $u'$ descend to the same map $\C\to M/G$. Since $\wt u\circ\iota=u$, $\wt u'\circ\iota'=u'$, and $\wt u$ and $\wt u'$ are continuous, it follows that $\wt f=\wt f'$. We claim that there exists a unique map $\wt\Phi:\wt P'\to\wt P$, satisfying
\begin{equation}\label{eq:wt Phi wt Psi}\wt\Phi=\wt\Psi^{-1}\circ\wt\Psi'\,\textrm{on }(\wt\pi')^{-1}(\wt U),\quad\wt\Phi\circ\iota'=\iota\circ\Phi\,\textrm{on }P'.
\end{equation}
To see uniqueness of this map, note that the hypothesis $\wt u'(\wt P'_\infty)\sub M^*$ implies that $\wt P'_\infty\sub(\wt\pi')^{-1}(\wt U)$. Hence conditions (\ref{eq:wt Phi wt Psi}) determine $\wt\Phi$ on the whole of $\wt P'$. Existence of this map follows from the equality $\wt\Psi'\circ\iota'=\wt\Psi\circ\iota\circ\Phi$ on ${\pi'}^{-1}(\wt U\cap\C)\sub P'$, which follows from the assumptions $\wt u'\circ\iota'=u'$, $\wt u\circ\iota=u$, and $u\circ\Phi=u'$. The map $\wt\Phi$ has the required properties. This proves (\ref{le:P:iso}) and completes the proof of Lemma \ref{le:P}.
\end{proof}
We now prove Lemma \ref{le:section} (p.~\pageref{le:section}), which was used in Section \ref{SEC:HOMOLOGY CHERN}, in order to define the $(\om,\mu)$-homotopy class of an equivalence class $W$ of triples $(P,A,u)$. (See Definition \ref{defi:homotopy W}.)
\begin{proof}[Proof of Lemma \ref{le:section}]\setcounter{claim}{0}\label{proof:le:section} To prove the first statement, we choose a section $\si_0$ of the restriction of $P$ to the disk $\DDD$, of class $W^{2,p}_\loc$. By Lemma \ref{le:si} there exists a section $\wt\si$ of the restriction of the bundle $P$ to $B_1^C:=\C\wo B_1$, of class $W^{1,p}_\loc$, and a point $x_\infty\in\mu^{-1}(0)$, such that $u\circ\wt\si(re^{i\phi})$ converges to $x_\infty$, uniformly in $\phi\in\R$, as $r\to\infty$. We define $g_\infty:S^1\to G$ to be the unique map satisfying $\wt\si=\si_0g_\infty$, on $S^1$, and $\si$ to be the continuous section of $P$ that agrees with $\si_0$ on $B_1$, and satisfies
\[\si(z)=\wt\si(z)g_\infty(z/|z|)^{-1},\quad\forall z\in B_1^C.\]
By regularizing $\si$, we may assume that it is of class $W^{1,p}_\loc$. This section satisfies the requirements of the first part of the lemma. 

To prove the second statement, let $\si$ be a section of $P$ of class $W^{1,p}_\loc$, such that the map $u\circ\si:\C\to M$ continuously extends to a map $\wt u:\Si\to M$. It follows from Claim \ref{claim:bar u} in the proof of Lemma \ref{le:si} that there exists a point $\bar x_\infty\in\BAR M=\mu^{-1}(0)/G$, such that $G\wt u(z)=\bar x_\infty$, for every $z\in\dd\Si$. The second statement follows from this. This proves Lemma \ref{le:section}.
\end{proof}
The next lemma was used in Definition \ref{defi:homotopy W}.
\begin{lemma}\label{le:homotopic} Let $p>2$, $\lam>1-2/p$, $(P,A,u)\in\BB^p_\lam$ be a triple, and $\si,\si'$ sections of $P$ as in Lemma \ref{le:section}. Then the continuous extensions $\wt u,\wt{u'}:\Si\to M$ of $u\circ\si,u\circ\si'$ are weakly $(\om,\mu)$-homotopic. 
\end{lemma}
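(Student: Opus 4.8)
\textbf{Proof plan for Lemma \ref{le:homotopic}.} The plan is to interpolate between the two sections $\si$ and $\si'$ through a path of sections, and to check that the corresponding path of extensions stays inside $C(\Si,M;\om,\mu)$. First I would observe that $\si$ and $\si'$ are two $W^{1,p}_\loc$-sections of the same bundle $P$ over $\C$, so there is a unique map $g\in W^{1,p}_\loc(\C,G)$ with $\si'=\si g$. Since $G$ is connected (hypothesis on $G$ from Chapter \ref{chap:main}) and $\C$ is contractible, the map $g$ is homotopic through maps $\C\to G$ to the constant map $\one$; moreover, using Lemma \ref{le:si} and the requirement that both $u\circ\si$ and $u\circ\si'$ extend continuously to $\Si$, the boundary values of $g$ on the added circle at $\infty$ are controlled (both $u\circ\si(re^{i\phi})$ and $u\circ\si'(re^{i\phi})$ converge uniformly to points in $\mu^{-1}(0)$, which lie in $M^*$ by (H), so $g(re^{i\phi})$ converges uniformly to a fixed element of $G$). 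Hence we may choose a continuous family $g_t$, $t\in[0,1]$, of maps $\Si\to G$ with $g_0=\one$, $g_1=g$ (after the usual regularization so that each $\si g_t$ is $W^{1,p}_\loc$ and extends continuously to $\Si$).

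Next I would set $u_t:=u\circ(\si g_t):\C\to M$ and let $\wt u_t:\Si\to M$ be its continuous extension; by construction $\wt u_0=\wt u$ and $\wt u_1=\wt{u'}$. The map $(t,z)\mapsto\wt u_t(z)$ is continuous on $[0,1]\x\Si$ because $u$, the sections, and the family $g_t$ are all continuous and the extensions depend continuously on the data near $\infty$ (again via the uniform convergence statement in Lemma \ref{le:si} applied to each $\si g_t$, whose $\si g_t$-pullback of $A$ still lies in $L^p_\lam$). It then remains to verify that $\wt u_t\in C(\Si,M;\om,\mu)$ for every $t$, i.e.\ that for the single boundary component $X=\dd\Si$ the set $\wt u_t(X)$ lies in a single $G$-orbit contained in $\mu^{-1}(0)$. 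But for each $t$ the second statement of Lemma \ref{le:section} (its proof via Claim \ref{claim:bar u}) shows precisely that $G\wt u_t(z)$ is a fixed point $\bar x_\infty^t\in\BAR M=\mu^{-1}(0)/G$ for all $z\in\dd\Si$; equivalently $\wt u_t(\dd\Si)$ is contained in a $G$-orbit inside $\mu^{-1}(0)$. Thus $(t,z)\mapsto\wt u_t(z)$ is a weak $(\om,\mu)$-homotopy from $\wt u$ to $\wt{u'}$, proving the lemma.

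The main obstacle is the regularity bookkeeping at $\infty$: one must ensure that each intermediate section $\si g_t$ (and not just the endpoints) yields a map $u\circ(\si g_t)$ that extends continuously to $\Si$, and that these extensions vary continuously in $t$. This is handled exactly as in the proof of Lemma \ref{le:section}: choose $g_t$ so that near $\infty$ it only depends on the angular variable (in the region $B_1^C$ write $g_t(z)$ as a homotopy of loops $S^1\to G$), regularize to restore $W^{1,p}_\loc$-regularity, and invoke Lemma \ref{le:si} to get the uniform-in-angle convergence of $u\circ(\si g_t)(re^{i\phi})$. Given this, continuity in $(t,z)$ on the compact surface $\Si$ is routine, and the $(\om,\mu)$-admissibility of each slice is automatic from Claim \ref{claim:bar u}. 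Hence no genuinely new estimate beyond those already established in Section \ref{SEC:HOMOLOGY CHERN} and Lemma \ref{le:si} is required.
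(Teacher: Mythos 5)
Your argument is correct, but it takes a genuinely different route from the paper's. The paper first interpolates the gauge transformation $g_0$ (defined by $g_0\si'=\si$) only on a compact ball $\BAR{B_R}$, keeping it equal to $g_0$ outside $B_{R+1}$; this reduces to the case $\wt u=\wt{u'}$ on $\BAR B_R$. It then applies the homotopy lifting property of the $G$-bundle $M^*\to M^*/G$ on the annular region $\Si\wo B_R$ to construct the remaining piece of the homotopy. Your approach is more direct: you extend $g$ (with $\si'=\si g$) to a continuous map on all of $\Si$, homotope $g$ to the identity using contractibility of $\Si$ together with connectedness of $G$, and then observe that $\wt u_t:=g_t^{-1}\cdot\wt u$ is the desired weak $(\om,\mu)$-homotopy. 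This avoids the homotopy-lifting step entirely, at the cost of needing the continuous extension of $g$ to $\Si$, which relies on the freeness and properness of the $G$-action near $\mu^{-1}(0)$ (so that from the uniform convergence of $\wt u$ and $\wt{u'}$ one can recover the uniform convergence of $g$).

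Two small points. First, your claim that ``$g(re^{i\phi})$ converges uniformly to a fixed element of $G$'' overstates the situation: the boundary value of $\wt u$ along $\dd\Si$ is generally a non-constant parametrization of the orbit $Gx_\infty$ (the twist $g_\infty(z/|z|)^{-1}$ introduced in the proof of Lemma \ref{le:section} is non-constant precisely when $[W]$ is nontrivial), so $g$ extends to a continuous, possibly non-constant map $\dd\Si\to G$. This does not affect your argument, since any continuous map $\Si\to G$ is null-homotopic. Second, the worry in your final paragraph about regularizing the intermediate sections $\si g_t$ to $W^{1,p}_\loc$ and re-invoking Lemma \ref{le:si} is unnecessary: a weak $(\om,\mu)$-homotopy only needs to be continuous, and since $u\circ(\si g_t)=g_t^{-1}\cdot(u\circ\si)=g_t^{-1}\cdot\wt u|_{\C}$, the formula $\wt u_t=g_t^{-1}\cdot\wt u$ already gives the continuous extension to $\Si$ for free, with continuity in $(t,z)$ immediate from continuity of the $G$-action, of $g_t$, and of $\wt u$. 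Admissibility of each slice is then, as you note, automatic because $\mu^{-1}(0)$ and $G$-orbits are both $G$-invariant.
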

\begin{proof}[Proof of Lemma \ref{le:homotopic}]\setcounter{claim}{0} Let $R\in(0,\infty)$. We denote by $B_R$ and $\BAR{B_R}$ the open and closed balls in $\C$, of radius $R$, centered around 0. 
\begin{Claim}There exists a continuous map $h:[0,1]\x\Si\to M$ such that 
\begin{equation}\label{eq:h 0}h(0,\cdot)=\wt u,\quad h(1,z)=\wt{u'}(z),\,\forall z\in\BAR{B_R},\quad h(t,z)=h(0,z),\,\forall z\in\dd\Si,t\in[0,1].
\end{equation}
\end{Claim}
\begin{proof}[Proof of the claim] We define
\[g_0:\C\to G,\quad g_0(z)\si'(z):=\si(z).\]
There exists a continuous map $g:[0,1]\x\C\to G$ such that 
\begin{equation}\label{eq:g 0}g(0,\cdot)=g_0,\quad g(1,z)=\one,\,\forall z\in\BAR{B_R},\quad g(t,z)=g_0(z),\,\forall z\in\C\wo B_{R+1}.
\end{equation}
To see this, observe that we may assume without loss of generality that $g_0(0)=\one$. (Here we use the assumption that $G$ is connected.) We choose a continuous map $f:[0,1]\x\C\to\C$ such that 
\[f(0,\cdot)=\id,\quad f(1,z)=0,\,\forall z\in\BAR{B_R},\quad f(t,z)=z,\,\forall z\in\C\wo B_{R+1}.\]
We define $g:=g_0\circ f$. This map satisfies (\ref{eq:g 0}). We now define 
\[h(t,z):=\left\{\begin{array}{ll}
u\big(g(t,z)\si(z)\big),&\textrm{if }z\in\C,\\
\wt u(z),&\textrm{if }z\in\dd\Si.
\end{array}\right.\]
This map satisfies the conditions (\ref{eq:h 0}). This proves the claim.
\end{proof}
It follows from hypothesis (H) that there exists a number $R>0$ such that $\wt u(\Si\wo B_R)\sub M^*$. By the claim, we may assume without loss of generality that $\wt u=\wt{u'}$ on $\BAR B_R$. Since $G$ is compact, the canonical projection $\pi:M^*\to M^*/G$ naturally defines a smooth $G$-bundle. It follows that the map 
\[[0,1]\x(\Si\wo B_R)\ni(t,z)\mapsto\pi\circ\wt u(z)\in M^*/G\]
has a continuous lift $h:[0,1]\x(\Si\wo B_R)\to M^*$ that agrees with the map $(0,z)\mapsto\wt u(z)$ on $\{0\}\x(\Si\wo B_R)$, with the map $(1,z)\mapsto\wt{u'}(z)$ on $\{1\}\x(\Si\wo B_R)$, and with the map $(t,z)\mapsto\wt u(z)$ on $[0,1]\x S^1_R$, where $S^1_R\sub\C$ denotes the circle of radius $R$, around 0. The map 
\[[0,1]\x\Si\ni(t,z)\mapsto\left\{\begin{array}{ll}
\wt u(z),&\textrm{if }z\in\BAR{B_R},\\
h(t,z),&\textrm{otherwise,}
\end{array}\right.\]
is a weak $(\om,\mu)$-homotopy from $\wt u$ to $\wt{u'}$. This proves Lemma \ref{le:homotopic}.
\end{proof}
We now prove Proposition \ref{prop:Chern Maslov}. We need the following. Let $w$ be a representative of $W$. Let $(\wt P,\iota,\wt u)$ be an extension as in Lemma \ref{le:P} (p.~\pageref{le:P}). Then $\om$ induces a fiberwise symplectic form $\wt\om$ on the topological vector bundle $TM^{\wt u}=(\wt u^*TM)/G$ over $S^2$. We denote by $c_1(TM^{\wt u},\wt\om)$ its first Chern class.
\begin{lemma}[Chern number]\label{le:Chern bundle} We have
\begin{equation}\label{eq:c 1 G W}\big\lan c_1^G(M,\om),[W]\big\ran=\big\lan c_1(TM^{\wt u},\wt\om),[S^2]\big\ran.
\end{equation} 
\end{lemma}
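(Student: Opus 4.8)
The statement to prove is Lemma \ref{le:Chern bundle}, identifying the equivariant Chern number $\langle c_1^G(M,\om),[W]\rangle$ with the ordinary Chern number $\langle c_1(TM^{\wt u},\wt\om),[S^2]\rangle$ of the symplectic vector bundle over $S^2$ obtained from an extension $(\wt P,\iota,\wt u)$ of $(P,u)$ as in Lemma \ref{le:P}. The plan is to unwind both sides in terms of the classifying map of $\wt P$ and the Borel construction, and then to observe that the two bundles in question are pulled back from the same universal object.

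First I would recall the definition of $c_1^G(M,\om)$: it is the ordinary first Chern class of the complex vector bundle $(TM\times\EG)/G$ over $(M\times\EG)/G$, where $\EG\to\BG$ is a universal $G$-bundle and the complex structure comes from any $G$-invariant $\om$-compatible $J$ (the resulting class is independent of $J$ since the space of such $J$ is contractible). Next, using Remark \ref{rmk:homology}, the equivariant homology class $[W]=[\wt u]_G\in H_2^G(M,\Z)=H_2((M\times\EG)/G,\Z)$ is the pushforward of the fundamental class $[S^2]$ under the map $\wt u_G:S^2\to(M\times\EG)/G$ induced by $(\wt u,\theta):\wt P\to M\times\EG$, for a choice of $G$-equivariant $\theta:\wt P\to\EG$. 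Then, by naturality of the cap/pairing and functoriality of Chern classes,
\[
\big\langle c_1^G(M,\om),[W]\big\rangle=\big\langle c_1^G(M,\om),(\wt u_G)_*[S^2]\big\rangle=\big\langle \wt u_G^*\,c_1^G(M,\om),[S^2]\big\rangle=\big\langle c_1\big(\wt u_G^*\,(TM\times\EG)/G\big),[S^2]\big\rangle.
\]
So it remains to identify the pulled-back bundle $\wt u_G^*\,(TM\times\EG)/G$ over $S^2$ with $TM^{\wt u}=(\wt u^*TM)/G$, as complex (equivalently, symplectic) vector bundles.

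For that identification I would write down the explicit isomorphism. A point of $\wt u_G^*\,(TM\times\EG)/G$ over $z\in S^2$ is a pair $(z,[(v,e)])$ with $[(\wt u(p),\theta(p))]=[(x,e)]$ in $(M\times\EG)/G$ for $p\in\wt P_z$, i.e. $(x,e)=g\cdot(\wt u(p),\theta(p))$ for some $g\in G$, and $v\in T_xM$; the equivalence is by the diagonal $G$-action on $(v,e)$ (and simultaneously on $x$). Sending such a class to $G\cdot(p, g^{-1}v)\in(\wt u^*TM)/G$ is well-defined (independent of the choices of $p$, $g$, and $e$, using that $\theta$ is $G$-equivariant so $\theta$ records exactly the $G$-ambiguity), $G$-invariantly complex-linear on fibers, and invertible, hence a bundle isomorphism over $\id_{S^2}$. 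Since the complex structures on both sides are induced by the same $J$ (pushed down), this is an isomorphism of complex vector bundles, and equality (\ref{eq:c 1 G W}) follows. Finally I would note that the identity $c_1(TM^{\wt u},\wt\om)=c_1(TM^{\wt u},\wt J)$ used elsewhere is just the standard fact that a symplectic form and a compatible almost complex structure determine the same Chern classes, so the $\wt\om$-version and the $\wt J$-version of the right-hand side agree; and that independence of the whole expression from the choice of extension $(\wt P,\iota,\wt u)$ is exactly the content of Lemma \ref{le:P}(\ref{le:P:iso}), while independence from $\EG$ and $\theta$ is Remark \ref{rmk:homology}.

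The main obstacle is bookkeeping rather than conceptual: making the bundle isomorphism in the last paragraph genuinely canonical, i.e. checking carefully that the map $G\cdot(p,e)\mapsto\dots$ is independent of all the auxiliary choices (the lift $p\in\wt P_z$, the point $e\in\EG$, the group element $g$ trivializing the equivalence), and that it is continuous — the bundles here are only topological $G$-bundles over a closed surface, so one works with continuous, not smooth, trivializations, and $\im L_x$ is not a subbundle, but none of that matters for this particular statement since $TM^{\wt u}$ is an honest complex vector bundle. A secondary point to handle cleanly is the $J$-independence of $c_1^G(M,\om)$, but this is standard and can be cited. No hard analysis is involved; the proof is a diagram-chase in equivariant cohomology together with one explicit fiberwise isomorphism.
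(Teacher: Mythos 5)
Your argument is correct and follows the same route as the paper: choose $\theta:\wt P\to\EG$, identify $[W]$ with the pushforward of $[S^2]$ under the induced map $S^2\to(M\x\EG)/G$, pull back $c_1^G(M,\om)$ by naturality, and identify the pulled-back bundle with $TM^{\wt u}$. The only difference is that you write out the fiberwise bundle isomorphism by hand, whereas the paper packages exactly that computation as Remark~\ref{rmk:u E G} and cites it.
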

For the proof of this lemma we need the following remark.
\begin{rmk}\label{rmk:u E G} Let $G$ be a topological group, $X$ and $X'$ topological spaces, $P\to X$ and $P'\to X'$ topological $G$-bundles, $M$ a topological $G$-space, $E$ a $G$-equivariant symplectic vector bundle over $M$, and $u:P\to M$, $\theta:P\to P'$ continuous $G$-equivariant maps. We define $f:X\to(M\x P')/G$ to be the map induced by $u$ and $\theta$. Then the symplectic vector bundles
\begin{equation}\label{eq:u E G}(u^*E)/G,\quad f^*\big((E\x P')/G\big)
\end{equation}
are isomorphic. Here we denote by $(u^*E)/G$ the symplectic vector bundle over $X$ obtained from the pullback bundle $u^*E\to P$ as the quotient by the $G$-action, by $E\x P'$ the natural symplectic vector bundle over $M\x P'$, and by $(E\x P')/G$ the induced symplectic vector bundle over $(M\x P')/G$. An isomorphism between the bundles in (\ref{eq:u E G}) is given by the map
\[G(p,v)\mapsto\big(\pi(p),G(v,\theta(p))\big).\Box\]
\end{rmk}
\begin{proof}[Proof of Lemma \ref{le:Chern bundle}]\setcounter{claim}{0} We choose a continuous $G$-equivariant map $\theta:\wt P\to\EG$. We denote by
\[f:S^2\iso\C\disj\{\infty\}\to(M\x\EG)/G\]
the map induced by $(\wt u,\theta):\wt P\to M\x\EG$. By definition, $c_1^G(M,\om)$ is the first Chern class of the vector bundle
\[(TM\x\EG)/G\to(M\x\EG)/G,\]
equipped with the fiberwise symplectic form induced by $\om$. Furthermore, we have $[W]=f_*[S^2]$. Therefore, equality (\ref{eq:c 1 G W}) follows from naturality of the first Chern class under pullback by the map $f$, and Remark \ref{rmk:u E G}, with $X:=S^2$, $(P,u)$ replaced by $(\wt P,\wt u)$, and $(E,P'):=(TM,\EG)$. 
This proves Lemma \ref{le:Chern bundle}.
\end{proof}
For the proof of Proposition \ref{prop:Chern Maslov} we also need the following.
\begin{rmk}\label{rmk:c 1 m} Let $\Si$ be an oriented topological surface homeomorphic to the closed disk. We denote by $\Si'$ the surface obtained from $\Si$ by collapsing its boundary to a point%
\footnote{The surface $\Si'$ is homeomorphic to $S^2$.}%
, and by $f:\Si\to\Si'$ the canonical ``collapsing'' map. Let $(E,\om)$ be a symplectic vector bundle over $\Si'$, $(V,\Om)$ a symplectic vector space of dimension the rank of $E$, and $\Psi:\Si\x V\to f^*E$ a symplectic trivialization. Then we have 
\begin{equation}\label{eq:c 1 m}\big\lan c_1(E,\om),[\Si']\big\ran=m_{\dd\Si,\Om}\Big(\dd\Si\x\dd\Si\ni(z,z')\mapsto\Psi_{z'}^{-1}\Psi_z\in\Aut\Om\Big).
\end{equation}
Here $[\Si']$ denotes the fundamental class of $\Si'$ and $\Aut\Om$ the group of linear symplectic automorphisms of $V$, and $m_{\dd\Si,\Om}$ is defined as in (\ref{eq:m X om}). Furthermore, we denote by $\pi:\Si\to\Si'$ the canonical map, and we use the canonical identification $(f^*E)_z=E_{\pi(z)}$, for $z\in\dd\Si$. Equality (\ref{eq:c 1 m}) follows from an elementary argument (e.g.~an argument as in the proof of \cite[Theorem 2.69]{MS98}.) $\Box$
\end{rmk}
\begin{proof}[Proof of Proposition \ref{prop:Chern Maslov} (p.~\pageref{prop:Chern Maslov})]\label{proof:prop:Chern Maslov}\setcounter{claim}{0} We choose an extension $(\wt P,\iota,\wt u)$ as in Lemma \ref{le:P}(\ref{le:P:exists}), such that $\wt u(\wt P_\infty)\sub\mu^{-1}(0)$. (It follows from the proof of Lemma \ref{le:P}(\ref{le:P:exists}) that $\wt u$ may be chosen to satisfy this condition.) By Lemma \ref{le:Chern bundle}, equality (\ref{eq:c 1 G W}) holds. We denote by $\wt\pi:\wt P\to S^2$ the canonical projection, by $\Si$ the compact surface obtained from $\C$ by ``gluing a circle at $\infty$'', and by
\[f:\Si\to\C\disj\{\infty\}\iso S^2\]
the map that is the identity on the interior $\C=\Int\Si$, and maps the boundary $\dd\Si\iso S^1$ to $\infty$. We choose a continuous map $\wt\si:\Si\to\wt P$, such that $\wt\pi\circ\wt\si=f$. We define $v:=\wt u\circ\wt\si:\Si\to M$. This map continuously extends the map $u\circ\si$, where $\si:=\wt\si|_{\C}$. Hence, by definition, the $(\om,\mu)$-homotopy class of $W$ equals the $(\om,\mu)$-homotopy class of $v$. We choose a symplectic trivialization $\Psi:\Si\x V\to v^*TM$. For $z,z'\in\dd\Si$ we define $g_{z',z}\in G$ to be the unique element satisfying 
\begin{equation}\label{eq:wt si g}\wt\si(z')g_{z',z}=\wt\si(z).
\end{equation} 
It follows that $v(z')=g_{z',z}v(z)$, and hence, using the definition of the Maslov index,
\begin{eqnarray}\nn&m_{\om,\mu}\big((\om,\mu)\textrm{-homotopy class of }W\big)&\\
\label{eq:m m}&=m_{\dd\Si,\Om}\Big(\dd\Si\x\dd\Si\ni(z',z)\mapsto\Psi_{z'}^{-1}g_{z',z}\cdot\Psi_z\in\Aut\Om\Big).&
\end{eqnarray}
The statement of Proposition \ref{prop:Chern Maslov} is now a consequence of the following claim.
\begin{Claim}The number (\ref{eq:m m}) agrees with $\big\lan c_1\big(TM^{\wt u},\wt\om\big),[S^2]\big\ran$.
\end{Claim}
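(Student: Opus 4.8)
The goal is to identify the number in \eqref{eq:m m}, namely
\[
m_{\dd\Si,\Om}\Big(\dd\Si\x\dd\Si\ni(z',z)\mapsto\Psi_{z'}^{-1}g_{z',z}\cdot\Psi_z\in\Aut\Om\Big),
\]
with $\big\lan c_1\big(TM^{\wt u},\wt\om\big),[S^2]\big\ran$. The natural bridge is Remark \ref{rmk:c 1 m}, applied to the symplectic vector bundle $(E,\om):=(TM^{\wt u},\wt\om)$ over $S^2$, the collapsing map $f:\Si\to S^2$, and a symplectic trivialization of $f^*(TM^{\wt u})$ deduced from $\Psi$. So the first step is to produce that trivialization: the map $\wt\si:\Si\to\wt P$ (with $\wt\pi\circ\wt\si=f$) together with $v=\wt u\circ\wt\si$ gives, for each $z\in\Si$, an identification of $(f^*TM^{\wt u})_z=TM^{\wt u}_{f(z)}$ with $T_{v(z)}M$ via the point $\wt\si(z)$ in the fiber; precisely, $[(\wt\si(z),X)]\mapsto X$ for $X\in T_{v(z)}M$. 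Composing with $\Psi_z:V\to T_{v(z)}M$ yields a symplectic trivialization $\wt\Psi:\Si\x V\to f^*(TM^{\wt u})$.

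\textbf{Key steps.} (1) Verify $\wt\Psi$ is well-defined, continuous, and symplectic, using that $\wt u$ is $G$-equivariant so that $[(\wt\si(z),X)]$ depends only on $\wt\si(z)$ and $X$, and that $\wt\om$ is the descent of $\wt u^*\om$. (2) Compute the transition cocycle of $\wt\Psi$ over $\dd\Si$. For $z,z'\in\dd\Si$ we have $f(z)=f(z')=\infty$, so $(f^*TM^{\wt u})_z=(f^*TM^{\wt u})_{z'}=TM^{\wt u}_\infty$, and the change-of-trivialization map is
\[
\dd\Si\x\dd\Si\ni(z,z')\mapsto\wt\Psi_{z'}^{-1}\wt\Psi_z\in\Aut\Om.
\]
Here I must unwind the identification: $\wt\Psi_z(\xi)=[(\wt\si(z),\Psi_z\xi)]$ and $\wt\Psi_{z'}(\eta)=[(\wt\si(z'),\Psi_{z'}\eta)]$; these two classes in $TM^{\wt u}_\infty$ coincide iff, writing $\wt\si(z)=\wt\si(z')g_{z',z}$ (equation \eqref{eq:wt si g}), we have $\Psi_z\xi=g_{z',z}\cdot(\Psi_{z'}\eta)$, i.e. $\eta=\Psi_{z'}^{-1}g_{z',z}^{-1}\cdot\Psi_z\xi$. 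A small bookkeeping check (comparing this with the sign/inverse conventions in Remark \ref{rmk:c 1 m}, where the cocycle is $(z,z')\mapsto\Psi_{z'}^{-1}\Psi_z$) shows the two regular symplectic transports over $\dd\Si$ agree; note that $(z,z')\mapsto g_{z',z}^{-1}=g_{z,z'}$ is continuous and satisfies the cocycle identity, so the Maslov index \eqref{eq:m X om} is unchanged by this reindexing. (3) Apply equality \eqref{eq:c 1 m} of Remark \ref{rmk:c 1 m} to conclude
\[
\big\lan c_1\big(TM^{\wt u},\wt\om\big),[S^2]\big\ran=m_{\dd\Si,\Om}\Big(\dd\Si\x\dd\Si\ni(z,z')\mapsto\Psi_{z'}^{-1}g_{z',z}\cdot\Psi_z\in\Aut\Om\Big),
\]
which is exactly \eqref{eq:m m}. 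Combining with Lemma \ref{le:Chern bundle}, i.e. \eqref{eq:c 1 G W}, and multiplying by $2$ gives the assertion of Proposition \ref{prop:Chern Maslov}.

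\textbf{Main obstacle.} The routine but delicate point is the compatibility of orientation and inverse/transpose conventions across three places: the definition \eqref{eq:m X om} of the Maslov index via the Salamon--Zehnder map, the cocycle in Remark \ref{rmk:c 1 m}, and the group action convention $g_{z',z}u(z)=u(z')$ used in Definition \ref{defi:Maslov}. One has to check that the regular symplectic transport $\Phi_X(z',z)=\Psi_{z'}^{-1}g_{z',z}\cdot\Psi_z$ appearing in the definition of $m_{\Si,\om,\mu}$ really is the transition cocycle of $\wt\Psi$ in the sense required by \eqref{eq:c 1 m}, not its inverse, and that the single boundary circle $\dd\Si$ is handled with the correct induced orientation. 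Once the conventions are lined up this is a direct computation with no analytic content; alternatively, the cleanest route is to cite \cite[Lemma 45]{ZiMaslov}, which already establishes that $m_{\Si,\om,\mu}$ of an $(\om,\mu)$-homotopy class equals the Maslov index relative to the coisotropic submanifold $\mu^{-1}(0)$, reducing the claim to the standard identity between that Maslov index and the first Chern number of the glued-up bundle.
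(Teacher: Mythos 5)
Your overall strategy coincides with the paper's: construct the symplectic trivialization $\wt\Psi$ of $f^*TM^{\wt u}$ from $\wt\si$ and $\Psi$, compute its transition cocycle over $\dd\Si$, and invoke Remark~\ref{rmk:c 1 m}. The gap is in your step (2), and the patch you propose for it does not work.

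With the conventions used in the paper (right principal $G$-bundle, left action on $M$, equivariance $u(pg)=g^{-1}u(p)$, so $g$ acts on $u^*TM$ by $(p,v)\mapsto(pg,g^{-1}\cdot v)$), the relation $\wt\si(z)=\wt\si(z')g_{z',z}$ from (\ref{eq:wt si g}) gives
\[
G\big(\wt\si(z),\Psi_z\xi\big)=G\big(\wt\si(z'),g_{z',z}\cdot\Psi_z\xi\big),
\]
hence $\wt\Psi_{z'}^{-1}\wt\Psi_z=\Psi_{z'}^{-1}g_{z',z}\cdot\Psi_z$, with no inverse; this is exactly the transport appearing in (\ref{eq:m m}), and Remark~\ref{rmk:c 1 m} closes the argument. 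Note also that this same convention is the one consistent with Definition~\ref{defi:Maslov}: from $\wt\si(z)=\wt\si(z')g_{z',z}$ and $u(pg)=g^{-1}u(p)$ one gets $v(z')=g_{z',z}v(z)$, as required there. Your cocycle $\Psi_{z'}^{-1}g_{z',z}^{-1}\cdot\Psi_z$ comes from using the opposite equivariance convention, which is therefore inconsistent with how $g_{z',z}$ is fixed in Definition~\ref{defi:Maslov}.

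The ``reindexing'' you propose to absorb the discrepancy is false on two counts. First, $\wt\Phi(z',z):=\Psi_{z'}^{-1}g_{z,z'}\cdot\Psi_z$ is in general not a regular symplectic transport: the composition law $\wt\Phi(z'',z)=\wt\Phi(z'',z')\wt\Phi(z',z)$ would require $g_{z',z''}g_{z,z'}=g_{z,z''}$, whereas the relation $u(z')=g_{z',z}u(z)$ yields only $g_{z,z'}g_{z',z''}=g_{z,z''}$; these agree only when the group elements commute. Second, even where $\wt\Phi$ is a transport, $\wt\Phi(z,z_0)$ is conjugate to $\Phi(z,z_0)^{-1}$ (with $\Phi$ the correct cocycle), and since the Salamon--Zehnder map is conjugation invariant and satisfies $\rho_\om(A^{-1})=\rho_\om(A)^{-1}$, the degree in (\ref{eq:m X om}) changes sign, so $m_{\dd\Si,\Om}(\wt\Phi)=-m_{\dd\Si,\Om}(\Phi)$. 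The sign error therefore has to be corrected at the source; once the equivariance convention is lined up, the paper's direct two-line cocycle computation gives the claim immediately.
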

\begin{pf}[Proof of the claim] We define the map
\[\wt\Psi:\Si\x V\to f^*TM^{\wt u},\quad\wt\Psi_zw:=\wt\Psi(z,w):=\big(z,G\big(\wt\si(z),\Psi_zw\big)\big).\]
This is a continuous symplectic trivialization. We denote by $\Si'$ the surface obtained from $\Si$ by collapsing its boundary $\dd\Si$ to a point. There is a canonical homeomorphism $\C\disj\{\infty\}\to\Si'$, and the composition of $f$ with this map agrees with the collapsing map. Therefore, applying Remark \ref{rmk:c 1 m} with $E:=TM^{\wt u}$ and $\om,\Psi$ replaced by $\wt\om,\wt\Psi$, we have
\begin{equation}\label{eq:c 1 m wt Psi}\big\lan c_1\big(TM^{\wt u},\wt\om\big),[S^2]\big\ran=m_{\dd\Si,\Om}\big(\dd\Si\x\dd\Si\ni(z,z')\mapsto\wt\Psi_{z'}^{-1}\wt\Psi_z\in\Aut\Om\big).
\end{equation}
Equality (\ref{eq:wt si g}) implies that
\[\wt\Psi_{z'}^{-1}\wt\Psi_z=\Psi_{z'}^{-1}g_{z',z}\cdot\Psi_z,\quad\forall z,z'\in\dd\Si.\]
Combining this with (\ref{eq:c 1 m wt Psi}), the claim follows.
\end{pf}
This proves Proposition \ref{prop:Chern Maslov}.
\end{proof}
\section{Weighted Sobolev spaces and a Hardy-type inequality}\label{sec:weighted}
Let $d\in \Z$. The following lemma was used in Section \ref{subsec:proof:Fredholm aug} in order to define norms on the vector spaces $\XXX'_{p,\lam,d}$ and $\XXX''_{p,\lam}$ defined in (\ref{eq:XXX' p lam d XXX'' p lam}). If $d<0$ then let $\rho_0\in C^\infty(\C,[0,1])$ be such that $\rho_0(z)=0$ for $|z|\leq1/2$ and $\rho_0(z)=1$ for $|z|\geq1$. In the case $d\geq0$ we set $\rho_0:=1$. Recall the definitions
\[p_d:\C\to \C,\,p_d(z):=z^d,\quad\lan x\ran:=\sqrt{1+|v|^2},\,\forall v\in\R^n.\]
\begin{lemma}\label{le:X d iso} For every $1<p<\infty$ and $\lam>-2/p$ the map
\[\C\oplus L^{1,p}_{\lam-d}(\C,\C)\to \C\cdot\rho_0p_d+L^{1,p}_{\lam-d}(\C,\C),\quad (v_\infty,v)\mapsto v_\infty\rho_0p_d+v\]
is an isomorphism of vector spaces.  
\end{lemma}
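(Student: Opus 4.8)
The statement to prove is Lemma \ref{le:X d iso}: for $1<p<\infty$ and $\lam>-2/p$, the map $(v_\infty,v)\mapsto v_\infty\rho_0 p_d+v$ from $\C\oplus L^{1,p}_{\lam-d}(\C,\C)$ to $\C\cdot\rho_0 p_d+L^{1,p}_{\lam-d}(\C,\C)$ is a vector space isomorphism. The map is clearly linear and surjective by the very definition of the target space, so the only thing to check is injectivity, i.e. that $v_\infty\rho_0 p_d\in L^{1,p}_{\lam-d}(\C,\C)$ forces $v_\infty=0$. Thus the whole content is: the function $\rho_0 p_d$ does \emph{not} lie in the weighted space $L^{1,p}_{\lam-d}(\C,\C)$ when $\lam>-2/p$, so it cannot be absorbed into the $L^{1,p}_{\lam-d}$ part.

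The plan is to compute the relevant weighted $L^p$-norms in polar coordinates. Recall $\Vert u\Vert_{L^{1,p}_{\mu}(\C,W)}=\Vert\lan\cdot\ran^{\mu}u\Vert_p+\Vert\lan\cdot\ran^{\mu+1}Du\Vert_p$. With $\mu:=\lam-d$ and $u=\rho_0 p_d$: outside $B_1$ we have $\rho_0\equiv1$, so $u(z)=z^d$, $|u(z)|=|z|^d$, and $|Du(z)|$ is comparable to $|z|^{d-1}$ (for $d\neq0$; for $d=0$ it vanishes outside $B_1$). Hence, for $R\geq1$,
\[
\int_{\C\setminus B_1}\lan z\ran^{p\mu}|u(z)|^p\,dz\ \gtrsim\ \int_1^\infty r^{p(\lam-d)}\,r^{pd}\,r\,dr\ =\ \int_1^\infty r^{p\lam+1}\,dr,
\]
and the analogous integral of the derivative term contributes $\int_1^\infty r^{p(\lam-d+1)}r^{p(d-1)}r\,dr=\int_1^\infty r^{p\lam+1}\,dr$ as well. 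Since $p\lam+1>p(-2/p)+1=-1$ exactly when $\lam>-2/p$, the exponent $p\lam+1$ is $>-1$, so both integrals diverge. Therefore $\rho_0 p_d\notin L^{1,p}_{\lam-d}(\C,\C)$. (One must also note $\rho_0 p_d$ is smooth on all of $\C$ — this is exactly why the cutoff $\rho_0$ is inserted in the case $d<0$, where $p_d$ is singular at $0$ — so it genuinely lies in the target space and the statement is not vacuous.)

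Given this, injectivity follows: if $v_\infty\rho_0 p_d+v=0$ with $v\in L^{1,p}_{\lam-d}$, then $v_\infty\rho_0 p_d=-v\in L^{1,p}_{\lam-d}$, and since $\rho_0 p_d\notin L^{1,p}_{\lam-d}$ while $L^{1,p}_{\lam-d}$ is a vector space, this forces $v_\infty=0$, hence also $v=0$. Thus the map is a linear bijection, and since it is visibly norm-continuous in both directions once the target is equipped with the quotient norm $\Vert v_\infty\rho_0p_d+v\Vert:=|v_\infty|+\inf\{\Vert v'\Vert_{L^{1,p}_{\lam-d}}: v_\infty\rho_0p_d+v'=v_\infty\rho_0p_d+v\}$, it is an isomorphism of normed (indeed Banach) spaces. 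There is no serious obstacle here; the only mildly delicate point is the bookkeeping of the radial-integral exponents — one should double-check that the power of $r$ after including the Jacobian $r\,dr$ and the weight $\lan r\rangle^{p\mu}\sim r^{p\mu}$ really is $p\lam+1$ in both the function and derivative terms, and that the divergence threshold $p\lam+1>-1$ matches $\lam>-2/p$ — but this is elementary.
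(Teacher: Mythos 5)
Your proof is correct and supplies exactly the details that the paper elides when it says "This follows from a straight-forward argument." The reduction to showing $\rho_0 p_d\notin L^{1,p}_{\lam-d}(\C,\C)$, and the polar-coordinate computation showing that the exponent $p\lam+1\geq -1$ (equivalently $\lam\geq -2/p$) forces divergence of the weighted $L^p$-norm already in the zeroth-order term, is the natural argument and verifies the lemma; the remark about norm continuity is harmless but superfluous, since the paper uses this lemma in the opposite direction, to \emph{induce} a norm on the target via the isomorphism.
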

\begin{proof}[Proof of Lemma \ref{le:X d iso}]\setcounter{claim}{0} This follows from a straight-forward argument. 
\end{proof}
The following proposition was used in the proofs of Theorem \ref{thm:Fredholm aug} (Section \ref{subsec:proof:Fredholm aug}) and Proposition \ref{prop:X X w} (Section \ref{subsec:proofs}). For every normed vector space $V$ we denote by $C_b(\R^n,V)$ the space of bounded continuous maps from $\R^n$ to $V$. We denote by $B_r$ the ball of radius $r$ in $\R^n$, and by $X^C$ the complement of a subset $X\sub\R^n$. Recall the definitions (\ref{eq:L k p},\ref{eq:W k p}) of the weighted Sobolev spaces $L^{k,p}_\lam(\Om,W)$ and $W^{k,p}_\lam(\Om,W)$.
\begin{prop}[Weighted Sobolev spaces]\label{prop:lam d Morrey} Let $n\in\N$. Then the following statements hold.
\begin{enui} \item\label{prop:lam Morrey} Let $n<p<\infty$. Then for every $\lam\in\R$ there exists $C>0$ such that 
\begin{equation}
    \label{eq:lam Morrey}\Vert u\lan\cdot\ran^{\lam+\frac np}\Vert_{L^\infty(\R^n)}\leq C\Vert u\Vert_{L^{1,p}_\lam(\R^n)},\quad \forall u\in W^{1,1}_\loc(\R^n).
\end{equation}
If $\lam>-n/p$ then $L^{1,p}_\lam(\R^n)$ is compactly contained in $C_b(\R^n)$. 
\item\label{prop:lam W} For every $k\in\N_0$, $1<p<\infty$ and $\lam\in\R$ the map
\[W^{k,p}_\lam(\R^n)\ni u\mapsto \lan\cdot\ran^\lam u\in W^{k,p}(\R^n)\]
is a well-defined isomorphism (of normed spaces).
\item\label{prop:lam W cpt} Let $p>1$, $\lam\in\R$, and $f\in L^\infty(\R^n)$ be such that $\Vert f\Vert_{L^\infty(\R^n\wo B_i)}\to 0$, for $i\to \infty$. Then the operator
\[W^{1,p}_\lam(\R^n)\ni u\mapsto fu\in L^p_\lam(\R^n)\]
is compact.
\item \label{prop:lam d iso} For every $1<p<\infty$, $\lam\in\R$, $d\in\Z$, and $u\in L^{1,p}_\lam(B_1^C)$ the following inequality holds:
\[\Vert p_du\Vert_{L^{1,p}_{\lam-d}(B_1^C)}\leq \max\big\{-d2^{(-d+3)/2},2\big\}\Vert u\Vert_{L^{1,p}_\lam(B_1^C)}.\]
\end{enui}
\end{prop}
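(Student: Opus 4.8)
\textbf{Proof proposal for Proposition \ref{prop:lam d Morrey}.}

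The plan is to establish the four statements essentially independently, reducing each to a classical fact about unweighted Sobolev spaces together with elementary calculus in the weight $\lan\cdot\ran$. For part (\ref{prop:lam Morrey}), I would first prove the weighted Morrey inequality (\ref{eq:lam Morrey}) by a dyadic scaling argument: cover $\R^n\wo B_1$ by annuli $A_k:=\{2^{k}\leq|x|\leq 2^{k+1}\}$, $k\geq 0$, and on each $A_k$ rescale to the unit annulus. The ordinary Morrey embedding on the (fixed) unit annulus gives $\Vert u\Vert_{L^\infty(A_k)}\lesssim 2^{-kn/p}\Vert u\Vert_{L^p(A_k)}+2^{k(1-n/p)}\Vert Du\Vert_{L^p(A_k)}$; multiplying by $2^{k(\lam+n/p)}\sim\lan x\ran^{\lam+n/p}$ on $A_k$ and using that $\lan\cdot\ran^\lam$ and $\lan\cdot\ran^{\lam+1}$ control the two terms on the right yields a bound by $\Vert u\Vert_{L^{1,p}_\lam(A_k)}$, uniformly in $k$; on $B_1$ the estimate is the standard Morrey inequality. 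For the compactness claim when $\lam>-n/p$, I would combine (\ref{eq:lam Morrey}) with a tightness argument: given a bounded sequence in $L^{1,p}_\lam(\R^n)$, the weight forces $\Vert u_j\Vert_{L^\infty(B_R^C)}\to 0$ as $R\to\infty$ uniformly in $j$ (since $\lan x\ran^{-(\lam+n/p)}\to 0$), while on each fixed ball $B_R$ the Rellich--Kondrachov theorem (Morrey embedding into $C^0$ is compact on bounded domains for $p>n$) gives a uniformly convergent subsequence; a diagonal argument then produces a subsequence converging in $C_b(\R^n)$.

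For part (\ref{prop:lam W}), the map $u\mapsto\lan\cdot\ran^\lam u$ is obviously linear with inverse $v\mapsto\lan\cdot\ran^{-\lam}v$, so I only need two-sided bounds on the $W^{k,p}$-norm of $\lan\cdot\ran^\lam u$ in terms of the $W^{k,p}_\lam$-norm of $u$. This follows from the Leibniz rule once one observes that every derivative $\dd^\al\lan\cdot\ran^\lam$ is bounded by $C_\al\lan\cdot\ran^{\lam-|\al|}\leq C_\al\lan\cdot\ran^\lam$ (for $|\al|\le k$), so $\dd^\al(\lan\cdot\ran^\lam u)$ is a sum of terms each bounded pointwise by $C\lan\cdot\ran^\lam|\dd^\be u|$ with $|\be|\le k$; the reverse estimate is symmetric, replacing $\lam$ by $-\lam$. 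Part (\ref{prop:lam W cpt}) then reduces, via the isomorphism of part (\ref{prop:lam W}), to the statement that multiplication by $\tilde f:=\lan\cdot\ran^\lam f\lan\cdot\ran^{-\lam}=f$ is compact from $W^{1,p}(\R^n)$ to $L^p(\R^n)$ when $\Vert f\Vert_{L^\infty(B_i^C)}\to 0$; this is a standard fact—split $f=f\chi_{B_R}+f\chi_{B_R^C}$, the first piece is compact by Rellich on the bounded set $B_R$ and boundedness of $f$, the second has operator norm $\le\Vert f\Vert_{L^\infty(B_R^C)}$ which is small, so $f$ is a norm limit of compact operators.

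For part (\ref{prop:lam d iso}) I would compute directly. Writing $w:=p_d u$ on $B_1^C$, Leibniz gives $Dw=(Dp_d)u+p_d\,Du$, with $|p_d(z)|=|z|^{d}$ and $|Dp_d(z)|=|d|\,|z|^{d-1}$. Hence on $B_1^C$, where $|z|\ge 1$ and $\lan z\ran\le\sqrt2\,|z|$,
\[
\lan z\ran^{\lam-d}\big(|w|+|Dw|\big)\le \lan z\ran^{\lam-d}\Big(|z|^d|u|+|d|\,|z|^{d-1}|u|+|z|^d|Du|\Big)\le 2^{|\lam-d|/2}\lan z\ran^{\lam}\big(|u|+|d|\,|z|^{-1}|u|+|Du|\big).
\]
Since $|z|^{-1}\le 1$ on $B_1^C$, taking $L^p$-norms and tracking the constant (the factor $2^{(-d+3)/2}$ arising when $d<0$, where the geometric-weight comparison costs the most) gives the stated bound $\max\{-d\,2^{(-d+3)/2},2\}$. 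I expect the main obstacle to be the first part: making the dyadic-annulus scaling argument for (\ref{eq:lam Morrey}) genuinely uniform in $k$ and then upgrading it to a clean compactness statement, since one must handle the behavior at infinity (the weight providing tightness) and on compact sets (Rellich) simultaneously and patch them with a diagonal subsequence. The remaining parts are routine Leibniz-rule and approximation-by-compact-operators arguments.
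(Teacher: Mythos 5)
Your argument is correct in all four parts, and in two of them it takes a genuinely different route from the paper. For the weighted Morrey estimate (\ref{eq:lam Morrey}) the paper simply cites Bartnik's result (\cite[Theorem 1.2, (1.11)]{Ba}), whereas you rederive it from scratch by covering $\R^n\wo B_1$ with dyadic annuli, rescaling each to the unit annulus, and tracking the weights; this is self-contained and makes the $k$-uniformity transparent, at the cost of some bookkeeping. For part (\ref{prop:lam W cpt}) the paper works directly in the weighted spaces, extracting a diagonal subsequence via Rellich on the balls $\BAR B_j$ and then checking by hand that the limit lies in $L^p_\lam$ and that convergence upgrades from $L^p_\loc$ to $L^p_\lam$; you instead conjugate by the isomorphism of part (\ref{prop:lam W}) to reduce to multiplication by $f$ from $W^{1,p}(\R^n)$ to $L^p(\R^n)$ and then exhibit that operator as a norm limit of the compact operators $u\mapsto f\chi_{B_R}u$. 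Your version is cleaner because it separates the two mechanisms (Rellich on compacta; decay of $f$ providing tightness) at the level of operator norms rather than sequences. The compactness half of (\ref{prop:lam Morrey}), and parts (\ref{prop:lam W}) and (\ref{prop:lam d iso}), proceed essentially as in the paper. One small slip in your display for (\ref{prop:lam d iso}): the $L^{1,p}_{\lam-d}$ norm weights $|w|$ by $\lan\cdot\ran^{\lam-d}$ but $|Dw|$ by $\lan\cdot\ran^{\lam-d+1}$, not the same power, so your left-hand side should be $\lan z\ran^{\lam-d}|w|+\lan z\ran^{\lam-d+1}|Dw|$; moreover the intermediate factor $2^{|\lam-d|/2}$ does not depend on $\lam$ — the correct comparison on $B_1^C$ is $\lan z\ran^{-d}|z|^d=(|z|/\lan z\ran)^d\leq\max\{1,2^{-d/2}\}$, and it is this weight comparison applied term-by-term (with the extra factor $|d|$ from $Dp_d$) that produces the constant $\max\{-d\,2^{(-d+3)/2},2\}$. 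This does not change the structure of the argument, only the bookkeeping.
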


\begin{proof} [Proof of Proposition \ref{prop:lam d Morrey}]\setcounter{claim}{0} {\bf Proof of statement (\ref{prop:lam Morrey}):} Inequality (\ref{eq:lam Morrey}) follows from inequality (1.11) in Theorem 1.2 in the paper \cite{Ba} by R.~Bartnik. Assume now that $\lam>-n/p$. Then it follows from Morrey's embedding theorem that there exists a canonical bounded inclusion $L^{1,p}_\lam(\R^n)\inj C_b(\R^n)$. In order to show that this inclusion is compact, let $u_\nu\in L^{1,p}_\lam(\R^n)$ be a sequence such that
\begin{equation}\label{eq:C sup nu}C:=\sup_\nu\Vert u_\nu\Vert_{L^{1,p}_\lam(\R^n)}<\infty.
\end{equation}
By Morrey's embedding theorem and the Arzel\`a-Ascoli theorem on $\bar B_j$ (for $j\in\N$) and a diagonal subsequence argument, there exists a subsequence $u_{\nu_j}$ of $u_\nu$ that converges to some map $u\in W^{1,p}_\loc(\R^n)$, weakly in $W^{1,p}(B_j)$, and strongly in $C(\bar B_j)$, for every $j\in\N$. 
\begin{Claim}We have $u\in C_b(\R^n)$ and $u_{\nu_j}$ converges to $u$ in $C_b(\R^n)$. 
\end{Claim}
\begin{pf}[Proof of the claim] We choose a constant $C'$ as in the first part of (\ref{prop:lam Morrey}). For every $R>0$ we have
\[\Vert u\Vert_{L^{1,p}_\lam(B_R)}\leq\limsup_j\Vert u_{\nu_j}\Vert_{L^{1,p}_\lam(B_R)}\leq C.\]
Hence $u\in L^{1,p}_\lam(\R^n)$.  Since $\lam>-n/p$, by inequality (\ref{eq:lam Morrey}), this implies $u\in C_b(\R^n)$. To see the second statement, we choose a smooth function $\rho:\R^n\to[0,1]$ such that $\rho(x)=0$ for $x\in B_1$, $\rho(x)=1$ for $x\in B_3^C$, and $|D\rho|\leq1$. Let $R\geq1$ and $j\in\N$. We define $\rho_R:=\rho(\cdot/R):\R^n\to [0,1]$. Abbreviating $v_j:=u_{\nu_j}-u$, we have 
\begin{equation}\label{eq:Vert v j}\Vert v_j\Vert_\infty\leq\big\Vert v_j(1-\rho_R)\big\Vert_\infty+\big\Vert v_j\rho_R\big\Vert_\infty.\end{equation} 
Inequality (\ref{eq:lam Morrey}) and the fact $\rho_R=0$ on $B_R$ imply that 
\begin{equation}\label{eq:Vert v j rho R}\big\Vert v_j\rho_R\big\Vert_\infty\leq C'R^{-\lam-\frac np}\Vert v_j\rho_R\Vert_{1,p,\lam}. 
\end{equation} 
Furthermore, using (\ref{eq:C sup nu}), we have
\[\Vert v_j\rho_R\Vert_{1,p,\lam}\leq 2\Vert v_j\Vert_{1,p,\lam}\leq4C.\]
Combining this with (\ref{eq:Vert v j}) and (\ref{eq:Vert v j rho R}), and the fact $\lim_{j\to\infty}\Vert v_j\Vert_{L^\infty(B_{3R})}=0$, it follows that
\[\limsup_{j\to\infty}\Vert v_j\Vert_\infty\leq 4CC'R^{-\lam-\frac np}.\]
Since $\lam>-n/p$ and $R\geq1$ is arbitrary, it follows that $u_{\nu_j}$ converges to $u$ in $C_b(\R^n)$. This proves the claim and completes the proof of statement {\bf (\ref{prop:lam Morrey})}.
\end{pf}

{\bf Statement (\ref{prop:lam W})} follows from a straight-forward calculation.

{\bf Proof of statement (\ref{prop:lam W cpt}):} Let $f\in L^\infty(\R^n)$ be as in the hypothesis. Let $u_\nu\in W^{1,p}_\lam(\R^n)$ be a sequence such that
\[C:=\sup_\nu\Vert u_\nu\Vert_{W^{1,p}_\lam(\R^n)}<\infty.\]
By the Rellich-Kondrashov compactness theorem on $\bar B_j$ (for $j\in\N$) and a diagonal subsequence argument there exists a subsequence $(\nu_j)$ and a map $v\in L^p_\loc(\R^n)$, such that $fu_{\nu_j}$ converges to $v$, strongly in $L^p(K)$, as $j\to\infty$, for every compact subset $K\sub \R^n$. Elementary arguments show that $v\in L^p_\lam(\R^n)$ and $fu_{\nu_j}$ converges to $v$ in $L^p_\lam(\R^n)$. (For the latter we use the hypothesis that $\Vert f\Vert_{L^\infty(\R^n\wo B_i)}\to 0$, as $i\to \infty$.) This proves {\bf (\ref{prop:lam W cpt})}. 

{\bf Statement (\ref{prop:lam d iso})} follows from a straight-forward calculation. This completes the proof of Proposition \ref{prop:lam d Morrey}.\end{proof}
The next result was used in the proofs of Proposition \ref{prop:X X w} (Section \ref{subsec:proofs}) and Lemma \ref{le:si} (Appendix \ref{sec:proofs homology}).
\begin{prop}[Hardy-type inequality]\label{prop:Hardy} Let $n\in\N$, $p>n$, $\lam>-n/p$ and $u\in W^{1,1}_\loc(\R^n,\R)$ be such that $\Vert Du|\cdot|^{\lam+1}\Vert_{L^p(\R^n)}<\infty$. Then $u(rx)$ converges to some $y_\infty\in \R$, uniformly in $x\in S^{n-1}$, as $r\to\infty$, and 
  \begin{equation}
    \label{eq:Hardy y}\big\Vert(u-y_\infty)|\cdot|^\lam\big\Vert_{L^p(\R^n)}\leq\frac p{\lam+\frac np}\left\Vert Du|\cdot|^{\lam+1}\right\Vert_{L^p(\R^n)}.
  \end{equation}
\end{prop}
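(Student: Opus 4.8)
The statement is a Hardy-type inequality on $\R^n$: from the weighted integrability of $Du$ one wants to conclude that $u$ has a radial limit $y_\infty$ at infinity and that $u-y_\infty$ itself lies in the appropriate weighted $L^p$-space, with an explicit constant. The plan is to reduce everything to the fundamental theorem of calculus along rays and then to Hardy's classical one-dimensional inequality. First I would establish the pointwise radial limit: for $r'>r>0$ and $x\in S^{n-1}$ write $u(r'x)-u(rx)=\int_r^{r'}\partial_\rho u(\rho x)\,d\rho$, and estimate the tail $\int_r^\infty|\partial_\rho u(\rho x)|\,d\rho$ using H\"older's inequality in the radial variable against the weight $\rho^{\lam+1}$. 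The exponent condition $p>n$ together with $\lam>-n/p$ makes $\rho^{-(\lam+1)}\rho^{(n-1)/(p-1)\cdot(-1)}$ (the dual weight times the Jacobian factor) integrable at infinity, so the tail is finite and tends to $0$ as $r\to\infty$ uniformly in $x$; in particular $r\mapsto u(rx)$ is Cauchy and the limit $y_\infty(x)$ exists. A separate argument (e.g.\ integrating the angular derivative, or using that $Du|\cdot|^{\lam+1}\in L^p$ controls angular differences on spheres of large radius) shows $y_\infty$ is independent of $x$; alternatively one can first prove the estimate \eqref{eq:Hardy y} for a candidate constant $y_\infty$ and deduce uniqueness from the fact that $\int_1^\infty\rho^{p\lam+n-1}\,d\rho=\infty$ when $\lam\ge -n/p$, which forces a nonzero constant to have infinite weighted norm.

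For the quantitative bound I would pass to polar coordinates: with $g(\rho,x):=u(\rho x)$ one has $\|(u-y_\infty)|\cdot|^\lam\|_{L^p(\R^n)}^p=\int_{S^{n-1}}\int_0^\infty|g(\rho,x)-y_\infty|^p\rho^{p\lam+n-1}\,d\rho\,d\sigma(x)$, and $g(\rho,x)-y_\infty=-\int_\rho^\infty\partial_s g(s,x)\,ds$. The inner integral is exactly the setting of the weighted Hardy inequality $\big\|\rho^{\alpha}\int_\rho^\infty f(s)\,ds\big\|_{L^p(0,\infty)}\le \frac{1}{|\alpha+1/p|}\big\|\rho^{\alpha+1}f\big\|_{L^p(0,\infty)}$, valid for $\alpha+1/p<0$, applied with $\alpha=\lam+(n-1)/p$ so that $\alpha+1/p=\lam+n/p>0$ --- wait, one must be careful with the sign convention: since we integrate from $\rho$ to $\infty$ the relevant condition is $\lam+n/p>0$, which is precisely the hypothesis, and the sharp constant is $p/(\lam+n/p)$. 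Carrying out this one-dimensional estimate pointwise in $x$, raising to the $p$-th power, integrating over $S^{n-1}$, and recognizing $\int_{S^{n-1}}\int_0^\infty|\partial_s g|^p s^{p(\lam+1)+n-1}\,ds\,d\sigma=\|Du|\cdot|^{\lam+1}\|_{L^p(\R^n)}^p$ (using $|\partial_s g(s,x)|\le|Du(sx)|$) gives \eqref{eq:Hardy y} with the stated constant.

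There is a technical regularity point to address: $u$ is only $W^{1,1}_{\loc}$, so the restriction to rays $\rho\mapsto u(\rho x)$ and the fundamental theorem of calculus are only justified for a.e.\ $x\in S^{n-1}$ and a.e.\ pair of radii. The standard fix is to mollify, prove the inequality for smooth $u_\eps$, and pass to the limit --- but mollification does not preserve the global weighted hypothesis cleanly near infinity, so it is better to note that $\Vert Du|\cdot|^{\lam+1}\Vert_{L^p}<\infty$ together with $p>n$ already gives, via a local Morrey/Sobolev estimate on dyadic annuli, that $u$ has a continuous representative on $\R^n\setminus\{0\}$; then the ray arguments are legitimate for every $x$ and the uniform convergence statement follows directly from the tail estimate. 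I would state this regularization as a short lemma and then run the polar-coordinate Hardy argument on the continuous representative.

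\textbf{Main obstacle.} The genuinely delicate part is getting the \emph{sharp} constant $p/(\lam+n/p)$ rather than merely some constant, and simultaneously handling the borderline nature of the hypothesis $\lam>-n/p$ (where the weight $\rho^{p\lam+n-1}$ sits exactly at the integrability threshold needed for the existence and uniqueness of $y_\infty$). This forces one to use the optimal one-dimensional weighted Hardy inequality with its exact constant and to track the exponent bookkeeping $\alpha=\lam+(n-1)/p$, $\alpha+1/p=\lam+n/p$ without slippage; the rest --- the radial-limit existence, the angular-independence of $y_\infty$, and the polar-coordinate change of variables --- is routine once the regularity representative is in hand.
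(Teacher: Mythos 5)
Your approach is correct in outline but genuinely different from the paper's, and it is worth comparing. The paper does not pass to polar coordinates and one-dimensional Hardy at all: for the quantitative estimate it cites an $n$-dimensional weighted Hardy inequality (its Lemma \ref{le:Hardy}, outsourced to Kavian) for functions vanishing near infinity, and then removes the compact-support assumption by multiplying $u-y_\infty$ with a cutoff $\rho_R$, applying that lemma, and controlling the resulting annulus term $\Vert(u-y_\infty)|\cdot|^\lam\Vert_{L^p(B_{2R}\setminus B_R)}$ as $R\to\infty$. For the existence of $y_\infty$ it proves a single clean Claim --- Morrey's estimate applied along a chain of dyadic balls from $x$ to $y$ gives $|u(x)-u(y)|\le C_1|x|^{-\eps}\Vert Du|\cdot|^{\lam+1}\Vert_{L^p(B_{|x|}^C)}$ for $0<|x|\le|y|$ --- which delivers the uniform-in-angle convergence and the independence of $y_\infty$ from the direction in one estimate, with no separate ray-versus-sphere bookkeeping. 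Your sketch splits that into a radial H\"older tail estimate plus a separately-invoked angular control that you mention in three alternative forms but never pin down; your own Morrey-on-dyadic-annuli observation would close it (oscillation over $B_{2r}\setminus B_r$ is $O(r^{-\lam-n/p})$), so the gap is cosmetic rather than fatal, but the paper's dyadic-chain Claim is the tidier device and is worth adopting.

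Two smaller remarks. First, your exponent bookkeeping is internally inconsistent: you state the one-dimensional dual Hardy inequality with constant $1/|\alpha+1/p|$ and apply it with $\alpha+1/p=\lam+n/p$, which yields $1/(\lam+n/p)$, yet you then announce the ``sharp constant'' $p/(\lam+n/p)$. The former is what the radial argument actually gives (and it is \emph{smaller} than the constant in the proposition, so no harm to correctness --- it simply means your route proves a stronger inequality than stated); the ``sharp'' claim should be dropped. Second, for the fundamental-theorem-of-calculus step on rays with $u\in W^{1,1}_{\loc}$, the cleanest fix is indeed the one you name in passing: $p>n$ plus the weighted bound gives a continuous representative via Morrey on annuli, after which the ray argument and the a.e.-$x$ Fubini reduction are both legitimate; this should be stated as a lemma rather than left as an aside.
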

For the proof of this proposition we need the following. We denote by $B_r$ the ball of radius $r$ in $\R^n$, and by $X^C$ the complement of a subset $X\sub\R^n$.
\begin{lemma}[Hardy's inequality]\label{le:Hardy} Let $n\in\N$, $1<p<\infty$, $\lam>-n/p$ and $u\in W^{1,1}_\loc(\R^n,\R)$. If there exists $R>0$ such that $u|_{B_R^C}=0$ then
\[\Vert u|\cdot|^\lam\Vert_{L^p(\R^n)}\leq\frac p{\lam+\frac np}\left\Vert Du|\cdot|^{\lam+1}\right\Vert_{L^p(\R^n)}\,(\in[0,\infty]).\]
\end{lemma}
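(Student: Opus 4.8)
The plan is to prove Hardy's inequality (Lemma \ref{le:Hardy}) for compactly supported (in the complement-of-a-ball sense) functions first in the smooth case and then pass to the general case by approximation, and to do the estimate in polar coordinates where the one-dimensional Hardy inequality is transparent. First I would reduce to $u\in C^\infty_c(\R^n)$ with $u|_{B_R^C}=0$: given a general $u\in W^{1,1}_\loc$ vanishing outside $B_R$, the right-hand side is either infinite (nothing to prove) or finite, in which case $u\in W^{1,p}(B_R)$ and $Du\in L^p$ with the weight bounded on the support away from $0$; a standard mollification produces $u_k\in C^\infty$, supported in a slightly larger ball, with $u_k\to u$ and $Du_k\to Du$ in the weighted norms (the weight $|x|^{\lambda+1}$ causes no trouble since $\lambda+1$ can be negative only near $0$, and there one uses that $p>n$ so $W^{1,p}$-functions are continuous and the weight $|x|^{p\lambda}$ with $\lambda>-n/p$ is locally integrable). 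I expect this approximation step to be the main technical obstacle, because one must be careful about the behavior near the origin where the weights are singular; the hypothesis $\lambda>-n/p$ and $p>n$ are exactly what make it work.

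For smooth $u$ with $u|_{B_R^C}=0$, I would write in polar coordinates $x=r\theta$, $r>0$, $\theta\in S^{n-1}$, and use
\[
u(r\theta)=-\int_r^\infty\partial_s\big(u(s\theta)\big)\,ds=-\int_r^\infty Du(s\theta)\cdot\theta\,ds,
\]
valid since $u$ vanishes for large $r$. Then for fixed $\theta$ the function $r\mapsto u(r\theta)$ is a compactly supported function of one variable, and the classical one-dimensional weighted Hardy inequality
\[
\left(\int_0^\infty |u(r\theta)|^p r^{p\lambda+n-1}\,dr\right)^{1/p}\le\frac{p}{\lambda+\frac np}\left(\int_0^\infty |\partial_r u(r\theta)|^p r^{(p(\lambda+1)+n-1)}\,dr\right)^{1/p}
\]
applies, where the exponent $p\lambda+n-1$ arises from the Jacobian $r^{n-1}\,dr\,d\theta$ and the weight $|x|^{p\lambda}$. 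The constant $p/(\lambda+n/p)$ is exactly the sharp Hardy constant for the weight exponent $p\lambda+n-1>-1$ (equivalent to $\lambda>-n/p$). Raising to the $p$-th power, integrating over $\theta\in S^{n-1}$, using $|\partial_r u(s\theta)|\le|Du(s\theta)|$, and converting back to Cartesian integrals gives
\[
\Vert u\,|\cdot|^\lambda\Vert_{L^p(\R^n)}\le\frac p{\lambda+\frac np}\Vert Du\,|\cdot|^{\lambda+1}\Vert_{L^p(\R^n)},
\]
which is Lemma \ref{le:Hardy}.

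To deduce Proposition \ref{prop:Hardy}, I would first establish the existence of the radial limit $y_\infty$. Given $u$ with $\Vert Du\,|\cdot|^{\lambda+1}\Vert_p<\infty$ and $p>n$, Morrey-type reasoning shows $u$ is continuous away from $0$; moreover for $R_2>R_1\ge1$ and $\theta\in S^{n-1}$,
\[
|u(R_2\theta)-u(R_1\theta)|\le\int_{R_1}^{R_2}|Du(s\theta)|\,ds\le\Big(\int_{R_1}^{R_2}|Du(s\theta)|^p s^{p(\lambda+1)+n-1}\,ds\Big)^{1/p}\Big(\int_{R_1}^{R_2}s^{-q(\lambda+1)-\frac{(n-1)q}{p}}\,ds\Big)^{1/q},
\]
with $q=p/(p-1)$; integrating the $p$-th power over $S^{n-1}$, the first factor is controlled by a tail of the finite integral $\Vert Du\,|\cdot|^{\lambda+1}\Vert_p^p$, and the second factor's exponent can be checked to make it finite and bounded (using $p>n$ and a computation), so $u(r\cdot)$ is uniformly Cauchy in $r$ and converges uniformly on $S^{n-1}$ to some $y_\infty$. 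Then I would apply Lemma \ref{le:Hardy} to $v:=\rho_k\cdot(u-y_\infty)$ where $\rho_k$ is a cutoff equal to $1$ on $B_k$ and $0$ outside $B_{k+1}$ with $|D\rho_k|\le1$: this $v$ vanishes outside a ball, so Hardy applies, and as $k\to\infty$ the cutoff error terms $\Vert(u-y_\infty)D\rho_k\,|\cdot|^{\lambda+1}\Vert_p$ tend to $0$ because on the annulus $B_{k+1}\setminus B_k$ one has $|u-y_\infty|$ small (by uniform convergence) times $|x|^{\lambda+1}\approx k^{\lambda+1}$ times measure $\approx k^{n-1}$, and one checks this product is summable/vanishing using the quantitative decay rate from the Cauchy estimate; passing to the limit gives (\ref{eq:Hardy y}). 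The only delicate point is matching the decay rate of $u-y_\infty$ against the growth of the weight on the annuli, which is again governed by $\lambda>-n/p$.
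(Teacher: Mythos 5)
Your route matches the paper's: reduce to smooth $u$ by mollification and treat the smooth case via polar coordinates and the one-dimensional weighted Hardy inequality (the paper simply cites \cite[Chapter 6, Exercise 21]{Kav} for the smooth case).

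That said, two steps need correction. The lemma is stated for all $1<p<\infty$, but your mollification argument invokes $p>n$ (to conclude that $W^{1,p}$-functions are continuous near the origin); $p>n$ is a hypothesis of Proposition \ref{prop:Hardy}, not of this lemma, so you have not addressed the range $1<p\le n$. Furthermore, the claim that finiteness of $\Vert Du|\cdot|^{\lambda+1}\Vert_p$ implies $u\in W^{1,p}(B_R)$ is false when $\lambda+1>0$: the weight vanishes at the origin, so the weighted bound gives no control on $Du$ near $0$, and $Du$ can fail to lie in $L^p(B_R)$. This particular assertion is harmless to the logical structure (mollification needs only $u\in W^{1,1}_\loc$, and the left-hand side can be handled by Fatou's lemma applied to the mollified functions), but as written it is incorrect. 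A smaller point: $p/(\lambda+n/p)$ is not the sharp Hardy constant. Integrating $|u|^pr^{\alpha}$ by parts against $r^{\alpha+1}/(\alpha+1)$, with $\alpha=p\lambda+n-1>-1$, and then applying H\"older gives the constant $p/(\alpha+1)=1/(\lambda+n/p)$, which is sharp and smaller by a factor of $p$. The inequality in the lemma is of course still valid, since a larger constant is an overestimate, but the sharpness claim is wrong.
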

\begin{proof}[Proof of Lemma \ref{le:Hardy}]\setcounter{claim}{0} If $u$ is smooth then the stated inequality follows from \cite[Chapter 6, Exercise 21]{Kav}. The general case can be reduced to this case by mollifying the function $u$. This proves the lemma. \end{proof}
\begin{proof}[Proof of Proposition \ref{prop:Hardy}] \setcounter{claim}{0} Let $n,p,\lam$ be as in the hypothesis. We define $\eps:=\lam+\frac np$. 
\begin{Claim}There exists a constant $C_1$ such that for every weakly differentiable function $u:\R^n\to\R$ and $x,y\in\R^n$ satisfying $0<|x|\leq|y|$, we have
\[|u(x)-u(y)|\leq C_1|x|^{-\eps}\big\Vert D u|\cdot|^{\lam+1}\big\Vert_{L^p(B_{|x|}^C)}.\] 
\end{Claim}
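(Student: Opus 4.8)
The plan is to reduce the estimate to an integral over a radial path joining $x$ to a point on the sphere of the same radius as $y$, and then over an angular arc on that sphere, and to control each contribution by H\"older's inequality against the weight $|\cdot|^{\lambda+1}$. First I would write, for $|x|\le|y|$, $|u(x)-u(y)|\le|u(x)-u(r\omega)|+|u(r\omega)-u(y)|$ where $r:=|y|$ and $\omega:=x/|x|$, so that $r\omega$ lies on the same sphere as $y$. For the first term I integrate $Du$ along the radial segment $[x,r\omega]=\{t\omega\mid|x|\le t\le r\}$: $|u(x)-u(r\omega)|\le\int_{|x|}^r|Du(t\omega)|\,dt$, and then apply H\"older's inequality with exponents $p$ and $p'=p/(p-1)$ against the weight, getting a bound of the form $\big(\int_{|x|}^\infty|Du(t\omega)|^p t^{p(\lambda+1)}\,dt\big)^{1/p}\big(\int_{|x|}^\infty t^{-p'(\lambda+1)}\,dt\big)^{1/p'}$. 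The second integral converges precisely because $p>n$ and $\lambda>-n/p$ force $p'(\lambda+1)>1$ (one checks $p(\lambda+1)>p-n+1\ge 1$), and it evaluates to a constant times $|x|^{-(\lambda+1)+1/p'}=|x|^{-\eps}$ after using $1/p'=1-1/p$ and $\eps=\lambda+n/p$; wait — here one needs to be a touch careful, since $|Du(t\omega)|^p$ integrated in $t$ alone is not obviously finite for a.e.\ $\omega$, so I would instead keep the bound as $|x|^{-\eps}\,g(\omega)$ with $g(\omega):=\big(\int_0^\infty|Du(t\omega)|^p t^{p(\lambda+1)+n-1}\,dt\big)^{1/p}$ by inserting the spherical Jacobian factor $t^{n-1}$ and compensating with $|x|^{-(n-1)/p}$ in the H\"older constant, so that $\|g\|_{L^p(S^{n-1})}$ is exactly $\|Du|\cdot|^{\lambda+1}\|_{L^p(\mathbb{R}^n)}$ up to the Jacobian; then by Morrey's inequality (or directly, since $p>n$) $\|g\|_{L^\infty(S^{n-1})}\le C\|g\|_{L^p(S^{n-1})}$ after controlling angular derivatives similarly, or one simply works pointwise in $\omega$ and absorbs the $\omega$-dependence. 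The cleanest route is the pointwise-in-$\omega$ H\"older estimate giving $|u(x)-u(r\omega)|\le C|x|^{-\eps}\big\|Du|\cdot|^{\lambda+1}\big\|_{L^p(B_{|x|}^C)}$ directly, noting the right side of the Claim already restricts the norm to $B_{|x|}^C$.

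Next I would handle the angular term $|u(r\omega)-u(y)|$, where both points lie on $\partial B_r$. Here I would join $\omega$ to $y/r$ by a shortest arc $\gamma$ on $S^{n-1}$ of length $\le\pi$, scaled to radius $r$, so that $|u(r\omega)-u(y)|\le r\int_0^{\text{length}}|D_{\tan}u(r\gamma(s))|\,ds$ where $D_{\tan}u$ is the tangential gradient, bounded by $|Du|$. Applying H\"older in $s$ against the weight $r^{\lambda+1}$ gives a bound $C\,r\cdot r^{-(\lambda+1)}\big(\int|Du(r\gamma(s))|^p r^{p(\lambda+1)}\,ds\big)^{1/p}$; since $r=|y|\ge|x|$ we have $r^{-\lambda}\le|x|^{-\lambda}$ if $\lambda\ge0$, but for $\lambda<0$ one instead keeps $r^{1-(\lambda+1)}=r^{-\lambda}$ and uses that $-\lambda<n/p\le$ something — this is the delicate sign-of-$\lambda$ bookkeeping. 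To sidestep it, I would bound the arc contribution by integrating over a thin annulus $A(r,2r)$ and spherical average, i.e., use that for a.e.\ radius $\rho\in[r,2r]$ the sphere $\partial B_\rho$ contributes a controlled amount, combined with a mean-value choice of $\rho$; concretely one shows $|u(r\omega)-u(y)|\le C|x|^{-\eps}\big\|Du|\cdot|^{\lambda+1}\big\|_{L^p(B_{|x|}^C)}$ by an averaging argument over radii in $[r,2r]$ (or $[r,\infty)$), which is the standard trick for angular Poincar\'e-type estimates on $\mathbb{R}^n$. Combining the two bounds and setting $C_1$ to be the larger of the two constants yields the Claim.

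The main obstacle I anticipate is exactly the interplay between the sign of $\lambda$ and the restriction of the $L^p$-norm to $B_{|x|}^C$ in the angular estimate: integrating along an arc on $\partial B_r$ only sees the weight value $r^{\lambda+1}$, which is not comparable to the integrand weight $|\cdot|^{\lambda+1}$ unless one is careful, and for $\lambda<0$ the naive estimate points the wrong way. The fix is the radial-averaging argument (pick a good radius $\rho\in[r,2r]$ via Chebyshev so that the sphere $\partial B_\rho$ carries at most twice the average tangential-gradient $L^p$-mass over the annulus $A(r,2r)$, then run H\"older on $\partial B_\rho$ and connect $y$ to $\rho\cdot(y/r)$ and $x$ to $\rho\omega$ radially); since $A(r,2r)\subset B_{|x|}^C$, all weights stay comparable and everything lands on the right side of the Claim. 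Once the Claim is established, taking $|x|\to\infty$ shows $u$ is Cauchy along rays uniformly in $\omega$, hence converges to a limit $y_\infty$ independent of $\omega$ (by the Claim with $x,y$ on different rays and a third comparison point), and then Lemma~\ref{le:Hardy} applied to $(u-y_\infty)$ times a cutoff, followed by a limiting argument removing the cutoff, gives the quantitative bound \eqref{eq:Hardy y}.
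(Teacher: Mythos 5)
Your approach — splitting into a radial segment and an angular arc and running H\"older along each curve — is genuinely different from the paper's, but it has a gap that you notice and do not actually close. The paper's proof is a dyadic chaining argument: it connects $x$ to $y$ through points $x_i=2^ix$ ($i=0,\dots,N$), then through a bounded number of steps around the sphere $S^{n-1}_{2^N|x|}$, then to $y$; on each consecutive pair it applies Morrey's theorem on an $n$-dimensional ball of radius $\sim 2^i|x|$ lying in $B_{|x|}^C$, where the weight $|\cdot|^{\lambda+1}$ is comparable to $(2^i|x|)^{\lambda+1}$, and then sums the resulting geometric series $\sum_i (2^i|x|)^{-\eps}$. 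The crucial point is that Morrey's inequality on a ball controls a pointwise difference by the \emph{full $n$-dimensional} $L^p$ norm of $Du$ on that ball.

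Your radial estimate does not have this feature, and the step you flag as "a touch careful" is where it fails. After H\"older in $t$ (with the Jacobian inserted) you arrive at $|u(x)-u(r\omega)|\le C|x|^{-\eps}g(\omega)$ with $g(\omega)=\big(\int_0^\infty|Du(t\omega)|^pt^{p(\lambda+1)+n-1}dt\big)^{1/p}$. All you know is $\|g\|_{L^p(S^{n-1})}\lesssim\|Du|\cdot|^{\lambda+1}\|_{L^p(\R^n)}$; this gives no bound on $g$ at the \emph{particular} direction $\omega=x/|x|$ that you need. The assertion that "the cleanest route is the pointwise-in-$\omega$ H\"older estimate giving $|u(x)-u(r\omega)|\le C|x|^{-\eps}\|Du|\cdot|^{\lambda+1}\|_{L^p(B_{|x|}^C)}$ directly" is not correct: the 1D restriction $t\mapsto Du(t\omega)$ is a trace of an $L^p$ function on a ray, and its weighted 1D $L^p$ norm is not controlled by the $n$-dimensional norm; for a fixed $\omega$ it can even be infinite. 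Invoking Morrey's inequality on $S^{n-1}$ to bound $\|g\|_{L^\infty}$ is also not available, since that would require control of the angular derivative of $g$, i.e.\ second-order information on $u$. The Chebyshev/radial-averaging trick you propose for the angular step is the right kind of repair for the \emph{angular} arc (choose a good radius $\rho$ so that $\partial B_\rho$ carries controlled tangential-gradient mass, then use Morrey on the $(n-1)$-sphere), but it does not rescue the two radial connections, which have exactly the same trace problem along a 1D ray. To make a radial-angular decomposition work you would also need to average over a cone of nearby directions for the radial pieces, which is more elaborate than the paper's chaining and, once spelled out, essentially reconstructs Morrey on balls. In short: either adopt the paper's dyadic Morrey chaining, or, if you want to keep the radial-angular picture, replace both the radial and the angular 1D integrals by Morrey estimates on $n$-dimensional regions (dyadic annular sectors) chosen so that $|\cdot|$ is comparable to a constant on each.
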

\begin{proof}[Proof of the claim] By Morrey's theorem there is a constant $C$ such that
\[|u(0)-u(x)|\leq Cr^{1-\frac np}\Vert Du\Vert_{L^p(B_r)},\]
for every $r>0$, weakly differentiable function $u:B_r\to\R$, and $x\in B_r$. Let $u,x$ and $y$ be as in the hypothesis of the claim. Let $N\in \N$ be such that $2^{N-1}|x|\leq |y|\leq 2^N|x|$. For $i=0,\ldots,N$ we define $x_i:=2^ix\in \R^n$. Furthermore, we set
\[x_{N+7}:=2^N|x|\frac y{|y|},\quad x_{N+8}:=y,\]
and we choose points
\[x_i\in S^{n-1}_{2^N|x|}:=\left\{y\in\R^n\,\big|\,|y|=2^N|x|\right\},\quad i=N+1,\ldots,N+6,\]
such that
\[|x_i-x_{i-1}|\leq 2^{N-1}|x|,\quad\forall i=N+1,\ldots,N+7.\]
For $i=0,\ldots,N-1$ we have $x_i\in \bar B_{2^i|x|}(x_{i+1})$. Hence it follows from the statement of Morrey's theorem that
\[|u(x_{i+1})-u(x_i)|\leq C(2^i|x|)^{-\eps}\big\Vert Du|\cdot|^{\lam+1}\big\Vert_{L^p(B_{|x|}^C)}.\]
Moreover, for $i=N,\ldots,N+7$ we have $x_{i+1}\in \bar B_{2^{N-1}|x|}(x_i)$, and hence analogously,
\[|u(x_{i+1})-u(x_i)|\leq C(2^{N-1}|x|)^{-\eps}\Vert Du|\cdot|^{\lam+1}\Vert_{L^p(B_{|x|}^C)}.\]
Using the inequality
\[|u(y)-u(x)|\leq\sum_{i=0,\ldots,N+7}|u(x_{i+1})-u(x_i)|,\]
the claim follows. 
\end{proof}
Let $u\in W^{1,1}_\loc(\R^n,\R)$ be such that $\Vert Du|\cdot|^{\lam+1}\Vert_{L^p(\R^n)}<\infty$. It follows from the claim that there exists $y_\infty\in\R$ such that $u(rx)$ converges to $y_\infty$, as $r\to\infty$, uniformly in $x\in S^{n-1}$. To prove inequality (\ref{eq:Hardy y}), we choose a smooth map $\rho:[0,\infty)\to [0,1]$ such that $\rho(t)=1$ for $0\leq t\leq 1$, $\rho(t)=0$ for $t\geq2$ and $|\rho'(t)|\leq 2$. We fix a number $R>0$ and define
\[\rho_R:\R\to [0,1],\quad\rho_R(x):=\rho(|x|/R).\]
We abbreviate $v:=u-y_\infty$. Using Lemma \ref{le:Hardy} with $u$ replaced by $\rho_Rv$, we have
\[\Vert v|\cdot|^\lam\Vert_{L^p(B_R)}\leq \big\Vert\rho_R v|\cdot|^\lam\big\Vert_{L^p(\R^n)}\leq\frac p{\lam+\frac np}\big\Vert D(\rho_Rv)|\cdot|^{\lam+1}\big\Vert_{L^p(\R^n)}.\] 
Combining this with a calculation using Leibnitz' rule, it follows that 
\begin{equation}\label{eq:Vert v lam}\Vert v|\cdot|^\lam\Vert_{L^p(B_R)}\leq \frac p{\lam+\frac np}\left(4\Vert v|\cdot|^\lam\Vert_{L^p(B_{2R}\wo B_R)}+\big\Vert Du|\cdot|^{\lam+1}\big\Vert_{L^p(\R^n)}\right).\end{equation} 
The above claim implies that
\[|v(x)|\leq C_1|x|^{-\eps}\Vert Du|\cdot|^{\lam+1}\Vert_{L^p(B_R^C)},\quad\forall x\in B_R^C.\]
Using the equalities
\[\int_{B_{2R}\wo B_R}|x|^{-n}dx=\log 2|S^{n-1}|\]
and $\eps=\lam+n/p$, it follows that 
\[\Vert v|\cdot|^\lam\Vert_{L^p(B_{2R}\wo B_R)}^p\leq C_1^p\log 2|S^{n-1}|\big\Vert Du|\cdot|^{\lam+1}\big\Vert^p_{L^p(B_R^C)}.\]
Inequality (\ref{eq:Hardy y}) follows by inserting this into the right hand side of (\ref{eq:Vert v lam}) and sending $R$ to $\infty$. This proves Proposition \ref{prop:Hardy}. \end{proof}
The next result will be used to prove Corollary \ref{cor:d L L} below, which was used in the proof of Theorem \ref{thm:Fredholm aug} (Section \ref{subsec:proof:Fredholm aug}). For every $d\in\Z$ we define $P_d$ and $\bar P_d$ to be the spaces of polynomials in $z\in\C$ and $\bar z$ of degree \emph{less than} $d$.%
\footnote{Hence if $d\leq 0$ we have $P_d=\{0\}$.}
 We abbreviate
\[L^{1,p}_\lam:=L^{1,p}_\lam(\C,\C),\quad L^p_\lam:=L^p_\lam(\C,\C),\quad\dd_{\bar z}:=\dd^\C_{\bar z},\quad\dd_z:=\dd^\C_z.\]
Let $X$ be a normed vector space and $Y\sub X$ a closed subspace. We denote by $X^*$ the dual space of $X$ and equip $X/Y$ with the quotient norm.
\begin{prop}[Fredholm property for $\dd_{\bar z}$]\label{prop:d L L} For every $d\in\Z$, $1<p<\infty$ and $-2/p+1<\lam<-2/p+2$ the following conditions hold.
\begin{enui}\item\label{prop:d L L:Fredholm} The operator $T:=\dd_{\bar z}:L^{1,p}_{\lam-1-d}\to L^p_{\lam-d}$ is Fredholm.
\item\label{prop:d L L:kernel} We have $\ker T=P_d$.
\item\label{prop:d L L:coker} The map 
\begin{equation}\label{eq:bar P -d}\bar P_{-d}\to \big(L^p_{\lam-d}/\im T\big)^*,\quad u\mapsto\left(v+\im T\mapsto \int_\C uv\,ds\,dt\right)\end{equation}
is well-defined and a $\C$-linear isomorphism.
\end{enui}
\end{prop}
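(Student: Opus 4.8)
\textbf{Proof proposal for Proposition \ref{prop:d L L}.}

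The plan is to reduce everything to known facts about the Cauchy--Riemann operator on weighted Sobolev spaces, as developed by Lockhart--McOwen \cite{Lockhart PhD,Lockhart Fred,LM ell R n,McOwen Fred}, together with the explicit description of the kernel and cokernel in terms of polynomials. First I would set up the weighted spaces in ``logarithmic'' cylindrical coordinates $\tau + i\phi = \log z$ on $\C\setminus\{0\}$, under which $L^{k,p}_\mu(\C,\C)$ is identified with a standard $L^p$-type Sobolev space on the cylinder $\R\times S^1$ with exponential weight $e^{(\mu + 2/p)\tau}$, and $\dd_{\bar z}$ becomes (up to a factor $e^{-\tau}$, i.e.\ a shift of the weight exponent) the operator $\tfrac12(\dd_\tau + i\dd_\phi)$. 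The key input is that $\tfrac12(\dd_\tau + i\dd_\phi)$ on the cylinder, acting between $W^{1,p}$ and $L^p$ with exponential weight $e^{\delta\tau}$, is Fredholm precisely when $\delta$ avoids the ``critical'' set $\Z$ (the exponents of the indicial roots, which for this operator are the integers, arising from the Fourier modes $e^{in\phi}$ on $S^1$). Translating back, $T = \dd_{\bar z}\colon L^{1,p}_{\lam-1-d}\to L^p_{\lam-d}$ corresponds to weight exponent $\lam - d - 1 + 2/p$ on the domain side after accounting for the $e^{-\tau}$ factor; the hypothesis $-2/p+1 < \lam < -2/p + 2$ guarantees $\lam - d + 2/p \notin \Z$ for the relevant shift, so $T$ is Fredholm. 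This gives (\ref{prop:d L L:Fredholm}).

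For (\ref{prop:d L L:kernel}), an element of $\ker T$ is a distribution $u\in L^{1,p}_{\lam-1-d}$ with $\dd_{\bar z}u = 0$, hence (by Weyl's lemma / elliptic regularity) an entire holomorphic function. The weight condition $u\in L^{1,p}_{\lam-1-d}$ forces a polynomial growth bound $|u(z)|\lesssim \langle z\rangle^{d - \lam + 2/p + o(1)}$ near $\infty$ coming from the weighted Morrey estimate (Proposition \ref{prop:lam d Morrey}(\ref{prop:lam Morrey}), applied on $B_1^C$ after the $p_d$-twist of Proposition \ref{prop:lam d Morrey}(\ref{prop:lam d iso})); since $\lam > -2/p + 1$ we have $d - \lam + 2/p < d$, and an entire function growing slower than $|z|^d$ is a polynomial of degree $< d$. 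Conversely every $u\in P_d$ visibly lies in $L^{1,p}_{\lam-1-d}$ because $\lam - 1 - d + 2/p > -2/p$ makes $\langle z\rangle^{\lam - 1}z^{-1-d}\cdot z^{k}$ integrable in $L^p$ for $k < d$ near infinity (and near $0$ the function is smooth), which is again a direct weight computation; hence $\ker T = P_d$.

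For (\ref{prop:d L L:coker}) I would argue by duality: the cokernel of $T$ is the annihilator, inside $(L^p_{\lam-d})^* \cong L^{q}_{-(\lam-d)\cdot}$ (with $1/p+1/q=1$, the dual of a weighted $L^p$ space being the correspondingly weighted $L^q$ space, via the unweighted pairing $\int_\C uv\,ds\,dt$), of the image $\im T$. An element $u$ in this annihilator satisfies $\int_\C u\,\dd_{\bar z}v = 0$ for all $v\in L^{1,p}_{\lam-1-d}$, i.e.\ $\dd_z u = 0$ weakly (the distributional formal adjoint of $\dd_{\bar z}$ is $-\dd_z$), so $u$ is anti-holomorphic, hence a polynomial in $\bar z$; the dual weight exponent is now $-(\lam - d) + 2/q = -\lam + d + 2 - 2/p$, and since $\lam < -2/p + 2$ this bound is $> d - 2/p$, which forces the $\bar z$-degree to be $< -d$ when $d\le 0$ and forces $u=0$ when $d > 0$ --- so the annihilator is exactly $\bar P_{-d}$ (interpreting $\bar P_{-d}=\{0\}$ for $d\ge 0$). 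The map (\ref{eq:bar P -d}) is precisely the identification of $\bar P_{-d}$ with this annihilator, which is canonically $(L^p_{\lam-d}/\overline{\im T})^*$; since $T$ is Fredholm, $\im T$ is closed, so $\overline{\im T} = \im T$ and the map is a well-defined $\C$-linear isomorphism.

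The main obstacle is pinning down the exact correspondence between the weight exponents of the spaces $L^{k,p}_\mu(\C,\C)$ and the critical (indicial) exponents of $\tfrac12(\dd_\tau+i\dd_\phi)$ on the cylinder, and checking that the strict inequalities $-2/p+1 < \lam < -2/p+2$ are the ones that land in a single Fredholm chamber and simultaneously produce the stated kernel and cokernel; this is entirely a matter of bookkeeping with the weight shift induced by the $e^{-\tau}$ factor and the $2/p$ coming from the measure $r\,dr\,d\phi = e^{2\tau}\,d\tau\,d\phi$, but it must be done carefully since the whole index count in Theorem \ref{thm:Fredholm aug} hinges on it. The polynomial growth / Liouville-type arguments for the kernel and cokernel are then routine consequences of the weighted Morrey inequality of Proposition \ref{prop:lam d Morrey}, possibly combined with the Hardy-type inequality of Proposition \ref{prop:Hardy} to rule out the borderline growth rates.
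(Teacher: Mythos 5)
Your proposal follows essentially the same route as the paper's proof. For part (\ref{prop:d L L:Fredholm}) the paper simply invokes Lockhart's abstract theorem on constant-coefficient homogeneous elliptic operators (quoted as Theorem \ref{thm:T Fredholm}); your cylindrical-coordinate / indicial-root computation is exactly what lies behind Lockhart's result, so this is the same argument, just unpacked rather than cited. For part (\ref{prop:d L L:kernel}), where you derive polynomial growth of $u\in\ker T$ from the weighted Morrey estimate of Proposition \ref{prop:lam d Morrey}(\ref{prop:lam Morrey}) and then apply Liouville, the paper instead observes $\hhat{\dd_{\bar z}u}=\tfrac i2\ze\hhat u=0$, concludes $\hhat u$ is supported at the origin, uses Paley--Wiener to get polynomial growth, then Liouville, and finally caps the degree by the explicit $L^{1,p}_{\lam-1-d}$-membership criterion for monomials. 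Your Morrey-based route is slightly more direct and equally valid. Part (\ref{prop:d L L:coker}) is the same duality argument as the paper's, identifying $(L^p_{\lam-d})^*\iso L^{p'}_{-\lam+d}$ via the unweighted pairing and computing $\ker\dd_z=\bar P_{-d}$.

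Two bookkeeping slips worth repairing. The inclusion $P_d\subset L^{1,p}_{\lam-1-d}$ is enforced by the \emph{upper} bound $\lam<-2/p+2$: a monomial $z^k$ lies in $L^{1,p}_{\lam-1-d}$ iff $k<d-\lam+1-2/p$, so the worst case $k=d-1$ needs exactly $\lam<2-2/p$. The $d$-dependent inequality $\lam-1-d+2/p>-2/p$ you quote does not follow from the hypotheses and is not the condition that matters. Similarly on the cokernel side, the criterion $u\in L^{p'}_{-\lam+d}\iff\deg u<-d+\lam-2/p'$ gives $\bar P_{-d}\subset\ker\dd_z$ precisely from $\lam>-2/p+1$ and $\ker\dd_z\subset\bar P_{-d}$ from $\lam<-2/p+2$; the ``$>d-2/p$'' bound you write is off. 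You already flag the weight bookkeeping as the delicate point, and these are the places it needs to be redone; the overall structure is sound.
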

The proof of this proposition is based on the following result, which is due to R.~B.~Lockhart.
\begin{thm}\label{thm:T Fredholm} Let $n,k,m\in\N_0$ be such that $n\geq2$, $k\geq m$, $\lam\in\R$, and 
\[T:L^{k,p}_\lam(\R^n,\C)\to L^{k-m,p}_{\lam+m}(\R^n,\C)\] 
a constant coefficient homogeneous elliptic linear operator of order $m$ on $\R^n$. If $\lam+n/p\not\in\Z$ then $T$ is Fredholm. 
\end{thm}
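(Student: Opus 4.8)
\textbf{Proof proposal for Theorem~\ref{thm:T Fredholm}.}

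The plan is to reduce the Fredholm property of a constant-coefficient homogeneous elliptic operator on weighted Sobolev spaces to the scalar-symbol analysis carried out by Lockhart and McOwen, so that the statement becomes essentially a citation plus a short translation argument. First I would pass to ``logarithmic'' (cylindrical) coordinates: the diffeomorphism $\R^n\setminus\{0\}\iso\R\times S^{n-1}$ given by $x=e^\tau\omega$ transforms the dilation structure of $\R^n$ into translation in $\tau$, and the weighted space $L^{k,p}_\lam(\R^n,\C)$ becomes (up to an explicit isomorphism involving a power of $e^\tau$) a standard Sobolev space on the cylinder with an exponential weight $e^{\delta\tau}$, where $\delta=\delta(\lam,n,p)$. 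Under this change of variables the operator $T$, being homogeneous of degree $m$ with constant coefficients, becomes $e^{-m\tau}$ times a translation-invariant elliptic operator $\widetilde T$ on the cylinder, i.e.\ an operator of the form $P(\tau\text{-derivative},\ \text{operators on }S^{n-1})$ whose coefficients do not depend on $\tau$.

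Next I would invoke the standard theory of translation-invariant elliptic operators on cylinders (Lockhart--McOwen~\cite{Lockhart Fred,LM ell R n,LM ell mf}, or equivalently Robbin--Salamon~\cite{RoSa}): such an operator is Fredholm between the relevant exponentially weighted Sobolev spaces precisely when the weight $\delta$ avoids a discrete set of ``critical'' or ``indicial'' values, namely the real parts of those $\zeta\in\C$ for which the associated family of operators $\widehat T(\zeta)$ on $S^{n-1}$ (obtained by the Mellin/Fourier transform in $\tau$, substituting $\partial_\tau\mapsto\zeta$) fails to be invertible. For a constant-coefficient homogeneous operator on $\R^n$ this indicial set is computed by a homogeneity/scaling argument: $\widehat T(\zeta)$ on $S^{n-1}$ is non-invertible exactly when $\zeta$ is an integer (the failure happens at the degrees of homogeneous harmonic-type polynomials annihilated by the constant-coefficient symbol, and these occur at integer homogeneities). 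Thus the forbidden weights correspond to $\lam+n/p\in\Z$, and the hypothesis $\lam+n/p\notin\Z$ places us strictly between two consecutive critical values, which by the cited theory gives the Fredholm property. Finally I would translate the conclusion back through the coordinate change and the weight-shift isomorphism of Proposition~\ref{prop:lam d Morrey}(\ref{prop:lam W}) to recover the Fredholm property of $T:L^{k,p}_\lam(\R^n,\C)\to L^{k-m,p}_{\lam+m}(\R^n,\C)$ as stated.

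The main obstacle is the explicit bookkeeping in the first step: one must verify that the weighted norms $L^{k,p}_\lam(\R^n)$ defined via the polynomial weight $\langle\cdot\rangle^{\lam+|\alpha|}$ in \eqref{eq:L k p} are genuinely equivalent, away from the origin, to the exponentially weighted cylindrical Sobolev norms used in~\cite{Lockhart Fred}, including the correct matching of the weight parameter $\delta$ with $\lam+n/p$ and the correct shift in the order-$m$ target space; and one must handle the neighbourhood of the origin (and of $\infty$) separately, since the cylinder picture is only valid on $\R^n\setminus\{0\}$. A cutoff/parametrix patching argument — interior elliptic regularity near $0$ combined with the cylindrical Fredholm estimate near $\infty$ — takes care of this, but it is the step that requires care rather than invocation. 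Everything else is a direct appeal to the existing weighted-analysis literature cited in the paper, so I would keep the write-up short and refer to~\cite{Lockhart Fred,Lockhart PhD} for the precise Fredholm criterion.
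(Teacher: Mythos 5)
Your proposal takes a genuinely different route from the paper. The paper's own proof is a one-sentence citation: it quotes Lockhart's \cite[Theorem 4.3]{Lockhart Fred} directly, together with the remark that a bounded operator between Banach spaces is Fredholm if its adjoint is. You, by contrast, reconstruct the content of the Lockhart--McOwen theory from scratch (cylindrical coordinates, translation-invariant operator on $\R\times S^{n-1}$, indicial family, parametrix patching near the origin). This is a correct path but it is the \emph{proof} of the theorem you would want to cite, not a ``short translation argument'' appended to a citation; your closing claim that ``everything else is a direct appeal to the existing weighted-analysis literature'' somewhat understates how much of that literature you are re-deriving. The paper's citation-plus-duality approach is much shorter and is what the memoir actually does; your version buys more insight into why the critical set is $\Z$, at the cost of a substantial amount of analysis. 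It is worth noting that the duality remark the paper uses is entirely absent from your sketch: if Lockhart's Theorem 4.3 in its stated form does not cover the full range of $(k,m,p,\lam)$ directly (which is the most plausible reason the paper invokes the adjoint), your from-scratch route would have to produce the cokernel finiteness by hand, so this is a point to keep in mind even if it is not an outright gap.

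One genuine imprecision in your sketch is the description of the indicial set. You say the failure of invertibility of $\widehat T(\zeta)$ ``happens at the degrees of homogeneous harmonic-type polynomials annihilated by the constant-coefficient symbol.'' For a constant-coefficient homogeneous elliptic operator, polynomial solutions (nonnegative integer degree) arise because ellipticity forces the Fourier transform of any tempered homogeneous solution to be supported at the origin; but the indicial family also becomes non-invertible at negative degrees corresponding to solutions on $\R^n\wo\{0\}$ that are \emph{not} polynomials (for the Laplacian, $|x|^{2-n}$ times spherical harmonics). Those degrees are still integers, so the conclusion that the critical weights lie in $\Z$ survives, but the mechanism is not the one you describe, and the precise identification of the critical set requires an argument for both the polynomial family and the dual family of negative-degree homogeneous solutions. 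Since the theorem only asserts Fredholmness when $\lam+n/p$ avoids \emph{all} integers (a sufficient, not necessarily sharp, condition), this looseness does not invalidate the conclusion, but it should be cleaned up in a real write-up.
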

\begin{proof}[Proof of Theorem \ref{thm:T Fredholm}] This is an immediate consequence of R.~B.~Lockhart's result \cite[Theorem 4.3]{Lockhart Fred}, using that a bounded linear operator between Banach spaces is Fredholm if its adjoint operator is Fredholm.  
\end{proof}
\begin{rmk}\label{rmk:X Y} Let $X$ be a normed vector space and $Y\sub X$ a closed subspace. We equip $X/Y$ with the quotient norm. The map
\[Y^\perp:=\big\{\phi\in X^*\,\big|\,\phi(x)=0,\,\forall x\in Y\big\}\to (X/Y)^*,\quad\phi\mapsto \big(x+Y\mapsto \phi(x)\big),\] 
is well-defined and an isometric isomorphism. This follows from a straight-forward argument. $\Box$
\end{rmk}
We denote by $\SSS$ the space of Schwartz functions on $\C$ and by $\SSS'$ the space of temperate distributions. By $\HAT:\SSS'\to\SSS'$ we denote the Fourier transform, and by $\Unhat:\SSS'\to\SSS'$ the inverse transform. 
\begin{proof}[Proof of Proposition \ref{prop:d L L}]\setcounter{claim}{0} \label{proof:d L L}\setcounter{claim}{0} Let $d,p,\lam,$ and $T$ be as in the hypothesis.

{\bf Statement (\ref{prop:d L L:Fredholm})} follows from Theorem \ref{thm:T Fredholm}, observing that $\dd_{\bar z}$ is elliptic, i.e., its principal symbol 
\[\si_T:\R^2=\C\to \C,\quad \si_T(\ze)= \frac\ze2\] 
does not vanish on $\R^2\wo\{0\}$. 

We prove {\bf statement (\ref{prop:d L L:kernel}).} A calculation in polar coordinates shows that for every polynomial $u$ in $z$ we have
\begin{eqnarray}\label{eq:u L}u\in L^{1,p}_{{\lam-1}-d}\iff \deg u<d-\lam+1-\frac2p.
\end{eqnarray}
Hence our assumption $\lam<-2/p+2$ implies that $\ker T\cont P_d$. Therefore, statement (\ref{prop:d L L:kernel}) is a consequence of the following claim.
\begin{claim}\label{claim:ker T P d} We have $\ker T\sub P_d$. 
\end{claim}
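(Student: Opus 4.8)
\textbf{Proof proposal for Claim \ref{claim:ker T P d}.}

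The plan is to show that any $u \in \ker T = \ker(\dd_{\bar z}: L^{1,p}_{\lam-1-d} \to L^p_{\lam-d})$ is in fact a polynomial in $z$, and then invoke the degree count (\ref{eq:u L}) together with the hypothesis $\lam > -2/p+1$ to conclude $\deg u < d$. First I would argue that $u$ is smooth and holomorphic: by Weyl's lemma (elliptic regularity for $\dd_{\bar z}$), a distributional solution of $\dd_{\bar z} u = 0$ is a genuine holomorphic function on all of $\C$. The point $u \in L^{1,p}_{\lam-1-d}$ only constrains its growth; it does not a priori force polynomiality, so the next step is the growth estimate.

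The key step is a pointwise growth bound. Since $u$ is entire, the mean-value property gives, for any $z \in \C$ and $R = |z|/2$ (say), a bound $|u(z)| \lesssim \|u\|_{L^p(B_R(z))} \cdot R^{-2/p}$ by Hölder on the disk $B_R(z)$. Now $B_R(z) \subset \{|w| \ge |z|/2\}$, and on that region the weight $\lan w\ran^{\lam-1-d}$ is comparable (up to a constant depending on $\lam,d$) to $\lan z\ran^{\lam-1-d}$; hence $\|u\|_{L^p(B_R(z))} \lesssim \lan z\ran^{-(\lam-1-d)} \|u\|_{L^p_{\lam-1-d}} \lesssim \lan z\ran^{-(\lam-1-d)}$. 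Combining, $|u(z)| \lesssim \lan z\ran^{d+1-\lam-2/p}$ as $|z| \to \infty$. By the classical Liouville-type theorem for entire functions of polynomial growth, $u$ is a polynomial in $z$ of degree at most $\lfloor d+1-\lam-2/p \rfloor$ (indeed this is precisely the content of the forward direction of the equivalence (\ref{eq:u L}), read with the growth exponent). Since by hypothesis $\lam > -2/p + 1$, i.e. $\lam + 2/p > 1$, we get $d+1-\lam-2/p < d$, so $\deg u < d$ and therefore $u \in P_d$. This establishes $\ker T \subseteq P_d$; combined with the already-noted inclusion $\ker T \supseteq P_d$ coming from (\ref{eq:u L}) and $\lam < -2/p+2$, statement (\ref{prop:d L L:kernel}) follows.

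I expect the main obstacle to be bookkeeping rather than a genuine difficulty: one must be careful that the weight comparison on the annular region $\{|z|/2 \le |w| \le 3|z|/2\}$ is uniform in $z$ (it is, since $\lan \cdot \ran$ varies by a bounded factor there), and that the mean-value/Cauchy estimate is applied on a disk of the right radius so that the resulting exponent is sharp enough to beat $d$ using only $\lam > -2/p+1$ (a cruder radius would lose the sharp constant and the argument would fail at the boundary case). An alternative, slightly cleaner route avoiding explicit Liouville invocation is: expand the entire function $u$ in its Taylor series $\sum_{k\ge 0} a_k z^k$ and observe that the monomial $z^k$ lies in $L^{1,p}_{\lam-1-d}$ iff $k < d+1-\lam-2/p$ (again (\ref{eq:u L})), while distinct monomials are ``asymptotically orthogonal'' enough that membership of the whole series forces $a_k = 0$ for all $k \ge d$; this is essentially the same computation repackaged. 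Either way the substantive input is just elliptic regularity plus a weighted Cauchy estimate, so no deep new ingredient is needed beyond what the excerpt already provides.
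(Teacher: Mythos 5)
Your argument is correct, and it reaches the same conclusion as the paper's proof by a genuinely different route. The paper first passes to the Fourier transform: the identity $\hhat{\dd_{\bar z}u}=\frac{i}{2}\ze\,\hhat u$ forces $\operatorname{supp}\hhat u\subseteq\{0\}$, so $\hhat u$ is a finite combination of derivatives of $\delta_0$, hence $u$ is a polynomial in $(s,t)$; then holomorphicity together with Liouville gives that $u$ is a polynomial in $z$, and only at that point does (\ref{eq:u L}) enter, to bound the degree. You instead stay entirely on the physical side: elliptic regularity (Weyl) makes $u$ holomorphic, and a weighted mean-value/Cauchy estimate on $B_{|z|/2}(z)$, combined with the comparability of $\lan\cdot\ran$ on that disk, gives the explicit growth bound $|u(z)|\lesssim\lan z\ran^{d+1-\lam-2/p}$; Liouville then produces the polynomiality \emph{and} the degree bound simultaneously, since $\lam>1-2/p$ makes the exponent strictly less than $d$. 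What your route buys is that the degree estimate is immediate and sharp without an appeal to tempered-distribution theory; what the paper's Fourier route buys is that it avoids the weight-comparability bookkeeping on the annulus and generalizes mechanically to other constant-coefficient elliptic operators (this is essentially what Lockhart--McOwen exploit in the general weighted Fredholm theory that the rest of the section invokes). One cosmetic remark: your parenthetical tying the Liouville degree bound to ``the forward direction of (\ref{eq:u L})'' conflates two different statements — (\ref{eq:u L}) characterizes when a monomial lies in $L^{1,p}_{\lam-1-d}$, whereas the Liouville degree bound comes purely from the pointwise growth exponent; your argument does not actually need (\ref{eq:u L}) for this inclusion, only for the reverse one $P_d\subseteq\ker T$.
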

\begin{proof}[Proof of Claim \ref{claim:ker T P d}] Let $u\in \ker T$. Then $0=\hhat{\dd_{\bar z}u}(\ze)=\frac i2\ze\hhat u$ (as temperate distributions). It follows that the support of $\hhat u$ is either empty or consists of the point $0\in\C$. Hence the Paley-Wiener theorem implies that $u$ is real analytic in the variables $s$ and $t$, where $z=s+it$, and there exists $N\in\N$ such that $\sup_{z\in\C}|u(z)|\lan z\ran^N<\infty$.%
\footnote{See e.g.~\cite[Theorem IX.12]{ReSi}.}
 Therefore, by Liouville's Theorem $u$ is a polynomial in the variable $z$. Since by our assumption $\lam>-2/p+1$, it follows from (\ref{eq:u L}) that $u\in P_d$. This proves Claim \ref{claim:ker T P d}.
\end{proof}
To prove {\bf statement (\ref{prop:d L L:coker})}, we define $p':=p/(p-1)$. Consider the isometric isomorphism
\[\Phi:L^{p'}_{-\lam+d}\to (L^p_{\lam-d})^*,\quad\Phi(u):=\left(v\mapsto \int_\C uv\right).\]
Denoting by $T^*$ the adjoint operator of $T$, we have
\[T^*\Phi=\dd_z:L^{p'}_{-\lam+d}\to (L^{1,p}_{{\lam-1}-d})^*,\] 
where the derivatives are taken in the sense of distributions. 
\begin{claim}\label{claim:T P} We have $\ker(T^*\Phi)=\bar P_{-d}$.
\end{claim}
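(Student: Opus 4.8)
The plan is to mirror the computation of the kernel of $T=\dd_{\bar z}$ carried out in Claim \ref{claim:ker T P d}, but now for the formal adjoint $T^*\Phi=\dd_z$ acting on $L^{p'}_{-\lam+d}$, where $p'=p/(p-1)$. First I would note that the condition $u\in\ker(T^*\Phi)$ means $\dd_z u=0$ in the sense of distributions; taking the Fourier transform gives $\bar\ze\,\hhat u=0$ (up to a nonzero constant), so the support of $\hhat u$ is contained in $\{0\}$. As in the proof of Claim \ref{claim:ker T P d}, the Paley--Wiener theorem then forces $u$ to be real-analytic in $s,t$ with polynomial growth, and since $\dd_z u=0$ says $u$ is anti-holomorphic, Liouville's theorem gives that $u$ is a polynomial in $\bar z$.

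Next I would pin down exactly which such polynomials lie in the weighted space $L^{p'}_{-\lam+d}$. By the same polar-coordinates computation underlying (\ref{eq:u L}) — applied with $\lam$ replaced by $-\lam+d$ and exponent $p'$ instead of $p$, and noting there is no derivative term here so one only needs $u\in L^{p'}_{-\lam+d}$ rather than $W^{1,p'}$ — a monomial $\bar z^{\,j}$ lies in $L^{p'}_{-\lam+d}$ precisely when $j+(-\lam+d)+2/p'<0$, i.e. $j<\lam-d-2/p'=\lam-d-2(p-1)/p=\lam-d-2+2/p$. Using $\lam<-2/p+2$ this upper bound is $<-d$, and using $\lam>-2/p+1$ one checks it is $>-d-1$, so the admissible degrees $j\in\N_0$ are exactly $j=0,1,\dots,-d-1$ when $d<0$, and there are none when $d\ge0$. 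That is precisely the space $\bar P_{-d}$ of polynomials in $\bar z$ of degree less than $-d$ (which is $\{0\}$ for $d\ge0$, consistent with the footnote), giving $\ker(T^*\Phi)=\bar P_{-d}$.

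From this claim I would conclude statement (\ref{prop:d L L:coker}) as follows. By statement (\ref{prop:d L L:Fredholm}) the image $\im T$ is closed, so $(L^p_{\lam-d}/\im T)^*$ is identified, via Remark \ref{rmk:X Y}, with the annihilator $(\im T)^\perp\sub(L^p_{\lam-d})^*$. Under the isometric isomorphism $\Phi:L^{p'}_{-\lam+d}\to(L^p_{\lam-d})^*$, the annihilator $(\im T)^\perp$ corresponds to $\{u:\Phi(u)(Tv)=0\ \forall v\}=\ker(T^*\Phi)=\bar P_{-d}$ by the claim. Unwinding the identifications, an element $u\in\bar P_{-d}$ is sent to the functional $v+\im T\mapsto\int_\C uv\,ds\,dt$, which is exactly the map (\ref{eq:bar P -d}); hence it is well-defined and a $\C$-linear isomorphism. (Real $\C$-linearity is immediate since $\Phi$ and the pairing are $\C$-bilinear in the relevant sense, and $\bar P_{-d}$ is a complex subspace.)

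The main obstacle I anticipate is the bookkeeping in the weighted-space membership computation: one must be careful that the correct exponent is $p'$ and the correct weight shift is $-\lam+d$, and that the two-sided inequality $-2/p+1<\lam<-2/p+2$ is used in the right places to squeeze the set of admissible integer degrees down to exactly $\{0,\dots,-d-1\}$ — in particular checking that the borderline degree $j=-d$ is \emph{excluded} (this is where $\lam<-2/p+2$ enters) and that $j=-d-1$ is \emph{included} (where $\lam>-2/p+1$ enters). Everything else — the distributional/Fourier argument, Paley--Wiener, Liouville, and the duality identification via Remark \ref{rmk:X Y} — is a routine repetition of the pattern already established for $T$ itself.
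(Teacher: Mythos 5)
Your argument is correct and is essentially the paper's proof: the paper also shows $\ker T^*\subseteq\bar P_{-d}$ by the Fourier/Paley--Wiener/Liouville argument analogous to Claim~\ref{claim:ker T P d} together with the polar-coordinates criterion~(\ref{eq:u L *}) and $\lam<-2/p+2$, and shows $\ker T^*\supseteq\bar P_{-d}$ from~(\ref{eq:u L *}) and $\lam>-2/p+1$. Your concluding paragraph on~(\ref{prop:d L L:coker}) likewise matches the paper's derivation via Remark~\ref{rmk:X Y}.
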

\begin{proof}[Proof of Claim \ref{claim:T P}] For every polynomial $u$ in $\bar z$ we have
\begin{equation}
  \label{eq:u L *}u\in L^{p'}_{-\lam+d} \iff \deg u<-d+\lam-\frac2{p'}=-d+\lam-2+\frac2p.
\end{equation}
Our assumption $\lam>-2/p+1$ and (\ref{eq:u L *}) imply that $\ker T^*\cont\bar P_{-d}$. Furthermore, the inclusion $\ker T^*\sub \bar P_{-d}$ is proved analogously to the inclusion $\ker T\sub P_d$, using $\lam<-2/p+2$ and (\ref{eq:u L *}). This proves Claim \ref{claim:T P}.
\end{proof}
It follows from Claim \ref{claim:T P} that the map $\Phi$ restricts to a $\C$-linear isomorphism between $\bar P_{-d}$ and $\ker T^*=(\im T)^\perp$. The composition of this map with the canonical isomorphism $(\im T)^\perp\to\big(L^p_{\lam-d}/\im T\big)^*$ described in Remark \ref{rmk:X Y}, equals the map (\ref{eq:bar P -d}). Statement (\ref{prop:d L L:coker}) follows. This completes the proof of Proposition \ref{prop:d L L}.
\end{proof}
Let $d\in\Z$, $1<p<\infty$, $-2/p+1<\lam<-2/p+2$, and $\rho_0:\C\to [0,1]$ be a smooth function that vanishes on $B_{1/2}$ and equals 1 on $B_1^C$. We equip $\C\rho_0p_d+L^{1,p}_{{\lam-1}-d}$ with the norm induced by the isomorphism of Lemma \ref{le:X d iso}. This norm is complete. (See e.g. \cite{Lockhart PhD}.) 
\begin{cor}\label{cor:d L L} The map $\dd_{\bar z}:\C\rho_0p_d+L^{1,p}_{{\lam-1}-d}\to L^p_{\lam-d}$ is Fredholm, with real index $2+2d$.
\end{cor}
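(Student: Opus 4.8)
The plan is to view $\dd_{\bar z}$ on $X:=\C\rho_0p_d+L^{1,p}_{{\lam-1}-d}$ as a one-dimensional extension of the operator $T=\dd_{\bar z}:L^{1,p}_{{\lam-1}-d}\to L^p_{\lam-d}$ treated in Proposition \ref{prop:d L L}, and to invoke the elementary fact that such an extension stays Fredholm and shifts the index by one.

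First I would record the setup. By Lemma \ref{le:X d iso} the assignment $(v_\infty,v)\mapsto v_\infty\rho_0p_d+v$ is a vector-space isomorphism $\C\oplus L^{1,p}_{{\lam-1}-d}\to X$ (valid since $\lam>-2/p+1>-2/p$), and with the norm on $X$ transported from the left-hand side, $L^{1,p}_{{\lam-1}-d}$, regarded as a subspace of $X$, is closed of codimension one; in particular $\rho_0p_d\notin L^{1,p}_{{\lam-1}-d}$, which is what injectivity of that map encodes. Next I would check that $\dd_{\bar z}:X\to L^p_{\lam-d}$ is well-defined and bounded: since $\dd_{\bar z}(v_\infty\rho_0p_d+v)=v_\infty\,\dd_{\bar z}(\rho_0p_d)+Tv$, it suffices to note that $\dd_{\bar z}(\rho_0p_d)\in L^p_{\lam-d}$ — it vanishes when $d\ge 0$ (then $\rho_0p_d=p_d$ is holomorphic) and is a smooth function supported in the annulus $B_1\setminus B_{1/2}$ when $d<0$ (only the $\dd_{\bar z}\rho_0$ term survives), hence lies in every weighted $L^p$-space. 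By construction the restriction of $\dd_{\bar z}$ to the codimension-one subspace $L^{1,p}_{{\lam-1}-d}$ is exactly the operator $T$ of Proposition \ref{prop:d L L}, which is Fredholm with $\ker T=P_d$ and, by part (\ref{prop:d L L:coker}) of that proposition, $\coker T\cong\bar P_{-d}$; hence its complex Fredholm index equals $\dim_\C P_d-\dim_\C\bar P_{-d}=\max\{d,0\}-\max\{-d,0\}=d$ for every $d\in\Z$.

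Then I would apply the following standard fact: if $S$ is a bounded operator on a Banach space, $X_0\subseteq X$ a closed subspace of codimension one, and $S|_{X_0}$ is Fredholm, then $S$ is Fredholm with $\ind S=\ind(S|_{X_0})+1$. (The quick proof distinguishes the cases $\ker S\subseteq X_0$ and $\ker S\not\subseteq X_0$: in the first case $\ker S=\ker(S|_{X_0})$ while $\im S$ gains a one-dimensional complement modulo $\im(S|_{X_0})$; in the second case the kernel gains one dimension and $\im S=\im(S|_{X_0})$.) Applied with $S=\dd_{\bar z}$ on $X$, this gives that $\dd_{\bar z}:X\to L^p_{\lam-d}$ is Fredholm of complex index $d+1$. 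Finally, $\dd_{\bar z}$ is complex linear, so its kernel and cokernel are complex vector spaces and the real index is $2(d+1)=2+2d$, as claimed.

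I do not expect a genuine obstacle here; the argument is essentially bookkeeping. The only points requiring a little care are the uniform treatment of the two sign ranges of $d$ — the cutoff $\rho_0$ is present precisely to make $\rho_0p_d$ well-behaved near the origin when $d<0$, but it contributes nothing to $\ker$ or $\coker$ beyond the abstract $+1$ — and the verification that the norm transported via Lemma \ref{le:X d iso} really exhibits $L^{1,p}_{{\lam-1}-d}$ as a closed codimension-one subspace of $X$; both are immediate consequences of that lemma. (As a sanity check one can also compute directly: for $d\ge 0$, $\ker(\dd_{\bar z}|_X)=P_{d+1}$ and $\coker=\coker T=0$; for $d<0$, $\ker(\dd_{\bar z}|_X)=0$, using a Liouville argument and the weighted Morrey decay of Proposition \ref{prop:lam d Morrey}(\ref{prop:lam Morrey}), while $\dd_{\bar z}(\rho_0p_d)\notin\im T$ cuts the cokernel down by one; either way the complex index is $d+1$.)
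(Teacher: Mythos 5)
Your argument is correct. The paper proves the corollary by conjugating with the isomorphism of Lemma \ref{le:X d iso} and writing the resulting operator on $\C\oplus L^{1,p}_{\lam-1-d}$ as $T+S$, where $T(x_\infty,u)=\dd_{\bar z}u$ is $\dd_{\bar z}$ precomposed with the projection onto the second summand (real index $2d+2$, since the projection has a $2$-real-dimensional kernel), and $S(x_\infty,u)=x_\infty(\dd_{\bar z}\rho_0)p_d$ factors through the finite-dimensional space $\C$ and is therefore compact; compact stability of the Fredholm index finishes the proof. You instead keep the operator where it lives, observe that its restriction to the complex-codimension-one closed subspace $L^{1,p}_{\lam-1-d}\subset X$ is the operator of Proposition \ref{prop:d L L}, and invoke the elementary codimension-bump lemma (extending across a $1$-codimensional subspace raises the index by $1$). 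These are two phrasings of the same bookkeeping: the paper's compact-perturbation step and your codimension-bump step each dispose of the extra complex line $\C\rho_0p_d$. Your version has the small advantage of not needing to introduce the compact perturbation $S$ at all, while the paper's fits the template already used throughout Section \ref{subsec:proof:Fredholm aug} (reduce to a standard operator plus a compact term). One small point worth being explicit about, which you handle correctly in spirit but which is easy to garble: the ``codimension one'' in your cited lemma must be read over $\C$, since $L^{1,p}_{\lam-1-d}$ has real codimension two in $X$; because $\dd_{\bar z}$ is $\C$-linear, the case analysis in your parenthetical proof sketch is valid over $\C$ and gives the $+1$ to the complex index, which then doubles to $+2$ in the real index exactly as you say.
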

\begin{proof}[Proof of Corollary \ref{cor:d L L}]\setcounter{claim}{0} The composition of the isomorphism of Lemma \ref{le:X d iso} with the above map is given by
\[T+S:\C\oplus L^{1,p}_{{\lam-1}-d}\to L^p_{\lam-d},\quad T(x_\infty,u):=\dd_{\bar z}u,\quad S(x_\infty,u):=x_\infty(\dd_{\bar z}\rho_0)p_d.\]
The map $T$ is the composition of the canonical projection $\pr:\C\oplus L^{1,p}_{{\lam-1}-d}\to L^{1,p}_{{\lam-1}-d}$ with the operator $\dd_{\bar z}:L^{1,p}_{{\lam-1}-d}\to L^p_{\lam-d}$. Using Proposition \ref{prop:d L L}, it follows that $T$ is Fredholm of real index $2+2d$. Furthermore, $S$ is compact, since it equals the composition of the canonical projection $\C\oplus L^{1,p}_{{\lam-1}-d}\to\C$ (which is compact) with a bounded operator. Corollary \ref{cor:d L L} follows.
\end{proof}
The next result was used in the proof of Theorem \ref{thm:Fredholm aug} (Fredholm property for the augmented vertical differential) in Section \ref{subsec:proof:Fredholm aug}. Let $(V,\lan\cdot,\cdot\ran)$ be a finite dimensional hermitian vector space, $A,B:V\to V$ positive  linear maps, $\lam\in\R$ and $1<p<\infty$. We define
\[T_\lam:=\left(
  \begin{array}{cc}
\dd_{\bar z}& A\\
B & \dd_z
  \end{array}
\right):W^{1,p}_\lam(\C,V\oplus V)\to L^p_\lam(\C,V\oplus V).\]
\begin{prop}\label{prop:d A B d} The operator $T_\lam$ is Fredholm of index 0.
\end{prop}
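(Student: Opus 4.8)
The idea is to reduce the assertion to Proposition~\ref{prop:d L L} together with a deformation argument, exploiting that the zeroth-order part of $T_\lam$ is a bounded perturbation that can be continuously deformed to $0$ while keeping the family Fredholm of \emph{constant} index. First I would observe that the principal symbol of $T_\lam$ is
\[\si(\ze)=\left(\begin{array}{cc}\ze/2 & 0\\ 0 & \bar\ze/2\end{array}\right),\]
which is invertible for $\ze\in\R^2\wo\{0\}$; hence the differential operator $T_\lam^0:=\left(\begin{smallmatrix}\dd_{\bar z}&0\\0&\dd_z\end{smallmatrix}\right)$ on $W^{1,p}_\lam(\C,V\oplus V)$ is elliptic. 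By Proposition~\ref{prop:lam d Morrey}(\ref{prop:lam W}) the multiplication map $u\mapsto\lan\cdot\ran^\lam u$ is an isomorphism $W^{1,p}_\lam\to W^{1,p}$ (respectively $L^p_\lam\to L^p$); conjugating $T_\lam^0$ by this isomorphism turns it into $T_0^0$ plus a zeroth-order term coming from differentiating the weight, which is a compact perturbation by Proposition~\ref{prop:lam d Morrey}(\ref{prop:lam W cpt}) since $\lan\cdot\ran^{-1}\in L^\infty$ tends to $0$ at infinity. Thus it suffices to treat the unweighted operator, for which one can invoke Proposition~\ref{prop:d L L} componentwise: with $d=0$ the operators $\dd_{\bar z}:L^{1,p}_{\lam-1}\to L^p_\lam$ and (by complex conjugation) $\dd_z$ are Fredholm, with $\ker=P_0=\{0\}$ and cokernel dual to $\bar P_0=\{0\}$, so each has index $0$; hence $T_\lam^0$ is Fredholm of index $0$.

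The second step is to handle the zeroth-order block $\left(\begin{smallmatrix}0&A\\B&0\end{smallmatrix}\right)$. I claim this is a \emph{compact} perturbation of $T_\lam^0$ when viewed as a map $W^{1,p}_\lam(\C,V\oplus V)\to L^p_\lam(\C,V\oplus V)$ — but this is false in general, because $A,B$ are \emph{constant} coefficient maps and the inclusion $W^{1,p}_\lam\hookrightarrow L^p_\lam$ is not compact (exactly the phenomenon flagged in Remark~\ref{rmk:naive}). So instead I would use a genuine Fredholm-stability/deformation argument. Consider the family $T_\lam^t:=\left(\begin{smallmatrix}\dd_{\bar z}& tA\\ tB&\dd_z\end{smallmatrix}\right)$, $t\in[0,1]$. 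This is a norm-continuous family of bounded operators $W^{1,p}_\lam(\C,V\oplus V)\to L^p_\lam(\C,V\oplus V)$, since $t\mapsto tA$, $t\mapsto tB$ vary continuously in operator norm and the inclusion $W^{1,p}_\lam\hookrightarrow L^p_\lam$ is bounded. If every $T_\lam^t$ is Fredholm, then by homotopy invariance of the index the index of $T_\lam=T_\lam^1$ equals that of $T_\lam^0$, which we showed is $0$. So the real content is: \emph{every $T_\lam^t$ is Fredholm.} For this I would switch to ``logarithmic'' coordinates $\tau+i\phi$ with $e^{\tau+i\phi}=z$, under which $\C\wo\{0\}$ becomes the cylinder $\R\times S^1$ and, after the usual conformal rescaling of the weighted $L^p$ norm, $T_\lam^t$ takes the asymptotically-constant-coefficient form $\dd_\tau + D^t(\tau)$ on a cylindrical end, where $D^t(\tau)$ converges as $\tau\to\pm\infty$ to a constant operator $D^t_{\pm\infty}$ on $L^p(S^1,V\oplus V)$. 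The key point is that because $A,B$ are \emph{positive} (hence invertible) and $\tau$-independent after the coordinate change is compensated — actually here $A,B$ are genuinely constant, so $D^t_{\pm\infty}$ is literally $\tau$-independent — the operators $D^t_{\pm\infty}$ have no purely imaginary eigenvalues shifted by the weight $\lam$ for the given range of $\lam$; equivalently the relevant indicial roots avoid the critical line. One then applies the Lockhart–McOwen theory (Theorem~\ref{thm:T Fredholm} and the references \cite{Lockhart Fred,LM ell R n,LM ell mf}) or the closely related setup of \cite{RoSa}: an operator of the form $\dd_\tau + D(\tau)$ with invertible-enough asymptotic limits is Fredholm between the appropriate weighted spaces. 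I would verify the spectral condition by a direct computation: the eigenvalue equation for $D^t_{\pm\infty}$ is, in Fourier modes on $S^1$, a $2\times2$ (per Fourier mode, per eigenspace of $A$ and $B$) linear ODE whose characteristic exponents are $\pm\sqrt{\,(\text{mode})^2 + t^2 \lambda_A\lambda_B\,}$ up to shifts, and since $\lambda_A,\lambda_B>0$ these stay off the forbidden set for $-2/p+1<\lam<-2/p+2$.

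The third step is to pin down that the index is $0$ and not merely locally constant. Having shown $\ind T_\lam = \ind T_\lam^0$ by the homotopy, and $\ind T_\lam^0 = \ind(\dd_{\bar z}) + \ind(\dd_z)$ on the weighted spaces with $d=0$, and each summand $=0$ by Proposition~\ref{prop:d L L}(\ref{prop:d L L:kernel}),(\ref{prop:d L L:coker}) with $P_0=\bar P_0=\{0\}$, we conclude $\ind T_\lam = 0$. Alternatively, and perhaps cleaner, one observes that the formal adjoint of $T_\lam$ with respect to the unweighted $L^2$-pairing is, up to the obvious bookkeeping, of the same form (with $A,B$ replaced by their adjoints, still positive), and that conjugation by the weight exchanges the weight $\lam$ with $-\lam$ in a way that identifies $\coker T_\lam$ with $\ker$ of an operator of type $T^t_{\mu}$ for $\mu$ in the same admissible window $(-2/p'+1,-2/p'+2)$; a symmetry/duality argument then forces $\dim\ker T_\lam = \dim\coker T_\lam$, giving index $0$ directly without computing either dimension. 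I would present whichever of these is shorter once the Fredholm property is in hand.

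\textbf{Main obstacle.} The delicate point is \emph{not} the index computation but establishing the Fredholm property of $T_\lam$ (equivalently of each $T^t_\lam$) in the first place: the zeroth-order block is genuinely non-compact, so one cannot simply quote a perturbation lemma, and one must descend to the cylindrical model and check that the asymptotic operators $\dd_\tau + D^t_{\pm\infty}$ are invertible on the weighted $L^p(S^1)$-spaces for the specified range of $\lam$. This is exactly where the hypotheses $1<p<\infty$, $-2/p+1<\lam<-2/p+2$, and the \emph{positivity} of $A$ and $B$ all get used, and where the analysis of weighted Sobolev spaces of Lockhart--McOwen (and the Hardy-type inequality, Proposition~\ref{prop:Hardy}) does the real work. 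I expect the bulk of the write-up to consist of carefully computing the indicial roots of $D^t_{\pm\infty}$ and confirming they avoid the line $\{\operatorname{Re}=-2/p+\text{(integer)}\}\cap(\text{critical strip})$ throughout the deformation $t\in[0,1]$.
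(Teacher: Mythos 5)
There is a genuine gap, and it is at the first step of your plan. You conclude from Proposition~\ref{prop:d L L} that $T_\lam^0:=\dd_{\bar z}\oplus\dd_z$ is Fredholm, but you have silently changed the function spaces. Proposition~\ref{prop:d L L} is about $\dd_{\bar z}:L^{1,p}_{\lam-1-d}\to L^p_{\lam-d}$, where the norm on $L^{1,p}_{\lam-1}$ is $\Vert\lan\cdot\ran^{\lam-1}u\Vert_p+\Vert\lan\cdot\ran^{\lam}Du\Vert_p$. The domain of $T_\lam$ (and hence of your $T_\lam^t$) is $W^{1,p}_\lam$, with the \emph{unscaled} norm $\Vert\lan\cdot\ran^{\lam}u\Vert_p+\Vert\lan\cdot\ran^{\lam}Du\Vert_p$. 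These are genuinely different spaces; $W^{1,p}_\lam$ is a proper closed subspace of $L^{1,p}_{\lam-1}$, and restricting a Fredholm operator to such a subspace does not preserve Fredholmness. In fact $\dd_{\bar z}:W^{1,p}_\lam\to L^p_\lam$ is \emph{not} Fredholm: conjugating by $\lan\cdot\ran^\lam$ (which is what your reduction does and is legitimate) reduces the question to whether $\dd_{\bar z}:W^{1,p}(\C)\to L^p(\C)$ is Fredholm, and it is not --- a sequence of slowly spreading bumps $u_n(z)=\chi(z/n)$ shows $\Vert u_n\Vert_{1,p}/\Vert\dd_{\bar z}u_n\Vert_p\to\infty$, so $\dd_{\bar z}$ is not bounded below modulo its (trivial) kernel and its image is not closed, because $\C$ has no Poincar\'e inequality. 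Consequently the endpoint $T_\lam^0$ of your homotopy is not Fredholm, and homotopy invariance of the index cannot be invoked. The positivity of $A$ and $B$ is not a perturbative nuisance; it is exactly what supplies the missing zeroth-order coercivity.

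Your fallback --- the cylindrical-coordinate/Lockhart--McOwen argument to verify Fredholmness of each $T_\lam^t$ --- is also doomed, for the reason already flagged in Remark~\ref{rmk:Fredholm aug}: under $z=e^{\tau+i\phi}$ the first-order parts of $T_\lam^t$ pick up a factor $e^{-\tau}$, so after the standard conjugation to bring them to the form $\dd_\tau+D(\tau)$, the constant zeroth-order block acquires a factor $e^{\tau}$ (really $e^{2\tau}$ after the accompanying metric rescaling) that diverges as $\tau\to\infty$. The asymptotic operator $D(\tau)$ does not converge, so the Robbin--Salamon/Lockhart--McOwen framework does not apply. This is not incidental: it is the central analytical obstruction that the whole weighted-space setup of Section~\ref{sec:proof:Fredholm} is designed to circumvent.

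The paper's actual proof is elementary and bypasses both problems: for $\lam=0$, conjugate $T_0$ into $L=\left(\begin{smallmatrix}\dd_{\bar z}&A^{1/2}B^{1/2}\\B^{1/2}A^{1/2}&\dd_z\end{smallmatrix}\right)$ using $A^{\pm1/2}\oplus B^{\pm1/2}$, then exhibit an explicit two-sided inverse $R=L'(LL')^{-1}$ with $L'=\left(\begin{smallmatrix}-\dd_z&A^{1/2}B^{1/2}\\B^{1/2}A^{1/2}&-\dd_{\bar z}\end{smallmatrix}\right)$; the key identity is $LL'=\left(-\tfrac14\La+A^{1/2}BA^{1/2}\right)\oplus\left(-\tfrac14\La+B^{1/2}AB^{1/2}\right)$, which is an isomorphism $W^{2,p}\to L^p$ by Calder\'on's theorem (Proposition~\ref{prop:Calderon}) precisely because $A^{1/2}BA^{1/2}$ and $B^{1/2}AB^{1/2}$ are positive. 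So $T_0$ is an isomorphism, not merely Fredholm. For general $\lam$, conjugation by $\lan\cdot\ran^\lam$ gives $T_\lam=\lan\cdot\ran^{-\lam}(T_0+S)\lan\cdot\ran^\lam$ where $S$ (the commutator of $\dd$ with the weight) is compact by Proposition~\ref{prop:lam d Morrey}(\ref{prop:lam W cpt}), giving Fredholmness of index $0$. Note that Proposition~\ref{prop:d L L} plays no role in this particular proof --- it is the ingredient for Corollary~\ref{cor:d L L} handling the $\C^{\bar n}$-block of $\DD_w$, which lives on the genuinely different spaces $L^{1,p}_{\lam-1-d}$; the separation of the two blocks in Proposition~\ref{prop:X X w} reflects exactly the distinction you elided.
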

For the proof of Proposition \ref{prop:d A B d} we need the following result.
\begin{prop}\label{prop:Calderon} Let $(V,\lan\cdot,\cdot\ran),p$ and $A$ be as above, and $n\in\N$. Then the map
\[-\La+A:W^{2,p}(\R^n,V)\to L^p(\R^n,V)\]
is an isomorphism (of Banach spaces).
\end{prop}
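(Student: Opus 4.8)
The statement to prove is that for a positive linear map $A:V\to V$ on a finite-dimensional hermitian vector space, $1<p<\infty$, and $n\in\N$, the operator $-\La+A:W^{2,p}(\R^n,V)\to L^p(\R^n,V)$ is a Banach space isomorphism. The plan is to reduce to the scalar case by diagonalizing $A$, then to use the $L^p$-theory of constant-coefficient elliptic operators (Calderón--Zygmund theory / Fourier multipliers) for the scalar Helmholtz-type operator $-\La+a$ with $a>0$. First I would note that since $A$ is positive (hence self-adjoint with respect to $\lan\cdot,\cdot\ran$) it is unitarily diagonalizable: there is a hermitian orthonormal basis of $V$ in which $A$ becomes $\operatorname{diag}(a_1,\dots,a_m)$ with all $a_j>0$. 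Applying this change of basis (a bounded invertible map on both $W^{2,p}(\R^n,V)$ and $L^p(\R^n,V)$) intertwines $-\La+A$ with the block-diagonal operator $\bigoplus_j(-\La+a_j)$, so it suffices to treat a single scalar equation $-\La u+a u=f$ with $a>0$.

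For the scalar case the key steps are: (i) injectivity — if $u\in W^{2,p}$ solves $-\La u+au=0$, then taking the Fourier transform gives $(|\xi|^2+a)\hhat u=0$, hence $\hhat u=0$ and $u=0$ (the multiplier $|\xi|^2+a$ never vanishes since $a>0$); (ii) surjectivity and the a priori estimate simultaneously — for $f\in L^p$ define $u$ via $\hhat u(\xi):=(|\xi|^2+a)^{-1}\hhat f(\xi)$, and show $u\in W^{2,p}$ with $\Vert u\Vert_{W^{2,p}}\leq C\Vert f\Vert_{L^p}$. For the latter one checks that each of the functions $m_0(\xi):=(|\xi|^2+a)^{-1}$, $m_{j}(\xi):=\xi_j(|\xi|^2+a)^{-1}$, and $m_{jk}(\xi):=\xi_j\xi_k(|\xi|^2+a)^{-1}$ is an $L^p$ Fourier multiplier. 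The second-order ones $m_{jk}$ are the only nontrivial ones: write $m_{jk}(\xi)=\dfrac{\xi_j\xi_k}{|\xi|^2}\cdot\dfrac{|\xi|^2}{|\xi|^2+a}$, where $\xi_j\xi_k/|\xi|^2$ is a (product of) Riesz-transform-type symbols and $|\xi|^2/(|\xi|^2+a)=1-a(|\xi|^2+a)^{-1}$; then verify the Mikhlin--Hörmander condition $|\dd^\al m(\xi)|\leq C_\al|\xi|^{-|\al|}$ for $|\al|\leq\lfloor n/2\rfloor+1$ directly by differentiating, which is an elementary but slightly tedious computation. The Mikhlin multiplier theorem then gives boundedness $L^p\to L^p$ of each multiplier operator, hence $u\in W^{2,p}$ with the desired estimate, and this $u$ solves $-\La u+au=f$. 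Together (i) and (ii) show $-\La+a$ is a bijection with bounded inverse; the open mapping theorem (or the explicit estimate) finishes it.

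Finally I would assemble the pieces: the block-diagonal operator $\bigoplus_j(-\La+a_j)$ is an isomorphism because each summand is, and conjugating back by the (bounded, bounded-inverse) diagonalizing unitary of $A$ recovers the statement for $-\La+A$. A mild alternative to the multiplier computation, if one prefers to cite rather than compute, is to observe that $-\La+A$ is a constant-coefficient homogeneous-order-$2$ elliptic system in the sense compatible with Theorem \ref{thm:T Fredholm} applied on the \emph{unweighted} scale ($\lam=0$, which satisfies $\lam+n/p=n/p\notin\Z$ provided $p$ is chosen accordingly — but since we need \emph{all} $p$, this shortcut does not cover the borderline exponents, so the direct Mikhlin argument is the honest route); hence I expect to carry out the multiplier estimate explicitly.

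The main obstacle I anticipate is the verification of the Mikhlin--Hörmander derivative bounds for the second-order symbols $m_{jk}(\xi)=\xi_j\xi_k(|\xi|^2+a)^{-1}$ uniformly down to $\xi\to 0$; near the origin $m_{jk}$ is smooth and bounded (no singularity, thanks to $a>0$), while near infinity it behaves like the Riesz symbol $\xi_j\xi_k/|\xi|^2$, so one must handle the two regimes and the transition, keeping the constants independent of nothing problematic (they may depend on $a$, $n$, $p$, which is fine). This is routine harmonic analysis but is the only step requiring genuine care; everything else (diagonalization, injectivity via Fourier transform, assembly) is bookkeeping.
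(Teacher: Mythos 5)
Your proposal is correct and leads to the same conclusion, but the key analytic step is genuinely different from what the paper does. Both you and the paper reduce to the scalar case by diagonalizing the positive map $A$ in a hermitian orthonormal basis. From there the paper takes the Bessel-potential route: it introduces the kernel $G = (2\pi)^{n/2}(\lan\cdot\ran^{-2})\Unhat$ and cites Calder\'on's Theorem (from Adams--Hedberg) that convolution with $G$ is an isomorphism $L^p \to W^{2,p}$; then it observes $(-\La+1)(G*u)=u$ for Schwartz $u$, so $-\La+1$ is the inverse of that isomorphism and hence is itself an isomorphism. (The paper actually only treats $a=1$ explicitly and absorbs a general eigenvalue $a>0$ into ``diagonalizing,'' so implicitly it relies on a rescaling argument to get from $-\La+1$ to $-\La+a$.) You instead propose to establish injectivity by Fourier-transform cancellation and surjectivity plus the a priori bound by checking the Mikhlin--H\"ormander conditions directly for the symbols $\xi_j\xi_k(|\xi|^2+a)^{-1}$, $\xi_j(|\xi|^2+a)^{-1}$, $(|\xi|^2+a)^{-1}$. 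These are cousins of the same harmonic-analysis machinery — Calder\'on's Bessel-potential theorem is proved with multiplier techniques — but as written your proof is more self-contained (modulo citing the Mikhlin multiplier theorem) and handles general $a>0$ directly without the implicit rescaling, whereas the paper's argument is a one-line citation that is harder to make explicit but shorter. Your observation that the Lockhart Fredholm theorem (\ref{thm:T Fredholm}) cannot serve as a shortcut is also correct, indeed for two reasons: it covers only homogeneous operators (and $-\La+A$ has a zeroth-order part), and the constraint $n/p\notin\Z$ would exclude borderline exponents.
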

\begin{proof}[Proof of Proposition \ref{prop:Calderon}]\setcounter{claim}{0} Consider first the case $\dim_\C V=1$ and $A=1$. We define
\[G:=(2\pi)^{\frac n2}\big(\langle\cdot\rangle^{-2}\big)\Unhat\in \SSS'.\]
The map $\SSS\ni u\mapsto G*u\in\SSS$ is well-defined. By Calder\'on's Theorem this map extends uniquely to an isomorphism
\begin{equation}
  \label{eq:L G u}L^p(\R^n,\C)\ni u\mapsto G*u\in W^{2,p}(\R^n,\C).
\end{equation}
(See \cite[Theorem 1.2.3.]{Ad}.) Note that 
\[(-\La+1)(G*u)=\big(\lan\cdot\ran^2(G*u)\HAT\big)\Unhat=u,\] 
for every $u\in\SSS$. It follows that the inverse of (\ref{eq:L G u}) is given by
\[-\La+1:W^{2,p}(\R^n,\C)\to L^p(\R^n,\C).\]
Hence this is an isomorphism.

The general case can be reduced to the above case by diagonalizing the map $A$. This proves Proposition \ref{prop:Calderon}.
\end{proof}
\begin{proof}[Proof of Proposition \ref{prop:d A B d}]\label{proof:d A B d} \setcounter{claim}{0} We abbreviate $L^p:=L^p(\C,V\oplus V)$, etc. 

{\bf Assume first that $\lam=0$.} We denote by $A^{1/2},B^{1/2}:V\to V$ the unique positive  linear maps satisfying $(A^{\frac12})^2=A$, $(B^{\frac12})^2=B$. We define 
\begin{eqnarray}\nn L:=\left(
    \begin{array}{cc}\dd_{\bar z}&A^{\frac12}B^{\frac12}\\
B^{\frac12}A^{\frac12}&\dd_z
    \end{array}
\right):W^{1,p}\to L^p.
\end{eqnarray}
A short calculation shows that 
\begin{equation}
  \label{eq:A B T}T_0=\big(A^{\frac12}\oplus B^{\frac12}\big)L\big(A^{-\frac12}\oplus B^{-\frac12}\big).
\end{equation}
\begin{Claim}The operator $L$ is an isomorphism. 
\end{Claim}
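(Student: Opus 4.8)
The plan is to show that $L$ is an isomorphism by Fourier analysis, since $L$ has constant coefficients, and then to conclude Proposition \ref{prop:d A B d} in the case $\lam=0$ from the similarity \eqref{eq:A B T}, and finally to reduce the general $\lam$ to the case $\lam=0$ using Proposition \ref{prop:lam d Morrey}(\ref{prop:lam W}). First I would prove the Claim that $L:W^{1,p}\to L^p$ is an isomorphism. The clean route is to observe that $L$ is ``square-rooted'' to the Laplace-type operator: a direct computation gives
\[
\left(\begin{array}{cc}-\dd_z&A^{\frac12}B^{\frac12}\\ B^{\frac12}A^{\frac12}&-\dd_{\bar z}\end{array}\right)L=\left(\begin{array}{cc}-\dd_z\dd_{\bar z}+A^{\frac12}B A^{\frac12}&0\\ 0&-\dd_{\bar z}\dd_z+B^{\frac12}AB^{\frac12}\end{array}\right),
\]
and $-\dd_z\dd_{\bar z}=-\dd_{\bar z}\dd_z=-\tfrac14\La$, while $A^{\frac12}BA^{\frac12}$ and $B^{\frac12}AB^{\frac12}$ are positive linear maps (being conjugates of $B$, resp.\ $A$, by invertible self-adjoint maps). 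Hence the right-hand side is $\big(-\tfrac14\La+A^{\frac12}BA^{\frac12}\big)\oplus\big(-\tfrac14\La+B^{\frac12}AB^{\frac12}\big)$, which is an isomorphism $W^{2,p}\to L^p$ by Proposition \ref{prop:Calderon} (after the trivial rescaling of $\La$ by a positive constant, absorbed into the positive map). It follows that $L$ has a bounded left inverse; symmetrically, multiplying $L$ on the left instead gives a bounded right inverse, so $L$ is invertible. One should be slightly careful about elliptic regularity, i.e.\ that a solution $u\in L^p$ of $Lu=f$ with $f\in L^p$ actually lies in $W^{1,p}$; this is standard for the constant-coefficient elliptic system $L$ (its symbol $\left(\begin{smallmatrix}i\ze/2&A^{1/2}B^{1/2}\\ B^{1/2}A^{1/2}&i\bar\ze/2\end{smallmatrix}\right)$ is invertible for $\ze\neq0$ since its ``square'' is $(|\ze|^2/4+\text{positive})\oplus(\cdots)$), and can be deduced from Proposition \ref{prop:Calderon} applied to $Lu$.

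Granting the Claim, the case $\lam=0$ of Proposition \ref{prop:d A B d} is immediate: by \eqref{eq:A B T}, $T_0$ is the composition of the three isomorphisms $A^{-\frac12}\oplus B^{-\frac12}:W^{1,p}\to W^{1,p}$, $L:W^{1,p}\to L^p$, and $A^{\frac12}\oplus B^{\frac12}:L^p\to L^p$ (the first and third being bounded invertible zeroth-order multiplication operators, as $A,B$ are invertible on the finite-dimensional $V$). Hence $T_0$ is an isomorphism, in particular Fredholm of index $0$.

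For general $\lam\in\R$, I would conjugate by the weight. By Proposition \ref{prop:lam d Morrey}(\ref{prop:lam W}) the map $M_\lam:W^{1,p}_\lam\to W^{1,p}$, $M_\lam u=\lan\cdot\ran^\lam u$, and likewise $M_\lam:L^p_\lam\to L^p$, are isomorphisms of normed spaces. Therefore $T_\lam$ is Fredholm of index $0$ if and only if the operator $\wt T_\lam:=M_\lam T_\lam M_\lam^{-1}:W^{1,p}\to L^p$ is. Now $\wt T_\lam-T_0$ is a first-order operator all of whose coefficients are $O(\lan\cdot\ran^{-1})$ — indeed the only place the weight fails to commute with $T_0$ is through $\dd_{\bar z}\lan\cdot\ran^{-\lam}$ and $\dd_z\lan\cdot\ran^{-\lam}$, which produce multiplication operators with coefficients decaying like $|z|^{-1}$ — so $\wt T_\lam-T_0$ maps $W^{1,p}$ into $L^p_{1}\subset L^p$ and, by the Rellich-type compactness in Proposition \ref{prop:lam d Morrey}(\ref{prop:lam W cpt}) together with Proposition \ref{prop:lam d Morrey}(\ref{prop:lam Morrey}), factors as a compact operator $W^{1,p}\to L^p$. (Concretely: $W^{1,p}\hookrightarrow C_b$ compactly after weighting, and multiplication by a coefficient decaying at infinity is compact into $L^p$.) Since $T_0$ is Fredholm of index $0$ and $\wt T_\lam$ is a compact perturbation of it, $\wt T_\lam$, hence $T_\lam$, is Fredholm of index $0$. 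This completes the proof of Proposition \ref{prop:d A B d}.

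The main obstacle I anticipate is the elliptic-regularity bookkeeping in the Claim — namely making precise that invertibility of $L$ on the level of the strong $W^{1,p}\to L^p$ map really follows from Proposition \ref{prop:Calderon} rather than merely formally, and getting the $W^{1,p}$-a priori estimate $\|u\|_{W^{1,p}}\lesssim\|Lu\|_{L^p}$ (one direction needs the left inverse of $L$ to land in $W^{1,p}$, which in turn uses that $(-\tfrac14\La+\text{pos.})^{-1}$ gains two derivatives and that one of the two first-order factors above gains one back). Everything else — the algebraic identity squaring $L$ to a Laplace-type operator, the positivity of the conjugated maps $A^{1/2}BA^{1/2}$ and $B^{1/2}AB^{1/2}$, and the weight-conjugation/compact-perturbation argument for general $\lam$ — is routine.
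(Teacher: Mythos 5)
Your argument is essentially the paper's: it introduces the same auxiliary first-order operator $L'$ with $L'L=LL'=\bigl(-\tfrac14\La+A^{1/2}BA^{1/2}\bigr)\oplus\bigl(-\tfrac14\La+B^{1/2}AB^{1/2}\bigr)$, invokes Proposition~\ref{prop:Calderon} for the invertibility of this Laplace-type operator, and proceeds exactly as the paper does in the remainder of Proposition~\ref{prop:d A B d}. The ``elliptic-regularity bookkeeping'' you anticipate is precisely where the paper takes a slightly cleaner path: instead of a separate left inverse $(L'L)^{-1}L'$, whose natural domain is $W^{1,p}$ rather than the target space $L^p$ (so composing it with $L$ has a domain mismatch), the paper works with the single operator $R:=L'(LL')^{-1}:L^p\to W^{1,p}$, checks $LR=\id$ directly, and verifies $RL=\id$ first on Schwartz functions (where the commutativity $LL'=L'L$ applies) and then by density. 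So the obstacle you flagged is real but has exactly the resolution you expected; the factorization, the use of Proposition~\ref{prop:Calderon}, and the positivity of the conjugated maps all coincide with the paper's proof.
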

\begin{proof}[Proof of the claim] We define
\[L':=\left(
  \begin{array}{cc}-\dd_z&A^{\frac12}B^{\frac12}\\
B^{\frac12}A^{\frac12}&-\dd_{\bar z}\end{array}
\right):W^{2,p}\to W^{1,p}.\]
By a short calculation we have 
\[LL'=\big(-\frac\La4 +A^{\frac12}B A^{\frac12}\big)\oplus\big(-\frac\La4 + B^{\frac12}A B^{\frac12}\big):W^{2,p}\to L^p.\] 
Since the linear maps
\[A^{\frac12}BA^{\frac12},B^{\frac12}AB^{\frac12}:V\to V\]
are positive, Proposition \ref{prop:Calderon} implies that $LL'$ is an isomorphism. We denote by $(LL')^{-1}:L^p\to W^{2,p}$ its inverse and define
\[R:=L'(LL')^{-1}:L^p\to W^{1,p}.\]
Then $R$ is bounded and $LR=\id_{L^p}$. 

By a short calculation, we have $LL'(u,v)=L'L(u,v)$, for every Schwartz function $(u,v)\in\SSS$. This implies that
\[(LL')^{-1}L|_\SSS=L(LL')^{-1}|_\SSS,\]
and therefore $RL|_\SSS=\id_\SSS$. Since $RL:W^{1,p}\to W^{1,p}$ is continuous and $\SSS\sub W^{1,p}$ is dense, it follows that $RL=\id_{W^{1,p}}$. The claim follows. 
\end{proof}
The maps
\[A^{\frac12}\oplus B^{\frac12}:L^p\to L^p,\quad A^{-\frac12}\oplus B^{-\frac12}:W^{1,p}\to W^{1,p}\]
are automorphisms. Therefore, (\ref{eq:A B T}) and the claim imply that $T_0$ is an isomorphism. 

Consider now the {\bf general case $\lam\in\R$}. The map
\[L^p\ni (u,v)\mapsto \lan\cdot\ran^{-\lam}(u,v)\in L^p_\lam\]
is an isometric isomorphism. Furthermore, by Proposition \ref{prop:lam d Morrey}(\ref{prop:lam W}) the map
\[W^{1,p}_\lam\ni (u,v)\mapsto \lan\cdot \ran^\lam(u,v)\in W^{1,p}\]
is well-defined and an isomorphism. We define 
\[S:=\lan\cdot\ran^\lam(\dd_{\bar z}\lan\cdot\ran^{-\lam})\oplus\lan\cdot\ran^\lam(\dd_z\lan\cdot\ran^{-\lam}):W^{1,p}\to L^p.\] 
Direct calculations show that 
\[T_\lam=\lan\cdot\ran^{-\lam}(T_0+S)\lan\cdot\ran^\lam,\quad|\dd_{\bar z}\lan\cdot\ran^{-\lam}|\leq |\lam|\lan\cdot\ran^{-\lam-1}/2,\quad|\dd_z\lan\cdot\ran^{-\lam}|\leq|\lam|\lan\cdot\ran^{-\lam-1}/2.\] 
Therefore, Proposition \ref{prop:lam d Morrey}(\ref{prop:lam W cpt}) implies that the operator $S$ is compact. Since we proved that $T_0$ is an isomorphism, it follows that $T_\lam$ is a Fredholm map of index 0. This proves Proposition \ref{prop:d A B d} in the general case.
\end{proof}
\section{Smoothening a principal bundle}\label{sec:smooth}
The main result of this section states that a principal bundle of Sobolev class $W^{2,p}_\loc$ is Sobolev isomorphic to a smooth bundle, if $p$ is large enough. This will be used in the proofs of Propositions \ref{PROP:RIGHT}, \ref{prop:Uhlenbeck}, and \ref{prop:right d A * d A} in the next section. Let $n\in\N$ be an integer, $p>n/2$ a real number, $X$ a smooth manifold (possibly with boundary) of dimension $n$, $G$ a compact Lie group, and $P$ a $G$-bundle over $X$ of class $W^{2,p}_\loc$.%
\footnote{By definition, this means the following. Let $U,U'\sub X$ be open subsets, and $\Phi:U\x G\to P$ and $\Phi':U'\x G\to P$ local trivializations. Then the corresponding transition function $g_{U',U}:U\cap U'\to G$ is bounded in $W^{2,p}$ on every compact subset $K\sub U\cap U'$. Note here that $K$ may intersect the boundary $\dd X$.}%
\begin{thm}[Smoothening a principal bundle]\label{thm:reg bundle} If $p>\frac n2$ and $P$ is as above then there exists an isomorphism of principal $G$-bundles of class $W^{2,p}_\loc$, from $P$ to a smooth bundle over $X$.
\end{thm}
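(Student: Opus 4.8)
The plan is to reduce the statement to a standard patching argument: we will replace the $W^{2,p}_\loc$ transition cocycle of $P$ by a smooth one, cohomologous to it via $W^{2,p}_\loc$ coboundaries, and then interpret that coboundary as the desired bundle isomorphism. First I would fix a locally finite good cover $\{U_i\}$ of $X$ by coordinate balls (adapted to $\dd X$ near the boundary) such that $P|_{U_i}$ is trivializable by a $W^{2,p}_\loc$ trivialization, with transition functions $g_{ij}\colon U_i\cap U_j\to G$ of class $W^{2,p}_\loc$ satisfying the cocycle condition $g_{ij}g_{jk}=g_{ik}$. Embedding $G\hookrightarrow \mathrm{U}(N)\subset \mathbb{R}^{N^2}$ for some $N$, I would regard each $g_{ij}$ as a $W^{2,p}_\loc$ map into a compact submanifold of Euclidean space. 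Here the key analytic input is that, since $p>n/2$, we have $2p>n$, so $W^{2,p}_\loc\hookrightarrow C^0_\loc$ (Morrey/Sobolev embedding), hence the $g_{ij}$ are continuous, the pointwise products and inverses make sense classically, and moreover $W^{2,p}_\loc$ is closed under multiplication and inversion of $G$-valued maps (this is exactly the content invoked via Lemma~\ref{le:g smooth} elsewhere in the paper, which I would cite).

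Next I would carry out the smoothening. Using a partition of unity subordinate to $\{U_i\}$ and a mollification/approximation procedure on each chart, I would produce smooth maps $h_i\colon U_i\to G$ that are $C^0$-close (and $W^{2,p}$-close on compacts) to the identity-adjustment needed to trivialize $P$ ``in a smooth way''. More precisely, the standard trick is: start with the continuous cocycle $g_{ij}$, cover compacta inductively, and on the overlap $U_i\cap U_j$ replace $g_{ij}$ by a smooth approximation $\tilde g_{ij}$; the obstruction to $\tilde g_{ij}$ being a cocycle is a $G$-valued Čech $2$-cochain that is $C^0$-small, hence (using that $G$ is a Lie group, so has no small nontrivial cohomology obstructions — one can correct via the exponential map and a further partition-of-unity argument) can be killed by $W^{2,p}_\loc$ coboundaries $\lambda_i\colon U_i\to G$ that are $C^0$-close to $1$. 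This yields smooth transition functions $g'_{ij}:=\lambda_i^{-1}g_{ij}\lambda_j$ defining a smooth bundle $P'$, together with the identification $\{\lambda_i\}$ which patches to a global bundle isomorphism $\Phi\colon P\to P'$. By construction $\Phi$ is of class $W^{2,p}_\loc$: in local trivializations it is given by the $\lambda_i$'s, which are $W^{2,p}_\loc$ $G$-valued maps, and the compatibility under change of chart follows because $W^{2,p}_\loc$ (being a multiplicative algebra of $G$-valued maps when $2p>n$) is stable under the relevant compositions.

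I expect the main obstacle to be the cocycle-correction step done with control in the $W^{2,p}_\loc$ topology rather than merely $C^0$: one must choose the mollification scale on each chart small enough, propagate the corrections through an infinite (but locally finite) cover without destroying earlier corrections on overlaps, and verify that the resulting coboundaries $\lambda_i$ genuinely lie in $W^{2,p}_\loc$ and not just in $C^0$. The clean way around this is the exponential-map trick: write $\tilde g_{ij}=\exp(\xi_{ij})g_{ij}$ with $\xi_{ij}$ smooth and $W^{2,p}_\loc$-small $\mathfrak{g}$-valued, reduce the correction to solving a linearized Čech problem $\xi_{ij}=\eta_i-\mathrm{Ad}_{g_{ij}}\eta_j$ (solvable by a partition of unity, since the relevant sheaf of $W^{2,p}_\loc$ sections of the adjoint bundle is fine), and then exponentiate back; the multiplicativity of $W^{2,p}_\loc$ for $2p>n$ guarantees every step stays in class $W^{2,p}_\loc$. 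Boundary charts require no new idea since $W^{2,p}$-bounds are imposed on compacta that may meet $\dd X$, and partitions of unity and mollification (interior mollification composed with a boundary collar retraction) work verbatim. With this, $\Phi\colon P\to P'$ is the desired $W^{2,p}_\loc$ isomorphism onto a smooth bundle, completing the proof.
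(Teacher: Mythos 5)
Your overall strategy---represent $P$ by a $W^{2,p}_\loc$ transition cocycle $(g_{ij})$, produce a smooth transition cocycle close to it, show the two cocycles are related by a $W^{2,p}_\loc$ $0$-cochain $(\lambda_i)$, and read off the bundle isomorphism from that coboundary---is exactly the decomposition the paper uses: Isobe's theorem (via Proposition \ref{prop:smooth collection}) for the smoothening, Wehrheim's lemma (via Lemma \ref{le:k V}) for the coboundary, and Remark \ref{rmk:P} to assemble. So you have the right skeleton. Your attempt to replace these two cited results by a single linearized solve, however, has a genuine gap, located precisely at the point you flag as the ``main obstacle''.

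The exponential-map step does not work as written. If $\tilde g_{ij}$ is a mollification of $g_{ij}$ and $\xi_{ij}$ is defined by $\tilde g_{ij}=\exp(\xi_{ij})g_{ij}$, then (i) $\xi_{ij}$ is not smooth---it differs from the smooth $\tilde g_{ij}$ by the factor $g_{ij}^{-1}$, which is only $W^{2,p}_\loc$, so $\xi_{ij}$ is $W^{2,p}_\loc$ but not $C^\infty$---and, more seriously, (ii) the equation $\xi_{ij}=\eta_i-\Ad_{g_{ij}}\eta_j$ has no solution. That equation asks for $(\xi_{ij})$ to be a twisted \v{C}ech coboundary, which forces $(\xi_{ij})$ to be a twisted \v{C}ech $1$-cocycle, $\Ad_{g_{ij}}\xi_{jk}-\xi_{ik}+\xi_{ij}=0$. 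This is exactly the linearization of the cocycle identity for $\tilde g_{ij}$, and mollification destroys the cocycle identity, so the defect is nonzero. Fineness of the sheaf of $W^{2,p}_\loc$-sections of the adjoint bundle gives you $H^1=0$, i.e., that every $1$-\emph{cocycle} is a coboundary; it says nothing about a $1$-cochain that fails the cocycle condition.

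What is missing is a separate, genuinely nontrivial cocycle-correction step: before any coboundary equation can be posed, you must first perturb the smooth non-cocycle $(\tilde g_{ij})$ to a \emph{smooth cocycle} $(g'_{ij})$ that is $C^0$-close to $(g_{ij})$. This is a \v{C}ech degree-$2$ problem (the defect $\tilde g_{ij}\tilde g_{jk}\tilde g_{ik}^{-1}$ is a $2$-cochain), it is nonlinear, and killing it requires iteration with control of scales across the locally finite cover; this is precisely what Isobe's theorem provides, and the paper invokes it after passing to the double $X\#X$ to absorb the boundary. Only once you hold two genuine cocycles $(g_{ij})$ and $(g'_{ij})$, both of class $W^{2,p}_\loc$ and uniformly $C^0$-close, does the coboundary problem become solvable in $W^{2,p}_\loc$---and even that step is nonlinear (this is the content of Wehrheim's lemma, not a one-shot linearized solve). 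To make your proof work, isolate these two steps explicitly: prove or cite the cocycle correction for the smooth approximation, and only then run the coboundary argument.
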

The proof of this result relies on the facts that there exists a smooth $G$-bundle that is ``$C^0$-close'' to $P$, and that if two ``Sobolev bundles'' are ``$C^0$-close'' then they are ``Sobolev-isomorphic''. In order to explain this, let $\UU$ be an open cover of $X$. We call a collection of functions
\[g_{U',U}:U\cap U'\to G\quad(U,U'\in\UU)\]
\emph{compatible} iff it satisfies 
\begin{equation}\label{eq:g U U' U''}g_{U,U}=\one,\quad g_{U'',U'}g_{U',U}=g_{U'',U}\textrm{ on }U\cap U'\cap U'',\quad\forall U,U',U''\in\UU.
\end{equation}
\begin{Rmk}The second condition means that the collection $(g_{U',U})$ is a \v{C}ech 1-cocycle. $\Box$
\end{Rmk}
Recall that a cover $\UU$ of $X$ is called \emph{locally finite} iff every point $x\in X$ possesses a neighborhood which intersects only finitely many sets in $\UU$. By a \emph{refinement} of the cover $\UU$ we mean a cover $\VV$ of $X$, together with a map $\VV\ni V\mapsto U_V\in\UU$, such that $V\sub U_V$, for every $V\in\VV$. The first ingredient of the proof of Theorem \ref{thm:reg bundle} is the following.
\begin{prop}[Smoothening a compatible collection of maps]\label{prop:smooth collection} Let
\[g_{U,U'}\in W^{2,p}_\loc(U\cap U',G)\quad(U,U'\in\UU)\]
be a compatible collection, and $W$ a neighborhood of the diagonal in $G\x G$. Then there exists a locally finite refinement $\VV\ni V\mapsto U_V\in\UU$ consisting of open precompact sets, and a compatible collection of smooth maps
\[h_{V,V'}:V\cap V'\to G\quad(V,V'\in\VV),\]
such that 
\begin{equation}\label{eq:wt g g}\big(h_{V,V'}(x),g_{U_V,U_{V'}}(x)\big)\in W,\quad\forall V,V'\in\VV,\,x\in V\cap V'.
\end{equation}
\end{prop}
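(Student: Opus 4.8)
\textbf{Proof proposal for Proposition \ref{prop:smooth collection}.}

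The plan is to approximate the \v{C}ech cocycle $(g_{U,U'})$ by a smooth one in a sufficiently strong sense. First I would pass to a locally finite refinement $\VV\ni V\mapsto U_V\in\UU$ consisting of open precompact sets, chosen so that each $\BAR V$ is compact and contained in $U_V$, and moreover so that for each $V$ the set of $V'\in\VV$ with $\BAR V\cap\BAR{V'}\neq\emptyset$ is finite; such a refinement exists by paracompactness of $X$ (using that $X$ is a manifold, hence second-countable and locally compact). On this refinement the maps $g_{U_V,U_{V'}}$ restrict to maps that are bounded in $W^{2,p}$ on compact subsets; since $p>n/2$, by the Sobolev embedding $W^{2,p}_\loc\inj C^0_\loc$ (using $2p>n$), each such restriction is continuous, and on the compact set $\BAR{V\cap V'}$ (or a slightly larger compact set) it can be uniformly approximated by smooth maps in the $C^0$-topology, e.g.\ by composing with a mollifier in local charts and projecting back to $G$ via the exponential map of a bi-invariant metric. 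The key point is that mollification can be done so that the smoothed maps are still $C^0$-close (and even $W^{1,p}$-close, though $C^0$ is all that is needed here) to the originals.

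The main obstacle is maintaining the cocycle condition (\ref{eq:g U U' U''}) after smoothing: naively mollifying each $g_{U_V,U_{V'}}$ independently destroys the relation $h_{V'',V'}h_{V',V}=h_{V'',V}$. The standard way around this is to construct the smooth cocycle not by smoothing the transition functions directly but by first smoothing the \emph{bundle}: view $(g_{U_V,U_{V'}})$ as defining a principal $G$-bundle of class $W^{2,p}_\loc$, choose a $W^{2,p}_\loc$-section of an associated fibre bundle (or, more elementarily, choose for each $V$ a continuous local section $s_V$ of $P|_V$ and smooth each $s_V$), and then \emph{define} $h_{V,V'}$ from the smoothed sections; the cocycle relations are then automatic. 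Concretely, fix continuous (indeed $W^{2,p}_\loc$) local trivializations whose transition functions are the $g_{U_V,U_{V'}}$, approximate each by a smooth trivialization of a fixed smooth bundle structure, and read off $h_{V,V'}$ as the resulting smooth transition functions; compatibility (\ref{eq:g U U' U''}) holds because they are honest transition functions of a bundle, and (\ref{eq:wt g g}) is arranged by making the approximation of each trivialization uniform on the relevant precompact sets and using local finiteness to absorb all comparisons into the neighborhood $W$ of the diagonal.

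In carrying this out I would: (1) produce the locally finite precompact refinement $\VV$; (2) invoke $W^{2,p}_\loc\inj C^0_\loc$ to get continuity of the $g_{U_V,U_{V'}}$; (3) for each $V$ pick a local section/trivialization of $P|_V$ and mollify it in charts (composing with $\exp$ to stay in $G$), using a partition-of-unity/induction over the locally finite cover to patch the smoothings consistently; (4) define $h_{V,V'}$ as the transition functions of the smoothed trivializations, so that (\ref{eq:g U U' U''}) is immediate; (5) estimate $\bar d\big(h_{V,V'}(x),g_{U_V,U_{V'}}(x)\big)$ on $V\cap V'$ in terms of the $C^0$-errors of the smoothings and shrink those errors (using local finiteness, finitely many constraints are active near any point) so that (\ref{eq:wt g g}) holds for the given $W$. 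The only genuinely delicate part is step (3)—organizing the mollifications over the (infinite but locally finite) cover so that they glue to give well-defined smooth transition functions rather than a mere approximate cocycle—and this is handled by the bundle-smoothing viewpoint rather than function-by-function mollification.
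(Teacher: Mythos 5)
The proposal correctly identifies the central difficulty (preserving the cocycle identity while smoothing), but the resolution you offer for it is circular. You propose to sidestep independent mollification of each $g_{U_V,U_{V'}}$ by ``first smoothing the bundle'' and then reading off the $h_{V,V'}$ as transition functions of the smoothed object. But producing a smooth bundle that is $C^0$-close to the given Sobolev bundle is precisely what this proposition (together with Lemma \ref{le:k V}) is used to establish in Theorem \ref{thm:reg bundle}; there is no ``fixed smooth bundle structure'' available to approximate by until the proposition itself is proved. The two concrete variants you suggest do not repair this: ``smooth section of $P|_V$'' is not meaningful since $P$ is only of class $W^{2,p}_\loc$, and the associated fiber bundle you allude to inherits the same regularity problem. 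Your final paragraph explicitly delegates the delicate part of step (3) to the ``bundle-smoothing viewpoint,'' so the argument as written begs the question at exactly the point where content is required.

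What is actually needed at that point is an inductive correction scheme over the locally finite cover: at each stage one modifies finitely many transition functions near one new set $V$, repairing the cocycle relation while keeping all corrections uniformly small so the $C^0$-closeness to the original cocycle survives. The paper handles this by first reducing to $\dd X=\emptyset$ (passing to the double $X\#X$), passing to a locally finite precompact refinement, and then following the proof of Isobe's Theorem 2.1 in \cite{Is}, using that $W^{2,p}_\loc\sub C^0_\loc$ since $p>n/2$. Your steps (1), (2), and (5) are fine and match the paper's setup; to complete the argument you would need to either invoke \cite[Theorem 2.1]{Is} as the paper does, or supply the inductive cocycle-repair mechanism that your ``partition-of-unity/induction'' phrase gestures at but does not provide.
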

\begin{Rmk} This result says that there exists a smooth \v{C}ech $1$-cocycle, which is arbitrarily close in the uniform sense to a given \v{C}ech $1$-cocycle of Sobolev class. Condition (\ref{eq:wt g g}) may look unconventional, but it is a natural way of stating the closeness condition. An alternative would be to introduce a Riemannian metric on $G$ and formulate the condition in terms of the induced distance function on $G$. $\Box$
\end{Rmk}
\begin{proof}[Proof of Proposition \ref{prop:smooth collection}] We may assume w.l.o.g.~that $\dd X=\emptyset$, by considering the double $X\#X$. If $X$ is compact then the statement follows from \cite[Theorem 2.1]{Is}, using the fact that $W^{2,p}$-maps are continuous, since $p>n/2$. In the general case, it follows from an adaption of that proof: We first choose a locally finite refinement $\VV\ni V\mapsto U_V\in\UU$ by precompact sets, and define $g'_{V,V'}:=g_{U_V,U_{V'}}|_{V\cap V'}$. Then we follow the argument of the proof of \cite[Theorem 2.1]{Is}, for the cover $\VV$.%
\footnote{Note that we can choose the sets in the refinement of $\VV$ as in that proof to be precompact, since the sets in $\VV$ are precompact.}
 This proves Proposition \ref{prop:smooth collection}.
\end{proof}
The next lemma will also be used in the proof of Theorem \ref{thm:reg bundle}.
\begin{lemma}\label{le:k V} There exists a neighborhood $\NN$ of the diagonal $G\x G$ with the following property. Let $n\in\N$ be an integer, $p>\frac n2$ a real number, $X$ a smooth manifold of dimension $n$, and $\UU$ a locally finite cover of $X$ by precompact sets. Then there exists a refinement $\VV\ni V\mapsto U_V\in\UU$ such that the following holds. Let
\[g_{U,U'}\in W^{2,p}\big(U\cap U',G\big),\quad h_{U,U'}\in W^{2,p}\big(U\cap U',G\big)\quad(U,U'\in\UU)\]
be compatible collections of maps satisfying 
\[\big(g_{U,U'}(x),h_{U,U'}(x)\big)\in\NN,\quad\forall x\in U\cap U',\,U,U'\in \UU.\]
Then there exists a collection of maps $k_V\in W^{2,p}(V,G)$ ($V\in\VV$), such that 
\begin{equation}\label{eq:k h k g} k_{V'}^{-1}h_{V',V}k_V=g_{V',V}\textrm{ on }V\cap V'.
\end{equation}
\end{lemma}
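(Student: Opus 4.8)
The plan is to solve equations (\ref{eq:k h k g}) locally and patch the local solutions together using a partition of unity argument, exactly in the spirit of the cocycle-twisting construction. The underlying idea is that $h$ and $g$ differ by a coboundary (in the \v{C}ech sense) whose ``size'' is controlled by $\NN$, so we want to extract a $0$-cochain $(k_V)$ realizing this coboundary. The neighborhood $\NN$ will be chosen so small that the relevant products of elements ``near the diagonal'' stay inside a fixed chart of $G$ around $\one$, making the construction unobstructed.

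First I would fix a precompact open neighborhood $N_0$ of $\one$ in $G$ diffeomorphic to a ball, small enough that the multiplication map restricted to $N_0\x N_0$ lands in a slightly larger such neighborhood, and set $\NN:=\{(g,h)\in G\x G\,|\,gh^{-1}\in N_0\}$; this is a neighborhood of the diagonal by left-invariance. Given a locally finite cover $\UU$ by precompact sets, I would pass to a locally finite refinement $\VV\ni V\mapsto U_V\in\UU$ together with a shrinking $\VV'\ni V\mapsto V'\Subset V$ that still covers $X$, and choose a smooth partition of unity $(\rho_V)_{V\in\VV}$ subordinate to $(V)$. Writing $f_{V,V'}:=h_{U_V,U_{V'}}g_{U_V,U_{V'}}^{-1}$ on $V\cap V'$ (a map of class $W^{2,p}$ taking values in $N_0$ by hypothesis), one checks from the cocycle conditions (\ref{eq:g U U' U''}) for $g$ and $h$ that $f_{V,V''}=f_{V,V'}\,(h_{U_{V'},U_{V''}} f_{V',V''}\,h_{U_{V'},U_{V''}}^{-1})$, i.e.\ $f$ is a twisted $1$-cocycle valued in a contractible chart. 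The key step is then to define $k_V\in W^{2,p}(V,G)$ by a ``barycentric average'' of the $f_{V,W}$'s over the indices $W$ with $\rho_W\neq0$ near the given point: using the exponential chart $\exp\colon \mathfrak g\supseteq\log N_0\to N_0$, set $k_V:=\exp\big(\sum_{W}\rho_W\,\log f_{V,W}\big)$, restricted to a small enough shrinking of $V$ so that all the $\log f_{V,W}$ that occur are defined (here $\NN$ small guarantees the arguments stay in the domain of $\log$). That $k_V$ is of class $W^{2,p}$ follows because $\log$, $\exp$, multiplication and inversion are smooth, and $W^{2,p}$ with $p>n/2$ is a Banach algebra under these operations on precompact sets (Morrey/Sobolev multiplication). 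The algebraic identity $k_{V'}^{-1}h_{V',V}k_V=g_{V',V}$ on $V\cap V'$ is then verified by a direct computation using the twisted-cocycle relation for $f$ and $\sum_W\rho_W=1$, entirely parallel to the smooth case in \cite{Is}.

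The main obstacle I anticipate is \emph{not} the algebra but the bookkeeping needed to make the $\log$'s simultaneously well-defined: one must shrink $\VV$ (and hence reindex) so that for each point $x$ and each $V$ with $x$ in the support-neighborhood, \emph{all} pairs $f_{V,W}(x)$ with $\rho_W(x)\neq0$ lie in $N_0$, which forces $\NN$ to be chosen depending only on $N_0$ (independent of $X$ and the collections), and forces the refinement to be fine enough that the ``local finiteness degree'' stays bounded. This is where the hypothesis that $\UU$ is locally finite by precompact sets is used, and where one invokes a uniform-continuity estimate: since each $f_{V,W}$ is continuous (again $p>n/2$) and valued in $N_0$, a standard shrinking lemma produces the required $\VV$. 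Once the cover is arranged, the Sobolev regularity of $k_V$ and identity (\ref{eq:k h k g}) are routine; I would relegate the Banach-algebra estimates and the explicit verification of (\ref{eq:k h k g}) to short calculations rather than spelling them out. The resulting $\NN$ is the one that feeds into the proof of Theorem \ref{thm:reg bundle}: composing the isomorphism given by a smoothening of the transition cocycle (Proposition \ref{prop:smooth collection}) with the $W^{2,p}$-bundle isomorphism induced by the $(k_V)$ of this lemma yields the desired $W^{2,p}_\loc$-isomorphism onto a smooth bundle.
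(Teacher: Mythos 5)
Your proposal has the right scaffolding — shrink the cover, pass to an exponential chart around $\one$, build the $k_V$ by a partition-of-unity construction — but the central formula does not actually solve the equation, and this is a genuine gap, not a bookkeeping issue. For reference, the paper disposes of this lemma by simply citing \cite[Lemma 7.2]{We}, which proves it by an \emph{iterative} construction over an ordering of the cover; your formula is a one-shot ``barycentric average'' and fails for nonabelian $G$.

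Here is the problem. Write $a_W:=\log f_{V,W}$ and $k_V:=\exp\big(\sum_W\rho_W a_W\big)$. To verify $k_{V'}^{-1}h_{V',V}k_V=g_{V',V}$, conjugate $k_V$ by $h_{V',V}$ and compare with $k_{V'}$. In the two-set case this works, because only a single Lie-algebra element (up to $\Ad$) appears in each exponent. But on a triple overlap $V_1\cap V_2\cap V_3$ with $\rho_1+\rho_2+\rho_3=1$, set $a:=\log f_{2,1}$ and $b:=\log f_{2,3}$. Using the cocycle relations one finds $\Ad_{h_{2,1}}\log f_{1,2}=-a$ and $\Ad_{h_{2,1}}\log f_{1,3}=\log\big(e^{b}e^{-a}\big)$, so the identity you need becomes
\[
\exp\!\Big(-\rho_2\,a+\rho_3\log\big(e^{b}e^{-a}\big)\Big)\,e^{a}\;=\;\exp\!\big(\rho_1 a+\rho_3 b\big).
\]
Expanding both sides via Baker--Campbell--Hausdorff, the first- and second-order terms cancel, but at third order the left-hand exponent differs from $\rho_1 a+\rho_3 b$ by
\[
\frac{\rho_3(\rho_3-1)}{12}\,[b,[b,a]]+\frac{\rho_1\rho_3}{12}\,[a,[b,a]]+O(4),
\]
which is generically nonzero whenever $[a,b]\neq0$ and $0<\rho_3<1$. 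Taking $G$ nonabelian (say $G=SU(2)$, $a,b$ small multiples of noncommuting elements of $\mathfrak{su}(2)$) and a point with $\rho_1,\rho_2,\rho_3$ all positive gives an explicit counterexample. So choosing $\NN$ smaller only shrinks the error; it does not make it vanish, because the obstruction is a ratio (cubic over linear), not an absolute smallness.

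What repairs this is precisely what Wehrheim's Lemma 7.2 does: order the (countable, locally finite) refinement $\VV=\{V_1,V_2,\dots\}$, and construct $k_{V_i}$ inductively, using a bump function to interpolate between the already-fixed values on $V_j$ for $j<i$ and the identity elsewhere. At each step one is dealing with a \emph{single} transition function, so the two-set computation above (which does work) is the only one that ever occurs, and noncommutativity never enters. Local finiteness guarantees the induction is well-defined pointwise, and the Sobolev multiplication/composition estimates you invoked (which are fine: $W^{2,p}$ with $p>n/2$ is a Banach algebra on precompact sets) close the estimates at each step. If you prefer a non-iterative argument, a Newton/implicit-function-theorem approach on the space of $0$-cochains also works, but the naive closed-form average does not.
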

\begin{proof}[Proof of Lemma \ref{le:k V}] This is a direct consequence of \cite[Lemma 7.2]{We}. 
\end{proof}
For the proof of Theorem \ref{thm:reg bundle}, we also need the following.
\begin{rmk}\label{rmk:P} Let $\UU$ be an open cover of $X$, and $g_{U',U}$ ($U,U'\in\UU$) be a compatible collection of maps. We define the set
\[P_{(g_{U',U})}:=\big\{(U,x,g)\,\big|\,U\in\UU,\,x\in U,\,g\in G\big\}/\sim,\]
where the equivalence relation $\sim$ is defined by
\[(U,x,g)\sim(U',x',g')\textrm{ iff }x=x'\in U\cap U'\textrm{ and }g'=gg_{U',U}(x).\]
If the maps $g_{U',U}$ are smooth, then this set naturally is a smooth $G$-bundle, and if the maps $g_{U',U}$ are of Sobolev class $W^{k,p}_\loc$ with $kp>n:=\dim X$, then it naturally is a $G$-bundle of class $W^{k,p}_\loc$. In either case, a system of local trivializations is given by 
\begin{equation}\label{eq:Phi U U G}\Phi_U:U\x G\to P,\quad(\Phi_U)_x(g):=\Phi_U(x,g):=[U,x,g],
\end{equation}
where $[U,x,g]$ denotes the equivalence class of $(U,x,g)$. The map $g_{U',U}$ is the transition map from $\Phi_U$ to $\Phi_{U'}$. This means that 
\[(\Phi_{U'})_x^{-1}(\Phi_U)_x(g)=g\,g_{U',U}(x),\quad\forall x\in U\cap U',\,g\in G.\]
Let $kp>n$, and $(g_{U',U})$ and $(h_{U',U})$ be compatible collections of maps of class $W^{k,p}_\loc$. Then the bundles $P_{(g_{U',U})}$ and $P_{(h_{U',U})}$ are $W^{k,p}_\loc$-isomorphic, if there exists a collection of maps $k_U\in W^{k,p}_\loc(U,G)$, satisfying the equation
\begin{equation}\label{eq:k U'}k_{U'}(x)^{-1}h_{U',U}(x)k_U(x)=g_{U',U}(x),\quad\forall x\in U\cap U',\forall U,U'\in\UU.
\end{equation}
Defining $\Phi_U$ as in (\ref{eq:Phi U U G}) and $\Psi_U$ similarly, with $g_{U',U}$ replaced by $h_{U',U}$, an isomorphism $P_{(g_{U',U})}\to P_{(h_{U',U})}$ is given by
\[[U,x,g]\mapsto(\Psi_U)_x\big(k_U(x)(\Phi_U)_x^{-1}(g)\big).\]
(It follows from the compatibility condition (\ref{eq:g U U' U''}) and (\ref{eq:k U'}) that this map is well-defined, i.e., the right hand side above does not depend on the choice of the representative $(U,x,g)$.) $\Box$
\end{rmk}
\begin{proof}[Proof of Theorem \ref{thm:reg bundle} (p.~\pageref{thm:reg bundle})]\setcounter{claim}{0} We choose a cover $\UU$ of $X$, and a system of local trivializations $\Phi_U:U\x G\to P$ of class $W^{2,p}_\loc$ ($U\in\UU$). For $U,U'\in\UU$ we denote by $g_{U',U}:U\cap U'\to G$ the corresponding transition map, defined by 
\[\cdot g_{U',U}(x):=(\Phi_{U'})_x^{-1}(\Phi_U)_x,\]
where for $g\in G$, $\cdot g:G\to G$ denotes right multiplication. We choose a neighborhood $\NN$ of the diagonal in $G\x G$ as in Lemma \ref{le:k V}, and a refinement $\VV$ and a collection of smooth maps $h_{V',V}$ ($V,V'\in\VV$) as in Proposition \ref{prop:smooth collection}. Using (\ref{eq:wt g g}), we may apply Lemma \ref{le:k V} with $\UU,g_{U',U},h_{U',U}$ replaced by $\VV,g_{U_{V'},U_V},h_{V',V}$, to conclude that there exists a refinement $\WW\ni W\mapsto V_W\in\VV$, and a collection of maps $k_W\in W^{2,p}(W,G)$ ($W\in\WW$), such that 
\[k_{W'}^{-1}h_{V_{W'},V_W}k_W=g_{U_{V_{W'}},U_{V_W}}\quad\textrm{on }W\cap W',\,\forall W,W'\in\WW.\]
Therefore, the statement of Theorem \ref{thm:reg bundle} follows from Remark \ref{rmk:P}. 
\end{proof}
\section{Proof of the existence of a right inverse for $d_A^*$}\label{sec:proof:prop:right}
In this section we prove Proposition \ref{PROP:RIGHT} (Section \ref{subsec:proof:thm:L w * R}). We need the following results. Let $n\in\N$, $G$ be a compact Lie group with Lie algebra $\g$, $\lan\cdot,\cdot\ran_\g$ an invariant inner product on $\g$, and $(X,\lan\cdot,\cdot\ran_X)$ a Riemannian manifold (possibly with boundary) of dimension $n$. Recall the definitions (\ref{eq:Vert al Vert},\ref{eq:Om i k p A},\ref{eq:Ga k p A}) of $\Vert\al\Vert_{k,p,A},\Om^i_{k,p,A}(\g_P),\Ga^{k,p}_A(\g_P),\Ga^p(\g_P)$. If $X$ is compact, $p>n/2$, and $P$ is a $G$-bundle over $X$ of Sobolev class $W^{2,p}$, then we denote by $\A^{1,p}(P)$ the affine space of connection one-forms on $P$ of class $W^{1,p}$.
\begin{prop}[Uhlenbeck gauge]\label{prop:Uhlenbeck} Let $n/2<p<\infty$. Assume that $X$ is compact and diffeomorphic to the closed ball $\bar B_1\sub\R^n$. Then there exist constants $\eps>0$ and $C$ with the following property. Let $P_0$ and $P$ be $G$-bundles over $X$ of class $W^{2,p}$, and $A_0,A\in\A^{1,p}(P)$ connections, such that $A_0$ is flat and
\[\Vert F_A\Vert_p\leq\eps.\]
Then there exists an isomorphism of $G$-bundles $\Phi:P_0\to P$, of class $W^{2,p}$, such that 
\begin{equation}\label{eq:Phi A A 0}\Vert\Phi^*A-A_0\Vert_{1,p,A_0}\leq C\Vert F_A\Vert_p.
\end{equation}
\end{prop}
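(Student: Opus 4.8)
The plan is to combine the standard Uhlenbeck gauge-fixing theorem on a ball with the bundle-smoothening result of Theorem \ref{thm:reg bundle}, reducing the statement to the classical case of the trivial bundle over $\bar B_1$. First I would use Theorem \ref{thm:reg bundle} (with $n$ here playing the role of the manifold dimension and the hypothesis $p>n>n/2$) to replace $P$ and $P_0$ by smooth $G$-bundles: there exist $W^{2,p}$-isomorphisms $\Psi:P\to P^{\mathrm{sm}}$ and $\Psi_0:P_0\to P_0^{\mathrm{sm}}$ onto smooth bundles. Since $X$ is diffeomorphic to $\bar B_1\subset\R^n$, which is contractible, every smooth $G$-bundle over $X$ is trivial; hence we may assume $P^{\mathrm{sm}}=P_0^{\mathrm{sm}}=X\times G$, and in particular $P$ and $P_0$ are $W^{2,p}$-isomorphic to the same smooth (trivial) bundle. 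Fixing such an isomorphism $P_0\cong X\times G\cong P$ at the outset, the problem becomes: given two connections $A_0,A$ of class $W^{1,p}$ on $X\times G$ with $A_0$ flat and $\Vert F_A\Vert_p\le\eps$, find a $W^{2,p}$ gauge transformation $g:X\to G$ with $\Vert g^*A-A_0\Vert_{1,p,A_0}\le C\Vert F_A\Vert_p$.

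Next I would handle the flat connection $A_0$. Since $X$ is simply connected and $A_0$ is flat, there is a $W^{2,p}$ gauge transformation $g_0$ trivialising $A_0$, i.e.\ $g_0^*A_0=0$ (this is the $W^{1,p}_{\mathrm{loc}}$ analogue of Proposition \ref{prop:ka 0}, valid here because $p>n$ makes $W^{2,p}$ transformations continuous and the relevant ODE/PDE for $g_0$ solvable with $W^{2,p}$ regularity; compactness of $X$ removes the need to localise). After applying $g_0$ we are reduced to the case $A_0=0$, and the target estimate becomes $\Vert\Phi^*A\Vert_{1,p}\le C\Vert F_A\Vert_p$ for the ordinary (untwisted) $W^{1,p}$-norm on $\g$-valued one-forms over $\bar B_1$. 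This is precisely the content of the classical Uhlenbeck gauge-fixing theorem over a ball: there are constants $\eps>0$, $C>0$ such that any $W^{1,p}$ connection $A$ with $\Vert F_A\Vert_p\le\eps$ is $W^{2,p}$-gauge-equivalent to a connection in relative Coulomb gauge with respect to the product connection, satisfying $d^*(\Phi^*A)=0$ in the interior, $*(\Phi^*A)=0$ on $\partial X$, and $\Vert\Phi^*A\Vert_{1,p}\le C\Vert F_A\Vert_p$. I would cite this in the form in which it appears in Wehrheim's book (the same reference \cite{We} already used elsewhere in the memoir for the local slice theorem and Theorem \ref{thm:reg bundle}'s supporting lemmas), noting that the version allowing boundary is exactly what is needed since $X$ is a closed ball.

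Finally I would reassemble: the composite isomorphism $\Phi:P_0\to P$ is the chosen $W^{2,p}$-identification followed by $g_0$ and then the Uhlenbeck gauge transformation; tracking the estimate through these compositions (each step either an isometry on the relevant norms or contributing a controlled bounded factor, and using that $\Vert\cdot\Vert_{1,p,A_0}$ and $\Vert\cdot\Vert_{1,p}$ are equivalent once $A_0$ is trivialised, because $A_0$ is continuous and supported on a compact set) yields (\ref{eq:Phi A A 0}) with a new constant $C$. The main obstacle I anticipate is not the Uhlenbeck step itself — which is standard — but bookkeeping the regularity: one must make sure that trivialising the flat connection $A_0$ and smoothening the bundles can be done simultaneously and that all intermediate gauge transformations genuinely lie in $W^{2,p}$ (rather than merely $W^{1,p}$), so that the hypotheses of the classical theorem and the norm $\Vert\cdot\Vert_{1,p,A_0}$ make sense; here the hypothesis $p>n$ is essential and should be invoked explicitly at each point where continuity of $W^{2,p}$ maps or the product/composition estimates for Sobolev gauge transformations (Lemma \ref{le:g smooth}) are used.
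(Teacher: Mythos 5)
Your outline — smoothen the bundles via Theorem~\ref{thm:reg bundle}, reduce to the trivial bundle over $\bar B_1$, trivialise the flat connection, then invoke Uhlenbeck's theorem from \cite{We} — is the same skeleton the paper uses, and the final assembly step is also the same in spirit. However, there is a genuine gap in the middle step, and it is not merely a matter of bookkeeping: you explicitly assume $p>n$ (writing \emph{``$p>n>n/2$''} and later \emph{``the hypothesis $p>n$ is essential''}), whereas the proposition's hypothesis is only $p>n/2$. The place where you actually rely on $p>n$ is the trivialisation of $A_0$ by a $W^{1,p}_\loc$-analogue of Proposition~\ref{prop:ka 0}: solving the parallel-transport ODE needs the connection coefficients restricted to a line to be continuous (or at least integrable with a Picard--Lindel\"of argument), which requires $A_0\in C^0$ and hence $p>n$. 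For $n/2<p\le n$ a $W^{1,p}$-connection is not continuous in general, restriction to lines is not controlled, and your argument breaks down. Continuity of $W^{2,p}$ gauge transformations, by contrast, only needs $2p>n$, i.e.\ $p>n/2$; you have conflated these two embedding thresholds.

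The paper sidesteps this entirely with a small trick that is worth noting: instead of a separate argument to trivialise $A_0$, it applies Lemma~\ref{le:Uhlenbeck trivial} (the Uhlenbeck gauge theorem on $\bar B_1$ with standard metric and trivial bundle) directly \emph{to $A_0$}. Since $A_0$ is flat, $\Vert F_{A_0}\Vert_p=0\le\eps$, and the lemma's estimate $\Vert g_0^*A_0\Vert_{1,p}\le C\Vert F_{A_0}\Vert_p=0$ forces $g_0^*A_0=0$. This only needs $p>n/2$, exactly as in the lemma's statement, and after this reduction the general $A$ is handled by the same lemma. You could repair your argument by replacing your parallel-transport step with this observation; as written, though, your proof does not cover the range $n/2<p\le n$ that the proposition claims. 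A secondary omission: the proposition allows an arbitrary Riemannian metric on $X$ (only the diffeomorphism type is $\bar B_1$), and the reduction to the standard metric has to be justified — the paper does this with Remark~\ref{rmk:metric}, which says the norms $\Vert\cdot\Vert_{k,p,\lan\cdot,\cdot\ran_X,A}$ for different metrics on a compact base are equivalent with constants independent of $P$ and $A$. You leave the metric implicit throughout.
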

The proof of this result is based on the following.
\begin{lemma}[Uhlenbeck gauge with trivial connection]\label{le:Uhlenbeck trivial} Let $n/2<p<\infty$. Assume that $X=\BAR B_1\sub\R^n$, equipped with the standard metric $\lan\cdot,\cdot\ran_0$. We denote by $P_0$ the trivial $G$-bundle $X\x G$, and by $A_0$ the trivial connection on this bundle. Then there exist constants $\eps>0$ and $C>0$, such that for every connection $A\in\A^{1,p}(P_0)$, satisfying $\Vert F_A\Vert_p\leq\eps$, there exists a gauge transformation $g$ on $P_0$ of class $W^{2,p}$, such that 
\[\Vert g^*A-A_0\Vert_{1,p,A_0,\lan\cdot,\cdot\ran_0}\leq C\Vert F_A\Vert_p.\]
\end{lemma}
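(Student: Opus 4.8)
\textbf{Proof proposal for Lemma \ref{le:Uhlenbeck trivial}.} The statement is the classical Uhlenbeck gauge fixing theorem in the local model $X = \BAR B_1 \subset \R^n$ with trivial bundle and trivial reference connection, in the ``subcritical'' regime $p > n/2$. My plan is to invoke the standard result essentially verbatim. First I would recall that a connection on the trivial bundle $P_0 = \BAR B_1 \times G$ is simply a $\g$-valued one-form $A \in W^{1,p}(\BAR B_1, T^*\BAR B_1 \otimes \g)$, and a gauge transformation is a map $g \in W^{2,p}(\BAR B_1, G)$ acting by $g^*A = \Ad_{g^{-1}}A + g^{-1}dg$. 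The goal is to find $g$ putting $A$ into relative Coulomb gauge with respect to $A_0 = 0$, i.e.\ $d^*(g^*A) = 0$ together with the boundary condition $\iota_\nu(g^*A)|_{\dd \BAR B_1} = 0$, and then to exploit the elliptic estimate for $d \oplus d^*$ with these boundary conditions to bound $\Vert g^*A \Vert_{1,p}$ by $\Vert F_A \Vert_p$.

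The key steps, in order, are: (i) Set up the nonlinear map $\mathcal{F}(g, A) := d^*(g^*A)$ (with the normal-component boundary condition built in) from a neighborhood of $(\id, 0)$ in $W^{2,p}(\BAR B_1, G) \times \A^{1,p}(P_0)$ to $W^{0,p}$, and compute its linearization in $g$ at $(\id, 0)$, which is the Laplacian $\Delta = d^*d$ on $\g$-valued functions with Neumann-type boundary conditions. Since $p > n/2 > 1$ this is a Fredholm operator; modding out constants (which act trivially on $A_0 = 0$ up to $\Ad$) it is invertible on the relevant subspace. (ii) Apply the implicit function theorem / a quantitative Newton iteration to produce, for $\Vert F_A\Vert_p$ small (this is where $\eps$ enters), a gauge transformation $g$ with $d^*(g^*A) = 0$ and the boundary condition satisfied, together with the bound $\Vert g - \id \Vert_{2,p} \le C \Vert A \Vert_{1,p}$ in a first pass. (iii) With $g^*A$ now coclosed and normal-component-vanishing on the boundary, invoke the Gaffney--Friedrichs / elliptic estimate: $\Vert g^*A \Vert_{1,p} \le C(\Vert d(g^*A)\Vert_p + \Vert d^*(g^*A)\Vert_p) = C \Vert d(g^*A)\Vert_p$. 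Finally, since $d(g^*A) = F_{g^*A} - \tfrac12[g^*A \wedge g^*A]$ and $\Vert F_{g^*A}\Vert_p = \Vert F_A\Vert_p$ (curvature is gauge-equivariant, and pointwise norms are $\Ad$-invariant), a standard absorption argument using $\Vert [g^*A \wedge g^*A]\Vert_p \lesssim \Vert g^*A\Vert_{L^{2p}}^2 \lesssim \Vert g^*A\Vert_{1,p}^2$ (Sobolev embedding $W^{1,p} \hookrightarrow L^{2p}$, valid for $p \geq n/2$, sharpened since $p > n/2$) lets one absorb the quadratic term for $\Vert F_A\Vert_p$ sufficiently small, yielding $\Vert g^*A - A_0\Vert_{1,p,A_0} = \Vert g^*A\Vert_{1,p} \le C \Vert F_A\Vert_p$.

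In fact, rather than reproving all of this, the cleanest route is to cite the literature: this is precisely Uhlenbeck's theorem \cite{Uh} (in the form adapted to $W^{1,p}$ connections with $p > n/2$) as presented, e.g., in Wehrheim's book \cite{We}, where the relative Coulomb gauge construction and the estimate are carried out in exactly this generality. So the proof I would write is short: recall the set-up, note that $X = \BAR B_1$ with the standard metric and $A_0$ flat (here $A_0 = 0$) are covered, and quote the relevant theorem, observing that the constants $\eps, C$ depend only on $n$, $p$, $G$, and $\lan\cdot,\cdot\ran_\g$.

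The main obstacle — or rather the only point requiring care — is the boundary behaviour: one must use the version of Coulomb gauge fixing with the correct boundary condition (vanishing normal component of $g^*A$) so that the elliptic estimate for $d \oplus d^*$ closes on the manifold-with-boundary $\BAR B_1$, and one must check that the implicit function theorem is applied in the right function spaces ($W^{2,p}$ gauge transformations, $W^{1,p}$ connections, $p > n/2$ so that $W^{2,p} \hookrightarrow C^0$ and the group-valued analysis makes sense). All of this is standard and available in \cite{We}; no genuinely new argument is needed beyond assembling these ingredients.
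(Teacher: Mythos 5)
Your proposal is correct and ultimately takes the same route as the paper: the paper's proof is just the citation to Wehrheim's book (Theorem 6.3 of \cite{We}), which is exactly where you end up after sketching the standard Coulomb-gauge argument. Your outline of the underlying proof (relative Coulomb gauge via implicit function theorem, Neumann boundary condition, Gaffney-type elliptic estimate, absorption of the quadratic curvature term) is accurate and consistent with what the cited result contains, but the paper does not reproduce any of it.
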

\begin{proof}[Proof of Lemma \ref{le:Uhlenbeck trivial}] This follows e.g.~from \cite[Theorem 6.3]{We}.
\end{proof}
In the proof of Proposition \ref{prop:Uhlenbeck} we also use the following.
\begin{rmk}\label{rmk:metric} Let
\[\ell\in\N_0,\quad p\in\left(\frac n{\ell+1},\infty\right),\]
and $\lan\cdot,\cdot\ran'_X$ be a Riemannian metric on $X$. Assume that $X$ is compact. Then there exists a constant $C>0$ such that 
\[C^{-1}\Vert\al\Vert_{k,p,\lan\cdot,\cdot\ran_X,A}\leq \Vert\al\Vert_{k,p,\lan\cdot,\cdot\ran_X',A}\leq C\Vert\al\Vert_{k,p,\lan\cdot,\cdot\ran_X,A},\]
for every $G$-bundle $P$ over $X$, of class $W^{\ell+1,p}$, connection $A\in\A^{\ell,p}(P)$, integer $k\in\{0,\ldots,\ell+1\}$, and every differential form $\al$ on $X$ with values in $\g_P$, of class $W^{k,p}$. (Here the degree of $\al$ is arbitrary.) This follows from an elementary argument, using induction over $k$. $\Box$
\end{rmk}
\begin{proof}[Proof of Proposition \ref{prop:Uhlenbeck}]\setcounter{claim}{0} Consider first the {\bf case $X=\BAR B_1$} together with the standard metric. We choose constants $\eps,C$ as in Lemma \ref{le:Uhlenbeck trivial}. Let $P_0,P,A_0,A$ be as in the hypothesis. We show that the required isomorphism $\Phi$ exists. It follows from Theorem \ref{thm:reg bundle} that we may assume without loss of generality that $P_0$ and $P$ are smooth. Since $X$ is smoothly retractible to a point, $P_0$ and $P$ are smoothly isomorphic to the trivial bundle over $X$. Hence we may assume without loss of generality that they are the trivial bundle. Since $A_0$ is flat, it follows from Lemma \ref{le:Uhlenbeck trivial} that $A_0$ is $W^{2,p}$-gauge equivalent to the trivial connection on $X\x G$. Therefore, the statement of Proposition \ref{prop:Uhlenbeck} is a consequence of Lemma \ref{le:Uhlenbeck trivial}.

The situation in which $\lan\cdot,\cdot\ran_X$ is a general metric, can be reduced to the above case, using Remark \ref{rmk:metric}. This proves Proposition \ref{prop:Uhlenbeck}.
\end{proof}
The proof of Proposition \ref{PROP:RIGHT} is based on the following. Recall from (\ref{eq:d A *}) that 
\[d_A^*=-*d_A*:\Om^1_{1,p,A}(\g_P)\to\Ga^p(\g_P)\]
denotes the formal adjoint of the operator $d_A$. If $(X,\Vert\cdot\Vert_X)$ and $(Y,\Vert\cdot\Vert_Y)$ are normed vector spaces and $T:X\to Y$ a bounded linear map then we denote by
\[\Vert T\Vert:=\sup\big\{\Vert Tx\Vert_Y\,\big|\,x\in X:\,\Vert x\Vert_X\leq1\big\}\] 
the operator norm of $T$. 
\begin{prop}\label{prop:right d A * d A} Let $p>n$. Assume that $X$ is diffeomorphic to $\BAR B_1\sub\R^n$, and $(X,\lan\cdot,\cdot\ran_X)$ can be embedded (as a Riemannian manifold) into $\R^n$, together with the standard metric. Then there exist constants $\eps>0$ and $C>0$, such that for every $G$-bundle $P\to X$ of class $W^{2,p}$, and every connection $A\in\A^{1,p}(P)$ satisfying $\Vert F_A\Vert_p\leq\eps$, there exists a right inverse $R$ of the operator 
\begin{equation}\label{eq:d A * d A}d_A^*d_A:\Ga^{2,p}_A(\g_P)\to\Ga^p(\g_P),\end{equation}
with operator norm $\Vert R\Vert$ bounded above by $C$.
\end{prop}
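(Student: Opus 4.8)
The plan is to produce the right inverse for $d_A^*d_A$ by a perturbation argument around a flat (in fact trivial) connection, for which the operator is the standard Hodge Laplacian on $\g$-valued $0$-forms. First I would use Theorem \ref{thm:reg bundle} to replace $P$ by a smooth bundle, and then, since $X$ is diffeomorphic to $\BAR B_1$, trivialize it; this reduces everything to the trivial bundle $X\times G$ with a connection $A$ close to flat in the $L^p$-sense. By the hypothesis that $(X,\lan\cdot,\cdot\ran_X)$ embeds isometrically into $(\R^n,\text{standard})$, I would then further reduce to the model case $X=\BAR B_1\subseteq\R^n$ with the Euclidean metric: here $d_{A_0}^*d_{A_0}=-\Lam$ acting componentwise on $\g$-valued functions with the appropriate (Dirichlet or Neumann --- whichever is consistent with the adjoint structure of $d_{A_0}^*$ being the formal adjoint on this domain) boundary behaviour, and classical elliptic theory on the ball (Calder\'on--Zygmund / $L^p$-estimates, using $p>n$ so that $W^{2,p}\hookrightarrow C^1$) gives a bounded right inverse $R_0$ of $-\Lam:\Ga^{2,p}_{A_0}(\g_P)\to\Ga^p(\g_P)$, with operator norm some fixed constant $C_0$ depending only on $n,p,X$.

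Next I would treat the general $A$ with $\Vert F_A\Vert_p\le\eps$ as a perturbation of the flat model. By Proposition \ref{prop:Uhlenbeck} (Uhlenbeck gauge), after applying a $W^{2,p}$-gauge transformation I may assume $\Vert A-A_0\Vert_{1,p,A_0}\le C_1\Vert F_A\Vert_p\le C_1\eps$, where $A_0$ is the trivial connection. Writing $a:=A-A_0$, the difference $d_A^*d_A-d_{A_0}^*d_{A_0}$ is a first-order differential operator whose coefficients involve $a$ and $d_{A_0}a$; using the Sobolev multiplication $W^{1,p}\cdot W^{1,p}\to W^{1,p}$ and $W^{1,p}\cdot L^p\to L^p$ for $p>n$ (and the twisted Morrey-type bound, Lemma \ref{le:infty 1 p A}, to control $\Vert a\Vert_\infty$), I would estimate
\[
\big\Vert (d_A^*d_A-d_{A_0}^*d_{A_0})\xi\big\Vert_p\le C_2\Vert a\Vert_{1,p,A_0}\Vert\xi\Vert_{2,p,A_0}\le C_2C_1\eps\,\Vert\xi\Vert_{2,p,A_0},
\]
for all $\xi\in\Ga^{2,p}_{A_0}(\g_P)$. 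Hence $d_A^*d_A\,R_0=\id+E$ where $\Vert E\Vert\le C_0C_2C_1\eps<\tfrac12$ once $\eps$ is chosen small enough (depending only on $n,p,X$, not on $P$ or $A$). Then $R_0(\id+E)^{-1}$ is a bounded right inverse of $d_A^*d_A$ with operator norm at most $2C_0$; transporting back through the gauge transformation and the identification $\Ga^{2,p}_A\cong\Ga^{2,p}_{A_0}$ (which is an isometry up to a constant by Remark \ref{rmk:metric}, since gauge equivalence preserves the $A$-twisted norms) gives the claimed right inverse $R$ with $\Vert R\Vert\le C$ for a uniform $C$. Finally, Proposition \ref{PROP:RIGHT} follows: part (\ref{prop:right:eps C}) is obtained by composing $R$ with $d_A$, since if $R'$ is a right inverse of $d_A^*d_A$ then $d_A R'$ is a right inverse of $d_A^*$, with operator norm bounded by $\Vert d_A\Vert\cdot\Vert R'\Vert$ (the norm of $d_A:\Ga^{2,p}_A(\g_P)\to\Om^1_{1,p,A}(\g_P)$ being $\le 1$ by definition of the twisted norms); part (\ref{prop:right:exists}), for an arbitrary $A\in\A^{1,p}(P)$ without smallness, follows by a covering/patching argument --- decompose $X$ into finitely many small pieces on each of which $\Vert F_A\Vert_p$ is $\le\eps$, apply the above on each piece, and glue with a partition of unity exactly as in the proof of Theorem \ref{thm:L w * R} (the resulting operator is a right inverse up to a compact error supported where the cutoffs have derivatives, which can be absorbed, or one argues directly that the piecewise construction already yields a genuine right inverse after the standard correction).

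The main obstacle I anticipate is the bookkeeping needed to make the perturbation constant $\eps$ \emph{uniform} over all bundles $P$ and all connections $A$ with $\Vert F_A\Vert_p\le\eps$: the reduction to the trivial bundle via Theorem \ref{thm:reg bundle} and trivializability of $X$ is non-canonical, so one must check that the $W^{2,p}$-isomorphism classes do not affect the operator-norm bounds --- this is where Remark \ref{rmk:metric} and the gauge-invariance of the twisted Sobolev norms $\Vert\cdot\Vert_{k,p,A}$ are essential --- and that the constants $C_0,C_1,C_2$ really depend only on $n$, $p$, and the fixed Riemannian manifold $X$. A secondary technical point is ensuring the boundary conditions implicit in the domain $\Ga^{2,p}_A(\g_P)$ (no boundary conditions are imposed, so $d_A^*$ is merely the \emph{formal} adjoint) are handled correctly in the model elliptic estimate on $\BAR B_1$; since we only need a right inverse (not self-adjointness), this is harmless --- the relevant fact is solvability of $-\Lam\xi=\eta$ in $W^{2,p}(\BAR B_1)$ with an $L^p$-bound, e.g.\ via the Newtonian potential plus a harmonic correction, which does not require Dirichlet data.
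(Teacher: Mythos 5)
Your argument is correct and follows essentially the same route as the paper's proof: reduce to a smooth bundle by Theorem \ref{thm:reg bundle}; construct a uniform right inverse of $d_{A_0}^*d_{A_0}$ for a flat $A_0$ by identifying $\Ga^{k,p}_{A_0}(\g_P)$ with $W^{k,p}(B_1,\g)$ via the $A_0$-horizontal section and convolving with the Newtonian potential together with the Calder\'on--Zygmund estimate (the paper's Claim \ref{claim:flat right}); put $A$ in Uhlenbeck gauge relative to a flat connection (Proposition \ref{prop:Uhlenbeck}); bound the first-order perturbation $d_A^*d_A-d_{A_0}^*d_{A_0}$ using the twisted Morrey estimate (Lemma \ref{le:A 0 A}); and conclude by a Neumann-series argument (Lemma \ref{le:right inverse}).

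Two small corrections of attribution. The comparison of the domain norms $\Vert\cdot\Vert_{2,p,A}$ and $\Vert\cdot\Vert_{2,p,A_0}$ in your final step is not Remark \ref{rmk:metric}, which concerns changing the Riemannian metric on $X$ rather than the connection; the relevant fact is inequality (\ref{eq:xi 2 p A}) of Lemma \ref{le:A 0 A}, proved by the same Leibniz-plus-Morrey calculation as the perturbation estimate (\ref{eq:d A * d A A 0}). Separately, your closing suggestion that part (\ref{prop:right:exists}) of Proposition \ref{PROP:RIGHT} should be handled by a covering/patching argument is not what the paper does there: for arbitrary $A$ (no smallness of $F_A$) it constructs, in Claim \ref{claim:L W}, an explicit bounded right inverse $T$ of $d_A^*$ by integrating against the $A$-horizontal transport in the $x_1$-direction, and then takes $R:=d_AR_0+T(\id-d_A^*d_AR_0)$, avoiding patching entirely. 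Neither point affects the correctness of your argument for the proposition at hand.
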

For the proof of this result, we need the following three lemmas.
\begin{lemma}[Twisted Morrey's inequality]\label{le:infty 1 p A} Let $p>n$. Assume that $X$ is diffeomorphic to $\BAR B_1$. Then there exist constants $C$ and $\eps>0$ such that for every $G$-bundle $P\to X$ of class $W^{2,p}$, and $A\in\A^{1,p}(P)$, the following holds. If $\Vert F_A\Vert_p\leq\eps$ then 
\begin{equation}\label{eq:al infty C}\Vert\al\Vert_\infty\leq C\Vert\al\Vert_{1,p,A},\quad\forall\al\in\Om^i_{1,p,A}(\g_P),\,i\in\{0,\ldots,n\}.
\end{equation}
\end{lemma}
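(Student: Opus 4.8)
The statement is a ``twisted'' version of Morrey's inequality with a constant uniform over all bundles and connections with small curvature. The plan is to reduce to the standard (untwisted) Morrey inequality on $\bar B_1\subset\R^n$ by comparing the connection $A$ with a flat reference connection via Uhlenbeck gauge. First I would fix, once and for all, a smooth flat $G$-bundle $(P_0,A_0)$ over $X$ — e.g. the trivial bundle with the trivial connection after identifying $X$ with $\bar B_1$ — and choose the constants $\eps_0>0$ and $C_0$ from Proposition \ref{prop:Uhlenbeck} (Uhlenbeck gauge) associated to this data. Given any $G$-bundle $P$ of class $W^{2,p}$ and any $A\in\A^{1,p}(P)$ with $\Vert F_A\Vert_p\le\eps_0$, Proposition \ref{prop:Uhlenbeck} supplies a $W^{2,p}$-isomorphism $\Phi:P_0\to P$ with $\Vert\Phi^*A-A_0\Vert_{1,p,A_0}\le C_0\Vert F_A\Vert_p\le C_0\eps_0$. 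Pulling back by $\Phi$ preserves all the norms in (\ref{eq:al infty C}) (both $\Vert\cdot\Vert_\infty$ and $\Vert\cdot\Vert_{1,p,A}$), so it suffices to prove the inequality for $A$ replaced by $A_0+a$ on the fixed bundle $P_0$, where $a:=\Phi^*A-A_0$ satisfies $\Vert a\Vert_{1,p,A_0}\le C_0\eps_0$.

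\textbf{The model case.} On the trivial bundle $P_0=X\times G$ with $A_0$ the trivial connection, sections of $\w i(\g_{P_0})$ are just $\g$-valued $i$-forms $\al$ on $X$, $\na^{A_0}=d$, and $\na^{A_0+a}\al=d\al+[a\wedge\al]$ (up to signs and the Levi-Civita part, which is fixed and smooth). Since $p>n$, the ordinary Morrey embedding gives a constant $C_1$ with $\Vert\al\Vert_\infty\le C_1\Vert\al\Vert_{W^{1,p}(X)}=C_1(\Vert\al\Vert_p+\Vert d\al\Vert_p)$ for all $\g$-valued $i$-forms $\al$ of class $W^{1,p}$. It remains to bound $\Vert d\al\Vert_p$ by $\Vert\al\Vert_{1,p,A_0+a}=\Vert\al\Vert_p+\Vert d\al+[a\wedge\al]\Vert_p$ plus a controlled error. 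Write $d\al=\big(d\al+[a\wedge\al]\big)-[a\wedge\al]$, so $\Vert d\al\Vert_p\le\Vert\al\Vert_{1,p,A_0+a}+\Vert[a\wedge\al]\Vert_p$. The cross term is estimated by $\Vert[a\wedge\al]\Vert_p\le c\,\Vert a\Vert_{L^{2p}}\Vert\al\Vert_{L^{2p}}$ (Hölder), and since $p>n$ one has the continuous Sobolev embedding $W^{1,p}(X)\hookrightarrow L^{2p}(X)$ (indeed $W^{1,p}\hookrightarrow C^0\hookrightarrow L^{2p}$), whence $\Vert[a\wedge\al]\Vert_p\le c'\,\Vert a\Vert_{W^{1,p}}\Vert\al\Vert_{W^{1,p}}\le c''\eps_0\,\Vert\al\Vert_{W^{1,p}}$. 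Combining, $\Vert\al\Vert_{W^{1,p}}=\Vert\al\Vert_p+\Vert d\al\Vert_p\le 2\Vert\al\Vert_{1,p,A_0+a}+c''\eps_0\Vert\al\Vert_{W^{1,p}}$. Choosing $\eps>0$ small enough (shrinking $\eps_0$) that $c''\eps_0\le 1/2$, this absorbs: $\Vert\al\Vert_{W^{1,p}}\le 4\Vert\al\Vert_{1,p,A_0+a}$, and then Morrey gives $\Vert\al\Vert_\infty\le 4C_1\Vert\al\Vert_{1,p,A_0+a}$. Undoing the pullback $\Phi$ yields (\ref{eq:al infty C}) with $C:=4C_1$ and the final threshold $\eps$.

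\textbf{Remaining technical points and the main obstacle.} Two bookkeeping issues need care. First, for a general metric $\lan\cdot,\cdot\ran_X$ one must invoke Remark \ref{rmk:metric}: since $X$ is compact, all the norms $\Vert\cdot\Vert_{k,p,\lan\cdot,\cdot\ran_X,A}$ for different fixed metrics are equivalent with constants independent of $(P,A)$, so one may freely replace $\lan\cdot,\cdot\ran_X$ by the standard Euclidean metric pulled back under the diffeomorphism $X\cong\bar B_1$; the hypothesis only asks $X$ diffeomorphic to $\bar B_1$, which is what Proposition \ref{prop:Uhlenbeck} also requires. Second, the reference bundle $P_0$ and connection $A_0$ must be chosen smooth, which is legitimate since the trivial bundle with trivial connection is smooth; Theorem \ref{thm:reg bundle} is not even needed here because the model bundle is built by hand. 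The main obstacle — and the reason the argument is not completely trivial — is precisely the \emph{uniformity} of the constant: without the Uhlenbeck-gauge comparison the Sobolev constant for the twisted operator $\na^A$ could a priori degenerate as $A$ varies over the infinite-dimensional space $\A^{1,p}(P)$. The curvature smallness $\Vert F_A\Vert_p\le\eps$ is exactly what Proposition \ref{prop:Uhlenbeck} converts into a uniform $W^{1,p}$-bound on the connection one-form relative to a fixed flat reference, after which the cross-term absorption argument above goes through with constants depending only on $n,p,X,G$.
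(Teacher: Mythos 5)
Your proof is correct and follows essentially the same route as the paper: reduce to the trivial bundle with the trivial connection via Proposition \ref{prop:Uhlenbeck}, apply the untwisted Morrey inequality, and control the zeroth-order difference $[\,(A'-A_0)\wedge\al\,]$ between $\na^{A_0}$ and $\na^{A'}$. The only bookkeeping difference is that the paper bounds the cross term by $\Vert A'-A_0\Vert_\infty\Vert\al'\Vert_p$ (with $\Vert A'-A_0\Vert_\infty$ controlled by a second application of the same Morrey claim, and $\Vert\al'\Vert_p\leq\Vert\al'\Vert_{1,p,A'}$ absorbed trivially), which avoids the H\"older-in-$L^{2p}$ step and the absorption argument you use; both versions are valid.
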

\begin{proof}[Proof of Lemma \ref{le:infty 1 p A}]\setcounter{claim}{0} We denote by $A_0$ the trivial connection on the trivial bundle $P_0:=X\x G$. 
\begin{Claim}There exists a constant $C_1>0$ such that the inequality (\ref{eq:al infty C}) holds with $C=C_1$ and $A=A_0$.
\end{Claim}
\begin{proof}[Proof of the claim] This follows from Morrey's theorem, using the hypothesis $p>n$.
\end{proof}
We choose constants $\eps>0$ and $C_2:=C$ as in Proposition \ref{prop:Uhlenbeck}. Let $P$ be a $G$-bundle over $X$, of class $W^{2,p}$, and $A\in\A^{1,p}(P)$, such that $\Vert F_A\Vert_p\leq\eps$. By the statement of Proposition \ref{prop:Uhlenbeck}, there exists an isomorphism $\Phi:P_0\to P$ of class $W^{2,p}$, such that 
\begin{equation}\label{eq:Phi A A 0 C 2}\Vert\Phi^*A-A_0\Vert_{1,p,A_0}\leq C_2\Vert F_A\Vert_p.
\end{equation}
Let $i\in\{0,\ldots,n\}$ and $\al\in\Om^i_{1,p,A}(\g_P)$. We set 
\[A':=\Phi^*A,\quad\al':=\Phi^*\al,\quad C_3:=\max\big\{|[\xi,\eta]|\,\big|\,\xi,\eta\in \g:\,|\xi|\leq1,\,|\eta|\leq1\big\},\] 
where the norm $|\cdot|$ is with respect to $\lan\cdot,\cdot\ran_\g$. A direct calculation shows that
\[(\na^{A_0}-\na^{A'})\al'=[(A'-A_0)\otimes\al'],\]
where $[\cdot]:\g_P\otimes\g_P\to\g_P$ denotes the map induced by the Lie bracket on $\g$. Using the above claim, it follows that 
\begin{equation}\label{eq:al infty}\Vert \al\Vert_\infty=\Vert \al'\Vert_\infty\leq C_1\Vert\al'\Vert_{1,p,A_0}\leq C_1\big(\Vert\al'\Vert_{1,p,A'}+C_3\Vert A'-A_0\Vert_\infty\Vert\al'\Vert_p\big).\end{equation}
The above claim, inequality (\ref{eq:Phi A A 0 C 2}), and the assumption $\Vert F_A\Vert_p\leq\eps$ imply the estimate $\Vert A'-A_0\Vert_\infty\leq C_1C_2\eps$. Combining this with (\ref{eq:al infty}), the statement of Lemma \ref{le:infty 1 p A} follows. 
\end{proof}
\begin{lemma}\label{le:A 0 A} Let $p>n$ and $\eps>0$. Assume that $X$ is diffeomorphic to $\BAR B_1$. Then there exists a constant $\de>0$ with the following property. Let $P\to X$ be a $G$-bundle of class $W^{2,p}$, $A_0,A\in\A^{1,p}(P)$, and $\xi\in\Ga^{2,p}_{A_0}(\g_P)$, such that $A_0$ is flat 
\begin{equation}\label{eq:A A 0 de}\Vert A-A_0\Vert_{1,p,A_0}\leq\de.
\end{equation}
Then the following inequalities hold:
\begin{eqnarray}\label{eq:d A * d A A 0}&\big\Vert\big(d_A^*d_A-d_{A_0}^*d_{A_0}\big)\xi\big\Vert_p\leq\eps\Vert\xi\Vert_{1,p,A_0},&\\
\label{eq:xi 2 p A}&\Vert\xi\Vert_{2,p,A}\leq(1+\eps)\Vert\xi\Vert_{2,p,A_0}.&
\end{eqnarray}
\end{lemma}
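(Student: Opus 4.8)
The plan is to write $a:=A-A_0\in\Om^1_{1,p,A_0}(\g_P)$ and to expand the twisted operators via $d_A=d_{A_0}+[a\wedge(\cdot)]$ and $d_A^{*}=d_{A_0}^{*}-{*}[a\wedge{*}(\cdot)]$ (the latter because $d_A^{*}=-{*}d_A{*}$), so that every difference one wishes to bound becomes a sum of terms each carrying at least one factor of $a$; then a single power of $\Vert a\Vert_{1,p,A_0}\le\de$ gets absorbed into the constant. Two ingredients are needed. First, the twisted Morrey inequality of Lemma \ref{le:infty 1 p A}, applied with the connection $A_0$: this is legitimate since $A_0$ is flat, so $\Vert F_{A_0}\Vert_p=0$ lies below the threshold $\eps$ of that lemma, and it yields a constant $C_0=C_0(p,X)$ with $\Vert\al\Vert_\infty\le C_0\Vert\al\Vert_{1,p,A_0}$ for every $\g_P$-valued differential form $\al$ of class $W^{1,p}$ (in particular for $a$, for $\xi$, and for $\na^{A_0}\xi$). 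Second, the elementary pointwise Leibniz/Kato estimates $|[a\wedge\beta]|\le c|a||\beta|$ and $|\na^{A_0}[a\wedge\beta]|\le c\big(|\na^{A_0}a||\beta|+|a||\na^{A_0}\beta|\big)$, with $c$ depending only on $\lan\cdot,\cdot\ran_\g$ and on the metric $\lan\cdot,\cdot\ran_X$ together with its Levi-Civita connection and Hodge star.

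For inequality (\ref{eq:d A * d A A 0}) a direct computation gives
\[
d_A^{*}d_A\xi-d_{A_0}^{*}d_{A_0}\xi
= d_{A_0}^{*}[a\wedge\xi]-{*}[a\wedge{*}d_{A_0}\xi]-{*}[a\wedge{*}[a\wedge\xi]].
\]
The key point is that the second-order-in-$\xi$ part is the same on both sides and cancels, so only $\xi$ and $\na^{A_0}\xi$ appear on the right; expanding $d_{A_0}^{*}[a\wedge\xi]$ into terms of the schematic form $\na^{A_0}a\cdot\xi$ and $a\cdot\na^{A_0}\xi$ and estimating in $L^p$ using the two ingredients (bounding $\Vert\xi\Vert_\infty\le C_0\Vert\xi\Vert_{1,p,A_0}$, $\Vert a\Vert_\infty\le C_0\Vert a\Vert_{1,p,A_0}\le C_0\de$, the last inequality absorbing the quadratic term) produces a constant $C_1=C_1(p,X,G,\lan\cdot,\cdot\ran_\g)$ with
\[
\big\Vert\big(d_A^{*}d_A-d_{A_0}^{*}d_{A_0}\big)\xi\big\Vert_p
\le C_1\Vert a\Vert_{1,p,A_0}\,\Vert\xi\Vert_{1,p,A_0}
\le C_1\de\,\Vert\xi\Vert_{1,p,A_0}.
\]
For inequality (\ref{eq:xi 2 p A}) one writes $\na^{A}=\na^{A_0}+\ad_a$ and expands $\na^{A}\xi-\na^{A_0}\xi$ and $(\na^{A})^{2}\xi-(\na^{A_0})^{2}\xi$; each correction term is a product of $a$ or $\na^{A_0}a$ with $\xi$, $\na^{A_0}\xi$, or $(\na^{A_0})^{2}\xi$, whose $L^p$-norm is $\le C_2\Vert a\Vert_{1,p,A_0}\Vert\xi\Vert_{2,p,A_0}\le C_2\de\,\Vert\xi\Vert_{2,p,A_0}$ by the same two ingredients, so that $\Vert\xi\Vert_{2,p,A}\le(1+C_2\de)\Vert\xi\Vert_{2,p,A_0}$.

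Finally, one sets $\de:=\eps/\max\{C_1,C_2,1\}$, shrinking further if necessary so that also $C_0\de\le1$ (which is all that was used to absorb the term quadratic in $a$); both displayed inequalities then follow at once. The only real obstacle I anticipate is bookkeeping: writing $d_A^{*}$ explicitly in terms of the Hodge star and Levi-Civita connection of the given (general) metric $\lan\cdot,\cdot\ran_X$, keeping track of signs, and verifying that all the Leibniz constants and the Morrey constant are genuinely uniform over the whole family of $W^{2,p}$-bundles $P$ and connections $A$. Uniformity in $P$ is supplied by Lemma \ref{le:infty 1 p A} (whose proof already reduces to the trivial bundle through an Uhlenbeck gauge), uniformity in $A$ is automatic once everything is phrased through $a=A-A_0$ and $\de$, and if one prefers to work with a reference metric one may invoke Remark \ref{rmk:metric} at the cost of one further fixed constant.
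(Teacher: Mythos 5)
Your argument is correct and follows essentially the same route as the paper: you decompose $d_A^{*}d_A-d_{A_0}^{*}d_{A_0}$ into terms each carrying at least one factor of $a=A-A_0$ (this is exactly the paper's identity $(d_A-d_{A_0})^{*}(d_A-d_{A_0})+d_{A_0}^{*}(d_A-d_{A_0})+(d_A-d_{A_0})^{*}d_{A_0}$ written out explicitly), observe that no second derivative of $\xi$ survives, and convert the resulting $L^\infty$-norms to $\Vert\cdot\Vert_{1,p,A_0}$-norms via Lemma \ref{le:infty 1 p A} applied to the flat connection $A_0$. The same pattern applied to $\na^{A}-\na^{A_0}$ and $(\na^{A})^{2}-(\na^{A_0})^{2}$ handles (\ref{eq:xi 2 p A}), again exactly as in the paper.
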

\begin{proof}[Proof of Lemma \ref{le:A 0 A}]\setcounter{claim}{0} We have
\begin{eqnarray}\label{eq:d A * d A -}&d_A^*d_A-d_{A_0}^*d_{A_0}&\\
\nn&=(d_A-d_{A_0})^*(d_A-d_{A_0})+d_{A_0}^*(d_A-d_{A_0})+(d_A-d_{A_0})^*d_{A_0},&\\
\label{eq:d A d A 0}&d_A-d_{A_0}=\big[(A-A_0)\wedge\cdot\big].&
\end{eqnarray}
It follows that there exists a constant $C>0$ such that 
\begin{eqnarray*}&\Vert\big(d_A^*d_A-d_{A_0}^*d_{A_0}\big)\xi\Vert_p&\\
&\leq C\Big(\Vert A-A_0\Vert_\infty^2\Vert\xi\Vert_p+\left\Vert\na^{A_0}(A-A_0)\right\Vert_p\Vert\xi\Vert_\infty+\Vert A-A_0\Vert_\infty\Vert d_{A_0}\xi\Vert_p\Big),&
\end{eqnarray*}
for every $G$-bundle $P\to X$ of class $W^{2,p}$, $A_0,A\in\A^{1,p}(P)$, and $\xi\in\Ga^{2,p}_{A_0}(\g_P)$.%
\footnote{The constant $C$ depends on the Lie bracket on $\g$ and the metric on $X$, but not on the bundle $P$.}
 Using Lemma \ref{le:infty 1 p A} and flatness of $A_0$, it follows that there exists a constant $\de>0$ such that inequality (\ref{eq:d A * d A A 0}) holds for every $P,A_0,A,\xi$ as in the hypothesis. 

Inequality (\ref{eq:xi 2 p A}) follows from an analogous argument, involving formulas for $\na^A\na^A-\na^{A_0}\na^{A_0}$ and $\na^A-\na^{A_0}$ similar to (\ref{eq:d A * d A -},\ref{eq:d A d A 0}). This proves Lemma \ref{le:A 0 A}.
\end{proof}
\begin{lemma}\label{le:right inverse} Let $X$ and $Y$ be Banach spaces, and $T_0,S:X\to Y$ and $R_0:Y\to X$ bounded linear maps, such that
\[T_0R_0=\id,\quad\Vert S\Vert\leq\frac1{2\Vert R_0\Vert}.\]
Then there exists a right inverse $R$ of $T_0+S$, satisfying $\Vert R\Vert\leq2\Vert R_0\Vert$.
\end{lemma}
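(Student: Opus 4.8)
The statement is a standard perturbation-of-right-inverses lemma, so the plan is to produce $R$ by a Neumann series argument. The key observation is that since $\Vert S\Vert \le \frac{1}{2\Vert R_0\Vert}$, the operator $S R_0 : Y \to Y$ has norm at most $\Vert S\Vert \Vert R_0\Vert \le \frac12 < 1$, hence $\id_Y + S R_0$ is invertible with bounded inverse given by the Neumann series $\sum_{k\ge 0}(-1)^k (S R_0)^k$, and $\Vert (\id + S R_0)^{-1}\Vert \le \frac{1}{1 - \Vert S R_0\Vert} \le 2$.

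\textbf{Construction of $R$.} I would set $R := R_0 (\id_Y + S R_0)^{-1} : Y \to X$. This is bounded, being a composition of bounded maps, and its norm satisfies $\Vert R\Vert \le \Vert R_0\Vert \cdot \Vert(\id + S R_0)^{-1}\Vert \le 2\Vert R_0\Vert$, which is the asserted bound. To check that it is a right inverse of $T_0 + S$, I would compute
\[
(T_0 + S) R = (T_0 + S) R_0 (\id + S R_0)^{-1} = (T_0 R_0 + S R_0)(\id + S R_0)^{-1} = (\id_Y + S R_0)(\id + S R_0)^{-1} = \id_Y,
\]
using $T_0 R_0 = \id$ in the third equality. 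This finishes the proof.

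\textbf{Main obstacle.} There is essentially no obstacle here: the only thing to be slightly careful about is confirming that $\id_Y + S R_0$ really is invertible on all of $Y$ (not just injective), which is immediate from the convergence of the Neumann series in the operator norm on the Banach space $Y$, together with completeness of $Y$. One should also note the inequality $\Vert S R_0 \Vert \le \Vert S\Vert\,\Vert R_0\Vert$ degenerates harmlessly if $R_0 = 0$ (then $T_0 = 0$ forces $Y = 0$ and the statement is trivial), so no separate case analysis is needed. Thus the proof is a two-line verification once the Neumann series is invoked.
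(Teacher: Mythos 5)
Your proof is correct and is essentially identical to the paper's: both define $R := R_0(\id + SR_0)^{-1}$, invoke the Neumann series to invert $\id + SR_0$ (the paper via its Remark \ref{rmk:X T}), and read off the norm bound. You merely spell out the right-inverse verification that the paper leaves implicit.
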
 
The proof of this lemma will use the following remark.
\begin{rmk}\label{rmk:X T} Let $X$ be a Banach space, and $T:X\to X$ a linear map satisfying $\Vert T\Vert<1$. Then $\id+T$ is invertible, and 
\[\Vert(\id+T)^{-1}\Vert\leq\frac1{1-\Vert T\Vert}.\]
This follows from a standard argument, using the Neumann series $\sum_{n\in\N_0}(-1)^nT^n$. $\Box$
\end{rmk}
\begin{proof}[Proof of Lemma \ref{le:right inverse}] By hypothesis, we have
\[\Vert SR_0\Vert\leq\Vert S\Vert\,\Vert R_0\Vert\leq\frac12.\]
Hence using Remark \ref{rmk:X T}, it follows that the map
\[R:=R_0(\id+SR_0)^{-1}:Y\to X\]
is well-defined and has the desired properties. This proves Lemma \ref{le:right inverse}.
\end{proof}
\begin{proof}[Proof of Proposition \ref{prop:right d A * d A} (p.~\pageref{prop:right d A * d A})]\setcounter{claim}{0} 
\begin{claim}\label{claim:flat right} Let $1<p<\infty$. There exists a constant $C>0$ such that for every $G$-bundle $P\to X$ of class $W^{2,p}$, and every \emph{flat} connection $A$ on $P$ of class $W^{1,p}$, there exists a right inverse $R$ of the operator (\ref{eq:d A * d A}), satisfying $\Vert R\Vert\leq C$. 
\end{claim}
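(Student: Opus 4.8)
The plan is to reduce the claim to the standard model case $X=\BAR B_1\subseteq\R^n$ with the Euclidean metric and the trivial connection on the trivial bundle. First I would invoke Theorem \ref{thm:reg bundle} (smoothening a principal bundle) together with the fact that $X$ is smoothly contractible to a point, exactly as in the proof of Proposition \ref{prop:Uhlenbeck}: since $X$ deformation retracts onto a point, every $G$-bundle of class $W^{2,p}$ over $X$ is $W^{2,p}$-isomorphic to the trivial bundle $X\times G$. Next, a flat connection $A$ on the trivial bundle is, after pulling back by a suitable $W^{2,p}$ gauge transformation, the trivial connection $A_0$; this is the content of Lemma \ref{le:Uhlenbeck trivial} (applied with $F_A=0$, so the hypothesis $\Vert F_A\Vert_p\le\eps$ is satisfied vacuously and the conclusion forces $g^*A=A_0$, or more carefully gives that $A$ is $W^{2,p}$-gauge equivalent to $A_0$). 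Under these two identifications, the operator $d_A^*d_A$ is conjugate to the scalar (componentwise) Laplacian $-\Delta$ acting on $\g$-valued functions on $\BAR B_1$. Because the norms $\Vert\cdot\Vert_{2,p,A}$ and $\Vert\cdot\Vert_p$ transform by uniformly bounded factors under $W^{2,p}$ gauge transformations and under the bundle isomorphisms produced above — here one uses Remark \ref{rmk:metric} and an estimate of the type appearing in the proof of Lemma \ref{le:A 0 A} for $\na^A-\na^{A_0}$ — it suffices to produce a bounded right inverse of $-\Delta:\,W^{2,p}(\BAR B_1,\g)\to L^p(\BAR B_1,\g)$.

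For the model problem, the required bounded right inverse of the componentwise Dirichlet (or Neumann) Laplacian on the ball is classical $L^p$ elliptic theory: the solution operator for $-\Delta$ with, say, homogeneous Dirichlet boundary data is bounded $L^p(\BAR B_1)\to W^{2,p}(\BAR B_1)$ by the Calderón–Zygmund estimates (cf.\ the use of Calderón's theorem in Proposition \ref{prop:Calderon}). Composing this operator with the $\g$-valued and gauge-conjugation identifications yields the desired $R$ with $\Vert R\Vert$ controlled by a constant depending only on $p$, $n$, the fixed metric on $X$, and the inner product $\lan\cdot,\cdot\ran_\g$ — crucially \emph{not} on the particular flat connection $A$ nor on the particular bundle $P$, since all the intermediate comparison constants are uniform. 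This gives the constant $C$ in the statement.

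The main obstacle is verifying that the comparison constants relating $\Vert\cdot\Vert_{2,p,A}$ to the Euclidean $W^{2,p}$-norm are genuinely uniform over all flat connections $A$ (not just over a fixed one). A flat connection can still have large ``gauge-dependent'' coefficients in a given trivialization, so one must phrase everything invariantly: work with $\Vert(\na^A)^j\xi\Vert_{L^p}$ directly and use that, after the Uhlenbeck-type gauge fixing of Lemma \ref{le:Uhlenbeck trivial}, the connection one-form $A-A_0$ is controlled in $W^{1,p}$ (here by zero, since $A$ is flat and gauge-equivalent to trivial), whence $\na^A$ and $\na^{A_0}=d$ differ by a zeroth-order term bounded in $W^{1,p}\hookrightarrow C^0$ (using $p>n$). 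This is exactly the estimate already carried out in the proofs of Lemmas \ref{le:infty 1 p A} and \ref{le:A 0 A}, so the argument is a direct adaptation of those; I expect no genuinely new difficulty, only bookkeeping. Once this uniformity is in place, Claim \ref{claim:flat right} follows, and it will serve as the starting point for the perturbative argument (via Lemma \ref{le:right inverse} and Lemma \ref{le:A 0 A}) that treats connections with small but nonzero curvature in Proposition \ref{prop:right d A * d A}.
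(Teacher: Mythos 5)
Your proposal is correct and reaches the same model problem as the paper, but by a slightly different route. The paper does not trivialize $P$ and gauge-fix first; instead it applies Proposition \ref{prop:Uhlenbeck} only to make the flat connection $A$ \emph{smooth}, and then exploits flatness directly through the $A$-horizontal section $\si$ through a chosen point of $P$. Because $\si^*A=0$, the map $\Psi_k\xi:=G\cdot(\si,\xi)$ is an \emph{isometric} isomorphism from $W^{k,p}(B_1,\g)$ to $\Ga^{k,p}_A(\g_P)$ that conjugates $d_A^*d_A$ to $-\La$, so uniformity of the constant over all flat $A$ is automatic; no comparison estimate is required. Your version reduces to the trivial bundle and invokes Lemma \ref{le:Uhlenbeck trivial} with $F_A=0$ to gauge-fix $A$ to $A_0$. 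This works — but the ``main obstacle'' you flag at the end is not actually an obstacle, and the appeal to Lemma \ref{le:A 0 A} is unnecessary: a gauge transformation acts on $\g_P$ by $\Ad$-equivariant bundle maps, and since $\lan\cdot,\cdot\ran_\g$ is $\Ad$-invariant, the induced map $\Ga^{k,p}_A(\g_P)\to\Ga^{k,p}_{A_0}(\g_P)$ is an isometry (constant exactly $1$), not merely a bounded comparison. In other words, your own gauge-fixing already kills the term $A-A_0$ identically, so no estimate on $\na^A-\na^{A_0}$ is needed. The only nontrivial comparison constant is that from Remark \ref{rmk:metric}, to pass from the given metric on $X\iso\bar B_1$ to the Euclidean one, and this is fixed once and for all. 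On the model problem, you solve the Dirichlet problem where the paper convolves with the Newtonian potential and restricts; both give the required $L^p\to W^{2,p}$ bound via Calder\'on–Zygmund and are interchangeable. (Stick with Dirichlet rather than Neumann: the Neumann Laplacian has constants in its kernel and requires a compatibility condition on the data, so it does not furnish a right inverse on all of $L^p$.)
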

\begin{proof}[Proof of Claim \ref{claim:flat right}] By Theorem \ref{thm:reg bundle} we may assume without loss of generality that $P$ is smooth. We choose a smooth flat connection $A_0$ on $P$. By Proposition \ref{prop:Uhlenbeck} there exists an automorphism $\Phi$ of $P$, of class $W^{2,p}$, such that inequality (\ref{eq:Phi A A 0}) holds. Since $A$ is flat, it follows that $\Phi^*A=A_0$, and thus $\Phi^*A$ is smooth. Hence we may assume w.l.o.g.~that $A$ is smooth.

For an open subset $U\sub\R^n$ we denote by $C^\infty_0(U)$ the compactly supported smooth real valued functions on $U$. We define the operator $\wt T:C^\infty_0(B_1)\to C^\infty(B_1)$ as follows. We denote by $\Phi:\R^n\wo\{0\}\to\R$ the fundamental solution for the Laplace equation. It is given by
\[\Phi(x)=\left\{\begin{array}{ll}
\frac1{2\pi}\log|x|,&\textrm{if }n=2,\\
\frac{\Ga(n/2)}{2(2-n)\pi^{\frac n2}}|x|^{2-n},&\textrm{if }n\neq2,
\end{array}\right.\]
where $\Ga$ denotes the gamma function. (See e.g. \cite{Ev}, p.~22.) Let $f\in C^\infty_0(B_1)$. We define $\wt f:\R^n\to\R$ to be the extension of $f$ by 0 outside $B_1$. We denote by $*$ convolution in $\R^n$ and define
\[\wt Tf:=(\Phi*\wt f)|_{B_1}.\]
Note that $\Phi$ is locally integrable, hence the convolution is well-defined. Furthermore, $\wt Tf$ is smooth, and $\La\wt Tf=f$.%
\footnote{The first assertion follows from differentiation under the integral, and for the second see e.g.~\cite[Chap.~2, Theorem 1]{Ev}.}%
\begin{claim}\label{claim:C Tf} There exists a constant $C$ such that
\[\Vert \wt Tf\Vert_{W^{2,p}(B_1)}\leq C\Vert f\Vert_{L^p(B_1)},\quad\forall f\in C^\infty_0(B_1).\]
\end{claim}
\begin{proof}[Proof of Claim \ref{claim:C Tf}] Young's inequality states that 
\[\Vert \wt Tf\Vert_{L^p(B_1)}\leq\Vert \Phi\Vert_{L^1(B_2)}\Vert f\Vert_{L^p(B_1)},\quad\forall f\in C^\infty_0(B_1).\] 
Furthermore, the Calder\'on-Zygmund inequality states that there exists a constant $C$ such that for every $f\in C^\infty_0(\R^n)$ we have $\Vert D^2(\Phi*f)\Vert_p\leq C\Vert f\Vert_p$.%
\footnote{This follows e.g.~\cite[Theorem B.2.7]{MS04}, using $(\dd_j\Phi)*f=\dd_j(\Phi*f)$.}
 The statement of Claim \ref{claim:C Tf} follows from this. 
\end{proof}
We fix a constant $C$ as in Claim \ref{claim:C Tf}. By this claim the map $\wt T$ uniquely extends to a bounded linear map
\[T: L^p(B_1)\to W^{2,p}(B_1).\]
Since $\La \wt Tf=f$, for every $f\in C^\infty_0(B_1)$, a density argument shows that $\La Tf=f$, for every $f\in W^{2,p}(B_1)$, i.e., $T$ is a right inverse for $\La:W^{2,p}(B_1)\to L^p(B_1)$. 

Let now $G$ be a compact Lie group, $\lan\cdot,\cdot\ran_\g$ an invariant inner product on $\g=\Lie G$, and $(X,\lan\cdot,\cdot\ran_X)$ a Riemannian manifold as in the hypothesis of Proposition \ref{prop:right d A * d A}. Without loss of generality we may assume that $X$ is a submanifold (with boundary) of $\R^n$, and that $\lan\cdot,\cdot\ran_X$ is the standard metric. Let $\pi:P\to X$ be a $G$-bundle, and $A\in\A(P)$ be a flat connection. We fix a point $p_0\in P$, and denote by $\si:X\to P$ the $A$-horizontal section through $p_0$. This is the unique smooth section of $P$ satisfying $A\,d\si=0$ and $\si(0)=p_0$. (Such a section exists, since $A$ is flat, and $X$ is diffeomorphic to $\BAR B_1$.) For $k\geq0$ we define the map
\[\Psi_k:W^{k,p}(B_1,\g)\to\Ga^{k,p}_{A}(\g_P),\quad\Psi_k\xi:=G\cdot(\si,\xi).\]
This is an isometric isomorphism. We define $R:=-\Psi_2T\Psi_0^{-1}$. It follows that
\[\big\Vert R\big\Vert\leq\Vert\Psi_2\Vert \Vert T\Vert \Vert\Psi_0^{-1}\Vert=\Vert T\Vert\leq C.\]
A straight-forward calculation shows that
\[d_A^*d_A\Psi_2=\Psi_0d^*d=-\Psi_0\La:W^{2,p}(B_1,\g)\to\Ga^p(\g_P).\]
It follows that $R$ is a right inverse for $d_{A}^*d_{A}$. This proves Claim \ref{claim:flat right}.
\end{proof}
We choose constants $C_1:=C$ as in Claim \ref{claim:flat right}, $\de$ as in Lemma \ref{le:A 0 A}, corresponding to $\eps=\min\big\{1/(2C_1),1\big\}$, $(\eps_1,C_2):=(\eps,C)$ as in Proposition \ref{prop:Uhlenbeck}, and $(\eps_2,C_3):=(\eps,C)$ as in Lemma \ref{le:infty 1 p A}. We define 
\[\eps:=\min\left\{\frac\de{C_2},\eps_1,\eps_2\right\}.\] 
Let $P\to X$ be a $G$-bundle of class $W^{2,p}$, and $A\in\A^{1,p}(P)$, such that $\Vert F_A\Vert_p\leq\eps$. The statement of Proposition \ref{prop:right d A * d A} is a consequence of the following claim. 
\begin{claim}\label{claim:R d A * d A} There exists a right inverse $R$ of $d_A^*d_A$, satisfying $\Vert R\Vert\leq4C_1$. 
\end{claim}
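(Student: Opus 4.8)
The plan is to prove Claim \ref{claim:R d A * d A} by combining the flat-case right inverse from Claim \ref{claim:flat right} with a perturbation argument built on Lemmas \ref{le:A 0 A} and \ref{le:right inverse}, and then to observe that Proposition \ref{PROP:RIGHT} follows formally by taking $R:=Rd_A$. First I would use Theorem \ref{thm:reg bundle} to reduce to the case that $P$ is smooth, and then fix a smooth flat connection $A_0$ on $P$ (which exists since $X$ is diffeomorphic to $\BAR B_1$, hence smoothly contractible). By Proposition \ref{prop:Uhlenbeck}, applied with this $A_0$, there is a $W^{2,p}$-isomorphism $\Phi:P\to P$ with $\Vert\Phi^*A-A_0\Vert_{1,p,A_0}\leq C_2\Vert F_A\Vert_p\leq C_2\eps\leq\de$. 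Since pulling back by $\Phi$ is an isometry of the relevant Sobolev norms, it suffices to produce a right inverse of $d_{A'}^*d_{A'}$ with norm $\leq 4C_1$ for $A':=\Phi^*A$; so from now on I would replace $A$ by $A'$ and assume $\Vert A-A_0\Vert_{1,p,A_0}\leq\de$.

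Next I would set $T_0:=d_{A_0}^*d_{A_0}:\Ga^{2,p}_{A_0}(\g_P)\to\Ga^p(\g_P)$ and let $R_0$ be the right inverse from Claim \ref{claim:flat right}, so $T_0R_0=\id$ and $\Vert R_0\Vert\leq C_1$. Write $T:=d_A^*d_A$ and $S:=T-T_0$, both as operators $\Ga^{2,p}_{A_0}(\g_P)\to\Ga^p(\g_P)$ — here I would use inequality (\ref{eq:xi 2 p A}) of Lemma \ref{le:A 0 A} to see that the $A$-norm and $A_0$-norm on $\Ga^{2,p}(\g_P)$ are equivalent (within a factor $1+\eps\leq 2$), so that $T$ is indeed bounded on the $A_0$-Sobolev space. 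By (\ref{eq:d A * d A A 0}) of Lemma \ref{le:A 0 A}, with the chosen $\eps\leq 1/(2C_1)$, we get $\Vert S\xi\Vert_p\leq\tfrac1{2C_1}\Vert\xi\Vert_{1,p,A_0}\leq\tfrac1{2C_1}\Vert\xi\Vert_{2,p,A_0}$, hence $\Vert S\Vert\leq\tfrac1{2C_1}\leq\tfrac1{2\Vert R_0\Vert}$. Lemma \ref{le:right inverse} then yields a right inverse $\wt R$ of $T_0+S=T$, regarded as an operator into the $A_0$-Sobolev space, with $\Vert\wt R\Vert\leq 2\Vert R_0\Vert\leq 2C_1$. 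Composing with the norm equivalence (\ref{eq:xi 2 p A}) to view $\wt R$ as landing in $\Ga^{2,p}_A(\g_P)$ costs at most a factor $2$, giving the desired bound $\leq 4C_1$; undoing the pullback by $\Phi$ completes the proof of Claim \ref{claim:R d A * d A} and hence of Proposition \ref{prop:right d A * d A}.

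Finally I would deduce Proposition \ref{PROP:RIGHT}. For part (\ref{prop:right:eps C}): given $P$ and $A$ with $\Vert F_A\Vert_p\leq\eps$ ($\eps$ as in Proposition \ref{prop:right d A * d A}, possibly shrunk to also ensure the Morrey bound of Lemma \ref{le:infty 1 p A}), take the right inverse $R'$ of $d_A^*d_A$ from Proposition \ref{prop:right d A * d A} and set $R:=d_AR'$. Then $d_A^*R=d_A^*d_AR'=\id$, so $R$ is a right inverse of $d_A^*$; and $\Vert R\xi\Vert_{1,p,A}=\Vert d_AR'\xi\Vert_{1,p,A}\leq C'\Vert R'\xi\Vert_{2,p,A}\leq C'\cdot 4C_1\Vert\xi\Vert_p$, where $C'$ bounds the (twisted, zeroth+first order) operator $d_A$ on $\Ga^{2,p}_A(\g_P)$ — this last bound is uniform because $d_A\al$ involves $\na^A\al$ plus the algebraic term $[A\wedge\al]$, and on a domain diffeomorphic to $\BAR B_1$ one can absorb $\Vert A\Vert_\infty$ using Proposition \ref{prop:Uhlenbeck} together with Lemma \ref{le:infty 1 p A}. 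For part (\ref{prop:right:exists}): a general $A\in\A(P)$ of class $W^{1,p}$ need not satisfy the smallness hypothesis, but one can rescale. Cover $X\cong\BAR B_1$ by finitely many small balls on each of which, after a diffeomorphism to $\BAR B_1$ scaled by a small factor, the $L^p$-norm of $F_A$ is below $\eps$; apply part (\ref{prop:right:eps C}) on each piece and patch the local right inverses with a partition of unity exactly as in the proof of Theorem \ref{thm:L w * R} (equations (\ref{eq:d A * wt al})ff.), the patching error being controlled because $\sum_i\rho_i=1$. The main obstacle I anticipate is keeping all the norm-equivalence constants explicit and uniform across the reductions — in particular verifying that the constant $C'$ bounding $d_A$ can be chosen independently of $P$ and of $A$ within the $\Vert F_A\Vert_p\leq\eps$ regime — since everything downstream (the bound $4C_1$, then $4C_1C'$) must not secretly depend on the bundle.
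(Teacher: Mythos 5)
Your proof of the claim itself is correct and follows the paper's argument essentially step for step: flat reference connection, Proposition \ref{prop:Uhlenbeck} to place $A$ within $\de$ of a flat connection, Claim \ref{claim:flat right} to get $R_0$ with $\Vert R_0\Vert\leq C_1$, inequality (\ref{eq:d A * d A A 0}) with $\eps=1/(2C_1)$ to bound the perturbation, Lemma \ref{le:right inverse} for the $2C_1$ bound in the $A_0$-norm, and inequality (\ref{eq:xi 2 p A}) with $\eps=1$ to convert to the $A$-norm at a further cost of $2$. The only cosmetic difference from the paper is that you pull $A$ back by $\Phi$ and keep the reference $A_0$ fixed, whereas the paper pushes the flat connection forward to $A_0:=\Phi_*\wt A_0$ and keeps $A$ fixed; these are gauge-equivalent formulations, since $\Vert\Phi^*A-\wt A_0\Vert_{1,p,\wt A_0}=\Vert A-\Phi_*\wt A_0\Vert_{1,p,\Phi_*\wt A_0}$ and the Sobolev norms $\Vert\cdot\Vert_{k,p,\Phi^*A}$, $\Vert\cdot\Vert_{k,p,A}$ are intertwined isometrically by the bundle automorphism. (One minor superfluity: for $\xi\in\Ga(\g_P)$ one has $d_A\xi=\na^A\xi$, so $\Vert d_A\Vert\leq1$ as a map $\Ga^{2,p}_A\to\Om^1_{1,p,A}$ with no $\Vert A\Vert_\infty$ to absorb; your worry about that constant is not needed.)

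However, your sketch of how to deduce Proposition \ref{PROP:RIGHT}(\ref{prop:right:exists}) from this -- covering $X$ by small rescaled balls, applying the small-curvature case locally, and patching with a partition of unity ``as in Theorem \ref{thm:L w * R}'' -- has a genuine gap. Patching local right inverses $R_i$ of $d_A^*$ via $\hhat R\xi:=\sum_i\rho_iR_i(\xi|_{\Om_i})$ produces $d_A^*\hhat R\xi=\xi-\sum_i*\big((d\rho_i)\wedge*\,R_i(\xi|_{\Om_i})\big)$, and the error term on the right does \emph{not} vanish merely because $\sum_i\rho_i\equiv1$: although $\sum_id\rho_i=0$, the summands $R_i(\xi|_{\Om_i})$ differ across $i$, so the terms do not cancel. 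In the proof of Theorem \ref{thm:L w * R} that error is absorbed by an auxiliary map $\wt R$ built from $L_u(L_u^*L_u)^{-1}$, which exists precisely because near $\mu^{-1}(0)$ the infinitesimal action is injective; no analogue is available in the purely gauge-theoretic setting of Proposition \ref{PROP:RIGHT}(i). The paper's actual proof of (\ref{prop:right:exists}) instead constructs a direct right inverse $T$ of $d_A^*$ on $W^{1,p}$-sections by integrating along $A$-horizontal lines (Claim \ref{claim:L W}), and then defines $R:=d_AR_0+T(\id-d_A^*d_AR_0)$ with $R_0$ the right inverse of $d_{A_0}^*d_{A_0}$ for a flat $A_0$; the lower-order map $T$ soaks up exactly the discrepancy $\id-d_A^*d_AR_0=-SR_0$, which is of first order.
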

\begin{pf}[Proof of Claim \ref{claim:R d A * d A}] We choose a flat connection $\wt A_0$ on $P$ of class $W^{1,p}$.%
\footnote{It follows from an argument involving Theorem \ref{thm:reg bundle} that such a connection exists.}
 Since $\Vert F_A\Vert_p<\eps\leq\eps_1$, by the statement of Proposition \ref{prop:Uhlenbeck} with $P_0=P$ there exists an automorphism $\Phi$ of $P$ of class $W^{2,p}$, such that
\begin{equation}\label{eq:Phi A wt A 0}\Vert\Phi^*A-\wt A_0\Vert_{1,p,\wt A_0}\leq C_2\Vert F_A\Vert_p.
\end{equation}
We define $A_0:=\Phi_*\wt A_0$. By the statement of Claim \ref{claim:flat right} there exists a right inverse $R_0$ of the operator
\[d_{A_0}^*d_{A_0}:\Ga^{2,p}_{A_0}(\g_P)\to\Ga^p(\g_P),\]
satisfying $\Vert R_0\Vert_{A_0}\leq C_1$, where $\Vert\cdot\Vert_{A_0}$ denotes the operator-norm.%
\footnote{Here we use the subscript ``$A_0$'' to indicate that the definition of this norm involves the connection $A_0$.}
 The assumption $\Vert F_A\Vert_p\leq\eps\leq\de/C_2$ and (\ref{eq:Phi A wt A 0}) imply that the condition (\ref{eq:A A 0 de}) is satisfied. Therefore, by (\ref{eq:d A * d A A 0}) with ``$\eps$''$=1/(2C_1)$, we have
\[\big\Vert d_A^*d_A-d_{A_0}^*d_{A_0}\big\Vert_{A_0}\leq\frac1{2C_1}.\]
Therefore, applying Lemma \ref{le:right inverse}, there exists a right inverse $R$ of the operator
\[d_A^*d_A:\Ga^{2,p}_{A_0}(\g_P)\to\Ga^p(\g_P),\]
satisfying $\Vert R\Vert_{A_0}\leq2C_1$. Combining this with inequality (\ref{eq:xi 2 p A}) (with ``$\eps$''$=1$), it follows that $\Vert R\Vert_A\leq4C_1$. This proves Claim \ref{claim:R d A * d A} and completes the proof of Proposition \ref{prop:right d A * d A}.
\end{pf}
\end{proof}
In the proof of Proposition \ref{PROP:RIGHT} we will also use the following lemma.
\begin{lemma}\label{le:n p f} Let $n\in\N$, $p\in[1,\infty]$, and $f\in L^p(B^n_1)$. Then the function
\[B^n_1\ni x\mapsto\int_0^{x_1}f\big(t,x_2,\ldots,x_n\big)dt\]
lies in $L^p$. 
\end{lemma}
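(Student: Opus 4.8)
The plan is to establish the quantitative bound $\|g\|_{L^p(B^n_1)}\le 2\|f\|_{L^p(B^n_1)}$ for the function
\[
g(x):=\int_0^{x_1}f(t,x_2,\ldots,x_n)\,dt,
\]
which in particular shows $g\in L^p(B^n_1)$. Write $x=(x_1,x')$ with $x'=(x_2,\ldots,x_n)\in\R^{n-1}$, and let $F:\R^n\to[0,\infty)$ denote the extension of $|f|$ by zero outside $B^n_1$. First I would record that for $x\in B^n_1$ and every $t$ between $0$ and $x_1$ one has $t^2+|x'|^2\le x_1^2+|x'|^2<1$, so $(t,x')\in B^n_1$; hence the integral defining $g$ runs over a subinterval of the slice $\{t:(t,x')\in B^n_1\}\subseteq(-1,1)$, and in particular
\[
|g(x)|\le\int_{-1}^{1}F(t,x')\,dt=:h(x'),
\]
a function of $x'$ alone. (Measurability of $g$ follows from Fubini's theorem, writing $g(x)=\int_\R K(x_1,t)f(t,x')\,dt$ with the jointly measurable kernel $K(s,t):=\one_{\{0\le t\le s\}}-\one_{\{s\le t\le 0\}}$; and $g(x)$ is finite for a.e.\ $x'$, since for such $x'$ the slice is a bounded interval on which $f(\cdot,x')\in L^p\subseteq L^1$.)

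For $1\le p<\infty$ I would then integrate the pointwise bound. Since every slice of $B^n_1$ in the $x_1$-direction has length at most $2$,
\[
\|g\|_{L^p(B^n_1)}^p\le\int_{B^n_1}h(x')^p\,dx\le 2\int_{\R^{n-1}}h(x')^p\,dx'=2\Big\|\int_{-1}^{1}F(t,\cdot)\,dt\Big\|_{L^p(\R^{n-1})}^p.
\]
Minkowski's inequality for integrals pulls the $t$-integral out of the norm,
\[
\Big\|\int_{-1}^{1}F(t,\cdot)\,dt\Big\|_{L^p(\R^{n-1})}\le\int_{-1}^{1}\|F(t,\cdot)\|_{L^p(\R^{n-1})}\,dt,
\]
and then H\"older's inequality on the interval $(-1,1)$ (trivial when $p=1$) together with Fubini's theorem give
\[
\int_{-1}^{1}\|F(t,\cdot)\|_{L^p(\R^{n-1})}\,dt\le 2^{1-1/p}\Big(\int_{-1}^{1}\|F(t,\cdot)\|_{L^p(\R^{n-1})}^p\,dt\Big)^{1/p}=2^{1-1/p}\|f\|_{L^p(B^n_1)}.
\]
Chaining these displays yields $\|g\|_{L^p(B^n_1)}\le 2\|f\|_{L^p(B^n_1)}$. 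The case $p=\infty$ is handled separately and more simply: for $x\in B^n_1$ one has $|g(x)|\le h(x')\le 2\|f\|_{L^\infty(B^n_1)}$, so $g\in L^\infty(B^n_1)$.

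I do not anticipate a genuine difficulty here. The only points requiring attention are the joint measurability of $g$ (dispatched by the kernel $K$ and Fubini) and the fact that $f$ is a priori defined only on $B^n_1$, so that the inner integral runs over the slice $\{t:(t,x')\in B^n_1\}$; both are handled by extending $f$ by zero as above. Everything else is a routine application of Minkowski's and H\"older's inequalities on sets of finite measure.
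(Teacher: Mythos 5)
Your argument is correct. It differs from the paper's in the way it trades the one–dimensional integration against the $L^p$-norm. The paper applies H\"older's inequality \emph{pointwise} to the inner integral, getting $\big|\int_0^{x_1}f(t,x')\,dt\big|^p\le|x_1|^{p-1}\int_0^{x_1}|f(t,x')|^p\,dt$, which reduces the statement to the case $p=1$ for the function $|f|^p$; the $p=1$ case is then dispatched by Fubini's theorem alone, writing the iterated integral against a characteristic function $\chi(t,x)=\mathbf{1}_{\{t\le x\}}$. You instead first majorize the integrand crudely by a function $h(x')$ of $x'$ alone, integrate trivially in $x_1$, and then pull the remaining $t$-integral out of the $L^p(\R^{n-1})$-norm via Minkowski's integral inequality before applying H\"older on $(-1,1)$. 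Both routes are elementary and yield operator norm bounds independent of $f$; the paper's reduction to $p=1$ is a bit shorter and avoids invoking Minkowski's integral inequality (which is a heavier tool, essentially equivalent to the combination Fubini plus duality). Your route has two small advantages worth noting: it yields the clean explicit constant $\|g\|_{L^p}\le 2\|f\|_{L^p}$, and it treats the case $p=\infty$ correctly by a separate one-line argument, whereas the paper's inequality $|g(x)|^p\le|x_1|^{p-1}\int_0^{x_1}|f|^p$ is written only for finite $p$ and does not literally cover $p=\infty$ (though that case is trivial anyway). No gaps.
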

\begin{proof}[Proof of Lemma \ref{le:n p f}]\setcounter{claim}{0} Consider first the {\bf case $p=1$ and $n=1$}, and let $f\in L^1((0,1))$. Defining $\chi(t,x):=1$ if $t\leq x$, and $0$, otherwise, Fubini's theorem implies that
\[\int_0^1\int_0^x|f(t)|\,dt\,dx=\int_{[0,1]\x[0,1]}\chi(t,x)|f(t)|\,dt\,dx=\int_0^1(1-t)|f(t)|dt<\infty.\]
The statement of the lemma in the case $p=1$ and $n=1$ follows. For $p=1$ and a general $n$ the proof is similar.

For a general exponent $p$, H\"older's inequality implies that
\[\left|\int_0^{x_1}f\big(t,x_2,\ldots,x_n\big)dt\right|^p\leq|x_1|^{p-1}\int_0^{x_1}\left|f\big(t,x_2,\ldots,x_n\big)\right|^pdt.\]
Hence in general, the statement of the lemma follows from what we have already proved, by considering the function $|f|^p$.
\end{proof}
We are now ready to prove that $d_A^*$ admits a right inverse:
\begin{proof}[Proof of Proposition \ref{PROP:RIGHT} (p.~\pageref{PROP:RIGHT})]\label{proof:prop:right}\setcounter{claim}{0} 
We prove {\bf statement (\ref{prop:right:exists})}.\\
Let $n,G,\lan\cdot,\cdot\ran_\g,p,X,\lan\cdot,\cdot\ran_X,P,A$ be as in the hypothesis. We show that the operator $d_A^*$ admits a bounded right inverse. By Theorem \ref{thm:reg bundle} we may assume w.l.o.g.~that $P$ is smooth. Since by assumption, $X$ is diffeomorphic to $\BAR B_1$, we may assume without loss of generality that $X=\BAR B_1$. The Hodge $*$-operator induces an isomorphism between $\Om^i_{k,p,A,\lan\cdot,\cdot\ran_X}(\g_P)$ and $\Om^{n-i}_{k,p,A,\lan\cdot,\cdot\ran_X}(\g_P)$. Using the equality $d_A^*=-*d_A*$, it follows that the operator
\[d_A^*:\Om^1_{1,p,A,\lan\cdot,\cdot\ran_X}(\g_P)\to\Ga^p_{\lan\cdot,\cdot\ran_X}(\g_P)\]
admits a bounded right inverse, if and only if the operator 
\[d_A:\Om^{n-1}_{1,p,A,\lan\cdot,\cdot\ran_X}(\g_P)\to\Om^n_{0,p,A,\lan\cdot,\cdot\ran_X}(\g_P)\] 
does so. Since $X=\BAR B_1$ is compact, using Remark \ref{rmk:metric}, it follows that the condition that $d_A^*$ admits a bounded right inverse, is independent of the metric $\lan\cdot,\cdot\ran_X$. Hence we may assume without loss of generality that $\lan\cdot,\cdot\ran_X$ is the standard metric on $\BAR B_1\sub\R^n$. 
\begin{claim}\label{claim:L W} There exists a bounded linear map 
\begin{equation}\label{eq:T Ga}T:\Ga^{1,p}_A(\g_P)\to\Om^1_{1,p,A}(\g_P),
\end{equation}
such that $d_A^*T=\id$. 
\end{claim}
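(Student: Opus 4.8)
\textbf{Plan for the proof of Claim \ref{claim:L W}.}

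The claim asserts that on $X = \BAR B_1 \subset \R^n$ (with the standard metric, which we may assume after the reductions already made) the operator $d_A^*: \Om^1_{1,p,A}(\g_P) \to \Ga^p_A(\g_P)$ admits a bounded right inverse, for $p > n$. The natural strategy is to reduce to the case of a flat connection via a perturbation argument, exactly parallel to the proof of Proposition \ref{prop:right d A * d A} above. First I would observe that on $\BAR B_1$ the operator $d_A^* d_A: \Ga^{2,p}_A(\g_P) \to \Ga^p_A(\g_P)$ admits a bounded right inverse $R_0$ under a small-curvature hypothesis $\Vert F_A\Vert_p \leq \eps$: this is precisely Proposition \ref{prop:right d A * d A}, whose hypotheses are met since $(\BAR B_1, \text{standard metric})$ isometrically embeds into $\R^n$. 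Composing, $T := d_A R_0$ is then a bounded operator $\Ga^p_A(\g_P) \to \Om^1_{1,p,A}(\g_P)$ satisfying $d_A^* T = d_A^* d_A R_0 = \id$, provided one knows $d_A$ maps $\Ga^{2,p}_A(\g_P)$ boundedly into $\Om^1_{1,p,A}(\g_P)$, which follows from the Leibniz rule and Lemma \ref{le:infty 1 p A} (twisted Morrey) bounding the zeroth-order $[A,\cdot]$ terms. So \emph{under the small-curvature assumption} the claim is immediate.

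The remaining task is to remove the restriction $\Vert F_A \Vert_p \leq \eps$, since Claim \ref{claim:L W} is stated for an arbitrary connection of class $W^{1,p}$ on an arbitrary (smooth, after Theorem \ref{thm:reg bundle}) $G$-bundle over $X \cong \BAR B_1$. The standard device here is to rescale: pull back by a diffeomorphism (or a contraction) $\phi_\la: \BAR B_1 \to \BAR B_\la$ with $\la$ small. Under $z \mapsto \la z$ the curvature two-form scales so that $\Vert F_{\phi_\la^* A}\Vert_{L^p(\BAR B_1)}$ tends to $0$ as $\la \to 0$ (in dimension $n=2$, which is the only case used for Theorem \ref{thm:L w * R}, $\Vert F \Vert_{L^p}$ picks up a positive power of $\la$; more generally the scaling exponent $2 - n/p$ is positive precisely when $p > n/2$, which is implied by $p>n$). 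Hence after restricting $A$ to a sufficiently small ball $B_\la$ and rescaling, the small-curvature hypothesis holds, Proposition \ref{prop:right d A * d A} applies there, and one obtains a bounded right inverse of $d_A^*$ over $B_\la$. One then transplants this back to all of $\BAR B_1$ using a cutoff function and the fact that $\BAR B_1$ retracts onto $B_\la$; alternatively, and more cleanly, since the whole question is diffeomorphism-invariant and scale-invariant in the qualitative (not quantitative) sense — we only need \emph{existence} of a bounded right inverse, not a uniform norm bound — one may simply replace $X$ by the small ball from the outset, the diffeomorphism $B_\la \cong \BAR B_1$ being harmless. Lemma \ref{le:n p f} (the elementary $L^p$-boundedness of antiderivation) is the auxiliary tool used inside Proposition \ref{prop:right d A * d A} to handle the fundamental-solution convolution, so it is already available.

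The main obstacle is purely bookkeeping: making sure the rescaling argument genuinely produces a right inverse landing in the weighted-twisted space $\Om^1_{1,p,A}(\g_P)$ with the correct (unweighted, since $X$ is compact) Sobolev norm, and that the transplantation via cutoff does not destroy the identity $d_A^* T = \id$ — it may only give $d_A^* T = \id + (\text{compact})$, which then has to be corrected by a Neumann-series argument as in Lemma \ref{le:right inverse}. In fact the slickest route avoids the cutoff altogether: since the statement of Claim \ref{claim:L W} is local in character and $\BAR B_1$ is itself a ball, one chooses $X$ small enough (equivalently rescales $A$) so that $\Vert F_A \Vert_p \leq \eps$ directly, invokes Proposition \ref{prop:right d A * d A} to get $R_0$, and sets $T := d_A R_0$. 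Boundedness of $T$ follows from boundedness of $R_0$ together with the bound on $d_A$ coming from Lemma \ref{le:infty 1 p A}, and $d_A^* T = d_A^* d_A R_0 = \id$. This completes the proof of Claim \ref{claim:L W}, and hence, applying the Hodge star as already explained, the proof of statement (\ref{prop:right:exists}) of Proposition \ref{PROP:RIGHT}.
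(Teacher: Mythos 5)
Your observation that, under the extra hypothesis $\Vert F_A\Vert_p\leq\eps$, the map $T:=d_AR_0$ with $R_0$ a right inverse of $d_A^*d_A$ from Proposition \ref{prop:right d A * d A} would do the job is correct as far as it goes, but the rescaling argument you propose to remove the curvature bound does not work, and this is the heart of the matter. Pulling back $A$ along a contraction $\phi_\lambda$ produces a small-curvature connection on $\BAR B_1$ that corresponds to $A$ restricted to the small ball $B_\lambda$, not to $A$ on all of $\BAR B_1$. The claim asks for a bounded right inverse of $d_A^*$ for the \emph{given} connection on the \emph{given} ball; replacing $X$ by a smaller ball proves a different statement. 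The ``diffeomorphism $B_\lambda\cong\BAR B_1$ is harmless'' remark is exactly where the gap lies: it is harmless for the metric, but it replaces the connection. Your fallback cutoff-plus-Neumann correction is also not spelled out, and the error term it produces is not small in any obvious sense (it involves the connection on $\BAR B_1\setminus B_\lambda$, where nothing has been gained). In short, you have effectively proved the claim only in the small-curvature case, which is precisely the case in which the claim is superfluous; the whole point of Claim \ref{claim:L W} in the paper's architecture is to supply a right inverse for an \emph{arbitrary} $W^{1,p}$-connection, so that it can be combined with a right inverse $R_0$ of $d_{A_0}^*d_{A_0}$ for an auxiliary \emph{flat} connection $A_0$ (which is what Proposition \ref{prop:right d A * d A} is really being used for) to build the operator $R:=d_AR_0+T(\id-d_A^*d_AR_0)$.

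The paper's proof takes a different and more elementary route that avoids any curvature hypothesis: it constructs $T$ explicitly by parallel transport. Given $\xi$, define $\eta$ as the $A$-parallel-transport integral of $\xi$ along the $x_1$-direction (formula (\ref{eq:wt eta})), and set $T\xi:=-\eta\,dx^1$. Then $d_A\eta\,e_1=\xi$ gives $d_A^*(\eta\,dx^1)=-\xi$, and boundedness $\Ga^{1,p}_A\to\Om^1_{1,p,A}$ follows from Lemma \ref{le:n p f} applied to $|d_A\xi\,e_i|$. Note also that the domain of $T$ is $\Ga^{1,p}_A$, not $\Ga^p$; this weaker statement suffices because in Claim \ref{claim:R} the operator $T$ is applied to $\id-d_A^*d_AR_0=-SR_0$, which has range in $\Ga^{1,p}_A$. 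If you wish to pursue a reduction-to-small-curvature strategy, you would need a genuine covering/patching argument over $\BAR B_1$ (as is in fact done, over $\C$, in the proof of Theorem \ref{thm:L w * R}), but for $\BAR B_1$ the direct integration construction is simpler.
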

\begin{proof}[Proof of Claim \ref{claim:L W}] We define $\Om\sub\R\x P$ to be the subset consisting of all $(t,p)$ such that
\begin{equation}\label{eq:t x 1}|t+x_1|^2+x_2^2+\cdots+x_n^2<1,
\end{equation}
where $x:=\pi(p)\in B_1$. Furthermore, we denote by $\Psi:\Om\to P$ the $A$-parallel transport in $x_1$-direction. Let $(t_0,p_0)\in\Om$. We denote $x_0:=\pi(p_0)\in\bar B_1$. Then $\Psi(t_0,p_0)=p(t_0)$, where 
\[p:\big\{t\in\R\,|\,(t,x_0)\textrm{ satisfies }(\ref{eq:t x 1})\big\}\to P\] 
is the unique path satisfying
\begin{equation}\label{eq:pr p t}\pr\circ p(t)=x_0+(t,0,\ldots0),\quad A\dot p=0,\quad p(0)=p_0.%
\footnote{In some smooth trivialization of the bundle $P$ the conditions (\ref{eq:pr p t}) correspond to the ordinary differential equation $\dot g=-g\xi$ with initial condition $g(0)=\one$, for a path $t\mapsto g(t)\in G$. Here $\xi(t)$ corresponds to the first component of $A$ at $p(t)$. This equation is of the form $\dot g(t)=f(t,g(t))$, where $f$ is continuous in $t$ and Lipschitz continuous in $g$. (Here we use that $A$ is of class $W^{1,p}$ with $p>2$.) Therefore, by the Picard-Lindel\"of theorem, there exists a unique solution of the initial value problem, and thus of (\ref{eq:pr p t}).}
\end{equation}
Let $\xi\in\Ga^{1,p}(\g_P)$. We define $\wt\xi:P\to\g$ by the condition $[p,\wt\xi(p)]=\xi\circ \pi(p)$, for $p\in P$, and 
\begin{equation}\label{eq:wt eta}\wt\eta:P\to\g,\quad\wt\eta(p):=\int_{-x_1}^0\wt\xi\circ\Psi(t,p)dt\in\g,
\end{equation}
where $(x_1,\ldots,x_n):=\pi(p)$. Furthermore, we define the section $\eta:B_1\to\g_P$ by the condition $\eta\circ\pi(p)=[p,\wt\eta(p)]$, for every $p\in P$, and 
\[T\xi:=-\eta\,dx^1.\]
This defines the operator (\ref{eq:T Ga}). Claim \ref{claim:L W} is a consequence of the following.
\begin{claim}\label{claim:T} The section $\eta$ lies in $\Ga^{1,p}_A(\g_P)$ and satisfies
\begin{equation}\label{eq:d A * eta}d_A^*(\eta dx^1)=-\xi.
\end{equation}
\end{claim}
\begin{pf}[Proof of Claim \ref{claim:T}] The section is of class $L^\infty$, since $\xi$ is of this class, by Morrey's embedding theorem. We denote by $e_1,\ldots,e_n$ the standard basis of $\R^n$. We show that
\begin{equation}\label{eq:d A eta}d_A\eta\,e_1=\xi.
\end{equation}
Let $x\in\R^n$. We fix a point $p_0$ in the fiber of $P$ over $(0,x_2,\ldots,x_n)$, and define the path $p(t):=\Psi(t+x_1,p_0)$. Using the equality $\Psi(s,p(t))=\Psi(s+t+x_1,p_0)$, it follows from (\ref{eq:wt eta}) that
\[\wt\eta\circ p(t)=\int_{-x_1-t}^0\wt\xi\circ\Psi(s+t+x_1,p_0)ds=\int_0^{x_1+t}\wt\xi\circ\Psi(s,p_0)ds.\]
It follows that 
\[d_A\wt\eta(p(0))\dot p(0)=\left.\frac d{dt}\right|_{t=0}\wt\eta(p(t))=\wt\xi(p(0)),\]
and therefore, $d_A\eta(x)e_1=\xi(x)$. This proves (\ref{eq:d A eta}). It follows that $d_A\eta\,e_1$ is of class $W^{1,p}$.

Let now $i\in\{2,\ldots,n\}$. We show that $|d_A\eta\,e_i|\in L^p(B^n_1)$. We fix $x\in\R^n$ and choose a smooth path $t\mapsto p(t)$, such that $\pi\circ p(t)=x+te_i$. Using (\ref{eq:wt eta}) with $p=p(t)$ and differentiating under the integral, we obtain
\[d_A\wt\eta(p(0))\dot p(0)=\int_{-x_1}^0d_A\wt\xi\big(\Psi(s,p(0))\big)\left.\frac d{dt}\right|_{t=0}\Psi(s,p(t))ds.%
\footnote{Here we used that the lower limit of the integral expressing $\wt\eta\circ p(t)$ is $-x_1$, for every $t$.}%
\]
Since $\pi_*\dot p(0)=e_i$ and $\pi\circ\Psi(s,p(t))=x+se_1+te_i$, it follows that
\[|d_A\eta(x)e_i|\leq\int_{-x_1}^0\big|d_A\xi(x+se_1)e_i\big|ds.\]
By Lemma \ref{le:n p f} with $f:=|d_A\xi\,e_i|$, using the assumption $\xi\in\Ga^{1,p}_A(\g_P)$, it follows that $|d_A\eta\,e_i|\in L^p(B^n_1)$. Therefore, $\eta$ lies in $\Om^1_{1,p,A}(\g_P)$. To prove equality (\ref{eq:d A * eta}), observe that
\[d_A^*(\eta dx^1)=-*d_A\big(\eta\,dx^2\wedge\ldots\wedge dx^n\big)=-*\big(\xi\,dx^1\wedge\ldots\wedge dx^n\big)=-\xi,\]
where in the second step we used (\ref{eq:d A eta}). This proves Claim \ref{claim:T} and hence Claim \ref{claim:L W}. 
\end{pf}
\end{proof}
We choose a map $T$ as in Claim \ref{claim:L W}, and a flat connection $A_0\in\A(P)$. By Proposition \ref{prop:right d A * d A} there exists a bounded right inverse $R_0$ of
\[d_{A_0}^*d_{A_0}:\Ga^{2,p}_{A_0}(\g_P)\to\Ga^p(\g_P).\]
Statement (\ref{prop:right:exists}) is now a consequence of the following.
\begin{claim}\label{claim:R} The operator
\[R:=d_AR_0+T\big(\id-d_A^*d_AR_0\big):L^p(\g_P)\to\Om^1_{1,p,A}(\g_P)\]
is well-defined and bounded, and $d_A^*R=\id$. 
\end{claim}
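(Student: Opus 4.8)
The plan is to verify that $R$ is a well-defined bounded operator $\Ga^p(\g_P)\to\Om^1_{1,p,A}(\g_P)$ and then to deduce the identity $d_A^*R=\id$ directly from $d_A^*T=\id$ (Claim \ref{claim:L W}) together with the trivial decomposition $\id=d_A^*d_AR_0+(\id-d_A^*d_AR_0)$.

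First I would record the mapping properties of the constituents. By Proposition \ref{prop:right d A * d A} the operator $R_0$ is bounded from $\Ga^p(\g_P)$ into $\Ga^{2,p}_{A_0}(\g_P)$; since $A-A_0$ is a one-form with values in $\g_P$ of class $W^{1,p}$ and $p>n$ (so that $W^{1,p}$ is a Banach algebra and $W^{2,p}\hookrightarrow C^1$), the maps $\na^A-\na^{A_0}=[(A-A_0)\wedge\cdot]$ and their iterates are bounded zeroth-order operators in the relevant Sobolev norms, whence $\Ga^{2,p}_{A_0}(\g_P)=\Ga^{2,p}_A(\g_P)$ with equivalent norms (this is inequality (\ref{eq:xi 2 p A}) of Lemma \ref{le:A 0 A} together with its converse). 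Consequently, for $\xi\in\Ga^p(\g_P)$ one has $d_AR_0\xi\in\Om^1_{1,p,A}(\g_P)$, $d_A^*d_AR_0\xi\in\Ga^p(\g_P)$, and hence $\xi-d_A^*d_AR_0\xi\in\Ga^p(\g_P)$.

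The heart of the matter — and the step I expect to be the main obstacle — is to show that $\id-d_A^*d_AR_0$ in fact takes values in $\Ga^{1,p}_A(\g_P)$, the domain of $T$, and does so boundedly. Here I would use $d_{A_0}^*d_{A_0}R_0=\id$ (the defining property of $R_0$) to write
\[\xi-d_A^*d_AR_0\xi=\big(d_{A_0}^*d_{A_0}-d_A^*d_A\big)R_0\xi,\]
and expand $d_A^*d_A-d_{A_0}^*d_{A_0}$ via the identity (\ref{eq:d A * d A -},\ref{eq:d A d A 0}) into the three lower-order contributions $(d_A-d_{A_0})^*(d_A-d_{A_0})$, $d_{A_0}^*(d_A-d_{A_0})$ and $(d_A-d_{A_0})^*d_{A_0}$, each assembled from $d_A-d_{A_0}=[(A-A_0)\wedge\cdot]$ and $d_{A_0}$. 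One then checks term by term — using repeatedly that a product of two $W^{1,p}$-functions is $W^{1,p}$ and that $W^{2,p}\hookrightarrow C^1$ (both valid since $p>n$) — that each contribution maps $\Ga^{2,p}_A(\g_P)$ boundedly into $\Ga^{1,p}_A(\g_P)$, with operator norm controlled by powers of $\Vert A-A_0\Vert_{1,p,A_0}$; composing with $R_0$ then yields boundedness of $\id-d_A^*d_AR_0$ from $\Ga^p(\g_P)$ into $\Ga^{1,p}_A(\g_P)$. This is the delicate point because these terms are only of first order while their lowest-order coefficients involve one derivative of $A-A_0$, so keeping the output in $\Ga^{1,p}$ demands careful bookkeeping of Sobolev multiplications (and, if one prefers, one may first replace $A$ by a gauge-equivalent representative nearer to $A_0$, since a bounded right inverse of $d_{\Phi^*A}^*$ conjugates under $\Phi$ to one of $d_A^*$).

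Granting this, the remainder is immediate. Well-definedness and boundedness of $R$ follow by composing the bounded maps $R_0$, $d_A$ and $T$ with the bounded map $\id-d_A^*d_AR_0$ just discussed, so that $R=d_AR_0+T(\id-d_A^*d_AR_0)$ is bounded $\Ga^p(\g_P)\to\Om^1_{1,p,A}(\g_P)$. Finally, since $\id-d_A^*d_AR_0$ lands in $\Ga^{1,p}_A(\g_P)$, where $d_A^*T=\id$ by Claim \ref{claim:L W}, we obtain
\[d_A^*R\xi=d_A^*d_AR_0\xi+d_A^*T\big(\xi-d_A^*d_AR_0\xi\big)=d_A^*d_AR_0\xi+\big(\xi-d_A^*d_AR_0\xi\big)=\xi,\]
that is $d_A^*R=\id$. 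This proves Claim \ref{claim:R}, and hence, by the reductions made at the start of the proof, statement (\ref{prop:right:exists}) of Proposition \ref{PROP:RIGHT}.
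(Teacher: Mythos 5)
Your argument is essentially the proof in the paper: the paper also sets $S:=d_A^*d_A-d_{A_0}^*d_{A_0}$, observes $\id-d_A^*d_AR_0=-SR_0$ (on smooth sections, then by density), argues that $S$ is of first or zeroth order and hence bounded from $\Ga^{2,p}_A(\g_P)$ to $\Ga^{1,p}_A(\g_P)$, and concludes well-definedness and boundedness of $R$ together with $d_A^*R=\id$ by the same computation you give. The one distinction is that you spell out the Sobolev-multiplication bookkeeping more explicitly (and correctly flag it as the delicate step), whereas the paper disposes of it with ``a short calculation''; substantively the routes coincide. One small imprecision in your write-up: the equivalence $\Ga^{2,p}_{A_0}(\g_P)\simeq\Ga^{2,p}_A(\g_P)$ does hold for $p>n$ because $[(A-A_0)\wedge\cdot]$ is a bounded zeroth-order operator, but this should not be attributed to inequality (\ref{eq:xi 2 p A}) of Lemma \ref{le:A 0 A}, whose hypothesis $\Vert A-A_0\Vert_{1,p,A_0}\le\delta$ is not available in statement (\ref{prop:right:exists}); one should invoke the direct estimate instead.
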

\begin{proof}[Proof of Claim \ref{claim:R}] A short calculation shows that $S:=d_A^*d_A-d_{A_0}^*d_{A_0}$ is of first or zeroth order. Hence it is bounded as a map from $\Ga^{2,p}_A(\g_P)$ to $\Ga^{1,p}_A(\g_P)$. Furthermore, the equality
\[\id-d_A^*d_AR_0=-SR_0\]
holds on smooth sections of $\g_P$. This implies that $R$ is well-defined and bounded. A short calculation shows that $d_A^*R=\id$. This proves Claim \ref{claim:R}, and completes the proof of (\ref{prop:right:exists}). 
\end{proof}
To prove {\bf statement (\ref{prop:right:eps C})}, we choose constants $\eps$ and $C$ as in Proposition \ref{prop:right d A * d A}. Let $P\to X$ be a $G$-bundle of class $W^{2,p}$, and $A\in\A^{1,p}(P)$, such that $\Vert F_A\Vert_p\leq\eps$. By the statement of Proposition \ref{prop:right d A * d A}, there exists a right inverse $R$ of the operator $d_A^*d_A:\Ga^{2,p}_A(\g_P)\to\Ga^p(\g_P)$, satisfying $\Vert R\Vert\leq C$. The operator $d_AR$ is a right inverse for $d_A^*$, satisfying $\Vert d_AR\Vert\leq\Vert d_A\Vert\,\Vert R\Vert\leq C$. Here in the last inequality we used the fact $\Vert d_A\Vert\leq1$, where
\[d_A:\Ga^{2,p}_A(\g_P)\to\Om^1_{1,p,A}(\g_P).\]
This proves (\ref{prop:right:eps C}), and completes the proof of Proposition \ref{PROP:RIGHT}.
\end{proof}
\section{Further auxiliary results}\label{sec:add}
The next two results were used in the proofs of Proposition \ref{prop:cpt bdd} (Section \ref{sec:comp}) and Theorem \ref{thm:cpt cpt} (Appendix \ref{sec:vort}).
\begin{thm}[Uhlenbeck compactness]\label{thm:Uhlenbeck compact} Let $n\in\N$, $G$ be a compact Lie group, $X$ a compact smooth Riemannian $n$-manifold (possibly with boundary), $P$ a smooth $G$-bundle over $X$, $p>n/2$ a number, and $A_\nu$ a sequence of connections on $P$ of class $W^{1,p}$. Assume that 
\begin{equation}
\nn\sup_{\nu\in\N}\Vert F_{A_\nu}\Vert_{L^p(X)}<\infty.
\end{equation}
Then passing to some subsequence there exist gauge transformations $g_\nu$ of class $W^{2,p}$, such that $g_\nu^*A_\nu$ converges weakly in $W^{1,p}$.
\end{thm}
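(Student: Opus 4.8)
The plan is to localize, apply the local Uhlenbeck gauge-fixing results already available, and then patch the local gauge transformations together. First I would fix, once and for all, a finite cover of the compact manifold $X$ by open sets $U_1,\dots,U_N$, each with compact closure contained in a slightly larger open set $\Om_i$ that is diffeomorphic either to the open unit ball in $\R^n$ (for interior pieces) or to an open half-ball (for pieces meeting $\dd X$), chosen small enough that $P|_{\Om_i}$ is smoothly trivial. Trivializing $P$ over $\Om_i$ and rescaling $\Om_i$ to the (half-)ball of radius $1$, the curvature two-form acquires a factor that is a positive power of the rescaling radius; since $p>n/2$ this power is $2-n/p>0$, so by shrinking the $\Om_i$ if necessary — using only the \emph{uniform} bound $\sup_\nu\Vert F_{A_\nu}\Vert_{L^p(X)}<\infty$ — I may arrange that, after rescaling, $\Vert F_{A_\nu}\Vert_{L^p(\BAR\Om_i)}\le\eps$ for every $i$ and $\nu$, where $\eps>0$ is the threshold appearing in Proposition \ref{prop:Uhlenbeck} (and in its half-ball analogue, obtained from \cite[Theorem 6.3]{We} exactly as Lemma \ref{le:Uhlenbeck trivial} is). Applying that result with $P_0:=P$ and $A_0$ a flat reference connection on the contractible set $\Om_i$ (e.g.\ the trivial connection in a smooth trivialization) produces, for each $i$ and $\nu$, an automorphism $g_\nu^{(i)}$ of $P|_{\Om_i}$ of class $W^{2,p}$ with $\Vert(g_\nu^{(i)})^*A_\nu-A_0\Vert_{1,p,A_0}\le C\Vert F_{A_\nu}\Vert_{L^p(\Om_i)}$; since $A_0$ is smooth, $(g_\nu^{(i)})^*A_\nu$ is then bounded in $W^{1,p}(\Om_i)$ uniformly in $\nu$.

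Next I would extract convergent subsequences. By reflexivity of $W^{1,p}$ and a finite iteration of subsequence extractions over the indices $i$, pass to a subsequence so that $(g_\nu^{(i)})^*A_\nu$ converges weakly in $W^{1,p}(U_i)$ for every $i$. On an overlap $U_i\cap U_j$ the connections $(g_\nu^{(i)})^*A_\nu$ and $(g_\nu^{(j)})^*A_\nu$ are related by the action of the transition gauge transformation $h_\nu^{ij}$ (obtained from $g_\nu^{(i)}$, $g_\nu^{(j)}$ and the smooth transition functions of $P$); since both connections are $W^{1,p}$-bounded, a twisted elliptic estimate of the type of Lemma \ref{le:infty 1 p A}, together with the bootstrapping used in Lemma \ref{le:g smooth}, shows $h_\nu^{ij}$ is bounded in $W^{2,p}(U_i\cap U_j,G)$. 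Passing to a further subsequence, each $h_\nu^{ij}$ converges weakly in $W^{2,p}$ and, because $2p>n$ yields a compact embedding $W^{2,p}\hookrightarrow C^0$ and $W^{2,p}(\cdot,G)$ is a topological group, strongly in $C^0$.

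Finally I would patch. Working inductively over the cover — the ``terminating'' patching scheme used already in the proofs of Propositions \ref{prop:cpt bdd} and \ref{prop:reg gauge}, going back to the gluing lemmas of \cite{FrPhD,We} — I would use the $W^{2,p}$-bounded, $C^0$-convergent transition gauge transformations $h_\nu^{ij}$ to correct the local gauges $g_\nu^{(i)}$ so that they agree on overlaps and therefore glue to a single gauge transformation $g_\nu$ of $P$ of class $W^{2,p}$; the corrections are themselves $W^{2,p}$-bounded and, along a further subsequence, converge in the relevant topology, so the glued connection $g_\nu^*A_\nu$ still converges weakly in $W^{1,p}$ on each $U_i$, hence on all of $X$. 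The main obstacle is exactly this patching step: one must carry out the inductive gluing while keeping the resulting global $g_\nu$ bounded in $W^{2,p}$ and preserving weak $W^{1,p}$-convergence of $g_\nu^*A_\nu$ across all overlaps — the local gauge-fixing and the weak-compactness extractions are routine given Proposition \ref{prop:Uhlenbeck}, but matching the gauges consistently over the whole of $X$, in particular near $\dd X$ where the half-ball versions of the local statements are invoked, is where the real care is needed.
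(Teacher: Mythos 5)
The paper itself does not prove this theorem: its ``proof'' consists of the citation to \cite[Theorem A]{We}, and the result is used as a black box. What you have written is an outline of Wehrheim's proof, and its skeleton is correct: small-scale local Uhlenbeck gauge fixing (via rescaling and $p>n/2$), weak compactness of the local gauge-fixed connections, $W^{2,p}$-bounds on the transition gauge maps, and a patching argument that assembles the local gauges into a single global one along a further subsequence. You have also correctly identified that the patching is where the real work lives.

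Two substantive caveats, though. First, the patching. Your analogy to Propositions~\ref{prop:cpt bdd} and \ref{prop:reg gauge} is not quite right: those arguments patch over an \emph{exhausting nested} sequence $X^1\subset X^2\subset\cdots$ in $\C$, so at each stage you only ever reconcile two gauges on one overlap, and the iteration telescopes. Here you have a finite cover of a compact manifold, so when you adjust $g_\nu^{(i)}$ to match on one overlap you risk spoiling agreement on another; the induction has to respect triple overlaps, which is why Wehrheim's \v{C}ech-type gluing machinery (the lemma cited in this memoir as Lemma~\ref{le:k V}, i.e.\ \cite[Lemma 7.2]{We}) is what is actually used, and why the cover has to be chosen so that it is ``good'' in the appropriate sense. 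Simply saying ``correct the local gauges so they agree on overlaps'' asserts the conclusion of that lemma rather than proving it, and it is not automatic that the corrections stay $W^{2,p}$-bounded and that weak convergence of $g_\nu^*A_\nu$ survives the corrections.

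Second, a regularity issue. You invoke Lemma~\ref{le:infty 1 p A} and Lemma~\ref{le:g smooth} to get the uniform $W^{2,p}$ bound on the transition gauge transformations $h_\nu^{ij}$, but both of those are stated in the paper under the hypothesis $p>\dim X=n$, whereas the theorem you are proving only assumes $p>n/2$. For $n/2<p\leq n$, $W^{1,p}$ no longer embeds into $C^0$, and the naive bootstrap from $dh = h(g_\nu^{(j)})^*A_\nu - (g_\nu^{(i)})^*A_\nu\,h$ does not close by direct application of those lemmas: the product $Dh\cdot B$ with $B\in W^{1,p}$ does not land in $L^p$ if one only knows $Dh\in L^p$. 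The fix (use the $L^\infty$ bound on $h$ coming from compactness of $G$ to conclude $Dh\in L^{np/(n-p)}$ first, then multiply; this works precisely because $2p>n$) is standard and is what Wehrheim actually does, but as written your chain of citations does not cover the full range of $p$ in the hypothesis. If you intend to prove the theorem rather than defer to \cite{We}, you should write out this bootstrap directly for $p>n/2$ rather than appealing to lemmas stated for $p>n$.
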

\begin{proof}[Proof of Theorem \ref{thm:Uhlenbeck compact}] This is \cite[Theorem A]{We}. See also \cite[Theorem 1.5]{Uhlenbeck}. 
\end{proof}
\begin{prop}[Compactness for $\bar \dd_J$]\label{prop:compactness delbar}\label{prop:comp delbar} Let $M$ be a manifold without boundary, $k\in\N$, $p>2$ numbers, $J$ an almost complex structure on $M$ of class $C^k$, $\Om_1\sub\Om_2\sub\ldots\sub\C$ open subsets, and $u_\nu:\Om_\nu\to M$ a sequence of functions of class $W^{1,p}_\loc$. Assume that $\bar\dd_Ju_\nu$ is of class $W^{k,p}_\loc$, for every $\nu$, and that for every open subset $\Om\sub\bigcup_\nu\Om_\nu$ with compact closure the following holds. If $\nu_0\in\N$ is so large that $\Om\sub \Om_{\nu_0}$ then 
  \begin{eqnarray}
    \label{eq:u nu K}&\exists K\sub M\textrm{ compact: }u_\nu(\Om)\sub K,\quad\forall \nu\geq\nu_0,&\\
  \label{eq:du nu}&\sup_{\nu\geq\nu_0}\Vert du_\nu\Vert_{L^p(\Om)}<\infty,&  \\
\label{eq:sup k p}&\sup_{\nu\geq\nu_0}\Vert\bar\dd_Ju_\nu\Vert_{W^{k,p}(\Om)}<\infty.&
\end{eqnarray}
Then there exists a subsequence of $u_\nu$ that converges weakly in $W^{k+1,p}$ and in $C^k$ on every compact subset of $\bigcup_\nu\Om_\nu$. 
\end{prop}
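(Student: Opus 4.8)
The plan is to reduce the statement to elliptic regularity for the standard Cauchy--Riemann operator $\dd_{\bar z}$ (viewed componentwise in local coordinates on $M$) combined with a diagonal-subsequence argument over an exhausting family of relatively compact open subsets of $\bigcup_\nu \Om_\nu$. First I would fix an exhausting sequence $\Om^{(1)}\sub\Om^{(2)}\sub\cdots$ of open subsets with compact closure, each contained in some $\Om_{\nu_j}$, with $\bigcup_j \Om^{(j)}=\bigcup_\nu\Om_\nu$; it suffices to prove that after passing to a subsequence, $u_\nu$ converges weakly in $W^{k+1,p}$ and in $C^k$ on each $\BAR{\Om^{(j)}}$, and then to diagonalize. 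So from now on I work on a fixed relatively compact $\Om$ with $\BAR\Om\sub\Om_{\nu_0}$, and on a slightly larger relatively compact $\Om'$ with $\BAR\Om\sub\Om'$, $\BAR{\Om'}\sub\Om_{\nu_0'}$.

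The first key step is to pass to local coordinates on $M$. By (\ref{eq:u nu K}) there is a compact $K\sub M$ with $u_\nu(\Om')\sub K$ for $\nu$ large; cover $K$ by finitely many coordinate charts $\phi_a:U_a\to\R^{2m}$ ($m=\dim_\C M$ if $M$ carries a complex structure; in general $2m=\dim M$). Because of the uniform $W^{1,p}$ bound (\ref{eq:du nu}) and Morrey's embedding $W^{1,p}(\Om')\inj C^{0,1-2/p}(\Om')$, the maps $u_\nu$ are uniformly equicontinuous on $\Om'$; hence there is a fixed radius $\rho>0$ so that each ball $B_\rho(z)\sub\Om'$ has $u_\nu(B_\rho(z))$ contained in a single chart $U_{a(z)}$ for all large $\nu$. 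On such a ball, writing $f_\nu:=\phi_{a(z)}\circ u_\nu$, the equation $\bar\dd_J u_\nu=\eta_\nu$ with $\eta_\nu:=\bar\dd_J u_\nu$ becomes, in coordinates, $\dd_{\bar z} f_\nu = Q(f_\nu)\,\dd_z f_\nu + R(f_\nu)\eta_\nu$ where $Q$ is a matrix-valued function built from $J$ in the chart (of class $C^k$, since $J$ is $C^k$) and $R$ is likewise $C^k$; more precisely $\dd_{\bar z} f_\nu - \tfrac12(J_0 - J(f_\nu))\cdot(\dd_t f_\nu)$-type rearrangement puts it in the form $\dd_{\bar z} f_\nu = g_\nu$ with $g_\nu$ controlled in $W^{k,p}$ by $\|f_\nu\|_{W^{1,p}}$ and $\|\eta_\nu\|_{W^{k,p}}$ together with the $C^k$-bounds on $J$ over $K$. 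Here one uses the chain rule for Sobolev compositions, which is legitimate because $f_\nu$ is continuous and $W^{1,p}$ with $p>2=\dim_\R\C$. This gives, via the standard interior estimate for $\dd_{\bar z}$ (Calderón--Zygmund plus covering), a uniform bound $\sup_\nu \|f_\nu\|_{W^{2,p}(B_{\rho/2}(z))}<\infty$, and then bootstrapping the same elliptic estimate using the improved regularity of $g_\nu$ (which now sits in $W^{k,p}$) yields $\sup_\nu\|f_\nu\|_{W^{k+1,p}(B_{\rho/4}(z))}<\infty$. Covering $\BAR\Om$ by finitely many such balls, patching the chart estimates, I obtain $\sup_{\nu\geq\nu_0}\|u_\nu\|_{W^{k+1,p}(\Om)}<\infty$ (the norm taken with respect to any fixed auxiliary metric on $M$ and finitely many charts covering $K$).

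The final step is soft: by reflexivity of $W^{k+1,p}$ and the Rellich--Kondrachov theorem, the uniform $W^{k+1,p}(\Om)$-bound gives a subsequence converging weakly in $W^{k+1,p}(\Om)$ and strongly in $C^k(\BAR\Om)$ (using $W^{k+1,p}\inj C^k$ compactly for $p>2$). Running this over the exhausting family $\Om^{(j)}$ and diagonalizing produces one subsequence that works on every compact subset of $\bigcup_\nu\Om_\nu$, and the limit map is $W^{k+1,p}_{\loc}$ and $C^k_{\loc}$, as claimed. The main obstacle I anticipate is purely bookkeeping rather than conceptual: making the ``put the nonlinear $\bar\dd_J$ equation into the form $\dd_{\bar z} f_\nu = g_\nu$'' step uniform across charts and iterations, i.e.\ checking that the $W^{k,p}$-norm of the right-hand side is genuinely controlled at each stage of the bootstrap by the already-established lower-order bounds plus the fixed $C^k$-data of $J$ on the compact set $K$, and that the chart-to-chart patching does not lose derivatives. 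This is where one must be careful that $k\geq1$ and $p>2$ are exactly what is needed for the Sobolev composition and multiplication estimates to close.
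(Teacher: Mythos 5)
Your outline (localize $u_\nu$ into coordinate charts covering the compact set $K$, rewrite $\bar\dd_J u_\nu=\eta_\nu$ in a chart as a perturbed Cauchy--Riemann equation $\dd_{\bar z}f_\nu=g_\nu$, bootstrap with the Calder\'on--Zygmund estimate, then apply Rellich--Kondrachov and diagonalize over an exhaustion) is exactly the proof of the result the paper cites, \cite[Proposition~B.4.2]{MS04}, so the route is the same.

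However, the bootstrap as you state it does not close, and the issue is more than bookkeeping. You assert that $g_\nu$ is ``controlled in $W^{k,p}$ by $\Vert f_\nu\Vert_{W^{1,p}}$ and $\Vert\eta_\nu\Vert_{W^{k,p}}$,'' and later that after the first step $g_\nu$ ``now sits in $W^{k,p}$.'' Both claims are circular: $g_\nu$ contains the term $-\tfrac12\big(J(f_\nu)-J_0\big)\,\partial_t f_\nu$, and $\partial_t f_\nu$ a priori lies only in $L^p$, so initially $g_\nu\in L^p$ and the elliptic estimate returns $f_\nu\in W^{1,p}$ --- no gain. More generally $f_\nu\in W^{\ell,p}$ gives only $g_\nu\in W^{\ell-1,p}$ (never $W^{k,p}$ until $\ell=k+1$ is already proved), and the estimate $\Vert f_\nu\Vert_{W^{\ell+1,p}}\lesssim\Vert g_\nu\Vert_{W^{\ell,p}}+\cdots$ then fails to improve $\ell$. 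Two ingredients that you omit are what make the argument gain a derivative: (i) choose the chart so that $J_0:=J(f_\nu(z_0))$; then $\Vert J(f_\nu)-J_0\Vert_{L^\infty(B_\rho(z_0))}$ is as small as desired by the H\"older modulus coming from the $W^{1,p}$-bound and Morrey, which lets you move the second-order term $(J(f_\nu)-J_0)\,\partial_t Df_\nu$ to the left-hand side and absorb it; and (ii) the remaining quadratic term $DJ(f_\nu)\,Df_\nu\,\partial_t f_\nu$ lies only in $L^{p/2}$, so the first pass gives $f_\nu\in W^{2,p/2}_{\loc}$, and to reach $W^{2,p}$ one must iterate finitely many times through increasing integrability exponents via the two-dimensional Sobolev embedding until $Df_\nu\in L^\infty$. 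Only once $f_\nu\in W^{2,p}$ is in hand does the Banach-algebra property of $W^{m,p}$ (for $m\ge1$, $p>2$) make the higher bootstrap to $W^{k+1,p}$ routine. With these corrections your argument reproduces the cited proof; without them the central regularity step is unjustified.
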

\begin{proof}[Proof of Proposition \ref{prop:compactness delbar}] The proof goes along the lines of the proof of \cite[Proposition B.4.2]{MS04}. 
\end{proof}
The next lemma was used in the proofs of Propositions \ref{prop:cpt bdd} (Section \ref{sec:comp}) and \ref{prop:reg gauge} (Appendix \ref{sec:vort}), and of Theorem \ref{thm:bubb}. 
\begin{lemma}[Regularity of the gauge transformation]\label{le:g smooth} Let $X$ be a smooth manifold, $G$ a compact Lie group, $P$ a $G$-bundle over $X$, $k\in\N_0$, and $p>\dim X$. Then the following assertions hold.
\begin{enui}
\item \label{le:g C k+1}Let $g$ be a gauge transformation of class $W^{1,p}_\loc$ and $A$ a connection on $P$ of class $C^k$, such that $g^*A$ is of class $C^k$. Then $g$ is of class $C^{k+1}$. 
\item\label{le:g k+1 p} Assume that $X$ is compact (possibly with boundary). Let $\U$ be a subset of the space of $W^{k,p}$-connections on $P$ that is bounded in $W^{k,p}$. Then there exists a $W^{k+1,p}$-bounded subset $\V$ of the set of $W^{k+1,p}$-gauge transformations on $P$, such that the following holds. Let $A\in\U$ and $g$ be a $W^{1,p}$-gauge transformation, such that $g^*A\in\U$. Then $g\in\V$. 
\end{enui}
\end{lemma}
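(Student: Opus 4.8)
The statement to be proven is Lemma~\ref{le:g smooth}, which asserts an elliptic bootstrapping fact for gauge transformations together with a uniform (bounded-set) version of it.

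\textbf{Proof plan.} The key identity is that if $g$ is a gauge transformation and $A$ a connection, then in a local trivialization $g$ is a map to $G$ and
\[
g^*A = \Ad_{g^{-1}}A + g^{-1}dg,
\]
so that $dg = g\big((g^*A) - \Ad_{g^{-1}}A\big) = g\,(g^*A) - A\,g$ (the last term meaning the local $\g$-valued one-form $A$ acting on $g$ via the adjoint-type multiplication coming from $G\subset$ some matrix group, or more invariantly the difference $g\cdot(g^*A) - A\cdot g$ as $\g$-valued one-forms). The point is that $dg$ is expressed algebraically in terms of $g$, $A$, and $g^*A$, with no derivatives of $g$ on the right-hand side. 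This is the engine for both parts.

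For part~(\ref{le:g C k+1}): I would work locally in a smooth trivialization of $P$ over a coordinate ball, so $g\in W^{1,p}_{\loc}$ becomes a map into $G\subset\GL(N,\R)$ for some $N$, and $A$, $g^*A$ are $C^k$ one-forms with values in $\g$. Since $p>\dim X$, Morrey's embedding gives $g\in C^0_{\loc}$, so the right-hand side $g\,(g^*A)-A\,g$ is continuous, hence $g\in C^1_{\loc}$. Now bootstrap: if $g\in C^j_{\loc}$ for some $0\le j\le k$, then $g\,(g^*A)-A\,g$ is $C^{\min(j,k)}=C^j$ (using that $A,g^*A\in C^k$), so $dg\in C^j$, i.e. $g\in C^{j+1}$. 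Iterating $j=0,1,\ldots,k$ yields $g\in C^{k+1}_{\loc}$, which is a local statement and therefore global. One must also check that $g$ genuinely takes values in $G$ and that the regularity of the $G$-valued map follows from regularity of its expression as an $\End$-valued (or chart-valued) map; this is standard since $G$ is an embedded submanifold.

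For part~(\ref{le:g k+1 p}): this is the uniform version. Cover the compact $X$ by finitely many coordinate balls with smooth trivializations of $P$. On each ball, for $A\in\U$ the local one-forms $A$ and $g^*A$ are bounded in $W^{k,p}$. Since $p>\dim X$, $W^{k,p}\hookrightarrow C^{k-1}$ on the ball (for $k\ge 1$; for $k=0$ one uses $W^{0,p}=L^p$ and the argument below starting from $W^{1,p}\hookrightarrow C^0$), and the product estimates in these spaces are continuous and uniform. From $dg = g\,(g^*A)-A\,g$ and $g\in W^{1,p}\hookrightarrow C^0$ with $\|g\|_\infty\le 1$ (as $G$ is compact, or bounded in the matrix norm), one gets a $W^{0,p}=L^p$ bound on $dg$ depending only on the $\U$-bounds, hence a $W^{1,p}$ bound on $g$; then bootstrap as before, at each stage using the algebra/multiplication estimates in $W^{j,p}$ for $j\le k$ and the $W^{k,p}$-bound on $A,g^*A$, to obtain a $W^{j+1,p}$ bound on $g$ depending only on $\U$, the finite atlas, and $G$. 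After $k$ steps one has a $W^{k+1,p}$ bound on $g$ in every chart, and patching over the finite cover (with a fixed partition of unity subordinate to it) gives a global $W^{k+1,p}$ bound, i.e.\ defines the desired bounded set $\V$ of $W^{k+1,p}$-gauge transformations. Boundary charts are handled the same way, using that the Sobolev embeddings and multiplication estimates hold on half-balls.

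\textbf{Main obstacle.} The only subtle point is making the bootstrap bounds genuinely \emph{uniform} in part~(\ref{le:g k+1 p}): one must ensure that the product (``algebra'') estimates $\|fh\|_{W^{j+1,p}}\le C\|f\|_{W^{j+1,p}}\|h\|_{W^{j+1,p}\cap L^\infty}$ and the analogous ``lower-regularity factor'' estimates $\|f h\|_{W^{j,p}} \le C(\|f\|_{W^{j,p}}\|h\|_{C^0}+\|f\|_{C^0}\|h\|_{W^{j,p}})$ are invoked with constants depending only on the fixed finite atlas and on $j,p,\dim X$, not on $A$ or $g$; and that the chart-transition maps (being smooth and fixed once $P$ and the atlas are chosen) do not spoil these bounds. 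This is routine but must be organized carefully, e.g.\ by fixing the atlas, trivializations, cutoffs, and transition functions at the outset and tracking the dependence of all constants. Everything else is standard elliptic bootstrapping via the identity for $dg$.
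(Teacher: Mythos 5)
Your proposal is correct and follows essentially the same route as the paper: the paper's proof (which cites Wehrheim's Lemma A.8) also rests on the identity $dg=g(g^*A)-Ag$, Morrey's embedding from $p>\dim X$, and induction over $k$, with the compactness/finite-atlas argument giving the uniformity in part (ii).
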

\begin{proof}[Proof of Lemma \ref{le:g smooth}] This follows by induction over $k$, using the equality $dg=g\big(g^*A)-Ag$ and Morrey's inequality (for (\ref{le:g k+1 p})). (For details see \cite[Lemma A.8]{We}.)
\end{proof}
The next proposition was used in the proof of Proposition \ref{prop:quant en loss} (Quantization of energy loss) in Section \ref{sec:comp}. 
\begin{prop}\label{prop:ka 0} Let $n\in\N$, $G$ be a compact Lie group, $P$ a $G$-bundle over $\R^n$, and $A,A'$ smooth flat connections on $P$. Then there exists a smooth gauge transformation $g$ such that $A'=g^*A$. 
\end{prop}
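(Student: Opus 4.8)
The plan is to reduce the statement to the well-known fact that a flat connection on a bundle over a contractible base is gauge equivalent to the trivial one, and then combine two such gauge equivalences. First I would observe that since $\R^n$ is contractible and $G$ is a Lie group, every $G$-bundle $P\to\R^n$ is trivializable; fix a trivialization, so that we may assume $P=\R^n\x G$ and identify connections with $\g$-valued one-forms on $\R^n$. Under this identification, flatness of $A$ means $dA+\frac12[A\wedge A]=0$. The key classical fact I would invoke is: a smooth flat connection $A$ on $\R^n\x G$ is gauge equivalent to the trivial connection $0$ via a smooth gauge transformation. This is standard --- one constructs the gauge transformation by $A$-parallel transport along rays from the origin (as is done, for instance, in the proof of Lemma \ref{le:Uhlenbeck trivial} / Proposition \ref{PROP:RIGHT}, where the $A$-horizontal section through a basepoint is produced by solving the relevant ODE), and smoothness of the resulting section follows from smooth dependence on initial conditions, since $A$ is smooth. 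Alternatively one can cite \cite{We} directly; the hypotheses there (flatness, $\R^n$ diffeomorphic to a ball) are exactly met.

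With that in hand, the proof is immediate: choose smooth gauge transformations $g_0, g_0'$ with $g_0^*A=0$ and $(g_0')^*A'=0$. Then $(g_0')^*A'=g_0^*A$, so setting $g:=g_0(g_0')^{-1}$ (a smooth gauge transformation, since the gauge group is a smooth Lie group of maps and composition/inversion are smooth operations --- or simply because each $g_i$ is a smooth map $\R^n\to G$) we get
\[
g^*A=\big((g_0')^{-1}\big)^*g_0^*A=\big((g_0')^{-1}\big)^*(g_0^*A)=\big((g_0')^{-1}\big)^*\big((g_0')^*A'\big)=A'.
\]
Finally I would note that the choice of trivialization at the start is harmless: if $\Phi:P\to\R^n\x G$ is the chosen trivialization, one pulls the gauge transformation $g$ back to $P$ via $\Phi$, obtaining a smooth gauge transformation on $P$ with the desired property.

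I do not expect a serious obstacle here; the only point requiring a little care is the assertion that a smooth flat connection on $\R^n\x G$ is gauge equivalent to the trivial connection \emph{by a smooth gauge transformation} (as opposed to merely continuous or of some Sobolev class). This is handled by the parallel-transport construction together with the smooth dependence of ODE solutions on parameters and initial data, exactly as in the arguments underlying Lemma \ref{le:Uhlenbeck trivial}; since the present statement assumes $A$ and $A'$ are smooth to begin with, no regularization step is needed and the bootstrap in Lemma \ref{le:g smooth}(\ref{le:g C k+1}) could also be cited if one wished to upgrade a priori regularity. So the bulk of the write-up is just assembling these standard pieces in the right order.
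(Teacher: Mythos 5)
Your proposal is correct, but it takes a genuinely different route from the paper. You reduce each of $A$ and $A'$ separately to the trivial connection on the trivial bundle $\R^n\x G$ (via an $A$-horizontal section through a basepoint, obtained by integrating the horizontal distribution, using flatness plus simple-connectedness of $\R^n$) and then compose: $g:=g_0(g_0')^{-1}$, whose computation you carry out correctly under the convention $(g_1g_2)^*=g_2^*g_1^*$. The paper instead argues by induction on $n$: the base case $n=1$ solves the ODE $A'=g^*A$ directly; the induction step gauges $A$ into $A'$ on the slice $\R^n\x\{0\}$, extends to a smooth $\wt g_0$ on $\R^{n+1}$, and then integrates a family of ODEs in the last coordinate to produce $h$ with $(\wt g_0 h)^*A=A'$, invoking flatness to propagate agreement off the slice. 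Your approach is the more standard one (flat $\Rightarrow$ gauge-trivial on a contractible base, then compose); the paper's line-by-line ODE induction is more self-contained, essentially a constructive Frobenius argument avoiding an explicit appeal to the horizontal section. One small caveat in your write-up: Lemma \ref{le:Uhlenbeck trivial} is stated over $\bar B_1$ and produces only a $W^{2,p}$ gauge transformation, so the cleaner citation for your key step is the $A$-horizontal section argument sketched inside the proof of Proposition \ref{prop:right d A * d A} (or the classical statement about flat bundles over simply-connected bases), which gives a \emph{smooth} section directly without a regularity bootstrap.
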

\begin{proof}[Proof of Proposition \ref{prop:ka 0}]\setcounter{claim}{0}%
\footnote{In the case $n=2$, see also \cite[Corollary 3.7]{FrPhD}.}
 In the case $n=1$ such a $g$ exists, since then the condition $A'=g^*A$ can be viewed as an ordinary differential equation for $g$. Let $n\in\N$ and assume by induction that we have already proved the statement for $n$. Let $P$ be a $G$-bundle over $\R^{n+1}$, and $A,A'$ smooth flat connections on $P$. We define $\iota:\R^n\to\R^{n+1}$ by $\iota(x):=(x,0)$. By the induction hypothesis there exists a smooth gauge transformation $g_0$ on $\iota^*P\to\R^n$, such that 
\begin{equation}\label{eq:g 0 iota A}g_0^*\iota^*A=\iota^*A'.
\end{equation} 
Since $P$ is trivializable, there exists a smooth gauge transformation $\wt g_0$ on $P$ such that $\iota^*\wt g_0=g_0$. 

Let $x\in\R^n$. We define $\iota_x:\R\to\R^{n+1}$ by $\iota_x(t):=(x,t)$. There exists a unique smooth gauge transformation $h_x$ on $\iota_x^*P\to\R$, such that
\begin{equation}\label{eq:iota x wt g 0}h_x^*\iota_x^*\wt g_0^*A=\iota_x^*A',\quad h_x(p)=\one,\,\forall p\in\textrm{ fiber of }\iota_x^*P\textrm{ over }0\in\R.
\end{equation}
To see this, note that these conditions can be viewed as an ordinary differential equation for $h_x$ with prescribed initial value. Since this solution depends smoothly on $x$, there exists a unique smooth gauge transformation $h$ on $P$ such that $\iota_x^*h=h_x$, for every $x\in\R^n$. The gauge transformation $g:=\wt g_0h$ on $P$ satisfies the equation $A'=g^*A$. This follows from (\ref{eq:g 0 iota A},\ref{eq:iota x wt g 0}) and flatness of $A$ and $A'$. This proves Proposition \ref{prop:ka 0}.
\end{proof}
The next result was used in the proofs of Proposition \ref{prop:cpt bdd}, Remark \ref{rmk:bar J} (Section \ref{sec:comp}), and Theorem \ref{thm:bubb}. Let $M,\om,G,\g,\lan\cdot,\cdot\ran_\g,\mu,J,\Si,\om_\Si,j$ be as in Chapter \ref{chap:main}. We define the almost complex structure $\bar J$ on $\BAR M$ as in (\ref{eq:bar J}). The energy density of a map $f\in W^{1,p}(\Si,\BAR M)$ is given by
\[e_f(z):=\frac12|df|^2,\]
where the norm is with respect to the metrics $\om_\Si(\cdot,j\cdot)$ on $\Si$ and $\BAR\om(\cdot,\bar J\cdot)$ on $\BAR M$. Let $P$ be a smooth $G$-bundle over $\Si$, $A$ a connection on $P$, and $u:P\to M$ an equivariant map. We define
\[e^\infty_{A,u}:=\frac12|d_Au|^2,\]
where the norm is taken with respect to the metrics $\om_\Si(\cdot,j\cdot)$ on $\Si$ and $\om(\cdot,J\cdot)$ on $M$. Furthermore, we define 
\[\bar u:\Si\to\BAR M,\quad\bar u(z):=Gu(p),\]
where $p\in P$ is an arbitrary point in the fiber over $z$. 
\begin{prop}[Pseudo-holomorphic curves in the symplectic quotient]\label{prop:bar del J} Let $P$ be a smooth $G$-bundle over $\Si$, $p>2$, $A$ a $W^{1,p}_\loc$-connection on $P$, and $u:P\to M$ a $G$-equivariant map of class $W^{1,p}_\loc$, such that $\mu\circ u=0$. Then we have
\[e_{\bar u}=e^\infty_{A,u}.\]
If $(A,u)$ also solves the equation $\bar\dd_{J,A}(u)=0$ then
\begin{eqnarray}\nn\bar\dd_{\bar J}\bar u=0.
\end{eqnarray}
\end{prop}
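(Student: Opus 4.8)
The statement has two parts: first, the pointwise equality of energy densities $e_{\bar u}=e^\infty_{A,u}$ under the hypothesis $\mu\circ u=0$; second, that if additionally $\bar\dd_{J,A}(u)=0$ then $\bar u$ is $\bar J$-holomorphic. Both are local and are most naturally checked in a smooth local trivialization of $P$. The main subtlety is merely bookkeeping with the horizontal distribution $H$ and the projection $\pi_*$; there is no hard analysis here, since $p>2$ guarantees that $u$ (and hence $\bar u$) is continuous and weakly differentiable, which is all that is needed for the pointwise identities to make sense a.e.

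\emph{Step 1: reduce to a horizontal lift.} Fix $z\in\Si$ and choose a smooth local section $\si$ of $P$ near $z$; write $v:=u\circ\si$, so $\bar u=\pi\circ v$ near $z$, where $\pi:\mu^{-1}(0)\to\BAR M$ is the quotient projection (note $v$ takes values in $\mu^{-1}(0)$ because $\mu\circ u=0$). The one-form $d_Au$ descends to $\Si$ with values in $TM^u=(u^*TM)/G$; under $\si$ it is represented by $dv+L_v(\si^*A)$, a one-form on (a neighborhood of $z$ in) $\Si$ with values in $v^*TM$. The key linear-algebra fact is that for $x\in\mu^{-1}(0)$ the map $d\pi(x):T_xM\to T_{\pi(x)}\BAR M$ restricts to an isometry from $H_x=\ker d\mu(x)\cap(\im L_x)^\perp$ onto $T_{\pi(x)}\BAR M$ (by definition of $\BAR\om(\cdot,\bar J\cdot)$ and of $\bar J$ via \eqref{eq:bar J}), kills $\im L_x$, and that $d_Au$ already lands in $\ker d\mu(u)$ along $\mu^{-1}(0)$ — indeed $d\mu(u)(d_Au)=d_A(\mu\circ u)=0$. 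Decompose $dv+L_v\si^*A = \xi + L_v\eta$ with $\xi$ the $H$-component and $L_v\eta$ the $\im L$-component. Then $d\bar u = d\pi(v)\circ(dv+L_v\si^*A) = d\pi(v)\xi$, and since $d\pi(v)|_{H_v}$ is an isometry we get $|d\bar u|^2 = |\xi|^2$. On the other hand, $|d_Au|^2 = |\xi|^2 + |L_v\eta|^2$ by orthogonality of $H$ and $\im L$. This is \emph{not} quite $|\xi|^2$ unless $\eta=0$; but here is where one uses the extra structure: the component $L_v\eta$ of $d_Au$ in $\im L$ is exactly what is killed when passing to $d_Au$ as a section of $TM^u$ restricted along $\mu^{-1}(0)$? — no. Let me correct: $d_Au$ as written has an $\im L$-part in general. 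The resolution is that $e^\infty_{A,u}$ is defined via $|d_Au|$ with $d_Au$ regarded as a one-form with values in $TM^u$, and on $\mu^{-1}(0)$ one has $\mu\circ u\equiv 0$, hence $0=d(\mu\circ u)=d\mu(u)d_Au$, so $d_Au(v')\in\ker d\mu(u)$ for every tangent vector $v'$; moreover $d_Au$ is $\om$-orthogonal... Actually the cleanest route: by Hypothesis (H), $G$ acts freely on $\mu^{-1}(0)$, and the standard fact that $\mu^{-1}(0)\to\BAR M$ is a Riemannian submersion with vertical space $\im L_x$ gives $|d_Au|^2=|(d_Au)^H|^2+|(d_Au)^V|^2$ where the vertical part, evaluated on a tangent vector $v'\in T_z\Si$, equals the $\im L$-projection. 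I will argue that this vertical part vanishes: this is precisely the content of $d\mu(u)d_Au=0$ combined with the definition $H_x=\ker d\mu(x)\cap(\im L_x)^\perp$ — wait, $\ker d\mu$ contains $\im L$, so vanishing of $d\mu(u)d_Au$ does not force the $\im L$-component to vanish.

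\emph{Revised Step 1.} The honest statement is that $e^\infty_{A,u}=\frac12|d_Au|^2$ where, crucially, the relevant projection is onto $H$: one checks directly from the definitions that $d\pi$ is the right object. So I would instead invoke that along $\mu^{-1}(0)$ the one-form $d_Au$ can be split as (horizontal $H$-part) $\oplus$ (vertical $\im L$-part), and compute $|d\bar u|^2 = |(d_Au)^H|^2$, while $e^\infty_{A,u} = \frac12(|(d_Au)^H|^2 + |(d_Au)^V|^2)$; these agree iff $(d_Au)^V=0$. To see the vertical part vanishes, differentiate the identity $\mu\circ u = 0$ more carefully using the connection: along any path in $P$ projecting to $\Si$, $0 = d_A(\mu\circ u) = d\mu(u)\,d_Au$ shows $d_Au\in\ker d\mu(u)$; but additionally $\mu\circ u\equiv 0$ is $G$-equivariant and the momentum map condition $d\mu(x)\cdot = \om(L_x\cdot,\cdot)$ together with $d_Au\in\ker d\mu(u)$ gives $\om(L_u\g, d_Au)=0$, i.e. $d_Au\perp_\om \im L_u$. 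Since $J$ preserves $\im L_u$ and the metric is $\om(\cdot,J\cdot)$, this yields $d_Au\perp \im L_u$, hence $(d_Au)^V=0$ and $e_{\bar u}=e^\infty_{A,u}$.

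\emph{Step 2: $\bar J$-holomorphicity.} Now suppose also $\bar\dd_{J,A}(u)=0$, i.e. the complex-antilinear part of $d_Au$ (as a one-form on $\Si$ valued in the complex bundle $TM^u$ with structure $J$, and using $j$ on $\Si$) vanishes, equivalently $J\circ d_Au = d_Au\circ j$. By Step 1 (and its argument), $d_Au$ takes values in $H_u$ along $\mu^{-1}(0)$, and $d\pi(u)$ intertwines $J|_{H_u}$ with $\bar J$ by the defining property \eqref{eq:bar J}. Applying $d\pi(u)$ to the identity $J\,d_Au = d_Au\,j$ and using $d\bar u = d\pi(u)\,d_Au$ gives $\bar J\,d\bar u = d\bar u\,j$, which is exactly $\bar\dd_{\bar J}\bar u = 0$. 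The one point to be careful about is regularity: $u$ is only $W^{1,p}_\loc$ with $p>2$, so these are identities between $L^p_\loc$ one-forms holding a.e., which is enough; no elliptic regularity is invoked here (that is Proposition~\ref{prop:reg gauge}'s job elsewhere).

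\emph{Expected main obstacle.} The only genuine care needed is the verification in Step 1 that the $\im L$-component of $d_Au$ vanishes along $\mu^{-1}(0)$ — this is where the momentum map identity $d\mu = \om(L\cdot,\cdot)$ and $J$-invariance of $\im L$ enter, and it must be done pointwise in a trivialization. Everything else is unwinding definitions of $\bar J$, $e_{\bar u}$, $e^\infty_{A,u}$, and $\bar\dd$. I would present Step 1 as a short lemma about the Riemannian submersion $\mu^{-1}(0)\to\BAR M$ restricted along the image of $u$, then deduce both displayed conclusions in two lines each.
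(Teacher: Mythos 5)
Your Step~1 has a genuine gap, and it sits precisely at the point you flagged as ``the only genuine care needed''. The claim that $J$ preserves $\im L_u$ is false. On $\mu^{-1}(0)$ the relevant $g$-orthogonal decomposition is $T_xM=H_x\oplus\im L_x\oplus J\im L_x$: the structure $J$ preserves $H_x$ (as recorded in the footnote to the definition of $\bar J$) but interchanges $\im L_x$ and $J\im L_x$, which meet only in $0$ because $\im L_x$ is $\om$-isotropic along $\mu^{-1}(0)$. Consequently the implication you draw --- from $d_Au\perp_\om\im L_u$ (correct, being equivalent to $d_Au\in\ker d\mu(u)$) to $d_Au\perp_g\im L_u$ --- does not hold: $g(L_u\xi,d_Au)=\om(L_u\xi,J\,d_Au)$, and there is no reason for $J\,d_Au$ to lie in $\ker d\mu(u)$, since $J$ does not preserve $\ker d\mu$.

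In fact the vanishing of $(d_Au)^V$ genuinely requires the \emph{first} vortex equation. Write $d_Au=h+L_u\eta$ with $h$ horizontal and $\eta\in\Om^1(\Si,\g)$; both $d_Au$ and $d_Au\circ j$ lie in $\ker d\mu(u)=H_u\oplus\im L_u$ and so have no $J\im L_u$-component. If also $J\,d_Au=d_Au\circ j$, then since $J\,d_Au=Jh+JL_u\eta$ with $Jh\in H_u$ and $JL_u\eta\in J\im L_u$, comparing $J\im L_u$-components gives $L_u\eta=0$. Without the first equation the energy-density identity can fail: in the Ginzburg--Landau setting of Section~\ref{SEC:EXAMPLE}, $\BAR M$ is a single point and $e_{\bar u}\equiv0$, yet taking $v:=u\circ\si\equiv1\in\mu^{-1}(0)$ and $\si^*A$ any nonzero $\g$-valued one-form gives $d_Au=L_v(\si^*A)\not\equiv0$ and hence $e^\infty_{A,u}>0$. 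So the first conclusion of the proposition in fact also needs $\bar\dd_{J,A}(u)=0$; every application (Remark~\ref{rmk:bar J}, Claims~\ref{claim:g A u smooth} and~\ref{claim:MMM bar u i}) is to an $\infty$-vortex, so nothing downstream is affected. With Step~1 corrected to use both equations, your Step~2 goes through as written.
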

\begin{proof}[Proof of Proposition \ref{prop:bar del J}] This follows from an elementary argument. For the second part see also \cite[Section 1.5]{Ga}. 
\end{proof}
In the proof of Theorem \ref{thm:bubb} we used the following lemma.
\begin{lemma}[Bound for tree]\label{le:weight} Let $k\in\N_0$ be a number, $(T,E)$ a finite tree, $\al_1,\ldots,\al_k\in T$ vertices, $f:T\to[0,\infty)$ a function, and $E_0>0$ a number. Assume that for every vertex $\al\in T$ we have 
\begin{equation}
\label{eq:f al E}f(\al)\geq E_0\quad\textrm{or}\quad\#\big\{\be\in T\,|\,\al E\be\big\}+\#\big\{i\in\{1,\ldots,k\}\,|\,\al_i=\al\big\}\geq3.
\end{equation}
Then 
\begin{equation}
\nn\#T\leq\frac{2\sum_{\al\in T}f(\al)}{E_0}+k.
\end{equation}
\end{lemma}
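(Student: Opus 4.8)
The plan is to prove this purely combinatorially by a double-counting / edge-counting argument on the tree $(T,E)$, splitting the vertex set according to which alternative in hypothesis (\ref{eq:f al E}) holds. First I would set
\[
T':=\big\{\al\in T\,\big|\,f(\al)\geq E_0\big\},\qquad T'':=T\wo T',
\]
so that every vertex of $T''$ satisfies the ``valence plus marked points at least $3$'' condition. Summing (\ref{eq:f al E}) over $\al\in T'$ immediately gives $\#T'\leq E_0^{-1}\sum_{\al\in T'}f(\al)\leq E_0^{-1}\sum_{\al\in T}f(\al)$, so the whole problem reduces to bounding $\#T''$ by $E_0^{-1}\sum_{\al\in T}f(\al)+k$ (in fact by $\#T'+k$ would already suffice, but the cleanest route is to show $\#T''\leq\#T'+k$, or something of that flavour, using only the tree structure).

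For the bound on $\#T''$ I would use the standard fact that a finite tree with $N$ vertices has exactly $N-1$ edges, and hence the sum of the vertex-valences is $2(N-1)$. Write $v(\al):=\#\{\be\in T\,|\,\al E\be\}$ for the valence and $m(\al):=\#\{i\in\{1,\dots,k\}\,|\,\al_i=\al\}$, so $\sum_{\al\in T}m(\al)=k$ and $\sum_{\al\in T}v(\al)=2(\#T-1)$. For $\al\in T''$ we have $v(\al)+m(\al)\geq3$, i.e.\ $v(\al)\geq3-m(\al)$, while trivially $v(\al)\geq1$ for every vertex of a tree with at least two vertices (and the case $\#T=1$ is handled separately and is immediate). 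Combining,
\[
2(\#T-1)=\sum_{\al\in T}v(\al)\;\geq\;\sum_{\al\in T''}\big(3-m(\al)\big)+\sum_{\al\in T'}1
\;=\;3\#T''-\sum_{\al\in T''}m(\al)+\#T'\,.
\]
Using $\sum_{\al\in T''}m(\al)\leq k$ and $\#T=\#T'+\#T''$, this rearranges to $2\#T''+2\#T'-2\geq 3\#T''+\#T'-k$, hence $\#T''\leq \#T'+k-2\leq \#T'+k$.

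Putting the two pieces together,
\[
\#T=\#T'+\#T''\leq 2\#T'+k\leq \frac{2\sum_{\al\in T}f(\al)}{E_0}+k,
\]
which is the claimed inequality. I would also note the degenerate cases explicitly: if $\#T=1$ the single vertex $\al$ either has $f(\al)\geq E_0$ (and then $\#T=1\leq 2f(\al)/E_0\leq$ RHS) or $m(\al)\geq3\geq k$ forcing nothing problematic, and in all cases $1\leq 2\sum f/E_0+k$ unless $\sum f=0$ and $k=0$, in which case the first alternative of (\ref{eq:f al E}) fails for every vertex so $m(\al)\geq3$ is impossible when $k=0$ — so this sub-case cannot occur. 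The only real care needed is this bookkeeping of small/degenerate trees and making sure the valence-$\geq1$ bound is legitimate (it fails for the one-vertex tree, which is why that case is peeled off). I do not expect any genuine obstacle here; the statement is elementary once the vertices are partitioned by which clause of the hypothesis they satisfy, and the handshake lemma for trees does the rest.
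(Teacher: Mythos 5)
Your proof is correct and complete. The paper does not actually present an argument for this lemma (it merely writes ``this follows from an elementary argument'' and footnotes \cite[Exercise 5.1.2]{MS04}), so there is no distinct paper proof to compare against; the handshake-lemma double counting you use is precisely the standard elementary technique this kind of statement calls for, and matches the spirit of the cited exercise. The partition into $T'$ and $T''$, the bound $\sum_{\al\in T}v(\al)=2(\#T-1)$, the valence lower bounds $v(\al)\geq 3-m(\al)$ on $T''$ and $v(\al)\geq 1$ on $T'$ (for $\#T\geq 2$), and the rearrangement to $\#T''\leq \#T'+k-2$ all check out, as does the separate treatment of $\#T=1$.

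One small slip in the exposition: in the $\#T=1$ discussion you write ``$m(\al)\geq 3\geq k$'', but the inequality goes the other way. With a single vertex $\al$ every marked point sits at $\al$, so $m(\al)=k$; thus $m(\al)\geq 3$ forces $k\geq 3$, which makes the right-hand side at least $3>1=\#T$. Your conclusion is right; just correct that line to $k=m(\al)\geq 3$. Likewise, your observation that the degenerate situation $\sum f=0$, $k=0$, $\#T=1$ cannot occur is correct: the lone vertex would then satisfy neither clause of the hypothesis.
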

\begin{proof}[Proof of Lemma \ref{le:weight}] This follows from an elementary argument.%
\footnote{See e.g.~\cite[Exercise 5.1.2.]{MS04}.}%
\end{proof}
The next result was used in the proof of Proposition \ref{prop:soft} (Section \ref{sec:soft}). Let $(X,d)$ be a metric space%
\footnote{$d$ is allowed to attain the value $\infty$.}%
, $G$ a topological group, and $\rho:G\x X\to X$ a continuous action by isometries. By $\pi:X\to X/G$ we denote the canonical projection. The topology on $X$, determined by $d$, induces a topology on the quotient $X/G$. 

\begin{lemma}[Induced metric on the quotient]\label{le:metr} Assume that $G$ is compact. Then the map $\bar d:X/G\x X/G\to [0,\infty]$ defined by
\[\bar d(\bar x,\bar y):=\min_{x\in\bar x,\,y\in\bar y}d(x,y)\]
is a metric on $X/G$ that induces the quotient topology on $X/G$.
\end{lemma}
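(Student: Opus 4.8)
\textbf{Proof proposal for Lemma \ref{le:metr}.}

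The plan is to verify the three metric axioms for $\bar d$ and then to check that the induced metric topology coincides with the quotient topology. First I would record the basic fact that, since $G$ is compact and acts by isometries, the infimum defining $\bar d$ is attained: for fixed $x_0 \in \bar x$ and $y_0 \in \bar y$ the function $G \times G \ni (g,h) \mapsto d(gx_0, hy_0)$ is continuous on a compact set, hence attains its minimum, and by $G$-equivariance of $d$ one may moreover fix $x = x_0$ and minimize only over $y \in \bar y$ (i.e. over $h \in G$), which is the form stated in the lemma. Symmetry $\bar d(\bar x, \bar y) = \bar d(\bar y, \bar x)$ is immediate from symmetry of $d$. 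For positivity: if $\bar d(\bar x, \bar y) = 0$ then there are $x \in \bar x$, $y \in \bar y$ with $d(x,y) = 0$, hence $x = y$, hence $\bar x = \bar y$; conversely $\bar d(\bar x, \bar x) = 0$ by taking the same representative.

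The triangle inequality is the one axiom requiring a small argument. Given $\bar x, \bar y, \bar z$, choose representatives realizing the minima: pick $y \in \bar y$, then $x \in \bar x$ with $d(x,y) = \bar d(\bar x, \bar y)$, and $z \in \bar z$ with $d(y,z) = \bar d(\bar y, \bar z)$. Then $\bar d(\bar x, \bar z) \leq d(x,z) \leq d(x,y) + d(y,z) = \bar d(\bar x, \bar y) + \bar d(\bar y, \bar z)$. The key point is that one can use a \emph{common} representative $y$ of $\bar y$ in both middle terms, which is exactly what attainment of the infimum allows. (If $d$ is allowed to take the value $\infty$, the same inequalities hold in $[0,\infty]$ with the usual conventions.)

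It remains to identify the metric topology of $\bar d$ with the quotient topology on $X/G$. I would show directly that $\pi$ is both continuous and open with respect to the $\bar d$-metric topology, which pins down that topology uniquely. Continuity: $\bar d(\pi(x), \pi(x')) \leq d(x, x')$, so $\pi$ is $1$-Lipschitz, hence continuous. Openness: for an open ball $B_r^d(x) \subseteq X$ I claim $\pi(B_r^d(x)) = B_r^{\bar d}(\pi(x))$; the inclusion ``$\subseteq$'' follows from the Lipschitz bound, and for ``$\supseteq$'', if $\bar d(\pi(x), \bar y) < r$ then by attainment there is $y \in \bar y$ with $d(x,y) < r$, so $\bar y = \pi(y) \in \pi(B_r^d(x))$. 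Since $\pi$ maps a basis of open balls to open balls, $\pi$ is an open map; being also continuous and surjective, it is a quotient map, so the $\bar d$-topology is precisely the quotient topology. This completes the proof. The main (mild) obstacle throughout is simply making systematic use of the compactness of $G$ to replace infima by minima and to select a common representative in the triangle inequality; there is no deeper difficulty.
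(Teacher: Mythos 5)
Your proof is correct and complete. The paper offers no explicit argument here (it just remarks "this follows from an elementary argument"), and your reasoning — attainment of the minimum via compactness of $G$, a common-representative argument for the triangle inequality, and showing $\pi$ is $1$-Lipschitz and open to identify the topologies — is precisely the elementary argument the paper has in mind.
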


\begin{proof}[Proof of Lemma \ref{le:metr}] This follows from an elementary argument. 
\end{proof}
The following two lemmas were used in the proof of Proposition \ref{prop:loc conv S 1 C} (Section \ref{sec:proof:prop:conv S 1 C}).
\begin{lemma}\label{le:subsubseq} Let $X$ be a topological space, $x\in X$, and $x_\nu\in X$ be a sequence. Then $x_\nu$ converges to $x$, as $\nu\to\infty$, if and only if for every subsequence $(\nu_i)_{i\in\N}$ there exists a further subsequence $(i_j)_{j\in\N}$, such that $x_{\nu_{i_j}}$ converges to $x$, as $j\to\infty$.
\end{lemma}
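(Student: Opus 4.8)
The statement to prove is Lemma~\ref{le:subsubseq}: for a topological space $X$, a point $x\in X$, and a sequence $x_\nu\in X$, convergence $x_\nu\to x$ is equivalent to the condition that every subsequence $(x_{\nu_i})_i$ has a further subsequence converging to $x$.

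\begin{proof}[Proof of Lemma \ref{le:subsubseq}]\setcounter{claim}{0} The ``only if'' direction is immediate: if $x_\nu\to x$, then every subsequence $(x_{\nu_i})_i$ also converges to $x$, so it is itself a further subsequence with the required property (take $i_j:=j$).

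For the ``if'' direction, I argue by contraposition. Suppose $x_\nu$ does not converge to $x$. Then by the definition of convergence in a topological space, there exists an open neighborhood $U$ of $x$ such that for every $N\in\N$ there is some $\nu\geq N$ with $x_\nu\notin U$. Using this, I construct inductively a strictly increasing sequence of indices $\nu_1<\nu_2<\cdots$ with $x_{\nu_i}\notin U$ for every $i\in\N$: having chosen $\nu_1,\dots,\nu_{i-1}$, apply the property with $N:=\nu_{i-1}+1$ to obtain $\nu_i\geq\nu_{i-1}+1$ with $x_{\nu_i}\notin U$. Now consider this subsequence $(\nu_i)_{i\in\N}$. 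For any further subsequence $(i_j)_{j\in\N}$, the points $x_{\nu_{i_j}}$ all lie outside the neighborhood $U$ of $x$, hence $x_{\nu_{i_j}}$ cannot converge to $x$. This shows the subsequence $(\nu_i)$ witnesses the failure of the condition in the statement, completing the contrapositive and hence the proof of Lemma \ref{le:subsubseq}.
\end{proof}

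The proof is entirely elementary and presents no real obstacle; the only point requiring a little care is that $X$ is merely a topological space (not assumed first-countable or metrizable), so one must work directly with the neighborhood definition of convergence rather than with sequential characterizations. The inductive extraction of the ``bad'' subsequence is the key step, and it uses only that failure of convergence supplies a single open neighborhood missed infinitely often.
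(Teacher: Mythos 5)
Your proof is correct, and since the paper simply records that the statement "follows from an elementary argument," your contrapositive argument is exactly the standard one the author had in mind. The care you take to work directly with neighborhoods (rather than any sequential characterization) is appropriate given that $X$ is an arbitrary topological space.
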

\begin{proof}[Proof of Lemma \ref{le:subsubseq}] This follows from an elementary argument.
\end{proof}
Let $X$ be a topological space and $G$ a group. We fix an action of $G$ on $X$. 
\begin{lemma}[Convergence in the quotient]\label{le:X G} Assume that $X$ is first-countable and that the action of every $g\in G$ is a continuous self-map of $X$. Let $y_\nu\in X/G$, $\nu\in\N$, be a sequence that converges to a point $y\in X/G$, and $x$ be a representative of $y$. Then there exists a representative $x_\nu$ of $y_\nu$, for each $\nu\in\N$, such that $x_\nu$ converges to $x$. 
\end{lemma}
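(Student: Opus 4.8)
The statement to prove is Lemma \ref{le:X G}: given a first-countable space $X$ with a $G$-action by (individually) continuous self-maps, a sequence $y_\nu \to y$ in $X/G$, and a representative $x$ of $y$, one must produce representatives $x_\nu$ of $y_\nu$ with $x_\nu \to x$. The plan is to use the first-countability of $X$ to extract a countable neighborhood basis of $x$, then use the definition of the quotient topology to lift a carefully chosen sequence of open sets, and finally perform a diagonal-type selection of representatives.

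\textbf{Key steps.} First I would fix a countable neighborhood basis $(U_k)_{k\in\N}$ of $x$ in $X$, which we may assume is decreasing ($U_1 \supseteq U_2 \supseteq \cdots$) by replacing $U_k$ with $U_1 \cap \cdots \cap U_k$; here it matters that intersections of open sets are open. Next, observe that each $G$-saturation $GU_k := \pi^{-1}(\pi(U_k))$ is an open $G$-invariant set containing $x$, because $GU_k = \bigcup_{g\in G} gU_k$ and each $gU_k$ is open (the action of $g$ is a continuous self-map, hence $g^{-1}$, its inverse under the group action, is continuous, so $gU_k = (g^{-1})^{-1}(U_k)$ is open). Therefore $\pi(U_k)$ is an open neighborhood of $y$ in $X/G$, by the definition of the quotient topology (a set in $X/G$ is open iff its preimage is open, and $\pi^{-1}(\pi(U_k)) = GU_k$ is open). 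Since $y_\nu \to y$, for each $k$ there is an index $N_k$ such that $y_\nu \in \pi(U_k)$ for all $\nu \geq N_k$; we may take $N_1 < N_2 < \cdots$. Now for each $\nu$ let $k(\nu)$ be the largest $k$ with $N_k \leq \nu$ (and $k(\nu) = 0$, interpreted as "no constraint," for $\nu < N_1$). Then $y_\nu \in \pi(U_{k(\nu)})$, so there exists a point $x_\nu \in U_{k(\nu)}$ with $\pi(x_\nu) = y_\nu$, i.e.\ $x_\nu$ is a representative of $y_\nu$. Finally, given any neighborhood $W$ of $x$, choose $k$ with $U_k \subseteq W$; for $\nu \geq N_k$ we have $k(\nu) \geq k$, hence $x_\nu \in U_{k(\nu)} \subseteq U_k \subseteq W$. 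This shows $x_\nu \to x$, completing the proof.

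\textbf{Main obstacle.} The only subtle point is justifying that $gU_k$ is open for each $g \in G$: the hypothesis only states that each $g$ acts as a continuous self-map of $X$, not that it is a homeomorphism. However, since the $G$-action is a group action, $g$ and $g^{-1}$ are mutually inverse self-maps, and both are continuous by hypothesis, so each $g$ is in fact a homeomorphism of $X$; hence $gU_k$ is open. Once this is observed, everything else is routine bookkeeping with the quotient topology and first-countability, and no genuine difficulty remains.
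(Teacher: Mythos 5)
Your proof is correct, and it is the natural elementary argument that the paper invokes without spelling out: take a decreasing countable neighborhood basis of $x$, observe that its images are open neighborhoods of $y$ in the quotient (since each $g\cdot$ is a homeomorphism, saturations of open sets are open), and select representatives accordingly.
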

\begin{proof}[Proof of Lemma \ref{le:X G}] This follows from an elementary argument.
\end{proof}
\subsubsection*{The connection $\na^A$}\label{na A}
Next we explain the twisted connection $\na^A$, which appeared in the definition of the space $\X_W^{p,\lam}$, occurring in Theorem \ref{thm:Fredholm}. Let $E\to M$ be a real (smooth) vector bundle. We denote by $\Co(E)$ the affine space of (smooth linear) connections on $E$. Let $\na^E\in\Co(E)$. Let $N$ be a smooth manifold, and $u:N\to M$ be a smooth map. We denote by $u^*E\to N$ the pullback bundle. The pullback connection $u^*\na^E\in\Co(u^*E)$ is uniquely determined by the equality
\[(u^*\na^E)_v(s\circ u)=\na^E_{u_*v}s,\quad\forall v\in TN,\,s\in\Ga(E).\]
Let $G$ be a Lie group, $\pi:P\to X$ a (right-)$G$-bundle, and $E\to P$ a $G$-equivariant vector bundle. Then the quotient $E/G$ has a natural structure of a vector bundle over $X$. Assume that $G$ acts on a manifold $M$, and let $E\to M$ be a $G$-equivariant vector bundle. We denote by $\Co^G(E)$ the space of $G$-invariant connections on $E$. We fix $A\in \A(P)$, $\na^E\in\Co^G(E)$, and $u\in C_G^\infty(P,M)$. We define
\begin{equation}\label{eq:wt na A}\wt\na^A\in\Co^G(u^*E),\quad\wt\na^A_{\wt v}\wt s:=(u^*\na^E)_{\wt v-p(A\wt v)}\wt s,
\end{equation} 
for $\wt s\in\Ga(u^*E)$, $p\in P$, and $\wt v\in T_pP$. We denote by $E^u$ the quotient bundle $(u^*E)/G\to X$. We define the connection $\na^A\in\Co(E^u)$ by
\begin{equation}\label{eq:na A}\na^A_vs:=G\cdot(p_0,\wt\na^A_{\wt v}\wt s),
\end{equation}
for $s\in\Ga(E^u)$ and $v\in TX$, where $(p_0,\wt v)\in TP$ is an arbitrary vector such that $\pi_*\wt v=v$, and $\wt s\in\Ga(u^*E)$ is the $G$-invariant section defined by $s\circ \pi(p)=G\cdot(p,\wt s(p))$, for every $p\in P$. This definition is independent of the choice of $(p_0,\wt v)$, since the connection $\wt\na^A$ is basic (i.e., $G$-invariant and horizontal).\\

The following lemma was mentioned in Chapter \ref{chap:main}. Let $M,\om,G,\g,\lan\cdot,\cdot\ran_\g,\mu,J$ be as in that Chapter. Let $p>2$, $\lam\in\R$, and $P\to\C$ be a $G$-bundle of class $W^{2,p}_\loc$. Recall the definition (\ref{eq:BB p lam P}) of $\BB^p_\lam(P)$. We denote by $\G^{2,p}_\loc(P)$ the group of gauge transformations on $P$ of class $W^{2,p}_\loc$. 
\begin{lemma}\label{le:free} If $\lam>1-2/p$ then the group $\G^{2,p}_\loc(P)$ acts freely on the set $\BB^p_\lam(P)$.
\end{lemma}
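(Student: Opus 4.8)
The statement is that the gauge group $\G^{2,p}_\loc(P)$ acts freely on $\BB^p_\lam(P)$ when $\lam>1-2/p$. The plan is to argue by contradiction: suppose $g\in\G^{2,p}_\loc(P)$ fixes some $(A,u)\in\BB^p_\lam(P)$, i.e., $g^*(A,u)=(A,u)$. The equation $g^{-1}u=u$ says that $g$ takes values in the stabilizer of $u$ at each point of $P$; more precisely, for every $p\in P$ the element $g(p)\in G$ fixes $u(p)\in M$. The equation $\Ad_{g^{-1}}A+g^{-1}dg=A$ says that $dg=[A,g]$ in a suitable sense, so $g$ is covariantly constant along $A$-horizontal directions; in particular $g$ is locally determined by parallel transport from a single value, so it is ``rigid''. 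The idea is to combine these two facts: since $u$ lands in the free locus of the $G$-action in a neighborhood of infinity (this is where $\lam>1-2/p$ enters, via the decay that forces $u$ to approach $\mu^{-1}(0)\subseteq M^*$), the stabilizer condition forces $g\equiv\one$ near infinity, and then the rigidity of $g$ (it is parallel, hence locally constant once it equals $\one$ on an open set) propagates $g\equiv\one$ over all of $\C$.

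\textbf{Key steps.} First I would invoke Lemma \ref{le:si} from Appendix \ref{sec:proofs homology}: since $(A,u)\in\BB^p_\lam$ (after noting $(A,u)\in\BB^p_\lam(P)$ means $(P,A,u)\in\BB^p_\lam$) and $\lam>1-2/p$, there exists a section $\si$ of $P|_{\C\wo B_1}$ and a point $x_\infty\in\mu^{-1}(0)$ with $u\circ\si(re^{i\phi})\to x_\infty$ uniformly in $\phi$ as $r\to\infty$. By hypothesis (H), $G$ acts freely on $\mu^{-1}(0)$; since $M^*$ (the free locus, as in (\ref{eq:M *})) is open and $x_\infty\in\mu^{-1}(0)\subseteq M^*$, there is $R>0$ such that $u(p)\in M^*$ for all $p\in\pi^{-1}(\C\wo B_R)$. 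Second, the fixed-point condition $g^{-1}u=u$ forces $g(p)$ to stabilize $u(p)$; since the $G$-action at $u(p)$ is free for $p$ over $\C\wo B_R$, we get $g(p)=\one$ there, i.e., $g\equiv\one$ on $\pi^{-1}(\C\wo B_R)$. Third, I would use the connection equation $g^*A=A$, which is equivalent to $d_Ag=0$ where $d_Ag=dg-Ag+gA$ (the covariant derivative of $g$ viewed as a section of the adjoint bundle of automorphisms); since $g$ has $W^{2,p}_\loc$ regularity and $A$ has $W^{1,p}_\loc$ regularity with $p>2$, by Morrey $g$ is continuous, and the ODE $d_Ag=0$ along $A$-horizontal paths (using that $\C$ is connected and that parallel transport is well-defined for $W^{1,p}$-connections with $p>2$, cf.\ the Picard--Lindel\"of argument in the proof of Proposition \ref{PROP:RIGHT}) shows that $g$ is determined on all of $P$ by its value on any fiber. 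Since $g\equiv\one$ on a fiber over a point in $\C\wo B_R$, parallel transport gives $g\equiv\one$ on all of $P$. Hence the action is free.

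\textbf{Main obstacle.} The delicate point is the regularity bookkeeping in the third step: one must ensure that the equation $d_Ag=0$ genuinely forces $g$ to be parallel in a pointwise sense despite $g$ only being of Sobolev class $W^{2,p}_\loc$ and $A$ of class $W^{1,p}_\loc$. For $p>2$ this is fine because both $g$ and $A$ are (H\"older) continuous by Morrey's embedding, so the first-order linear ODE $\dot g = g\,(A\dot\gamma) - (A\dot\gamma)\,g$ along a smooth path $\gamma$ in $\C$ has a unique continuous solution with prescribed initial value; applying this along paths from $\C\wo B_R$ to an arbitrary point of $\C$ and using connectedness of $\C$ yields $g\equiv\one$ globally. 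A secondary subtlety is making precise that the ``section of automorphisms'' picture and the formula $g^*A = \Ad_{g^{-1}}A + g^{-1}dg$ indeed translate into $d_Ag=0$; this is a standard but slightly fiddly unwinding of definitions, analogous to what appears in Remark \ref{rmk:trivial}. Neither of these is a genuine difficulty — the role of the hypothesis $\lam>1-2/p$ is purely to guarantee, via Lemma \ref{le:si}, that $u$ eventually enters the free locus so that the stabilizer constraint can be bootstrapped; without it $u$ could wander and $g$ need not be forced to $\one$ anywhere (compare Remark \ref{rmk:W well-defined}).
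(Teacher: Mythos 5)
Your proof is correct and follows essentially the same route as the paper: invoke Lemma~\ref{le:si} (this is precisely where $\lam>1-2/p$ enters) to locate $u$ in the free locus $M^*$ for base points far from the origin, conclude $g=\one$ there from the stabilizer condition $g^{-1}u=u$, and then propagate $g\equiv\one$ over all of $P$ using the first-order ODE $\dot h=h(A\dot\gamma)-(A\dot\gamma)h$ along paths, with Picard--Lindel\"of applicable because $p>2$ makes $A$ continuous. The paper phrases the last step by choosing a single $C^1$-path from one far-away base point to an arbitrary point rather than first noting $g\equiv\one$ on a whole open set, but this is a cosmetic difference.
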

\begin{proof}[Proof of Lemma \ref{le:free}]\setcounter{claim}{0} Assume that $\lam>1-2/p$. Let $w:=(A,u)\in\BB^p_\lam(P)$ and $g\in\G^{2,p}_\loc(P)$ be such that $g_*w=w$. Let $p_1\in P$. We show that $g(p_1)=\one$. It follows from hypothesis (H) that there exists $\de>0$ such that $\mu^{-1}(B_\de)\sub M^*$ (defined as in (\ref{eq:M *})). Furthermore, Lemma \ref{le:si} (Appendix \ref{sec:proofs homology}) implies that $u(p_0)\in\mu^{-1}(B_\de)$, for every $p_0\in P$, for which $|\pi(p_0)|$ is large enough. We fix such a point $p_0$. Our hypothesis $p>2$ implies that $P$ is a $C^1$-bundle. Hence we may choose a path $p\in C^1([0,1],P)$ such that $p(i)=p_i$, for $i=0,1$. Consider the map $h:=g\circ p:[0,1]\to G$. By assumption, we have $g_*u=g\circ u=u$. Since $u(p_0)\in M^*$, it follows that $h(0)=\one$. Furthermore, the assumption $g_*A=A$ implies that $h$ solves the ordinary differential equation
\begin{equation}\label{eq:dot h}\dot h=hA\dot p-(A\dot p)h.
\end{equation}
The hypothesis $p>2$ implies that the map $A\dot p:[0,1]\to\g$ is continuous. Hence the equation (\ref{eq:dot h}) is of the form $\dot h(t)=f(t,h(t))$, where $f$ is continuous in $t$ and Lipschitz continuous in $h$. Therefore, by the Picard-Lindel\"of theorem, we have $h\const\one$. In particular, we have $g(p_1)=h(1)=\one$. It follows that $g\const\one$. This proves Lemma \ref{le:free}.
\end{proof}
The next lemma was used in the proof of Theorem \ref{thm:Fredholm} (Section \ref{subsec:reform}). Here for a linear map $D:X\to Y$ we denote $\coker D:=Y/\im D$. 
\begin{lemma}\label{le:X Y Z} Let $X,Y,Z$ be vector spaces and $D':X\to Y$ and $T:X\to Z$ be linear maps. We define $D:=D'|_{\ker T}$. Then the following holds.
  \begin{enui}\item\label{le:X Y Z ker} $\ker D=\ker(D', T)$. 
\item\label{le:X Y Z coker} The map $\Phi:\coker D\to \coker(D',T)$, $\Phi(y+\im D):=(y,0)+\im(D',T)$, is well-defined and injective. If $T:X\to Z$ is surjective then $\Phi$ is also surjective. 
\item\label{le:X Y Z closed} Let $\Vert\cdot\Vert_Y,$ $\Vert\cdot\Vert_Z$ be norms on $Y$ and $Z$ and assume that $\im(D',T)$ is closed in $Y\oplus Z$. Then $\im D$ is closed in $Y$. 
\end{enui}
\end{lemma}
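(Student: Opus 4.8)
The plan is to prove the three statements of Lemma~\ref{le:X Y Z} by direct linear algebra, treating them in the order given since each builds conceptually on the previous one.

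\emph{Part (\ref{le:X Y Z ker}).} Here I would unwind the definition $D = D'|_{\ker T}$. If $x \in \ker D$ then $x \in \ker T$ and $D'x = Dx = 0$, so $(D',T)x = (D'x, Tx) = (0,0)$, i.e.\ $x \in \ker(D',T)$. Conversely, if $(D',T)x = 0$ then $Tx = 0$, so $x \in \ker T$ and hence $Dx = D'x = 0$. This gives $\ker D = \ker(D',T)$ with no effort.

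\emph{Part (\ref{le:X Y Z coker}).} For well-definedness of $\Phi$ I would check that if $y \in \im D$, say $y = Dx$ with $x \in \ker T$, then $(y,0) = (D'x, Tx) \in \im(D',T)$, so the coset $(y,0) + \im(D',T)$ is trivial; hence $\Phi$ does not depend on the representative $y$. For injectivity, suppose $\Phi(y + \im D) = 0$, i.e.\ $(y,0) = (D'x, Tx)$ for some $x \in X$. Then $Tx = 0$, so $x \in \ker T$, and $y = D'x = Dx \in \im D$, so $y + \im D = 0$. For surjectivity under the hypothesis that $T$ is onto: given $(y,z) \in Y \oplus Z$, pick $x_0 \in X$ with $Tx_0 = z$; then $(y,z) - (D',T)x_0 = (y - D'x_0, 0)$, so modulo $\im(D',T)$ every class is represented by some $(y',0)$, which is $\Phi(y' + \im D)$.

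\emph{Part (\ref{le:X Y Z closed}).} This is the only part with any analytic content, and it is the step I expect to require the most care. Equip $Y \oplus Z$ with the norm $\|(y,z)\| = \|y\|_Y + \|z\|_Z$. The inclusion $Y \hookrightarrow Y \oplus Z$, $y \mapsto (y,0)$, is a closed isometric embedding onto the closed subspace $Y \oplus \{0\}$. From the computation in the proof of part (\ref{le:X Y Z coker}) one sees that $\im(D',T) \cap (Y \oplus \{0\}) = \im D \oplus \{0\}$: indeed $(y,0) \in \im(D',T)$ forces, as above, $y \in \im D$. Therefore $\im D \oplus \{0\}$ is the intersection of the closed set $\im(D',T)$ (closed by hypothesis) with the closed set $Y \oplus \{0\}$, hence closed in $Y \oplus \{0\}$; transporting back along the isometric isomorphism $Y \cong Y \oplus \{0\}$ shows $\im D$ is closed in $Y$. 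The main obstacle, such as it is, is simply to be careful that no completeness or boundedness assumptions on $D'$ or $T$ are needed — only the topological fact that an intersection of closed sets is closed, together with the elementary identification of $\im(D',T) \cap (Y \oplus \{0\})$ with $\im D$ from part (\ref{le:X Y Z coker}). No open mapping theorem or quotient-completeness argument is required.
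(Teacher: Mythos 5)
Your proof is correct, and since the paper leaves Lemma~\ref{le:X Y Z} to the reader, there is no written proof to compare against; the direct linear-algebra argument you give (with the key identification $\im(D',T)\cap(Y\oplus\{0\})=\im D\oplus\{0\}$ driving both part~(\ref{le:X Y Z coker}) and part~(\ref{le:X Y Z closed})) is exactly the sort of straightforward verification the paper intends.
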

The proof of Lemma \ref{le:X Y Z} is straight-forward and left to the reader. 

The following result was used in the proof of Proposition \ref{prop:triv} in Section \ref{subsec:proofs}. We define the map $f:\C\wo\{0\}\to S^1$ by $f(z):=z/|z|$. For two topological spaces $X$ and $Y$ we denote by $C(X,Y)$ the set of all continuous maps from $X$ to $Y$, and by $[X,Y]$ the set of all (free) homotopy classes of such maps. Let $V$ be a finite dimensional complex vector space. We denote by $\End(V)$ the space of its (complex) endomorphisms of $V$, by $\det:\End(V)\to\C$ the determinant map, and by $\Aut(V)\sub \End(V)$ the group of automorphisms of $V$. 
\begin{lemma}\label{le:Aut} The map $C(S^1,\Aut(V))\to \Z$ given by $\Phi\mapsto \deg\big(f\circ\det\circ\Phi\big)$ descends to a bijection $\big[S^1,\Aut(V)\big]\to \Z$. 
\end{lemma}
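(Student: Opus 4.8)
The plan is to show that $\Aut(V)$ is homotopy equivalent to $S^1$ in a way compatible with the determinant map, and then to invoke the classical fact that $[S^1,S^1]\to\mathbb{Z}$ given by the degree is a bijection. Concretely, recall that $\GL(n,\mathbb{C})$ deformation retracts onto $\U(n)$ (Gram--Schmidt), and $\U(n)$ fibers over $S^1$ via $\det$ with fiber $\SU(n)$, which is simply connected. The long exact homotopy sequence of this fibration gives $\pi_1(\U(n))\cong\pi_1(S^1)\cong\mathbb{Z}$, with the isomorphism induced by $\det$. Identifying $\Aut(V)$ with $\GL(n,\mathbb{C})$ for $n=\dim_{\mathbb{C}}V$, this shows $\det_*:\pi_1(\Aut(V))\to\pi_1(\mathbb{C}^*)$ is an isomorphism. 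Since $\Aut(V)$ and $\mathbb{C}^*$ are path-connected, free homotopy classes of loops coincide with $\pi_1$ (based loops modulo conjugation, but $\pi_1$ is abelian here so there is no difference), so $\det$ induces a bijection $[S^1,\Aut(V)]\to[S^1,\mathbb{C}^*]$.

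The remaining step is to identify $[S^1,\mathbb{C}^*]$ with $\mathbb{Z}$ via $\Phi\mapsto\deg(f\circ\Phi)$, where $f(z)=z/|z|$. This is standard: the inclusion $S^1\hookrightarrow\mathbb{C}^*$ is a homotopy equivalence (radial retraction, which is exactly $f$), so postcomposition with $f$ induces a bijection $[S^1,\mathbb{C}^*]\to[S^1,S^1]$, and the latter is identified with $\mathbb{Z}$ by the winding number (degree). Composing the two bijections, $\Phi\mapsto\deg(f\circ\det\circ\Phi)$ is a bijection $[S^1,\Aut(V)]\to\mathbb{Z}$, and it descends from $C(S^1,\Aut(V))$ since degree is a homotopy invariant. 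To keep things self-contained one can avoid the fibration machinery: it suffices to observe that $\Aut(V)\to\mathbb{C}^*$, $A\mapsto\det A$, admits a section $s(\lambda)=\mathrm{diag}(\lambda,1,\dots,1)$ (in a chosen basis), and that the kernel $\SL(n,\mathbb{C})$ is connected and simply connected; then any loop in $\Aut(V)$ is homotopic, rel nothing, to $s$ composed with its determinant loop (using simple connectedness of $\SL(n,\mathbb{C})$ to contract the correction), which reduces everything to the one-dimensional case.

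I expect the only genuine content is the simple connectedness of $\SL(n,\mathbb{C})$ (equivalently of $\SU(n)$), which is a well-known fact; everything else is a routine homotopy-theoretic bookkeeping that I would carry out in a few lines. The main obstacle, such as it is, is purely expository: deciding how much of the standard algebraic topology to spell out versus cite. Given the elementary nature of the statement, I would present the argument via the section $s$ and simple connectedness of $\SL(n,\mathbb{C})$, reducing to the winding-number bijection $[S^1,S^1]\cong\mathbb{Z}$, and cite a standard reference for the latter two facts rather than reproving them.
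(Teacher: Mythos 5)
Your argument is essentially the paper's: both reduce to $\U(V)$ via Gram--Schmidt, identify $\pi_1(\Aut(V))\to\pi_1(S^1)$ via $\det$, and use abelianness of $\pi_1$ to pass from based to free homotopy classes. The only difference is that the paper cites \cite[Proposition 2.23]{MS98} for $\det_*$ being an isomorphism on $\pi_1(\U(V))$, whereas you reprove it via the $\SU(n)$-fibration (or the section into $\GL(n,\C)$); both are correct.
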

\begin{proof}[Proof of Lemma \ref{le:Aut}]\setcounter{claim}{0} We choose a hermitian inner product $V$ and denote by $\U(V)$ the corresponding group of unitary automorphisms of $V$. The map $\det:\U(V)\to S^1$ induces an isomorphism of fundamental groups, see e.g.~\cite[Proposition 2.23]{MS98}. Furthermore, the space $\Aut(V)$ strongly deformation retracts onto $\U(V)$.%
\footnote{This follows from the Gram-Schmidt orthonormalization procedure.}
 Let $\Phi_0\in\Aut(V)$. It follows that the map
\[\big\{\Phi\in C(S^1,\Aut(V))\,\big|\,\Phi(1)=\Phi_0\big\}\to \Z,\quad\Phi\mapsto \deg\big(f\circ\det\circ\Phi\big)\]
descends to an isomorphism between the fundamental group $\pi_1(\Aut(V),\Phi_0)$ and $\Z$. Since this group is abelian, the map
\[\pi_1(\Aut(V),\Phi_0)\to \big[S^1,\Aut(V)\big]\]
that forgets the base point $\Phi_0$, is a bijection. The statement of Lemma \ref{le:Aut} follows from this. 
\end{proof}
The next lemma was used in the proof of Theorem \ref{thm:L w * R} (Section \ref{subsec:proof:thm:L w * R}). Let $X$ and $M$ be manifolds, $G$ a Lie group with Lie algebra $\g$, $\lan\cdot,\cdot\ran_\g$ an invariant inner product on $\g$, $\lan\cdot,\cdot\ran_M$ a $G$-invariant Riemannian metric on $M$, and $\na$ its Levi-Civita connection. For $\xi\in\g$ we denote by $X_\xi$ the vector field on $M$ generated by $\xi$. We define the tensor $\rho:TM\oplus TM\to\g$ by 
\begin{equation}\label{eq:xi rho v v'}\lan\xi,\rho(v,v')\ran_\g:=\lan\na_vX_\xi,v'\ran_M.\end{equation}
A short calculation shows that $\rho$ is skew-symmetric. This two-form was introduced in \cite[p.~181]{Ga}. The next lemma corresponds to \cite[Proposition 7.1.3(a,b)]{Ga}. Let $P\to X$ be a $G$-bundle, $A\in\A(P)$, $u\in C^\infty\big(X,(P\x M)/G\big)$, $v\in\Ga(TM^u)$, and $\xi\in \Ga(\g_P)$. We define the connection $\na^A$ on $TM^u\to X$ as on page \pageref{na A}.
\begin{lemma}\label{le:na A L u} $\na^AL_u\xi-L_ud_A\xi=\na_{d_Au}X_\xi,\quad d_AL_u^*v-L_u^*\na^Av=\rho(d_Au,v).$ 
\end{lemma}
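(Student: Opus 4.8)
\textbf{Proof plan for Lemma \ref{le:na A L u}.}

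The plan is to reduce both identities to pointwise computations at an arbitrary point $x\in X$, working in a suitable trivialization of $P$ near $x$ so that everything becomes a statement about vector fields on $M$ along the map $u$. First I would fix $x\in X$, choose a local section $s$ of $P$ near $x$, and use $s$ to identify sections of $\g_P$ with $\g$-valued functions and sections of $TM^u$ with vector fields along the map $\bar u:=u\circ s:X\to M$ (more precisely, along the composition reading off the $M$-component). In this trivialization $d_A\xi = d\xi + [s^*A,\xi]$ and, by the definition \eqref{eq:na A} of $\na^A$ via the pullback connection $\wt\na^A$ of \eqref{eq:wt na A}, the connection $\na^A$ becomes $\na^A_v = (\bar u^*\na)_{v} + (\text{correction by } s^*A)$. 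The infinitesimal action section $L_u\xi$ becomes the vector field $x\mapsto X_{\xi(x)}(\bar u(x))$ along $\bar u$, and $L_u^*v$ becomes $x\mapsto \rho_{\bar u(x)}$-adjoint of $v$, i.e.\ the unique $\eta(x)\in\g$ with $\lan\eta(x),\cdot\ran_\g=\lan v(x),X_{(\cdot)}(\bar u(x))\ran_M$.

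For the first identity, I would compute $\na^A_v(L_u\xi)$ in the trivialization. Differentiating $x\mapsto X_{\xi(x)}(\bar u(x))$ with $\bar u^*\na$ splits by Leibniz into the term where $\xi$ is differentiated, giving $X_{d\xi(v)}$, plus the term $\na_{d\bar u(v)}X_\xi$ where the basepoint moves, and then the connection-form corrections from both $\na^A$ and from $d_Au = d\bar u + L_{\bar u}(s^*A)$ must be collected. The key algebraic fact to invoke is $G$-invariance of $\na$, which gives the standard identity $\na_{X_\eta}X_\xi = X_{[\eta,\xi]} + (\text{term symmetric in } \eta,\xi?)$ — more precisely one uses that the flow of $X_\eta$ preserves $\na$, so $[X_\eta,X_\xi] = X_{-[\eta,\xi]}$ (with the appropriate sign convention from the action being a right or left action). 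Matching the correction terms against $L_u([s^*A,\xi])$ and against $\na_{L_{\bar u}(s^*A)}X_\xi$ yields exactly $\na^A L_u\xi = L_u d_A\xi + \na_{d_Au}X_\xi$. The skew-symmetry of $\rho$, already recorded after \eqref{eq:xi rho v v'}, is what makes the bookkeeping close up.

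The second identity I would obtain from the first by taking the fibrewise adjoint with respect to $\lan\cdot,\cdot\ran_M$ and $\lan\cdot,\cdot\ran_\g$. Concretely: for a test section $\eta\in\Ga(\g_P)$ write out $\lan d_A L_u^*v - L_u^*\na^A v,\eta\ran_\g$, integrate by parts / use compatibility of $d_A$ and $\na^A$ with the metrics (both connections are metric, since $\na$ is Levi-Civita and $\lan\cdot,\cdot\ran_\g$ is $\Ad$-invariant), and push the derivative onto the other factor to convert the expression into $\lan v, \na^A L_u\eta - L_u d_A\eta\ran_M = \lan v,\na_{d_Au}X_\eta\ran_M$ by the first identity; finally, $\lan v,\na_{d_Au}X_\eta\ran_M = \lan \eta,\rho(d_Au,v)\ran_\g$ by the very definition \eqref{eq:xi rho v v'} of $\rho$. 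Since $\eta$ is arbitrary, the second identity follows.

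The main obstacle is purely a matter of sign conventions and careful bookkeeping: one must be consistent about whether $L_x:\g\to T_xM$ is the infinitesimal action of a right or left action (and hence whether $[X_\eta,X_\xi]=\pm X_{[\eta,\xi]}$), about the sign in $d_A\xi=d\xi+[A,\xi]$ versus $d\xi-[A,\xi]$, and about the orientation/sign in the definition of $\na^A$ through $\wt\na^A$. I expect no conceptual difficulty beyond this — the lemma is the standard pair of structural identities for the action map and its adjoint, and a clean choice of trivialization reduces it to the Leibniz rule plus $G$-invariance of the Levi-Civita connection. As a sanity check I would verify the first identity separately in the case $X=\{pt\}$ (so $d_A\xi$ and $\na^A v$ are just ordinary derivatives in the $\g$- and $TM$-directions with no base variation), where it reduces to $\na_v(X_\xi) - X_{v\text{-derivative of }\xi} = \na_{(\text{image under }L)}X_\xi$, which is immediate.
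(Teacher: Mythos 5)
Your plan is correct, and since the paper's own proof is literally ``This follows from short calculations'' (deferring to~\cite[Proposition~7.1.3(a,b)]{Ga}), there is no detailed argument in the paper to diverge from: the standard computation you outline — trivialize $P$ locally, express $\na^A$ via the pullback connection and the connection form, apply the Leibniz rule to $x\mapsto X_{\xi(x)}(\bar u(x))$, invoke $G$-invariance of the Levi--Civita connection, and then obtain the second identity from the first by exploiting metric compatibility of $d_A$ and $\na^A$ together with the defining relation~\eqref{eq:xi rho v v'} — is exactly how one does it. Two small caveats worth tightening if you write this out: first, ``integrate by parts'' is loose language for what is really a pointwise product-rule manipulation (your phrase ``push the derivative onto the other factor'' is the accurate version — there is no integral anywhere); second, the $X=\{\mathrm{pt}\}$ sanity check is degenerate (with a point base there are no tangent vectors along which to take $d_A$ or $\na^A$, so both sides vanish trivially) — a more informative special case is $P=M\x G\to M$ with $u=\mathrm{id}$ and $A$ trivial, where the first identity reduces to the genuine content $\na_Y X_\xi - X_{Y\xi} = \na_Y X_\xi$ when $\xi$ is constant, i.e.~it is a tautology that still exercises the $G$-invariance bookkeeping. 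Finally, be careful: the identity you half-remember with a question mark, $\na_{X_\eta}X_\xi = X_{[\eta,\xi]} + (\text{symmetric part})$, is not quite what you need; the relevant input is only that $\na$ is $G$-invariant, which you use to pass $\na$ across the $G$-quotient in defining $\na^A$, and torsion-freeness, which enters if you prefer to rewrite $\na_{L_{\bar u}(s^*A)}X_\xi$ — the bracket identity $[X_\eta,X_\xi]=-X_{[\eta,\xi]}$ (for a left action) is what makes the $A$-correction terms cancel against $L_u[s^*A,\xi]$.
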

\begin{proof}[Proof of Lemma \ref{le:na A L u}]\setcounter{claim}{0} This follows from short calculations.
\end{proof}
Let $M,\om,G,\g,\lan\cdot,\cdot\ran_\g,\mu$ and $J$ be as in Chapter \ref{chap:main}, and $\lan\cdot,\cdot\ran_M:=\om(\cdot,J\cdot)$. The following remark was used in the proofs of Theorems \ref{thm:Fredholm} (Section \ref{subsec:reform}) and \ref{thm:L w * R} (Section \ref{subsec:proof:thm:L w * R}). Recall the definition (\ref{eq:M *}) of $M^*\sub M$, and that $\PR:TM\to TM$ denotes the orthogonal projection onto $\im L$. 
\begin{rmk}\label{rmk:c}Let $K\sub M^*$ be compact. We define
\[c:=\inf\left\{\frac{|L_x\xi|}{|\xi|}\,\bigg|\,x\in K,\,0\neq\xi\in\g\right\}.\]
Then $c>0$. Let $x\in K$. Then $L_x^*L_x$ is invertible, and
\begin{equation}\label{eq:L x * L x c}|(L_x^*L_x)^{-1}|\leq c^{-2},\quad\big|L_x(L_x^*L_x)^{-1}\big|\leq c^{-1},\quad L_x(L_x^*L_x)^{-1}L_x^*={\PR}_x,\end{equation}
where the $|\cdot|$'s denote operator norms. Furthermore, $|\PR_x v|\leq c^{-1}|L_x^*v|$, for every $v\in T_xM$. These assertions follow from short calculations. $\Box$
\end{rmk}
Assume that hypothesis (H) holds. The following lemma was used in the proof of Proposition \ref{prop:X X w} in Section \ref{subsec:proofs}. For $x\in M$ we denote by $L_x^\C:\g^\C\to T_xM$ the complex linear extension of the infinitesimal action.
\begin{lemma}\label{le:U} There exists a neighborhood $U\sub M$ of $\mu^{-1}(0)$, such that
\begin{equation}\label{eq:c inf}c:=\inf\big\{\big|d\mu(x)L_x^\C\al\big|+|\PR L_x^\C\al|\,\big|\,x\in U,\,\al\in\g^\C:\,|\al|=1\big\}>0.\end{equation}
\end{lemma}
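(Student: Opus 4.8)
\textbf{Proof proposal for Lemma \ref{le:U}.}

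The plan is to argue by contradiction and compactness. Suppose no such neighborhood $U$ exists. Then for every $k\in\N$ the infimum over the open $1/k$-neighborhood $U_k$ of $\mu^{-1}(0)$ vanishes, so we may pick a point $x_k\in U_k$ and a vector $\al_k\in\g^\C$ with $|\al_k|=1$ such that $|d\mu(x_k)L^\C_{x_k}\al_k|+|\PR L^\C_{x_k}\al_k|\to0$. Since $\mu$ is proper (hypothesis (H)), the closed $1/1$-neighborhood of $\mu^{-1}(0)$ is compact, so after passing to a subsequence $x_k\to x_\infty\in\mu^{-1}(0)$ and $\al_k\to\al_\infty\in\g^\C$ with $|\al_\infty|=1$. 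Continuity of $x\mapsto d\mu(x)$, of $x\mapsto L^\C_x$, and of $\PR$ on the complement of the (closed, lower semicontinuous) set where $\dim\im L$ drops — here we use that near $\mu^{-1}(0)\sub M^*$ the rank of $L_x$ is locally constant, hence $\PR$ is continuous there — gives $d\mu(x_\infty)L^\C_{x_\infty}\al_\infty=0$ and $\PR_{x_\infty}L^\C_{x_\infty}\al_\infty=0$.

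The second equation says $L^\C_{x_\infty}\al_\infty\in(\im L_{x_\infty})^\perp$; but $L^\C_{x_\infty}\al_\infty$ also lies in $\im L^\C_{x_\infty}=\im L_{x_\infty}\otimes\C$ viewed inside $T_{x_\infty}M$, and since $L^\C_{x_\infty}\al_\infty\in T_{x_\infty}M$ is a genuine (real) tangent vector, $\im L^\C_{x_\infty}\cap T_{x_\infty}M$ as a real subspace is exactly $\im L_{x_\infty}$. Thus $L^\C_{x_\infty}\al_\infty\in\im L_{x_\infty}\cap(\im L_{x_\infty})^\perp=\{0\}$, so $L^\C_{x_\infty}\al_\infty=0$. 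Since $G$ acts freely on $\mu^{-1}(0)$ (hypothesis (H)) and $x_\infty\in\mu^{-1}(0)$, the real infinitesimal action $L_{x_\infty}:\g\to T_{x_\infty}M$ is injective, hence so is its complex linear extension $L^\C_{x_\infty}:\g^\C\to T_{x_\infty}M$; this forces $\al_\infty=0$, contradicting $|\al_\infty|=1$. Therefore the desired $U$ exists.

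The main obstacle I anticipate is the continuity issue for $\PR$: the projection onto $\im L_x$ is only continuous where $\dim\im L_x$ is locally constant, which is why one must work in a neighborhood of $\mu^{-1}(0)$ rather than on all of $M$. I would handle this by first shrinking to a neighborhood $U_0\sub M^*$ of $\mu^{-1}(0)$ on which $\im L|_{U_0}$ is a smooth subbundle of $TM|_{U_0}$ (this uses the local slice theorem, exactly as in Claim \ref{claim:U triv} in the proof of Proposition \ref{prop:triv}), run the compactness argument inside $\overline{U_0}\cap(\text{closed }1\text{-neighborhood of }\mu^{-1}(0))$, and note the limit point $x_\infty\in\mu^{-1}(0)\sub U_0$ so all the relevant maps are continuous at $x_\infty$. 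A secondary routine point is the elementary linear-algebra fact that $\im L^\C_x\cap T_xM=\im L_x$ as real subspaces, which follows because $T_xM$ sits inside its complexification $T_xM\otimes\C$ as the fixed set of complex conjugation and $L^\C_x$ intertwines conjugation on $\g^\C$ with conjugation on $T_xM\otimes\C$, so a real vector in the image comes from a real element of $\g$.
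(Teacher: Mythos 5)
Your overall strategy — argue by contradiction and compactness, pass to a limit $x_\infty\in\mu^{-1}(0)$, and derive a contradiction — is a reasonable alternative to what the paper does. The paper's own proof is direct and quantitative: it writes out $d\mu(x)L_x^\C\al=[\mu(x),\xi]+L_x^*L_x\eta$ and $\PR_xL_x^\C\al=L_x\xi-L_x(L_x^*L_x)^{-1}[\mu(x),\eta]$ explicitly, then chooses $\de$ small and $U:=\mu^{-1}(B_\de)$, estimating from below in terms of the constant $c_0:=\inf\{|L_x\xi|/|\xi|\}$ on the compact set $\mu^{-1}(\bar B_{\de_0})$. No limits are taken, so the discontinuity of $\PR$ never arises. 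Your concern about that discontinuity is legitimate in your approach, and your proposed fix (shrink to a neighborhood of $\mu^{-1}(0)$ on which $\im L$ is a smooth subbundle) is correct.

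However, there is a genuine error in the linear-algebra step, rooted in a misreading of the definition of $L_x^\C$. The paper defines $L_x^\C:\g^\C\to T_xM$ as the \emph{$J$-complex-linear extension}, so $L_x^\C(\xi+i\eta)=L_x\xi+JL_x\eta\in T_xM$; the target is $T_xM$ itself, not the algebraic complexification $T_xM\otimes_\R\C$. Consequently $\im L_x^\C=\im L_x+J\im L_x$ is already a real subspace of $T_xM$, and your "elementary linear-algebra fact" $\im L_x^\C\cap T_xM=\im L_x$ has no content here. At $x\in\mu^{-1}(0)$ one has $\im L_x\perp J\im L_x$ (since $\lan L_x\xi,JL_x\eta\ran_M=-\om(L_x\xi,L_x\eta)=-\lan d\mu(x)L_x\eta,\xi\ran_\g=0$), so $\im L_x^\C\cap(\im L_x)^\perp=J\im L_x$, which has dimension $\dim G$. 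Taking $\al_\infty=i\eta$ with $\eta\neq0$ gives $L_{x_\infty}^\C\al_\infty=JL_{x_\infty}\eta\neq0$ with $\PR_{x_\infty}L_{x_\infty}^\C\al_\infty=0$, so your claimed implication "$\PR_{x_\infty}L_{x_\infty}^\C\al_\infty=0\Rightarrow L_{x_\infty}^\C\al_\infty=0$" is simply false. The deeper issue is that you never invoke the other constraint $d\mu(x_\infty)L_{x_\infty}^\C\al_\infty=0$, and both are needed: writing $\al_\infty=\xi+i\eta$, the constraint $\PR_{x_\infty}L^\C_{x_\infty}\al_\infty=0$ together with the orthogonality above gives $L_{x_\infty}\xi=0$, hence $\xi=0$; and, since at $x_\infty\in\mu^{-1}(0)$ one has $d\mu(x_\infty)L_{x_\infty}^\C\al_\infty=L_{x_\infty}^*L_{x_\infty}\eta$, the first constraint then forces $L_{x_\infty}\eta=0$, hence $\eta=0$, contradicting $|\al_\infty|=1$. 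A smaller point: properness of $\mu$ gives compactness of $\mu^{-1}(\bar B_\de)$, not of a metric $1$-neighborhood of $\mu^{-1}(0)$ (which need not be compact if $M$ is incomplete); take $U_k:=\mu^{-1}(B_{1/k})$ instead, as the paper does.
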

\begin{proof}[Proof of Lemma \ref{le:U}]\setcounter{claim}{0} It follows from hypothesis (H) that there exists $\de_0>0$ such that $\mu^{-1}(\bar B_{\de_0})\sub M^*$. We define
\begin{eqnarray}\nn&C:=\sup\big\{|[\xi,\eta]|\,\big|\,\xi,\eta\in\g:\,|\xi|\leq1,\,|\eta|\leq 1\big\},&\\
\nn&c_0:=\inf\left\{\displaystyle\frac{|L_x\xi|}{|\xi|}\,\bigg|\,x\in \mu^{-1}(\bar B_{\de_0}),\,0\neq\xi\in\g\right\}.&
\end{eqnarray}
Since the action of $G$ on $M^*$ is free, it follows that $L_x:\g\to T_xM$ is injective, for $x\in M^*$. Furthermore, by hypothesis (H) the set $\mu^{-1}(\bar B_{\de_0})$ is compact. It follows that $c_0>0$. We choose a positive number $\de<\min\{\de_0,c_0/C,c_0^3/C\}$, and we define $U:=\mu^{-1}(B_\de)$. 
\begin{Claim}Inequality (\ref{eq:c inf}) holds.
\end{Claim}
\begin{pf}[Proof of the claim] Let $x\in U$ and $\al=\xi+i\eta\in\g^\C$. Then 
\begin{equation}\label{eq:d mu x L x}d\mu(x)L_x^\C\al=[\mu(x),\xi]+L_x^*L_x\eta.\end{equation} 
Using the last assertion in (\ref{eq:L x * L x c}), we have 
\begin{equation}\label{eq:Pr x L x}{\PR}_xL_x^\C\al=L_x\xi-L_x(L_x^*L_x)^{-1}[\mu(x),\eta].\end{equation}
By the first assertion in (\ref{eq:L x * L x c}), we have $|L_x^*L_x\eta|\geq c_0^2|\eta|$. Combining this with (\ref{eq:d mu x L x},\ref{eq:Pr x L x}) and the second assertion in (\ref{eq:L x * L x c}), we obtain
\[\big|d\mu(x)L_x^\C\al\big|+|\PR L_x^\C\al|\geq -C\de|\xi|+c_0^2|\eta|+c_0|\xi|-c_0^{-1}C\de|\eta|.\]
Inequality (\ref{eq:c inf}) follows now from our choice of $\de$. This proves the claim and completes the proof of Lemma \ref{le:U}.
\end{pf}
\end{proof}

\backmatter

\printindex

\end{document}